\newif\ifpreprint
\newcommand{\preprintonly}[1]{%
  \ifpreprint
    #1
  \else% not preprint, make short!
    
  \fi
}
\pgfplotsset{compat=1.15}
\title{Scale-invariant tangent-point energies for knots}
\author[S. Blatt]{Simon Blatt}
\address[Simon Blatt]{Paris Lodron Universit\"at Salzburg, Hellbrunner Strasse 34, 5020 Salzburg, Austria}
\email{simon.blatt@sbg.ac.at}
\author[$\Phi$. Reiter]{Philipp Reiter}
\address[Philipp Reiter]
{Chemnitz University of Technology, Faculty of Mathematics, 09107 Chemnitz, Germany} \email{reiter@math.tu-chemnitz.de}
\author[A. Schikorra]{Armin Schikorra}
\address[Armin Schikorra]{Department of Mathematics,
University of Pittsburgh,
301 Thackeray Hall,
Pittsburgh, PA 15260, USA}
\email{armin@pitt.edu}
\author[N. Vorderobermeier]{Nicole Vorderobermeier}
\address[Nicole Vorderobermeier]{Paris Lodron Universit\"at Salzburg, Hellbrunner Strasse 34, 5020 Salzburg, Austria}
\email{nicole.vorderobermeier@sbg.ac.at}
\newcommand{\n}{\mathcal{N}}
\newcommand{\N}{{\mathbb N}}
\renewcommand{\S}{{\mathbb S}}
\newtheorem{theorem}{Theorem}
\newtheorem{lemma}[theorem]{Lemma}
\newtheorem{corollary}[theorem]{Corollary}
\newtheorem{proposition}[theorem]{Proposition}
\theoremstyle{definition}
\newtheorem{definition}[theorem]{Definition}
\newtheorem{example}[theorem]{Example}
\theoremstyle{remark}
\newtheorem{remark}[theorem]{Remark}
\newcommand\diam{{\rm diam\,}}
\newcommand\dist{{\rm dist\,}}
\newcommand\lip{{\rm Lip\,}}
\newcommand\supp{{\rm supp\,}}
\newcommand{\R}{\mathbb{R}}
\newcommand{\Z}{\mathbb{Z}}
\newcommand{\brac}[1]{\left (#1 \right )}
\newcommand{\abs}[1]{\left\lvert #1 \right \rvert}
\newcommand{\barint}{
\rule[.036in]{.12in}{.009in}\kern-.16in \displaystyle\int }
\newcommand{\barcal}{\text{$ \rule[.036in]{.11in}{.007in}\kern-.128in\int $}}
\def\mvint_#1{\mathchoice
          {\mathop{\vrule width 6pt height 3 pt depth -2.5pt
                  \kern -8pt \intop}\nolimits_{\kern -3pt #1}}%
%%%% P.S., 01/03/2001
% old definition had ...\nolimits_{#1}}
% \kern -3pt makes nicer distances between the integral sign
% and the domain of integration
%%%%
          {\mathop{\vrule width 5pt height 3 pt depth -2.6pt
                  \kern -6pt \intop}\nolimits_{#1}}%
          {\mathop{\vrule width 5pt height 3 pt depth -2.6pt
                  \kern -6pt \intop}\nolimits_{#1}}%
          {\mathop{\vrule width 5pt height 3 pt depth -2.6pt
                  \kern -6pt \intop}\nolimits_{#1}}}
\numberwithin{theorem}{section} \numberwithin{equation}{section}
\newcommand{\lap}{\Delta }
\newcommand{\aleq}{\lesssim}
\newcommand{\ageq}{\succsim}
\newcommand{\aeq}{\approx}
\newcommand{\laps}[1]{|D|^{#1}}
\newcommand{\lapla}[1]{(-\lap)^{#1}}
\newcommand{\lapin}[1]{\mathcal{I}_{#1}}
\newcommand{\M}{\mathcal{M}}
\newcommand{\p}{\mu}
\newcommand{\E}{\mathcal{E}}
\newcommand{\RZ}{\R / \Z}
\def\avint{\,\ThisStyle{\ensurestackMath{%
			\stackinset{c}{.2\LMpt}{c}{.5\LMpt}{\SavedStyle-}{\SavedStyle\phantom{\int}}}%
		\setbox0=\hbox{$\SavedStyle\int\,$}\kern-\wd0}\int}
\let\latexchi\chi
\renewcommand\chi{\@ifnextchar_\sub@chi\latexchi}
\newcommand{\sub@chi}[2]{% #1 is _, #2 is the subscript
  \@ifnextchar^{\subsup@chi{#2}}{\latexchi^{}_{#2}}%
}
\newcommand{\subsup@chi}[3]{% #1 is the subscript, #2 is ^, #3 is the superscript
  \latexchi_{#1}^{#3}%
}
\newcommand{\eps}{\varepsilon}
\newcommand{\tp}{{\rm TP}}
\begin{document}

\begin{abstract}
We investigate  minimizers and critical points for scale-invariant tangent-point energies ${\rm TP}^{p,q}$ of closed curves. We show that a) minimizing sequences in ambient isotopy classes converge to locally critical embeddings in all but finitely many points and b) show regularity of locally critical embeddings.

Technically, the convergence theory a) is based on a gap-estimate of a fractional Sobolev spaces in comparison to the tangent-point energy. The regularity theory b) is based on constructing a new energy $\mathcal{E}^{p,q}$ and proving that the derivative $\gamma'$ of a parametrization of a ${\rm TP}^{p,q}$-critical curve $\gamma$ induces a critical map with respect to $\mathcal{E}^{p,q}$ acting on torus-to-sphere maps.
\end{abstract}

% Preprint version
\preprinttrue

\maketitle
\tableofcontents
\section{Introduction and main results}
When modeling and simulating topological effects in the sciences like, e.g. protein knotting, one has to make a choice how to incorporate the avoidance of
interpenetration of matter, i.e. self-intersections. Either one explicitly models partial differential equations that incorporate effects such as self-repulsion through penalization. Or one constructs a comprehensive variational energy that includes self-repulsive behavior, and hopes that minimizing the energy (or following the steepest descent) delivers a realistic description. Several such self-repulsive energy functionals have been proposed and studied extensively over the last forty years\footnote{for an overview we refer the interested reader to \cite{strvdM,blatt-reiter-survey}} -- and all have one thing in common: modeling topological resistance, i.e.\  self-repulsion, they are necessarily nonlocal functionals. Consequently, questions of most central interest such as existence and regularity for minimizing configurations are very challenging.
This holds especially true for the geometrically most interesting case of \emph{scale-invariant} knot energies, i.e. energies for which a curve $\gamma$ and any scaled version $\lambda \gamma$ have the same value for any $\lambda > 0$.

\subsection*{O'Hara and M\"obius knot energies}%For the latter approach,
The first \emph{knot energies} have been introduced by Fukuhara \cite{Fukuhara1988} and O'Hara \cite{OH91,OH92,OH94}, and are known as \emph{O'Hara energies}. Let $\gamma: \R/\Z \to \R^3$ be the parametrization of a closed regular Lipschitz curve, i.e., $\gamma$ is both an immersion and an embedding. For any $\alpha p \geq 4$ and $p \geq 2$, the O'Hara energy  $\mathcal{O}^{\alpha,p}(\cdot)$ is given by
\[
 \mathcal{O}^{\alpha,p}(\gamma) := \int_{\R/\Z}\int_{\R/\Z}\brac{\frac{1}{|\gamma(x)-\gamma(y)|^\alpha}-\frac{1}{d_\gamma(\gamma(x),\gamma(y))^\alpha}}^{\frac{p}{2}}\, |\gamma'(x)|\, |\gamma'(y)|\, dx\, dy
\]
where $d_\gamma$ is the intrinsic distance on the submanifold $\gamma(\R/\Z) \subset \R^3$.

These energies are scale-invariant functionals if $\alpha p = 4$. In this scaling invariant regime, until recently, only the case of the so-called \emph{M\"obius energy} $\mathcal{O}^{2,2}$  was understood at all. This was due to the celebrated work by Freedman, He, and Wang~\cite{FHW94}. They discussed existence of minimizers within prime knot classes and established $C^{1,1}$-regularity of local minimizers. One can then bootstrap to smoothness \cite{he} and even analyticity~\cite{BV19}.\footnote{Higher regularity via bootstrapping is possible because $p=2$ implies the functional is a Hilbert-space functional; in particular, such arguments are independent of the presence of scale-invariance, see~\cite{reiter,blatt-reiter1-3,V19}.}

The techniques employed in \cite{FHW94} by Freedman et al.
crucially rely on
%a special property,
the M\"obius invariance of $\mathcal{O}^{2,2}$, and largely fall apart for $\mathcal{O}^{4/p,p}$ when $p \neq 2$ since M\"obius invariance does not hold anymore, cf.~\cite{BRS19}. Indeed, 
there was no progress on neither existence nor regularity of scale-invariant knot energies besides the M\"obius energy for a long time, until in the two recent works \cite{BRS16,BRS19}, three of the authors established the regularity theory for all scale-invariant O'Hara energies $\mathcal{O}^{\alpha,p}$ (for critical points and minimizers) via a new approach. Namely, they showed that critical knots $\gamma$ induce via their derivative $\gamma'$ a sort of fractional harmonic map between $\R/\Z{}\cong\S^1$ and $\S^2$. Then, extending the tools developed for fractional harmonic maps \cite{DLR11a,S15}, they obtained a regularity theory via arguments based on compensation effects and harmonic analysis.

\subsection*{Tangent-point energies for curves}
In this work we are interested in scale-invariant \emph{tangent-point} energies. As in the case of O'Hara energies, the scale-invariant situation is the most interesting and challenging one, and up to now it was completely out of reach.
Due to the lack of Möbius invariance,\footnote{%We make no attempt of proving this statement rigorously, but
As for O'Hara energies~\cite{BRS19} one can check this assertion by numerically computing the energy of a stadion curve before and after applying a M\"obius transform.}
the geometric techniques of Freedman, He, and Wang~\cite{FHW94} cannot be applied. Let us stress that M\"obius invariance of an energy does not have any impact for applications and it might be considered a curiosity mostly of geometric-topological interest. Actually,	from the point of view of applications, one can argue that the tangent-point energies might be preferable to O'Hara energies because they are numerically simpler to compute~\cite{BRR18,bartels2018stability,BR20,RS20}, and they have a natural generalization to embedded surfaces~\cite{SvdM13} %\footnote{but let us mention that nevertheless there \emph{are}
which seems to be more convenient than higher dimensional analogues of O'Hara-type energies, cf.~\cite{OH20,KvdM20}.
% However, since the tangent-point energy is \emph{not} invariant under M\"obius transformations this \ph{completely} rules out most of the geometric techniques of Freedman--He-Wang \cite{FHW94}.%

The ``classical'' tangent-point energy
has been studied first by Buck and Orloff~\cite{buck-orloff}.
It amounts to the {double} integral over the reciprocal of
%This energy is called tangent-point energy, because 
\[
 R_t(x,y):=\frac{|\gamma(x)-\gamma(y)|^2}{2\left |\gamma'(x) \wedge (\gamma(x)-\gamma(y)) \right |}
\]
which is the smallest radius of {a sphere} %the unique circle
passing through $\gamma(x)$ and $\gamma(y)$ while being tangential at $\gamma(x)$, see \Cref{fig:tangentpoint}. Later on, Gonzalez and Maddocks~\cite{GM99}
obtained a family of energies by taking the integrand to suitable powers.
Decoupling these powers as proposed in~\cite{BR15},
we arrive at the two-parameter family
%They are defined  as follows
\[
 \begin{split}
 \tp^{p,q}(\gamma) :=& \int_{\R /\Z} \int_{\R /\Z} \frac{\left |\frac{\gamma'(x)}{|\gamma'(x)|} \wedge (\gamma(x)-\gamma(y)) \right |^q}{|\gamma(x)-\gamma(y)|^p}\, |\gamma'(x)| |\gamma'(y)|\, dx\, dy\\
 =&\int_{\R /\Z} \int_{\R /\Z} \frac{\left |\gamma'(x) \wedge (\gamma(x)-\gamma(y)) \right |^q}{|\gamma(x)-\gamma(y)|^p}\, |\gamma'(x)|^{1-q} |\gamma'(y)|\, dx\, dy
 \end{split}
\]
for any embedded $\gamma \in C^2(\R/\Z, \R^3)$.
As a standing assumption, we will always restrict the parameters to satisfy $q+2\leq p<2q+1$. If $p \ge 2q+1$, the energy is infinite even for some smooth diffeomorphisms. If $p < q+2$, then $\tp^{p,q}$ is not self-repulsive.
The subfamily studied by Gonzalez and Maddocks can be recovered
by letting $p=2q$; the Buck--Orloff functional
%The classical scale-invariant tangent-point energy
%\[
% \mathcal{T}^2(\gamma) := \int_{\gamma} \int_{\gamma} \frac{1}{R_t(x,y)^2} d\mathcal{H}^1(x)\, d\mathcal{H}^1(y)\quad \text{for a curve $\gamma \subset \R^3$ }
% \]
corresponds to $\tp^{{4,2}}$.
While the O'Hara energies $\mathcal{O}^{\alpha,p}$ for $\alpha \to 0$ and $p = \frac{4}{\alpha}$ converge to Gromov's \emph{distortion} functional, cf. \cite{gromov,OH92,BRS19}, the tangent point energies $\tp^{2q,q}$ converge to Federer's \emph{reach} as $q \to \infty$, cf. \cite{Federer, GM99}. 

% They yield a smooth ``interpolation'' of the earlier nonsmooth\Armin{Not so clear what that means. ``extremal''?} concepts of \emph{reach} (due to Federer~\cite{Federer}, requiring $C^{1,1}$-regularity to be finite) and \emph{distortion} (introduced by Gromov~\cite{gromov}, finite on bi-Lipschitz embeddings).

% for any one-to-one parametrization of $\gamma$.
	\begin{figure}%[h!]
		\centering
		\includegraphics[scale=0.4]{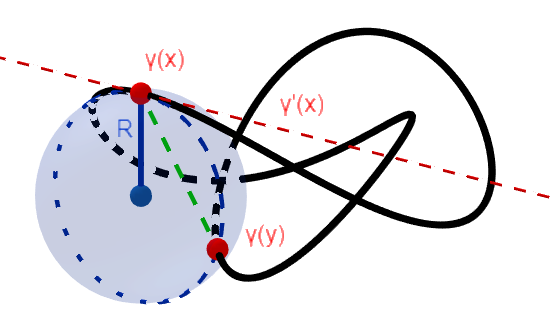}
%%%%%%%%% Alternative über Tikz %%%%%%%%%%%%%%%%%
%		\begin{tikzpicture}[scale=2]
%		\draw[out=0, in=-180] (-1,1.5) to (0, 0);
%		\draw[out=0, in=-180] (0,0) to (1, 1) node[right] {$\gamma$}; 
%		\draw[purple, densely dashed] (-1,0) to (1,0) node[right] {$\gamma'(x)$};
%		\draw[orange, dashed] (0.69,0.84) to (0.69,0); 
%		\draw[orange, dashed] (0.69,0.84) to (0,0);
%		\draw[out=0, in=-180] (0,0) to (1, 1);
%		\fill[red] (0,0) circle (0.05);
%		\fill[red] (0.69,0.84) circle (0.05);
%		\draw[blue, dashed] (0,0.7) circle (20pt);
%		\fill[blue] (0,0.7) circle (0.05);
%		\fill[red] (0,0) circle (0.05);
%		\fill[red] (0.69,0.84) circle (0.05); 
%		\draw[blue] (0,0) -- node[midway,right] {$R_t$} ++ (0,0.7);
%		\fill[blue] (0,0.7) circle (0.05);
%		\fill[red] (0,0) circle (0.05) node[below] {$\gamma(x)$};
%		\fill[red] (0.69,0.84) circle (0.05) node[right] {$\gamma(y)$}; 
%		\end{tikzpicture}
		\caption{\label{fig:tangentpoint} The tangent-point radius $R_t$ is the radius of the smallest sphere tangential to $\gamma$ at $\gamma(x)$ and traversing $\gamma(y)$. It
		tends to zero when $\gamma(x) \to \gamma(y)$ while $\gamma(x)$ and $\gamma(y)$ belong to two different strands of $\gamma$. Its reciprocal converges to the local curvature as $y\to x$.}
	\end{figure}

Strzelecki and von der Mosel \cite{SvdM12} obtained the first {and so far only} fundamental result concerning the scale-invariant case $p=q+2$. They showed in particular that the images of curves with finite $\tp^{q+2,q}$-energy form a topological one-manifold. However, this could be a nonsmooth object, e.g., a non-differentiable curve, see \Cref{ex:nonsmooth} -- or even worse: 
a doubly-traversed line which has zero energy, see \Cref{ex:weirdfiniteenergy}! So there is an issue with even defining the notion of \emph{minimizing} embedded curves of the tangent-point energies. While the energy of the doubly-traversed line is zero and thus the global minimizer, it is certainly not a smooth manifold and {therefore should not count as an acceptable minimizer}. Let us remark that none of this was an issue for O'Hara energies which would be infinite on any periodic parametrization of a straight segment -- \emph{the tangent point energies are more extrinsic than the O'Hara energies}.

% In the scaling invariant case $p=q+2$, if $|\gamma'| \equiv 1$,
% \[
%  \tp^{p,q}(\gamma) := \int_{\R /\Z} \int_{\R /\Z} \frac{\left |\gamma'(x)\wedge (\gamma(x)-\gamma(y)) \right |^q}{|\gamma(x)-\gamma(y)|^p}\, dx\, dy
% \]
\subsection*{Main results}
With the example of the doubly-traversed line in mind, in order to discuss minimizers in the class of knots (i.e.\ closed embedded curves), we  restrict our interest to {those} curves which appear as limits of diffeomorphisms.

Let us introduce the localized energy for $A \subset \R/\Z$ by
\[
 \tp^{p,q}(\gamma;A) := |\gamma'|^{2-q}\int_{A} \int_{A} \frac{\left |\gamma'(x)\wedge (\gamma(x)-\gamma(y)) \right |^q}{|\gamma(x)-\gamma(y)|^p}\, dx\, dy
\]
where we assume $\gamma$ to be parametrized by arclength, {$|\gamma'| \equiv const$.}
%  \cite{BGMM03}, with variations proposed in \cite{SvdM12}.

Following the spirit of an analogue strategy for Willmore surfaces~\cite[Definition~I.1]{R08}, we
introduce the following terminology. %define
\begin{definition}[Homeomorphisms with locally small tangent-point energy]\label{def:homeotp}
%Let $\gamma: \R /\Z \to \R^3$ be
A Lipschitz map $\gamma: \R /\Z \to \R^3$
%and $x \in \R/\Z$. $\gamma$
is called a \emph{homeomorphism with locally $\eps$-small tangent-point energy at $x\in\R/\Z$} if there exists an open interval $B(x,r) \subset \R/ \Z$ and a sequence of $C^1$-homeomorphisms $\gamma_k: \R/\Z\to \R^3$, $|\gamma_k'| \equiv c >0$, such that 
\begin{enumerate}
 \item $\gamma_k$ converges uniformly to $\gamma$ on $\R / \Z$,
 \item $\sup_k \tp^{p,q}(\gamma_k;\R / \Z) < \infty$, and
 \item $\sup_k \tp^{p,q}(\gamma_k;B(x,r)) < \eps$ for some $r>0$.
 \end{enumerate}
\end{definition}

Our first main result states that \emph{sequences of curves with uniformly bounded tangent-point energy converge to homeomorphisms with locally $\eps$-small tangent-point energy outside of at most finitely many points}.
More precisely, we will prove the following assertion.

\begin{theorem}\label{th:weaklimitareweakimm}
Let $p= q+2$, $q>1$, $\Lambda > 0$, and $\eps > 0$. Then there exists an integer $K = K(q,\eps,\Lambda)$ such that
any sequence $\left(\gamma_k\right)_{k\in\N} \subset C^1(\R/\Z, \R^3)$ of closed embedded curves
with
\[
 \sup_{k \in \N} \tp^{q+2,q}(\gamma_k) < \Lambda
\]
converges -- after possibly translating, rescaling, and reparametrizing $\gamma_k$ and passing to a subsequence -- to a Lipschitz map $\gamma: \R / \Z \to  \R^3$
with the following properties.
\begin{itemize}
 \item \emph{embeddedness:} $\gamma$ is a bi-Lipschitz homeomorphism.
 \item \emph{arclength parametrization:} $|\gamma'(x)| = 1$ for a.e. $x \in \R / \Z$.
 \item \emph{lower semicontinuity:} $\tp^{q+2,q}(\gamma) \leq \liminf_{k \to \infty} \tp^{q+2,q}(\gamma_{k})$.
 \item \emph{subcritical Sobolev-space:} $\gamma \in W^{1+s,q}(\R/\Z,\R^3)$ for any $0<s < \frac{1}{q}$.
 \end{itemize}
 \emph{Moreover -- %
 and this is crucial -- we %obtain
 locally control the critical Sobolev norm outside of a singular set $\Sigma$ containing at most $K$ points:} for any $x_0 \in (\R / \Z) \backslash \Sigma$ there exists some $r_{x_0} > 0$ such that 
 \begin{itemize} 
 \item $\sup_{k} \tp^{q+2,q}(\gamma_k,B(x_0,r_{x_0})) < \eps$,
 \item $\gamma_k$ weakly converges to $\gamma$ in the Sobolev space $W^{1+\frac{1}{q},q}(B(x_0,r_{x_0}),\R^3)$,
 \item and \quad
 \(\displaystyle
 \sup_{k} [\gamma_k]_{W^{1+\frac{1}{q},q}(B(x_0,r_{x_0}))}^q \aleq \sup_{k} \tp^{q+2,q}(\gamma_k,B(x_0,r_{x_0})) < \eps 
 \).
 \end{itemize}

\end{theorem}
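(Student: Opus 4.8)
The natural approach is a concentration--compactness argument in the spirit of Freedman--He--Wang~\cite{FHW94} and Rivi\`ere~\cite{R08}, built around the fractional-Sobolev-versus-tangent-point \emph{gap estimate} announced in the abstract. First I would use the scale invariance of $\tp^{q+2,q}$ to normalise the sequence: reparametrise each $\gamma_k$ by arclength, rescale so that $\mathrm{length}(\gamma_k)=1$ (hence $|\gamma_k'|\equiv 1$), and translate so that $\gamma_k(0)=0$. The rescaled curves are then equi-Lipschitz with images in a fixed ball, so Arzel\`a--Ascoli yields a subsequence converging uniformly to a $1$-Lipschitz limit $\gamma$. To upgrade this to a \emph{bi-Lipschitz} homeomorphism I would invoke the quantitative thickness/non-self-intersection estimate of Strzelecki--von der Mosel~\cite{SvdM12} for curves of finite scale-invariant tangent-point energy: since $\mathrm{length}(\gamma_k)=1$ and $\tp^{q+2,q}(\gamma_k)<\Lambda$, this supplies a constant $c=c(q,\Lambda)>0$ with $|\gamma_k(x)-\gamma_k(y)|\ge c\,\dist_{\R/\Z}(x,y)$ uniformly in $k$, and this lower bound survives the uniform limit.

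Next I would isolate the concentration set. Introducing the finite measures
\[
 \mu_k(A):=\int_{A}\int_{\R/\Z}\frac{\bigl|\gamma_k'(x)\wedge(\gamma_k(x)-\gamma_k(y))\bigr|^q}{|\gamma_k(x)-\gamma_k(y)|^{q+2}}\,|\gamma_k'(y)|\,dy\,dx ,
\]
one has $\mu_k(\R/\Z)=\tp^{q+2,q}(\gamma_k)<\Lambda$ and $\tp^{q+2,q}(\gamma_k;A)\le\mu_k(A)$ for every $A\subset\R/\Z$; after a further subsequence $\mu_k\rightharpoonup^{*}\mu$ with $\mu(\R/\Z)\le\Lambda$. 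Shrinking $\eps$, if needed, to the smallness threshold of the gap estimate, I set $\Sigma:=\{x:\ \mu(\{x\})\ge\eps/2\}$, so that $\#\Sigma\le 2\Lambda/\eps$ and an admissible $K=K(q,\eps,\Lambda)$ can be read off. For $x_0\notin\Sigma$, since $\mu(\overline{B(x_0,r)})\to\mu(\{x_0\})<\eps$ as $r\to0$, there is $r_{x_0}>0$ with $\limsup_k\tp^{q+2,q}(\gamma_k;B(x_0,r_{x_0}))\le\mu(\overline{B(x_0,r_{x_0})})<\eps$, which (discarding finitely many $k$) is the first bulleted claim. On such a ball the localised energy lies below the smallness threshold, so the gap estimate applies and delivers $\sup_k[\gamma_k]_{W^{1+1/q,q}(B(x_0,r_{x_0}))}^q\aleq\sup_k\tp^{q+2,q}(\gamma_k;B(x_0,r_{x_0}))<\eps$; since $W^{1+1/q,q}$ is reflexive (as $q>1$), a subsequence converges weakly there, and a diagonal extraction over a countable cover of $(\R/\Z)\setminus\Sigma$ by such balls produces one subsequence with $\gamma_k\rightharpoonup\gamma$ in $W^{1+1/q,q}_{\mathrm{loc}}((\R/\Z)\setminus\Sigma)$ satisfying the stated seminorm bound. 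The compact embedding $W^{1+1/q,q}\hookrightarrow\hookrightarrow W^{1,q}$ on intervals then forces $\gamma_k'\to\gamma'$ strongly in $L^q_{\mathrm{loc}}((\R/\Z)\setminus\Sigma)$, hence a.e.\ off $\Sigma$ and thus a.e.\ on $\R/\Z$; in particular $|\gamma'|=\lim_k|\gamma_k'|=1$ a.e., so $\gamma$ is arclength-parametrised.

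It then remains to establish the two remaining global properties. For lower semicontinuity, the energy density of $\gamma_k$ is nonnegative and, for a.e.\ $(x,y)$ off the diagonal, converges to that of $\gamma$ (using $\gamma_k\to\gamma$ uniformly, $\gamma_k'\to\gamma'$ a.e., and $|\gamma(x)-\gamma(y)|>0$ by bi-Lipschitzness), so Fatou's lemma gives $\tp^{q+2,q}(\gamma)\le\liminf_k\tp^{q+2,q}(\gamma_k)$. For the subcritical Sobolev bound, fix $0<s<1/q$: away from $\Sigma$ the critical bounds just obtained control $[\gamma]_{W^{1+s,q}}$ locally, while near each $x_0\in\Sigma$, where no critical control is available, the uniform bound $\tp^{q+2,q}(\gamma_k)<\Lambda$ together with the bi-Lipschitz lower bound still controls $[\gamma_k]_{W^{1+s,q}}$ on a fixed neighbourhood of $x_0$ — via the comparison between $\tp^{q+2,q}$ and fractional seminorms, where, unlike the critical estimate, smallness is not needed and the constant degenerates as $s\uparrow1/q$; patching the local estimates with a partition of unity and passing to the limit places $\gamma$ in $W^{1+s,q}(\R/\Z,\R^3)$ for every $0<s<1/q$.

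The main obstacle is the gap estimate itself, used above in the form ``small localised tangent-point energy controls the critical seminorm $[\gamma']_{W^{1/q,q}}$ on a subinterval.'' The benign direction -- energy $\aleq$ seminorm -- follows from the identity $\gamma(x)-\gamma(y)-\gamma'(x)(x-y)=\int_y^x(\gamma'(\xi)-\gamma'(x))\,d\xi$ together with H\"older's inequality and bi-Lipschitzness. The reverse inequality must defeat the cancellation in that line integral by working with the \emph{full} double integral defining $\tp^{q+2,q}$ and isolating a genuinely coercive quantity inside it -- a compensation phenomenon -- and this is the hard, harmonic-analytic core, as is the bookkeeping that explains why the critical estimate requires smallness while a subcritical one does not. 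Two further points need care: making the normalisation rigorous, i.e.\ ruling out degeneration or escape to infinity of the rescaled curves (exactly where the uniform Strzelecki--von der Mosel bound is used), and checking that $\#\Sigma$ is bounded by a constant depending only on $q$, $\eps$, $\Lambda$, which is what forces $\eps$ to be compared against the dimensional threshold of the gap estimate.
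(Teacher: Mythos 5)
The overall architecture you propose — normalise by scaling and arclength, extract a uniform limit, locate the concentration set via a covering/measure argument, and feed the localized small energy into a gap estimate that delivers critical Sobolev bounds and bi-Lipschitzness off $\Sigma$ — is indeed the strategy the paper follows (\Cref{pr:uniformsmallness}, \Cref{la:gammasobvstp1}, \Cref{th:gammasobvstp2}, \Cref{pr:convoutsidesingular}). However, there is one genuine gap that breaks your argument for global bi-Lipschitzness, plus a smaller mismatch in the subcritical Sobolev step.

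\textbf{The bi-Lipschitz step is not available as stated.} You invoke a ``quantitative thickness/non-self-intersection estimate of Strzelecki--von der Mosel'' to deduce a uniform constant $c=c(q,\Lambda)>0$ with $|\gamma_k(x)-\gamma_k(y)|\geq c\,\dist(x,y)$ for all $k$, and then pass this to the limit. In the scale-invariant case $p=q+2$ the result of \cite{SvdM12} is only qualitative: finite $\tp^{q+2,q}$-energy implies the \emph{image} is a topological one-manifold, which allows corners and carries no modulus of injectivity depending only on $q$ and $\Lambda$ (as \Cref{ex:nonsmooth} shows, the image can even be non-$C^1$). The quantitative injectivity that one can actually extract — by substantially reworking the Strzelecki--von der Mosel argument, as done in \Cref{la:vdms} and \Cref{th:smallenergyinjectivity} — requires \emph{local smallness} of the tangent-point energy on a ball (below an absolute threshold $\delta$), together with an a priori $W^{1/q,q}$-regularity on that ball, and it only yields injectivity on that ball. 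None of this is available uniformly across $\Sigma$. Consequently the paper cannot pass any uniform bi-Lipschitz bound to the limit and instead proves bi-Lipschitzness of $\gamma$ \emph{indirectly} (item \ref{i:cos:9} of \Cref{pr:convoutsidesingular}): one argues by contradiction, locates a putative degenerating pair $(x_k,y_k)$, shows the accumulation point must lie in $\Sigma$, shows (via local smallness and \Cref{th:gammasobvstp2}) that $x_k$ and $y_k$ must straddle that singular point, and then applies \Cref{th:smallenergyinjectivity} on a ball centered at $x_k$ that excludes the singular point to derive a lower bound on $|\gamma(x_k)-\gamma(y_k)|$. The homeomorphism property itself (item \ref{i:cos:8}) also needs a separate argument: after establishing lower semicontinuity so that $\tp^{q+2,q}(\gamma)<\infty$, one invokes the topological-manifold result from \cite{SvdM12} and rules out the finitely many possible double points by a topological (``no cross embeds in $\R$'') argument. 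Your sketch skips all of this.

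\textbf{Minor: the subcritical Sobolev bound near $\Sigma$.} You propose to control $[\gamma_k]_{W^{1+s,q}}$ near a singular point by a comparison between $\tp^{q+2,q}$ and the subcritical fractional seminorm ``where smallness is not needed.'' \Cref{ex:weirdfiniteenergy} warns that $\tp^{p,q}$ alone never controls any Sobolev seminorm of $\gamma'$ without a priori regularity or a bi-Lipschitz constant in the estimate, and near $\Sigma$ the latter is not uniform. The paper's proof of item \ref{i:cos:10} is simpler and does not rely on any such comparison: away from the singular point one already has the critical bound, and across the singular point the cross term $\int_{(-r,0)}\int_{(0,r)}|x-y|^{-1-sq}\,dx\,dy$ is finite for every $s<\tfrac1q$ using nothing beyond $\|\gamma'\|_{L^\infty}\le 1$.

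Everything else — the normalization, Arzel\`a--Ascoli, the measure/covering argument for $\Sigma$ with $\#\Sigma\aleq\Lambda/\eps$, the identification that the heart of the matter is the gap estimate proving ``small local $\tp^{q+2,q}$ controls $[\gamma']_{W^{1/q,q}}$,'' the diagonal weak-convergence extraction, a.e.\ convergence of $\gamma_k'$ via compact embedding, $|\gamma'|=1$ a.e., and lower semicontinuity via Fatou once one knows the coincidence set $\{\gamma(x)=\gamma(y)\}$ is a null set — matches the paper's route.
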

\begin{remark}[Pull-tight]\label{re:exsingularnonempty}
In analogy to harmonic maps and %the M\"obius energy,
O'Hara energies, we expect
%While we make no attempt to prove it explicitly, it is likely that there are
examples for $p=q+2$ where the singular set $\Sigma$ in \Cref{th:weaklimitareweakimm} is nonempty.
%From the comparison with harmonic maps or the M\"obius energy, an example could look as follows.
The idea is as follows.
Take a (closed) smooth curve containing a piece of a straight
line and replace the latter by a small \emph{nontrivially} knotted arc.
Shrinking this arc to zero (``pull-tight'') produces a sequence of curves
with uniformly bounded energy, cf.~\cite[Thm.~3.1]{OH92}.
In the limit curve we observe a change of topology
along with a loss of energy, ruling out strong convergence
with respect to the Sobolev norm. 

{For the M\"obius energy $\mathcal{O}^{2,2}$, one can use the M\"obius invariance to rewrite a minimizing sequence into one that avoids ``concentration of topology in a small set'' see \cite{FHW94} for this notion, and \cite{N18} for a survey. Even more is true in this special case: the M\"obius energy can be decomposed into several M\"obius invariant energies that control different features, cf. \cite{Nagasawa1,Nagasawa2,Nagasawa3,Nagasawa4,Nagasawa5,Nagasawa6}}.
\end{remark}

\begin{remark}
In light of two-dimensional analogues \cite{H57,MS95}, one might conjecture that
the limit curve $\gamma$ from Theorem~\ref{th:weaklimitareweakimm}
does not necessarily belong to a Sobolev space on $\R/\Z$,
but this is certainly not clear to us.
\end{remark}

\Cref{th:weaklimitareweakimm} %, which we prove in \Cref{s:weaklimitimmersion},
is much simpler to prove if $p>q+2$, cf.~\cite{blattTP,BR15}.  % sorry fuers dissen. Alles for the greater good
In our limiting range $p=q+2$,
\Cref{th:weaklimitareweakimm} can be understood as a one-dimensional counterpart to the fundamental theorem of M\"uller and Sverak~\cite{MS95}, who showed that surfaces with small second fundamental form  w.r.t.\ the $L^2$-norm can be conformally parametrized. We also refer to earlier works by Toro \cite{T94,T95} %. We also refer to
as well as~\cite{KL12,LiLuoTang13,KS12,R15,R16,S18}. Indeed,
\Cref{th:weaklimitareweakimm} is strongly inspired by  the ``weak closure theorem'' for the Willmore energy, see \cite[Theorem~3.55]{R16}.

As a particular consequence of \Cref{th:weaklimitareweakimm}, homeomorphisms with locally small tangent-point energy as described in \Cref{def:homeotp} appear as limits of smooth minimizing sequences (minimizing, e.g., with respect to isotopy classes, see~\Cref{s:topology}). Since the convergence of minimizing sequence is only weak, in general the limits of minimizing sequences may not be minimizers -- indeed they may not belong to the same isotopy class due to bubbling effects (also called pull-tights). This is why we introduce, once more in analogy to Willmore surfaces \cite[Definition I.2]{R08}, the notion of local critical embeddings.

\begin{definition}[Locally critical embedding]\label{def:wcp}
A homeomorphism with locally small tangent-point energy at $x\in\R/\Z$ as in \Cref{def:homeotp} is a
\emph{locally critical embedding} in  $B(x,r)$ if
\[ \delta\tp^{p,q}(\gamma,\varphi) = 0
\qquad\text{for all $\varphi \in C_c^\infty(B(x,r),\R^3)$}. \]
\end{definition}
The notion of locally critical embedding as in \Cref{def:wcp} can be justified by the following theorem which %can be summarized as:
states that
\emph{any minimizing sequence of curves \emph{(w.r.t.\ isotopy classes, cf.\ \Cref{s:topology})} converges away from finitely many points to a local critical embedding}. 

\begin{theorem}\label{th:minarecritical} Let $p =q+2$.
Let $[\gamma_0]$ be an ambient isotopy class and let $\left(\gamma_k\right)_{k\in\N} \subset [\gamma_0]$ be a minimizing sequence for 
\[
 \Lambda := \inf_{\gamma \in [\gamma_0]} \tp^{p,q}(\gamma)
\]
in the sense that $\gamma_k \in C^1(\R / \Z , \R^3)$ are homeomorphisms and $\gamma_k(\R / \Z)$ belongs to the ambient isotopy class $[\gamma_0]$ for all $k\in\N$.

Then, {up to reparametrization, translation, rescaling} and %up to taking
{passing to} a subsequence, $\gamma_k$ uniformly converges to a limit map $\gamma: \R/\Z \to \R^3$ which is a locally critical embedding in the sense of \Cref{def:wcp}
except for a finite exception set %-- outside of finitely many points
$\Sigma \subset \R/\Z$
whose cardinality is bounded in terms of $\Lambda$.
%Moreover $\#\Sigma \leq C(\Lambda)$.
\end{theorem}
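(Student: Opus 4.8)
The plan is to derive \Cref{th:minarecritical} as a consequence of \Cref{th:weaklimitareweakimm} together with a localized lower-semicontinuity/Euler--Lagrange argument. First I would apply \Cref{th:weaklimitareweakimm} to the minimizing sequence $(\gamma_k)$: since $\sup_k \tp^{q+2,q}(\gamma_k) \leq \Lambda + 1 < \infty$ for $k$ large, there exist a subsequence (not relabeled), translations, rescalings, and reparametrizations by arclength such that $\gamma_k \to \gamma$ uniformly, $\gamma$ is a bi-Lipschitz homeomorphism parametrized by arclength, $\tp^{q+2,q}(\gamma) \leq \liminf_k \tp^{q+2,q}(\gamma_k) = \Lambda$, and -- crucially -- there is a finite set $\Sigma$ with $\#\Sigma \leq K(q,\eps,\Lambda)$ such that for every $x_0 \in (\R/\Z)\setminus\Sigma$ we have a radius $r_{x_0}>0$ with $\sup_k \tp^{q+2,q}(\gamma_k, B(x_0,r_{x_0})) < \eps$ and weak $W^{1+1/q,q}$-convergence on $B(x_0,r_{x_0})$. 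I would fix once and for all a threshold $\eps = \eps(q) > 0$ small enough that the estimate $[\gamma_k]_{W^{1+1/q,q}(B(x_0,r_{x_0}))}^q \aleq \tp^{q+2,q}(\gamma_k,B(x_0,r_{x_0})) < \eps$ guarantees the energy density is small in the sense needed to make sense of, and pass to the limit in, the first variation; this $\eps$ is exactly what fixes $K$ and hence the bound on $\#\Sigma$ in terms of $\Lambda$.

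The heart of the matter is then to show that the limit $\gamma$ is a locally critical embedding near each $x_0 \notin \Sigma$, i.e. $\delta\tp^{p,q}(\gamma,\varphi) = 0$ for all $\varphi \in C_c^\infty(B(x_0,r_{x_0}),\R^3)$. The key point is that although $\gamma_k$ lies in a fixed ambient isotopy class $[\gamma_0]$ while $\gamma$ need not, the competitors used to test criticality are \emph{compactly supported perturbations inside $B(x_0,r_{x_0})$}, and a small enough such perturbation of the embedded $C^1$ curve $\gamma_k$ stays in the same isotopy class and remains embedded. Thus for each $k$ and each admissible $\varphi$, minimality of $\gamma_k$ within $[\gamma_0]$ (up to the vanishing error of the minimizing sequence) gives, after the standard variational computation, $\delta \tp^{p,q}(\gamma_k,\varphi) = o(1)$ as $k\to\infty$ -- more precisely, one compares $\tp^{q+2,q}(\gamma_k)$ with $\tp^{q+2,q}(\gamma_k + t\varphi)$, uses $\tp^{q+2,q}(\gamma_k) \leq \Lambda + \delta_k$ with $\delta_k \to 0$ and $\tp^{q+2,q}(\gamma_k+t\varphi) \geq \Lambda$, divides by $t$, and lets $t\to 0$, which shows the first variation of $\gamma_k$ in direction $\varphi$ has a sign that forces it to be $o(1)$ (applying the same argument to $-\varphi$). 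The remaining task is to pass to the limit $\delta\tp^{p,q}(\gamma_k,\varphi) \to \delta\tp^{p,q}(\gamma,\varphi)$; this is where the local $W^{1+1/q,q}$-control with small energy is indispensable, since the first variation of $\tp^{q+2,q}$ is a singular double integral that is continuous only with respect to a sufficiently strong local norm, and smallness of the energy on $B(x_0,r_{x_0})$ rules out concentration of the integrand near the diagonal.

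Concretely, I would split the first-variation integral $\delta\tp^{p,q}(\gamma_k,\varphi)$ into a near-diagonal piece (both variables in $B(x_0,r_{x_0})$, where the weak $W^{1+1/q,q}$-convergence and the uniform smallness $[\gamma_k]^q_{W^{1+1/q,q}(B(x_0,r_{x_0}))} < C\eps$ let one pass to the limit, combining weak convergence of $\gamma_k'$ with strong convergence coming from compactness of the embedding $W^{1+s,q}\hookrightarrow C^1$ for $s<1/q$ appropriately chosen, and dominated convergence away from the diagonal controlled by the bi-Lipschitz bound) and a far-from-diagonal piece (at least one variable outside $\supp\varphi$, or the two arguments at definite distance, where the integrand is bounded and depends continuously on the uniform limit together with the bi-Lipschitz lower bound $|\gamma_k(x)-\gamma_k(y)| \geq c\, d_{\R/\Z}(x,y)$, which survives in the limit since $\gamma$ is itself bi-Lipschitz). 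The main obstacle I expect is precisely this limit passage in the singular near-diagonal term: one must show that the quantity $|\gamma_k'(x)\wedge(\gamma_k(x)-\gamma_k(y))|^q/|\gamma_k(x)-\gamma_k(y)|^p$ and its variation are equi-integrable on $B(x_0,r_{x_0})^2$, which is exactly the content of the gap-estimate comparing $\tp^{q+2,q}$ to the fractional Sobolev seminorm (alluded to in the abstract and used in \Cref{th:weaklimitareweakimm}); given that estimate and the uniform $\eps$-smallness, equi-integrability follows, but making the variation of the ``$\wedge$'' numerator behave requires care because it is not purely a function of $\gamma_k'$ but couples $\gamma_k$ and $\gamma_k'$ nonlinearly. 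Once the limit passage is justified on each $B(x_0,r_{x_0})$, one concludes $\delta\tp^{p,q}(\gamma,\varphi)=0$ there, i.e. $\gamma$ is a locally critical embedding away from $\Sigma$, and since $\#\Sigma \leq K(q,\eps(q),\Lambda)$ depends only on $\Lambda$ (with $q$ fixed), the theorem follows.
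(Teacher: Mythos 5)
Your approach diverges substantially from the paper's, and there are two genuine gaps. The paper (see \Cref{th:minseqcrit} and its proof) never passes to the limit in the first variation $\delta\tp^{p,q}(\gamma_k,\varphi)$. Instead it builds a comparison curve $\sigma_{k,\delta}$ via the fractional Luckhaus lemma (\Cref{la:frluckhaus}) that agrees with $\gamma_k$ outside $B(R)$ and essentially with $\gamma+t\varphi$ inside, invokes \Cref{th:localambisotchange} to put $\sigma_{k,\delta}$ in the same isotopy class as $\gamma_k$, then compares \emph{energies} $\tp^{p,q}(\gamma_k) \le \tp^{p,q}(\sigma_{k,\delta})+1/k$ and lets $k\to\infty$ by Fatou and $\delta\to 0$ afterwards. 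This deliberately avoids both problems your argument runs into.

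The first gap is the claim that $\delta\tp^{p,q}(\gamma_k,\varphi)=o(1)$. From $\tp(\gamma_k)\le\Lambda+\delta_k$ and $\tp(\gamma_k+t\varphi)\ge\Lambda$ you get only $\tp(\gamma_k+t\varphi)-\tp(\gamma_k)\ge -\delta_k$; dividing by $t$ and letting $t\to 0$ while $k$ is held fixed yields no information (the right-hand side $-\delta_k/t$ diverges). To salvage this one must balance $t\sim\sqrt{\delta_k}$ and control the Taylor remainder $\frac{1}{t}\bigl(\tp(\gamma_k+t\varphi)-\tp(\gamma_k)-t\,\delta\tp(\gamma_k,\varphi)\bigr)$ uniformly in $k$, i.e.\ a uniform second-variation bound, which you do not supply and which is itself nontrivial since the integrand is critical with respect to the $W^{1+1/q,q}$-scaling. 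The second, more fundamental gap is the limit $\delta\tp(\gamma_k,\varphi)\to\delta\tp(\gamma,\varphi)$. You invoke a compact embedding $W^{1+s,q}\hookrightarrow C^1$ for $s<1/q$; this is false (for $s<1/q$ one has $\gamma'\in W^{s,q}$ with $sq<1$, which does not embed into $C^0$, so $\gamma\notin C^1$ in general), and indeed the paper's \Cref{ex:nonsmooth} gives a finite-energy curve whose derivative is discontinuous. Without strong convergence in a critical space, the nonlinear, singular first-variation integrand cannot simply be passed to the limit under the weak $W^{1+1/q,q}$-convergence one has; that is exactly the obstacle the Luckhaus/Fatou route avoids, and you acknowledge but do not resolve it.
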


Our last main result concerns regularity: \emph{the limit of minimizing sequences is regular outside a finite singular set $\Sigma$}. Indeed, we have regularity theory for critical points as in \Cref{def:wcp}.
%Namely we have regularity theory of critical points (and thus minimizers).
\begin{theorem}\label{th:mainreg}
	Let $q \geq 2$. Let $\gamma:\R/\Z\rightarrow \R^3$ be a homeomorphism with finite global and small local tangent-point energy around $B(x_0,r)$ as in \Cref{def:homeotp} that is a locally critical embedding in $B(x_0,r)$ of $\tp^{q+2,q}$ as in \Cref{def:wcp}. Then $\gamma \in C^{1,\alpha}(B(x_0,\tfrac r 2),\R^3)$ for some uniform constant $\alpha=\alpha(q) > 0$.
\end{theorem}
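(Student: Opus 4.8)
The plan is to follow, in the tangent-point setting, the scheme introduced for the scale-invariant O'Hara energies in \cite{BRS16,BRS19}: rewrite the Euler--Lagrange equation of $\tp^{q+2,q}$ as a fractional harmonic-map-type system for the unit tangent $u:=\gamma'$, and then run a compensation-compactness regularity argument in the spirit of Da Lio--Rivière and Schikorra \cite{DLR11a,S15}. Throughout, the localization and the smallness of $\tp^{q+2,q}$ around $B(x_0,r)$ are what will make the nonlinear terms perturbative.

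\emph{Step 1 (pass to the tangent).} After reparametrizing by arclength we may assume $|\gamma'|\equiv1$, so $u:=\gamma'$ takes values in $\S^2$. Applying \Cref{th:weaklimitareweakimm} --- more precisely the quantitative estimates in its proof --- to the approximating sequence furnished by \Cref{def:homeotp}, the local smallness of $\tp^{q+2,q}$ yields $\gamma\in W^{1+1/q,q}$ near $x_0$ with $[\gamma]_{W^{1+1/q,q}}$ as small as we like on a slightly smaller ball, hence $u\in W^{1/q,q}$ with values in $\S^2$ and small Gagliardo seminorm there. Observe that $W^{1/q,q}$ on a one-dimensional domain is precisely the scale-critical Sobolev space (differentiability times integrability equals the dimension), the analogue of the $W^{1/2,2}$ half-harmonic maps, now in the non-Hilbertian regime enforced by $q\ge2$.

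\emph{Step 2 (auxiliary energy and Euler--Lagrange system).} Using $\gamma(x)-\gamma(y)=\int_y^x u$, one rewrites the localized energy $\tp^{q+2,q}(\gamma;\cdot)$ purely in terms of the sphere-valued map $u$; since a variation of $\gamma$ by $\varphi\in C_c^\infty(B(x_0,r),\R^3)$ is a variation of $u$ by $\varphi'$, criticality of $\gamma$ becomes criticality of $u$ for the rewritten functional against mean-zero test fields. Isolating the part whose tangential first variation reproduces a fixed multiple of the $W^{1/q,q}$-Dirichlet form $u\mapsto\iint\tfrac{|u(x)-u(y)|^{q}}{|x-y|^{2}}\,dx\,dy$ defines the new energy $\mathcal{E}^{q+2,q}$ (the torus-to-sphere energy announced in the abstract), and an arclength-parametrized locally critical embedding induces a critical point of it. The remaining tangent-point contributions are genuinely nonlocal but of strictly \emph{subcritical} order in scaling --- here the identity $p=q+2$ enters essentially. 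Using the constraint $u\in\S^2$ (in particular $u\perp u'$ and the algebra of the cross product with $u$) to project the constrained equation onto $T_u\S^2$, one arrives at a system of the schematic form
\[
  (-\lap)^{1/q}_{q}\, u \;=\; \Omega\cdot\mathcal{G}(u)\;+\;\mathcal{R}[u],
\]
where $(-\lap)^{1/q}_{q}$ is the nonlinear fractional $q$-Laplacian attached to that Dirichlet form, $\Omega=\Omega(u)$ is an antisymmetric matrix-valued potential lying in the critical space with $\|\Omega\|\aleq [u]_{W^{1/q,q}}^{\theta}$ for some $\theta>0$, and $\mathcal{R}[u]$ is a subcritical remainder absorbing the lower-order tangent-point terms.

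\emph{Step 3 ($\eps$-regularity, iteration, and the main obstacle).} With the antisymmetric structure in hand, one invokes the regularity theory for fractional $q$-harmonic systems with antisymmetric potential: decompose $\Omega\cdot\mathcal{G}(u)$ via fractional div--curl and three-term commutator estimates into a contribution gaining integrability and one absorbed through the smallness of $[u]_{W^{1/q,q}}$ from Step~1, and control $\mathcal{R}[u]$ by a Morrey estimate using its subcriticality. This produces, on a smaller ball, a decay $[u]_{W^{1/q,q}(B(x_0,\rho))}\aleq\rho^{\delta}$ for some $\delta=\delta(q)>0$, hence $u\in W^{1/q+\sigma,q}$ locally for some $\sigma>0$; being strictly above the critical exponent, $u\in C^{0,\alpha}$ locally, i.e.\ $\gamma\in C^{1,\alpha}(B(x_0,r/2),\R^3)$ with $\alpha=\alpha(q)>0$. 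I expect the main obstacles to be, first, identifying $\mathcal{E}^{q+2,q}$ and peeling the critical Dirichlet term off the tangent-point integrand while proving $\mathcal{R}[u]$ is genuinely subcritical --- delicate because $\tp^{q+2,q}$ depends on $\gamma$ through the nonlocal factor $\int_y^x u$ and $\gamma$ lies only in $W^{1+1/q,q}$, so fine fractional Leibniz and commutator estimates are needed --- and, second, the absence of Hilbert-space structure for $q\neq2$: the operator $(-\lap)^{1/q}_{q}$ is nonlinear, there is no naive bootstrap, and the compensation and commutator estimates behind the antisymmetric-potential theory must be reestablished in the $L^q$ rather than $L^2$ framework, which is presumably why the hypothesis is $q\ge2$ and not merely $q>1$.
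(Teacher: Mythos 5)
Your outline follows the paper's strategy in all essential respects: rewrite $\tp^{q+2,q}$ as an energy $\mathcal{E}^{q+2,q}$ of $u=\gamma'$ so that a critical arclength embedding induces a critical sphere-valued map, derive a Euler--Lagrange system whose leading term is the $W^{1/q,q}$ $q$-Dirichlet form with lower-order nonlocal remainders, then run the fractional-harmonic-map machinery of Da Lio--Rivi\`ere and Schikorra (three-term commutators, the compensation coming from $|u|\equiv1$ and the cross product $u\wedge$, which is where your antisymmetric potential $\Omega$ lives), and iterate a decay estimate via Campanato embedding to reach $u\in C^{0,\alpha}$. One caveat on Step~2 where the paper is more delicate than your sketch suggests: the admissible variations of $u$ cannot simply be $\varphi'$ --- to preserve $|u_\eps|\equiv1$ and keep $\gamma_\eps$ closed, the paper renormalizes $u_\eps=(u+\eps\varphi)/|u+\eps\varphi|$ and re-integrates with a cutoff corrector $\theta$, setting $\eta:=\theta'\in C_c^\infty(B(x_0,r))$; this is why the energy one actually works with is $\mathcal{E}^{q+2,q}_\eta$ rather than $\mathcal{E}^{q+2,q}$, and why several additional remainder terms ($R^{4,(p,q)}_\eta,\dots,R^{7,(p,q)}_\eta$) show up in the Lagrangian.
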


From the previous two results we draw the following conclusion.

\begin{corollary}\label{co:regisotopy}
Let $[\gamma_0]$ be an ambient isotopy class and let $\gamma_k \subset [\gamma_0]$ be a minimizing sequence for 
\[
 \Lambda := \inf_{\gamma \in [\gamma_0]} \tp^{p,q}(\gamma)
\]
in the sense that $\left(\gamma_k\right)_{k\in\N} \subset C^\infty(\R / \Z , \R^3)$ is a sequence of homeomorphisms and $\gamma_k(\R / \Z)$ belongs to the knot class $[\gamma_0]$ for all $k\in\N$.

Then, up to reparametrization, translation, and rescaling and taking a subsequence, $\gamma_k$ converges to a limit map $\gamma: \R/\Z \to \R^3$ which is a weak critical point in the sense of \Cref{def:wcp} outside of finitely many points (whose number is bounded in terms of $\Lambda$). In particular, in view of \Cref{th:mainreg}, the limit is $C^{1,\alpha}$ outside of finitely many points.
\end{corollary}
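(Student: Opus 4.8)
The plan is to combine \Cref{th:minarecritical} and \Cref{th:mainreg}, the only genuine point being to feed the smallness threshold forced by the regularity theorem into the convergence/criticality theorem rather than the other way around. First I would note that $C^\infty(\R/\Z,\R^3)\subset C^1(\R/\Z,\R^3)$, so the given minimizing sequence $(\gamma_k)_{k\in\N}$ satisfies the hypotheses of \Cref{th:minarecritical}. Let $\eps_0=\eps_0(q)>0$ be the implicit smallness threshold of \Cref{th:mainreg}, i.e.\ the largest $\eps$ for which a homeomorphism with finite global and locally $\eps$-small tangent-point energy that is locally critical is automatically $C^{1,\alpha}$ near the corresponding point. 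I would then apply \Cref{th:minarecritical} (whose proof invokes \Cref{th:weaklimitareweakimm}) with $\eps=\eps_0$: after reparametrizing by arclength, translating, rescaling and passing to a subsequence, the $\gamma_k$ converge uniformly to a bi-Lipschitz homeomorphism $\gamma\colon\R/\Z\to\R^3$, parametrized by arclength, which is a locally critical embedding in the sense of \Cref{def:wcp} at every point of $(\R/\Z)\setminus\Sigma$, where $\Sigma$ is finite with $\#\Sigma\le K(q,\eps_0,\Lambda)=K(q,\Lambda)$.

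Next, I would fix $x_0\in(\R/\Z)\setminus\Sigma$ and check the three hypotheses of \Cref{th:mainreg}. Finite global energy follows from the lower-semicontinuity statement of \Cref{th:weaklimitareweakimm}, namely $\tp^{q+2,q}(\gamma)\le\liminf_{k}\tp^{q+2,q}(\gamma_k)\le\Lambda<\infty$. That $\gamma$ is a homeomorphism with locally $\eps_0$-small tangent-point energy at $x_0$ in the sense of \Cref{def:homeotp} is precisely the content of the last group of bullets in \Cref{th:weaklimitareweakimm}: the constant-speed $C^1$-homeomorphisms $\gamma_k$ serve as the witnessing sequence, with $\sup_k\tp^{q+2,q}(\gamma_k)<\infty$ and $\sup_k\tp^{q+2,q}(\gamma_k,B(x_0,r_{x_0}))<\eps_0$ for a suitable $r_{x_0}>0$. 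Finally, local criticality of $\gamma$ in $B(x_0,r_{x_0})$ is exactly the conclusion of \Cref{th:minarecritical}. Assuming $q\ge2$ (as \Cref{th:mainreg} requires), we conclude $\gamma\in C^{1,\alpha}(B(x_0,r_{x_0}/2),\R^3)$ with $\alpha=\alpha(q)>0$ independent of $x_0$.

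Since $x_0\in(\R/\Z)\setminus\Sigma$ was arbitrary and $\alpha$ does not depend on it, the balls $B(x_0,r_{x_0}/2)$ cover $(\R/\Z)\setminus\Sigma$ and on each of them $\gamma$ is $C^{1,\alpha}$; hence $\gamma\in C^{1,\alpha}_{\loc}((\R/\Z)\setminus\Sigma,\R^3)$ with $\#\Sigma$ bounded in terms of $\Lambda$. The step I would be most careful with is the order of the quantifiers on $\eps$: because the threshold in \Cref{th:mainreg} depends only on $q$, it can and must be chosen first and then passed as the parameter $\eps$ to \Cref{th:minarecritical}/\Cref{th:weaklimitareweakimm}; there is no circularity, since this threshold is independent of the particular curve or sequence. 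Everything else is bookkeeping of already-established conclusions, so I do not anticipate a substantive obstacle.
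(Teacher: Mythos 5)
Your proposal is correct and takes the same approach the paper has in mind: the corollary is stated immediately after \Cref{th:minarecritical} and \Cref{th:mainreg} with the remark ``From the previous two results we draw the following conclusion,'' and the intended argument is exactly the composition you describe. Your explicit handling of the quantifier order on $\eps$ -- fixing the threshold $\eps_0(q)$ demanded by \Cref{th:mainreg} first and then feeding it as the parameter $\eps$ into \Cref{th:weaklimitareweakimm}/\Cref{th:minarecritical}, which is legitimate because that threshold depends only on $q$ and is independent of the sequence -- is a genuine point of care that the paper leaves implicit, and it is precisely what makes the composition non-circular. The remaining checks (finite global energy via lower semicontinuity, the arclength-parametrized $\gamma_k$ serving as the witnessing sequence in \Cref{def:homeotp}, criticality from \Cref{th:minarecritical}, $\alpha$ independent of the base point $x_0$) are all in order.
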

% In particular, 
% \begin{corollary} Any minimizer of $\tp^{q+2,q}$ in any isotopy class (if it exists) can be parametrized by a $C^{1,\alpha}$-curve for some $\alpha = \alpha(q) > 0$.
% \end{corollary}

\begin{remark}In \Cref{th:mainreg} we restrict to the scale-invariant case $p=q+2$. For the non scaling-invariant case $q \geq 2$ and $p > q+2$ a $C^{1,\alpha}$-regularity is a consequence of previous work by \cite{SvdM12}, with $\alpha$ depending on $p-q-2>0$. Let us remark that a slight adaptation of our arguments, similar in spirit to adaptations carried out in \cite{MS20} implies $C^{1,\alpha}$-regularity with $\alpha$ independent of $p-q-2 \geq 0$.

Also, in \Cref{th:mainreg} we restrict our attention to $q \geq 2$ (which includes the ``classical'' tangent point energy $T^{4,2}$), but we expect that it is only a minor technical difficulty to obtain the same result for the case $q > 1$. 

Lastly, we consider the target space $\R^3$ throughout this paper to keep the notation simple, but again we expect our results to carry over to curves of arbitrary co-dimension without more than minor technical difficulties.
\end{remark}

\subsection*{Outline and comments on the proofs}
In \Cref{s:sobolev} we introduce the Sobolev spaces %we work with.
that are essential for this article.
In \Cref{s:topology} we review the notion of ambient isotopy and adapt this concept to $W^{1+s,\frac{1}{s}}$-curves. While this is, to the best of our knowledge, a new result, the main ideas are %similar to the ones for
related to the well-established theory of homotopy groups of Sobolev maps, e.g. in \cite{Schoen-Uhlenbeck-1983,Bethuel-1991}.

In \Cref{s:weaklimitimmersion} we prove our first main theorem, \Cref{th:weaklimitareweakimm},
{which states that} sequences of diffeomorphisms with uniformly bounded tangent-point energy converge outside of a finite singular set. The argument is based on a gap-estimate, vaguely reminiscent of and substantially inspired by arguments due to M\"uller--Sverak \cite{MS95} and H\'elein \cite{Helein-2002} who showed that limits of conformally parametrized two-dimensional maps with a sufficiently small $L^2$-bound on the {second} fundamental form are either point maps or bi-Lipschitz. A further crucial ingredient is an adaptation of the ``straightness'' analysis developed by Strzelecki and von der Mosel \cite{SvdM12} (which in their case leads to the fact that finite energy curves are topological one-manifolds).

In \Cref{s:proofth:minarecritical} we prove our second main theorem, \Cref{th:minarecritical}, {which asserts that} minimizing sequences converge to critical points. This is based on \Cref{th:weaklimitareweakimm} combined with a fractional Luckhaus-type lemma, \Cref{la:frluckhaus}, and the theory of isotopy classes for Sobolev maps from \Cref{s:topology}.

In \Cref{s:regularity} we prove the regularity theory, \Cref{th:mainreg}. We follow the spirit of~\cite{BRS19}, building a bridge to harmonic map theory. Namely, we introduce an energy $\mathcal{E}^{q}$ such that the arclength parametrization $\gamma$ of a critical knot $\tp^{q+2,q}$ induces via its derivative $\gamma'$ a critical map of $\mathcal{E}^{q}$ in the class of maps from $\R/\Z$ to the sphere $\S^{2}$. The energy $\mathcal{E}^{q}$ is structurally similar to the $W^{\frac{1}{q},q}$-seminorm whose critical points are 
called \emph{$W^{\frac{1}{q},q}$-harmonic maps}. For $q=2$ techniques for regularity theory of $W^{\frac{1}{2},2}$ harmonic maps between manifolds were introduced in the pioneering work by Da Lio and Rivi\`ere~\cite{DLR11a,DLR11b}; this was extended to $W^{\frac{1}{q},q}$ harmonic maps into spheres in~\cite{S15}. Here, we extend the techniques of~\cite{S15} to obtain the regularity for derivatives $\gamma'$ of the arclength-parametrization of critical knots $\gamma$.

\subsection*{Notation} %We use fairly standard notation. For sake of presentation we use the $\aleq$, $\aeq$, $\ageq$-notation:
When $A \leq C B$ for some constant $C$, we write $A \aleq B$
or $B\ageq A$.
We use the notation $A \aeq B$
if both $A \aleq B$ and $B \aleq A$.
Throughout this work, % the
constants will depend on ``unimportant'' factors like $p$ and $q$ and may change from line to line.

Balls (i.e.\  intervals) in $\R$ will be denoted by $B(x,\rho)$. We will allow ourselves an abuse of notation to denote \emph{geodesic balls} in $\R/\Z$ by the same notation. All our arguments are local in nature, so that we only need to work with balls which correspond to Euclidean balls.

\subsection*{Acknowledgement}
Grant funding is acknowledged as follows.
\begin{itemize}
 \item Austrian Science Fund (FWF), Grant P29487 (SB, NV)
 \item German Research Foundation (DFG), grant RE~3930/1--1 ($\Phi$R)
 \item National Science Foundation (NSF) Career DMS 2044898 (AS)
 \item Simons foundation, grant no 579261 (AS)
\end{itemize}
Part of this work was conducted while the authors were at the IMA Minnesota and visits to each other's institutions. A long-term visit by NV at the University of Pittsburgh funded by the Austrian Marshall Plan Scholarship is also gratefully acknowledged.

\section{Preliminaries on Sobolev maps}\label{s:sobolev}
In this section we recall some basic notation and properties of Sobolev maps.

For $s \in (0,1)$, $p \in (1,\infty)$, $\Omega \subset \R$ open, the Sobolev space $W^{s,p}(\Omega)$ is defined as all maps $f \in L^{p}(\Omega)$ such that 
\[
 [f]_{W^{s,p}(\Omega)} := \brac{\int_{\Omega}\int_{\Omega} \frac{|f(x)-f(y)|^p}{|x-y|^{1+sp}} \, dx\, dy }^{\frac{1}{p}} < \infty.
\]
For $s \in (1,2)$ the Sobolev space $W^{s,p}(\Omega)$ is defined to be the space of $f \in L^{p}(\Omega)$, $f'\in L^{p}(\Omega)$, and 
\[
 [f']_{W^{s-1,p}(\Omega)} < \infty.
\]
One important observation, cf. \cite[Lemma 2.1]{B12}, is that small $W^{1+s,\frac{1}{s}}$-Sobolev norm implies a bi-Lipschitz estimate if $|\gamma'| > 0$. Namely we have,
\begin{lemma}\label{la:bilip}
Let $s \in (0,1)$. For any $\lambda_1 > \lambda_2 > 0$ there exists $\eps = \eps(\lambda_1,\lambda_2,s) > 0$ such that the following holds.
For any $-\infty<a<b<\infty$ and for any $\gamma \in \lip([a,b],\R^3)$ such that \[\inf_{[a,b]} |\gamma'| \geq \lambda_1\] and
\[
 [\gamma']_{W^{s,\frac{1}{s}}((a,b))} < \eps,
\]
we have
\[
 |\gamma(x)-\gamma(y)|\geq \lambda_2 |x-y|.
\]
\end{lemma}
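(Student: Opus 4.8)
The plan is to reduce the bi-Lipschitz bound to a \emph{scale-invariant mean-oscillation estimate} for $\gamma'$, which is precisely what the critical Sobolev exponent $s\cdot\tfrac1s=1$ provides. Fix $x,y\in[a,b]$; by symmetry we may assume $y<x$ (the case $x=y$ is trivial) and set $I:=(y,x)$, and let $v:=\frac{1}{|I|}\int_I\gamma'(\tau)\,d\tau$ denote the average of $\gamma'$ over $I$. By the fundamental theorem of calculus $\gamma(x)-\gamma(y)=\int_y^x\gamma'$, so $|\gamma(x)-\gamma(y)|=|x-y|\,|v|$, and it suffices to prove $|v|\ge\lambda_2$. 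Since $|\gamma'|\ge\lambda_1$ a.e., the triangle inequality $|\gamma'(t)|\le|\gamma'(t)-v|+|v|$, integrated over $I$, gives $|v|\ge\frac{1}{|I|}\int_I|\gamma'|-\frac{1}{|I|}\int_I|\gamma'-v|\ge\lambda_1-\frac{1}{|I|}\int_I|\gamma'-v|$. Thus everything reduces to bounding the mean oscillation $\frac{1}{|I|}\int_I|\gamma'-v|$ by $\eps$.

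To do so I would apply Jensen's inequality twice. Writing $\gamma'(t)-v=\frac{1}{|I|}\int_I(\gamma'(t)-\gamma'(\tau))\,d\tau$ and using convexity of $r\mapsto r^{1/s}$ (valid since $\tfrac1s\ge1$), one obtains $\int_I|\gamma'(t)-v|^{1/s}\,dt\le\frac{1}{|I|}\int_I\int_I|\gamma'(t)-\gamma'(\tau)|^{1/s}\,d\tau\,dt$. Because $|t-\tau|\le|I|$ and $1+s\cdot\tfrac1s=2$, inserting $|t-\tau|^2$ reveals the $W^{s,1/s}$-difference quotient, $|\gamma'(t)-\gamma'(\tau)|^{1/s}\le|I|^2\,\frac{|\gamma'(t)-\gamma'(\tau)|^{1/s}}{|t-\tau|^2}$, whence $\frac{1}{|I|}\int_I|\gamma'(t)-v|^{1/s}\,dt\le[\gamma']_{W^{s,1/s}(I)}^{1/s}\le[\gamma']_{W^{s,1/s}((a,b))}^{1/s}<\eps^{1/s}$, using monotonicity of the seminorm under shrinking the domain. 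The crucial point is that this bound holds uniformly over \emph{all} subintervals $I\subset(a,b)$; this scale-invariance is the real content of the lemma and is exactly why the exponent pair $(s,\tfrac1s)$ appears. A final application of Jensen (the power-mean inequality) converts the $L^{1/s}$-averaged oscillation into the $L^1$-averaged one: $\frac{1}{|I|}\int_I|\gamma'-v|\,dt\le\big(\frac{1}{|I|}\int_I|\gamma'-v|^{1/s}\,dt\big)^{s}<\eps$.

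Combining the two estimates yields $|v|\ge\lambda_1-\eps$, hence $|\gamma(x)-\gamma(y)|\ge(\lambda_1-\eps)\,|x-y|$; choosing $\eps:=\lambda_1-\lambda_2>0$ finishes the proof. I do not anticipate a genuine obstacle: the only delicate point is the bookkeeping in the double Jensen argument together with the check that the resulting oscillation bound is truly scale-free — which is where it matters that the hypothesis controls the critical seminorm $[\gamma']_{W^{s,1/s}}$ rather than a subcritical one — while the lower bound $|\gamma'|\ge\lambda_1$ enters only through the trivial estimate $\frac{1}{|I|}\int_I|\gamma'|\ge\lambda_1$.
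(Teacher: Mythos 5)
Your proof is correct, and it takes a genuinely different and cleaner route than the paper's. The paper squares $\gamma(x)-\gamma(y)=\int_x^y\gamma'$ and uses the polarization identity to produce an $L^2$ quantity, then bounds $\frac{1}{2|x-y|^2}\iint|\gamma'(z_1)-\gamma'(z_2)|^2$ by $C(s)\,[\gamma']^2_{W^{s,1/s}}$ via a three-way case split on $s$ (the case $s=1/2$ is immediate, the case $s>1/2$ invokes the Sobolev embedding \Cref{la:sob1}, and the case $s<1/2$ uses Jensen), finally choosing $\eps$ so that $\lambda_1^2-C(s)\eps^2>\lambda_2^2$. You instead reduce directly to an $L^1$ mean-oscillation bound: writing $|v|\ge\lambda_1-\frac{1}{|I|}\int_I|\gamma'-v|$ with $v$ the average of $\gamma'$ over $I$, and bounding the oscillation by two applications of Jensen together with the trivial kernel estimate $1\le|I|^2|t-\tau|^{-2}$ on $I$. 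This buys a single uniform argument for all $s\in(0,1)$ with no auxiliary Sobolev embedding and no $s$-dependent constant, so the explicit choice $\eps=\lambda_1-\lambda_2$ works; the paper's version yields a less explicit $\eps(\lambda_1,\lambda_2,s)$. Both arguments exploit the same scale-invariance of the $W^{s,1/s}$ seminorm (the exponent $1+s\cdot\frac1s=2$ exactly cancels the two factors of $|I|$), which is indeed, as you say, the real content of the lemma.
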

\preprintonly{
\begin{proof}
First we prove the following inequality
\begin{equation}\label{eq:bilip:goal}
 \frac{\abs{\gamma(y)-\gamma(x)}^2}{|x-y|^2} \geq (\lambda_1)^2 - \frac{1}{2|x-y|^2} \int_{[x,y]} \int_{[x,y]} |\gamma'(z_1)-\gamma'(z_2)|^2 \, dz_1 \, dz_2.
\end{equation}
The estimate \eqref{eq:bilip:goal} is a consequence of the fundamental theorem of calculus, which says that for any $x\neq y$ we have
\[
 \gamma(y)-\gamma(x) = \int_{x}^y \gamma'(z)\, dz.
\]
Then
\[
\begin{split}
 |\gamma(y)-\gamma(x)|^2 =& \int_{x}^y \int_{x}^y  \langle \gamma'(z_1), \gamma'(z_2)\rangle \, dz_1\, dz_2\\
 =& \frac{1}{2} \int_{x}^y \int_{x}^y  |\gamma'(z_1)|^2+|\gamma'(z_2)|^2\, dz_1\, dz_2 -\frac{1}{2} \int_{x}^y \int_{x}^y  |\gamma'(z_1)-\gamma'(z_2)|^2\, dz_1\, dz_2\\
 \geq&|x-y|^2\, (\lambda_1)^2-\frac{1}{2} \int_{[x,y]} \int_{[x,y]} |\gamma'(z_1)-\gamma'(z_2)|^2 \, dz_1\,  dz_2.
 \end{split}
\]
This establishes \eqref{eq:bilip:goal}.

The claim of \Cref{la:bilip} follows from \eqref{eq:bilip:goal} once we show that for any $s \in (0,1)$ there exists a constant $C = C(s)$ such that 
\begin{equation}\label{eq:bilip:goal2}
 \frac{1}{2|x-y|^2} \int_{[x,y]} \int_{[x,y]} |\gamma'(z_1)-\gamma'(z_2)|^2 \, dz_1 \, dz_2 \leq C(s)\, [\gamma']_{W^{s,\frac{1}{s}}((a,b))}^2.
\end{equation}
Indeed, once \eqref{eq:bilip:goal2} is established, we choose $\eps > 0$ such that
\[
 (\lambda_1)^2 - C(s) \eps^2 > (\lambda_2)^2.
\]
Then -- under the assumptions of \Cref{la:bilip} -- we conclude that
\[
 \frac{\abs{\gamma(y)-\gamma(x)}^2}{|x-y|^2} \geq (\lambda_2)^2,
\]
which is what we wanted to show.

It remains to establish \eqref{eq:bilip:goal2}, and for this we consider three cases.\\
For \underline{$s =\frac{1}{2}$}, \eqref{eq:bilip:goal2} is a consequence of the following observation
\[
\begin{split}
 &\frac{1}{2|x-y|^2} \int_{[x,y]} \int_{[x,y]} |\gamma'(z_1)-\gamma'(z_2)|^2 \, dz_1 \, dz_2 \\
 \leq& \frac{1}{2} \int_{[x,y]} \int_{[x,y]} \frac{|\gamma'(z_1)-\gamma'(z_2)|^2}{|z_1-z_2|^2} \, dz_1 \, dz_2\\
 =& \frac{1}{2}[\gamma']_{W^{\frac{1}{2},2}([x,y])}^2 \leq \frac{1}{2}[\gamma']_{W^{\frac{1}{2},2}([a,b])}^2.
 \end{split}
\]
For the \underline{case $s \in (\frac{1}{2},1)$}, we additionally observe that by \Cref{la:sob1} there exists a constant $C=C(s)$ such that 
\[
[\gamma']_{W^{\frac 12,2}((a,b))}^2 \leq C(s) \, [\gamma']_{W^{s,\frac 1s }((a,b))}^2.
\]
This establishes \eqref{eq:bilip:goal2} for all $s \in [\frac{1}{2},1)$. \\
If \underline{$s \in (0,\frac{1}{2})$}, then $\frac{1}{2s} > 1$. Hence, by Jensen's inequality,
\[
\begin{split}
&|x-y|^{-2} \int_{[x,y]} \int_{[x,y]} |\gamma'(z_1)-\gamma'(z_2)|^2 \, dz_1 \, dz_2 \\
=&\brac{|x-y|^{-2} \int_{[x,y]} \int_{[x,y]} |\gamma'(z_1)-\gamma'(z_2)|^2 \, dz_1 \, dz_2}^{\frac{1}{2s}\, 2s} \\
\leq&\brac{|x-y|^{-2} \int_{[x,y]} \int_{[x,y]} |\gamma'(z_1)-\gamma'(z_2)|^{\frac{1}{s}} \, dz_1 \, dz_2}^{2s} \\
\leq& \brac{\int_{[x,y]} \int_{[x,y]} \frac{|\gamma'(z_1)-\gamma'(z_2)|^{\frac{1}{s}}}{|z_1-z_2|^2} \, dz_1 \, dz_2}^{2s} .
\end{split}
\]
This establishes \eqref{eq:bilip:goal2} for $s \in (0,\frac{1}{2})$, and we can conclude.
\end{proof}}

Let us also remark the following consequence of \Cref{la:bilip}, which states that closed curves have minimal $W^{1+s,\frac{1}{s}}$-energy.
\begin{corollary}
Let $s\in(0,1)$, $-\infty < a < b < \infty$. For any $\lambda > 0$ there exists $\eps =\eps(\lambda,a,b,s) > 0$ so that the following holds.\\
Whenever $\gamma \in \lip((a,b),\R^3)\cap C^0([a,b])$ with $\gamma(a) = \gamma(b)$ and 
$\inf |\gamma'| \geq \lambda$, 
then $[\gamma']_{W^{s,\frac{1}{s}}([a,b])} \geq \eps$.
\end{corollary}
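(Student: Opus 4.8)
The plan is to argue by contradiction, exploiting the rigidity encoded in \Cref{la:bilip}. Suppose the conclusion fails: then for every $\eps>0$ there exists a curve $\gamma \in \lip((a,b),\R^3)\cap C^0([a,b])$ with $\gamma(a)=\gamma(b)$, $\inf|\gamma'|\geq \lambda$, yet $[\gamma']_{W^{s,1/s}([a,b])} < \eps$. Pick $\lambda_1 = \lambda$ and any fixed $\lambda_2 \in (0,\lambda)$, say $\lambda_2 = \lambda/2$, and let $\eps_0 = \eps_0(\lambda_1,\lambda_2,s) > 0$ be the threshold furnished by \Cref{la:bilip}. Choosing $\eps = \eps_0$ (or anything smaller) in the assumed counterexample, \Cref{la:bilip} applies on the interval $(a,b)$ and yields the bi-Lipschitz lower bound $|\gamma(x)-\gamma(y)| \geq \lambda_2\, |x-y|$ for all $x,y \in (a,b)$.

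The second step is to push this inequality to the endpoints using the $C^0$ extension. Since $\gamma$ is continuous on the closed interval $[a,b]$, letting $x \to a^+$ and $y \to b^-$ in the bi-Lipschitz estimate gives $|\gamma(a)-\gamma(b)| \geq \lambda_2\,(b-a) > 0$. This contradicts the closedness hypothesis $\gamma(a) = \gamma(b)$. Hence no such $\gamma$ can exist once $\eps$ is taken at or below $\eps_0(\lambda,\lambda/2,s)$, and this $\eps_0$ depends only on $\lambda$ and $s$ — and, through $b-a$ entering only via the strict positivity of $\lambda_2(b-a)$, trivially on $a,b$; in fact the dependence on $a,b$ is vacuous here, which is consistent with (and slightly stronger than) the stated claim that $\eps$ may depend on $a,b$.

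I do not expect a genuine obstacle: the corollary is essentially an immediate packaging of \Cref{la:bilip}, and the only point requiring a word of care is the passage to the limit at the endpoints, which is legitimate precisely because $\gamma$ is assumed continuous on the closed interval $[a,b]$ (the Sobolev seminorm and the Lipschitz bound are only asserted on the open interval, but bi-Lipschitz estimates valid on a dense subset extend to the closure by continuity). One should also note that \Cref{la:bilip} is stated for $\gamma \in \lip([a,b],\R^3)$ with $\inf_{[a,b]}|\gamma'|\geq \lambda_1$; to invoke it here one either applies it on every compact subinterval $[a+\delta,b-\delta]$ and then lets $\delta \to 0$, or observes that $\lip((a,b))\cap C^0([a,b])$ with bounded derivative is the same as $\lip([a,b])$. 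Either way the argument closes without new estimates.
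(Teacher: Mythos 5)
Your argument is correct and coincides with the paper's proof: both apply \Cref{la:bilip} to obtain a bi-Lipschitz lower bound and then pass to the endpoints by continuity to contradict $\gamma(a)=\gamma(b)$. The only cosmetic difference is that you frame it as a contradiction while the paper states it directly, and your closing remark about applying the lemma on compact subintervals is a sensible (if not strictly necessary) precaution.
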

\begin{proof}
If $[\gamma']_{W^{s,\frac{1}{s}}([a,b])} < \eps$ for small enough $\eps$ we know from \Cref{la:bilip} that $\gamma$ is bilipschitz, and thus
\[
 |\gamma(a)-\gamma(b)| = \lim_{x \to a^+} \lim_{y \to b^-} |\gamma(x)-\gamma(y)| \geq c |b-a|.
\]
\end{proof}

\section{Ambient isotopy for Sobolev curves}\label{s:topology}
A homotopy theory for Sobolev maps has been introduced and established a long time ago. The spirit is that for maps in VMO (see e.g. the phenomenal work \cite{BN95,BN96}) homotopy classes exist, and by Sobolev embedding, homotopy groups for $W^{s,\frac{n}{s}}(\Sigma^n,\mathcal{N})$ coincide with the classical homotopy groups for continuous maps. In particular, this leads to a beautiful theory of density \cite{Schoen-Uhlenbeck-1983,Bethuel-1991}. There are many extensions, e.g. to more general Sobolev spaces \cite{Hajlasz-1994,Riviere-2000,Mironescu-2004,Bousquet-Ponce-VanSchaftingen-2013,Brezis-Mironescu-2015,Bousquet-Ponce-VanSchaftingen-2015}.

We begin here to introduce the fundamental results on isotopy classes (for curves) in fractional Sobolev spaces following the spirit of homotopy classes. To the best of our knowledge, the results in this section are new, in particular our main result, \Cref{th:localambisotchange}, which says that small Sobolev-variations of smooth curves do not change their isotopy class. However, there is some overlap with \cite{BGRvdM} where an isotopy theory for closed sets with controlled bi-Lipschitz constant is developed.

We begin by defining ambient isotopy (by which we mean $C^1$-ambient isotopy).
\begin{definition}\label{def:smoothisotopy}
Two sets $X,Y \subset \R^3$ are called ambient isotopic, if there exists an ambient isotopy, namely $I \in C^1([0,1] \times \R^3)$, such that 
\begin{itemize}
\item $I(t,\cdot): \R^3 \to \R^3$ is a surjective diffeomorphism for all $t \in [0,1]$,
 \item $I(0,p) = p$ for all $p \in \R^3$, 
 \item $I(1,\cdot) : X \to Y$ is a surjective homeomorphism.
 \end{itemize}
\end{definition}

When working with parametrized curves, the following result is very useful: Smooth enough isotopy coincides with ambient isotopy, see \cite[Chapter 8, Theorem 1.6, p.181]{H94}.
\begin{theorem}\label{th:hambientisotop}
Let $\gamma_0,\gamma_1 \in C^1(\R / \Z,\R^3)$ be two diffeomorphisms, and assume there exists a $C^1$-isotopy between them, that is $\Gamma \in C^1([0,1] \times \R/\Z, \R^3)$ such that $\Gamma(0,\cdot) = \gamma_0(\cdot)$ and $\Gamma(1,\cdot) = \gamma_0(\cdot)$ and 
\[
 \Gamma(t,\cdot): \R/\Z \to \R^3 \quad \text{is a diffeomorphism for all $t \in [0,1]$.}
\]
Then the images $\gamma_0(\R/\Z)$ and $\gamma_1(\R/\Z)$ are ambient isotopic.
\end{theorem}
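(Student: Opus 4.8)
The plan is to recognize \Cref{th:hambientisotop} as the \emph{isotopy extension theorem} applied to the compact one‑manifold $\R/\Z$ embedded in $\R^3$, and to derive it — as in \cite[Chapter~8, Theorem~1.6]{H94} — by producing the ambient isotopy $I$ of \Cref{def:smoothisotopy} as the flow of a time‑dependent vector field on $\R^3$ that, along the moving curve, equals the velocity of the given isotopy. (We read the hypothesis as $\Gamma(0,\cdot)=\gamma_0$, $\Gamma(1,\cdot)=\gamma_1$.) First I would form the \emph{track} $\Phi\colon[0,1]\times\R/\Z\to[0,1]\times\R^3$, $\Phi(t,x)=(t,\Gamma(t,x))$. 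Since each $\Gamma(t,\cdot)$ is a $C^1$ embedding and $\Phi$ preserves the $[0,1]$‑coordinate, $\Phi$ is an injective $C^1$ immersion out of a compact manifold, hence a $C^1$ embedding onto a compact $C^1$ submanifold $M$. On $M$ there is a well‑defined continuous vector field $X$ with $X(\Phi(t,x))=\bigl(1,\partial_t\Gamma(t,x)\bigr)$; it is tangent to $M$ and has $[0,1]$‑component identically $1$.

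Next I would extend $X$ to a vector field $\widetilde X$ on $[0,1]\times\R^3$ of the form $\partial_t+V$ with $V$ compactly supported in the $\R^3$‑directions: take a tubular neighborhood of $M$ together with its retraction onto $M$, pull $X$ back along the retraction, multiply by a cutoff equal to $1$ near $M$, and normalize the time‑component to $1$. Because $\widetilde X=\partial_t+(\text{compactly supported spatial field})$, its flow exists on all of $\R^3$ for all admissible times; letting $I(t,\cdot)\colon\R^3\to\R^3$ be the time‑$t$ slice of this flow started at $t=0$, one gets $I(0,\cdot)=\mathrm{id}$, each $I(t,\cdot)$ a surjective diffeomorphism, and — since $X$ was tangent to $M$ and generated $\Phi$ — $I(t,\gamma_0(x))=\Gamma(t,x)$ for all $x$. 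Setting $t=1$ shows that $I(1,\cdot)$ restricts to a homeomorphism of $\gamma_0(\R/\Z)$ onto $\gamma_1(\R/\Z)$, which is exactly the asserted ambient isotopy.

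The main obstacle is regularity in the $C^1$ category: a $C^1$ isotopy produces only a \emph{continuous} velocity $\partial_t\Gamma$, so the extension $\widetilde X$ is merely $C^0$, and a $C^0$ vector field has in general a non‑unique flow, so one does not immediately obtain a well‑defined family of diffeomorphisms, let alone the $C^1$‑regularity demanded by \Cref{def:smoothisotopy}; the same "loss of one derivative" reappears in the $C^1$ tubular‑neighborhood step. This is precisely the technical point handled carefully in \cite[Chapter~8, Theorem~1.6]{H94}, so the cleanest route is to verify that our hypotheses are literally those of that theorem ($\Gamma$ a $C^1$ isotopy of embeddings of a compact manifold into $\R^3$) and quote it. Alternatively one may first replace $\Gamma$ by a smooth mollification $\Gamma_\delta$, which remains an isotopy of embeddings for $\delta$ small since embeddedness and the open condition "$\Gamma(t,\cdot)$ is a diffeomorphism onto its image" are $C^1$‑stable on the compact domain $[0,1]\times\R/\Z$, run the flow construction with smooth data, and then absorb the two small, $C^1$‑close endpoint discrepancies between $\gamma_i(\R/\Z)$ and $\Gamma_\delta(i,\cdot)(\R/\Z)$ by short ambient isotopies supported near the curves; since that last step shares the same $C^1$‑regularity subtlety, in practice one simply invokes \cite{H94}.
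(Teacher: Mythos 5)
Your proposal is correct and ends up exactly where the paper does: the paper gives no proof of \Cref{th:hambientisotop} and simply cites \cite[Chapter 8, Theorem 1.6, p.181]{H94}, which is the $C^1$ isotopy extension theorem you identify. Your sketch of the flow-of-a-time-dependent-vector-field argument and the observation about the loss of one derivative in the $C^1$ category are accurate context, but they are background to the cited result rather than a new route.
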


We begin by defining ambient isotopy classes for regular $W^{1+s,\frac{1}{s}}$-homeomorphisms.
Observe that in view of the formal analogy to homotopy classes, having the techniques by Brezis and Nirenberg \cite{BN95,BN96}, one might hope for an ``s=0'' theory (i.e.\  $\gamma' \in VMO$), but we will not pursue that question here. Also, we will make no attempt to consider the higher-dimensional version, but rather focus on the situation at hand, which are curves.

\begin{definition}[Regular Sobolev homeomorphism]
A homeomorphism $\gamma \in W^{1+s,\frac{1}{s}}(\R/\Z,\R^3)$ is called \emph{regular} if
\[
0< \inf |\gamma'| \leq \sup |\gamma'|<\infty
\]
where $\inf$ and $\sup$ are the essential infimimum and supremum, respectively. 
\end{definition}

The isotopy class is derived from smooth approximating maps, whose existence is the content of the following lemma.

\begin{lemma}\label{la:diffeoapprox}
Let $\gamma \in W^{1+s,\frac{1}{s}}(\R/\Z,\R^3)$ be a  regular homeomorphism 
% with
% \[
% 0< \inf |\gamma'| \leq \sup |\gamma'|<\infty,
% \]
% where $\inf$ and $\sup$ are the essential infimimum and supremum, respectively
. Then there exists a sequence of smooth diffeomorphisms $\gamma_k: \R/\Z \to \R^3$ with 
\[
\frac{1}{2} \inf |\gamma'| \leq \inf |\gamma_k'| \leq \sup |\gamma_k'|  \leq \sup |\gamma'| \quad \text{for all $k \in \N$}
\]
such that 
\[
 \|\gamma_k-\gamma\|_{L^\infty(\R/\Z)} + [\gamma_k-\gamma]_{W^{1+s,\frac{1}{s}}(\R/\Z)} \xrightarrow{k \to \infty} 0.
\]
\end{lemma}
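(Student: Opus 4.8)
\emph{Plan.} The natural candidate is mollification: fix a nonnegative $\phi\in C_c^\infty(B(0,1))$ with $\int\phi=1$, set $\phi_\rho(x):=\rho^{-1}\phi(x/\rho)$ (read $1$-periodically for small $\rho$), and put $\gamma_k:=\gamma*\phi_{1/k}$. These maps are automatically smooth and closed, and it remains to verify the four estimates; three are routine and two require care.

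\emph{The routine points.} The upper speed bound $\sup|\gamma_k'|=\sup|\gamma'*\phi_{1/k}|\le\sup|\gamma'|$ is just Jensen's inequality, and uniform convergence $\|\gamma_k-\gamma\|_{L^\infty}\to 0$ follows from uniform continuity of the continuous map $\gamma$ on the compact $\R/\Z$. For the Sobolev convergence, note $\gamma_k'=\gamma'*\phi_{1/k}$ and write $\gamma'*\phi_{1/k}-\gamma'=\int(\tau_{-y}\gamma'-\gamma')\,\phi_{1/k}(y)\,dy$ with $\tau$ denoting translation; since $[\,\cdot\,]_{W^{s,1/s}}$ is a seminorm given by an $L^{1/s}$-norm, Minkowski's integral inequality together with continuity of translation in $W^{s,1/s}$ (true for smooth maps, then for all maps by density, the seminorm being translation invariant) gives $[\gamma_k-\gamma]_{W^{1+s,1/s}(\R/\Z)}=[\gamma_k'-\gamma']_{W^{s,1/s}(\R/\Z)}\to 0$.

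\emph{The lower speed bound.} Since $\gamma'$ lies only in the critical space $W^{s,1/s}(\R/\Z)$, it need not be continuous, so one cannot estimate $\gamma_k'$ pointwise. Instead, polarizing exactly as in the proof of \Cref{la:bilip}, with $g:=\gamma'$ and $\lambda_1:=\inf|\gamma'|$,
\[
 |\gamma_k'(x)|^2\ \ge\ \lambda_1^2\ -\ \tfrac12\int\!\!\int|g(x-y_1)-g(x-y_2)|^2\,\phi_{1/k}(y_1)\phi_{1/k}(y_2)\,dy_1\,dy_2 ,
\]
and, using $\phi_{1/k}\aleq k\,\bbbone_{B(0,1/k)}$, the error is $\aleq k^2\int_{B(x,1/k)}\int_{B(x,1/k)}|g(z_1)-g(z_2)|^2\,dz_1\,dz_2$. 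This tends to $0$ uniformly in $x$: with the elementary bound $|g(z_1)-g(z_2)|^2\aleq|g(z_1)-g(z_2)|^{\min(2,1/s)}$ (constant depending on $\sup|\gamma'|$), Hölder's inequality, and $|z_1-z_2|\le 2/k$, the error is controlled by a power of $\sup_x\int_{B(x,1/k)}\int_{B(x,1/k)}\frac{|g(z_1)-g(z_2)|^{1/s}}{|z_1-z_2|^2}\,dz_1\,dz_2$, which is a tail of the finite integral $[\gamma']_{W^{s,1/s}(\R/\Z)}^{1/s}$ and hence vanishes by dominated convergence (equivalently, $\gamma'\in W^{s,1/s}\subset\mathrm{VMO}$). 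Thus $\inf|\gamma_k'|^2\ge\lambda_1^2-o(1)\ge\tfrac14\lambda_1^2$ for $k$ large, so $\gamma_k$ is an immersion with $\inf|\gamma_k'|\ge\tfrac12\inf|\gamma'|$.

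\emph{Injectivity --- the main obstacle.} Here \Cref{la:bilip} only gives a bi-Lipschitz bound on intervals where $[\gamma']_{W^{s,1/s}}$ is small, while $[\gamma']_{W^{s,1/s}(\R/\Z)}$ is not assumed small, so a localization plus compactness argument is needed. First choose a partition of $\R/\Z$ into finitely many arcs $I_1,\dots,I_N$, fine enough that $[\gamma']_{W^{s,1/s}(3I_j)}<\eps_0$ for each $j$, where $3I_j$ is the concentric arc of triple length and $\eps_0=\eps_0(\tfrac12\inf|\gamma'|,\tfrac14\inf|\gamma'|,s)$ is the constant of \Cref{la:bilip}; this is possible by the same dominated-convergence estimate as above. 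Applying \Cref{la:bilip} to $\gamma$ on each $3I_j$ shows $\gamma$ is locally bi-Lipschitz, and a standard compactness argument exploiting injectivity of the homeomorphism $\gamma$ promotes this to a global bound $|\gamma(x)-\gamma(y)|\ge c_0\,d_{\R/\Z}(x,y)$. By the Sobolev convergence, $[\gamma_k']_{W^{s,1/s}(3I_j)}<\eps_0$ for all $j$ once $k$ is large; combined with $\inf|\gamma_k'|\ge\tfrac12\inf|\gamma'|$, \Cref{la:bilip} yields $|\gamma_k(x)-\gamma_k(y)|\ge\tfrac14\inf|\gamma'|\,d_{\R/\Z}(x,y)$ whenever $x,y$ lie in a common $3I_j$, in particular whenever $d_{\R/\Z}(x,y)<\delta_0:=\min_j|I_j|$. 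For $d_{\R/\Z}(x,y)\ge\delta_0$ we use $|\gamma_k(x)-\gamma_k(y)|\ge|\gamma(x)-\gamma(y)|-2\|\gamma_k-\gamma\|_{L^\infty}\ge c_0\delta_0-2\|\gamma_k-\gamma\|_{L^\infty}>0$ for $k$ large. Hence $\gamma_k$ is injective, and being in addition a smooth immersion of the compact manifold $\R/\Z$ it is a smooth embedding, i.e.\ a diffeomorphism onto its image; discarding the finitely many small $k$ and relabeling yields the desired sequence. The two delicate ingredients are the uniform-in-$x$ decay of the oscillation term in the lower speed bound and the passage from the local (small-seminorm) estimate of \Cref{la:bilip} to global injectivity of $\gamma$ and of $\gamma_k$.
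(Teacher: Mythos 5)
Your proof is correct and follows essentially the same route as the paper's: mollify the curve, bound the speed from above by Jensen, bound it from below using local smallness of $[\gamma']_{W^{s,1/s}}$, and obtain injectivity by invoking \Cref{la:bilip} at short range and uniform convergence at long range. The two cosmetic deviations are (i) your lower speed bound goes via the polarization identity for $|\gamma_k'(x)|^2$ (reusing the computation from \Cref{la:bilip}), whereas the paper bounds $|\gamma_\delta'(x)|\ge\lambda-\mvint_{B(x,\delta)}|\gamma_\delta'(x)-\gamma'(z)|\,dz$ directly and estimates that $L^1$ oscillation average by the seminorm; and (ii) you first establish a full global bi-Lipschitz lower bound for $\gamma$, whereas the paper needs only the weaker (and immediate) fact that $\eps_1:=\inf_{|x-y|\ge\delta_0/2}|\gamma(x)-\gamma(y)|>0$, which suffices for the long-range step. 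Neither deviation affects correctness.
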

\begin{proof}
Set 
\[
 \lambda := \inf |\gamma'|.
\]
Fix some $\eps_0 > 0$ to be specified later.

By absolute continuity of the integral there exists $\delta_0 =\delta_0(\gamma) \in (0,1)$ such that
\[
\sup_{B(10\delta_0) \subset \R/\Z} [\gamma']_{W^{s,\frac{1}{s}}(B(10\delta_0))} < \eps_0.
\]
Since $\gamma$ is a continuous and injective map, the following infimum is attained and larger than $0$ 
\[
\eps_1 := \inf_{|x-y| \geq \frac{1}{2}\delta_0} |\gamma(x)-\gamma(y)| > 0.
\]
Let $\eta \in C_c^\infty(B(0,1),[0,1])$, $\int \eta = 1$, be the usual mollifier kernel, and $\eta_\delta := \delta^{-1} \eta(\cdot/\delta)$. Set 
\[
 \gamma_\delta(x) := \eta_\delta \ast \gamma(x) =  \int_{-1}^{1} \eta(z)\, \gamma(x+\delta z) \, dz.
\]
By periodicity of $\gamma$, $\gamma_\delta$ is $1$-periodic, and thus is well-defined $\gamma_\delta: \R/\Z \to \R^3$. 
Moreover, $[\gamma_\delta-\gamma]_{W^{1+s,\frac{1}{s}}(\R/\Z)} \xrightarrow{\delta \to 0} 0$, and by Sobolev embedding $\|\gamma_\delta-\gamma\|_{L^\infty(\R/\Z)} \xrightarrow{\delta \to 0}  0$.

Let $\delta_1 \in (0,\delta_0)$ such that 
\begin{equation}\label{eq:la:4654785}
\|\gamma_\delta-\gamma\|_{L^\infty(\R/\Z)} < \frac{1}{10} \eps_1 \quad  \text{for all $\delta \in [0,\delta_1]$.}
\end{equation}
We need to show (for the right choice of $\eps_0$) that for $\delta \in (0,\delta_1)$ $\gamma_\delta$ is a diffeomorphism as requested in the claim.

It is easy to see from the definition of $\gamma_\delta$ that
\[
 |\gamma_\delta'(x)| \leq \sup |\gamma'| \quad \forall \delta > 0.
\]
First we observe that for almost any $x \in \R/\Z$ and almost any $z \in \R/\Z$ we have 
\[
 |\gamma_\delta'(x)| \geq |\gamma'(z)| - |\gamma_\delta'(x)-\gamma'(z)| \geq \lambda -  |\gamma_\delta'(x)-\gamma'(z)|.
\]
Since this holds for almost every $z \in \R / \Z$, we can integrate this inequality, $\mvint_{B(x,\delta)}\, dz$, and find
\[
 |\gamma_\delta'(x)| \geq \lambda -  \mvint_{B(x,\delta)}|\gamma_\delta'(x)-\gamma'(z)|\, dz.
\]
Now we have $\gamma_\delta' = \eta_\delta \ast (\gamma')$, and thus
\[
\begin{split}
 &\mvint_{B(x,\delta)}|\gamma_\delta'(x)-\gamma'(z)| \, dz\\
 \leq& \ \mvint_{B(x,\delta)} \mvint_{B(x,\delta)}|\gamma'(z_1)-\gamma'(z)|\, dz_1 \, dz\\
 \aleq&  \ [\gamma']_{W^{s,\frac{1}{s}}(B(x,\delta))} < \eps_0.
 \end{split}
\]
That is, we have shown 
\[
 |\gamma_\delta'(x)| \geq \lambda -  C\eps_0 \quad \text{for almost any $x \in \R / \Z$}.
\]
So if we choose $\eps_0 < \frac{\lambda}{2C}$, we therefore have 
\[
 \inf |\gamma_\delta'| \geq \frac{\lambda}{2} \quad \forall \delta \in (0,\delta_0).
\]
Now choosing $\eps_0$ possibly even smaller (depending on $\lambda$), we obtain from \Cref{la:bilip} that 
\begin{equation}\label{eq:la:difap:bilip1}
 \frac{\lambda}{4} |x-y| \leq |\gamma_\delta(x)-\gamma_\delta(y)| \quad \forall \delta \in [0,\delta_0], \ |x-y| < \delta_0 .
\end{equation}
On the other hand, 
\[
\begin{split}
 |\gamma_\delta(x)-\gamma_\delta(y)|  \geq |\gamma(x)-\gamma(y)|-2\|\gamma-\gamma_\delta\|_{L^\infty}.\\
 \end{split}
\]
In view of \eqref{eq:la:4654785}, we thus have
\begin{equation}\label{eq:la:difap:bilip2}
\begin{split}
 |\gamma_\delta(x)-\gamma_\delta(y)|  \geq 
 (1-\frac{2}{10})\eps_1 \quad \forall |x-y| > \frac{1}{2}\delta_0, \quad \delta \in [0,\delta_1].
 \end{split}
\end{equation}
Combining \eqref{eq:la:difap:bilip1} and \eqref{eq:la:difap:bilip2}, we obtain
\[
 |\gamma_\delta(x)-\gamma_\delta(y)| \geq \frac{1}{100} \min\{\lambda, \delta_0\eps_1\} |x-y| \quad \forall x,y \in\R/\Z\quad \forall \delta \in [0,\delta_1].
\]
Consequently, $\gamma_\delta$ is a one-to-one map with $\inf |\gamma_\delta'| > 0$.

Hence, $\gamma_\delta$ is a smooth homeomorphism with nonvanishing derivative (i.e.\  its an immersion), thus $\gamma_\delta$ is a diffeomorphism. The proof is concluded by choosing $\gamma_k := \gamma_{\frac{\delta_1}{k}}$.
\end{proof}

Now that we have approximating smooth diffeomorphisms, we argue that they all eventually are of the same ambient isotopy type.

\begin{proposition}\label{pr:closebycurves}
Let $\gamma \in W^{1+s,\frac{1}{s}}(\R / \Z,\R^3)$ be a regular homeomorphism. Then there exists $\eps = \eps(\gamma,s) > 0$ such that for any $\tilde{\gamma}_1,\tilde{\gamma}_2 \in C^1(\R/\Z,\R^3)$ with 
\begin{equation}\label{eq:cbc:2}\|\tilde{\gamma}_i - \gamma\|_{L^\infty} +[\tilde{\gamma}_i' - \gamma']_{W^{s,\frac{1}{s}}} < \eps\end{equation} and 
\[
\frac{1}{2} \inf |\gamma'| \leq\inf |\tilde{\gamma}_i'| \leq
\sup |\tilde{\gamma}_i'| \leq 2\sup |\gamma'| \quad \text{$i=1,2$,}
\]
we have that $\tilde{\gamma}_1$ is ambient isotopic to $\tilde{\gamma}_2$.
\end{proposition}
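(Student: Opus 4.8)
The plan is to connect the two $C^1$-curves $\tilde\gamma_1,\tilde\gamma_2$ through a single $C^1$-isotopy and then invoke \Cref{th:hambientisotop}. The natural candidate is the affine homotopy $\Gamma(t,x) := (1-t)\tilde\gamma_1(x) + t\tilde\gamma_2(x)$, which is manifestly $C^1$ in $(t,x)$; the only thing to check is that $\Gamma(t,\cdot)$ is a diffeomorphism of $\R/\Z$ onto its image for every $t\in[0,1]$, i.e.\ that it is simultaneously an immersion and injective. Immersion is the easier half: writing $\Gamma'(t,x) = (1-t)\tilde\gamma_1'(x) + t\tilde\gamma_2'(x)$, I would compare each $\tilde\gamma_i'$ to $\gamma'$ and use that $[\tilde\gamma_i'-\gamma']_{W^{s,1/s}}$ is small, hence (by Sobolev embedding $W^{s,1/s}\hookrightarrow C^0$ on the circle, which holds precisely because $s\cdot\frac1s = 1 = \dim$) $\|\tilde\gamma_i' - \gamma'\|_{L^\infty}$ is small. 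Wait — I should be careful: $W^{s,1/s}$ embeds into $\mathrm{VMO}$ but not quite $C^0$. The honest route is to note $[\cdot]_{W^{s,1/s}}$ small together with the pointwise bounds $\inf|\tilde\gamma_i'|\ge\frac12\inf|\gamma'|$ does give a good lower bound on $|\Gamma'(t,x)|$: indeed $|\Gamma'(t,x)| = |(1-t)\tilde\gamma_1'(x)+t\tilde\gamma_2'(x)|$, and both vectors are close to $\gamma'(x)$ in an averaged sense, so the straight-line combination cannot vanish once $\eps$ is small enough relative to $\inf|\gamma'|$. I would make this rigorous by the same averaging trick used in the proof of \Cref{la:diffeoapprox}: bound $\mvint_{B(x,\rho)}|\tilde\gamma_i'(x)-\gamma'(z)|\,dz \aleq [\gamma']_{W^{s,1/s}} + [\tilde\gamma_i'-\gamma']_{W^{s,1/s}}$ for small $\rho$, concluding $|\tilde\gamma_i'(x)| \ge \inf|\gamma'| - C\eps'$ in a direction close to a fixed vector, so the convex combination stays bounded below.

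Injectivity is the real obstacle. The point is that $\Gamma(t,\cdot)$ need not be injective just because its endpoints are — a self-intersection could appear at an intermediate time. Here is where I would use the bi-Lipschitz estimate \Cref{la:bilip} as the key tool, exactly as in \Cref{la:diffeoapprox}. Fix, via absolute continuity, a scale $\delta_0 = \delta_0(\gamma)$ on which $\sup_{B(10\delta_0)\subset\R/\Z}[\gamma']_{W^{s,1/s}(B(10\delta_0))} < \eps_0$, with $\eps_0 = \eps_0(\lambda)$ chosen so small that \Cref{la:bilip} applies. For $\eps$ small enough, $[\Gamma'(t,\cdot)]_{W^{s,1/s}(B(10\delta_0))} \le (1-t)[\tilde\gamma_1']_{W^{s,1/s}(B)} + t[\tilde\gamma_2']_{W^{s,1/s}(B)} < 2\eps_0$ for every $t$, and combined with $\inf|\Gamma'(t,\cdot)|\ge\lambda/4$ from the previous paragraph, \Cref{la:bilip} yields the \emph{local} bi-Lipschitz bound $|\Gamma(t,x)-\Gamma(t,y)| \ge c\,|x-y|$ whenever $|x-y| < \delta_0$, uniformly in $t$. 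For the \emph{far} points $|x-y|\ge\frac12\delta_0$ I argue by uniform closeness to $\gamma$: since $\gamma$ is a homeomorphism, $\eps_1 := \inf_{|x-y|\ge\frac12\delta_0}|\gamma(x)-\gamma(y)| > 0$, and $\|\Gamma(t,\cdot)-\gamma\|_{L^\infty} \le \max_i\|\tilde\gamma_i-\gamma\|_{L^\infty} < \eps$, so $|\Gamma(t,x)-\Gamma(t,y)| \ge \eps_1 - 2\eps > 0$ for $\eps$ small. Here I again need $\|\tilde\gamma_i - \gamma\|_{L^\infty}$ small, which is literally part of hypothesis \eqref{eq:cbc:2}. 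Putting the two regimes together gives a genuine (global) bi-Lipschitz bound $|\Gamma(t,x)-\Gamma(t,y)| \ge \tfrac1C\min\{\lambda,\delta_0\eps_1\}|x-y|$ for all $x,y\in\R/\Z$ and all $t\in[0,1]$, in particular $\Gamma(t,\cdot)$ is injective.

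Having shown $\Gamma(t,\cdot)$ is a $C^1$ immersion that is injective, it is a $C^1$-diffeomorphism of $\R/\Z$ onto its image, and $\Gamma\in C^1([0,1]\times\R/\Z,\R^3)$ with $\Gamma(0,\cdot)=\tilde\gamma_1$, $\Gamma(1,\cdot)=\tilde\gamma_2$. Therefore \Cref{th:hambientisotop} applies and shows $\tilde\gamma_1(\R/\Z)$ and $\tilde\gamma_2(\R/\Z)$ are ambient isotopic, which is the assertion. The threshold $\eps = \eps(\gamma,s)$ is the minimum of the finitely many smallness requirements above (small enough for the immersion bound, small enough for \Cref{la:bilip} on scale $\delta_0$, and $\eps < \eps_1/4$), and each of these depends only on $\gamma$ and $s$ through $\lambda = \inf|\gamma'|$, $\sup|\gamma'|$, $\delta_0(\gamma)$, and $\eps_1(\gamma)$. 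The one subtlety worth flagging is that the constant in \Cref{la:bilip} and the scale $\delta_0$ must be fixed \emph{before} choosing $\eps$, so that the estimate is uniform in $t$ — this is exactly the bookkeeping already carried out in the proof of \Cref{la:diffeoapprox}, and the present proof is essentially that argument run on the path $\Gamma(t,\cdot)$ rather than on a single mollification.
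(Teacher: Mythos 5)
Your plan---run an affine homotopy $\Gamma(t,\cdot)=(1-t)\tilde\gamma_1+t\tilde\gamma_2$ directly between the two $C^1$ curves and invoke \Cref{th:hambientisotop}---cannot be carried out as written, and the gap is exactly where you signal your own unease: the immersion bound. The inequality you claim, namely $\mvint_{B(x,\rho)}|\tilde\gamma_i'(x)-\gamma'(z)|\,dz\aleq[\gamma']_{W^{s,1/s}}+[\tilde\gamma_i'-\gamma']_{W^{s,1/s}}$, is false. The left-hand side involves the pointwise value $\tilde\gamma_i'(x)$, and unlike the quantity in \Cref{la:diffeoapprox}, where $\gamma_\delta'(x)=\eta_\delta*\gamma'(x)$ is \emph{itself} an average of $\gamma'$ over $B(x,\delta)$ so that $|\gamma_\delta'(x)-\gamma'(z)|\le\mvint_{B(x,\delta)}|\gamma'(z_1)-\gamma'(z)|\,dz_1$, here $\tilde\gamma_i'(x)$ has no averaging structure linking it to $\gamma'$. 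Because $W^{s,1/s}$ is the critical space and does not embed into $L^\infty$, a continuous $h=\tilde\gamma_i'-\gamma'$ with zero mean can have arbitrarily small Gagliardo seminorm and small primitive (so \eqref{eq:cbc:2} holds) yet satisfy $|h(x_0)|\gtrsim 1$ at a single point: for instance, rotate $\gamma'$ by an angle $\theta(z)\approx\pi\min(1,\eta\log(1/|z|))$ near $z=0$, which preserves $|\tilde\gamma_2'|\equiv|\gamma'|$ (so the pointwise bounds on $|\tilde\gamma_i'|$ in the hypothesis are fine) but makes $\tilde\gamma_2'(0)$ antipodal to $\tilde\gamma_1'(0)$, with $[\theta]_{W^{s,1/s}}\to0$ as $\eta\to0$. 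Then $\Gamma'(t,0)=(1-t)\tilde\gamma_1'(0)+t\tilde\gamma_2'(0)$ vanishes at an intermediate $t$, and $\Gamma(t,\cdot)$ fails to be an immersion. Since your local bi-Lipschitz bound via \Cref{la:bilip} also needs $\inf|\Gamma'(t,\cdot)|$ bounded below, the whole chain collapses: the hypotheses of \Cref{pr:closebycurves} simply do not give you pointwise control of $\tilde\gamma_1'-\tilde\gamma_2'$, which is what makes the naive convex interpolation work.

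This is precisely why the paper's proof mollifies \emph{before} interpolating. Set $\gamma_\delta=\eta_\delta*\gamma$, $\tilde\gamma_{i,\delta}=\eta_\delta*\tilde\gamma_i$, connect each $\tilde\gamma_i$ to $\tilde\gamma_{i,\delta_0}$ by the mollification path (which is an isotopy since each map is a diffeomorphism, thanks again to \Cref{la:bilip}), and then interpolate $\tilde\gamma_{i,\delta_0}$ with $\gamma_{\delta_0}$ affinely. The crucial estimate the mollification buys you, and which is structurally unavailable for the un-mollified derivatives, is
\[
\|\gamma_{\delta_0}'-\tilde\gamma_{i,\delta_0}'\|_{L^\infty}=\|\eta_{\delta_0}'*(\gamma-\tilde\gamma_i)\|_{L^\infty}\le\frac{\|\eta'\|_{L^1}}{\delta_0}\,\|\gamma-\tilde\gamma_i\|_{L^\infty}\le\frac{\|\eta'\|_{L^1}}{\delta_0}\,\eps .
\]
In words: mollification trades the $L^\infty$-closeness of the curves in \eqref{eq:cbc:2} for $L^\infty$-closeness of the derivatives, which is exactly the pointwise datum needed to keep $\inf_{t,x}|\partial_x\Gamma_i(t,x)|$ bounded below and hence make the affine isotopy between the mollified curves a genuine $C^1$-isotopy. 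The mollification is therefore not ``bookkeeping already carried out in \Cref{la:diffeoapprox}''; it is the central mechanism of this proof, and without it there is no way to pass from a Sobolev-critical closeness hypothesis to a pointwise closeness conclusion on derivatives.
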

\begin{proof}
The strategy of the proof is very similar to the proof of \Cref{la:diffeoapprox}.

Set 
\[
 \lambda := \frac{1}{2}\inf |\gamma'|.
\]
Fix some $\eps_0 > 0$ to be specified later.

By absolute continuity there exists $\delta_0 =\delta_0(\gamma) \in (0,1)$ such that 
\[
\sup_{B(10\delta_0) \subset \R/\Z} [\gamma']_{W^{s,\frac{1}{s}}(B(10\delta_0) )} < \frac{1}{2}\eps_0.
\]
Since $\gamma$ is continuous and injective, the following infimum is attained and larger than $0$ 
\[
\eps_1 := \frac{1}{10}\inf_{|x-y| \geq \frac{1}{2}\delta_0} |\gamma(x)-\gamma(y)| > 0.
\]

Assume $\eps < \frac{1}{100}\min\{\eps_0,\eps_1\}$ in \eqref{eq:cbc:2}, then we have
\[
\sup_{B(10\delta_0) \subset \R/\Z} [\tilde{\gamma}_i']_{W^{s,\frac{1}{s}}(B(10\delta_0))} < \eps_0 \quad i=1,2
\]
and
\[
\inf_{|x-y| \geq \frac{1}{2}\delta_0} |\tilde{\gamma}_i(x)-\tilde{\gamma}_i(y)| > \eps_1 >0.
\]

Let $\eta \in C_c^\infty(B(0,1),[0,1])$, $\int \eta = 1$, be the usual mollifier kernel, and $\eta_\delta := \delta^{-1} \eta(\cdot/\delta)$. Set 
\[
 \gamma_\delta := \eta_\delta \ast \gamma
\]
and
\[
 \tilde{\gamma}_{i,\delta} := \eta_\delta \ast \tilde{\gamma}_i.
\]
Let $\delta_1$ be such that 
\[
\|\gamma_\delta - \gamma\|_{L^\infty} < \frac{1}{100}\eps_1 \quad \forall \delta \in [0,\delta_1]. 
\]
By the choice of $\eps \ll \eps_1$ we then have
\[
\|\tilde{\gamma}_{i,\delta} - \tilde{\gamma}_{i}\|_{L^\infty} < \frac{1}{10}\eps_1 \quad \forall \delta \in [0,\delta_1],\, i=1,2.
\]
As in the proof of \Cref{la:diffeoapprox}, we obtain that for each $\delta \in [0,\delta_0]$,
$\gamma_\delta$, $\tilde{\gamma}_{i,\delta}$ are diffeomorphisms satisfying
\[
|\tilde{\gamma}_{i,\delta}(x)-\tilde{\gamma}_{i,\delta}(y)|, |\gamma_\delta(x)-\gamma_\delta(y)| \geq \tilde{\lambda} |x-y| \quad \forall x,y \in\R/\Z \quad \forall \delta \in [0,\delta_1]
\]
for $\tilde{\lambda} := \frac{1}{100} \min\{\lambda, \delta_0\eps_1\}$.

Since $\tilde{\gamma}_{i,\delta} \xrightarrow{\delta \to 0} \tilde{\gamma}_{i}$, and $\tilde{\gamma}_i$ are smooth diffeomorphisms, we get from \Cref{th:hambientisotop} that $\tilde{\gamma}_{i,\delta_0}$ is ambient isotopic to $\tilde{\gamma}_{i}$ for $i=1,2$.

Now we show that $\tilde{\gamma}_{i,\delta_0}$ is ambient isotopic to $\tilde{\gamma}_{\delta_0}$ for $i=1,2$. Indeed set 
\[
 \Gamma_i(\cdot,t) := t \tilde{\gamma}_{i,\delta_0} + (1-t) \gamma_{\delta_0}.
\]
This is clearly a smooth homotopy, we only need to show that for each fixed $t \in [0,1]$ it is a diffeomorphism. But observe that
\[
\begin{split}
 &|\Gamma_i(x,t)-\Gamma_i(y,t)|\\
 \geq& |\gamma_{\delta_0}(x)-\gamma_{\delta_0}(y)| - |x-y|\, \|\gamma_{\delta_0}'-\tilde{\gamma}_{i,\delta_0}'\|_{L^\infty} \\
 \geq&\brac{\tilde{\lambda} -\|\gamma_{\delta_0}'-\tilde{\gamma}_{i,\delta_0}'\|_{L^\infty}} |x-y|.
 \end{split}
\]
Now
\[
 \|\gamma_{\delta_0}'-\tilde{\gamma}_{i,\delta_0}'\|_{L^\infty} \leq \frac{1}{\delta_0} \|\eta'\|_{L^1}\, \|\gamma-\tilde{\gamma}_i\|_{L^\infty} \leq \frac{\eps}{\delta_0} \|\eta'\|_{L^1}.
\]
That is, 
\[
 |\Gamma_i(x,t)-\Gamma_i(y,t)|  \geq\brac{\tilde{\lambda} -\|\eta'\|_{L^1}\, \frac{\eps}{\delta_0}} |x-y|\quad \forall x,y {\in\R/\Z}.
\]
So if we choose $\eps$ even smaller, namely if we ensure that $\eps < \frac{\tilde{\lambda}}{100} \delta_0 \|\eta'\|_{L^1}$, then we have found that $\Gamma(t,\cdot)$ is globally bilipschitz, and thus a diffeomorphism for each $t \in [0,1]$. This and \Cref{th:hambientisotop} imply that $\gamma_{\delta_0}$ and $\tilde{\gamma}_{i,\delta_0}$ are ambient isotopic, for each $i=1,2$. 

In particular, $\tilde{\gamma}_{1,\delta_0}$ is ambient isotopic to $\tilde{\gamma}_{2,\delta_0}$. Since we have already shown that $\tilde{\gamma}_{i,\delta_0}$ is ambient isotopic to $\tilde{\gamma}_{i}$ for each $i=1,2$, we conclude that $\tilde{\gamma}_{1}$ is ambient isotopic to $\tilde{\gamma}_{2}$.
\end{proof}

Since by \Cref{la:diffeoapprox} any $W^{1+s,\frac{1}{s}}$-regular Sobolev homeomorphism has an approximation by a regular diffeomorphisms, and by \Cref{pr:closebycurves} these approximating diffeomorphisms eventually are all of the same ambient isotopy type, the following definition is justified.

\begin{definition}\label{d:aisob}
\begin{itemize}
\item Let $[\gamma_0]$ be an ambient isotopy class. We say that a regular $W^{1+s,\frac{1}{s}}$-Sobolev homeomorphism $\gamma_1$ belongs to $[\gamma_0]$ if there exist approximating diffeomorphisms $\tilde{\gamma}_{1,k}$ such that $\tilde{\gamma}_{1,k} \in [\gamma_0]$ for eventually all $k \in \N$.
\item Equivalently, let $\gamma_1, \gamma_2 \in W^{1+s,\frac{1}{s}}(\R/\Z,\R^3)$ be two regular homeomorphisms.
% with
% \[
% 0< \inf |\gamma_i'| \leq \sup |\gamma_i'| < \infty\quad \text{\text{$i=1,2$}},
% \]
% where $\inf$ and $\sup$ are the essential infimimum and supremum, respectively.

We say that $\gamma_1$ and $\gamma_2$ are (Sobolev-)ambient isotopic,
\[
 \gamma_{1} \sim \gamma_{2},
\]
if the following properties are met: There exist approximating diffeomorphisms $\tilde{\gamma}_{1,k}$ and $\tilde{\gamma}_{2,k}$ converging in $L^\infty \cap W^{1+s,\frac{1}{s}}(\R/\Z,\R^3)$ to $\gamma_1$ and $\gamma_2$, respectively, and each satisfy
\[
\frac{1}{2} \inf |\gamma_i'| \leq\inf |\tilde{\gamma}_{i,k}'| \leq
\sup |\tilde{\gamma}_{i,k}'| \leq 2\sup |\gamma_i'| \quad \text{$i=1,2$}.
\]
Moreover, $\gamma_{1,k}$ and $\gamma_{2,\ell}$ are ambient isotopic for all but finitely many $k$ and $\ell$.
\end{itemize}
\end{definition}

Our main result in this section is that two curves which differ only locally and in a set where they have small critical Sobolev norm, have the same ambient isotopy type.
\begin{theorem}\label{th:localambisotchange}
 There exists a uniform $\eps > 0$ such that the following holds.

Let $\gamma_1,\gamma_2 \in C^1(\R/\Z,\R^3)$ be diffeomorphisms with 
\begin{itemize}
 \item $\frac{3}{4}\leq |\gamma_i'| \leq \frac{5}{4}$ in $\R / \Z$ for $i=1,2$
\end{itemize}
and assume that there is a ball $B(\rho) \subset \R / \Z$ such that the following holds:
\begin{itemize}
 \item $\frac{3}{4} |x-y|\leq |\gamma_i(x)-\gamma_i(y)|\leq \frac{5}{4} |x-y|$ for all $x,y \in B(10\rho)$,
  \item $[\gamma_i']_{W^{s,\frac{1}{s}}(B(10\rho))} < \eps$, $i=1,2$,
 \item we have $\dist\brac{\gamma_i(\R / \Z \backslash B(9\rho)),\gamma_i(B(8\rho)} \geq \frac{1}{1000} \rho$ for $i=1,2$,
 \item $\gamma_1(x) = \gamma_2(x)$ for all $x \in \R/\Z \backslash B(3\rho)$,
 \item $\|\gamma_1 -\gamma_2\|_{L^\infty(\R/\Z)} < \frac{1}{100}\frac{1}{1000} \rho$.
\end{itemize}
Then $\gamma_1(\R/\Z)$ and $\gamma_2(\R/\Z)$ are ambient isotopic.
\end{theorem}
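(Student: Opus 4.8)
The plan is to construct an explicit ambient isotopy connecting $\gamma_1(\R/\Z)$ to $\gamma_2(\R/\Z)$ by interpolating linearly between the two curves on the small ball $B(3\rho)$, while leaving everything fixed outside $B(9\rho)$, and then invoke \Cref{th:hambientisotop} to upgrade a suitable $C^1$-isotopy of parametrized curves to an ambient isotopy of their images. The starting point is that the linear homotopy $\Gamma(t,x) := (1-t)\gamma_1(x) + t\gamma_2(x)$ already has the right endpoints; because $\gamma_1 = \gamma_2$ outside $B(3\rho)$, this homotopy is stationary there, so the only place where injectivity and immersedness could fail is inside $B(3\rho)$ (and its interaction with the rest of the curve).

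First I would verify that each $\Gamma(t,\cdot)$ is an immersion. On $B(10\rho)$ the bi-Lipschitz hypothesis combined with the smallness of $[\gamma_i']_{W^{s,1/s}(B(10\rho))}$ gives, via \Cref{la:bilip} applied to the convex combination (note $\Gamma(t,\cdot)' = (1-t)\gamma_1' + t\gamma_2'$ and $[\Gamma(t,\cdot)']_{W^{s,1/s}(B(10\rho))} \le [\gamma_1']_{W^{s,1/s}} + [\gamma_2']_{W^{s,1/s}} < 2\eps$), a lower bound $|\Gamma(t,x) - \Gamma(t,y)| \ge \tfrac12 |x-y|$ for $x,y \in B(10\rho)$ with $|x-y|$ small, hence $|\Gamma(t,\cdot)'| \ge \tfrac12$ there; outside $B(3\rho)$ we have $\Gamma(t,\cdot)' = \gamma_1' = \gamma_2'$ which is bounded below by $\tfrac34$. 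So $\Gamma(t,\cdot)$ is a $C^1$ immersion of $\R/\Z$ for every $t$.

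Next, injectivity. For $x, y$ both in $B(10\rho)$ the local bi-Lipschitz estimate from \Cref{la:bilip} just quoted handles it directly. For $x \in B(8\rho)$ and $y \in \R/\Z \setminus B(9\rho)$: on the complement of $B(9\rho)$ we have $\Gamma(t,y) = \gamma_1(y) = \gamma_2(y)$, and for $x \in B(8\rho)$ we estimate $|\Gamma(t,x) - \gamma_1(x)| \le \|\gamma_1 - \gamma_2\|_{L^\infty} < \tfrac{1}{100}\tfrac{1}{1000}\rho$, so $|\Gamma(t,x) - \Gamma(t,y)| \ge \dist(\gamma_1(\R/\Z\setminus B(9\rho)), \gamma_1(B(8\rho))) - \tfrac{1}{100}\tfrac{1}{1000}\rho \ge \tfrac{1}{1000}\rho - \tfrac{1}{100}\tfrac{1}{1000}\rho > 0$, uniformly in $t$. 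Finally, for $x, y$ both in $\R/\Z \setminus B(8\rho)$ the map agrees with $\gamma_1 = \gamma_2$, which is injective. Since $\R/\Z = B(10\rho) \cup (B(8\rho) \times (\R/\Z\setminus B(9\rho)))\text{-pairs} \cup (\R/\Z\setminus B(8\rho))$ exhausts all pairs $(x,y)$ up to symmetry, $\Gamma(t,\cdot)$ is injective for every $t$. An injective $C^1$-immersion of the compact manifold $\R/\Z$ into $\R^3$ is a $C^1$-embedding, hence a diffeomorphism onto its image; and $\Gamma \in C^1([0,1]\times\R/\Z,\R^3)$ since it is affine in $t$ and $C^1$ in $x$. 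Applying \Cref{th:hambientisotop} (with $\gamma_0 = \gamma_1$, $\gamma_1^{\text{there}} = \gamma_2$) yields that $\gamma_1(\R/\Z)$ and $\gamma_2(\R/\Z)$ are ambient isotopic, which is the claim.

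The main obstacle is the cross-term injectivity argument, i.e.\ ensuring that the ``bubble'' region $\gamma_i(B(3\rho))$ stays clear of the far-away strands of the curve during the entire homotopy; this is precisely what the two hypotheses $\dist(\gamma_i(\R/\Z\setminus B(9\rho)),\gamma_i(B(8\rho))) \ge \tfrac{1}{1000}\rho$ and $\|\gamma_1-\gamma_2\|_{L^\infty} < \tfrac{1}{100}\tfrac{1}{1000}\rho$ are designed to supply, the nested radii giving the needed buffer. The second, more technical point is confirming that \Cref{la:bilip} is genuinely applicable to the convex combination $\Gamma(t,\cdot)$ — one needs the uniform lower bound on $|\Gamma(t,\cdot)'|$, which is not immediate from $|\gamma_i'| \ge \tfrac34$ alone since a convex combination of unit-ish vectors pointing in different directions could in principle be short, but the smallness of the $W^{s,1/s}$-seminorm forces $\gamma_1'$ and $\gamma_2'$ to be nearly constant (hence nearly parallel) on $B(10\rho)$, so their convex combination does not degenerate; making this precise is where the choice of the uniform $\eps$ is pinned down. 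The choice of $\eps$ is uniform because all the constants ($\tfrac12$, $\tfrac34$, the $\tfrac{1}{1000}$-type numbers) are absolute, and the constant $C(s)$ from \Cref{la:bilip} depends only on $s$.
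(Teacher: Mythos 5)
The crux of your argument is the claim that the direct linear homotopy $\Gamma(t,x)=(1-t)\gamma_1(x)+t\gamma_2(x)$ is an immersion for every $t$, and you identify — correctly — that this requires a pointwise lower bound on $|\Gamma(t,\cdot)'|$, i.e.\ that $\gamma_1'$ and $\gamma_2'$ cannot be close to antiparallel anywhere in $B(3\rho)$. Your proposed justification is that ``the smallness of the $W^{s,1/s}$-seminorm forces $\gamma_1'$ and $\gamma_2'$ to be nearly constant (hence nearly parallel) on $B(10\rho)$.'' This step fails: $W^{s,1/s}$ is the \emph{critical} Sobolev space on the line ($sp=1$) and does \emph{not} embed into $L^\infty$, so a small $W^{s,1/s}$-seminorm controls only the \emph{averaged} oscillation of $\gamma_i'$ (as in the proof of \Cref{la:bilip}), not its pointwise oscillation. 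Nothing in the hypotheses rules out $\gamma_1'(x_0)\approx-\gamma_2'(x_0)$ at some $x_0\in B(3\rho)$, in which case $\Gamma(1/2,\cdot)$ fails to be an immersion there and the homotopy is not an isotopy. You also try to extract $|\Gamma(t,\cdot)'|\ge\tfrac12$ from a bi-Lipschitz estimate obtained by applying \Cref{la:bilip} to $\Gamma(t,\cdot)$, but \Cref{la:bilip} takes $\inf|\Gamma(t,\cdot)'|\ge\lambda_1$ as a hypothesis, so that reasoning is circular.

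The paper's proof exists to sidestep exactly this obstruction. It first replaces each $\gamma_i$ by a locally tapered mollification $\gamma_{i,\sigma\rho}$ (with cutoff $\theta$ and scale $\sigma\rho$), shows that each $\gamma_i$ is ambient isotopic to $\gamma_{i,\sigma\rho}$, and only then runs the linear homotopy between $\gamma_{1,\sigma\rho}$ and $\gamma_{2,\sigma\rho}$. Mollification is what makes the linear homotopy non-degenerate: one can integrate by parts to get the pointwise bound
\[
\|\gamma_{1,\sigma\rho}'-\gamma_{2,\sigma\rho}'\|_{L^\infty(B(4\rho))}
\aleq \frac{1}{\sigma\rho}\,\|\gamma_1-\gamma_2\|_{L^\infty}
\aleq \frac{\eps}{\sigma},
\]
i.e.\ the $L^\infty$ difference of the \emph{mollified derivatives} is controlled by the $L^\infty$ difference of the \emph{curves} — a bound that is simply unavailable for the raw derivatives $\gamma_1'-\gamma_2'$. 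If you want to salvage a direct proof you would need an equivalent device (mollify, or use a partition-of-unity interpolation at the level of the curves with an explicit derivative estimate); the hypotheses as stated do not let you skip that step.
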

\begin{proof}
Let $\eta \in C_c^\infty(B(0,2),[0,1])$, $\int_{\R} \eta = 1$. Denote by $\eta_\delta := \delta^{-1} \eta(\cdot/\delta)$.

Let $\theta \in C_c^\infty(B(5\rho),[0,1])$ such that $\theta \equiv 1$ in $B(4\rho)$. We can construct $\theta$ such that $\|\theta'\|_{L^\infty} \aleq \frac{1}{\rho}$.

Set 
\[
 \gamma_{i,\delta}(x) := \int_{B(0,1)} \eta(z)\, \gamma_i(x+\delta \theta(x)z)\, dz.
\]

In the following we need to choose first some $\sigma > 0$, and then obtain some $\eps > 0$ depending on $\sigma$.

\underline{$\gamma_i$ is ambient isotopic to $\gamma_{i,\sigma \rho}$ for some uniform constant $\sigma \in (0,\frac{1}{2})$}

$\gamma_{i,\delta}$ would be the  usual convolution if $\theta \equiv 1$.

First observe that $\gamma_{i,\delta}(x) = \gamma_i(x)$ for $x \in \R / \Z \backslash B(5\rho)$. Moreover, we have 
\[
 \gamma_{i,\delta}(x)-\gamma_{i}(x) = \int_{B(0,1)} \eta(z)\, \brac{\gamma_i(x+\delta \theta(x)z)-\gamma_i(x)}\, dz \aleq \|\gamma_i'\|_{L^\infty} \delta \|\theta\|_{L^\infty} \xrightarrow{\delta \to 0} 0.
\]
Also we have 
\[
 \gamma_{i,\delta}'(x) = \int_{B(0,1)} \eta(z)\, \gamma_i'(x+\delta \theta(x)z) \brac{1+\delta \theta'(x)z}\, dz.
\]
Thus (recall $|\theta'|\aleq \frac{1}{\rho}$)
\[
 |\gamma_{i,\delta}'(x)| \leq \|\gamma_i'\|_{L^\infty}\, \brac{1+C\frac{\delta}{\rho}},
\]
that is for a certain (essentially uniform) $\sigma \in (0,1)$ we can ensure that
\[
 \|\gamma_{i,\delta}'\|_{L^\infty} \leq \frac{11}{8} \quad \forall \delta < \sigma \rho.
\]
It is the other direction which is more tricky. We have for any $x,z \in B(5\rho)$
\[
\begin{split}
 &\left ||\gamma_{i,\delta}'(x)|-1 \right |\\
 \leq &\left ||\gamma_{i,\delta}'(x)|-|\gamma_{i}'(z)| \right | + \frac{1}{4}\\
 \leq &\left |\gamma_{i,\delta}'(x)-\gamma_{i}'(z)\right | + \frac{1}{4}.
\end{split}
 \]
In particular, for any $z \in B(0,1)$,
\[
\begin{split}
 &\left ||\gamma_{i,\delta}'(x)|-1 \right |\\
 \leq &\left |\gamma_{i,\delta}'(x)-\gamma_{i}'(x+\delta \theta(x)z)\right | + \frac{1}{4}.
\end{split}
 \]
Integrating in $z$, as the left-hand side is a constant, we obtain
\[
\begin{split}
 &\left ||\gamma_{i,\delta}'(x)|-1 \right |\\
  =&\mvint_{B(0,1)} \left |\int_{B(0,1)} \eta(z_2) \gamma_{i}'(x+\delta \theta(x) z_2)(1+\delta \theta'(x) z_2)dz_2-\gamma_{i}'(x+\delta \theta(x)z) \right | dz + \frac{1}{4}\\
  =&\mvint_{B(0,1)} \left |\int_{B(0,1)} \eta(z_2) \brac{\gamma_{i}'(x+\delta \theta(x) z_2)(1+\delta \theta'(x) z_2)-\gamma_{i}'(x+\delta \theta(x)z)} dz_2 \right | dz + \frac{1}{4}\\
\leq&\mvint_{B(0,1)} \int_{B(0,1)} \left |\brac{\gamma_{i}'(x+\delta \theta(x) z_2)(1+\delta \theta'(x) z_2)-\gamma_{i}'(x+\delta \theta(x)z)}\right | \, dz_2  \, dz + \frac{1}{4}\\
%\leq&\mvint_{B(0,1)} \int_{B(0,1)} \left |\brac{\gamma_{i}'(x+\delta \theta(x) z_2)(1+\delta \theta'(x) z_2)-\gamma_{i}'(x+\delta \theta(x)z)} dz_2 \right | dz + \frac{1}{4}\\
\leq&\mvint_{B(0,1)} \int_{B(0,1)} \left |\gamma_{i}'(x+\delta \theta(x) z_2)-\gamma_{i}'(x+\delta \theta(x)z) \right | \, dz\, dz_2 + \frac{1}{4}+C\, \frac{\delta}{\rho}.\\
\end{split}
 \]
Now observe that $\theta(x)$ is a fixed, nonnegative number. If $\theta(x) = 0$, then the double integral is zero. If $\theta(x) > 0$, by substitution,
\[
\begin{split}
&\mvint_{B(0,1)} \int_{B(0,1)} \left |\gamma_{i}'(x+\delta \theta(x) z_2)-\gamma_{i}'(x+\delta \theta(x)z) \right | \, dz \, dz_2\\
\aleq &\frac{1}{(\delta \theta(x))^2} \int_{B(x,\delta \theta(x))} \int_{B(x,\delta \theta(x))} \left |\gamma_{i}'(\tilde{x})-\gamma_{i}'(\tilde{y}) \right | \, d\tilde{x} \,  d\tilde{y}\\
\aleq &[\gamma_i']_{W^{\frac{1}{q},q}(B(x,\delta \theta(x))}\\
\leq &[\gamma_i']_{W^{\frac{1}{q},q}(B(10\rho))} < \eps.
\end{split}
\]
That is, we have shown that for each $x \in B(5\rho)$ we have
\[
 \left ||\gamma_{i,\delta}'(x)|-1 \right | \leq \frac{1}{4} + C (\frac{\delta}{\rho} +\eps).
\]
The constant $C$ is uniform, so if $\delta < \sigma \rho$ and $\eps \ll 1$ (uniform constant), we get that 
\begin{equation}\label{eq:asdkgammaideltap2}
 \frac{5}{8} \leq |\gamma_{i,\delta}'(x)| \leq \frac{11}{8} \quad \forall x \in \R /\Z.
\end{equation}
(Observe this estimate is trivial for all $x$ where $\gamma_{i,\delta} = \gamma_{i}$).

Next we estimate the Sobolev norm, namely we have
\[
 [\gamma_{i,\delta}']_{W^{\frac{1}{q},q}(B(9\rho))} \aleq [\gamma_{i}']_{W^{\frac{1}{q},q}(B(10\rho))} + \frac{\delta}{\rho}.
\]
Indeed, we have 
\[
\begin{split}
 &|\gamma_{i,\delta}'(x)-\gamma_{i,\delta}'(y)| \\
 \leq &\int_{B(0,1)}\,\left  |\gamma_{i}'(x+\delta \theta(x) z)(1+\delta \theta'(x) z)-\gamma_{i}'(y+\delta \theta(y) z)(1+\delta \theta'(y) z) \right |\, dz\\
 \leq &(1+C \frac{\delta}{\rho})\int_{B(0,1)}\,\left  |\gamma_{i}'(x+\delta \theta(x) z)-\gamma_{i}'(y+\delta \theta(y) z) \right |\, dz\\
 &+\|\gamma_i'\|_{L^\infty}\int_{B(0,1)}\,\left  |\delta \theta'(x) - \delta \theta'(y) \right |\, dz.\\
 \end{split}
\]
First we observe 
\[
\int_{B(0,1)}\,\left  |\delta \theta'(x) - \delta \theta'(y) \right |\, dz \aleq \frac{\delta}{\rho} |x-y|
\]
and consequently, for any $q > 1$
\[
 \begin{split}
\int_{B(9\rho)}\int_{B(9\rho)} 
\frac{\brac{\int_{B(0,1)}\,\left  |\delta \theta'(x) - \delta \theta'(y) \right |\, dz}^q}{|x-y|^{2}}\, dx\, dy \aleq \brac{\frac{\delta}{\rho}}^q \rho^q = \delta^q.
 \end{split}
\]
We thus arrive at
\[
\begin{split}
&[\gamma_{i,\delta}']_{W^{\frac{1}{q},q}(B(9\rho))}^q \\
\leq& C\delta^q + (1+C \frac{\delta}{\rho})^q
\int_{B(0,1)} \int_{B(9\rho)}\int_{B(9\rho)} 
\frac{\left  |
\gamma_{i}'(x+\delta \theta(x) z)-\gamma_{i}'(y+\delta \theta(y) z)
\right | ^q}{|x-y|^{2}}\, dx\, dy \, dz.
\end{split}
\]
Observe that $|1+\delta \theta'(x)z| \geq 1-C\frac{\delta}{\rho}\geq 1-C\sigma$ (for $\delta \leq \sigma \rho$). Moreover, 
\[ 
 \left |x+\delta \theta(x) z -\brac{y+\delta \theta(y) z}\right | \leq |x-y| + \frac{\delta}{\rho} |x-y| \leq 2 |x-y|.
\]
So we can use the change of variables formula, to obtain
\[
\begin{split}
 &\int_{B(9\rho)}\int_{B(9\rho)} 
\frac{\left  |
\gamma_{i}'(x+\delta \theta(x) z)-\gamma_{i}'(y+\delta \theta(y) z)
\right |^q}{|x-y|^{2}}\, dx\, dy\\
\aleq &\frac{1}{(1-C\sigma)^2} \int_{B(9\rho)}\int_{B(9\rho)} 
\frac{\left  |
\gamma_{i}'(x+\delta \theta(x) z)-\gamma_{i}'(y+\delta \theta(y) z)
\right |^q}{|
x+\delta \theta(x) z - (y+\delta \theta(y) z)
|^{2}}\, |1+\delta \theta'(x) z| dx\, |1+\delta \theta'(y) z| dy\\
\aleq &\frac{1}{(1-C\sigma)^2} \int_{B(10\rho)}\int_{B(10\rho)} \frac{|\gamma_i'(x)-\gamma_i'(y)|^q}{|x-y|^{2}}\, dx\, dy.
\end{split}
\]
In conclusion, for $\sigma$ (and $\eps$) small enough we have shown
\begin{equation}\label{eq:asdkgammaideltap}
 [\gamma_{i,\delta}']_{W^{\frac{1}{q},q}(B(9\rho))} \aleq \eps_0 \quad \forall \delta \leq \sigma \rho.
\end{equation}
Here $\eps_0$ can be the small constant of \Cref{la:bilip}, and get that $\gamma'_{i,\delta}$ is uniformly bilipschitz in $B(9\rho)$. Outside it does not even change for all $\delta \leq \sigma \rho$.

Consequently,
\[
 \gamma_{i,\delta}: [0,\sigma\rho] \times \R/\Z \to \R^3
\]
is an isotopy with uniformly bounded bilipschitz estimate, and by \Cref{th:hambientisotop}, $\gamma_{i}$ and $\gamma_{i,\sigma \rho}$ are ambient isotopic.

\underline{$\gamma_{1,\sigma\rho}$ and $\gamma_{2,\sigma\rho}$ are ambient isotopic}

Let
\[
 \Gamma: [0,1] \times \R/\Z \to \R^3
\]
given by the convex combination
\[
 \Gamma(t,x) := t \gamma_{1,\sigma \rho}(x) + (1-t) \gamma_{2,\sigma \rho}(x).
\]
From the estimates above, we have  
\[
 \|\partial_x \Gamma(t,\cdot)\|_{L^\infty} \leq \frac{11}{8} \quad \forall t \in [0,1].
\]
We need to get a uniform bilipschitz estimate for $\Gamma$. 

First, since $\gamma_1 = \gamma_2$ in $\R/\Z \backslash B(3\rho)$ and $\sigma \in (0,\frac{1}{2})$,
\[
 \gamma_{1,\sigma \rho}(x) = \gamma_{2,\sigma\rho}(x) \quad \forall \text{$x \in \R / \Z \backslash B(\tfrac{7}{2}\rho)$}. 
 \]
 Thus,
 \[
  \Gamma(t,x) = \gamma_{1,\sigma \rho}(x) \quad \forall \text{$x \in \R / \Z \backslash B(\tfrac{7}{2}\rho)$}.
 \]
In view of \eqref{eq:asdkgammaideltap} and \eqref{eq:asdkgammaideltap2} combined with \Cref{la:bilip}, we know that $\gamma_{1,\sigma \rho}$ is  uniformly bilipschitz in $B(9\rho)$, namely
\begin{equation}\label{eq:bilip:344646}
\inf_{t \in [0,1]} \inf_{x,y \in B(9\rho)\backslash B(7\rho/2)} \frac{\Gamma(t,x)-\Gamma(t,y)}{|x-y|} > 0.
\end{equation}
Secondly, since $\theta \equiv 0$ in $\R/\Z \backslash B(5\rho)$, 
\[
 \gamma_{1,\sigma \rho}(x) = \gamma_{2,\sigma\rho}(x) = \gamma_1(x)=\gamma_2(x) \quad \forall x \in \R / \Z \backslash B(5\rho).
\]
Thus,
 \[
  \Gamma(t,x) = \gamma_{1}(x)\quad  \forall x \in \R / \Z \backslash B(5\rho).
 \]
Since $\gamma_1$ is a regular diffeomorphism, this implies that 
\begin{equation}\label{eq:bilip:344647}
\inf_{t \in [0,1]} \inf_{x,y \in \R / \Z \backslash B(5\rho)} \frac{\Gamma(t,x)-\Gamma(t,y)}{|x-y|} > 0.
\end{equation}
Combining \eqref{eq:bilip:344646} and \eqref{eq:bilip:344647} we have 
\begin{equation}\label{eq:bilip:344648}
\inf_{t \in [0,1]} \inf_{x,y \in \R / \Z \backslash B(\frac{3}{2}\rho)} \frac{\Gamma(t,x)-\Gamma(t,y)}{|x-y|} > 0.
\end{equation}

It remains to show the bilipschitz estimate for $|\Gamma(t,x)-\Gamma(t,y)|$ only for $x,y \in B(4\rho)$. For such $x,y$,
\[
\begin{split}
 |\Gamma(t,x)-\Gamma(t,y)| \geq& |\gamma_{1,\sigma \rho}(x)-\gamma_{1,\sigma \rho}(y)|\\
&- |\Gamma(t,x)-\gamma_{1,\sigma \rho}(x)-(\Gamma(t,y)-\gamma_{1,\sigma \rho}(y))|\\
\geq& \frac{1}{2}\, |x-y|\\
&- |(t-1)\gamma_{1,\sigma \rho}(x)+(1-t)\gamma_{2,\sigma \rho}(x) -((t-1)\gamma_{1,\sigma \rho}(y)+(1-t)\gamma_{2,\sigma \rho}(y))|\\
=& \frac{1}{2}\, |x-y|\\
&- (1-t) |\gamma_{1,\sigma \rho}(y)-\gamma_{1,\sigma \rho}(x)-\brac{\gamma_{2,\sigma \rho}(y)-\gamma_{2,\sigma \rho}(x) }|\\
\geq& \frac{1}{2}\, |x-y|\\
&- \int_{[x,y]}|\gamma'_{1,\sigma \rho}(z)-\gamma_{2,\sigma \rho}'(z)|\, dz \\
\geq& \brac{\frac{1}{2}-\|\gamma'_{1,\sigma \rho}-\gamma_{2,\sigma \rho}'\|_{L^\infty(B(4\rho))} }\, |x-y|.\\
\end{split}
 \]
Since $x,y \in B(4\rho)$, $\theta(x) = \theta(y) = 1$. Thus,
\[
\begin{split}
 \|\gamma'_{1,\sigma \rho}-\gamma_{2,\sigma \rho}'\|_{L^\infty(B(4\rho))}
 =&\|\eta_{\sigma \rho}\ast \gamma'_{1}-\eta_{\sigma \rho}\ast \gamma_{2}'\|_{L^\infty(B(4\rho))}\\
 \aleq& \frac{1}{\sigma \rho}\|\gamma'_1 - \gamma_2'\|_{L^1(B(6\rho))}\\
 \aleq& \frac{\rho}{\sigma \rho} [\gamma'_1 - \gamma_2']_{W^{\frac{1}{q},q}(B(6\rho))}\\
 \leq& \frac{\eps}{\sigma}.
\end{split}
\]
In the last step we used Poincar\'e inequality (and the fact that $\gamma'_1 = \gamma_2'$ close to $\partial B(6\rho)$).

So if we choose $\eps$ such that $\eps \ll \sigma$ we obtain 
\[
 |\Gamma(t,x)-\Gamma(t,y)| \geq \frac{1}{4}\, |x-y| \quad \forall x,y \in B(4\rho).
 \]
Combining this with \eqref{eq:bilip:344648}, we have shown that $\Gamma$ is an isotopy. In view of \Cref{th:hambientisotop}, $\gamma_{1,\sigma}(\R/\Z)$ and $\gamma_{2,\sigma}(\R/\Z)$ are ambient isotopic. Since in turn $\gamma_{i,\sigma}$ and $\gamma_i$  are ambient isotopic for $i=1,2$, we have proven that $\gamma_1$ is ambient isotopic to $\gamma_2$.
\end{proof}

\section{Homeomorphisms appear as limits: Proof of Theorem~\ref{th:weaklimitareweakimm}}\label{s:weaklimitimmersion}
It is easy to construct Lipschitz parametrization of curves $\gamma: \R /\Z \to \R^3$ with vanishing tangent-point energy $\tp^{p,q}(\gamma) = 0$, $p \geq q+2$, but with no reasonable regularity, namely $\gamma \not \in C^1(\R/\Z,\R^3)$ and $\gamma \not \in W^{1+\frac{p-q-1}{q},q}(\R/\Z,\R^3)$.

\begin{example}\label{ex:weirdfiniteenergy}
For any Lipschitz map $\tilde{\gamma}: \R/\Z \to [0,1/2]$ with $|\tilde{\gamma}'| \equiv 1$, if we set $\gamma(x) := (\tilde{\gamma},0,0) \in \R^3$ then
\[
 |\gamma'(x) \wedge (\gamma(x)-\gamma(y))| = 0.
\]
In particular, if for any $x \in \R/\Z$ there are only finitely many $y \in \R/\Z$ such that $\tilde{\gamma}(x) = \tilde{\gamma}(y)$, we have that 
\[
 \frac{|\gamma'(x) \wedge (\gamma(x)-\gamma(y))|^q}{|\gamma(x)-\gamma(y)|^{p}} = 0 \quad \text{$\mathcal{L}^2$-a.e. $(x,y) \in (\R/\Z)^2$},
\]
and thus $\tp^{p,q}(\gamma) = 0$.

E.g., take $\tilde{\gamma}$ to be
\[
 \tilde{\gamma}(t) := \begin{cases}
                       t \quad &t < \frac{1}{2}\\
                       \frac{1}{2}-t \quad &t \in [\frac{1}{2},1]
                      \end{cases}.
\]
Then $\gamma'$ has a jump discontinuity at $t= \frac{1}{2}$ and $t=0$. Thus $\gamma' \not \in C(\R/\Z, \R^3)$ and $\gamma' \not \in W^{\frac{p-1}{q},q}(\R/\Z,\R^3)$ whenever $\gamma \not \in W^{1+\frac{p-q-1}{q},q}(\R/\Z,\R^3)$ for any $p \geq q+2$ and $q \in (1,\infty)$.

It is easy to extend this example into  a map $\gamma$ with countably many points of non-differentiability but still $\tp^{p,q}(\gamma) = 0$.
\end{example}
See also example of $k$-covered circle \cite[after Theorem 1.1]{SvdM12}.

\Cref{ex:weirdfiniteenergy} shows that there is no hope to classify a reasonable energy space of Lipschitz maps $\gamma: \R/\Z \to \R^3$ with finite tangent-point energy. Rather we investigate the space of diffeomorphisms with finite tangent-point energy, which turns out to be more manageable -- this is the content of the following \Cref{th:convoutsidesingular} which is the main theorem of this section. In particular, \Cref{th:convoutsidesingular} implies \Cref{th:weaklimitareweakimm}.

\begin{theorem}\label{th:convoutsidesingular}
    For any $\Lambda > 0$ and $\eps > 0$ there exists an $L = L(\eps,\Lambda) \in \N$ such that the following holds.

	Let $\gamma_k \in C^1(\R/\Z,\R^3)$, $|\gamma_k'| \equiv 1$, be homeomorphisms with 
	\[
	\sup_{k}\| \gamma_k\|_{L^\infty} + \sup_{k} \tp^{p,q} (\gamma_k) \leq \Lambda.
	\]
	Then there exists a subsequence $(\gamma_{k_i})_{i \in \N}$ and $\gamma \in \lip(\R/\Z,\R^3)$ such that the following holds for some finite set $\Sigma \subset \R/\Z$ with $\#\Sigma \leq L$.  
	
	\begin{enumerate}
	\item $\gamma_{k_i}$ converges uniformly to $\gamma$ and $\gamma \in \lip(\R/\Z,\R^3)$.
    \item For any $x_0 \in \R / \Z \backslash \Sigma$ there exists a radius $\rho(x_0) > 0$ such that $\gamma_{k_i}$ weakly converges to $\gamma$ in $W^{1+\frac{p-q-1}{q},q}(B(x_0,\rho))$.
	\item $|\gamma'| = 1$ a.e.
	\item $\gamma$ is uniformly bilipschitz in $B(x_0,\rho)$ with the estimate
	\[
	 (1-\eps) |x-y| \leq |\gamma(x)-\gamma(y)| \leq |x-y| \quad \forall x,y \in B(x_0,\rho).
	\]
	
	\item We have lower semicontinuity, namely
	\[
	 \tp^{p,q} (\gamma) \leq \liminf_{k \to \infty} \tp^{p,q} (\gamma_k).
	\]
    
    \item $\gamma$ is a bi-Lipschitz homeomorphism.
    \item $\gamma \in W^{1+s,q}(\R/\Z,\R^3)$ for any $0<s < \frac{1}{q}$.
	\end{enumerate}
\end{theorem}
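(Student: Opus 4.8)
The plan is to prove Theorem~\ref{th:convoutsidesingular} by combining a compactness argument with a local gap-type estimate, following the strategy outlined in the introduction (inspired by M\"uller--Sverak and the ``straightness'' analysis of Strzelecki--von der Mosel). Throughout, I work with the arclength parametrizations $\gamma_k$, $|\gamma_k'|\equiv 1$, so $(\gamma_k)$ is equi-Lipschitz and equibounded; by Arzel\`a--Ascoli, after passing to a subsequence, $\gamma_{k}\to\gamma$ uniformly with $\gamma\in\lip(\R/\Z,\R^3)$ and $|\gamma'|\le 1$ a.e. This gives item (1).

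\medskip

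\noindent\textbf{Step 1: a localized energy-concentration bound and definition of $\Sigma$.}
The key quantitative input is that a lower bound on $\tp^{p,q}(\gamma_k;B(x,r))$ over small balls forces curvature concentration. More precisely, I expect a covering/Vitali argument: by the uniform bound $\sup_k\tp^{p,q}(\gamma_k)\le\Lambda$, for each $k$ there are at most $L=L(\eps,\Lambda)$ points around which the local energy on \emph{every} scale stays $\geq$ some threshold $\eps_0=\eps_0(\eps)$. Letting these bad points vary with $k$ and taking a limit point of each gives a finite set $\Sigma$ with $\#\Sigma\le L$. For $x_0\in(\R/\Z)\setminus\Sigma$ we then find $\rho=\rho(x_0)>0$ and a subsequence with $\sup_i\tp^{p,q}(\gamma_{k_i};B(x_0,2\rho))<\eps_0$.

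\medskip

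\noindent\textbf{Step 2: from small local energy to a Sobolev bound.}
Here is the technical heart, and the step I expect to be the main obstacle. In the scale-invariant range $p=q+2$ one needs to show
\[
[\gamma_k']_{W^{\frac{p-q-1}{q},q}(B(x_0,\rho))}^q=[\gamma_k']_{W^{\frac1q,q}(B(x_0,\rho))}^q\aleq \tp^{p,q}(\gamma_k;B(x_0,2\rho))+C(\rho).
\]
Morally, $|\gamma_k'(x)\wedge(\gamma_k(x)-\gamma_k(y))|\approx |x-y|\,|\gamma_k'(x)-\gamma_k'(y)|$ up to higher-order terms once $\gamma_k$ is nearly straight on $B(x_0,2\rho)$, so the tangent-point integrand controls $\tfrac{|\gamma_k'(x)-\gamma_k'(y)|^q}{|x-y|^{p-q}}=\tfrac{|\gamma_k'(x)-\gamma_k'(y)|^q}{|x-y|^2}$, which is exactly the $W^{\frac1q,q}$-seminorm density. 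The delicate point is bootstrapping: the comparison $|\gamma_k(x)-\gamma_k(y)|\approx|x-y|$ needed to make this work is itself a bi-Lipschitz estimate that, via \Cref{la:bilip}, follows once $[\gamma_k']_{W^{\frac1q,q}}$ is small — so one runs a continuity/absorption argument in $\rho$, shrinking $\rho$ so that the $C(\rho)$ term is absorbed and the smallness of $\tp^{p,q}(\gamma_k;B(x_0,2\rho))$ propagates to smallness of $[\gamma_k']_{W^{\frac1q,q}(B(x_0,\rho))}$. This yields the uniform bound $\sup_i[\gamma_{k_i}']_{W^{\frac1q,q}(B(x_0,\rho))}<\eps$, hence by reflexivity weak convergence $\gamma_{k_i}\rightharpoonup\gamma$ in $W^{1+\frac1q,q}(B(x_0,\rho))$, proving item (2); and by \Cref{la:bilip} (applied to $\gamma_{k_i}$ and then passed to the limit, using $|\gamma_k'|\equiv1$) the bi-Lipschitz estimate of item (4), and $|\gamma'|=1$ a.e.\ of item (3) (the upper bound $\le1$ is from equi-Lipschitz; the lower bound from bi-Lipschitzness).

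\medskip

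\noindent\textbf{Step 3: global bi-Lipschitzness, lower semicontinuity, and the global Sobolev class.}
For item (6), I would show $\gamma$ is injective globally: on the finitely many small balls covering a neighborhood of $\Sigma$ the curves remain embedded and the pieces away from $\Sigma$ are bi-Lipschitz by Step~2; a short argument (using the uniform energy bound to prevent two far-apart arcs from colliding in the limit — this is where one re-uses the Strzelecki--von der Mosel type estimate near $\Sigma$, or a standard argument that the tangent-point energy controls ``no collapse'' of distant strands) upgrades local to global bi-Lipschitzness. Item (5), lower semicontinuity $\tp^{p,q}(\gamma)\le\liminf_k\tp^{p,q}(\gamma_k)$, follows from Fatou's lemma together with a.e.\ convergence of the integrand: $\gamma_{k_i}\to\gamma$ uniformly gives pointwise convergence of $|\gamma_{k_i}(x)-\gamma_{k_i}(y)|^{-p}$ away from the diagonal, and $\gamma_{k_i}'\to\gamma'$ a.e.\ on each $B(x_0,\rho)$ (from weak $W^{1+\frac1q,q}$-convergence plus Rellich), so the wedge terms converge a.e.; one must check that the contribution of a neighborhood of the diagonal and of $\Sigma$ is harmless, which is standard once bi-Lipschitzness is in hand. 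Finally, item (7): $\gamma\in W^{1+s,q}(\R/\Z,\R^3)$ for all $0<s<\tfrac1q$ follows by interpolating the local $W^{1+\frac1q,q}$-bounds (on the finitely many good balls) with the global $W^{1,\infty}$-bound, or more directly: near $\Sigma$ the curve is bi-Lipschitz so $\gamma'\in L^\infty$, and $[\gamma']_{W^{s,q}}$ over all of $\R/\Z$ is finite for $s<\tfrac1q$ because the only possible singularities are at the finitely many points of $\Sigma$, where the $W^{s,q}$-seminorm of an $L^\infty$ function with a jump is finite precisely when $sq<1$; gluing the good-ball estimates with this local control completes the proof.
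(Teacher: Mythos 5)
Your overall architecture matches the paper's: covering argument for the bad set $\Sigma$, a local Sobolev bound under small energy, Strzelecki--von der Mosel for global injectivity, and Fatou for lower semicontinuity. But Step~2 — which you rightly identify as the technical heart — is not correct as you propose it, and the error is material.

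You write the target estimate as $[\gamma_k']_{W^{\frac1q,q}(B(x_0,\rho))}^q \aleq \tp^{p,q}(\gamma_k;B(x_0,2\rho)) + C(\rho)$ and suggest absorbing $C(\rho)$ by shrinking $\rho$. This cannot work: an additive constant $C(\rho)$ that does not itself scale with energy cannot be absorbed, and if it did scale you would not need the smallness hypothesis at all. The paper's actual mechanism (Lemma~\ref{la:gammasobvstp1}) produces a genuine \emph{gap estimate}
\[
[\gamma']_{W^{\frac{p-q-1}{q},q}(B)}^q \leq C_1\,\tp^{p,q}(\gamma,B) + C_2\,[\gamma']_{W^{\frac{p-q-1}{q},q}(B)}^{2q},
\]
where the error term is \emph{quadratic} in the seminorm. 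The decisive point you are missing is the algebraic identity coming from $|\gamma'|\equiv 1$: the Lagrange identity splits $|\gamma'(x)\wedge(\gamma(x)-\gamma(y))|^2$ into a leading term (controlling $\avint(\gamma'-\gamma'(x))$) minus a term involving $\gamma'(x)\cdot(\gamma'(z)-\gamma'(x)) = -\tfrac12|\gamma'(x)-\gamma'(z)|^2$, which is \emph{quadratic} in the oscillation and thus contributes the higher-power error. You also state that the estimate "needs" the bi-Lipschitz comparison $|\gamma_k(x)-\gamma_k(y)|\approx|x-y|$ before it can be run. It does not: the gap estimate only uses the trivial one-sided bound $|\gamma(x)-\gamma(y)|\leq|x-y|$, and bi-Lipschitzness is a \emph{consequence} (via Lemma~\ref{la:bilip}) once the seminorm is small, not a prerequisite. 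The propagation of smallness is then a continuity argument in $r$ (Theorem~\ref{th:gammasobvstp2}): set $f(r)=[\gamma']_{W^{\frac1q,q}(B(r))}^q$, observe $f(0)=0$, $f$ is continuous, and $f$ satisfies $f\leq C_1\lambda + C_2 f^2$ for $\lambda$ small — so $f$ is trapped below the smaller root of $C_2t^2-t+C_1\lambda$, which is $\aleq\lambda$. This requires the a priori knowledge that $f(r)<\infty$ (the $\gamma\in C^1$ or $\gamma'\in W^{\frac{p-q-1}{q},q}$ hypothesis), a subtlety you do not address; Example~\ref{ex:weirdfiniteenergy} shows why it cannot be dropped.

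Two smaller points. First, item~(3) does not follow from bi-Lipschitzness as you claim — that would only give $|\gamma'|\geq 1-\eps$; the correct route is that $\gamma_{k_i}'\to\gamma'$ a.e.\ (from weak $W^{1+\frac1q,q}$ convergence plus compact embedding) and $|\gamma_{k_i}'|\equiv1$, so $|\gamma'|=1$ a.e. Second, your Step~3 sketch for global injectivity leaves out the key topological step the paper uses to upgrade ``finitely many self-intersections, all in $\Sigma$'' to ``no self-intersections'': namely invoking \cite{SvdM12} to get that $\gamma(\R/\Z)$ is a topological $1$-manifold, and then observing that a cross is not homeomorphic to any subset of $\R$, so intersections are impossible. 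Without this, ruling out collisions at the finitely many singular points is not handled.
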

We will prove a more detailed version of \Cref{th:convoutsidesingular} in \Cref{pr:convoutsidesingular}.

In order to prove \Cref{th:convoutsidesingular}, we proceed in several steps.
\begin{itemize}
 \item First we prove in \Cref{s:uniformsmallness} that for the approximating sequence $\gamma_k$ the local tangent-point energy is uniformly small away from a finite set $\Sigma$  (we will refer to it as the ``singular set'') of points of energy concentration. 
 \item In \Cref{s:sobolevconv} we obtain the Sobolev estimate for smooth curves whenever the tangent-point energy is locally small, see \Cref{th:gammasobvstp2}, and as a consequence a bi-Lipschitz estimate. This estimate is obtained by a gap-estimate. In particular this method characterizes the energy space for the tangent-point energies in the scale-invariant case.
 \item In \Cref{s:svdm} we adapt an argument due to Strzelecki and von der Mosel \cite{SvdM12} to obtain a uniform estimate on global injectivity of the approximating sequence $\gamma_k$ away from the singular points, see~\Cref{th:smallenergyinjectivity}.
  \item In \Cref{s:convoutsidesingular} we then obtain in \Cref{pr:convoutsidesingular} the convergence outside the singular set which implies \Cref{th:convoutsidesingular}.
\end{itemize}

\subsection{Locally uniform smallness}\label{s:uniformsmallness}
In the first step we ensure that away from a discrete set we have locally uniformly small energy in the approximating sequence.

\begin{proposition}\label{pr:uniformsmallness}
For any $\eps > 0$ and $\Lambda > 0$ there exists $L = L(\eps,\Lambda)$ such that the following holds.

For any sequence $\gamma_k \in \lip(\R /\Z ,\R^3)$, $|\gamma_k'|\equiv 1$, such that 
\[
 \sup_{k} \tp^{p,q}(\gamma_k) \leq \Lambda,
\]
there exists a subsequence $\gamma_{k_i}$ and set $\Sigma \subset \R /\Z$ consisting of at most $L$ points such that for any $x_0 \in \R /\Z \backslash \Sigma$ there exists a radius $\rho =\rho_{x_0} > 0$ and an index $K \in \N$ such that 
\[
 \sup_{i \geq K} \int_{B(x_0,\rho_{x_0})} \int_{\R/\Z} \frac{|\gamma_{k_i}'(x) \wedge (\gamma_{k_i}(x)-\gamma_{k_i}(y))|^q}{|\gamma_{k_i}(x)-\gamma_{k_i}(y)|^{p}}\, dy\, dx < \eps.
\]
\end{proposition}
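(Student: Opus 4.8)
The plan is a soft concentration--compactness argument: package the localized tangent-point energy into finite measures, extract a weak-$*$ limit, and let its heavy atoms form the singular set. Concretely, for each $k$ I would introduce the nonnegative Borel measure on $\R/\Z$
\[
 \mu_k(A) := \int_A \int_{\R/\Z} \frac{|\gamma_k'(x) \wedge (\gamma_k(x)-\gamma_k(y))|^q}{|\gamma_k(x)-\gamma_k(y)|^{p}}\, dy\, dx .
\]
Because $|\gamma_k'| \equiv 1$, the weight $|\gamma_k'(x)|^{1-q}|\gamma_k'(y)|$ appearing in $\tp^{p,q}$ is identically $1$, hence $\mu_k$ is a finite measure with $\mu_k(\R/\Z) = \tp^{p,q}(\gamma_k) \leq \Lambda$. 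Since $\R/\Z$ is compact, finite Radon measures form the dual of $C(\R/\Z)$, so after passing to a subsequence $(\gamma_{k_i})_i$ we may assume $\mu_{k_i} \rightharpoonup \mu$ in the weak-$*$ sense for some finite measure $\mu$ with $\mu(\R/\Z) \leq \Lambda$.

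Next I would let $\Sigma$ be the set of heavy atoms of the limit,
\[
 \Sigma := \Big\{ x \in \R/\Z : \mu(\{x\}) \geq \tfrac{\eps}{2} \Big\} ,
\]
which has cardinality at most $L := \lceil 2\Lambda/\eps \rceil$, since these atoms carry disjoint $\mu$-mass summing to at most $\Lambda$; this is the asserted $L = L(\eps,\Lambda)$. (It is harmless that $\Sigma$, the subsequence, and the radii below depend on the extracted limit $\mu$, as the statement only asks for existence.) Then, given $x_0 \in \R/\Z \setminus \Sigma$, continuity from above of $\mu$ along $\overline{B(x_0,\rho)} \downarrow \{x_0\}$ yields a radius $\rho = \rho_{x_0} > 0$ with $\mu(\overline{B(x_0,\rho)}) < \eps/2$. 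By the standard upper semicontinuity of weak-$*$ limits of positive measures on closed sets,
\[
 \limsup_{i \to \infty} \mu_{k_i}\big(\overline{B(x_0,\rho)}\big) \leq \mu\big(\overline{B(x_0,\rho)}\big) < \tfrac{\eps}{2} ,
\]
so there is $K \in \N$ with $\mu_{k_i}(B(x_0,\rho)) \leq \mu_{k_i}(\overline{B(x_0,\rho)}) < \eps$ for all $i \geq K$, which is exactly the claimed estimate.

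This proposition is not where the real difficulty lies: the argument is essentially measure-theoretic bookkeeping, the only mildly delicate points being that one must work with \emph{closed} balls to use the correct one-sided inequality for weak-$*$ limits, and that the data $(\Sigma,\rho_{x_0},K)$ are attached to the chosen subsequence. The substantive work of the section is deferred to the subsequent steps, where this locally uniform smallness is upgraded to Sobolev and bi-Lipschitz control, and to global injectivity.
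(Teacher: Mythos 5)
Your argument is correct, and it takes a genuinely different route from the paper's. The paper runs a Sacks--Uhlenbeck-style covering argument (it explicitly cites \cite{SU81}): cover $\R/\Z$ by intervals of size $\delta 2^{-m}$, observe that for each fixed $k$ at most $\lfloor 2\Lambda/\eps\rfloor$ of them can carry mass $\geq\eps$, pass to a subsequence of $(\gamma_k)$ at each dyadic scale $m$ so that the ``bad'' intervals stabilize, and then diagonalize, obtaining $\Sigma$ as a nested intersection of at most $L$ shrinking intervals. You instead package $x\mapsto \int_{\R/\Z}\frac{|\gamma_k'(x)\wedge(\gamma_k(x)-\gamma_k(y))|^q}{|\gamma_k(x)-\gamma_k(y)|^p}\,dy\,dx$ as a nonnegative Radon measure $\mu_k$ of total mass $\leq\Lambda$ on the compact circle, extract a weak-$*$ limit $\mu$ via Riesz representation and Banach--Alaoglu, take $\Sigma$ to be the atoms of $\mu$ of mass $\geq\eps/2$ (at most $2\Lambda/\eps$ of them), and then get the local bound from continuity from above of $\mu$ plus the Portmanteau inequality $\limsup_i\mu_{k_i}(\overline{B})\leq\mu(\overline B)$ on closed sets. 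The two approaches yield the same conclusion with essentially the same bound on $\# \Sigma$; yours is shorter and more conceptual (weak-$*$ compactness absorbs the diagonalization), while the paper's is more elementary and self-contained, avoiding functional-analytic input. You were also right to be careful about closed balls for the Portmanteau direction; that is precisely the small technical point this approach requires.
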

\Cref{pr:uniformsmallness} follows for any integral energy from a relatively standard covering argument, see e.g. \cite[Proposition 4.3 and Theorem 4.4.]{SU81}.
\preprintonly{\begin{proof}
We give the details for the convenience of the reader. 

Pick $\delta << \frac{\eps}{2\Lambda}$ and let $m\in\N$. Then cover $\RZ$ by at most $2 \lceil(\delta 2^{-m})^{-1}\rceil$ intervals $B(x_i,\delta 2^{-m})$ such that every point $x\in\RZ$ is covered at most two times. Then we have
	\begin{align*}
		& \sum_i \int_{B(x_i,\delta 2^{-m})} \int_{\RZ} \frac{|\gamma_{k}'(x) \wedge (\gamma_{k}(x)-\gamma_{k}(y))|^q}{|\gamma_{k}(x)-\gamma_{k}(y)|^{q+2}}\, dy\, dx  \\
		& \leq 2 \int_{\RZ} \int_{\RZ} \frac{|\gamma_{k}'(x) \wedge (\gamma_{k}(x)-\gamma_{k}(y))|^q}{|\gamma_{k}(x)-\gamma_{k}(y)|^{q+2}}\, dy\, dx < 2\Lambda = \tfrac{2\Lambda}{\eps} \eps.
	\end{align*}
	Hence for every $\gamma_k$ there exist at most $L:=L(\eps,\Lambda):= \lfloor \tfrac{2\Lambda}{\eps} \rfloor$ intervals $B(x_i,\delta 2^{-m})$ such that
	\[
		\int_{B(x_i,\delta 2^{-m})} \int_{\RZ} \frac{|\gamma_{k}'(x) \wedge (\gamma_{k}(x)-\gamma_{k}(y))|^q}{|\gamma_{k}(x)-\gamma_{k}(y)|^{q+2}}\, dy\, dx \geq \eps.
	\]
	Now assume that we have already shown for $i\in \{1,\ldots, n\}$ that 
	\[
		\sup_k \int_{B(x_i,\delta 2^{-m})} \int_{\RZ} \frac{|\gamma_{k}'(x) \wedge (\gamma_{k}(x)-\gamma_{k}(y))|^q}{|\gamma_{k}(x)-\gamma_{k}(y)|^{q+2}}\, dy\, dx < \eps.
	\]
	If there exist more than $L$ intervals $B(x_i,\delta 2^{-m})$, $i>n$, with
	\[
	\sup_k \int_{B(x_i,\delta 2^{-m})} \int_{\RZ} \frac{|\gamma_{k}'(x) \wedge (\gamma_{k}(x)-\gamma_{k}(y))|^q}{|\gamma_{k}(x)-\gamma_{k}(y)|^{q+2}}\, dy\, dx \geq \eps,
	\]
	there must exist at least one $B(x_{n+1},\delta 2^{-m})$ among them and a subsequence of $\gamma_k$ 
such that 
	\[
	\sup_k \int_{B(x_{n+1},\delta 2^{-m})} \int_{\RZ} \frac{|\gamma_{k}'(x) \wedge (\gamma_{k}(x)-\gamma_{k}(y))|^q}{|\gamma_{k}(x)-\gamma_{k}(y)|^{q+2}}\, dy\, dx < \eps.
	\]
	By repeating this step, we find a subsequence of $\gamma_k$ for which 
	\[
		\sup_k \int_{B(x_i,\delta 2^{-m})} \int_{\RZ} \frac{|\gamma_{k}'(x) \wedge (\gamma_{k}(x)-\gamma_{k}(y))|^q}{|\gamma_{k}(x)-\gamma_{k}(y)|^{q+2}}\, dy\, dx < \eps
	\]
	holds for all given intervals apart from $L$ many $B(x_{i,m},\delta 2^{-m})$.

	Applying this method iteratively for $m \rightarrow \infty$, we can construct a series of subsequences such that for each subsequence $\gamma_{k,m}$ we have
	\[
	\sup_{k} \int_{B(x_i,\delta 2^{-m})} \int_{\RZ} \frac{|\gamma_{k,m}'(x) \wedge (\gamma_{k,m}(x)-\gamma_{k,m}(y))|^q}{|\gamma_{k,m}(x)-\gamma_{k,m}(y)|^{q+2}}\, dy\, dx < \eps,
	\]
	where $B(x_{i},\delta 2^{-m}) \subset \RZ \backslash \bigcup_{j\leq m-1} B(x_{i,j},\delta 2^{-j})$. 
	
	Now we choose a diagonal subsequence $\gamma_{k_i}$ (one element per $\gamma_{k,m}$).
	Since 
	\[
	\bigcup_m (\RZ \backslash \bigcup_{i\leq L} B(x_{i,m}, \delta 2^{-m})) = \RZ \backslash \bigcap_m \bigcup_{i\leq L} B(x_{i,m}, \delta 2^{-m}) = \RZ \backslash \{x_1,\ldots,x_L\}
	\]
	for at most  $x_1,\ldots,x_L\in\RZ$, 
	there indeed exists for any $x_0\in\RZ\backslash \{x_1,\ldots,x_L\}$ a $\rho_{x_0}>0$ such that $B(x_0,\rho_{x_0}) \subset B(z_i,\delta 2^{-K})$ for a $K\in\N$ and therefore
	\[
	\sup_{i\geq K} \int_{B(x_0,\rho_{x_0})} \int_{\RZ} \frac{|\gamma_{k_i}'(x) \wedge (\gamma_{k_i}(x)-\gamma_{k_i}(y))|^q}{|\gamma_{k_i}(x)-\gamma_{k_i}(y)|^{q+2}}\, dy\, dx < \eps.
	\]
\end{proof}}

\subsection{Small local energy implies local Sobolev space estimates}\label{s:sobolevconv}
The main novel ingredient underlying our argument for \Cref{th:convoutsidesingular} is a gap estimate for Sobolev spaces with respect to the tangent-point energy. 

As discussed in \Cref{ex:weirdfiniteenergy}, it is impossible to control the Sobolev norm of $\gamma$ in terms of the tangent-point energy of $\gamma$, $\tp^{p,q}(\gamma)$, without assuming a priori bilipschitz estimates (as it was done in \cite{BR15} see also \cite{B19}). This is however not a viable method for the scale-invariant case $p=q+2$ because the bi-Lipschitz constant is not uniformly controlled as a sequence $\gamma_k$ converges to $\gamma$. We turn this argument around and first a priori assume that the Sobolev norm is finite, and then conclude that this is an estimate which is uniform for sequences $\gamma_k$ converging to $\gamma$.

The first step is the following gap estimate\footnote{\Cref{la:gammasobvstp1} is called a gap estimate, because it implies the following: For $\eps := \brac{\frac{1}{2C(p,q)}}^{\frac{1}{q}}$ we have either $[\gamma']_{W^{\frac{1}{q},q}(B)}^q \leq 2C(p,q)\, \tp^{q+2,q}(\gamma,B)$ or $[\gamma']_{W^{\frac{1}{q},q}(B)} \geq \eps$.
}.
\begin{lemma}\label{la:gammasobvstp1}
Let $p \in [q+2,2q+1)$ and $q>1$. Let $\gamma \in \lip(\R/\Z,\R^3)$, $|\gamma'| \equiv 1$. Then for any ball $B \subset \R / \Z$ of diameter less than $\frac{1}{2}$,
\begin{equation}\label{eq:gammasobvstp1:goal}
 [\gamma']_{W^{\frac{p-q-1}{q},q}(B)}^q \leq C(p,q)\, \tp^{p,q}(\gamma,B) + C(p,q)\, [\gamma']_{W^{\frac{p-q-1}{q},q}(B)}^{2q},
\end{equation}
whenever the right-hand side is finite.
\end{lemma}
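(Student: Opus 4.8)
The plan is to show the gap estimate \eqref{eq:gammasobvstp1:goal} by writing the Gagliardo seminorm of $\gamma'$ as a double integral over $B\times B$ and, on each pair $(x,y)$, comparing the difference quotient $|\gamma'(x)-\gamma'(y)|/|x-y|^{\frac{p-q-1}{q}+1}$ to the tangent-point integrand $|\gamma'(x)\wedge(\gamma(x)-\gamma(y))|^q/|\gamma(x)-\gamma(y)|^p$ (times $|x-y|^{-2}$ after the change to the correct exponent). The first step is purely algebraic: since $|\gamma'|\equiv 1$, one has $\langle\gamma'(x),\gamma'(x)-\gamma'(y)\rangle=\tfrac12|\gamma'(x)-\gamma'(y)|^2$, so the tangential component of $\gamma'(x)-\gamma'(y)$ along $\gamma'(x)$ is quadratically small, and the leading-order part of $\gamma'(x)-\gamma'(y)$ is its component \emph{normal} to $\gamma'(x)$. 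I would therefore split
\[
 \gamma'(x)-\gamma'(y) = \brac{\gamma'(x)-\gamma'(y)}^{\top} + \brac{\gamma'(x)-\gamma'(y)}^{\perp},
\]
where $\top,\perp$ denote the components parallel and orthogonal to $\gamma'(x)$, control $|(\gamma'(x)-\gamma'(y))^{\top}|\aleq |\gamma'(x)-\gamma'(y)|^2$, and bound the contribution of the parallel part by $[\gamma']_{W^{\frac{p-q-1}{q},q}(B)}^{2q}$ (the second, ``quadratic'' term on the right-hand side of \eqref{eq:gammasobvstp1:goal}) using that $\frac{p-q-1}{q}<1$ so $W^{\frac{p-q-1}{q},q}\hookrightarrow L^\infty$ on short intervals, making $|\gamma'(x)-\gamma'(y)|$ uniformly small; the precise bookkeeping here is a standard fractional-Leibniz/product estimate.

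For the main, normal part, I would relate $|(\gamma'(x)-\gamma'(y))^{\perp}|$ to the tangent-point quantity. The key geometric observation is that, integrating $\gamma'$ from $x$ to $y$, one can write $\gamma(y)-\gamma(x) = (y-x)\gamma'(x) + \int_x^y(\gamma'(z)-\gamma'(x))\,dz$, hence
\[
 \gamma'(x)\wedge(\gamma(x)-\gamma(y)) = -\gamma'(x)\wedge \int_x^y\brac{\gamma'(z)-\gamma'(x)}\,dz,
\]
so that $|\gamma'(x)\wedge(\gamma(x)-\gamma(y))|$ is comparable, after averaging, to $|x-y|$ times a mean of $|(\gamma'(z)-\gamma'(x))^{\perp}|$ over $z\in[x,y]$. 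Combined with the bi-Lipschitz-type lower bound $|\gamma(x)-\gamma(y)|\geq c|x-y|$ — valid on short balls once one absorbs the small-seminorm hypothesis via \Cref{la:bilip} (or, to keep the estimate self-contained and free of circularity, via \eqref{eq:bilip:goal} which only costs another quadratic term) — this turns $\tp^{p,q}(\gamma,B)$ into a double integral controlling $\int_B\int_B |x-y|^{-p}\,|x-y|^q\,\brac{\text{mean of }|(\gamma'-\gamma'(x))^\perp|}^q\,dx\,dy$. A Minkowski/Jensen argument in the inner average, plus the standard fact that $\int\!\!\int |x-y|^{-p+q}\,(\text{avg of }|\gamma'(z)-\gamma'(x)|)^q$ dominates the Gagliardo seminorm $[\gamma']_{W^{\frac{p-q-1}{q},q}}^q$ (because $-p+q+$ two powers of length from the two integrations matches $-(1+\frac{p-q-1}{q}q)=-(p-q)$... one checks $-p+q-2 = -(1+sq)-1$ with $s=\frac{p-q-1}{q}$), closes the estimate for the normal part.

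The hard part will be the bookkeeping that isolates the \emph{honest} quadratic error from the \emph{main} linear term without ever assuming an a priori bi-Lipschitz bound with good constant — i.e.\ making sure that every time one uses $|\gamma(x)-\gamma(y)|\gtrsim|x-y|$ the implied constant is either universal on short balls or its failure is charged to the $[\gamma']_{W^{\frac{p-q-1}{q},q}}^{2q}$ term, and that the inner-average manipulations (Jensen, Fubini, change of variables $z\mapsto$ convex combination of $x,y$) genuinely reproduce the Gagliardo double integral rather than a weaker quantity. A secondary technical point is the harmless-looking claim $\frac{p-q-1}{q}\in(0,1)$ under the standing assumption $q+2\le p<2q+1$ with $q>1$, which is what makes the embedding $W^{\frac{p-q-1}{q},q}\hookrightarrow C^0$ and hence the smallness of $\sup_B|\gamma'-\gamma'(x)|$ available; I would record this at the very start. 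Everything else — the wedge-product identity, the fundamental theorem of calculus expansion, and the fractional-seminorm comparison — is routine once the split along $\gamma'(x)$ is in place.
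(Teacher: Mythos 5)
Your geometric split is the right one — decomposing along $\gamma'(x)$, noting that $\langle\gamma'(x),\gamma'(x)-\gamma'(z)\rangle=\tfrac12|\gamma'(x)-\gamma'(z)|^2$ makes the tangential piece quadratically small, and using the fundamental theorem of calculus to expose $\gamma(y)-\gamma(x)-\gamma'(x)(y-x)$ as the core quantity. This is exactly the skeleton of the paper's argument. But two ingredients you reach for do not work as you describe, and both are load-bearing.

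First, you invoke a bi-Lipschitz \emph{lower} bound $|\gamma(x)-\gamma(y)|\geq c\,|x-y|$ and worry about its circularity with \Cref{la:bilip} or \eqref{eq:bilip:goal}. The circularity worry is real, but more to the point the inequality is pointing the wrong way for what you want: $|\gamma(x)-\gamma(y)|^p$ sits in the \emph{denominator} of the tangent-point integrand, and you want a \emph{lower} bound on that integrand, hence an \emph{upper} bound on the denominator. The lower bound $|\gamma(x)-\gamma(y)|\geq c|x-y|$ would give an upper bound on the integrand, which is useless here (it is what you would use for the converse, \Cref{la:energyspace}). What you actually need is the trivial upper bound $|\gamma(x)-\gamma(y)|\leq|x-y|$, which follows immediately from $|\gamma'|\equiv 1$ and the fundamental theorem of calculus and costs nothing — no smallness hypothesis, no quadratic correction, no circularity. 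Once you use this the entire concern you flag as ``the hard part'' evaporates.

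Second, you justify the smallness of the tangential contribution through an embedding $W^{\frac{p-q-1}{q},q}\hookrightarrow L^\infty$ ``because $\frac{p-q-1}{q}<1$.'' That condition is irrelevant; the correct threshold for a continuous embedding into $L^\infty$ is $sq>1$, i.e.\ $p-q-1>1$, i.e.\ $p>q+2$. In the scale-invariant case $p=q+2$ — the case this paper is chiefly about — you have $sq=1$ exactly, and $W^{1/q,q}\not\hookrightarrow L^\infty$, so your mechanism for absorbing the quadratic error fails precisely where it matters. No sup-bound is needed: the clean route (the one the paper takes) is the elementary numerical inequality $(a-b)^{r}\geq c_r\,a^{r}-C_r\,b^{r}$ for $a\geq b\geq 0$, $r>0$, applied after the Lagrange identity $|v\wedge w|^2=|w|^2-|v\cdot w|^2$, followed by the identification lemmas (\Cref{la:id1}, \Cref{la:id2}) and the Sobolev embedding $W^{\frac{p-q-1}{q},q}(B)\hookrightarrow W^{\frac{p-q-1}{2q},2q}(B)$ from \Cref{la:sob1}, which is stable in the critical case. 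Fix these two points and your outline becomes the proof.
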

\begin{proof}[Proof of \Cref{la:gammasobvstp1}]
The assumption that $B$ has diameter less than $\frac{1}{2}$ implies that $B$ is a geodesic ball and thus convex with respect to the $\R/\Z$-metric. To simplify matters even more, we assume w.l.o.g. that the ball $B$ is centered at $0$ so that $|x-y|$ is actually the Euclidean distance.

Recall the Lagrange identity for $v,w \in \R^3$, $|v| = 1$, 
\[
 |v \wedge w|^2 = |w|^2 - |v \cdot w|^2.
\]
Moreover, observe that $|\gamma'| \equiv 1$ implies
\[
 |\gamma(x)-\gamma(y)| \leq |x-y|.
\]
Then,
\[
\begin{split}
 &\frac{\left |\gamma'(x)\wedge (\gamma(x)-\gamma(y)) \right |^q}{|\gamma(x)-\gamma(y)|^p}\\
 \geq&\frac{\left |\gamma'(x)\wedge (\gamma(x)-\gamma(y)) \right |^q}{|x-y|^p} \\
=&\frac{\brac{\left |\gamma'(x)\wedge \brac{\gamma(y)-\gamma(x)-\gamma'(x)(y-x)} \right |^2}^{\frac{q}{2}}}{|x-y|^p}\\
=&\frac{\brac{\left |\brac{\gamma(y)-\gamma(x)-\gamma'(x)(y-x)} \right |^2-\left |\gamma'(x) \cdot \brac{\gamma(y)-\gamma(x)-\gamma'(x)(y-x)} \right |^2}^{\frac{q}{2}}}{|x-y|^p}.\\ 
 \end{split}
\]
We have
\begin{equation}\label{eq:simplecomp}
\begin{split}
  \gamma'(x) \cdot \brac{\gamma'(z)-\gamma'(x)}
  =&\frac{1}{2} \brac{2\gamma'(x) \cdot \gamma'(z)-1-1}\\
  =&-\frac{1}{2} \abs{\gamma'(x)-\gamma'(z)}^2,\\
  \end{split}
\end{equation}
so
\[
\begin{split}
 &\left |\gamma'(x) \cdot \brac{\gamma(y)-\gamma(x)-\gamma'(x)(y-x)} \right |\\
 =&|y-x| \left | \mvint_{(x,y)} \gamma'(x) \cdot (\gamma'(z)-\gamma'(x)) \,dz\right |\\
 =& \frac{1}{2} |y-x| \mvint_{(x,y)} |\gamma'(x) -\gamma'(z)|^2\, dz.
 \end{split}
\]
Consequently,
\[
\begin{split}
 &\frac{\left |\gamma'(x)\wedge (\gamma(x)-\gamma(y)) \right |^q}{|\gamma(x)-\gamma(y)|^p}\\
\geq&\frac{\brac{\left |\brac{\gamma(y)-\gamma(x)-\gamma'(x)(y-x)} \right |^2-\frac{1}{4}|y-x|^2\left |\mvint_{(x,y)} |\gamma'(x)-\gamma'(z)|^2\, dz\right |^2}^{\frac{q}{2}}}{|x-y|^p}.\\ 
 \end{split}
\]
Observe that from our computations we know that in particular,
\[
 \left |\brac{\gamma(y)-\gamma(x)-\gamma'(x)(y-x)} \right |^2 \geq \frac{1}{4}|y-x|^2\left |\mvint_{(x,y)} |\gamma'(x)-\gamma'(z)|^2\, dz\right |^2.
\]
Also observe that for any $r > 0$ there exists $c_r \in (0,1)$, $C_r > 0$ such that for any $a \geq b \geq 0$ 
\[
 (a-b)^r \geq c_r\, a^r - C_r\, b^r.
\]
From this, and by Jensen's inequality
\[
\begin{split}
 &\frac{\left |\gamma'(x)\wedge (\gamma(x)-\gamma(y)) \right |^q}{|\gamma(x)-\gamma(y)|^p}\\
\geq&c\frac{\left |\brac{\gamma(y)-\gamma(x)-\gamma'(x)(y-x)} \right |^q}{|x-y|^p}-C\frac{|y-x|^q\left |\mvint_{(x,y)} |\gamma'(x)-\gamma'(z)|^2\, dz\right |^q}{|x-y|^p}\\ 
=&c\frac{\left | \frac{\gamma(y)-\gamma(x)-\gamma'(x)(y-x)}{|x-y|} \right |^q}{|x-y|^{p-q}}-C\frac{\mvint_{(x,y)} |\gamma'(x)-\gamma'(z)|^{2q}\, dz}{|x-y|^{p-q}}\\ 
 \end{split}
\]
Integrating $x$ and $y$ in $B$ we obtain
\begin{equation}\label{eq:23231}
\begin{split}
 &\int_{B}\int_{B} \frac{\abs{\frac{\gamma(y)-\gamma(x)-\gamma'(x)(y-x)}{|x-y|}}^q}{|x-y|^{p-q}}\, dx\, dy \\
 \leq& C_q \brac{\tp^{p,q}(\gamma,B) + \int_{B}\int_{B} \frac{\mvint_{(x,y)} |\gamma'(x)-\gamma'(z)|^{2q}\, dz}{|x-y|^{p-q}}\, dx\, dy}.
 \end{split}
\end{equation}
Recall that $p\in[q+2,2q+1)$, so $\frac{p-q-1}{q} \in (0,1)$. From \Cref{la:id1}
\[
 [\gamma']_{W^{\frac{p-q-1}{q},q}(B)}^q \aeq \int_{B}\int_{B} \frac{\abs{\frac{\gamma(y)-\gamma(x)-\gamma'(x)(y-x)}{|x-y|}}^q}{|x-y|^{p-q}}\, dx\, dy.
\]
From \Cref{la:id2}
\[
 [\gamma']_{W^{\frac{p-q-1}{2q},2q}(B)}^{2q} \aeq \int_{B}\int_{B} \frac{\mvint_{(x,y)} |\gamma'(x)-\gamma'(z)|^{2q}\, dz}{|x-y|^{p-q}}\, dx\, dy.
\]
Also, from the Sobolev inequality, \Cref{la:sob1}, we have
\[
 [\gamma']_{W^{\frac{p-q-1}{2q},2q}(B)} \aleq (\diam B)^{\frac{p-q-2}{2q}} [\gamma']_{W^{\frac{p-q-1}{q},q}(B)} \leq [\gamma']_{W^{\frac{p-q-1}{q},q}(B)}.
\]
Thus, \eqref{eq:23231} implies
\[
 [\gamma']_{W^{\frac{p-q-1}{q},q}(B)}^q \aleq \tp^{p,q}(\gamma,B) + [\gamma']_{W^{\frac{p-q-1}{q},q}(B)}^{2q},
\]
which is \eqref{eq:gammasobvstp1:goal}. The proof is concluded.
\end{proof}

The gap-estimate leads to the following control of the Sobolev norm. We stress that we need to assume \emph{a priori} that $\gamma$ already belongs to the Sobolev space in question, which rules out the irregular curves in \Cref{ex:weirdfiniteenergy}.

\begin{theorem}\label{th:gammasobvstp2}
Let $q_0,p_0 > 1$, $q_1 < \infty$, $p_1 < \infty$ such that $p_1 - 2q_0 < 1$. Let $\eps > 0$, then there exists $\delta = \delta(q_0,p_0,q_1,p_1,\eps) > 0$ and a constant $C=C(q_0,p_0,q_1,p_1) > 0$ such that the following holds for any $p \in [p_0,p_1]$ and $q \in [q_0,q_1]$.

Let $\gamma \in \lip(\R/\Z,\R^3)$, $|\gamma'| \equiv 1$, and assume that for some ball $B \subset \R / \Z$, $\diam(B) < \frac{1}{2}$, we have 
\[
 \tp^{p,q}(\gamma,B) < \delta
\]
and
\begin{equation}\label{eq:eitherc1orsob}
 \text{either }\gamma \in C^1(B) \quad \text{ or }\quad [\gamma']_{W^{\frac{p-q-1}{q},q}(B)}^q < \infty.
\end{equation}
Then
\begin{equation}\label{eq:gsob:goal1}
 [\gamma']_{W^{\frac{p-q-1}{q},q}(B)}^q \leq C(q_0,p_0,q_1,p_1)\, \tp^{p,q}(\gamma,B)
\end{equation}
and we have the bi-Lipschitz estimate
\begin{equation}\label{eq:gsob:goal2}
 (1-\eps)|x-y| \leq |\gamma(x)-\gamma(y)| \leq |x-y| \quad \forall x,y \in B.
\end{equation}
\end{theorem}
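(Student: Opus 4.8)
The plan is to upgrade the gap estimate of \Cref{la:gammasobvstp1} into a genuine bound by an absorption argument, in which the smallness of the critical seminorm needed to absorb the quadratic term is bootstrapped from small balls to $B$ through a continuation (clopen) argument. Throughout put $s:=\frac{p-q-1}{q}$, so $s\in[\tfrac1q,1)$ by the standing assumption $q+2\le p<2q+1$; the hypothesis $p_1-2q_0<1$ ensures $p<2q+1$ uniformly on $(p,q)\in[p_0,p_1]\times[q_0,q_1]$, so \Cref{la:gammasobvstp1} is available with constant $C(p,q)$, which we bound by a single $C^\ast:=\sup_{[p_0,p_1]\times[q_0,q_1]}C(p,q)<\infty$ (every constant entering its proof --- the elementary inequality $(a-b)^r\ge c_r a^r-C_r b^r$, Jensen, \Cref{la:id1}, \Cref{la:id2}, \Cref{la:sob1} --- is controlled uniformly as the relevant powers range over compact sets). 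I shall first prove the theorem under the a priori assumption $[\gamma']_{W^{s,q}(B)}<\infty$, and reduce the case $\gamma\in C^1(B)$ to it at the end.

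\textbf{Absorption.} Write $B=B(x_0,\rho)$ and set, for $r\in(0,\rho]$,
\[
 f(r):=[\gamma']_{W^{s,q}(B(x_0,r))}^q=\int_{B(x_0,r)}\int_{B(x_0,r)}\frac{|\gamma'(x)-\gamma'(y)|^q}{|x-y|^{1+sq}}\,dx\,dy .
\]
Since $f(\rho)<\infty$ and the integrand is nonnegative, dominated convergence shows $f$ is nondecreasing, continuous, and $f(r)\to0$ as $r\to0^+$. Each $B(x_0,r)$ is a geodesic ball of diameter $<\tfrac12$, and as the $\tp^{p,q}$-integrand is nonnegative, $\tp^{p,q}(\gamma,B(x_0,r))\le\tp^{p,q}(\gamma,B)=:\delta'<\delta$; hence \Cref{la:gammasobvstp1} on $B(x_0,r)$ gives
\[
 f(r)\ \le\ C(p,q)\,\tp^{p,q}(\gamma,B(x_0,r))+C(p,q)\,f(r)^2\ \le\ C^\ast\delta'+C^\ast f(r)^2\qquad\text{for every }r\in(0,\rho].
\]
Impose $\delta<\tfrac1{4(C^\ast)^2}$. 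Then $z\mapsto C^\ast z^2-z+C^\ast\delta'$ has two positive roots $z_-<z_+$ with $z_-=\frac{1-\sqrt{1-4(C^\ast)^2\delta'}}{2C^\ast}\le 2C^\ast\delta'$ and $z_+\ge\tfrac1{2C^\ast}$, and the last display forces $f(r)\le z_-$ or $f(r)\ge z_+$ for each $r$. The set $\{r\in(0,\rho]:f(r)\le z_-\}$ is nonempty (it contains all sufficiently small $r$, since $f(r)\to0$ while $z_+>0$ is bounded below), is closed in $(0,\rho]$ by continuity of $f$, and is open (if $f(r_0)\le z_-<z_+$, continuity forces $f<z_+$, hence $f\le z_-$, near $r_0$); since $(0,\rho]$ is connected it is all of $(0,\rho]$. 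In particular
\[
 [\gamma']_{W^{s,q}(B)}^q=f(\rho)\ \le\ z_-\ \le\ 2C^\ast\,\tp^{p,q}(\gamma,B),
\]
which is \eqref{eq:gsob:goal1} with $C=2C^\ast$, a constant depending only on the parameter ranges.

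\textbf{Bi-Lipschitz estimate.} We may assume $\eps\in(0,1)$, else the lower bound in \eqref{eq:gsob:goal2} is vacuous. The upper bound is immediate from $|\gamma'|\equiv1$: $|\gamma(x)-\gamma(y)|=\bigl|\int_x^y\gamma'\bigr|\le|x-y|$. For the lower bound, since $s\ge\tfrac1q$ and $\diam B\le1$, \Cref{la:sob1} gives $[\gamma']_{W^{1/q,q}(B)}\aleq(\diam B)^{s-1/q}[\gamma']_{W^{s,q}(B)}\le[\gamma']_{W^{s,q}(B)}$, so by \eqref{eq:gsob:goal1}
\[
 [\gamma']_{W^{1/q,q}(B)}^q\ \le\ \tilde C\,\tp^{p,q}(\gamma,B)\ <\ \tilde C\,\delta
\]
for a uniform $\tilde C=\tilde C(q_0,p_0,q_1,p_1)$. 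By \Cref{la:bilip} (with $\lambda_1=1$, $\lambda_2=1-\eps$, smoothness parameter $\tfrac1q$) there is $\theta(q,\eps)>0$ such that $[\gamma']_{W^{1/q,q}(B)}<\theta(q,\eps)$ implies $|\gamma(x)-\gamma(y)|\ge(1-\eps)|x-y|$ on the interval $B$; set $\theta_\ast:=\inf_{q\in[q_0,q_1]}\theta(q,\eps)>0$. Shrinking $\delta$ so that $\tilde C\,\delta<(\theta_\ast)^q$ --- this fixes $\delta=\delta(q_0,p_0,q_1,p_1,\eps)$ --- yields \eqref{eq:gsob:goal2}.

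\textbf{The $C^1$ case, and where the obstacle lies.} It remains to drop the a priori assumption when $\gamma\in C^1(B)$. The subtle point --- the only genuinely non-routine ingredient --- is that $sq=p-q-1\ge1$, so $\gamma\in C^1(B)$ does \emph{not} by itself imply $[\gamma']_{W^{s,q}(B)}<\infty$. One bypasses this by mollifying $\gamma$ (and, if needed, reparametrising to unit speed) to obtain $C^\infty$ curves $\gamma_\eta\to\gamma$ in $C^1$ with $\tp^{p,q}(\gamma_\eta,B')\to\tp^{p,q}(\gamma,B')$ on any $B'\Subset B$ --- there $\gamma$, being a $C^1$ immersion that is globally injective, is bi-Lipschitz, so the $\tp^{p,q}$-integrand is dominated. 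A $C^\infty$ curve has $\gamma_\eta'$ Lipschitz on $\overline{B'}$, hence $[\gamma_\eta']_{W^{s,q}(B')}<\infty$, so the case already treated applies to $\gamma_\eta$; letting $\eta\to0$ and using Fatou's lemma gives $[\gamma']_{W^{s,q}(B')}^q\le C\,\tp^{p,q}(\gamma,B')$, and exhausting $B$ by such $B'$ recovers \eqref{eq:gsob:goal1}, whence \eqref{eq:gsob:goal2} as before. I expect this approximation step --- together with checking that all constants are uniform over $[p_0,p_1]\times[q_0,q_1]$ --- to be the fiddliest part; the conceptual core, the polynomial absorption plus the clopen continuation, is short, and it is exactly the standing a priori finiteness of the critical seminorm that makes it work and that rules out the pathologies of \Cref{ex:weirdfiniteenergy}.
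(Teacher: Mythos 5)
Your absorption argument and bi-Lipschitz conclusion reproduce the paper's proof almost verbatim: same gap estimate (\Cref{la:gammasobvstp1}), same quadratic polynomial $C_2t^2 - t + C_1\lambda$ with roots $t_{\lambda;1} \leq C\lambda$ and $t_{\lambda;2} \gtrsim 1$, and the clopen continuation you run is just the paper's observation that $f$ is continuous with $f(0^+)=0$ so it can never jump across the forbidden interval. Your explicit bookkeeping that the constants in \Cref{la:gammasobvstp1} are uniform on compact parameter ranges is a welcome addition; the paper leaves this implicit.

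The divergence is in how you dispatch the $C^1$ case, and this is where there is a genuine gap. The paper simply invokes \cite[Theorem 1.1, Remark 1.6]{BR15}, which directly asserts that $\gamma\in C^1(B)$ with finite $\tp^{p,q}(\gamma,B)$ implies $\gamma'\in W^{(p-q-1)/q,q}(B)$, so the theorem reduces at once to the Sobolev case. Your mollification sketch tries to prove that implication from scratch, but it rests on the assertion that $\gamma$, ``being a $C^1$ immersion that is globally injective, is bi-Lipschitz'' on $B'\Subset B$ --- and this is not available: the theorem hypotheses do not include injectivity of $\gamma$, and $|\gamma'|\equiv1$ together with $C^1$ regularity gives only local injectivity, not injectivity on a fixed ball $B$. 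Small tangent-point energy does force a form of injectivity (this is the Strzelecki--von der Mosel mechanism, \Cref{la:vdms}/\Cref{th:smallenergyinjectivity}), but \Cref{th:smallenergyinjectivity} needs $\gamma$ to be a homeomorphism, which again is not assumed here, and in any case deploying it would make the reduction substantially longer than the body of the proof. Without a uniform-in-$\eta$ lower bound on $|\gamma_\eta(x)-\gamma_\eta(y)|/|x-y|$ you have no dominating function for the $\tp^{p,q}$-integrands, so the $\limsup_{\eta\to0}\tp^{p,q}(\gamma_\eta,B')\le\tp^{p,q}(\gamma,B')$ you need for the Fatou step is unjustified --- and manufacturing that lower bound from Sobolev control is exactly the circularity you set out to avoid. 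You flag this step as ``the fiddliest part''; more accurately, as written it does not close, and the clean route is the one the paper takes, namely quoting the Blatt--Reiter characterization of the $\tp^{p,q}$ energy space.
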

 
\begin{proof}
In view of \cite[Theorem 1.1, Remark 1.6]{BR15}, $\gamma \in C^1(B)$ and $\tp^{p,q}(\gamma,B) < \infty$ implies
\[
 [\gamma']_{W^{\frac{p-q-1}{q},q}(B)}^q < \infty,
\]
so that in \eqref{eq:eitherc1orsob} we can assume the Sobolev-space estimate to hold.

Assume that $B = B(x_0,R)$ for some $x_0 \in \R/\Z$ and $R \in (0,\frac{1}{4}]$.

Set 
\[
 \lambda := \tp^{p,q}(\gamma,B) < \delta.
\]

For $r \in (0,R]$ set $B(r) := B(x_0,r)$.

By \Cref{la:gammasobvstp1} we have for any $r \in (0,R]$,
\[
 [\gamma']_{W^{\frac{p-q-1}{q},q}(B(r))}^q \leq C_1 \tp^{p,q}(\gamma,B(r)) + C_2 [\gamma']_{W^{\frac{p-q-1}{q},q}(B(r))}^{2q}.
\]
Set 
\[
 f(r) := [\gamma']_{W^{\frac{p-q-1}{q},q}(B(r))}^q.
\]
Then we have
\[
 f(r) \leq C_1 \lambda + C_2 (f(r))^2 \quad \forall r \in (0,R).
\]
Setting $p(t) := C_2 t^2 - t + C_1 \lambda$, we have 
\begin{equation}\label{eq:gap:pfr}
 p(f(r)) \geq 0 \quad \forall r \in (0,R).
\end{equation}
The roots of the polynomial $p$ are
\[
 t_{\lambda;1} := \frac{1}{2C_2} - \sqrt{\frac{1}{(2C_2)^2} - \frac{C_1}{C_2} \lambda}, \quad t_{\lambda;2} := \frac{1}{2C_2} + \sqrt{\frac{1}{2C_2^2} - \frac{C_1}{C_2} \lambda}.
\]
Let $\delta$ be small enough so that $\frac{1}{(2C_2)^2} - {\frac {C_1} {C_2 }}\delta > \frac{1}{(4C_2)^2}$. Then, whenever $\lambda \in (0,\delta)$ we have $t_{\lambda;1} < t_{\lambda;2}$ and moreover
\begin{equation}\label{eq:gap:tlambda}
 t_{\lambda;1} \leq C(C_1,C_2)\, \lambda  \quad \forall \lambda \in (0,\delta).
\end{equation}
The polynomial $p$ is negative only on the interval $(t_{\lambda,1},t_{\lambda,2})$. From \eqref{eq:gap:pfr} we deduce that for each $r \in (0,R)$ either $f(r) \leq t_{\lambda,1}$ or $f(r) \geq t_{\lambda,2}$. Since $f(0) = 0$ and $f$ is continuous, we conclude that necessarily \[f(r) \leq t_{\lambda,1} \quad\text{for each $r < R$}.\]
That is, in view of \eqref{eq:gap:tlambda},
\[
[\gamma']_{W^{\frac{p-q-1}{q},q}(B)}^q \leq C(C_1,C_2)\, \lambda.
\]
Recalling the definition of $\lambda$, we conclude \eqref{eq:gsob:goal1}.

Choosing $\eps > 0$ possibly even smaller, we obtain also \eqref{eq:gsob:goal2} as a consequence of \eqref{eq:gsob:goal1} and \Cref{la:bilip}.
\end{proof}

Let us also remark, for the sake of completeness, that the argument in the proof of \Cref{la:gammasobvstp1} also gives a real classification of the energy space, if one assumes a priori bi-Lipschitz estimates (cf. \cite[Proposition 2.4]{BR15}).
\begin{lemma}\label{la:energyspace}
 Let $p \in [q+2,2q+1)$ and $q>1$. Let $\gamma \in \lip(\R/\Z,\R^3)$, $|\gamma'| \equiv 1$ and $\gamma: \R/\Z \to \R$ be bi-Lipschitz, i.e.\ 
 \[
  (1-\lambda)|x-y| \leq |\gamma(x)-\gamma(y)| \leq |x-y|
 \]
Then for any ball $B \subset \R / \Z$ of diameter less than $\frac{1}{2}$,
\[
 \tp^{p,q}(\gamma,B) \leq C(p,q,\lambda)[\gamma']_{W^{\frac{p-q-1}{q},q}(B)}^q + C(p,q,\lambda)\, [\gamma']_{W^{\frac{p-q-1}{q},q}(B)}^{2q}.
\]
\end{lemma}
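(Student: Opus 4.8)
The plan is to run the chain of estimates in the proof of \Cref{la:gammasobvstp1} in the opposite direction: there one bounds the tangent-point integrand \emph{from below} by a fractional difference of $\gamma'$; here one wants an upper bound, which is considerably easier. Since $|\gamma'|\equiv1$ the localisation prefactor $|\gamma'|^{2-q}$ equals $1$, so $\tp^{p,q}(\gamma,B)=\int_B\int_B|\gamma'(x)\wedge(\gamma(x)-\gamma(y))|^q\,|\gamma(x)-\gamma(y)|^{-p}\,dx\,dy$. For the numerator I would first subtract the tangent line: because $\gamma'(x)\wedge\gamma'(x)=0$ one has $\gamma'(x)\wedge(\gamma(x)-\gamma(y))=\gamma'(x)\wedge\big(\gamma(x)-\gamma(y)-\gamma'(x)(x-y)\big)$, and then $|v\wedge w|\le|v|\,|w|$ together with $|\gamma'(x)|=1$ gives $|\gamma'(x)\wedge(\gamma(x)-\gamma(y))|\le|\gamma(x)-\gamma(y)-\gamma'(x)(x-y)|$ (equivalently, use the Lagrange identity $|v\wedge w|^2=|w|^2-|v\cdot w|^2$ exactly as in the proof of \Cref{la:gammasobvstp1} and simply discard the nonnegative term $|\gamma'(x)\cdot(\gamma(x)-\gamma(y)-\gamma'(x)(x-y))|^2$). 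The fundamental theorem of calculus, $\gamma(x)-\gamma(y)-\gamma'(x)(x-y)=\int_{(x,y)}(\gamma'(z)-\gamma'(x))\,dz$, then yields $|\gamma'(x)\wedge(\gamma(x)-\gamma(y))|\le|x-y|\,\mvint_{(x,y)}|\gamma'(z)-\gamma'(x)|\,dz$, and raising to the power $q$ and applying Jensen's inequality, $|\gamma'(x)\wedge(\gamma(x)-\gamma(y))|^q\le|x-y|^q\,\mvint_{(x,y)}|\gamma'(z)-\gamma'(x)|^q\,dz$.

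For the denominator I use the hypothesis for the only time: the lower bi-Lipschitz bound gives $|\gamma(x)-\gamma(y)|^{-p}\le(1-\lambda)^{-p}|x-y|^{-p}$. Multiplying the two estimates and integrating over $B\times B$,
\[
\tp^{p,q}(\gamma,B)\le(1-\lambda)^{-p}\int_B\int_B\frac{\mvint_{(x,y)}|\gamma'(z)-\gamma'(x)|^q\,dz}{|x-y|^{p-q}}\,dx\,dy .
\]
It then remains to recognise the right-hand integral as a constant times $[\gamma']_{W^{(p-q-1)/q,q}(B)}^q$. This is an averaging identity of the same kind as \Cref{la:id1}, \Cref{la:id2}; directly, writing $\mvint_{(x,y)}=|x-y|^{-1}\int_{(x,y)}$ and applying Tonelli to exchange $z$ with the far endpoint, one is reduced to estimating $\int_{\{y:\,|x-y|\ge|x-z|\}}|x-y|^{-(p-q+1)}\,dy\aeq|x-z|^{-(p-q)}$, whose convergence uses precisely the standing assumption $p\ge q+2$ (so $p-q+1\ge3>1$). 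Hence the double integral is comparable to $\int_B\int_B|\gamma'(z)-\gamma'(x)|^q|x-z|^{-(p-q)}\,dz\,dx=[\gamma']_{W^{(p-q-1)/q,q}(B)}^q$, using $1+\tfrac{p-q-1}{q}\cdot q=p-q$, and the lemma follows with $C=C(p,q,\lambda)$. (The $[\gamma']^{2q}$ term in the statement is then included a fortiori; it appears naturally only if one mimics the proof of \Cref{la:gammasobvstp1} verbatim, passing through \Cref{la:id2} with exponent $2q$ and then \Cref{la:sob1} on the ball $B$, $\diam B<\tfrac12$, to convert $[\gamma']_{W^{(p-q-1)/(2q),2q}(B)}^{2q}$ into $[\gamma']_{W^{(p-q-1)/q,q}(B)}^{2q}$.)

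There is no genuine obstacle here, this being the easy counterpart of \Cref{la:gammasobvstp1}. The two points needing care are the bookkeeping of the blow-up factor $(1-\lambda)^{-p}$ — unavoidable, and reflecting that the constant degenerates as the bi-Lipschitz constant deteriorates, which is exactly why such an estimate is useless in the scale-invariant limit $p=q+2$ \emph{without} an a priori bi-Lipschitz assumption — and the correct identification of $\int\int\mvint_{(x,y)}|\gamma'(z)-\gamma'(x)|^q\,dz\,|x-y|^{-(p-q)}$ with a Gagliardo seminorm via Tonelli, which hinges on the integrability guaranteed by $p\ge q+2$.
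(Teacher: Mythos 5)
Your argument is correct and follows the route the paper alludes to when it says the proof of \Cref{la:gammasobvstp1} can be reversed once an a priori bi-Lipschitz bound controls the denominator. You rightly observe that the elementary bound $|\gamma'(x)\wedge w|\le|w|$ for unit $\gamma'(x)$ (or, equivalently, dropping the nonnegative subtracted term in Lagrange's identity) together with Jensen and the $\ageq$-direction of \Cref{la:id2} (with Lebesgue exponent $q$ and $s=\tfrac{p-q-1}{q}\in(0,1)$, so $1+sq=p-q$) yields the sharper one-term estimate $\tp^{p,q}(\gamma,B)\aleq(1-\lambda)^{-p}[\gamma']_{W^{(p-q-1)/q,q}(B)}^q$, which implies the stated two-term bound a fortiori; the $[\gamma']^{2q}$ term is indeed a harmless redundancy carried over from the phrasing of \Cref{la:gammasobvstp1}. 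Two microscopic points, neither affecting correctness: the fundamental-theorem identity should read $\gamma(x)-\gamma(y)-\gamma'(x)(x-y)=-\int_{(x,y)}(\gamma'(z)-\gamma'(x))\,dz$ (the sign vanishes under $|\cdot|$), and in your Tonelli step only the one-sided bound $\int_{\{|x-y|\ge|x-z|\}}|x-y|^{-(p-q+1)}\,dy\aleq|x-z|^{-(p-q)}$ is used and is what holds on the bounded domain $B$ (the ``$\aeq$'' would fail for $z$ near $\partial B$); you can sidestep this entirely by citing \Cref{la:id2} directly, whose proof carries out exactly this bookkeeping.
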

With the help of \Cref{la:energyspace} we obtain
\begin{example}\label{ex:nonsmooth}
There exists a homeomorphism $\gamma: \R /\Z \to \R^3$ which is bilipschitz, whose derivative is not everywhere continuous, but has finite tangent-point energy $\tp^{q+2,q}(\gamma)$ for any $q > 1$. Moreover, there exists a sequence of $C^\infty$-diffeomorphisms $\gamma_k$ converging uniformly to $\gamma$ with uniformly bounded tangent-point energy, i.e.\  $$\sup_{k \in \N} \tp^{q+2,q}(\gamma_k) < \infty. $$

Indeed, denote by $N = (0,0,1)$ the north pole of $\S^2$.

Let $u\in W^{\frac{1}{q},q}([-\frac{1}{4},\frac{1}{4}],\S^2) \backslash C^0([-\frac{1}{4},\frac{1}{4}],\R^3)$ such that 
\[
 \langle u,N \rangle \geq \frac{1}{4}
\]
and $u$ is constant for $|x| \leq -\frac{1}{8}$ and $|x| \geq \frac{1}{8}$.

For example for any $\eta \in C_c^\infty((-\frac{1}{8},\frac{1}{8}),[0,1])$ with $\eta \equiv 1$ in $[-\frac{1}{16},\frac{1}{16}]$ we could set
\[
 u(x) = \brac{\frac{1}{\sqrt{2}}\sin(\eta(x) \log \log 1/|x|),\frac{1}{\sqrt{2}}\cos(\eta(x)\log \log 1/|x|),\frac{1}{\sqrt{2}}}.
\]
Now let for $x \in [-\frac{1}{4},\frac{1}{4}]$,
\[
 \gamma(x) = \int_{-\frac{1}{4}}^x u(z)\, dz.
\]
Then $\gamma$ is bilipschitz in $[-\frac{1}{4},\frac{1}{4}]$ because 
\[
 |\gamma(x)-\gamma(y)|\geq \langle \gamma(x)-\gamma(y),N\rangle = \int_{[x,y]} \langle u, N \rangle \geq \frac{1}{4} |x-y|.
\]
Observe that $\gamma'$ is constant around $x \approx -\frac{1}{4}$ and $x \approx \frac{1}{4}$, so $\gamma$ can be smoothly extended into a closed curve on $[-\frac{1}{2},\frac{1}{2}]$ which is a smooth 1-D manifold outside of $[-\frac{1}{4},\frac{1}{4}]$. By \Cref{la:energyspace} the curve $\gamma$ has finite tangent-point energy $\tp^{q+2,q}$ but $\gamma$ is not $C^1$ since $\gamma'$ is discontinuous. 

On the other hand, in view of \Cref{s:topology} any regular homeomorphism $\gamma \in W^{1+\frac{1}{q},q}$ can be approximated by smooth homeomorphisms with uniformly controlled bi-Lipschitz constant, so that in view of \Cref{la:energyspace} the tangent-point energy $\tp^{q+2,q}$ is uniformly bounded.
\end{example}

\subsection{The Strzelecki--von der Mosel argument: Locally small energy implies global injectivity}\label{s:svdm}
In this section we provide a reformulation of a powerful argument due to Strzelecki and von der Mosel, \cite{SvdM12}, see also \cite[Appendix]{BR15}. They used it to show that the image of a curve with finite tangent-point energy ($p\geq q+2$) is a topological $1$-manifold embedded into $\R^3$. Recall that this manifold could be the twice covered straight line, \Cref{ex:weirdfiniteenergy}.

We rework their argument to provide us with uniform injectivity for intervals with small energy, \Cref{th:smallenergyinjectivity}.

The following is essentially a reformulation (with a slight refinement) of \cite[Lemma 2.1]{SvdM12}.

\begin{lemma}[Strzelecki--von der Mosel]\label{la:vdms}
    Let $p \geq q+2$. For any $\eps > 0$ there exists $\delta > 0$ such that the following holds.

	Let $\gamma \in \lip(\R/\Z,\R^3)$, $|\gamma'| \equiv 1$, and assume that for some $x_0\in \R / \Z$ and $\rho >0$ we have 
	\begin{equation}\label{eq:svdm:1}
	\int_{B(x_0,\rho)} \int_{\R / \Z} \frac{\left |\gamma'(x) \wedge (\gamma(y)-\gamma(x)) \right |^{q}
% 	+\left |\gamma'(y) \wedge (\gamma(x)-\gamma(y)) \right |^{q}
}{|\gamma(x)-\gamma(y)|^{p}} \, dy \,  dx < \delta.
	\end{equation}
	Moreover, assume that there is $y_0 \in \R / \Z$ with $d := |\gamma(y_0)-\gamma(x_0)| \leq \rho$.
	
	Then
	\[
		\gamma(\R / \Z) \cap B_{2d}(\gamma(x_0)) \subset B_{\eps d} (L(\gamma(x_0),\gamma(y_0))),
	\]
	where $L(\gamma(x_0),\gamma(y_0))$ is the straight line containing $\gamma(x_0)$ and $\gamma(y_0)$ defined by
	\[
	 L(\gamma(x_0),\gamma(y_0)) = \left \{(1-t) \gamma(x_0) + t \gamma(y_0), t \in \R \right \}.
	\]
\end{lemma}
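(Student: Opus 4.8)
The plan is to argue by contradiction, exploiting that — because $|\gamma'|\equiv1$ — the numerator $|\gamma'(x)\wedge(\gamma(y)-\gamma(x))|$ is precisely the Euclidean distance $\dist(\gamma(y),T_x)$ of $\gamma(y)$ to the affine tangent line $T_x=\{\gamma(x)+t\gamma'(x):t\in\R\}$. So smallness of the double integral in \eqref{eq:svdm:1} should force, for most parameters $x$ near $x_0$, the tangent line $T_x$ to pass very close to every point that $\gamma$ visits with appreciable length and at bounded distance from $\gamma(x_0)$. Suppose the conclusion fails: there is $w\in\R/\Z$ with $\gamma(w)\in B_{2d}(\gamma(x_0))$ but $\dist(\gamma(w),L)>\eps d$, where $L=L(\gamma(x_0),\gamma(y_0))$; we may assume $\eps\le1$, since for $\eps\ge2$ the conclusion is trivial ($B_{2d}(\gamma(x_0))\subset B_{\eps d}(L)$). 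Write $P_0:=\gamma(x_0)$, $P_1:=\gamma(y_0)\in L$, $Q:=\gamma(w)$, so that $|P_0-P_1|=d$, $|Q-P_0|\le2d$ and $\dist(Q,L)>\eps d$; note that, $\gamma$ being a closed curve of length $1$, its image has diameter $\le\tfrac12$, hence $d\le\tfrac12$, which keeps all the constants below scale-free.

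The core of the argument is an elementary geometric \emph{dichotomy}: there is a universal $c_1>0$ (one may take $c_1=\tfrac1{100}$) such that every line $T$ with $\dist(P_0,T)<\eps d/200$ satisfies
\[
 \max\bigl(\dist(P_1,T),\,\dist(Q,T)\bigr)\ \ge\ c_1\,\eps d .
\]
To prove this I would assume both distances are $<c_1\eps d$ and derive a contradiction. Then $T$ lies within $\eps d/200$ of $P_0$ and within $c_1\eps d$ of $P_1$, which are $d$ apart; comparing the orthogonal projections of $P_0$ and $P_1$ onto $T$ bounds the angle between $T$ and $L$ by $\arcsin(\eps/50)\le\eps/40$. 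Propagating this angle over a length $\le2d$ and adding the offset $\dist(P_0,T)\le\eps d/200$ shows that $T$ deviates from $L$ by at most $\eps d/8$ throughout $B_{2d}(P_0)$, whence $\dist(Q,T)\ge\dist(Q,L)-\eps d/8>\tfrac78\eps d$, contradicting $\dist(Q,T)<c_1\eps d$. I expect this quantitative bookkeeping — keeping the several $\eps$-proportional error terms strictly below $c_1\eps d$ — to be the only genuinely delicate step; conceptually it just says that a line close to two far-apart points is almost determined and hence cannot also be close to a point off that line.

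With the dichotomy in hand the rest is integration. Set $G:=B(x_0,\eps d/200)$, which is contained in $B(x_0,\rho)$ since $d\le\rho$; because $|\gamma'|\equiv1$ we have $\dist(\gamma(x),P_0)<\eps d/200$, hence $T_x$ is admissible for the dichotomy, for a.e.\ $x\in G$. Likewise set $J_0:=B(y_0,\eps d/200)$, $J_1:=B(w,\eps d/200)$, so that $\gamma(J_0)\subset B_{\eps d/200}(P_1)$, $\gamma(J_1)\subset B_{\eps d/200}(Q)$, and $|\gamma(x)-\gamma(y)|\le3d$ for $x\in G$, $y\in J_0\cup J_1$ (one checks $|\gamma(x)-\gamma(y)|\ge\tfrac12\eps d>0$ there, so no singularity occurs). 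For a.e.\ $x\in G$ the dichotomy gives either $\dist(P_1,T_x)\ge c_1\eps d$, in which case $\dist(\gamma(y),T_x)\ge c_1\eps d-\eps d/200\ge\tfrac{c_1}{2}\eps d$ for all $y\in J_0$, or else $\dist(Q,T_x)\ge c_1\eps d$ and the same bound holds for all $y\in J_1$. In either case
\[
 \int_{\R/\Z}\frac{|\gamma'(x)\wedge(\gamma(y)-\gamma(x))|^q}{|\gamma(x)-\gamma(y)|^p}\,dy\ \ge\ \min\{|J_0|,|J_1|\}\,\frac{(\tfrac{c_1}{2}\eps d)^q}{(3d)^p}\ \ge\ c(p,q)\,\eps^{q+1}\,d^{q+1-p}.
\]
Integrating over $x\in G$ with $|G|\aeq\eps d$, and using $p\ge q+2$ together with $d\le\tfrac12$ (so $d^{q+2-p}\ge1$), we obtain
\[
 \int_{B(x_0,\rho)}\int_{\R/\Z}\frac{|\gamma'(x)\wedge(\gamma(y)-\gamma(x))|^q}{|\gamma(x)-\gamma(y)|^p}\,dy\,dx\ \ge\ c'(p,q)\,\eps^{q+2}.
\]
Choosing $\delta:=\tfrac12\,c'(p,q)\,\eps^{q+2}$ contradicts \eqref{eq:svdm:1}, so no stray point $w$ exists, which is the claim.
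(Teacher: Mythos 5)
Your proof is correct, and it reaches the same conclusion by a genuinely streamlined route. Both you and the paper start from the geometric identity (valid when $|\gamma'|\equiv1$) that $|\gamma'(x)\wedge(\gamma(y)-\gamma(x))|=\dist(\gamma(y),T_x)$, so that the smallness of the double integral \eqref{eq:svdm:1} forces the tangent line $T_x$, for $x$ near $x_0$, to pass close to any point the curve visits on a set of positive measure. The paper's proof then proceeds in two separate steps: their Step 1 uses the energy bound near $y_0$ to produce a set of positive measure of $x$-parameters on which both $\gamma(x)\approx\gamma(x_0)$ \emph{and} $\gamma'(x)$ is nearly parallel to $v=(\gamma(x_0)-\gamma(y_0))/d$ (i.e.\ $T_x\approx L$); their Step 2 then plugs a hypothetical off-line point $z_0$ into the energy near those good $x$ to get the contradiction. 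You compress this into a single geometric dichotomy: any line through a point within $\eps d/200$ of $P_0$ cannot simultaneously lie within $c_1\eps d$ of both $P_1$ (which is $d$ away on $L$) and of the alleged stray point $Q$ (which is at distance $>\eps d$ from $L$). For each $x\in G$ the dichotomy then hands you, case by case, one of the two intervals $J_0$ or $J_1$ over which the inner integral already beats $c(p,q)\eps^{q+1}d^{q+1-p}$, and integrating over $G$ gives the contradiction with $\delta\sim\eps^{q+2}$. This avoids having to first establish the ``good set'' of Step 1 and makes the contribution of each $x$ manifest individually; your quantitative angle-propagation bookkeeping (angle $\lesssim\eps$, propagated over radius $\lesssim d$) checks out and the constants are consistent. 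One small clarification worth making: the reduction should be to $\eps<2$ rather than $\eps\leq1$ --- for $\eps\geq2$ the claim is vacuous because $B_{2d}(\gamma(x_0))\subset B_{\eps d}(L)$, while for $1<\eps<2$ one simply applies the $\eps=1$ case, since $B_{d}(L)\subset B_{\eps d}(L)$. The assumption $\eps\leq1$ is harmless, but stating the monotonicity in $\eps$ explicitly closes that small gap.
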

\begin{proof}
	For $r>0$ and $p,v \in \R^3$ we define
	\begin{align*}
		A(r,p) & := \{x \in \R / \Z: |\gamma(x)-p| < r\}, \\
		X(r,v) & := \left \{x \in \R / \Z: \abs{\gamma'(x) \wedge v} \geq r \right \}.
	\end{align*}
	Fix $x_0, y_0 \in \R / \Z$ and $d:= |\gamma(x_0) - \gamma(y_0)|$ as in the assumption. Set $v:=\frac{\gamma(x_0)-\gamma(y_0)}{|\gamma(x_0)-\gamma(y_0)|}$.
	
	\underline{Step 1} is devoted to show the following: There exists $\delta_0,\sigma_0\in (0,1)$ depending only on $p$ and $q$ such that if $\delta<\delta_0$ in \eqref{eq:svdm:1} and $\sigma < \sigma_0$, we have
	\begin{equation}\label{eq:stzvdm:step1}
		|B(x_0,\rho) \cap A(\sigma^2 d,\gamma(x_0)) \cap X(\sigma,v)^c| > \frac 32 \sigma^2 d.
	\end{equation}
    In order to establish \eqref{eq:stzvdm:step1}, we first observe that for any $x\in A(\sigma^2 d, \gamma(x_0))$ and $y \in A(\sigma^2 d, \gamma(y_0))$ we have 
	\[
		|\gamma(x)-\gamma(y)| \in [(1-2\sigma^2) d,(1+2\sigma^2) d].
	\]
	Moreover, for $x \in A(\sigma^2 d, \gamma(x_0)) \cap X(\sigma,v)$ and $y \in A(\sigma^2 d, \gamma(y_0))$, 
	\[
	\begin{split}
	 &\abs{\gamma'(x) \wedge \frac{\gamma(x)-\gamma(y)}{|\gamma(x)-\gamma(y)|} }\\
	 \geq& \frac{1}{(1+2\sigma^2)d} \abs{\gamma'(x) \wedge \brac{\gamma(x)-\gamma(y)}}\\
	 \geq& \frac{1}{(1+2\sigma^2)d} \brac{\abs{\gamma'(x) \wedge \brac{\gamma(x_0)-\gamma(y_0)}}-2\sigma^2d}\\
	 =&\frac{1}{(1+2\sigma^2)} \brac{\abs{\gamma'(x) \wedge \frac{\gamma(x_0)-\gamma(y_0)}{|\gamma(x_0)-\gamma(y_0)|}}-2\sigma^2}\\
	 \geq&\frac{1}{(1+2\sigma^2)} \brac{\sigma-2\sigma^2}\\
	 =&\sigma \frac{1-2\sigma}{1+2\sigma^2}.
	 \end{split}
	\]
	Hence, whenever $x \in A(\sigma^2 d, \gamma(x_0)) \cap X(\sigma,v)$ and $y \in A(\sigma^2 d, \gamma(y_0))$, 
	\[
	\frac{\abs{\gamma'(x) \wedge \brac{\gamma(x)-\gamma(y)}}^q}{\abs{\gamma(x)-\gamma(y)}^{p}} \geq  \frac{\sigma^q}{d^{p-q}} \brac{\frac{1-2\sigma}{1+2\sigma^2}}^q \frac{1}{(1+2\sigma^2)^{p-q}}.
	\]
	From \eqref{eq:svdm:1} we find
	\[
\delta > \frac{\sigma^q}{d^{p-q}} \brac{\frac{1-2\sigma}{1+2\sigma^2}}^q \frac{1}{(1+2\sigma^2)^{p-q}}\, |B(x_0,\rho) \cap A(\sigma^2 d,\gamma(x_0)) \cap X(\sigma,v)|\, |A(\sigma^2 d,\gamma(y_0))|.
	\]
Observe that since $|\gamma'|\equiv 1$, we have $|A(\sigma^2 d,\gamma(y_0))|\geq 2\sigma^2 d$.
Then we have
\begin{equation}\label{eq:svdm:3535}
|B(x_0,\rho) \cap A(\sigma^2 d,\gamma(x_0)) \cap X(\sigma,v)|  \leq \frac{d^{p-q-1}}{\sigma^{q+2}}  \brac{\frac{1+2\sigma^2}{1-2\sigma}}^{q} (1+2\sigma^2)^{p-q}\frac{1}{2} \delta.
\end{equation}
Since $d < \rho$, $\sigma \in (0,1)$, and $|\gamma'| \equiv 1$, we have
\begin{equation}\label{eq:svdm:3536}
 |B(x_0,\rho) \cap A(\sigma^2 d,\gamma(x_0))| \geq 2\sigma^2 d.
\end{equation}
Taking $\sigma_0$ and $\delta_0$ to be small enough, combining \eqref{eq:svdm:3535} and \eqref{eq:svdm:3536}, we obtain for any $\sigma \in (0,\sigma_0)$ and any $\delta \in (0,\delta_0)$,
\[
\begin{split}
	&|B(x_0,\rho) \cap A(\sigma^2 d,\gamma(x_0)) \cap X(\sigma,v)^c| \\
	\geq& 2\sigma^2 d - |B(x_0,\rho) \cap A(\sigma^2 d,\gamma(x_0)) \cap X(\sigma,v)| \\
	\geq& \frac{3}{2} \sigma^2 d.
	\end{split}
	\]
This establishes \eqref{eq:stzvdm:step1}.

\underline{Step 2:}
    We are going to show below that if $\sigma$ is small enough and
	\begin{equation}\label{eq:sdvdm1:asbycontr}
		\gamma(\R / \Z) \cap B_{2d}(\gamma(x_0)) \not \subset B_{20\sqrt{\sigma} d} (L(\gamma(x_0),\gamma(y_0))),
	\end{equation}
	then necessarily for a uniform constant $C=C(p,q)$
	\begin{equation}\label{eq:sdvdm1:asbycontr:goal}
    1 < 2C\, \delta   \sigma^{p-q-{4}-\frac{q}{2}}.
	\end{equation}
	Once we have obtained that we can argue by contradiction: Choose $\sigma$ small enough so that in particular $20\sqrt{\sigma} < \eps$. Pick $\delta$ depending on $\sigma$ such that \eqref{eq:sdvdm1:asbycontr:goal} is false. Then \eqref{eq:sdvdm1:asbycontr} would lead to a contradiction and thus is false, meaning 
	\[
		\gamma(\R / \Z) \cap B_{2d}(\gamma(x_0)) \subset B_{20\sqrt{\sigma} d} (L(\gamma(x_0),\gamma(y_0))) \subset B_{\eps d} (L(\gamma(x_0),\gamma(y_0))), 
	\]
	which is what is claimed in the statement.
	
	So what we have to show is that \eqref{eq:sdvdm1:asbycontr} implies \eqref{eq:sdvdm1:asbycontr:goal}, which we will do now.
		
	If \eqref{eq:sdvdm1:asbycontr} was true, there necessarily must be a point $z_0 \in \R/\Z$ such that 
	\begin{equation}\label{eq:sdvdm1:gammaz0}
	\gamma(z_0) \in B_{2d}(\gamma(x_0)) \backslash B_{20\sqrt{\sigma} d} (L(\gamma(x_0),\gamma(y_0))).
	\end{equation}
	Denote the angle between $\gamma(z_0)-\gamma(x_0)$ and $\gamma(y_0)-\gamma(x_0)$ by $\alpha$, cf. \Cref{fig:svdm1}. 

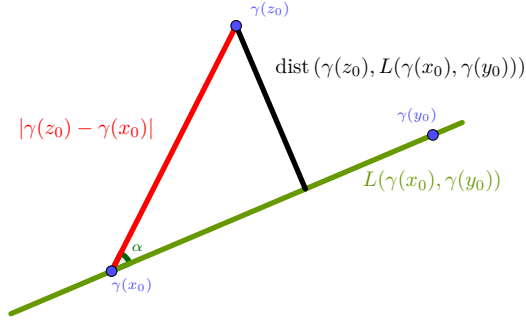
\begin{figure}	

	\definecolor{qqwuqq}{rgb}{0,0.39215686274509803,0}
\definecolor{ffqqqq}{rgb}{1,0,0}
\definecolor{wwzzqq}{rgb}{0.4,0.6,0}
\definecolor{ududff}{rgb}{0.30196078431372547,0.30196078431372547,1}

\vspace{-1cm}
\begin{tikzpicture}[line cap=round,line join=round,>=triangle 45,x=.75cm,y=.75cm, scale=0.8, transform shape]
\clip(-7,-6.60843746495283) rectangle (11.139334568632782,6.672764200933142);
\draw [shift={(-2.77,-2.72)},line width=2pt,color=qqwuqq,fill=qqwuqq,fill opacity=0.10000000149011612] (0,0) -- (22.98458350389757:0.44369270598728267) arc (22.98458350389757:63.098907048459004:0.44369270598728267) -- cycle;
\draw [line width=2pt,color=wwzzqq,domain=-5:5] plot(\x,{(-11.001--3.02*\x)/7.12});
\draw [line width=2pt,color=ffqqqq] (-2.77,-2.72)-- (-0.01,2.72);
\draw [line width=2pt] (-0.01,2.72)-- (1.5247497943652746,-0.8983505085697853);
\draw [color=ffqqqq](-5,0.845599995633506) node[anchor=north west] {$|\gamma(z_0)-\gamma(x_0)|$};
\draw (0.7,2.2) node[anchor=north west] {$\dist(\gamma(z_0),L(\gamma(x_0),\gamma(y_0)))$};
\draw [color=wwzzqq](2.635224370543196,-0.3227907967996698) node[anchor=north west] {$L(\gamma(x_0),\gamma(y_0))$};
\begin{scriptsize}
\draw [fill=ududff] (-2.77,-2.72) circle (2.5pt);
\draw[color=ududff] (-2.319344179648127,-3.036711181755211) node {$\gamma(x_0)$};
\draw [fill=ududff] (4.35,0.3) circle (2.5pt);
\draw[color=ududff] (4,0.7346768191366855) node {$\gamma(y_0)$};
\draw [fill=ududff] (-0.01,2.72) circle (2.5pt);
\draw[color=ududff] (0.7421354916641235,3.08624816086928) node {$\gamma(z_0)$};
\draw[color=qqwuqq] (-2.2,-2.2) node {$\alpha$};
\end{scriptsize}
\end{tikzpicture}

\vspace{-1cm}
\caption{Definition of $\alpha$}\label{fig:svdm1}

	\end{figure}
    Observe
	\[
	\begin{split}
	 \left |\frac{\gamma(z_0)-\gamma(x_0)}{|\gamma(z_0)-\gamma(x_0)|} \wedge \frac{\gamma(y_0)-\gamma(x_0)}{|\gamma(y_0)-\gamma(x_0)|}\right | =& |\sin(\alpha)| = \frac{\dist(\gamma(z_0),L(\gamma(x_0),\gamma(y_0)))}{|\gamma(z_0)-\gamma(x_0)|}\\
	 \overset{\eqref{eq:sdvdm1:gammaz0}}{\geq}&\frac{20 \sqrt{\sigma} d}{2d}=10\sqrt{\sigma}.
	 \end{split}
	\]
Denote by $\beta$ the angle between $\gamma'(x)$ and $\gamma(y_0)-\gamma(x_0)$ and $\theta$ the angle between $\gamma'(x)$ and $\gamma(z_0)-\gamma(x_0)$. Then we have
\[
 |\alpha| \leq |\beta|+|\theta|.
\]
Thus,
\[
\begin{split}
 &\abs{\gamma'(x) \wedge \frac{\gamma(z_0)-\gamma(x_0)}{|\gamma(z_0)-\gamma(x_0)|}}\\
 =&|\sin(\theta)|\\
 \geq&|\sin(\alpha)|-|\sin(\beta)|\\
 \geq&\left |\frac{\gamma(z_0)-\gamma(x_0)}{|\gamma(z_0)-\gamma(x_0)|} \wedge \frac{\gamma(y_0)-\gamma(x_0)}{|\gamma(y_0)-\gamma(x_0)|}\right | - \abs{\gamma'(x) \wedge \frac{\gamma(y_0)-\gamma(x_0)}{|\gamma(y_0)-\gamma(x_0)|}}\\
 \geq&10\sqrt{\sigma} - \abs{\gamma'(x) \wedge \frac{\gamma(y_0)-\gamma(x_0)}{|\gamma(y_0)-\gamma(x_0)|}}.
 \end{split}
\]
Consequently, for any $x \in X(\sigma,v)^c$,
\[
\begin{split}
 &\abs{\gamma'(x) \wedge \frac{\gamma(z_0)-\gamma(x_0)}{|\gamma(z_0)-\gamma(x_0)|}}\\
 \geq&10\sqrt{\sigma} - \sigma.
 \end{split}
\]
For all $\sigma$ small enough this implies that for any $x \in X(\sigma,v)^c$ necessarily
\begin{equation}\label{eq:svdm1:gammaprimez}
 \abs{\gamma'(x) \wedge \frac{\gamma(z_0)-\gamma(x_0)}{|\gamma(z_0)-\gamma(x_0)|}}
 \geq9\sqrt{\sigma}.
\end{equation}
Observe next that for any $x \in B(x_0,\rho) \cap A(\sigma^2 d,\gamma(x_0))$ and $z\in A(\sigma^2 d,\gamma(z_0))$ (it is very important that $z \in \R/\Z$ and does not need to lie in $B(x_0,\rho)$) we have 
	\[
	 |\gamma(z)-\gamma(x)| \geq |\gamma(z_0)-\gamma(x_0)|-2\sigma^2 d\geq \sqrt{\sigma}(20-2\sigma^{\frac{3}{2}}) d.
	\]
    and
    \[
	 |\gamma(z)-\gamma(x)| \leq |\gamma(z_0)-\gamma(x_0)|+2\sigma^2 d\leq (2+2\sigma^{2}) d.
	\]
	That is, for all $\sigma$ small enough
	\begin{equation}\label{eq:svdm1:gammzdif}
	 |\gamma(z)-\gamma(x)| \in (19\sqrt{\sigma} d,(2+2\sigma^2)d).
	\end{equation}
	
	We combine \eqref{eq:svdm1:gammzdif} and \eqref{eq:svdm1:gammaprimez}, and obtain for any 
	$x \in B(x_0,\rho) \cap A(\sigma^2 d,\gamma(x_0)) \cap X(\sigma,v)^c$ and $z \in A(\sigma^2 d,\gamma(z_0))$,
	\[
\begin{split}
 &\abs{\gamma'(x) \wedge \frac{\gamma(x)-\gamma(z)}{|\gamma(x)-\gamma(z)|}}\\
 \geq& \frac{1}{|\gamma(x)-\gamma(z)|} \abs{\gamma'(x) \wedge \brac{\gamma(x)-\gamma(z)}}\\
 \geq&\frac{1}{|\gamma(x)-\gamma(z)|} \brac{\abs{\gamma'(x) \wedge \brac{\gamma(x_0)-\gamma(z_0)}} - 2\sigma^2 d}\\
 \geq&\frac{1}{|\gamma(x)-\gamma(z)|} \brac{|\gamma(x_0)-\gamma(z_0)|\abs{\gamma'(x) \wedge \frac{\gamma(x_0)-\gamma(z_0)}{|\gamma(x_0)-\gamma(z_0)|}} -  2\sigma^2 d}\\
 \geq&\frac{d}{|\gamma(x)-\gamma(z)|} \brac{180 \sigma    -  2\sigma^2 }\\
 \geq&\frac{\sigma}{2+2\sigma^2} \brac{180     -  2\sigma }.\\
%  \geq& \nnote{8 statt 9}\sqrt{\sigma}-  \frac{2}{21} \sigma^{\frac{3}{2}}.
 \end{split}
\]
Again, for all small enough $\sigma$ this implies for any 
	$x \in B(x_0,\rho) \cap A(\sigma^2 d,\gamma(x_0)) \cap X(\sigma,v)^c$ and $z \in A(\sigma^2 d,\gamma(z_0))$,
	\begin{equation}\label{eq:svdm1:gammapxzvar}
	\abs{\gamma'(x) \wedge \frac{\gamma(x)-\gamma(z)}{|\gamma(x)-\gamma(z)|}}\\
 \geq3\sqrt{\sigma}.
 \end{equation}
Integrating this inequality in $x \in B(x_0,\rho) \cap A(\sigma^2 d,\gamma(x_0)) \cap X(\sigma,v)^c$ and $z \in A(\sigma^2 d,\gamma(z_0))$, we obtain from \eqref{eq:svdm:1}, \eqref{eq:svdm1:gammzdif}, and \eqref{eq:svdm1:gammapxzvar},
\[
 \delta > \frac{(3\sqrt{\sigma})^q}{(\sigma d)^{p-q}} |B(x_0,\rho) \cap A(\sigma^2 d,\gamma(x_0)) \cap X(\sigma,v)^c|\ |A(\sigma^2 d,\gamma(z_0))|.
\]
With \eqref{eq:stzvdm:step1} we find
\[
\delta > \frac{(3\sqrt{\sigma})^q}{(\sigma d)^{p-q}} \frac{3}{2} \sigma^2 d\ |A(\sigma^2 d,\gamma(z_0))|.
\]
On the other hand, since $|\gamma'| \equiv 1$, we have $|A(\sigma^2 d,\gamma(z_0))| \geq 2\sigma^2 d$. That is,
\[
 2\sigma^2 d \leq |A(\sigma^2 d,\gamma(z_0))|  < C\, \delta d^{p-q-1} \sigma^{p-q-2-\frac{q}{2}}.
\]
That is,
\[
d^{-(p-2-q)} < 2C\, \delta   \sigma^{p-q-4-\frac{q}{2}}.
\]
If $p \geq q+2$ we have (observe that $d \leq 1$ since $\diam(\gamma(\R/\Z)) \leq 1$),
\[
1 < 2C\, \delta   \sigma^{p-q-4-\frac{q}{2}}.
\]
That is, under the assumption \eqref{eq:sdvdm1:asbycontr}, we have shown \eqref{eq:sdvdm1:asbycontr:goal} which, as explained above, implies the claim of \Cref{la:vdms}.
\end{proof}

\begin{theorem}\label{th:smallenergyinjectivity}
Let $p \geq q+2$. There exists $\delta > 0$ such that the following holds.

Let $\gamma \in \lip(\R/\Z,\R^3)$ be a homeomorphism, $|\gamma'| \equiv 1$, and assume that for some $x_0\in \R / \Z$ and $\rho >0$ we have that
\begin{equation}\label{eq:svdm:24}
\text{either $\gamma \in C^1(B(x_0,\rho))$ or }[\gamma']_{W^{\frac{p-q-1}{q},q}(B(x_0,\rho))}<\infty.
\end{equation}
Also assume
\begin{equation}\label{eq:svdm:23}
\int_{B(x_0,\rho)} \int_{\R / \Z} \frac{\left |\gamma'(x) \wedge (\gamma(y)-\gamma(x)) \right |^{q}
% 	+\left |\gamma'(y) \wedge (\gamma(x)-\gamma(y)) \right |^{q}
}{|\gamma(x)-\gamma(y)|^{p}} \, dy\, dx < \delta.
\end{equation}

If for any $z_0 \in \R/\Z$ we have 
\[
 |\gamma(x_0)-\gamma(z_0)| < \frac{1}{10}\rho,
\]
then there exists $\bar{x} \in B(x_0,\rho)$ such that $\gamma(\bar{x}) = \gamma(z_0)$. In particular, we have $z_0\in B(x_0,\rho)$.
\end{theorem}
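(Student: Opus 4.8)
The plan is as follows. Since $\gamma$ is a homeomorphism of $\R/\Z$, there is exactly one point with image $\gamma(z_0)$ — namely $z_0$ itself — so the asserted existence of $\bar x\in B(x_0,\rho)$ with $\gamma(\bar x)=\gamma(z_0)$ is \emph{equivalent} to the inclusion $z_0\in B(x_0,\rho)$, and it is this that I would establish. Write $d:=|\gamma(x_0)-\gamma(z_0)|$; by hypothesis $d<\tfrac1{10}\rho$, and moreover $d\le\tfrac1{20}$ since the unit‑length closed curve $\gamma(\R/\Z)$ has diameter at most $\tfrac12$. Consequently every ball $B(\bar x,cd')$ with $d'\le d$, $c\le5$ and $\bar x$ near $x_0$ has diameter below $\tfrac12$ and is contained in $B(x_0,\rho)$, so the local machinery below applies on it, and the smallness hypotheses \eqref{eq:svdm:23}--\eqref{eq:svdm:24} restrict to such sub‑balls.

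First I would extract two inputs, both valid for $\delta$ small (depending only on $p,q$). From \eqref{eq:svdm:23}, $\eqref{eq:svdm:24}$ and $|\gamma'|\equiv1$ we get $\tp^{p,q}(\gamma,B(x_0,2d))\le\int_{B(x_0,\rho)}\int_{\R/\Z}(\cdots)<\delta$, so \Cref{th:gammasobvstp2} yields the bi‑Lipschitz estimate $\tfrac34|x-y|\le|\gamma(x)-\gamma(y)|\le|x-y|$ on $B(x_0,2d)$, and likewise on any ball $B(\bar x,2d')\subseteq B(x_0,\rho)$. Applying \Cref{la:vdms} with base point $x_0$, auxiliary point $y_0:=z_0$ and admissible radius $2d$ (indeed $d\le 2d$) produces the straightness inclusion
\[
 \gamma(\R/\Z)\cap B_{2d}(\gamma(x_0))\subseteq B_{\eps_1 d}\brac{L(\gamma(x_0),\gamma(z_0))},
\]
where $\eps_1>0$ can be made as small as desired by shrinking $\delta$. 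Setting $L:=L(\gamma(x_0),\gamma(z_0))$, letting $e_L$ be a unit direction of $L$, and $\pi(v):=\langle v-\gamma(x_0),e_L\rangle$, one has $\pi(\gamma(x_0))=0$, $\pi(\gamma(z_0))=d$, and every $\gamma(x)\in B_{2d}(\gamma(x_0))$ splits as $\gamma(x)=\gamma(x_0)+\pi(\gamma(x))e_L+v_x$ with $v_x\perp e_L$ and $|v_x|=\dist(\gamma(x),L)<\eps_1 d$.

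The core step is to produce a point of $B(x_0,\rho)$ whose $\gamma$‑image is within $\eps_1 d$ of $\gamma(z_0)$. I would examine $x_0\pm\tfrac32 d\in B(x_0,2d)$, whose images lie in $B_{2d}(\gamma(x_0))$. Using $|\gamma(x_0\pm\tfrac32 d)-\gamma(x_0)|\ge\tfrac34\cdot\tfrac32 d=\tfrac98 d$, the orthogonal splitting, and $|\pi(v)|\le|v-\gamma(x_0)|\le\tfrac32 d$, one gets $d<|\pi(\gamma(x_0\pm\tfrac32 d))|\le\tfrac32 d$. These two values cannot have the same sign: otherwise their difference would have modulus at most $\tfrac12 d$, whence the splitting would give $|\gamma(x_0+\tfrac32 d)-\gamma(x_0-\tfrac32 d)|^2\le(\tfrac12 d)^2+(2\eps_1 d)^2<\tfrac12 d^2$ for $\eps_1$ small, contradicting the bi‑Lipschitz lower bound $\tfrac34\cdot3d=\tfrac94 d$. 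Hence one of them, say $\pi(\gamma(x_0+\tfrac32 d))$, exceeds $d$; since $\pi\circ\gamma$ is continuous and vanishes at $x_0$, the intermediate value theorem yields $\bar x\in(x_0,x_0+\tfrac32 d)\subseteq B(x_0,\rho)$ with $\pi(\gamma(\bar x))=d$, and then $\gamma(\bar x)=\gamma(z_0)+v_{\bar x}$ with $|v_{\bar x}|<\eps_1 d$.

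Finally, one upgrades ``$\eps_1 d$‑close'' to ``equal''. If $\gamma(\bar x)=\gamma(z_0)$ we are done, since injectivity forces $\bar x=z_0\in B(x_0,\rho)$; otherwise I would iterate, replacing $(x_0,d)$ by $(\bar x,d_1)$ with $d_1:=|\gamma(\bar x)-\gamma(z_0)|\in(0,\eps_1 d)$ — legitimate because $B(\bar x,2d_1)\subseteq B(x_0,\rho)$ and $d_1<d$, so all smallness hypotheses persist. This produces points $\bar x_k$ with $d_k:=|\gamma(\bar x_k)-\gamma(z_0)|<\eps_1^{\,k}d$ and $|\bar x_{k+1}-\bar x_k|<\tfrac32 d_k$, hence a Cauchy sequence with limit $\bar x_\infty$ satisfying $|\bar x_\infty-x_0|<\tfrac32 d\sum_{k\ge0}\eps_1^{\,k}<3d<\rho$ (for $\eps_1<\tfrac12$) and, by continuity, $\gamma(\bar x_\infty)=\lim_k\gamma(\bar x_k)=\gamma(z_0)$; injectivity then gives $z_0=\bar x_\infty\in B(x_0,\rho)$. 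I expect the main obstacle to be the third step and its interface with the iteration: a single application of \Cref{la:vdms} only pins $\gamma(\bar x)$ to within $\eps_1 d$ of $\gamma(z_0)$, never exactly onto it, so one is forced into the limiting argument, and this in turn demands careful sign bookkeeping for $\pi(\gamma(x_0\pm\tfrac32 d))$ and a calibration of the numerical constants (the bi‑Lipschitz constant $\tfrac34$, the radius $\tfrac32 d$, the smallness $\eps_1<\tfrac14$) ensuring that the iterates never leave $B(x_0,\rho)$ and that the smallness hypotheses reproduce at every stage.
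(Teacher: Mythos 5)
Your proof is correct, and it takes a genuinely different and arguably cleaner route than the paper. The paper's iteration step invokes the Strzelecki--von der Mosel straightness lemma (\Cref{la:vdms}) at base $x_k$ with the auxiliary point $y_k=x_k+t_+d_k$, i.e.\ one of the two ``endpoints'' $\eta_k(t_\pm)$ of the curve's passage through $B(\gamma(x_k),d_k)$ selected by a good-guy/bad-guy dichotomy; this forces a subsequent, somewhat delicate analysis of the projection parameter $t_1$ (Pythagoras to get $|t_1|\ge 1-2\eps$, then a geometric contradiction argument to rule out $t_1\approx-1$) in order to conclude that $\gamma(y_k)$ and hence the next iterate is close to $\gamma(z_0)$. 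You instead apply \Cref{la:vdms} with the auxiliary point $z_0$ itself, so the reference line $L(\gamma(\bar x_k),\gamma(z_0))$ passes \emph{exactly} through the target $\gamma(z_0)$; the contraction then falls out of a short Pythagorean argument ($|\pi_\pm|>d$, opposite signs via the bi-Lipschitz lower bound) and the intermediate value theorem for the continuous scalar $\pi\circ\gamma$. This choice of auxiliary point eliminates precisely the sign bookkeeping that the paper has to fight through. The remaining scaffolding (bi-Lipschitz from \Cref{th:gammasobvstp2} on sub-balls, the geometric-series control $|\bar x_k - x_0|<3d<\rho$ ensuring each $B(\bar x_k, 2d_k)$ stays in $B(x_0,\rho)$ so the smallness hypotheses \eqref{eq:svdm:23}--\eqref{eq:svdm:24} restrict, and the passage from the Cauchy limit to $z_0\in B(x_0,\rho)$ via injectivity) matches the paper's and is sound. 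One tiny stylistic note: you derive $d\le\frac1{20}$ from $d<\frac1{10}\rho$ together with $\rho\le\frac12$; it would be worth saying explicitly that $\rho\le\frac12$ because $B(x_0,\rho)$ is a proper ball in $\R/\Z$, as this is the bound that guarantees the sub-balls you use have diameter below $\frac12$, the hypothesis of \Cref{th:gammasobvstp2}.
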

\begin{proof}
Fix $\sigma, \eps > 0$ to be specified later.
Take $\delta$ small enough so that \Cref{la:bilip} and \Cref{th:gammasobvstp2} are applicable (in view of in view of \eqref{eq:svdm:24}), so that we have
\begin{equation}\label{eq:svdm:bilip2}
 (1-\sigma)|x-y|\leq |\gamma(x)-\gamma(y)| \leq |x-y|\quad \forall x,y \in B(x_0,\rho).
\end{equation}
Moreover, we can assume that $\delta$ is small enough so that \Cref{la:vdms} is applicable.

Starting from $x_0$, we are going to construct a sequence $(x_k)_{k=0}^\infty \subset B(x_0,\rho)$ and a sequence $(d_k)_{k=0}^\infty \subset (0,\infty)$ such that for all $k \geq 0$,
\begin{itemize}
 \item $|x_k-x_{k+1}| \leq \frac{1}{1-\sigma}d_k$,
 \item $|\gamma(x_{k})-\gamma(z_0)| = d_{k}$,
 \item $d_{k+1} \leq \frac{1}{100} d_k$,
 \item $B(x_k,10d_k) \subset B(x_0,\rho)$.
\end{itemize}
Once we have constructed this sequence, we see that $x_k$ is convergent to some $\bar{x} := \lim_{k \to \infty} x_k \in B(x_0,\rho)$ and $\gamma(\bar{x})=\lim_{k \to \infty} \gamma(x_k) = \gamma(z_0)$. Since $\gamma$ is injective, this implies $\bar{x} = z_0$ and thus $z_0 \in B(x_0,\rho)$.

Let $x_k$ be already given, we need to construct $x_{k+1}$.

Set 
\[
 d_k := |\gamma(x_k)-\gamma(z_0)|
\]
and 
\[
 \eta_k(t) := \gamma(x_k+td_k).
\]
Observe that for $1 \leq |t| \leq (1-\sigma)^{-1}$, by \eqref{eq:svdm:bilip2},
\[
 d_k \leq |\eta_k(t)-\gamma(x_k)| \leq (1-\sigma)^{-1}\, d_k.
\]
Since $\eta_k(0)-\gamma(x_k) = 0$, we obtain from the intermediate value theorem that there must be $t_- \in [-(1-\sigma)^{-1},-1]$ and $t_+ \in [1,(1-\sigma)^{-1}]$ such that 
\[
 |\eta_k(t_{\pm}) -\gamma(x_k)| = d_k.
\]
W.l.o.g. (otherwise we interchange the role of $t_-$ and $t_+$ below) we may assume that 
\begin{equation}\label{eq:svdm2:goodguybadguy}
 |\eta_k(t_-)-\gamma(z_0)| \geq \frac{1}{2}d_k.
\end{equation}
Indeed, if the inequality was false for both $t_+$ and $t_-$, we would have
\[
 |\eta_k(t_-)-\eta_k(t_+)|\leq d_k,
\]
which violates the bi-Lipschitz assumption \eqref{eq:svdm:bilip2}.

Set \[
y_k := x_k+t_+ d_k.
\]
Denote the line through $\gamma(x_k)$ and $\gamma(y_k)$ by $L_{x_k,y_k}$, more precisely let
\[
 L_{x_k,y_k}(t) := (1-t)\gamma(x_k)+t \gamma(y_k), \quad t \in \R.
\]
Since $B(x_k,d_k) \subset B(x_0,\rho)$, we can apply \Cref{la:vdms} and have
\begin{equation}\label{eq:vdmsappl:24}
 |L_{x_k,y_k}(t_1)-\gamma(z_0)| = \inf_{t \in \R} |L_{x_k,y_k}(t)-\gamma(z_0)| \leq \eps d_k.
\end{equation}
for some $t_1 \in [-1,1]$. By Pythagoras,
\[
 (d_k)^2(1-\eps^2) = |L_{x_k,y_k}(t_1)-\gamma(x_k)|^2 = t_1^2 (d_k)^2.
\]
That is, if $\eps$ is chosen small enough, $|t_1| \geq 1-2\eps$.

We now argue that $t_1 \geq 1-2\eps$. Indeed, if we had $t_1 \leq -1+2\eps$, then from the bilipschitz estimate 
\[
 |\eta_k(t_1)-\eta_k(t_+)| \geq (1-\sigma)(2-2\eps)d_k.
\]
On the other hand,
\[
 |L_{x_k,y_k}(t_1)-\eta_k(t_+)|= |1-t_1|\, | \gamma(x_k) -\gamma_k(y_k)| \geq (2-2\eps)d_k. 
\]
Moreover,
\[
  |\eta_k(t_1)-\gamma(x_k)|\leq d_k
\]
and
\[
 |L_{x_k,y_k}(t_1)-\gamma(x_k)| = |t_1| d_k \leq d_k.
\]
So we have that 
\[
 L_{x_k,y_k}(t_1) , \eta_k(t_1) \in B(\gamma(x_k),d_k) \backslash B(\eta_k(t_+),(1-\sigma)(2-2\eps)d_k).
\]
But we also have that 
\[
 |\eta_k(t_+)-\gamma(x_k)| = d_k.
\]
From elementary geometry this implies that for small enough $\eps$ and $\sigma$ we have 
\[
 |L_{x_k,y_k}(t_1) - \eta_k(t_1)| < \frac{1}{4} d_k.
\]
But then from the projection assumption \eqref{eq:vdmsappl:24}
\[
\begin{split}
 |\eta_k(t_-)-\gamma(z_0)| \leq& |\eta_k(t_-) - \eta_k(t_1)| + |\eta_k(t_1) - L_{x_k,y_k}(t_1)| +|L_{x_k,y_k}(t_1)-\gamma(z_0)|\\
 \leq& |t_--t_1| d_k + \frac{1}{4}d_k + \eps d_k\\
 <& \frac{1}{2} d_k,
 \end{split}
\]
for small enough $\eps$ and $\sigma$. This contradicts \eqref{eq:svdm2:goodguybadguy}. That is, we have established that $t_1 \in [1-2\eps,1]$.

Now we compare
\[
 |\gamma(y_k) - L_{x_k,y_k}(t_1)| \leq (1-t_1) |\gamma(x_k)-\gamma(y_k)| \leq 2\eps d_k.
\]
Consequently, by \eqref{eq:vdmsappl:24}, for $\eps$ small enough
\[
 |\gamma(y_k)-\gamma(z_0)| \leq 3\eps d_k < \frac{1}{100} d_k.
\]
So if we set $x_{k+1} := y_k= x_k+t_+ d_k \in B(x_0,\rho)$, we have from \eqref{eq:svdm:bilip2}.
\[
 |x_{k+1}-x_k| \leq (1-\sigma)^{-1} d_k.
\]
Also, by the definition, $d_{k+1} := |\gamma(x_{k+1})-\gamma(z_0)|< \frac{1}{100} d_k$. Lastly, 
\begin{equation}
\begin{split}
B(x_{k+1},10d_{k+1}) \subset& B(x_{k+1},\frac{1}{10} d_{k}) \\
\subset& B(x_k, (\frac{1}{10} +(1-\sigma)^{-1} )d_k) \subset B(x_k,10d_k) \subset B(x_0,\rho).
\end{split}
\end{equation}
We thus have constructed $x_{k+1}$ with the required properties, and can conclude.
\end{proof}

\subsection{Convergence}\label{s:convoutsidesingular}

\begin{proposition}\label{pr:convoutsidesingular}
    For any $\Lambda > 0$ and $\eps > 0$ there exists an $L = L(\eps,\Lambda) \in \N$ such that the following holds.

	Let $\gamma_k : \R/\Z \to \R^3$, $|\gamma_k'| \equiv 1$, be $C^1$-homeomorphisms with 
	\[
	\sup_{k}\| \gamma_k\|_{L^\infty} + \sup_{k} \tp^{q+2,q} (\gamma_k) \leq \Lambda
	\]
	Then there exists a subsequence $(\gamma_{k_i})_{i \in \N}$ and $\gamma \in \lip(\R/\Z,\R^3)$ such that the following holds for some finite set $\Sigma \subset \R/\Z$ with $\#\Sigma \leq L$.  
	
	\begin{enumerate}
	\item \label{i:cos:1} $\gamma_{k_i}$ converges uniformly to $\gamma$ and $\gamma \in \lip(\R/\Z,\R^3)$.
    \item \label{i:cos:2} For any $x_0 \in \R / \Z \backslash \Sigma$ there exists a radius $\rho(x_0) > 0$ such that $\gamma_{k_i}$ weakly converges to $\gamma$ in $W^{1+\frac{p-q-1}{q},q}(B(x_0,\rho))$.
	\item  \label{i:cos:3}$|\gamma'| = 1$ a.e.
	\item  \label{i:cos:4} $\gamma_{k_i}$ and $\gamma$ are uniformly bilipschitz in $B(x_0,\rho)$ with the estimates
	\begin{equation}\label{eq:conv:bilipk}
	 (1-\eps) |x-y| \leq |\gamma_{k_i}(x)-\gamma_{k_i}(y)| \leq |x-y| \quad \forall x,y \in B(x_0,\rho) \, \forall i.
	\end{equation}
    and
    \begin{equation}\label{eq:conv:bilipg}
	 (1-\eps) |x-y| \leq |\gamma(x)-\gamma(y)| \leq |x-y| \quad \forall x,y \in B(x_0,\rho).
	\end{equation}
	
	\item  \label{i:cos:5} For any point $x_0 \in \R /\Z \backslash \Sigma$ and any $y_0 \in \R/\Z$ with $|\gamma_{k_i}(x_0)-\gamma_{k_i}(y_0)| \leq \frac{1}{100} \rho(x_0)$ or $|\gamma(x_0)-\gamma(y_0)| \leq \frac{1}{100} \rho(x_0)$ we have $|x_0-y_0| \leq \rho(x_0)$.
	
	\item  \label{i:cos:6} In particular, whenever $\gamma(x) =\gamma(y)$, then either $x=y$ or $\{x,y\} \subset \Sigma$.
	
	\item \label{i:cos:7} We have lower semi-continuity, namely
	\begin{equation}\label{eq:cv:lsc}
	 \tp^{p,q} (\gamma) \leq \liminf_{k \to \infty} \tp^{p,q} (\gamma_k).
	\end{equation}
    
    \item \label{i:cos:8}  $\gamma: \R/\Z \to \R^3$ is a homeomorphism.
    
    \item \label{i:cos:9} $\gamma$ is globally bi-Lipschitz.
    
    \item \label{i:cos:10} $\gamma \in W^{1+s,q}(\R/\Z,\R^3)$ for any $0<s < \frac{1}{q}$.
	\end{enumerate}
\end{proposition}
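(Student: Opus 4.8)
The plan is to feed the three tools of this section -- the covering lemma \Cref{pr:uniformsmallness}, the gap estimate \Cref{th:gammasobvstp2}, and the Strzelecki--von der Mosel injectivity \Cref{th:smallenergyinjectivity} -- into a single extraction procedure and then bootstrap the resulting local information to the global items. First I would pass to a subsequence realizing $\liminf_{k}\tp^{q+2,q}(\gamma_k)$ (so that every further subsequence keeps this value), then apply \Cref{pr:uniformsmallness} with a smallness threshold $\eps_0=\eps_0(\eps)$ small enough to make \Cref{th:gammasobvstp2} and \Cref{th:smallenergyinjectivity} applicable; this produces a further subsequence $(\gamma_{k_i})$, a set $\Sigma$ with $\#\Sigma\le L=L(\eps,\Lambda)$, and for each $x_0\in\R/\Z\setminus\Sigma$ a radius $\rho=\rho(x_0)>0$ and an index $K(x_0)$ such that the half-localized energy of $\gamma_{k_i}$ on $B(x_0,\rho)$ (as in \Cref{pr:uniformsmallness}) stays below $\eps_0$ for $i\ge K(x_0)$. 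Since $|\gamma_{k_i}'|\equiv1$ and $\sup_i\|\gamma_{k_i}\|_{L^\infty}\le\Lambda$, the $\gamma_{k_i}$ are equi-Lipschitz and equibounded, so after a final Arzel\`a--Ascoli extraction they converge uniformly to some $\gamma\in\lip(\R/\Z,\R^3)$ with $|\gamma'|\le1$ a.e.; this is item~(1).

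Next, fixing $x_0\notin\Sigma$ and writing $\rho=\rho(x_0)$, I would apply \Cref{th:gammasobvstp2} to each $\gamma_{k_i}$ with $i\ge K(x_0)$ (the alternative ``$\gamma_{k_i}\in C^1$'' being available), obtaining the uniform bound $[\gamma_{k_i}']_{W^{\frac1q,q}(B(x_0,\rho))}^q\aleq\tp^{q+2,q}(\gamma_{k_i},B(x_0,\rho))<\eps_0$ and the bi-Lipschitz estimate \eqref{eq:conv:bilipk}; letting $i\to\infty$ there gives \eqref{eq:conv:bilipg}, hence item~(4). Thus $(\gamma_{k_i})_{i\ge K(x_0)}$ is bounded in the reflexive space $W^{1+\frac1q,q}(B(x_0,\rho))$ and already converges to $\gamma$ in $L^q$, so it converges weakly there -- item~(2) -- and in particular $\gamma'\in W^{\frac1q,q}(B(x_0,\rho))$. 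By compactness of $W^{\frac1q,q}(B(x_0,\rho))\hookrightarrow L^q(B(x_0,\rho))$ the derivatives converge strongly in $L^q$, so $\int_{B(x_0,\rho)}|\gamma'|^q=\lim_i\int_{B(x_0,\rho)}|\gamma_{k_i}'|^q=|B(x_0,\rho)|$, which with $|\gamma'|\le1$ forces $|\gamma'|=1$ a.e.\ on $B(x_0,\rho)$; since these balls cover the full-measure set $\R/\Z\setminus\Sigma$ this is item~(3).

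Then I would apply \Cref{th:smallenergyinjectivity} to $\gamma_{k_i}$ (legitimate for $i\ge K(x_0)$ by the Sobolev bound just obtained) and use the uniform convergence to absorb the gap between the thresholds $\tfrac1{100}$ and $\tfrac1{10}$; this yields item~(5) for both $\gamma_{k_i}$ and $\gamma$, and, combined with \eqref{eq:conv:bilipg}, shows that any coincidence $\gamma(x)=\gamma(y)$ with $x\ne y$ has $\{x,y\}\subset\Sigma$, which is item~(6). After one further diagonal extraction making $\gamma_{k_i}'\to\gamma'$ a.e.\ on $\R/\Z$, the integrand of $\tp^{q+2,q}(\gamma_{k_i})$ converges a.e.\ to that of $\tp^{q+2,q}(\gamma)$ -- the coincidence set of $\gamma$ being finite by item~(6) -- and Fatou's lemma gives the lower semicontinuity~(7).

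It remains to pass from local to global. By items~(5)--(6), $\gamma$ is injective on $\R/\Z\setminus\Sigma$, never identifies a point of $\Sigma$ with one outside it, and restricts there to a homeomorphism onto an open subset of $M:=\gamma(\R/\Z)$; since $\tp^{q+2,q}(\gamma)<\infty$ by~(7), \cite{SvdM12} shows $M$ is a topological one-manifold, so $\gamma$ cannot identify two distinct points of $\Sigma$ -- this would make their common image an endpoint of at least three otherwise disjoint arcs of $M$. Hence $\gamma$ is injective, and so a homeomorphism by compactness: item~(8). For item~(9) one combines \eqref{eq:conv:bilipg}, the injectivity of $\gamma$, and the estimate $|\gamma(x_0)-\gamma(y)|\ageq\rho(x_0)\,|x_0-y|$ for every $y$ and $x_0\notin\Sigma$ (from the limit of \Cref{th:smallenergyinjectivity} applied to $\gamma_{k_i}$), upgraded near the finitely many points of $\Sigma$ to a uniform lower bound by a rescaling argument using the scale-invariance of $\tp^{q+2,q}$ and the fact that $\tp^{q+2,q}(\gamma,B(s_0,r))\to0$ as $r\to0$. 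Finally, item~(10) is immediate from~(2) away from $\Sigma$; around each $s_0\in\Sigma$ one decomposes a small ball into dyadic intervals $I_j$ whose distance to $s_0$ is comparable to their length, applies \Cref{th:gammasobvstp2} on each (valid since $\tp^{q+2,q}(\gamma,I_j)\to0$ and $\gamma'\in W^{\frac1q,q}(I_j)$ by~(2)) to get $\sum_j[\gamma']_{W^{\frac1q,q}(I_j)}^q\aleq\tp^{q+2,q}(\gamma,B(s_0,r))<\infty$, and sums: the same-interval contributions to $[\gamma']_{W^{s,q}(B(s_0,r))}^q$ are controlled by this series and the cross-interval ones form a convergent series precisely because $sq<1$. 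The main obstacle is exactly this last step: at a point of $\Sigma$ the critical seminorm $[\gamma']_{W^{\frac1q,q}}$ may blow up (the pull-tight phenomenon of \Cref{re:exsingularnonempty}), so \Cref{th:gammasobvstp2} and \Cref{th:smallenergyinjectivity} are unavailable for $\gamma$ there, and the injectivity and bi-Lipschitz control at $\Sigma$ must be wrung out of the embedded approximants together with the scale-invariance and the one-manifold structure, the matching of constants being the delicate point.
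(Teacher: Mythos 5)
Items (1)--(8) follow the paper's argument closely (your use of strong $L^q$ convergence of $\gamma_{k_i}'$ to get $|\gamma'|=1$ a.e.\ in (3) is a harmless variant of the paper's pointwise a.e.\ argument via Rellich, and your ``three disjoint arcs meeting at a common point'' version of the cross-argument for (8) is essentially the paper's). For item (10), your dyadic decomposition is a genuine but entirely workable alternative to the paper's $X_\sigma$-decomposition: the on-diagonal sum is controlled because the dyadic squares are disjoint in $B(s_0,r)^2$, and the off-diagonal sum converges for $sq<1$ using only $\|\gamma'\|_{L^\infty}\leq 1$.

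The gap is in item (9). The local estimate $|\gamma(x_0)-\gamma(y)|\ageq\rho(x_0)\,|x_0-y|$ that you extract from \Cref{th:smallenergyinjectivity} and (4) has a constant $\sim\rho(x_0)$ that degenerates as $x_0\to\Sigma$, and the scale-invariance of $\tp^{q+2,q}$ does nothing to repair that: a rescaling centered near $s_0\in\Sigma$ leaves the energy, the Sobolev seminorm, and the ratio you want to bound all unchanged, so no uniform constant emerges. What the paper actually does is argue by contradiction with a sequence $(x_k,y_k)$ realizing the infimum, show $\bar x=\bar y\in\Sigma$, and then establish the \emph{straddle property}: for all large $k$, the singular point $\bar x$ lies strictly between $x_k$ and $y_k$. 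This sub-step is itself nontrivial -- it uses that if $x_k,y_k$ lay on the same side of $\bar x$, a small ball containing both (and avoiding $\Sigma$) would have small energy by absolute continuity, and \Cref{th:gammasobvstp2} would then force a ratio $\geq\tfrac12$, a contradiction. With the straddle in hand, one applies \Cref{th:smallenergyinjectivity} to a ball $B(x_k,\tfrac12|x_k-\bar x|)$ which avoids $\Sigma$ and does \emph{not} contain $y_k$, so the contrapositive of that theorem gives $|\gamma(x_k)-\gamma(y_k)|\geq\tfrac1{20}|x_k-\bar x|$; this, and the symmetric estimate in $y_k$, yields a \emph{fixed} constant $\tfrac1{40}$ against $|x_k-y_k|$, independent of how close $x_k,y_k$ are to $\Sigma$. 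Your sketch is missing both the contradiction framework and the straddle step, which is the real content of (9) -- and is the mechanism that replaces the degenerating constant $\rho(x_0)$ with a universal one.
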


\begin{proof}
\underline{\cref{i:cos:1}:}
By Arzela-Ascoli, up to taking a subsequence, we may assume that \eqref{i:cos:1} holds.

\underline{\cref{i:cos:2}:}
Fix $\delta > 0$ to be specified later. By \Cref{pr:uniformsmallness} up to taking a further subsequence we find a discrete singular set $\Sigma$ with $\# \Sigma \leq L$ such that for any $x_0 \in \R/\Z \backslash \Sigma$ there exists $\rho_{x_0} > 0$ such that
\begin{equation}\label{eq:smalllimsup:23}
 \limsup_{k \to \infty} \int_{B(x_0,10\rho_{x_0})} \int_{\R/\Z} \frac{|\gamma_{k}'(x) \wedge (\gamma_{k}(x)-\gamma_{k}(y))|^q}{|\gamma_{k}(x)-\gamma_{k}(y)|^{p}}\, dx\, dy < \delta.
\end{equation}
From \Cref{th:gammasobvstp2} we find that for each $x_0 \in \R/\Z \backslash \Sigma$,
\begin{equation}\label{eq:cv:gammaksmall}
 \sup_{k} [\gamma_{k}']_{W^{\frac{p-q-1}{q},q}(B(x_0,\rho_{x_0}))}^q \aleq C \delta.
\end{equation}
By reflexivity of $W^{\frac{p-q-1}{q},p}(B(x_0,\rho_{x_0}))$ and Banach-Alaoglu, combined with Rellich's theorem, we find that $\gamma_k'$ weakly converges to $\gamma'$ in $W^{\frac{p-q-1}{q},p}(B(x_0,\rho_{x_0}))$ and the convergence is pointwise a.e. in $B(x_0,\rho_{x_0})$ and strong in $L^1(B(x_0,\rho_{x_0}))$.
Observe that by uniqueness of the weak limit we do not need to pass to a further subsequence here.
This establishes \eqref{i:cos:2}.

\underline{\cref{i:cos:3}:}
In particular, $\gamma'_k$  a.e. converges to $\gamma$ in $\R/\Z \backslash \Sigma$, and since $\Sigma$ is a $\mathcal{L}^1$-zero-set. we have that $\gamma'_k$ a.e. converges to $\gamma$. This implies 
\[
|\gamma'(x)| = \lim_{k \to \infty} |\gamma_k'(x)|=1 \quad \text{a.e. $x \in \R/\Z$}.
\]
We have established \eqref{i:cos:3}.

\underline{\cref{i:cos:4}:}
Having chosen $\delta$ in \eqref{eq:smalllimsup:23} small enough, we get a small $W^{\frac{p-q-1}{q},p}(B(x_0,\rho_{x_0}))$-norm from \eqref{eq:cv:gammaksmall}, and from \Cref{la:bilip} we obtain \eqref{eq:conv:bilipk}. From the uniform convergence $\gamma_k \to \gamma$ we obtain \eqref{eq:conv:bilipg}. This establishes \eqref{i:cos:4}.

\underline{\cref{i:cos:5}:}
The statement in \eqref{i:cos:5} for $\gamma_k$ is a consequence of \Cref{th:smallenergyinjectivity}. By uniform convergence this takes over to $\gamma$.

\underline{\cref{i:cos:6}:}
Assume $x,y \in \R /\Z$ with $\gamma(x) = \gamma(y)$. If $x \not \in \Sigma$, we obtain from \eqref{i:cos:5} that $y \in B(x,\rho(x))$. But by \eqref{eq:conv:bilipg} this implies $x = y$. Similarly, if $y \not \in \Sigma$ we obtain $y=x$. So if $\gamma(x) = \gamma(y)$ then either $x=y$ or $x$ and $y$ both belong to $\Sigma$.

\underline{\cref{i:cos:7}:}
In order to prove \eqref{eq:cv:lsc} observe that by \eqref{i:cos:6} we have 
\[
|\gamma(x)-\gamma(y)| = 0 \Rightarrow \left \{(x,y) \subset \R/\Z \times \R / \Z: x=y \right \} \cup \Sigma \times \Sigma
\]
In particular,
\[
 \left \{(x,y) \in \R/\Z \times \R/\Z: \quad |\gamma(x)-\gamma(y)| = 0 \right \}\quad \text{is a $\mathcal{L}^2$-zero set}.
\]
Consequently, we have from the pointwise convergence of $\gamma_k$ to $\gamma$ and the $\mathcal{L}^1$-almost everywhere convergence of $\gamma_k'$ to $\gamma'$ that 

\[
\frac{|\gamma'(x) \wedge (\gamma(x)-\gamma(y))|^q}{|\gamma(x)-\gamma(y)|^p} = \lim_{k \to \infty} \frac{|\gamma_k'(x) \wedge (\gamma_k(x)-\gamma_k(y))|^q}{|\gamma_k(x)-\gamma_k(y)|^p} \quad \text{$\mathcal{L}^2$-a.e. in $\R/\Z \times \R/\Z$}.
\]
From Fatou's lemma we thus have
\[
 \tp^{p,q}(\gamma) \leq \liminf_{k \to \infty} \tp^{p,q}(\gamma_k),
\]
and \eqref{i:cos:7} is established.

\underline{\cref{i:cos:8}:}
By now we know that $\gamma$ has finite tangent-point energy. By \cite[Theorem~1.1.]{SvdM12} this implies that $\gamma(\R/\Z)$ is a topological $1$-manifold. On the other hand $\gamma: \R/\Z \to \R^3$ has only at most finitely many self-intersection points, namely $\gamma(\Sigma)$. This means that there can be no intersection points at all.

Indeed, assume there is an intersection $x_1, x_2  \in \Sigma$, $\gamma(x_1)=\gamma(x_2)$. Assume by contradiction that $x_1 \neq x_2$. Then there exists $\eps > 0$ such that $[x_i-\eps,x_i+\eps] \cap \Sigma = \{x_i\}$, $i=1,2$, since $\Sigma$ is a discrete set. Now $\gamma: [x_i-\eps,x_i+\eps] \to \R^3$ is a one-to-one map and it is (even Lipschitz-)continuous.

Now let denote by $C$ a cross, 
\[C := \{z=(z_1,z_2) \in [-1,1]^2, z_1 = 0 \text{ or } z_2 =0\} \]
and define $f: C \to \R^3$
\[
 f(z) := \begin{cases}
         \gamma(x_1+\eps z_1) \quad z = (z_1,0),\\
         \gamma(x_2+\eps z_2) \quad z = (0,z_2).\\
         \end{cases}
\]
Then $f$ is injective and continuous. Since $C$ is a compact set, we conclude that $f: C \to f(C)\subset \gamma(\R/\Z)$ is a homeomorphism. Since $\gamma(\R/\Z)$ is a one-dimensional topological manifold, so around any point $p_0 \in \gamma(\R/\Z)$ there exists a homeomorphism $h: \gamma(\R/\Z) \cap B_{\tilde{\delta}}(p_0) \to \R$ for some small $\tilde{\delta} > 0$. Taking $p_0 := \gamma(x_1)$ we see that $h \circ f: C\cap B_\delta(0) \to \R$ is a homeomorphism for a smaller $\delta>0$. But the cross $C$ is not homeomorphic to any subset in $\R$, and we have a contradiction to our assumption that $x_1 \neq x_2$. In conclusion, whenever $\gamma(x) = \gamma(y)$, we have $x=y$, that is $\gamma$ is injective. Since $\gamma: \R/\Z \to \R^3$ is continuous and $\R/\Z$ compact we conclude that $\gamma: \R/\Z \to \R^3$ is a homeomorphism.
Thus \eqref{i:cos:8} is established.

\underline{\Cref{i:cos:9}:} Assume 
\[
 \inf_{x\neq y \in \R/\Z} \frac{|\gamma(x)-\gamma(y)|}{|x-y|} = 0.
\]
Then there exists a (w.l.o.g. convergent) sequences $\R/\Z \ni x_k \xrightarrow{k \to \infty} \bar{x}$ and $\R/\Z \ni y_k \xrightarrow{k \to \infty} \bar{y}$ with
\begin{equation}\label{eq:cos:9:infzero}
 \lim_{k \to \infty} \frac{|\gamma(x_k)-\gamma(y_k)|}{|x_k-y_k|} = 0.
\end{equation}
We make several observations:
\begin{itemize}
\item $\bar{x} = \bar{y}$. Indeed, assume that $\bar{x} \neq \bar{y}$, then continuity of $\gamma$ combined with \eqref{eq:cos:9:infzero} implies
\[
 0 = \gamma(\bar{x})-\gamma(\bar{y}).
\]
Since $\gamma$ is injective, this implies $\bar{x} = \bar{y}$, contradiction.
\item $\bar{x} \in \Sigma$. Indeed, assume that $\bar{x} \not \in \Sigma$. Then for all $k$ sufficiently large, $x_k,y_k \in B(\bar{x},\rho_{\bar x})$, thus by \eqref{eq:conv:bilipg},
\[
 \frac{|\gamma(x_k)-\gamma(y_k)|}{|x_k-y_k|} \geq 1-\eps \quad \forall k \gg 1.
\]
This contradicts \eqref{eq:cos:9:infzero}.
\item For all but finitely many $k \in \N$ (up to interchanging the role of $x_k$ and $y_k$) we have $x_k < \bar{x} < y_k$ for all $k \in \N$.

Indeed let $K > 0$ such that 
\begin{equation}\label{eq:co:9:inflimitK}
 \frac{|\gamma(x_{k})-\gamma(y_k)|}{|x_k-y_k|} < \frac{1}{4} \quad \forall k \geq K.
\end{equation}
Also, combining \Cref{la:bilip} and \Cref{th:gammasobvstp2}, let $\delta = \delta(\eps) > 0$ such that for any ball $B \subset \R/\Z$ of diameter $\leq \frac{1}{2}$ 
\begin{equation}\label{eq:co:9:tpsmallbilip}
\begin{split}
 &\tp^{p,q}(\gamma,B) < \delta \text{ and } [\gamma']_{W^{\frac{p-q-1}{q},q}(B))} < \infty\\ \text{ implies }& |\gamma(x)-\gamma(y)| \geq \frac{1}{2} |x-y| \quad \forall x,y \in B.
 \end{split}
\end{equation}
By absolute continuity of the integral, and since $\tp^{p,q}(\gamma) < \infty$, there exists a $\bar{\rho} > 0$ such that 
\begin{equation}\label{eq:co:9:tpsmall} 
\tp^{p,q}(B) < \delta\text{ for all balls }B \subset \R/\Z \text{ with $|B| < \bar{\rho}$}. 
\end{equation}

Now assume by contradiction that $x_k,y_k \in B(\bar{x},\bar{\rho}/2)$ and $x_k < y_k \leq \bar{x}$ for some $k > K$. Take a sequence $(\tilde{y}_{k;i})_{i \in \N}$ such that $x_k < \tilde{y}_{k;i} < \bar{x}$ with $\tilde{y}_{k;i} \xrightarrow{i \to \infty} y_k$. There exists an open ball $B_{k,i} \subset \overline{B_{k,i}} \subset \R/\Z \backslash \Sigma$ of radius $< \bar{\rho}$ such that $x_{k},\tilde{y}_{k;i} \subset B_{k,i}$. By a covering argument, from \eqref{i:cos:2}, we obtain that $\gamma' \in W^{\frac{p-q-1}{q},q}(B_{k,i})$ (without any estimate for the norm). However, since the ball $B_{k,i}$ is small enough, we have $\tp^{p,q}(\gamma,B_{k,i}) < \delta$ by \eqref{eq:co:9:tpsmall}, so from \eqref{eq:co:9:tpsmallbilip} we obtain
\[
 \frac{|\gamma(x_{k})-\gamma(\tilde{y}_{k;i})|}{|x_k-\tilde{y}_{k;i}|} \geq \frac{1}{2}.
\]
This holds for all $i \in \N$, so letting $i \to \infty$ we get
\[
 \frac{|\gamma(x_{k})-\gamma(y_k)|}{|x_k-y_k|} \geq \frac{1}{2},
\]
a contradiction to \eqref{eq:co:9:inflimitK}.
\item There exists $K \in \N$ so that for any $k \geq K$
\begin{equation}\label{eq:i:cos:9:45235}
 |\gamma(x_k)-\gamma(y_k)| \geq \frac{1}{20} \max\{|x_k-\bar{x}| ,|y_k-\bar{x}| \}.
\end{equation}
Indeed, let $\delta > 0$ be from \Cref{th:smallenergyinjectivity}, and let $R > 0$ be such that \eqref{eq:svdm:23} is satisfied for any $x_0 \in \R/\Z$ and any $\rho \in (0,R)$ -- such an $R>0$ exists by absolute continuity of the integral and since $\tp^{p,q}(\gamma) < \infty$. We can assume by taking $R$ possibly smaller that $B(\bar{x},R) \cap \Sigma = \{\bar{x}\}$.

Let $K \in \N$ such that $|x_k-y_k| < \frac{1}{100} R$ for all $k \geq K$.

Set $\rho := \frac{1}{2}|x_k-\bar{x}| < R$. Observe that $B(x_k,\rho) \cap \Sigma = \emptyset$ and thus $\gamma' \in W^{\frac{p-q-1}{q}}(B(x_k,\rho))$ by a covering argument and \eqref{i:cos:2}. Since $\bar{x} \not \in B(x_k,\rho)$ and $x_k < \bar{x} < y_k$, we have that $y_k \not \in B(x_k,\rho)$. Applying \Cref{th:smallenergyinjectivity} in $B(x_k,\rho)$, we conclude that 
\[
|\gamma(x_k)-\gamma(y_k)| \geq \frac{1}{20}|x_k-\bar{x}| .
\]
By a similar argument we also obtain
\[
|\gamma(x_k)-\gamma(y_k)| \geq \frac{1}{20}|y_k-\bar{x}| .
\]
\end{itemize}
To conclude \eqref{i:cos:9}, observe that by triangular inequality
\[
\max\{|x_k-\bar{x}|,|y_k-\bar{x}|\} \geq \frac{1}{2} |x_k-y_k|.
\]
Combining this with \eqref{eq:i:cos:9:45235}, implies 
\[
 |\gamma(x_k)-\gamma(y_k)| \geq \frac{1}{40} |x_k-y_k|,
\]
which is a contradiction to \eqref{eq:cos:9:infzero}. This establishes \eqref{i:cos:9}.

\underline{\Cref{i:cos:10}:} There are at most finitely many points $\Sigma$ where we do not know already that $\gamma$ is Sobolev. For simplicity of notation assume that $0 \in \Sigma$. Take $r > 0$ small enough such that $B(r) \cap \Sigma = \{0\}$, and that for $\delta$ from \Cref{th:gammasobvstp2},
\[
 \tp^{p,q}(\gamma,B(r)) < \delta . 
\]
For small $\sigma > 0$ let 
\[
 X_\sigma := \brac{[-r,-\sigma] \times [-r,-\sigma]} \cup \brac{[\sigma,r] \times [\sigma,r]} \cup \brac{[-r,-\sigma] \times [\sigma,r]}.
\]
Since $\mathcal{L}^2([-r,r]^2 \backslash X_\sigma) \xrightarrow{\sigma \to 0} 0$, it suffices to show that 
\[
 \limsup_{\sigma \to 0^+} \int\int_{X_\sigma} \frac{|\gamma'(x)-\gamma'(y)|^q}{|x-y|^{1+sq}} \, dx\, dy< \infty.
\]
Since we already know that $\gamma' \in W^{\frac{p-q-1}{q},q}([-r,-\sigma]) \cup W^{\frac{p-q-1}{q},q}([\sigma,r])$, we can use \Cref{th:gammasobvstp2} to obtain that 
\[
\begin{split}
 &\int\int_{X_\sigma} \frac{|\gamma'(x)-\gamma'(y)|^q}{|x-y|^{1+sq}}\, dx \, dy\\
 \aleq &\int\int_{[\sigma,r]^2 \cup [-r,-\sigma]^2} \frac{|\gamma'(x)-\gamma'(y)|^q}{|x-y|^{p-q}} \, dx\, dy + \|\gamma'\|_{L^\infty} \int_{(-r,0)} \int_{(0,r)} \frac{1}{|x-y|^{1+sq}}\, dx\, dy.
 \end{split}
\]
The second integral is finite for $s < \frac{1}{q}$.
\end{proof}

\begin{remark}
It is unclear to us whether \Cref{pr:convoutsidesingular}, property \ref{i:cos:10}, holds for $s=\frac{1}{q}$. If one were able to prove it, then there is a chance to remove the singular set for the regularity theory in \Cref{co:regisotopy} with the removability argument as in \cite{MS20}.
\end{remark}

\section{Weak limits of minimizing sequences are critical: Proof of Theorem~\ref{th:minarecritical}}\label{s:proofth:minarecritical}
We would like to compare our minimizing sequence $\gamma_k$ with the variation $\gamma + t\varphi$, where $\varphi$ is a locally supported test-function. Computing the Euler-Lagrange equations then proves \Cref{co:regisotopy}. For notational convenience, we restrict ourselves to the case $p=q+2$ instead of $p \geq q+2$. The case $p > q+2$ follows the same way, but it can also be obtained by simpler, more direct methods.

\begin{theorem}[Minimizing sequence becomes critical point]\label{th:minseqcrit}
There exists $\eps_0 > 0$ such that the following holds. 

Let {$q>1$,} $\gamma_k \in C^1(\R/\Z,\R^3)$, $\gamma \in \lip(\R / \Z,\R^3)$, $|\gamma_k'| = |\gamma'| = 1$ a.e., with
\[
 \sup_{k} \tp^{q+2,q}(\gamma_k) < \infty.
\]
Assume that $\gamma_k$ is approximately minimizing, in the sense that
\[
 \tp^{q+2,q}(\gamma_k) \leq \tp^{q+2,q}(\tilde{\gamma}) + \frac{1}{k}
\]
holds for any $\tilde{\gamma}$ ambient isotopic to $\gamma_k$.

Assume that $\gamma_k$ uniformly converges to $\gamma$ in $\R/\Z$ and for a geodesic ball $B(100\rho) \subset \R/\Z$, e.g. $\rho < \frac{1}{1000}$,
\begin{equation}\label{eq:tpl:smallen1}
\sup_k \int_{B(100\rho)}\int_{\R/\Z} \mu(\gamma,x,y)\, dy\, dx <\eps_0.
\end{equation}
Then for any $\varphi \in C_c^\infty(B(\rho),\R^3)$ there exists $t_0 > 0$ such that 
\[
 \int\int _{(\R/\Z)^2 \backslash (B(\rho)^c)^2} \mu(\gamma,x,y)\, dx\, dy \leq
 \int\int _{(\R/\Z)^2 \backslash (B(\rho)^c)^2} \mu(\gamma+t\varphi,x,y)\, dx\, dy \quad \forall t \in (-t_0,t_0).
\]
Here,
\[
  \mu(\sigma,x,y) := \frac{|\sigma'(x)\wedge (\sigma(x)-\sigma(y))|^{q}}{|\sigma(x)-\sigma(y)|^{q+2}}\, |\sigma'(x)|^{1-q}\, |\sigma'(y)|.
 \]

\end{theorem}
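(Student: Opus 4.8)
The plan is to test the approximate minimality of $\gamma_k$ against the explicit competitor $\gamma_k+t\varphi$; the decisive point is that this competitor lies in the same ambient isotopy class as $\gamma_k$, which is precisely what \Cref{th:localambisotchange} provides, and the remaining work is to pass to the limit $k\to\infty$ in the localized energy. \emph{Reductions.} Since the assertion does not refer to the sequence $(\gamma_k)$, we may freely pass to a subsequence: using \Cref{pr:uniformsmallness} and \Cref{th:gammasobvstp2} we arrange $\gamma_k'\to\gamma'$ a.e.\ on $\R/\Z$. Choosing $\eps_0$ small and invoking \Cref{th:gammasobvstp2} on $B(99\rho)$ (possible by \eqref{eq:tpl:smallen1}) we also get $\sup_k[\gamma_k']_{W^{1/q,q}(B(99\rho))}<\eps$ and $\tfrac34|x-y|\le|\gamma_k(x)-\gamma_k(y)|\le|x-y|$ for $x,y\in B(99\rho)$, where $\eps$ is the uniform constant of \Cref{th:localambisotchange} (with $s=1/q$, so that $W^{s,1/s}=W^{1/q,q}$ and $\tfrac{p-q-1}{q}=\tfrac1q$); these bounds pass to $\gamma$. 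Finally \Cref{th:smallenergyinjectivity}, applied to $\gamma_k$ and $\gamma$ with centres in $B(\rho)$ resp.\ $B(8\rho)$ and radii $\le 90\rho$, yields the separation $\dist(\gamma_k(\R/\Z\setminus B(9\rho)),\gamma_k(B(8\rho)))\ge\rho/1000$, the bound $|\gamma_k(x)-\gamma_k(y)|\ge 9\rho$ whenever $x\in B(\rho)$ and $y\notin B(99\rho)$ (likewise for $\gamma$), and the relative injectivity $\gamma(x)\ne\gamma(y)$ for all $x\in B(\rho)$, $y\ne x$.

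\emph{The competitor and its isotopy class.} Fix $\varphi\in C_c^\infty(B(\rho),\R^3)$ and let $t_0>0$ be small, depending on $\varphi$ and $\rho$. For $|t|<t_0$ put $\gamma_{k,t}:=\gamma_k+t\varphi\in C^1(\R/\Z,\R^3)$. Then $|\gamma_{k,t}'|\in[\tfrac34,\tfrac54]$, $\gamma_{k,t}=\gamma_k$ outside $B(\rho)$, the bi-Lipschitz and smallness bounds of the reductions survive on $B(99\rho)$ with slightly worse constants, and $\|\gamma_{k,t}-\gamma_k\|_{L^\infty}=|t|\,\|\varphi\|_{L^\infty}<\rho/10^5$. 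Moreover $\gamma_{k,t}$ is injective: if $\gamma_{k,t}(x)=\gamma_{k,t}(y)$ then $|\gamma_k(x)-\gamma_k(y)|\le 2t_0\|\varphi\|_{L^\infty}$, which by relative injectivity forces $x,y\in B(99\rho)$, where bi-Lipschitzness gives $x=y$; being also an immersion, $\gamma_{k,t}$ is a $C^1$ diffeomorphism. Applying \Cref{th:localambisotchange} with ball $B(\rho)$, $\gamma_1=\gamma_{k,t}$, $\gamma_2=\gamma_k$ (all its hypotheses were verified in the reductions) shows that $\gamma_{k,t}$ is ambient isotopic to $\gamma_k$. By approximate minimality, $\tp^{q+2,q}(\gamma_k)\le\tp^{q+2,q}(\gamma_{k,t})+\tfrac1k$ for all $|t|<t_0$; since $\gamma_{k,t}=\gamma_k$ on $B(\rho)^c$ the integrands agree on $(B(\rho)^c)^2$, and both energies are finite ($C^1$ embeddings), so subtracting that common part gives, with $D:=(\R/\Z)^2\setminus(B(\rho)^c)^2$,
\[
\int\!\!\int_D\mu(\gamma_k,x,y)\,dx\,dy\ \le\ \int\!\!\int_D\mu(\gamma_{k,t},x,y)\,dx\,dy+\tfrac1k.
\]

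\emph{Passage to the limit.} On $D=(B(\rho)\times\R/\Z)\cup(\R/\Z\times B(\rho))$ at least one variable lies in $B(\rho)\subset B(99\rho)$; together with $\gamma_k\to\gamma$ uniformly, $\gamma_k'\to\gamma'$ a.e.\ on $B(99\rho)$, and $\gamma(x)\ne\gamma(y)$ on $D$, this yields $\mu(\gamma_k,\cdot)\to\mu(\gamma,\cdot)$ a.e.\ on $D$, so Fatou gives $\int\!\!\int_D\mu(\gamma,\cdot)\le\liminf_k\int\!\!\int_D\mu(\gamma_k,\cdot)$. For the competitor side I claim $\int\!\!\int_D\mu(\gamma_{k,t},\cdot)\to\int\!\!\int_D\mu(\gamma+t\varphi,\cdot)$. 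Split $D$ into $D_{\mathrm{far}}$, where one variable is in $B(\rho)$ and the other outside $B(99\rho)$, and $D_{\mathrm{near}}:=D\cap B(99\rho)^2$. On $D_{\mathrm{far}}$ the separation bound gives $|\gamma_{k,t}(x)-\gamma_{k,t}(y)|\ge 9\rho-t_0\|\varphi\|_{L^\infty}$, so $\mu(\gamma_{k,t},x,y)\aleq|\gamma_{k,t}(x)-\gamma_{k,t}(y)|^{-2}\aleq\rho^{-2}$ is uniformly bounded and converges pointwise, and dominated convergence applies. On $D_{\mathrm{near}}$, where $\gamma_{k,t}$ is uniformly bi-Lipschitz with small uniform $W^{1/q,q}$-seminorm, one uses $\gamma_{k,t}'(x)\wedge(\gamma_{k,t}(x)-\gamma_{k,t}(y))=\gamma_{k,t}'(x)\wedge\bigl(\gamma_{k,t}(y)-\gamma_{k,t}(x)-\gamma_{k,t}'(x)(y-x)\bigr)$ and the computations of \Cref{la:gammasobvstp1}, \Cref{la:energyspace} to bound $\mu(\gamma_{k,t},\cdot)$ by $W^{1/q,q}$-type quantities of $\gamma_{k,t}'$; since $\gamma_{k,t}'\to(\gamma+t\varphi)'$ a.e., strongly in $L^q(B(99\rho))$ by Rellich, and the critical seminorms stay uniformly small, a Vitali/equi-integrability argument lets us pass to the limit. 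Combining the two pieces, $\int\!\!\int_D\mu(\gamma,\cdot)\le\liminf_k\bigl(\int\!\!\int_D\mu(\gamma_{k,t},\cdot)+\tfrac1k\bigr)=\int\!\!\int_D\mu(\gamma+t\varphi,\cdot)$, which is the asserted inequality.

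\emph{Main obstacle.} Apart from the last step, everything is careful but routine bookkeeping on top of \Cref{th:localambisotchange}, \Cref{th:gammasobvstp2} and \Cref{th:smallenergyinjectivity}. The genuine difficulty is the continuity of the localized tangent-point energy along $\gamma_{k,t}$ on $D_{\mathrm{near}}$ in the scale-invariant regime $p=q+2$: this is exactly the critical exponent $s=\tfrac1q$ of $W^{1+s,\frac1s}$, where the energy is not weakly continuous, so one must exploit the uniform \emph{smallness} of the critical seminorm (via the gap estimate \Cref{la:gammasobvstp1}) together with strong subcritical convergence. Equivalently, one can replace this by proving the uniform first-variation bound $|\delta\tp^{q+2,q}(\sigma,\varphi)|\le C$ with $C$ depending only on $\|\varphi\|_{C^1}$ and on the bi-Lipschitz and $W^{1/q,q}$ data of $\sigma$ on $B(99\rho)$, and concluding by dominated convergence applied to $s\mapsto\delta\tp^{q+2,q}(\gamma_{k,s},\varphi)$; it is at this point that the fractional Luckhaus-type lemma and the compensation phenomena underlying the paper are needed.
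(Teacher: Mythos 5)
Your overall skeleton is the same as the paper's: build an admissible competitor that agrees with $\gamma_k$ away from a small ball, show it lies in the same ambient isotopy class via \Cref{th:localambisotchange}, feed it into the approximate minimality, subtract the common $(B(\rho)^c)^2$ part, and pass to the limit. The reductions and the verification that $\gamma_k+t\varphi$ is a diffeomorphism ambient isotopic to $\gamma_k$ are fine. The problem is the \emph{choice} of competitor and the consequences this has for the final limit.

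You test against $\gamma_{k,t}=\gamma_k+t\varphi$, and at the end you must show
\[
\int\!\!\int_D \mu(\gamma_k+t\varphi,\cdot)\ \xrightarrow{k\to\infty}\ \int\!\!\int_D\mu(\gamma+t\varphi,\cdot),
\]
or at least the $\limsup\le$ half of this (Fatou already gives $\liminf\ge$, which is useless here). On $D_{\mathrm{near}}$ this is a continuity statement for the localized scale-invariant tangent-point energy along a sequence whose derivatives converge only \emph{weakly} in $W^{\frac1q,q}(B(99\rho))$ (plus strongly in $L^q$), with uniformly \emph{small but not vanishing} critical seminorm. This is exactly what fails in the critical regime. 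Uniform smallness of $[\gamma_k']_{W^{1/q,q}}^q$ gives only a uniform $L^1$ bound on the Gagliardo densities $g_k(x,y)=\tfrac{|\gamma_k'(x)-\gamma_k'(y)|^q}{|x-y|^2}$, not equi-integrability: take $\gamma_k'=\gamma'+\eps\,\phi(k(x-x_0))$ with a fixed bump $\phi$. Then $\gamma_k'\to\gamma'$ strongly in $L^q$ and weakly in $W^{1/q,q}$, $[\gamma_k']_{W^{1/q,q}}$ stays bounded by $[\gamma']_{W^{1/q,q}}+C\eps$ uniformly (the seminorm of the bump is scale-invariant), yet $g_k$ concentrates on a $1/k$-box at $(x_0,x_0)$ carrying mass $\aeq\eps^q[\phi]^q_{W^{1/q,q}}$, so $\{g_k\}$ is not uniformly integrable and Vitali does not apply. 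Your fallback (dominated convergence applied to $s\mapsto\delta\tp^{q+2,q}(\gamma_{k,s},\varphi)$ with a claimed uniform first-variation bound) has the same unresolved critical-norm issue inside it. You flag this as the ``main obstacle,'' and that is accurate --- but as written the obstacle is not overcome, only named.

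The paper's proof is designed precisely to sidestep this limit. Its competitor $\sigma_{k,\delta}$ is obtained by the fractional Luckhaus Lemma~\ref{la:frluckhaus}: it is $(\gamma+t\varphi)'$-built (via $g_\delta$) on the core $B((1-\delta)R)$, it equals $\gamma_k$ outside $B(R)$, and it interpolates only on the $\delta$-thin annulus. Consequently, in the difference $\int\!\!\int_D\mu(\sigma_{k,\delta})-\int\!\!\int_D\mu(\sigma_\delta)$ all the $k$-dependence is confined to that annulus and to pairs with $|x-y|\aleq\delta$; the estimate \eqref{eq:tpl:kconv} shows the discrepancy is $O(\delta)$, uniformly in $k$, and then one sends $\delta\to0$. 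At no point does one need upper semi-continuity (or equi-integrability) of the critical energy along the original sequence $\gamma_k'$. This is why the Luckhaus gluing is not an optional refinement but the essential mechanism; replacing $\sigma_{k,\delta}$ by $\gamma_k+t\varphi$ removes it and leaves a genuine gap at exactly the step you single out.
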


For the proof of \Cref{th:minseqcrit} we need to obtain first a fractional version of the Luckhaus Lemma.

\subsection{A fractional Luckhaus Lemma in one dimension}
The Luckhaus Lemma, \cite[Lemma~1]{L88}, is an important tool for harmonic maps, usually it is given in the following form, see \cite[Section 2.6, Lemma~1]{S96}.
It essentially provides a way to glue together two maps $u$ and $v$ along the boundary $\partial B(0,1)$ with explicit dependence on the size $\delta$ of the glued region.
\begin{lemma}\label{la:luckhaus}
Let $\n$ be any compact subset of $\R^3$, $n \geq 2$.

Assume $u,v \in W^{1,2}(\S^{n-1})$ and $\delta \in (0,1)$. Then there exists $w \in W^{1,2}(\S^{n-1} \times [0,\delta])$ with $w = u$ in a neighborhood of $\S^{n-1} \times \{0\}$, $w = v$ in a neighborhood of $\S^{n-1} \times \{\delta\}$, 
\[
 \int_{\S^{n-1} \times [0,\delta]} |\nabla w|^n \leq C \delta \int_{\S^{n-1}} \brac{|\nabla u|^2 + |\nabla v|^2 } + C \delta^{-1} \int_{\S^{n-1}} |u-v|^2,
\]
and
\[
 \dist^2(w,\n) \leq C \delta^{1-n} \brac{\int_{\S^{n-1}} |\nabla u|^2 + |\nabla v|^2}^{\frac{1}{2}}\, \brac{\int_{\S^{n-1}} |u-v|^2}^{\frac{1}{2}} + C\delta^{-n} \int_{\S^{n-1}} |u-v|^2.
\]

\end{lemma}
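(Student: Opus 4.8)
The statement is the classical Luckhaus lemma, so the plan is either to quote \cite[Section~2.6, Lemma~1]{S96} directly, or, for a self-contained argument, to reconstruct it as follows. Two preliminary remarks. First, the second estimate is to be read as a pointwise (a.e.) bound on $\dist^2(w,\n)$. Second, by density of Lipschitz maps in $W^{1,2}$ one may assume $u,v$ Lipschitz and $\n$-valued: the right-hand sides are continuous under $W^{1,2}$-convergence, the constructed maps $w$ converge weakly in $W^{1,2}$ and locally uniformly, and pointwise bounds pass to the limit. I will carry out the construction for the base $\S^{n-1}=\S^1$, which is the only case needed here; the general case is sketched at the end.

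On $\S^1$ fix a mesh of size $h\in(0,\delta]$ with arcs $I_j=[jh,(j+1)h]$, to be optimized at the end, and let $\bar u,\bar v$ denote the piecewise-linear interpolants of $u,v$ through the nodal values $u(jh),v(jh)$. The relevant elementary one-dimensional bounds are $\int_{\S^1}|\bar u'|^2\le\int_{\S^1}|u'|^2$, the P1-interpolation estimate $\int_{\S^1}|u-\bar u|^2\aleq h^2\int_{\S^1}|u'|^2$, the bound $\dist(\bar u(x),\n)\le|u((j+1)h)-u(jh)|\aleq h^{1/2}\big(\int_{I_j}|u'|^2\big)^{1/2}$ for $x\in I_j$ (as $\bar u$ is a convex combination of two points of $\n$), and $\|\bar u-\bar v\|_{L^\infty(I_j)}\aleq h^{1/2}\big(\int_{I_j}(|u'|^2+|v'|^2)\big)^{1/2}+h^{-1/2}\big(\int_{I_j}|u-v|^2\big)^{1/2}$; likewise with $u$ and $v$ interchanged. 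Define $w$ on $\S^1\times[0,\delta]$ by gluing three linear $t$-homotopies: from $u$ to $\bar u$ on $[0,\delta/4]$, from $\bar u$ to $\bar v$ on $[\delta/4,3\delta/4]$, and from $\bar v$ to $v$ on $[3\delta/4,\delta]$, so that $w|_{t=0}=u$ and $w|_{t=\delta}=v$. Integrating $|\nabla w|^2\aleq|u'|^2+|v'|^2+|\bar u'|^2+|\bar v'|^2+\delta^{-2}\big(|u-\bar u|^2+|\bar u-\bar v|^2+|\bar v-v|^2\big)$ over the cylinder, using the bounds above, $\|\bar u-\bar v\|_{L^2}^2\aleq\int_{\S^1}|u-v|^2+h^2\int_{\S^1}(|u'|^2+|v'|^2)$, and $h\le\delta$, gives the energy estimate. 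The inequality $\dist\big((1-s)a+sb,\n\big)\le|a-b|+\dist(a,\n)$ applied along each homotopy yields the pointwise bound $\dist^2(w,\n)\aleq h\int_{\S^1}(|u'|^2+|v'|^2)+h^{-1}\int_{\S^1}|u-v|^2$. Choosing $h\aeq\min\big\{\delta,\ \big(\int_{\S^1}|u-v|^2\big)^{1/2}\big(\int_{\S^1}(|u'|^2+|v'|^2)\big)^{-1/2}\big\}$ and distinguishing whether the unconstrained minimizer of $h\mapsto hA+h^{-1}B$ lies in $(0,\delta]$ turns the last display into the claimed $\delta^{1-n}(\cdots)^{1/2}(\cdots)^{1/2}+\delta^{-n}\int|u-v|^2$ (with $n=2$), using $\delta<1$.

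For general $n$ the piecewise-linear interpolant is replaced by the inductive skeleton construction: triangulate $\S^{n-1}$ at scale $h$ through a fixed bi-Lipschitz atlas, build the interpolating map on the product cell complex cell by cell — linear interpolation on $1$-cells, $0$-homogeneous (conical) extension from the boundary onto higher-dimensional cells — and glue along $t$ as above. I expect the only genuinely delicate point to be the energy of the conical extensions: the crude estimate for a $0$-homogeneous extension across a $k$-cell carries a logarithmically divergent radial factor, so one may not cone indiscriminately. This is resolved by the standard averaging over translates of the mesh: by Fubini one selects a grid position for which the cells on which $u$ or $v$ oscillates appreciably (``bad cells'') have total measure $\aleq h^{2-n}\int_{\S^{n-1}}(|\nabla u|^2+|\nabla v|^2)$; on the complementary ``good cells'' the boundary data is essentially constant, so the conical extensions cost almost nothing, and the bad-cell contribution is absorbed exactly into the $\delta^{1-n}$- and $\delta^{-n}$-weighted terms. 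Since only $n=2$ is used in this paper, the explicit construction of the second paragraph suffices.
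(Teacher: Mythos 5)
The paper does not prove this lemma at all: it is stated purely as background and explicitly attributed to Luckhaus and to Simon's book (the phrase ``see \cite[Section 2.6, Lemma~1]{S96}'' precedes the statement), and is never applied in the text --- what is actually used is the one-dimensional fractional analogue, \Cref{la:frluckhaus}, proved afterwards. So a comparison with ``the paper's own proof'' is moot; what you have supplied is a reconstruction of the cited classical proof.

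Your reconstruction for $n=2$ is essentially correct and is the standard Luckhaus skeleton argument: piecewise-linear interpolation on a mesh of size $h$, three glued $t$-homotopies $u\to\bar u\to\bar v\to v$, the pointwise inequality $\dist((1-s)a+sb,\n)\le|a-b|+\dist(a,\n)$, and the optimization $h\aeq\min\{\delta,(B/A)^{1/2}\}$. Your energy bookkeeping and your case split for the $h$-optimization both check out. Two remarks worth recording. First, the reduction to Lipschitz $\n$-valued maps is not innocent for a general compact set $\n$ (mollification does not stay $\n$-valued, and for non-manifold $\n$ there is no nearest-point projection); but for $n=2$ the embedding $W^{1,2}(\S^1)\hookrightarrow C^{1/2}(\S^1)$ makes nodal values immediately well defined, so you can drop the density reduction entirely and run the construction directly on $u,v\in W^{1,2}(\S^1,\n)$. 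Second, two small imprecisions in the paper's statement (not in your proof) are implicit in your argument and should be made explicit: the distance bound requires $u,v$ to actually take values in $\n$ (you use $\dist(u(x),\n)=0$), and the exponent in $\int|\nabla w|^n$ is evidently a misprint for $\int|\nabla w|^2$, though at $n=2$ the two coincide. Your sketch for general $n$ (averaging over translated meshes, cone extensions on good cells) names the right ingredients but is, as you say, only a sketch; since the paper only invokes the statement for orientation and never for $n\ge 3$, this is acceptable.
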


We will need a version of this lemma for fractional Sobolev spaces in one dimension. Working in one-dimension has advantages and disadvantages: The advantage is that the boundary of a ball consists of two points, and the possibility of explicit computations. The main disadvantage is that there may be no reasonable trace spaces for $W^{s,p}([0,1])$ when $sp \leq 1$. In any case, the following might be interesting on its own.
\begin{lemma}\label{la:frluckhaus}
Assume $u,v: \R \to \R^3$ be locally integrable, have a Lebesgue point in $x = \pm 1$, and
\[
 \int_{\R} \frac{|u(x)-u(y)|^{p}}{|x - y|^{1+sp}} dy+ \int_{\R} \frac{|v(x)-v(y)|^{p}}{|x-y|^{1+sp}} dy < \infty \quad x= -1,1.
\]
Then for any $\delta \leq \frac{1}{2}$ there exists $w: (-2,2)\rightarrow \R^3$ with the following properties:
\begin{itemize}
 \item $w(x) = u(x)$ for $|x| > 1$ and $w(x) = v(x/(1-\delta))$ for $|x| < 1-\delta$, namely we can choose
\[
w(x) = \begin{cases} 
u(x) \quad & |x| > 1\\
(1-\eta_\delta(x)) u(-1) + \eta_\delta(x) v(-1) \quad &x \in [-1,-1+\delta]\\
v(x/(1-\delta)) \quad & |x| < 1-\delta\\
(1-\eta_\delta(x)) u(1) + \eta_\delta(x) v(1) \quad &x \in [1-\delta,1]\\
\end{cases}
\]
where $\eta:\R \to [0,1]$ is any smooth map such that $\eta \equiv 0$ for $|x| > 1-\frac{\delta}{4}$, $\eta \equiv 1$ for $|x| \leq 1-\frac{1}{2}\delta$, and $|\eta'| \aleq \frac{1}{\delta}$.
 \item For any $r > 1$ we have the estimate
 \begin{equation}\label{eq:frl:estw}
 \begin{split}
  [w]_{W^{s,p}(-r,r)}^p \leq&  \int_{1<|y|<r} \int_{r>|x| > 1} \frac{|u(x)-u(y)|^p}{|x-y|^{1+sp}}\, dx\, dy + (1-\delta)^{1-sp}[v]_{W^{s,p}(-1,1)}^p\\
  &+2(1-\delta)\int_{|y|<1} \int_{r>|x| > 1} \frac{|u(x)-v(y)|^p}{|x-(1-\delta)y|^{1+sp}}\, dx\, dy\\
  &+C\,\ \delta \brac{\int_{r>|y|>1} \frac{|u(1)-u(y)|^{p}}{|1 - y|^{1+sp}} dy+\int_{|y|>1} \frac{|u(-1)-u(y)|^{p}}{|-1- y|^{1+sp}} dy }\\
  &+C\,\ \delta (1-\delta)^{-sp} \brac{\int_{|y|<1} \frac{|v(1)-v(y)|^{p}}{|1 - y|^{1+sp}} dy+\int_{|y|<1} \frac{|v(-1)-v(y)|^{p}}{|-1- y|^{1+sp}} dy }\\
  &+ C\, \delta^{1-sp}\, \brac{|u(1)-v(1)|^p+|u(-1)-v(-1)|^p}.
  \end{split}
 \end{equation}
\item If we set $K := u(-2,2) \cup v(-2,2)$, then \[\dist(w,K) \leq |u(-1)-v(-1)|+|u(1)-v(1)|.\]
\end{itemize}
\end{lemma}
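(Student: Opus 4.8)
The plan is to check that $w$ is well defined, dispose of the distance bound, and then estimate $[w]_{W^{s,p}(-r,r)}^p$ by splitting the interval $(-r,r)$, up to a null set, into the piece $O:=\{1<|x|<r\}$ (where $w=u$), the piece $I:=\{|x|<1-\delta\}$ (where $w(x)=v(x/(1-\delta))$), and the two transition intervals $T_\pm:=\pm[1-\delta,1]$. Well-definedness and continuity across the interface points is immediate: since $\eta\equiv1$ on $\{|x|\le1-\tfrac\delta2\}$ and $\eta\equiv0$ on $\{|x|>1-\tfrac\delta4\}$, the four branches of $w$ agree at $\pm(1-\delta)$ (value $v(\pm1)$) and at $\pm1$ (value $u(\pm1)$), using that $\pm1$ are Lebesgue points of $u$ and $v$. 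For the distance bound: on $O$, $w=u$ takes values in $u(-2,2)\subset K$; on $I$, since $1-\delta\ge\tfrac12$ the argument $x/(1-\delta)$ stays in $(-2,2)$, so $w$ takes values in $v(-2,2)\subset K$; on $T_\pm$, $w(x)$ lies on the segment joining $u(\pm1)\in K$ and $v(\pm1)\in K$, hence $\dist(w(x),K)\le\tfrac12|u(\pm1)-v(\pm1)|$. Taking the supremum in $x$ gives $\dist(w,K)\le|u(-1)-v(-1)|+|u(1)-v(1)|$.

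Next, $[w]_{W^{s,p}(-r,r)}^p$ equals the sum of the Gagliardo double integrals over the pairs $O\times O$, $I\times I$, $T_+\times T_+$, $T_-\times T_-$, $T_+\times T_-$, and --- with a factor $2$ --- over $O\times I$, $O\times T_\pm$, $I\times T_\pm$. The three clean pieces reproduce \emph{exactly} the first three terms of \eqref{eq:frl:estw}: on $O\times O$ one has $w=u$; on $I\times I$ the substitution $x=(1-\delta)\xi$, $y=(1-\delta)\eta$ produces the factor $(1-\delta)^{2-(1+sp)}=(1-\delta)^{1-sp}$ in front of $[v]_{W^{s,p}(-1,1)}^p$; and on $O\times I$ the substitution $y=(1-\delta)\eta$ gives $2(1-\delta)\int_{|y|<1}\int_{1<|x|<r}|u(x)-v(y)|^p\,|x-(1-\delta)y|^{-1-sp}\,dx\,dy$.

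The remaining five pieces all meet $T_+\cup T_-$ and must be absorbed into the error terms with the prescribed prefactors $\delta$, $\delta(1-\delta)^{-sp}$ and $\delta^{1-sp}$. The key structural fact is that $\eta_\delta$ is flat at both ends of each $T_\pm$: thus $w\equiv u(\pm1)$ on the sub-interval of $T_\pm$ adjacent to $O$, $w\equiv v(\pm1)$ on the sub-interval adjacent to $I$, and on what remains --- an interval of length $\aleq\delta$ sitting at distance $\geq c\delta$ from the endpoint $\pm1$ for a universal $c>0$ --- $w$ genuinely interpolates. For $O\times T_+$ one writes $|w(x)-u(y)|\le|u(1)-v(1)|+|u(1)-u(y)|$ for $x\in T_+$, observes $|x-y|\ge\tfrac34|1-y|$ (as $x\le1$), and then: the $|u(1)-u(y)|^p$-part is $\aleq\delta$ times the $u$-tail at $1$ (a term of type 4), the $|u(1)-v(1)|^p$-part over the flat-outer sub-interval likewise feeds the $u$-tail, and over the rest it is $\aleq|u(1)-v(1)|^p\int(1-x)^{-sp}\,dx\aleq\delta^{1-sp}|u(1)-v(1)|^p$ (type 6); the pair $O\times T_-$ is symmetric. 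The pairs $I\times T_\pm$ go the same way after rescaling $y=(1-\delta)\eta$, which is exactly what produces the $(1-\delta)^{-sp}$ in front of the $v$-tails (type 5) and again type-6 terms. On $T_+\times T_-$ we have $|x-y|\ge1$, so the integral is $\aleq\delta^2\bigl(|u(1)-u(-1)|^p+|u(1)-v(1)|^p+|u(-1)-v(-1)|^p\bigr)$, with $|u(1)-u(-1)|^p\aleq\int_{-2}^{-1}\bigl(|u(1)-u(y)|^p+|u(-1)-u(y)|^p\bigr)\,dy$ dominated by the $u$-tails; this is absorbed into type-4 and type-6 terms. Finally $w|_{T_\pm}$ is Lipschitz with constant $\aleq\delta^{-1}|u(\pm1)-v(\pm1)|$ on an interval of length $\aleq\delta$, so from $\int_0^\delta\int_0^\delta|x-y|^{p-1-sp}\,dx\,dy\aleq\delta^{p+1-sp}$ (which uses $p-1-sp>-1$, i.e.\ $s<1$) one gets $[w]_{W^{s,p}(T_\pm)}^p\aleq\delta^{-p}|u(\pm1)-v(\pm1)|^p\cdot\delta^{p+1-sp}=\delta^{1-sp}|u(\pm1)-v(\pm1)|^p$ (type 6). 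Summing all contributions gives \eqref{eq:frl:estw}.

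The main obstacle is precisely this last bookkeeping near the transition intervals: one has to avoid divergent inner integrals $\int_0^\delta t^{-sp}\,dt$, which is a genuine danger when $sp\ge1$ --- the relevant range, since the application uses $p=q$, $s=\tfrac1q$, hence $sp=1$. The flat plateaus of $\eta_\delta$ at both ends of $T_\pm$ are what rescues us: the non-constant part of $w$ lies at distance $\geq c\delta$ from the singularity of the kernel, so every inner integral against $O$, or against the rescaled copy of $I$, is $\aleq\delta^{-sp}$ rather than divergent. Getting all the powers of $\delta$ and $1-\delta$ to land on the stated error terms, uniformly in $s\in(0,1)$ and $p>1$, is the only delicate point; the rest is routine change of variables and the triangle inequality.
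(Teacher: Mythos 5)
Your proof is correct and follows essentially the same route as the paper's: the same decomposition of $(-r,r)^2$ into bulk, cross, and transition pieces, the same substitution $y\mapsto(1-\delta)y$ to produce the $(1-\delta)$-prefactors, and the same exploitation of the flat plateaus of $\eta$ both to gain a factor $\delta$ in front of the tail integrals and to avoid a divergent $\int_0^\delta t^{-sp}\,dt$. Your explicit absorption of the $T_+\times T_-$ cross term (controlling $|u(1)-u(-1)|^p$ via the $u$-tails) is a touch more careful than the paper's abbreviated treatment of its term $III$, but it is not a different method.
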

\begin{proof}
Let $\eta: (-2,2) \to [0,1]$ such that $\eta \equiv 0$ for $|x| > 1-\frac{\delta}{4}$ and $\eta \equiv 1$ for $|x| \leq 1-\frac{1}{2}\delta$.
We can find $\eta$ such that $|\eta'| \aleq \frac{1}{\delta}$.
 
Set 
\[
w(x) := \begin{cases} 
u(x) \quad & |x| > 1\\
(1-\eta(x)) u(-1) + \eta(x) v(-1) \quad &x \in [-1,-1+\delta]\\
v(x/(1-\delta)) \quad & |x| < 1-\delta\\
(1-\eta(x)) u(1) + \eta(x) v(1) \quad &x \in [1-\delta,1].\\
\end{cases}
\]
Then
\[
\begin{split}
 &\int_{(-2,2)} \int_{(-2,2)} \frac{|w(x)-w(y)|^p}{|x-y|^{1+sp}}\, dx\, dy\\
=& \int_{|y|>1} \int_{|x|>1} \frac{|u(x)-u(y)|^p}{|x-y|^{1+sp}}\, dx\, dy\\
&+(1-\delta)^{1-sp} \int_{|y|<1} \int_{|x|<1}  \frac{|v(x)-v(y)|^p}{|x-y|^{1+sp}}\, dx\, dy\\
&+III+2IV+2V+2VI
\end{split}
\]
where
\[
\begin{split}
III:=&\int_{|y| \in (1-\delta,1)} \int_{|x| \in (1-\delta,1)} \frac{|w(x)-w(y)|^p}{|x-y|^{1+sp}}\, dx\, dy,\\
IV&:=\int_{|y|<1-\delta} \int_{|x| > 1} \frac{|w(x)-w(y)|^p}{|x-y|^{1+sp}}\, dx\, dy,\\
V:=&\int_{|y| \in (1-\delta,1)} \int_{|x|<1-\delta} \frac{|w(x)-w(y)|^p}{|x-y|^{1+sp}}\, dx\, dy,\\
VI&:=\int_{|y| \in (1-\delta,1)} \int_{|x|>1} \frac{|w(x)-w(y)|^p}{|x-y|^{1+sp}}\, dx\, dy.\\
\end{split}
\]
We have
\[
 III = |v(\pm 1)-u(\pm 1)|^p\int_{|y| \in (1-\delta,1)} \int_{|x| \in (1-\delta,1)} \frac{|\eta(x)-\eta(y)|^p}{|x-y|^{1+sp}}\, dx\, dy \aleq \delta^{1-sp}\, |v(\pm 1)-u(\pm 1)|^p.
\]
Also 
\[
 \begin{split}
IV=&\int_{|y|<1-\delta} \int_{|x| > 1} \frac{|u(x)-v(y/(1-\delta))|^p}{|x-y|^{1+sp}}\, dx\, dy\\
=&(1-\delta)\int_{|y|<1} \int_{|x| > 1} \frac{|u(x)-v(y)|^p}{|x-(1-\delta)y|^{1+sp}}\, dx\, dy.\\
 \end{split}
\]
The tricky terms (that need to vanish as $\delta \to 0$) are the remaining ones:
\[
\begin{split}
 V =& \int_{y \in (1-\delta,1)} \int_{|x|<1-\delta} \frac{|v(x/(1-\delta))-(1-\eta(y)) u(1) - \eta(y) v(1)|^p}{|x-y|^{1+sp}}\, dx\, dy\\
 &+\int_{y \in (-1,-1+\delta)} \int_{|x|<1-\delta} \frac{|v(x/(1-\delta))-(1-\eta(y)) u(-1) - \eta(y) v(-1)|^p}{|x-y|^{1+sp}}\, dx\, dy\\
=& \int_{y \in (1-\delta,1)} \int_{|x|<1-\delta} \frac{|v(x/(1-\delta))-v(1)+(1-\eta(y)) (v(1)-u(1))|^p}{|x-y|^{1+sp}}\, dx\, dy\\
 &+\int_{y \in (-1,-1+\delta)} \int_{|x|<1-\delta} \frac{|v(x/(1-\delta))-v(-1)+(1-\eta(y)) (v(-1)-u(-1) )|^p}{|x-y|^{1+sp}}\, dx\, dy\\
=& \int_{y \in (1-\delta,1)} \int_{|x|<1-\delta} \frac{|v(x/(1-\delta))-v(1)+(\eta(x)-\eta(y)) (v(1)-u(1))|^p}{|x-y|^{1+sp}}\, dx\, dy\\
 &+\int_{y \in (-1,-1+\delta)} \int_{|x|<1-\delta} \frac{|v(x/(1-\delta))-v(-1)+(\eta(x)-\eta(y)) (v(-1)-u(-1) )|^p}{|x-y|^{1+sp}}\, dx\, dy.\\
 \end{split}
\]
That is,
\[
\begin{split}
 V \aleq& \int_{y \in (1-\delta,1)} \int_{|x|<1-\delta} \frac{|v(x/(1-\delta))-v(1)|^p}{|x-y|^{1+sp}}\, dx\, dy\\
 &+\int_{y \in (-1,-1+\delta)} \int_{|x|<1-\delta} \frac{|v(x/(1-\delta))-v(-1)|^p}{|x-y|^{1+sp}}\, dx\, dy\\
 &+\delta^{1-sp} \brac{|u(1)-v(1)|^p+|u(-1)-v(-1)|^p}.\, 
 \end{split}
\]
Now for $y \in (1-\delta,1)$ and $x < 1-\delta$ we have $|x-y| \geq |x-(1-\delta)|$, thus
\[
 \begin{split}
  &\int_{y \in (1-\delta,1)} \int_{|x|<1-\delta} \frac{|v(x/(1-\delta))-v(1)|^p}{|x-y|^{1+sp}}\, dx\, dy\\
  \leq&\int_{y \in (1-\delta,1)} \int_{|x|<1-\delta} \frac{|v(x/(1-\delta))-v(1)|^p}{|x-(1-\delta)|^{1+sp}}\, dx\, dy\\
  =&\delta \int_{|x|<1-\delta} \frac{|v(x/(1-\delta))-v(1)|^p}{|x-(1-\delta)|^{1+sp}}\, dx\\
  =&\delta (1-\delta)^{-sp} \int_{|x|<1} \frac{|v(x)-v(1)|^p}{|x-1|^{1+sp}}\, dx.
 \end{split}
\]
Arguing similarly for the other part, we obtain 
\[
\begin{split}
 V \leq& C\, \delta (1-\delta)^{-sp} \int_{|x|<1} \frac{|v(x)-v(-1)|^p}{|-1-x|^{1+sp}}\, dx\\
 &+C\, \delta (1-\delta)^{-sp} \int_{|x|<1} \frac{|v(x)-v(1)|^p}{|x-1|^{1+sp}}\, dx\\
 &+\delta^{1-sp} \brac{|u(1)-v(1)|^p+|u(-1)-v(-1)|^p}.\, 
 \end{split}
\]
Now for $VI$,
\[
\begin{split}
 VI =& \int_{|y| \in (1-\delta,1)} \int_{|x|>1} \frac{|u(x)-w(y)|^p}{|x-y|^{1+sp}}\, dx\, dy.\\
 \end{split}
\]
Again we have two parts, of which we only estimate 
\[
\begin{split}
 &\int_{y \in (1-\delta,1)} \int_{|x|>1} \frac{|u(x)-w(y)|^p}{|x-y|^{1+sp}}\, dx\, dy\\
 =&\int_{y \in (1-\delta,1)} \int_{|x|>1} \frac{|u(x)-(1-\eta(y)) u(1) - \eta(y) v(1)|^p}{|x-y|^{1+sp}}\, dx\, dy\\
 =&\int_{y \in (1-\delta,1)} \int_{|x|>1} \frac{|u(x)-u(1)+\eta(y) (u(1) - v(1))|^p}{|x-y|^{1+sp}}\, dx\, dy\\
 \aleq&\int_{y \in (1-\delta,1)} \int_{|x|>1} \frac{|u(x)-u(1)|^p}{|x-y|^{1+sp}}\, dx\, dy +|u(1) - v(1)|^p \delta^{1-sp} \\
 \aleq&\int_{y \in (1-\delta,1)} \int_{|x|>1} \frac{|u(x)-u(1)|^p}{|x-1|^{1+sp}}\, dx\, dy +|u(1) - v(1)|^p \delta^{1-sp}\\
 =&\delta \int_{|x|>1} \frac{|u(x)-u(1)|^p}{|x-1|^{1+sp}}\, dx +|u(1) - v(1)|^p \delta^{1-sp} .
 \end{split}
\]
For the last inequality we used that whenever $\delta \in (0,1)$ if $y \in (1-\delta,1)$ and $x> 1$ or $x<-1$, we have that $|x-y| \geq c |x-1|$ for some uniform constant $c$.
\end{proof}

\subsection{Proof of Theorem~\ref{th:minseqcrit}}
Armed with the Luckhaus Lemma, we can now prove \Cref{th:minseqcrit}. The idea is to work with $\gamma_k'$ and $\gamma' + t\varphi'$, using Luckhaus Lemma to glue $\gamma'+t\varphi'$ into $\gamma_k'$ where the gluing happens in an annulus $B(R)\backslash B(R(1-\delta))$. The important observation is that even under the assumption of only weak convergence the norms on the annulus $B(R)\backslash B(R(1-\delta))$ vanish in the limit.

We face an additional technicality, which is that we want to glue derivatives. Up to a adding a corrector term after integration, this leads to a curve $\sigma_{k,\delta}$ wich coincides with $\gamma_k$ outside of a ball $B(R)$. The map $\sigma_{k,\delta}$ may not coincide with $\gamma+t\varphi$ inside the ball $B(R)$, however, its derivative $\sigma_{k,\delta}'$ is essentially $\gamma'$ inside the ball, which is good enough for our purposes.

\begin{proof}[Proof of Theorem~\ref{th:minseqcrit}]
We may assume that the ball $B(\rho)$ is centered at zero. Since $\rho$ is small, w.l.o.g. we will assume for simplicity that the distance $|x-y|$ corresponds to the Euclidean distance.

For any $\eps > 0$ there is $\eps_0 > 0$ such that \eqref{eq:tpl:smallen1} implies
\begin{equation}\label{eq:tpl:smallen}
 [\gamma']_{W^{\frac{1}{q},q}(B(100\rho))}+\sup_k [\gamma_k']_{W^{\frac{1}{q},q}(B(100\rho))}<\eps .
\end{equation}
Indeed this follows, similar to the arguments in \Cref{s:weaklimitimmersion}, from the fact that $\gamma_k$ is smooth and \eqref{eq:tpl:smallen1} implies by \Cref{th:gammasobvstp2} a uniform bound of $\gamma_k'$ in $W^{\frac{1}{q},q}(B(100\rho))$, so that a subsequence of $\gamma_k'$ converges to $\gamma$ weakly in $W^{\frac{1}{q},q}(B(100\rho))$. 

Let $\varphi \in C_c^\infty(B(\rho))$.

By Fubini's theorem, there exists $R \in (\rho,2\rho)$ such that $\pm R$ are Lebesgue points of $\gamma_k'$ and $\gamma'$, 
\begin{equation}\label{eq:tpl:fub1}
  \int_{\R} \frac{|\gamma'(x)-\gamma'(y)|^{q}}{|x-y|^{2}} dy + \sup_{k} \int_{\R} \frac{|\gamma_k'(x)-\gamma_k'(y)|^{q}}{|x - y|^{2}} dy < \infty \quad x= -R,R,
\end{equation}
and we can also assume that $\gamma_k'(\pm R)$ converges to $\gamma'(\pm R)$ for this $R$, and that $\gamma_k' \xrightarrow{k \to \infty} \gamma'$ weakly in $W^{\frac{1}{q},q}(B(100\rho))$ and strongly in $L^q(B(100\rho))$.

Observe, this also implies 
\begin{equation}\label{eq:tpl:fub2}
 \int_{\R} \frac{|(\gamma+t\varphi)'(x)-(\gamma+t\varphi)'(y)|^{q}}{|x-y|^{2}} dy < \infty \quad \quad x= -R,R, \ \forall t \in \R.
\end{equation}

\underline{Construction of a comparison map $\sigma_{k,\delta}$}

Fix $\delta > 0$. Apply Luckhaus Lemma, \Cref{la:frluckhaus}, to $\gamma' + t\varphi'$ (within $B(R)$) and $\gamma_k'$ (in $B(R)^c$).
Then we obtain 
\[
g_{k,\delta}(x) = \begin{cases} 
\gamma_k'(x) \quad & x <- R\\
(\gamma+t\varphi)'(x/(1-\delta)) \quad & |x| < (1-\delta)R\\
\gamma_k'(x) \quad & x > R.\\
\end{cases}
\]
For $(1-\delta)R\leq |x|\leq R$ we have a interpolation between $\gamma'(\pm R)$ and $\gamma_k'(\pm R)$ as in \Cref{la:frluckhaus}. 

Observe that for $t_0 \ll 1$ we have that $|(\gamma+t\varphi)'|$ is as close to $1$ as we want, so we also get the estimate
\[
 \dist(g_{k,\delta}'(x),\S^{N-1})\leq \eps \quad \text{for $k \gg 1$, $t_0 \ll 1$}.
\]
Pick $\theta \in C^\infty(\R)$ with $\theta \equiv 0$ for $x < -R/2$, $\theta \equiv 1$ for $x \geq R/2$, and with $|\theta'| \aleq \frac{1}{R}$. We set 
\begin{equation}\label{eq:testfunction}
 \sigma_{k,\delta}(x) := \int_{-1}^x g_{k,\delta}(z)\, dz + \gamma_k(-1) + \theta(x) \brac{\int_{-R}^{R} \gamma_k'(z)-g_{k,\delta}(z)\, dz} .
\end{equation}

\underline{Properties of $\sigma_{k,\delta}$}
We need to show that $\sigma_{k,\delta}$ (for all small $t$, small $\delta$ and large $k$) is a comparison function for $\gamma_k$.

First we show 
\begin{equation}\label{eq:tpl:sigmakcomp}
 \sigma_{k,\delta} \equiv \gamma_k \quad \text{on $\R/\Z \backslash [-R,R]$}.
\end{equation}
Indeed, observe for $x < -R/2$ that we have
\[
 \sigma_{k,\delta}(x) = \int_{-1}^x g_{k,\delta}(z)\, dz + \gamma_k(-1),
\]
and thus $\sigma_{k,\delta}'(x) = g_{k,\delta}(x) = \gamma_k'(x)$ whenever $x < -R$. Since moreover $\sigma_{k,\delta}(-1) = \gamma_k(-1)$, we have
\[
 \sigma_{k,\delta} \equiv \gamma_k \quad \text{on $[-1,-R)$}.
\]
Moreover, for $x > R$ we have
\[
\begin{split}
 \sigma_{k,\delta}(x) =& \int_{-1}^x g_{k,\delta}(z)\, dz + \gamma_k(-1) + \brac{\int_{-R}^{R} \gamma_k'(z)-g_{k,\delta}(z)\, dz}\\
 =& \int_{-1}^x \gamma_k'(z)\, dz + \gamma_k(-1).
 \end{split}
\]
Again, this implies that $\sigma_{k,\delta}'(x) = \gamma_k'(x)$ for $x> R$ and since $\int_{-1}^1 \gamma_k' = 0$, we have that $\sigma_{k,\delta}(1) = \gamma_k(-1) = \gamma_k(1)$. 

Therefore, \eqref{eq:tpl:sigmakcomp} is established.

Next, we show that there exists $t_0 > 0$, $\delta_0 > 0$ and $K_0 \in \N$ such that for 
\begin{equation}\label{eq:tpl:sigmaarclength}
 \frac{1}{2} \leq |\sigma_{k,\delta}'|\leq \frac{3}{2} \quad \forall |t| < t_0, \ \delta \in (0,\delta_0), \ k \geq K_0.
\end{equation}
Indeed, there exists $t_0 > 0$ such that $|\gamma'+t\varphi'| \in (1-\frac{1}{1000},1+\frac{1}{1000})$ for all $|t| < t_0$. From \Cref{la:frluckhaus} we then have
\[
 |g_{k,\delta}(x)-1| \leq \frac{1}{1000} + |\gamma_k'(-R)-\gamma'(-R)|+|\gamma_k'(R)-\gamma'(R)|.
\]
Since by assumption $\gamma_k'(\pm R) \xrightarrow{k \to \infty} \gamma'(\pm R)$, we can find $K_0 \in \N$ such that 
\begin{equation}\label{eq:tpl:gkalmost1}
  |g_{k,\delta}(x)-1| \leq \frac{2}{1000} \quad \forall |t| < t_0,\, k \geq K_0.
\end{equation}
On the other hand, the term involving $\theta$ is converging in $C^\infty(\R)$ to zero. Namely, we have (recall that $\varphi(\pm R) = 0$)
\[
\begin{split}
 &\left |\int_{-R}^R \gamma_k'(z)-g_{k,\delta}(z) dz \right |\\
 \leq&\left |\int_{-R}^R \gamma_k'(z)- \int_{-(1-\delta)R}^{(1-\delta)R} (\gamma+t\varphi)'(z/(1-\delta))dz \right | + 2 \delta R \|g_{k,\delta}\|_{L^\infty}\\
 \overset{\eqref{eq:tpl:gkalmost1}}{\leq} &\left |\int_{-R}^R \gamma_k'(z)- \int_{-(1-\delta)R}^{(1-\delta)R} (\gamma+t\varphi)'(z/(1-\delta))dz \right | + 4\delta R\\
 =&\left |\gamma_k(R)-\gamma_k(-R)- \brac{(1-\delta) (\gamma+t\varphi)(R) - (1-\delta) (\gamma+t\varphi)(-R)} \right | + 4\delta R\\
 =&\left |\gamma_k(R)-\gamma_k(-R)- \brac{(1-\delta) (\gamma)(R) - (1-\delta) \gamma(-R)} \right | + 4\delta R\\
 \leq&2\max_{x \in \{-R,R\}}\left |\gamma_k(x)- \gamma(x)\right |+\delta |\gamma(x)| 
 + 4\delta R.\\
 \end{split}
\]
Since $\gamma_k$ uniformly converges to $\gamma$, we obtain that for any $L \in \N$
\begin{equation}\label{eq:tpl:corrector}
 \forall \tilde{\eps} > 0 \exists \delta_0 > 0, K_0 \in \N: \quad \left \|\theta(\cdot) \int_{-R}^R \gamma_k'(z)-g_{k,\delta}(z) dz\right \|_{C^L(\R)} < \tilde{\eps}\quad \forall \delta \in (0,\delta_0), \ k \geq K_0.
\end{equation}
\eqref{eq:tpl:gkalmost1} and \eqref{eq:tpl:corrector} readily imply 
\eqref{eq:tpl:sigmaarclength}.

Next we estimate the Sobolev norm of $\sigma_{k,\delta}$. From Luckhaus Lemma, \eqref{eq:frl:estw}, we obtain the estimate
\[
 \begin{split}
  &[g_{k,\delta}]_{W^{\frac{1}{q},q}(B(100\rho))}^q\\
  \aleq&  [\gamma_k']_{W^{\frac{1}{q},q}(B(100\rho))}^q+[\gamma']_{W^{\frac{1}{q},q}(B(100\rho))}^q+|t| [\varphi']_{W^{\frac{1}{q},q}(B(100\rho))}^q\\
  &+C(R)\,\ \delta \brac{\int_{\R} \frac{|(\gamma+t\varphi)'(R)-(\gamma+t\varphi)'(y)|^{q}}{|R - y|^{2}} dy+\int_{\R} \frac{|(\gamma+t\varphi)'(-R)-(\gamma+t\varphi)'(y)|^{q}}{|-R- y|^{2}} dy }\\
  &+C(R)\,\ \delta \brac{\int_{\R} \frac{|\gamma_k'(R)-\gamma_k'(y)|^{q}}{|R - y|^{2}} dy+\int_{\R} \frac{|\gamma_k'(-R)-\gamma_k'(y)|^{q}}{|-R- y|^{2}} dy }\\
  &+ C(R)\, \brac{|\gamma_k'(R)-\gamma'(R)|^q+|\gamma_k'(-R)-\gamma'(-R)|^q}\\
  &+C(R) \delta^{-2}  \|\gamma_k'-\gamma\|_{L^q(B(100\rho))}^q.
  \end{split}
 \]
Here we have used that 
\[
\begin{split}
&2(1-\delta)\int_{|y|<R} \int_{100\rho>|x| > R} \frac{|\gamma_k'(x)-(\gamma+t\varphi)'(y)|^q}{|x-(1-\delta)y|^{2}}\, dx\, dy\\
= \, &2(1-\delta)\int_{|y|<R} \int_{100\rho>|x| > R} \frac{|(\gamma_k+t\varphi)'(x)-(\gamma+t\varphi)'(y)|^q}{|x-(1-\delta)y|^{2}}\, dx\, dy\\
\aleq\, & (1-\delta)[(\gamma_k+t\varphi)']_{W^{\frac{1}{q},q}(B(100\rho))}^q + (1-\delta) \delta^{-{2}} \|\gamma_k'-\gamma'\|_{L^q(B(R))}^q.
\end{split}
\]
In view of the convergence $\gamma_k'(\pm R) \xrightarrow{k \to \infty} \gamma'(\pm R)$, the $L^q$-convergence of $\gamma_k'$ to $\gamma'$,  \eqref{eq:tpl:fub1}, and \eqref{eq:tpl:fub2} together with \eqref{eq:tpl:smallen}, we obtain that there are $\delta_0$, $t_0$ such that
\[
\forall \delta \in (0,\delta_0)\, \exists K(\delta):\
[g_{k,\delta}]_{W^{\frac{1}{q},q}(B(100\rho))}^q < 4\eps \quad \forall k \geq K(\delta),\ |t| < t_0.
\]
From \eqref{eq:tpl:corrector} we thus conclude that taking possibly $\delta_0$ and $t_0$ smaller and $K(\delta)$ larger,
\[
\forall \delta \in (0,\delta_0)\, \exists K(\delta):\ [\sigma_{k,\delta}]_{W^{\frac{1}{q},q}(B(100\rho))}^q < 4\eps \quad \forall k \geq K(\delta),\ |t| < t_0.
\]
\underline{The comparison}

In view of uniform convergence, \eqref{eq:tpl:smallen1}, and \eqref{eq:tpl:smallen}, we can apply \Cref{th:smallenergyinjectivity} and \Cref{la:bilip} to conclude that the assumptions of \Cref{th:localambisotchange} are satisfied for $\sigma_{k,\delta}$ and $\gamma_k$ for all $k \geq K(\delta)$ for some large number $K$.

\Cref{th:localambisotchange} implies $\sigma_{k,\delta}$ and $\gamma_k$ are ambient isotopic (technically: we can mollify $\sigma_{k,\delta}$ around $B(5\rho)$, then this mollification is ambient isotopic to $\gamma_k$, so we get the estimate and remove the mollification again (observe that $\gamma_k$ is smooth)), and thus
\[
 \tp^{q+2,q}(\gamma_k) \leq \tp^{q+2,q} (\sigma_{k,\delta}) + \frac{1}{k} \quad \forall k \geq K(\delta).
\]
Since $\gamma_k \equiv \sigma_{k,\delta}$ on $B(R)^c = \R/\Z \backslash B(R)$, this implies
\[
\begin{split}
 \int\int_{(\R/\Z)^2 \backslash (B(R)^c)^2} \mu(\gamma_k,x,y)\, dx\, dy \leq \int\int_{(\R/\Z)^2 \backslash (B(R)^c)^2} \mu(\sigma_{k,\delta},x,y)\, dx\, dy + \frac{1}{k}.
 \end{split}
\]
But if $(x,y)\in (\R/\Z)^2 \backslash (B(2\rho)^c)^2$, we have that $\gamma(x)=\gamma(y)$ if and only if $x = y$ (a set of measure zero), that is we have 
\[
 \mu(\gamma,x,y) = \lim_{k \to \infty} \mu(\gamma_k,x,y) \quad \text{a.e. }(x,y) \in (\R/\Z)^2 \backslash (B(2\rho)^c)^2.
 \]
From Fatou's Lemma we then find
\[
\begin{split}
 \int\int_{(\R/\Z)^2 \backslash (B(R)^c)^2} \mu(\gamma,x,y)\, dx\,  dy \leq \liminf_{k \to \infty} \int\int_{(\R/\Z)^2 \backslash (B(R)^c)^2} \mu(\sigma_{k,\delta},x,y)\, dx\, dy. 
 \end{split}
\]
\underline{The convergence of $\sigma_{k,\delta}$ as $k \to \infty$}
We now fix $\delta \in (0,\delta_0)$ and consider the limit as $k \to \infty$.
Set
\[
g_{\delta}(x) := \begin{cases} 
\gamma'(x) \quad & x <- R\\
\gamma'(-R) \quad & x \in (-R,-R(1-\delta))\\
(\gamma+t\varphi)'(x/(1-\delta)) \quad & |x| < (1-\delta)R\\
\gamma'(R) \quad & x \in (R(1-\delta),R)\\
\gamma'(x) \quad & x > R\\
\end{cases}
\]
and
\[
 \sigma_\delta(x) := \int_{-1}^x g_{\delta}(z)\, dz - \gamma(-1) + \theta (x) \brac{\int_{-R}^{R} \gamma'(z)-g_{\delta}(z)\, dz} .
\]
We claim that 
\begin{equation}\label{eq:tpl:kconv}
  \limsup_{k \to \infty} \left |\int\int_{(\R/\Z)^2 \backslash (B(R)^c)^2} \mu(\sigma_{k,\delta},x,y)\, dx \, dy  - \int\int_{(\R/\Z)^2 \backslash (B(R)^c)^2} \mu(\sigma_{\delta},x,y)\, dx \, dy \right | \leq C \delta.
\end{equation}
First observe that 
\begin{equation}\label{eq:tql:arclenghtunifconv}
 |\sigma_{\delta,k}'(x)| \xrightarrow{k \to \infty} |\sigma_\delta'(x)| \quad \forall \delta < \delta_0, \text{uniformly for $x\in\R/\Z$}.
\end{equation}
Indeed, by the support of $\varphi$,
\[
 |x| > R: |\sigma_{\delta,k}'(x)| = |\gamma_k'(x)| = 1 = |\gamma'(x)|=|\sigma_\delta'(x)|.
\]
Also for $|x| \in ((1-\delta)R,R)$,
\[
\begin{split}
 |g_{k,\delta}(x)|=& |\eta(x)\gamma'(\pm R)+(1-\eta(x)) \gamma'_k(\pm R)|\\ 
  \xrightarrow{k \to \infty} &|\eta(x)\gamma'(\pm R)+(1-\eta(x)) \gamma'(\pm R)|\\
  =&|\gamma'(\pm R)| = |g_\delta(x)|.
 \end{split}
\]
and for $|x| < (1-\delta)R$ we have 
\[
 |g_{k,\delta}(x)| = |g_{\delta}(x)|.
\]
By the definition of $\sigma_\delta$ and in view of \eqref{eq:tpl:corrector} we have established \eqref{eq:tql:arclenghtunifconv}.

Next we observe that for $(x,y) \in (\R/\Z)^2 \backslash (B(R)^c)^2$ with $|x-y| > \frac{\delta}{100}$ we have by the bilipschitz estimate or the global distance estimate 
\[
 \inf_{k} |\sigma_{k,\delta}(x)-\sigma_{k,\delta}(y)| \ageq C(\delta).
\]
Thus we can make a brute force estimate
\[
\begin{split}
 &\left |
 \int\int_{(\R/\Z)^2 \backslash (B(R)^c)^2 \cap |x-y| \ageq \delta} \mu(\sigma_{k,\delta},x,y)\, dx\,  dy  - \int\int_{(\R/\Z)^2 \backslash (B(R)^c)^2 \cap |x-y| \ageq \delta} \mu(\sigma_{\delta},x,y)\, dx \, dy \right |\\
 \aleq& C(\delta, \|\gamma_k'\|_{L^\infty},\|\gamma_k\|_{L^\infty})\, \int_{\R/\Z} |\gamma_k'(x)-\gamma'(x)|^q+|\gamma_k(x)-\gamma(x)|^q \, dx. 
 \end{split}
 \]
Since $\gamma_k \xrightarrow{k \to \infty} \gamma$ a.e., by dominated convergence (recall $|\gamma_k'| \leq 1$), we have
\[
 \lim_{k \to \infty} \left |
 \int\int_{(\R/\Z)^2 \backslash (B(R)^c)^2 \cap |x-y| \ageq \delta} \mu(\sigma_{k,\delta},x,y)\, dx \, dy  - \int\int_{(\R/\Z)^2 \backslash (B(R)^c)^2 |x-y| \ageq \delta} \mu(\sigma_{\delta},x,y)\, dx \, dy \right | = 0.
\]
That is, 
\[
\begin{split}
  &\limsup_{k \to \infty} \left |\int\int_{(\R/\Z)^2 \backslash (B(R)^c)^2} \mu(\sigma_{k,\delta},x,y)\, dx\, dy  - \int\int_{(\R/\Z)^2 \backslash (B(R)^c)^2} \mu(\sigma_{\delta},x,y)\, dx\, dy \right | \\
  \aleq &\limsup_{k \to \infty} \left |\int\int_{(\R/\Z)^2 \backslash (B(R)^c)^2, |x-y| \leq \frac{1}{100}\delta} \mu(\sigma_{k,\delta},x,y)- \mu(\sigma_{\delta},x,y)\, dx\, dy \right |.\\
  \end{split}
\]
Also observe that $\mu(\sigma_{k,\delta},x,y) = \mu(\sigma_{\delta},x,y)$ for all $x,y \in B((1-\delta)R)$, $k \in \N$.

So we have to consider the following terms, (observe $\mu$ is not symmetric in $x$,$y$)
\[
\begin{split}
 &\limsup_{k \to \infty} \left |\int\int_{(\R/\Z)^2 \backslash (B(R)^c)^2} \mu(\sigma_{k,\delta},x,y)\, dx\, dy  - \int\int_{(\R/\Z)^2 \backslash (B(R)^c)^2} \mu(\sigma_{\delta},x,y)\, dx\, dy \right | \\
\aleq&\limsup_{k \to \infty}\left |
 \sum_{\ell = 1}^5\int\int_{A_\ell}
\brac{
\frac{\abs{\sigma_{k,\delta}(x)' \wedge (\sigma_{k,\delta}(x)-\sigma_{k,\delta}(y))}^q}{\abs{\sigma_{k,\delta}(x)-\sigma_{k,\delta}(y)}^{q+2}}
- \frac{\abs{\sigma_{\delta}(x)' \wedge (\sigma_{\delta}(x)-\sigma_{\delta}(y))}^q}{\abs{\sigma_{\delta}(x)-\sigma_{\delta}(y)}^{q+2}}
} |\sigma_{\delta}'(x)|^{1-q} |\sigma_{\delta}'(y)|\, dx\, dy
\right |
\end{split}
\]
where

\[
\begin{split}
A_1 :=& (\pm R,\pm R(1+\delta)) \times (\pm R(1-\delta),\pm R)\\
A_2:= &(\pm R(1-\delta),\pm R) \times (\pm R,\pm R(1+\delta))\\
A_3:= &(\pm R(1-2\delta),\pm R(1-\delta)) \times  (\pm R(1-\delta),\pm R)\\
A_4:= &(\pm R(1-\delta),\pm R) \times (\pm R(1-2\delta),\pm R(1-\delta))\\
A_5:= &(\pm R(1-\delta),\pm R) \times (\pm R(1-\delta),\pm R).
\end{split}
\]
Observe that in each of these regimes $\theta \equiv 1$ or $\theta \equiv 0$, that is
\[
\begin{split}
 &\limsup_{k \to \infty} \left |\int\int_{(\R/\Z)^2 \backslash (B(R)^c)^2} \mu(\sigma_{k,\delta},x,y)\, dx\, dy  - \int\int_{(\R/\Z)^2 \backslash (B(R)^c)^2} \mu(\sigma_{\delta},x,y)\, dx\, dy \right | \\
\aleq&\limsup_{k \to \infty}\left |
 \sum_{\ell = 1}^5\int\int_{A_\ell}
\brac{
\frac{\abs{g_{k,\delta}(x) \wedge \int_x^yg_{k,\delta}(z)\, dz}^q}{\abs{\sigma_{k,\delta}(x)-\sigma_{k,\delta}(y)}^{q+2}}
- \frac{\abs{g_{\delta}(x) \wedge \int_x^y g_{\delta}(z)\,dz}^q}{\abs{\sigma_{\delta}(x)-\sigma_{\delta}(y)}^{q+2}}
} |\sigma_{\delta}'(x)|^{1-q} |\sigma_{\delta}'(y)|dx\, dy
\right |\\
=&\limsup_{k \to \infty}\left |
 \sum_{\ell = 1}^5\int\int_{A_\ell}
\brac{
\frac{\abs{g_{k,\delta}(x) \wedge \int_x^yg_{k,\delta}(z)\, dz}^q}{\abs{\int_x^y g_{k,\delta}(z)\, dz}^{q+2}}
- \frac{\abs{g_{\delta}(x) \wedge \int_x^y g_{\delta}(z)\,dz}^q}{\abs{\int_x^y g_{\delta}(z)\,dz}^{q+2}}
} |\sigma_{\delta}'(x)|^{1-q} |\sigma_{\delta}'(y)|dx\, dy
\right |\\
\aleq&\limsup_{k \to \infty}
 \sum_{\ell = 1}^5\int\int_{A_\ell}
\frac{\abs{\abs{g_{k,\delta}(x) \wedge \int_x^yg_{k,\delta}(z)\, dz}^q
- \abs{g_{\delta}(x) \wedge \int_x^y g_{\delta}(z)\,dz}^q} }
{\abs{\int_x^y g_{k,\delta}(z)\,dz}^{q+2}}
 |\sigma_{\delta}'(x)|^{1-q} |\sigma_{\delta}'(y)|dx\, dy\\
&+\limsup_{k \to \infty}
 \sum_{\ell = 1}^5\int\int_{A_\ell} \frac{\abs{\abs{\int_x^y g_{\delta}(z)\,dz}^{q+2}-\abs{\int_x^y g_{k,\delta}(z)\,dz}^{q+2}}}{
 \abs{\int_x^y g_{\delta}(z)\,dz}^{q+2}
 }
\frac{\abs{g_{\delta}(x) \wedge \int_x^y g_{\delta}(z)\,dz}^q}
{\abs{\int_x^y g_{k,\delta}(z)\,dz}^{q+2}}
 |\sigma_{\delta}'(x)|^{1-q} |\sigma_{\delta}'(y)|dx\, dy\\
\end{split}
\]
By the uniform bi-Lipschitz estimate we have 
\[
\begin{split}
    \aleq&\limsup_{k \to \infty}
 \sum_{\ell = 1}^5\int\int_{A_\ell}
\frac{\abs{\abs{g_{k,\delta}(x) \wedge \int_x^yg_{k,\delta}(z)\, dz}^q
- \abs{g_{\delta}(x) \wedge \int_x^y g_{\delta}(z)\,dz}^q} }
{\abs{x-y}^{q+2}}
 dx\, dy\\
&+\limsup_{k \to \infty}
 \sum_{\ell = 1}^5\int\int_{A_\ell} \frac{\abs{\abs{\int_x^y g_{\delta}(z)\,dz}^{q+2}-\abs{\int_x^y g_{k,\delta}(z)\,dz}^{q+2}}}{|x-y|^{q+2}}
\frac{\abs{g_{\delta}(x) \wedge \int_x^y g_{\delta}(z)\,dz}^q}
{\abs{\int_x^y g_{\delta}(z)\,dz}^{q+2}}
 |\sigma_{\delta}'(x)|^{1-q} |\sigma_{\delta}'(y)|dx\, dy\\
\end{split}
\]
The second set of integrals converges to zero by dominated convergence theorem since $g_{k,\delta}$ converges a.e. to $g_\delta$. To establish \eqref{eq:tpl:kconv} we argue as in the proof \Cref{la:frluckhaus} to obtain that the first terms satisfy
\[
\limsup_{k \to \infty}
 \sum_{\ell = 1}^5\int\int_{A_\ell}
\frac{\abs{\abs{g_{k,\delta}(x) \wedge \int_x^yg_{k,\delta}(z)\, dz}^q
- \abs{g_{\delta}(x) \wedge \int_x^y g_{\delta}(z)\,dz}^q} }
{\abs{x-y}^{q+2}}
 dx\, dy \aleq \delta.
\]

\underline{The convergence of $\sigma_{\delta}$ as $\delta \to 0$}

By now we have shown that for any $\delta \in (0,\delta_0)$
\[
\begin{split}
 \int\int_{(\R/\Z)^2 \backslash (B(R)^c)^2} \mu(\gamma,x,y)\, dx \, dy \leq \int\int_{(\R/\Z)^2 \backslash (B(R)^c)^2} \mu(\sigma_{\delta},x,y)\, dx\,  dy  + C\delta.
 \end{split}
\]
Observe that $\sigma_{\delta} \xrightarrow{\delta \to 0} \gamma + t\varphi$, in view of \eqref{eq:tpl:corrector} -- indeed essentially repeating the Luckhaus-Lemma argument from above, we see that as $\delta \to 0$ we get 
\[
 \int\int_{(\R/\Z)^2 \backslash (B(R)^c)^2} \mu(\sigma_{\delta},x,y)\, dx\,  dy  \xrightarrow{\delta \to 0} \int\int_{(\R/\Z)^2 \backslash (B(R)^c)^2} \mu(\gamma+t\varphi,x,y)\, dx\,  dy.
\]

\end{proof}

%%%%%%%%%%%%%%%%%%%%%%%%%%%%%%%%%%%%%%%%%%%%%%%%%%%%%%%%%%%%
%%%%%%%%%%%%%%%%%%%%%%%%%%%%%Regularity theory %%%%%%%%%%%%%%%%%%%%%%%%%%%%%%%
%%%%%%%%%%%%%%%%%%%%%%%%%%%%%%%%%%%%%%%%%%%%%%%%%%%%%%%%%%%%

\section{The Regularity theory for critical points: Proof of Theorem~\ref{th:mainreg}}
\label{s:regularity}

This section is dedicated to show $C^{1,\alpha}$-regularity of locally critical points for scale-invariant tangent-point energies $\tp^{q+2,q}$ with $q\geq 2$. Our main goal is the following  decay estimate. 
\begin{proposition}[Local decay estimate]\label{pr:decayest}
	Let $q\geq 2$ and $\gamma$ be a locally critical embedding in the sense of \Cref{def:wcp} with small tangent-point energy $\tp^{q+2,q}$ around a geodesic ball $B(x_0,r) \subset \R/\Z$, and assume $|\gamma'| \equiv const > 0$ almost everywhere. 
	Let $u:= \frac{\gamma'}{|\gamma'|}$, that is $u:\RZ\rightarrow \S^2$ such that $\int_{\RZ} u = 0$, and let $\tilde{u}: \R \to \R^3$ be a $L^\infty \cap W^{\frac 1q,q}$-extension of $u|_{B(x_0,r)}$ from  $B(x_0,r)$ to $\R$.
	Then there exist $\eps,\tau, \theta\in(0,1)$ and $N_0\in\N$ such that the following holds.
	
	If $N\geq N_0$, $\rho> 0$, and $y\in B(x_0,\tfrac r2)$ such that $B_{2^N\rho}:= B(y,2^N\rho) \subset B(x_0,r)$, and $[\tilde u]_{W^{\frac 1q,q}(B_{2^N\rho})} \leq \eps$, then
	\begin{align*}
		& [u]^q_{W^{\frac{1}{q}, q}(B_\rho)}  \leq \tau [u]^q_{W^{\frac{1}{q}, q}(B_{2^N\rho})}   + \sum_{l=1}^\infty 2^{-\theta (N+l)} [\tilde u]^{q}_{W^{\frac 1q,q }(B_{2^{N+l}\rho})} + \rho.
	\end{align*}
\end{proposition}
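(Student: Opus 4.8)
## Proof plan for Proposition~\ref{pr:decayest}

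\textbf{Overall strategy.} The plan is to exploit the Euler--Lagrange equation for the tangent-point energy, reinterpreted (as announced in the introduction via the energy $\mathcal{E}^q$) as a nonlocal equation for $u=\gamma'/|\gamma'|$ taking values in $\S^2$, of the type solved by the regularity theory for $W^{1/q,q}$-harmonic maps into spheres from~\cite{S15}. Concretely, I would first show that $u$ solves, in $B(x_0,r)$, an equation of the schematic form $(-\Delta)^{1/q}_q u = \Omega \cdot u + (\text{lower-order/more-integrable terms})$, where $\Omega$ is an antisymmetric potential built from $u$ and from the geometric quantities appearing in $\tp^{q+2,q}$, and crucially $\Omega$ lies in the same function-space scale as the $W^{1/q,q}$-seminorm of $u$. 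The structure $\Omega=-\Omega^T$ combined with the target constraint $|u|\equiv1$ is exactly what produces the compensation/commutator gains. The decay estimate~\eqref{...} is then the standard ``$\eps$-regularity iteration step'': on a ball $B_{2^N\rho}$ with small seminorm, one freezes the equation, compares $u$ with a suitable $\Omega$-adapted comparison map (or uses the three-term commutator estimates of Da~Lio--Rivière type extended in~\cite{S15}), and obtains that the seminorm on $B_\rho$ is a fraction $\tau<1$ of the seminorm on $B_{2^N\rho}$, up to a tail $\sum_l 2^{-\theta(N+l)}[\tilde u]^q_{W^{1/q,q}(B_{2^{N+l}\rho})}$ coming from the nonlocality (the integral over $y$ far from $B_\rho$) and an additive error $\rho$ coming from the fact that the full $\tp$-Euler--Lagrange equation differs from the model $\mathcal{E}^q$-equation only by scale-subcritical, hence benign, remainder terms.

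\textbf{Key steps, in order.} (1) Derive the first variation: starting from $\delta\tp^{q+2,q}(\gamma,\varphi)=0$ for $\varphi\in C_c^\infty(B(x_0,r),\R^3)$, differentiate along $u$ (using $\varphi=\int\psi$-type test functions so that $\varphi'$ ranges over mean-zero test functions, and projecting onto the tangent space $T_u\S^2$ to respect $|u|=1$), arriving at a weak equation for $u$. (2) Identify the principal nonlocal operator: show the leading term of this equation is comparable, on small balls, to the $W^{1/q,q}$-seminorm structure, i.e.\ to $\int\int \frac{|u(x)-u(y)|^{q-2}(u(x)-u(y))}{|x-y|^2}\,dy$ paired against test functions; everything else is a potential term $\Omega u$ plus a remainder $R$ with $\|R\|$ controlled by $\rho$ times energy (this is where scale-invariance $p=q+2$ is used: the critical term is exactly $\mathcal{E}^q$, subcritical corrections carry a positive power of the scale). (3) Check antisymmetry of $\Omega$ and the algebraic identities (analogues of $u\cdot\nabla u=0$, the Lagrange identity already used in Lemma~\ref{la:gammasobvstp1}) that let one write the right-hand side in ``div-curl''/commutator form. (4) Invoke the localization and commutator estimates from~\cite{S15} (extended here, as the authors announce) to get a Morrey-type gain: on $B_\rho$ the seminorm is dominated by $\tau$ times the seminorm on the much larger ball plus the nonlocal tail plus the $\rho$-error. (5) Assemble the dyadic tail: the contribution of $x\in B_\rho$, $y\notin B_{2^N\rho}$ is estimated by $\sum_{l\ge1} 2^{-\theta(N+l)}[\tilde u]^q_{W^{1/q,q}(B_{2^{N+l}\rho})}$ by splitting the outside into dyadic annuli and using $|x-y|\gtrsim 2^{N+l}\rho$ together with the extension $\tilde u$.

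\textbf{Main obstacle.} The hard part will be step~(2)--(3): showing that the Euler--Lagrange operator for $\tp^{q+2,q}$, which is a genuinely nonlinear, non-Hilbertian ($q\neq2$) nonlocal operator with a complicated kernel involving $\gamma'(x)\wedge(\gamma(x)-\gamma(y))$, can be massaged into the clean form ``$W^{1/q,q}$-type operator applied to $u$ equals antisymmetric potential times $u$ plus controlled remainder,'' with all error terms genuinely subcritical in scale so that they contribute only the additive $\rho$. This requires carefully expanding $\gamma(x)-\gamma(y)=\int u$, using $|\gamma'|\equiv const$ and bi-Lipschitzness (available from Theorem~\ref{th:gammasobvstp2} once local energy is small) to replace $|\gamma(x)-\gamma(y)|$ by $|x-y|$ up to harmless factors, and tracking that the antisymmetric structure survives the projection onto $T_u\S^2$. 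Once the equation is in that normal form, the decay estimate follows from the now-standard fractional-harmonic-map machinery of~\cite{DLR11a,S15}, but verifying every remainder is of lower order in the scaling is the delicate bookkeeping at the heart of the argument.
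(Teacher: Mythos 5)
Your high-level plan is on the right track and follows the same broad architecture as the paper: pass from the $\tp^{q+2,q}$-Euler--Lagrange equation to a nonlocal equation for the tangent $u$, bring it into a form amenable to the $W^{1/q,q}$-harmonic-map machinery of \cite{S15}, and extract a Morrey-type decay. But several of your specific attributions are off, and one of them would actually derail the argument if you tried to carry it out.

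\emph{Where the additive $\rho$ really comes from.} You say the error $\rho$ comes from ``the fact that the full $\tp$-Euler--Lagrange equation differs from the model $\mathcal E^q$-equation only by scale-subcritical remainders,'' with scale-invariance ensuring the critical piece is exactly $\mathcal E^q$. That is not how the paper sets things up, and indeed in the scale-invariant case $p=q+2$ the paper proves $\tp^{q+2,q}(\gamma)=\mathcal{E}^{q+2,q}(\gamma')$ \emph{exactly} (\Cref{la:reg:tpqisepq}) for arclength-parametrized curves; there is no subcritical gap between the two energies. The $\rho$-error comes from a different source: $\gamma$ is only \emph{locally} critical on $B(x_0,r)$, so constructing admissible variations requires a cutoff $\eta\in C_c^\infty(B(x_0,r))$ (cf.\ \Cref{reg:th:critmap}); the remainders $R^{4,(p,q)}_\eta,\dots,R^{7,(p,q)}_\eta$ produced by $\eta$ pick up a factor $\|\varphi\|_{L^1}\aleq\rho$. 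If you instead tried to blame the $\rho$-error on a ``subcritical TP vs.\ $\mathcal E^q$ discrepancy,'' you would be looking for an error term that doesn't exist, and you would have no mechanism to absorb the genuine contributions of the localization cutoff.

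\emph{The contraction $\tau<1$.} You invoke an ``$\eps$-regularity iteration step'' but don't explain how to make the comparison strictly contractive. The paper's left-hand-side estimate (\Cref{pr:reg:lhsestimate}) produces not only the term $[u]_{W^{1/q,q}(B_{2^L\rho})}\|\chi\Gamma u\|$ but also a term $C_\delta\bigl([u]^q_{W^{1/q,q}(B_{2^L\rho})}-[u]^q_{W^{1/q,q}(B_\rho)}\bigr)$ coming from the annulus where the cutoff transitions. Obtaining $\tau<1$ then requires a hole-filling step (adding $CC_\delta[u]^q_{W^{1/q,q}(B_{2^N\rho})}$ to both sides and dividing). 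Smallness of the local energy alone only handles the leading self-referential term $[u]^{2q-1}$; the annulus term needs hole-filling. This is a standard but essential ingredient that your sketch omits.

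\emph{The ``antisymmetric potential'' normal form.} Your schematic $(-\Delta)^{1/q}_qu=\Omega\cdot u + \text{l.o.t.}$ with $\Omega=-\Omega^T$ captures the spirit, but the paper never literally writes the equation in this Rivi\`ere-type form. Instead it builds a Riesz-potential-like operator $\Gamma_{\beta,B\times B}u$ from the leading term $Q$, estimates the critical $W^{1/q,q}$-seminorm by $\|\chi_{B}\Gamma_{1/\p,B\times B}u\|_{L^{\p/(\p-1)}}$, and then splits
\[
\|\chi\,\Gamma u\|\aleq\|\chi\,u\cdot\Gamma u\|+\|\chi\,u\wedge\Gamma u\|.
\]
The $u\cdot\Gamma u$ part is handled with the constraint $|u|\equiv1$ (producing a quadratic gain), while the $u\wedge\Gamma u$ part is handled by testing the Euler--Lagrange equation with $(\varphi\,u\wedge)_{ij}$ (which are automatically tangential) and invoking the remainder estimates \Cref{pr:reg:estremainderwedge}. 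The skew-symmetric structure is thus present but appears via the choice of test function rather than via a pointwise potential $\Omega$. This matters if you try to fill in details: there is no clean $\Omega\in L^{\p}$ to point to, and the compensation is organized through the commutator $H_\alpha$ and the explicit decomposition $Q+\sum_k R^k$ of \Cref{la:EulerLagrange}, not through a gauge-theoretic/Coulomb-gauge argument.

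In short, your plan would benefit from (i) correctly locating the source of the $\rho$-error in the localization cutoff $\eta$ rather than in any TP-vs-$\mathcal E^q$ discrepancy, (ii) adding the hole-filling step, and (iii) replacing the antisymmetric-potential picture with the concrete split of $\Gamma_\beta u$ into tangential and normal components and the explicit $Q+\sum R^k$ decomposition, since that is what actually makes the estimates close.
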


\Cref{pr:decayest} implies \Cref{th:mainreg} by the usual Dirichlet growth-type iteration techniques.
\begin{proof}[Proof of \Cref{th:mainreg}]
First note that by definition $u$ and $\tilde u$ coincide locally around $B(x_0,r)$, therefore we have for any ball $B\subset B(x_0,r)$ that
$$
	[u]_{W^{\frac{1}{q}, q}(B)} = [\tilde u]_{W^{\frac{1}{q}, q}(B)}.
$$
By iterating the decay estimate on small balls, cf. \cite[Lemma A.8]{BRS16}, we obtain a $\sigma > 0$ such that 
\[
\sup_{0<\rho<\frac r2, \, y\in B(x,\frac r2)} \rho^{-\sigma} [u]_{W^{\frac 1q,q}(B(y,\rho))}^q \aleq C(u).
\]
By Jensen's inequality, we conclude that 
\[
 \sup_{0<\rho<\frac r2, \, y\in B(x,\frac r2)} \rho^{-\sigma} \mvint_{B(y,\rho)} |u(z)-(u)_{B(y,\rho)}|^q \, dz \aleq C(u),
\]
where $(u)_{B(y,\rho)}$ denotes the mean value of $u$ in $B(y,\rho)$, and hence $u$ belongs to the Campanato space $\mathcal{L}^{q,1+\sigma}(B(x_0,r))$.
The characterization of Campanato spaces with H\"older spaces, cf.~\cite[Theorem~1.2]{Giaquinta1983}, implies that $u \in C^{\frac \sigma q}_{loc}(B(x_0,\frac{r}{2}))$, which concludes the proof of \Cref{th:mainreg}. 
\end{proof}

In order to obtain \Cref{pr:decayest} -- inspired by the investigations of critical O'Hara energies in \cite{BRS19} by comparison to the theory of fractional harmonic maps, cf. \cite{DLR11a,S15} -- we proceed as follows: 

\begin{itemize}
	\item In \Cref{s:newenergy} we relate critical knots of the tangent-point energies $\tp^{p,q}$ with $p\in [ q+2,2q+1)$ and $q>1$ to fractional harmonic maps: We first define a suitable energy $\E^{p,q}$ such that the unit tangent $u:= \frac{\gamma^\prime}{|\gamma^\prime|}$ of locally critical embeddings $\gamma$ of $\tp^{p,q}$ with constant-speed parametrization are locally critical maps of the energy $\E^{p,q}$ in the class of maps $v:\R/\Z \rightarrow \S^2$. We then establish that the new energy $\E^{p,q}$ is locally comparable to a $W^{\frac{p-q-1}{q}, q}$-seminorm, see \Cref{s:weaklimitimmersion}. Consequently, the equations, that the critical maps $u$ satisfy, are indeed structurally similar to the Euler-Lagrange equations of fractional harmonic maps into the sphere $\S^2$ as treated in \cite{S15}.
	\item In \Cref{s:EulerLagrangeEqu} we derive the Euler-Lagrange equations of the new energies $\E^{p,q}$ for $p\in [ q+2,2q+1)$, $q>1$, and study the highest order and remainder terms of the Lagrangian.
	\item In \Cref{subsec:RegularityTheory} we finally treat the actual decay estimate of \Cref{pr:decayest}.
\end{itemize}

Before continuing with the upcoming subsections, we need to introduce some notation for integration on $\R/\Z$, cf. \cite[Remark 2.2]{BRS19}: 
\begin{enumerate}[(1)]
	\item  We identify by $\rho (x,y)$ the distance of two points $x,y\in \R/\Z$ on $\R/\Z$, in particular $\rho(x,y)=|x-y| \,  {\rm{mod}} \tfrac 12$. 
	\item If $x$ and $y$ are not antipodal, which means $|x-y|\neq \tfrac 12$, we denote by $x\triangleright y$ the shortest geodesic from $x$ to $y$. Hence, we define for any $\Z$-periodic $f$
	\[
	\oint_{x\triangleright y} f := \int_x^{\tilde{y}} f(z) \, dz,
	\]
	where $\tilde{y}\in y + \Z$ such that $|x-\tilde{y}|< \tfrac 12$. 
	\item Furthermore, we write 
	\[
	\sigma(x\triangleright y) = {\rm sgn} \oint_{x\triangleright y} 1.
	\]
	That means, if $x\triangleright y$ is positively oriented, we have $\sigma(x\triangleright y) = 1$, and if $x\triangleright y$ is negatively oriented, we get $\sigma(x\triangleright y) =- 1$.
	\item Now given a $\Z$-periodic function $f$, we define
	\[
	\avint_{x\triangleright y} f := \frac{\sigma(x\triangleright y)}{\rho(x,y)} \oint_{x\triangleright y} f.
	\]
\end{enumerate}

We also have to deal with the fact that the critical embeddings of interest are only locally known to be of class $W^{1+\frac 1q,q}(\R/\Z,\R^3)$, which motivates the use of the extension $\tilde{u}$ as described below. This is a mere technical inconvenience, and we recommend the first-time reader to mentally identify $u$ and $\tilde{u}$ in the arguments to come.

\begin{remark}\label{reg:rm:extension}
	Let $p\in[q+2,2q+1)$, $q>1$, and $\gamma: \R/\Z\rightarrow \R^3$ be a homeomorphism with locally small tangent-point energy according to \Cref{def:homeotp} that is a locally critical embedding in $B(x_0,r)$ of $\tp^{p,q}$ as in \Cref{def:wcp}. Then \Cref{th:weaklimitareweakimm} implies that $\gamma$ is globally bi-Lipschitz and of class $W^{1+s,q}(\R/\Z,\R^3)$ for any $0<s<\tfrac 1q$. However, $\gamma$ is not known to globally belong to the class $W^{1+\frac 1q,q}$ or even $W^{1+\frac {p-q-1}q,q}$, we only have the local statement $\gamma \in W^{1+\frac {p-q-1}q,q}(B(x_0,r),\R^3)$ due to \Cref{th:gammasobvstp2}. 

   Although we aim to mostly work with the local $W^{\frac {p-q-1}q,q}$-Gagliardo seminorm of $\gamma'$, we also have to acknowledge global terms thereof on account of the non-locality of the proposed problem. For this reason, when necessary, we may interpret $B(x_0,r)$ as an interval in $[-1,2]$ and extend $\gamma'|_{B(x_0,r)}$ from $B(x_0,r)$ to a function $\tilde u  \in W^{\frac {p-q-1}q,q}(\R,\R^3)$ such that $\tilde u$ is uniformly bounded. Such extension exists since $\|\gamma'\|_{L^\infty}\leq 1$ and by standard construction of extensions, e.g. \cite[Theorem~5.4]{Hitchhiker}. Note that in this setting we have for any ball $B\subset B(x_0,r)$ that
	\[
	[\tilde u]_{W^{\frac{p-q-1}{q}, q}(B)} = [ \gamma']_{W^{\frac{p-q-1}{q}, q}(B)}.
	\]
\end{remark}

\subsection{A new energy $\E^{p,q}$} \label{s:newenergy}

Our first objective in this subsection is to construct a new energy $\E^{p,q}$, which coincides with the tangent-point energies $\tp^{p,q}$ for sufficiently regular curves $\gamma$, but only depends on the first derivative $\gamma^\prime$. We then show that any locally critical embedding of the tangent-point energies $\tp^{p,q}$ parametrized by arclength produces a locally critical $\S^2$-valued map of the new energy  $\E^{p,q}$. 

For this purpose, we recall that the tangent-point energies are for any $\gamma \in C^{0,1}(\R/\Z, \R^3)$ given by
\[
\tp^{p,q}(\gamma) = \int_{\R /\Z} \int_{\R /\Z} \frac{\left|\gamma'(x) \wedge (\gamma(x)-\gamma(y)) \right|^q}{|\gamma(x)-\gamma(y)|^p} |\gamma'(x)|^{1-q} |\gamma'(y)| \, dy\, dx. 
\]
Now we transform the wedge product in the numerator by Lagrange's identity and the fundamental theorem of calculus to
\begin{align*}
& \left|\gamma'(x) \wedge (\gamma(y)-\gamma(x)) \right|^2 \\
& = \left|\gamma'(x) \wedge (\gamma(x)-\gamma(y)- \gamma'(x)(y-x)) \right|^2\\ 
&= |\gamma'(x)|^2|\gamma(y)-\gamma(x)-\gamma'(x)(y-x)|^2 - (\gamma'(x) \cdot (\gamma(y) - \gamma(x)-\gamma'(x)(y-x)))^2 \\
& = |y-x|^2 \left( |\gamma'(x)|^2 |\avint_{x\triangleright y} \gamma'(z) \, dz - \gamma'(x)|^2 - |\, |\gamma'(x)|^2-\avint_{x\triangleright y} \gamma'(x) \cdot \gamma'(z) \, dz|^2 \right)\\
& = |y-x|^2 \bigg( |\gamma'(x)|^2 |\avint_{x\triangleright y} \gamma'(z)-\gamma'(x) \, dz |^2 - \tfrac 14 | \avint_{x\triangleright y} |\gamma'(x) - \gamma'(z)|^2 \, dz \\
& \qquad + |\gamma'(x)|^2 - \avint_{x\triangleright y} |\gamma'(z)|^2 \, dz|^2 \bigg).
\end{align*}
Additionally, observe that 
\begin{align}\label{eq:reg:rewritefund}
\begin{split}
\frac{|\gamma(y)-\gamma(x)|^2}{|y-x|^2} & = \avint_{x\triangleright y} \avint_{x\triangleright y} \gamma'(s)\cdot \gamma'(t) \, ds \, dt \\
& =  \avint_{x\triangleright y} |\gamma'(z)|^2 \, dz- \frac 12 \avint_{x\triangleright y} \avint_{x\triangleright y} |\gamma'(s)-\gamma'(t)|^2 \, ds \, dt .
\end{split}
\end{align}
Therefore, we can rewrite  $\tp^{p,q}(\gamma)$ in terms of the first derivative $\gamma'$ as 
{\small
	\begin{align*}
	& \tp^{p,q}(\gamma) \\
	& = \int_{\R /\Z} \int_{\R /\Z} \frac{( |\gamma'(x)|^2 |\avint_{x\triangleright y} \gamma'(z)-\gamma'(x) \, dz |^2 - \tfrac 14 | \avint_{x\triangleright y} |\gamma'(x) - \gamma'(z)|^2 \, dz +|\gamma'(x)|^2 - \avint_{x\triangleright y} |\gamma'(z)|^2 \, dz|^2)^{\frac q2}}{|y-x|^{p-q}} \\
	& \hspace{5em}\cdot \left(\avint_{x\triangleright y} |\gamma'(z)|^2 \, dz- \frac 12 \avint_{x\triangleright y} \avint_{x\triangleright y} |\gamma'(s)-\gamma'(t)|^2 \, ds \, dt  \right)^{-\frac p2} |\gamma'(x)|^{1-q} |\gamma'(y)|  \, dy\, dx.
	\end{align*} }

This motivates to introduce the following real-valued energy $\E^{p,q}$ for any maps $u:\R / \Z \rightarrow \R^3$ 
\begin{align*}
\E^{p,q}(u) & := \int_{\R /\Z} \int_{\R /\Z} \bigg(|u(x)|^2 |\avint_{x\triangleright y} u(z) -u(x) \, dz |^2 \\
& \hspace{3em}- \tfrac 14 \left(\avint_{x\triangleright y} |u(x) - u(z)|^2 \, dz + |u(x)|^2 - \avint_{x\triangleright y} |u(z)|^2 \, dz \right)^2 \bigg)^{\frac q2} \\
& \cdot \left(\avint_{x\triangleright y} |u(z)|^2 \, dz-\tfrac 12 \avint_{x\triangleright y} \avint_{x\triangleright y} |u(s)-u(t)|^2 \, ds \, dt \right)^{-\frac p2} \\
&  \cdot  |u(x)|^{1-q} |u(y)|  \frac{\, dy\, dx}{\rho(x,y)^{p-q}}.
\end{align*}

For $\eta \in C^\infty(\R,[0,\infty))$, we set moreover
\begin{align*}
\E^{p,q}_\eta(u) & := \int_{\R /\Z} \int_{\R /\Z} \bigg(|u(x)-\eta(x)(u)_{\RZ}|^2 |\avint_{x\triangleright y} u(z) -u(x) \, dz |^2 \\
& - \tfrac 14 \left(\avint_{x\triangleright y} |u(x) - u(z)|^2 \, dz + |u(x)-\eta(x)(u)_{\RZ}|^2 - \avint_{x\triangleright y} |u(z)-\eta(z)(u)_{\RZ}|^2 \, dz \right)^2 \bigg)^{\frac q2} \\
& \cdot \left(\avint_{x\triangleright y} |u(z)-\eta(z)(u)_{\RZ}|^2 \, dz-\tfrac 12 \avint_{x\triangleright y} \avint_{x\triangleright y} |u(s)-u(t)|^2 \, ds \, dt \right)^{-\frac p2} \\
&  \cdot  |u(x)-\eta(x)(u)_{\RZ}|^{1-q} |u(y)-\eta(y)(u)_{\RZ}|  \frac{\, dy\, dx}{\rho(x,y)^{p-q}}.
\end{align*}

Its localized version for any $D\subset \R/\Z\times\R/\Z$ is denoted by
\begin{align*}
\E_{\eta,D}^{p,q}(u)& := \iint_{D} \bigg(|u(x)-\eta(x)(u)_{\RZ}|^2 |\avint_{x\triangleright y} u(z) -u(x) \, dz |^2 \\
& - \tfrac 14 \left(\avint_{x\triangleright y} |u(x) - u(z)|^2 \, dz + |u(x)-\eta(x)(u)_{\RZ}|^2 - \avint_{x\triangleright y} |u(z)-\eta(z)(u)_{\RZ}|^2 \, dz \right)^2 \bigg)^{\frac q2} \\
& \cdot \left(\avint_{x\triangleright y} |u(z)-\eta(z)(u)_{\RZ}|^2 \, dz-\tfrac 12 \avint_{x\triangleright y} \avint_{x\triangleright y} |u(s)-u(t)|^2 \, ds \, dt \right)^{-\frac p2} \\
&  \cdot  |u(x)-\eta(x)(u)_{\RZ}|^{1-q} |u(y)-\eta(y)(u)_{\RZ}|  \frac{\, dy \, dx}{\rho(x,y)^{p-q}}.
\end{align*}
We observe that the energies $\tp^{p,q}$ and $\E^{p,q}$ coincide for our embeddings of interest accordingly, recalling that $(\gamma')_{\R/\Z}=0$.

\begin{lemma}\label{la:reg:tpqisepq}
	Let $p\in [q+2,2q+1)$ and $q>1$. For any embedding $\gamma:\R/\Z\rightarrow \R^3$ with finite tangent-point energies $\tp^{p,q}$ and constant speed parametrization as well as $\eta \in C^\infty(\R,[0,\infty))$, we have
	\[
	\tp^{p,q}(\gamma) = \E^{p,q}_\eta(\gamma^\prime) = \E^{p,q}(\gamma^\prime),
	\]
	and, in particular, for any subset $D\subset \R/\Z\times \R/\Z$,
	\[
	\iint_{D} \frac{\left|\gamma'(x) \wedge (\gamma(x)-\gamma(y)) \right|^q}{|\gamma(x)-\gamma(y)|^p} |\gamma'(x)|^{1-q} |\gamma'(y)| \, dy\, dx = \E^{p,q}_{\eta,D}(\gamma^\prime).
	\]
\end{lemma}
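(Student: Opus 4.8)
The plan is to recognize that \Cref{la:reg:tpqisepq} amounts, up to measure-theoretic bookkeeping, to the pointwise algebraic identity already carried out in the computation displayed just before the statement, combined with the fact that a closed curve has derivative of vanishing mean. First I would record that, since $\gamma:\RZ\to\R^3$ is a closed Lipschitz curve, $(\gamma')_{\RZ}=\int_{\RZ}\gamma'(z)\,dz=\gamma(1)-\gamma(0)=0$; hence every occurrence of $\eta(x)(\gamma')_{\RZ}$, $\eta(z)(\gamma')_{\RZ}$, $\eta(y)(\gamma')_{\RZ}$ in the integrands defining $\E^{p,q}_\eta$ and $\E^{p,q}_{\eta,D}$ drops out, so that $\E^{p,q}_\eta(\gamma')=\E^{p,q}(\gamma')$ and $\E^{p,q}_{\eta,D}(\gamma')$ does not depend on $\eta\in C^\infty(\R,[0,\infty))$. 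This reduces the lemma to the two identities $\tp^{p,q}(\gamma)=\E^{p,q}(\gamma')$ and, for arbitrary measurable $D\subset\RZ\times\RZ$, $\iint_D\frac{|\gamma'(x)\wedge(\gamma(x)-\gamma(y))|^q}{|\gamma(x)-\gamma(y)|^p}|\gamma'(x)|^{1-q}|\gamma'(y)|\,dy\,dx=\E^{p,q}_{\eta,D}(\gamma')$, both of which will follow from one pointwise statement.

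Next I would fix a pair $(x,y)\in\RZ\times\RZ$ that is \emph{admissible}, meaning $x\ne y$, $x$ and $y$ are not antipodal (so $x\triangleright y$ and $\avint_{x\triangleright y}$ are defined), $\gamma$ is differentiable at both $x$ and $y$, and $\gamma(x)\ne\gamma(y)$. As $\gamma$ is Lipschitz it is differentiable a.e., and as $\gamma$ is an embedding the set $\{\gamma(x)=\gamma(y)\}$ is the diagonal, so the non-admissible pairs form an $\mathcal L^2$-null set. On the admissible set I would check that the integrand of $\tp^{p,q}$ at $(x,y)$ equals the integrand of $\E^{p,q}$ evaluated at $u=\gamma'$. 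This is precisely the displayed chain of equalities preceding the lemma: insert $\gamma'(x)(y-x)$ into the wedge product (permissible since $\gamma'(x)\wedge\gamma'(x)(y-x)=0$), apply Lagrange's identity $|v\wedge w|^2=|v|^2|w|^2-(v\cdot w)^2$, rewrite $\gamma(y)-\gamma(x)-\gamma'(x)(y-x)=\oint_{x\triangleright y}(\gamma'(z)-\gamma'(x))\,dz$ by the fundamental theorem of calculus on the geodesic (with $y-x$ read as the signed geodesic displacement $\oint_{x\triangleright y}1$), use the polarisation identity $\gamma'(x)\cdot\gamma'(z)=\tfrac12(|\gamma'(x)|^2+|\gamma'(z)|^2-|\gamma'(x)-\gamma'(z)|^2)$ to handle $\gamma'(x)\cdot(\gamma(y)-\gamma(x)-\gamma'(x)(y-x))$, and treat the denominator via \eqref{eq:reg:rewritefund}, which gives $|\gamma(x)-\gamma(y)|^2=\rho(x,y)^2\bigl(\avint_{x\triangleright y}|\gamma'|^2-\tfrac12\avint_{x\triangleright y}\avint_{x\triangleright y}|\gamma'(s)-\gamma'(t)|^2\bigr)$. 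Collecting the powers of $\rho(x,y)$ reproduces exactly the $\E^{p,q}$-integrand; in particular it shows that the quantity raised to $q/2$ equals $|\gamma'(x)\wedge(\gamma(x)-\gamma(y))|^2/\rho(x,y)^2\ge0$ and the quantity raised to $-p/2$ equals $|\gamma(x)-\gamma(y)|^2/\rho(x,y)^2>0$, so that both fractional powers — and hence $\E^{p,q}(\gamma')$ and its localizations — are well defined.

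Finally, integrating this pointwise identity over $\RZ\times\RZ$, respectively over an arbitrary measurable $D$, yields the asserted equalities; the constant-speed hypothesis is used only to guarantee $|\gamma'|\equiv c>0$, which renders the factor $|\gamma'(x)|^{1-q}$ (present on both sides) unambiguous. I do not anticipate a genuine obstacle here: the only points requiring care are the sign/geodesic bookkeeping in the fundamental-theorem-of-calculus step — tracking $\sigma(x\triangleright y)$ and identifying $|y-x|$ with $\rho(x,y)$ — and the routine check that the non-admissible pairs form an $\mathcal L^2$-null set.
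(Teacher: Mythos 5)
Your proposal is correct and takes exactly the paper's route: the paper treats the displayed chain of algebraic manipulations preceding the lemma as the proof, and its only stated justification for passing from $\E^{p,q}_\eta$ to $\E^{p,q}$ is the observation that $(\gamma')_{\R/\Z}=0$, which you identify and use in the same way. You merely spell out the measure-theoretic bookkeeping (a.e.\ differentiability, the diagonal and antipodal pairs being $\mathcal L^2$-null) that the paper leaves implicit.
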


It remains to show that locally critical embeddings of the tangent-point energies $\tp^{p,q}$, cf. \Cref{def:wcp}, indeed induce locally critical maps into the sphere $\S^2$ of $\E^{p,q}$. 

The main result of this section is the analogue of \cite[Theorem 2.1]{BRS19}: (Locally) Critical knots $\gamma: \R/\Z \to \R^3$ induce a (locally) $\mathcal{E}^{p,q}_\eta$-critical maps $u: \R/\Z \to \S^2$ by setting $u := \frac{\gamma'}{|\gamma'|}$. 
\begin{theorem}\label{reg:th:critmap}
	Let $p\in [q+2,2q+1)$, $q>1$,  and $\gamma:\R /\Z \rightarrow \R^3$ be a homeomorphism with locally small tangent-point energy $\tp^{p,q}$ around the open interval $B(x_0,r)$, in the sense of \Cref{def:homeotp}, and assume $|\gamma'| \equiv const$. Denote the unit tangent field of $\gamma$ by  $u:=\frac{\gamma'}{|\gamma'|}:\RZ\rightarrow \S^2$. \\
	If $\gamma$ is a locally critical embedding of $\tp^{p,q}$ in $B(x_0,r )$, in the sense of \Cref{def:wcp}, then there exists some $\eta \in C_c^\infty(B(x_0,r),[0,\infty))$ such that the map $u: \R/\Z \to \S^2$ is a critical map of $\E^{p,q}_\eta$ in $B(x_0,r)$ in the class of maps $v: \R/\Z \rightarrow \S^2$.
	
	Namely, we have for any $\varphi \in C^\infty_c(B(x_0,r),\R^3)$, if we set $u_\eps = \frac{u+\eps\varphi}{|u+\eps \varphi|}$,
	\begin{align*}
	\frac{d}{d\eps} \Big |_{\eps = 0}\E_{\eta}^{p,q} (u_\eps) = \frac{d}{d\eps} \Big |_{\eps = 0}\E^{p,q} (u_\eps-\eta (u_\eps)_{\R/\Z}) = 0.
	\end{align*}
\end{theorem}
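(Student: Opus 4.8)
The plan is to adapt the proof of \cite[Theorem~2.1]{BRS19}: convert an admissible $\S^2$-variation of $u$ into an admissible variation of the curve $\gamma$, and transport the $\tp^{p,q}$-criticality of $\gamma$ to $\E^{p,q}_\eta$-criticality of $u$ via the identity of \Cref{la:reg:tpqisepq}. Write $c:=|\gamma'|>0$, so $u=\gamma'/c$ and, since $\gamma$ is closed, $(\gamma')_{\R/\Z}=0$, i.e. $(u)_{\R/\Z}=0$. Fix once and for all a cutoff $\eta\in C_c^\infty(B(x_0,r),[0,\infty))$ with $\int_{\R/\Z}\eta=1$; this $\eta$ serves for every $\varphi$. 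Given $\varphi\in C_c^\infty(B(x_0,r),\R^3)$ and $|\eps|$ small, the projected variation $u_\eps:=\frac{u+\eps\varphi}{|u+\eps\varphi|}$ is a well-defined $\S^2$-valued map, of class $W^{\frac{p-q-1}{q},q}$ on $B(x_0,r)$ (composition of the local $W^{\frac{p-q-1}{q},q}$-map $u$ with a smooth bounded map, perturbed by the smooth $\varphi$) and equal to $u$ off $\supp\varphi$; set $w_\eps:=u_\eps-\eta\,(u_\eps)_{\R/\Z}$, which has zero average on $\R/\Z$.

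The central object is the comparison curve. Pick a base point $a\in\R/\Z\setminus B(x_0,r)$ and define $\gamma_\eps(x):=\gamma(a)+c\int_a^x w_\eps$, the integral taken along $\R/\Z$ from $a$. Since $(w_\eps)_{\R/\Z}=0$ this is a well-defined closed Lipschitz curve with $\gamma_\eps'=c\,w_\eps$; at $\eps=0$ we have $w_0=u$ (using $(u)_{\R/\Z}=0$), hence $\gamma_0=\gamma$; and since $w_\eps\equiv u$ on the connected set $\R/\Z\setminus(\supp\varphi\cup\supp\eta)$, we get $\gamma_\eps\equiv\gamma$ on $\R/\Z\setminus B(x_0,r)$, so $\gamma_\eps-\gamma$ is supported in $B(x_0,r)$ and $\dot\gamma_0:=\tfrac{d}{d\eps}\big|_{\eps=0}\gamma_\eps=c\int_a^{\,\cdot}\bigl(\dot u_0-\eta(\dot u_0)_{\R/\Z}\bigr)$ with $\dot u_0=\varphi-(u\cdot\varphi)u$. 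One then checks that $\gamma_\eps$ is admissible: it is a bi-Lipschitz embedding for $|\eps|$ small — outside $B(x_0,r)$ the bound is inherited from $\gamma$, and inside it follows from \Cref{la:bilip} applied to $\gamma_\eps'$, whose $W^{\frac{p-q-1}{q},q}$-seminorm on $B(x_0,r)$ is small by \Cref{th:gammasobvstp2} and close to that of $\gamma'$ by uniform convergence — and $\tp^{p,q}(\gamma_\eps)<\infty$ by \Cref{la:energyspace} together with the bi-Lipschitz bound (the complement part of the energy equals that of $\gamma$, and the cross terms are bounded since $\supp\varphi\cup\supp\eta$ is compactly contained in $B(x_0,r)$).

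Now apply \Cref{la:reg:tpqisepq} to $\gamma_\eps$ (inspecting its proof, the identity there only uses that the curve is closed) together with the homogeneity of the integrand of $\E^{p,q}$, which has degree $q+2-p$ in its argument: since $\gamma_\eps'=c\,w_\eps$ and $(w_\eps)_{\R/\Z}=0$,
\[
\tp^{p,q}(\gamma_\eps)=\E^{p,q}_\eta(\gamma_\eps')=\E^{p,q}(\gamma_\eps')=c^{\,q+2-p}\,\E^{p,q}\bigl(u_\eps-\eta(u_\eps)_{\R/\Z}\bigr).
\]
Differentiating at $\eps=0$, the claim reduces to $\tfrac{d}{d\eps}\big|_{\eps=0}\tp^{p,q}(\gamma_\eps)=\delta\tp^{p,q}(\gamma,\dot\gamma_0)=0$. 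Here the criticality hypothesis \Cref{def:wcp} is available a priori only for $\varphi\in C_c^\infty(B(x_0,r),\R^3)$, whereas $\dot\gamma_0$ is merely of class $W^{1+\frac{p-q-1}{q},q}$, compactly supported in $B(x_0,r)$. So one first shows that $\psi\mapsto\delta\tp^{p,q}(\gamma,\psi)$ extends continuously from $C_c^\infty(B(x_0,r),\R^3)$ to the closure of this space in $W^{1+\frac{p-q-1}{q},q}$: since $\gamma$ is globally bi-Lipschitz with finite $\tp^{p,q}$ and $\gamma'\in W^{\frac{p-q-1}{q},q}(B(x_0,r))$ by \Cref{th:gammasobvstp2}, the kernel appearing in the first variation stays integrable and $|\delta\tp^{p,q}(\gamma,\psi)|$ is controlled by $[\psi]_{W^{1+(p-q-1)/q,q}}$ and lower-order norms; density then yields $\delta\tp^{p,q}(\gamma,\dot\gamma_0)=0$, hence $\tfrac{d}{d\eps}\big|_{0}\E^{p,q}(u_\eps-\eta(u_\eps)_{\R/\Z})=0$. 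The remaining equality $\tfrac{d}{d\eps}\big|_{0}\E^{p,q}_\eta(u_\eps)=\tfrac{d}{d\eps}\big|_{0}\E^{p,q}(u_\eps-\eta(u_\eps)_{\R/\Z})$ is a direct computation: both functionals are built from the same nonlinear integrand (which, thanks to the uniform bi-Lipschitz control, stays in its smooth regime along the family), they agree at $\eps=0$ since $(u)_{\R/\Z}=0$, and upon differentiating the extra contributions coming from the $\eta$-correction terms in the difference expressions of $\E^{p,q}(\,\cdot-\eta(\cdot)_{\R/\Z})$ cancel, exactly as in the corresponding computation in \cite{BRS19}.

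The main obstacle is this regularity/approximation step: unlike the smooth setting of \cite{BRS19}, here $u$ is not yet known to be even continuous (that is precisely what \Cref{th:mainreg} will eventually provide), so the induced variation $\dot\gamma_0$ is genuinely non-smooth, and the $\tp^{p,q}$-criticality must be propagated to it through a continuity argument for the first variation that rests on the bi-Lipschitz and local Sobolev bounds established in \Cref{s:weaklimitimmersion}; controlling the tangent-point Lagrangian and its variation near the diagonal is the technical heart of the matter. Everything else — closedness and admissibility of $\gamma_\eps$, the energy identifications, and the final bookkeeping identity relating $\E^{p,q}_\eta$ and $\E^{p,q}(\,\cdot-\eta(\cdot)_{\R/\Z})$ — is routine once the construction is in place.
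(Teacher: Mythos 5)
Your approach is essentially the paper's: both arguments construct a compactly supported curve variation $\gamma_\eps$ of $\gamma$ whose first-order derivative variation is $c\bigl(\dot u_0-\eta(\dot u_0)_{\R/\Z}\bigr)$ with $\dot u_0=\varphi-(u\cdot\varphi)u$, invoke the algebraic identity $\tp^{p,q}=\E^{p,q}$ of \Cref{la:reg:tpqisepq}, and then carry the $\tp^{p,q}$-criticality of $\gamma$ over to $\E^{p,q}$-criticality of $u$, taking $\eta$ to be the compactly supported bump of total mass $1$. Your realization of $\gamma_\eps$ is a slightly cleaner variant: you integrate $w_\eps:=u_\eps-\eta(u_\eps)_{\R/\Z}$ directly (the normalization $\int\eta=1$ forces $\gamma_\eps$ to close up and to coincide with $\gamma$ off $B(x_0,r)$), whereas the paper integrates $u_\eps$ and adds a $\theta$-corrector with $\eta=\theta'$; the two have identical linearizations. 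You also carry the constant $c=|\gamma'|$ through correctly via the $(q{+}2{-}p)$-homogeneity of $\E^{p,q}$, whereas the paper's formula $\gamma_\eps(x)=\gamma(0)+\int_0^x u_\eps+\theta(x)\int(\gamma'-u_\eps)$ tacitly normalizes $c=1$ (otherwise $\gamma_0\neq\gamma$). And you are right to flag, where the paper is silent, that the induced variation $\dot\gamma_0$ is only of class $W^{1+\frac{p-q-1}{q},q}_0(B(x_0,r))$ and not $C_c^\infty$, so that applying \Cref{def:wcp} to it requires a continuity/density argument for $\psi\mapsto\delta\tp^{p,q}(\gamma,\psi)$ in this Sobolev class; you outline it but do not carry it out, which is acceptable at this level of detail and is in any case more than the paper offers.

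One loose end you share with the paper's own statement: you assert that $\tfrac{d}{d\eps}\big|_0\E^{p,q}_\eta(u_\eps)$ equals $\tfrac{d}{d\eps}\big|_0\E^{p,q}(u_\eps-\eta(u_\eps)_{\R/\Z})$ and that the $\eta$-correction contributions ``cancel, exactly as in \cite{BRS19}.'' This is not automatic. In $\E^{p,q}_\eta(v)$ the correction $v\mapsto v-\eta(v)_{\R/\Z}$ is inserted only in the $|v(x)|$-, $|v(y)|$- and $\avint|v(z)|^2$-places of the integrand, while the translation-invariant places $\bigl|\avint v-v(x)\bigr|^2$, $\avint|v(x)-v(z)|^2$, $\avint\avint|v(s)-v(t)|^2$ retain the plain $v$; in $\E^{p,q}(v-\eta(v)_{\R/\Z})$ every occurrence is corrected. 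The two functionals agree at $\eps=0$ because $(u)_{\R/\Z}=0$, but their linearizations at $u$ differ by the contribution of $\eta(\dot u_0)_{\R/\Z}$ through those translation-invariant places, which does not obviously vanish since $\eta$ is not constant. What the comparison-curve argument actually delivers is $\delta\E^{p,q}(u)\bigl[\dot u_0-\eta(\dot u_0)_{\R/\Z}\bigr]=0$, i.e.\ the second expression; the first requires a separate check, which neither you nor the paper exhibit. Since the downstream regularity theory (\Cref{la:EulerLagrange} onward) only uses the Euler--Lagrange identity with the $\eta$-corrected test direction, this is an imprecision in the phrasing rather than a fatal gap, but it should be either justified or the statement rephrased.
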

\begin{proof}
We argue similar to the proof of \cite[Theorem 2.1]{BRS19}, but additional problems appear since we only have the criticality in a ball, not globally (which is what makes us to introduce $\eta \in C_c^\infty(B(x_0,r))$, while in \cite[Theorem 2.1]{BRS19} we can choose $\eta \equiv 1$).

For simplicity we assume that $x_0 \in (0,1)$ and that $r \ll \min\{x_0,1-x_0\}$ (namely, we can always assume that $x_0=\frac{1}{2}$ by the periodicity of the problem).

First define for $\varphi\in C^\infty_c (B(x_0,r),\R^3)$ the maps
\[
u_\eps := \frac{u+\eps \varphi}{|u+\eps \varphi|}.
\]
Pick $\theta \in C^\infty(\R)$ with $\theta \equiv 0$ for $x < x_0-r/2$ and $\theta \equiv 1$ for $x \geq x_0+r/2$, and with $|\theta'| \aleq \frac{1}{r}$. We can also assume that $\theta' \geq 0$. Below we will chose $\eta := \theta'$.

Similar to \eqref{eq:testfunction},
	\[
 \gamma_\eps(x) := \gamma(0)+\int_{0}^x u_\eps(z) dz +  \theta(x) \brac{\int_{x_0-r}^{x_0+r} \gamma'(z)-u_\eps(z)\, dz} .
\]
We observe that $\gamma_\eps(x) = \gamma(x)$ for $x < x_0-r$ and $x > x_0+r$. Indeed, for $x < x_0-r$ we have 
\[
 \gamma_\eps(x) = \gamma(0) + \int_0^{x} \gamma'(z) dz = \gamma(x),
\]
and for $x > x_0+r$ we have 
\[
\begin{split}
 \gamma_\eps(x) =& \gamma(0)+\int_{0}^{x_0-r} \gamma'(z) dz + \int_{x_0-r}^{x_0+r} u_\eps(z) dz + \int_{x_0+r}^{x} \gamma'(z) dz 
 +  \theta(x) \brac{\int_{x_0-r}^{x_0+r} \gamma'(z)-u_\eps(z)\, dz} \\
 =& \gamma(0)+\int_{0}^{x_0-r} \gamma'(z) dz + \int_{x_0-r}^{x_0+r} u_\eps(z) dz + \int_{x_0+r}^{x} \gamma'(z) dz 
 +  1\brac{\int_{x_0-r}^{x_0+r} \gamma'(z)-u_\eps(z)\, dz} \\
 =& \gamma(0)+\int_{0}^{x_0-r} \gamma'(z) dz + \int_{x_0+r}^{x} \gamma'(z) dz 
 +  \int_{x_0-r}^{x_0+r} \gamma'(z)\, dz \\
 =&\gamma(0)+\int_{0}^{x} \gamma'(z) dz \\
 =&\gamma(x).
 \end{split}
\]
Moreover, we find for almost every $x$ that
\[
 u_\eps(x) = \gamma'(x) + \eps \brac{\varphi(x)-\langle\varphi(x),  \gamma'(x)\rangle \gamma'(x)} + O(\eps^2),
\]
and, using again the support of $\varphi$,
\[
\begin{split}
 \gamma_\eps(x) =& \  \gamma(x)\\
 &+\eps \brac{\int_{0}^x \brac{\varphi(z)-\langle \varphi(z),\gamma'(z)\rangle \gamma'(z)} dz -  \theta(x) \brac{\int_{0}^{1} \brac{\varphi(z)-\langle \varphi(z),\gamma'(z)\rangle \gamma'(z)}\, dz}} \\
 &+O(\eps^2)
 \end{split}
\]
as well as 
\[
\begin{split}
 \gamma_\eps'(x) =& \gamma'(x)\\
 &+\eps \brac{\varphi(z)-\langle \gamma'(x),\varphi(x)\rangle \gamma'(x) -  \theta'(x) \brac{\int_{0}^{1} \brac{\varphi(z)-\langle \varphi(z),\gamma'(z)\rangle \gamma'(z)}\, dz}} \\
 &+O(\eps^2).
 \end{split}
\]
By \Cref{la:reg:tpqisepq}, we have for small $\eps>0$
\[
 \tp^{p,q} (\gamma_\eps)=\E^{p,q} (\gamma_\eps').
\]
Since by assumption $\gamma$ is critical in $B(x_0,r)$ and $\gamma_\eps$ is a permissible variation we obtain
\[
 0 = \frac{d}{d\eps} \Big |_{\eps = 0}\E^{p,q} (\gamma_\eps') = \delta \E^{p,q} (u)[\psi],
\]
where we observe that 
\[
 \psi := \varphi(x)-\langle \gamma'(x),\varphi(x)\rangle \gamma'(x)-  \theta'(x) \brac{\int_{0}^{1} \brac{\varphi(z)-\langle \varphi(z),\gamma'(z)\rangle \gamma'(z)}\, dz} 
\]
has support in $B(x_0,r)$. Setting $\eta := \theta'$, we conclude the proof.
\end{proof}
\begin{remark}
	The function $\eta$ appearing in the previous theorem might remind of a Lagrange multiplier. However, in our setting $\eta$ can be chosen more freely. The presented construction of $\eta$ in the proof above is only one out of many possibilities to define permissible functions $\eta$.
\end{remark}

%%%%%%%%%%%%%%%%%%%%%%%%%%%%%%%%%%%%%%%%%%%%%%%%%%%%%%%%%%%%%%%%%%%%%%%%%%%%%%%%%%%%%%%%%%%%%%%
%%%%%%%%%%%%%%%%%%%%% Euler Lagrange equation for general case %%%%%%%%%%%%%%%%%%%%%%%%%%%%%%%%
%%%%%%%%%%%%%%%%%%%%%%%%%%%%%%%%%%%%%%%%%%%%%%%%%%%%%%%%%%%%%%%%%%%%%%%%%%%%%%%%%%%%%%%%%%%%%%%

\subsection{Euler-Lagrange equations of \texorpdfstring{$\E^{p,q}$}{E}} \label{s:EulerLagrangeEqu} 

In this section, we derive the Euler-Lagrange equations of $\E_\eta^{p,q}$ for $p\in [q+2,2q+1)$, $q>1$, and suitable $\eta \in C^\infty(\R,[0,\infty))$. We realize that the new energies $\E_\eta^{q,q}$ have a nonlinear and nonlocal Lagrangian. Furthermore, we obtain a decomposition of the Lagrangian into a term of highest order, denoted by $Q$, and terms of lower order, denoted by $R$.

The leading order operator $Q$ on a subset $D \subset \R/\Z \times \R/\Z$ for $u: \R/\Z \to \R^3$ and $\varphi: \R/\Z \to \R^3$ is given by
\begin{align}\label{def:reg:QB}
\begin{split}
& Q^{(p,q)}_{D}(u,\varphi ) := \\
&  q \iint_{D} |\avint_{x\triangleright y} u(z) -u(x) \, dz |^{q-2}  \left(1-\frac 12 \avint_{x\triangleright y} \avint_{x\triangleright y} |u(s)-u(t)|^2 \, ds \, dt \right)^{-\frac{p}{2}} \\
& \qquad \cdot  \avint_{x\triangleright y}\avint_{x\triangleright y} (u(z_1)-u(x))\cdot (\varphi(z_2)-\varphi(x) ) \, dz_1 \, d z_2  \frac{ \, dy\, dx}{\rho(x,y)^{p-q}}\\
=&\frac{q}{2} \iint_{D} a^{q-2}  \left(1-\frac 12 c \right)^{-\frac{p}{2}}  a'(\varphi) \frac{ \, dy\, dx}{\rho(x,y)^{p-q}}.\\
\end{split}
\end{align}
(For the definition of $a,a',c$, etc., see below).

The remainders, which are, as we shall see, ``of lower order'', are given as follows
\begin{align*}
& R^{1,(p,q)}_{D}(u,\varphi) := \tfrac q2 \iint_{D}  \left( (a - \tfrac 14 b^2)^{\frac{q-2}{2}} - a^{\frac{q-2}{2}} \right) \left(1-\tfrac 12 c \right)^{-\frac{p}{2}} a'(\varphi) \frac{dy\, dx}{\rho(x,y)^{p-q}},  \\
& R^{2,(p,q)}_{D}(u,\varphi) :=  - \tfrac q4 \iint_{D}  (a - \tfrac 14 b^2)^{\frac{q-2}{2}} \left(1-\tfrac 12 c \right)^{-\frac{p}{2}} b \, b'(\varphi)  \frac{dy\, dx}{\rho(x,y)^{p-q}},  \\
& R^{3,(p,q)}_{D}(u,\varphi) :=  \tfrac{p}{4} \iint_{D}  (a - \tfrac 14 b^2)^{\frac{q}{2}} \left(1-\tfrac 12 c \right)^{-\frac{p+2}{2}}  c'(\varphi) \frac{dy\, dx}{\rho(x,y)^{p-q}}, \\
& R^{4,(p,q)}_{\eta,D}(u,\varphi) :=  - \tfrac{p}{2} \iint_{D} (a - \tfrac 14 b^2)^{\frac{q}{2}} \left(1-\tfrac 12 c \right)^{-\frac{p+2}{2}}  d'(\varphi) \frac{dy\, dx}{\rho(x,y)^{p-q}}, \\
& R^{5,(p,q)}_{\eta,D}(u,\varphi) :=  q \iint_{D}  (a - \tfrac 14 b^2)^{\frac{q-2}{2}}\left(1-\tfrac 12 c \right)^{-\frac{p}{2}} (a-\tfrac 12 b) \, e'(\varphi) \frac{dy\, dx}{\rho(x,y)^{p-q}} , \\
& R^{6,(p,q)}_{\eta,D}(u,\varphi) := \tfrac q4 \iint_{D} (a - \tfrac 14 b^2)^{\frac{q-2}{2}} \left(1-\tfrac 12 c \right)^{-\frac{p}{2}} b \, d'(\varphi) \frac{dy\, dx}{\rho(x,y)^{p-q}} , \\
& R^{7,(p,q)}_{\eta,D}(u,\varphi) :=  \iint_{D} (a - \tfrac 14 b^2)^{\frac{q}{2}} \left(1-\tfrac 12 c \right)^{-\frac{p}{2}} ((1-q)e'(\varphi) + f'(\varphi)) \frac{dy\, dx}{\rho(x,y)^{p-q}}.
\end{align*}
Here
\begin{align*}
a & := \left|\avint_{x\triangleright y} u(z) -u(x)\, dz \right|^2, \quad a'(\varphi)  = 2 \avint_{x\triangleright y} \avint_{x\triangleright y}  (u(z_1) - u(x)) \cdot(\varphi(z_2) - \varphi(x)) \, dz_1 \, dz_2,\\
b & :=  \avint_{x\triangleright y} |u(z)-u(x)|^2 \, dz, \quad  b'(\varphi)  =  2 \avint_{x\triangleright y} (u(z)-u(x)) \cdot (\varphi(z)-\varphi(x)) \, dz, \\
c & := \avint_{x\triangleright y} \avint_{x\triangleright y} |u(s)-u(t)|^2 \, ds \, dt, \quad  c'(\varphi)  = 2 \avint_{x\triangleright y} \avint_{x\triangleright y} (u(s)-u(t)) \cdot (\varphi(s)-\varphi(t)) \, ds \, dt,
\end{align*}
and 
\begin{align*}
d'(\varphi) &= -2 \avint_{x\triangleright y} \eta(z)u(z) \cdot (\varphi)_{\RZ}\, dz,\\
e'(\varphi) & = -\eta(x)u(x) \cdot (\varphi)_{\RZ}, \\
f'(\varphi) &= -\eta(y)u(y) \cdot (\varphi)_{\RZ}.
\end{align*}

Note that in case of considering the entire domain $D= \R/\Z \times \R/\Z$, we drop the label $D$ in the definition of $Q^{(p,q)}_{D}(u,\varphi )$ and the remainders $R^{k,(p,q)}_{ D}(u,\varphi)$ and $R^{k,(p,q)}_{\eta, D}(u,\varphi)$.

\begin{lemma}(Euler-Lagrange equations)\label{la:EulerLagrange}
	Let $p\in[q+2,2q+1)$, $q>1$, and $u:\RZ\rightarrow \S^2$ with  $\int_{\RZ} u = 0$ be a locally critical map of $\E_\eta^{p,q}$ around the interval $B(x_0,r)$ in the class of maps $v: \RZ \rightarrow \S^2$ and let $\eta \in C_c^\infty(B(x_0,r),[0,\infty))$. Then for any test function $\varphi\in W^{\frac{p-q-1}{q}, q}_{0}(B(x_0,r),\R^3)$, which is also tangential, i.e. $\varphi \in T_u\S^2$, if we set $u_\eps = u+\eps\varphi$, it holds 
	\[
	\frac{d}{d\eps} \Big |_{\eps = 0}\E_\eta^{p,q} (u_\eps)  = Q^{(p,q)}(u,\varphi) + \sum_{k=1}^3 R^{k,(p,q)}(u,\varphi) + \sum_{k=4}^7 R_{\eta}^{k,(p,q)}(u,\varphi) =0.
	\] 
\end{lemma}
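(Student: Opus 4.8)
The plan is to simply compute $\frac{d}{d\eps}\big|_{\eps=0}\mathcal{E}^{p,q}_\eta(u+\eps\varphi)$ by differentiating the integrand of $\mathcal{E}^{p,q}_\eta$ term by term under the integral sign, and then to identify the resulting pieces with the operators $Q^{(p,q)}$ and $R^{k,(p,q)}$ defined above. The criticality of $u$ around $B(x_0,r)$ in the class of $\S^2$-valued maps, together with the fact that $\mathcal{E}^{p,q}_\eta$ coincides with $\mathcal{E}^{p,q}$ up to the mean-value corrector (which is why $d',e',f'$ enter), then forces the sum to vanish for every admissible tangential $\varphi$. Before carrying out the differentiation, I would set up the bookkeeping: write the integrand of $\mathcal{E}^{p,q}_\eta(v)$ for a general $v$ as a product of four factors,
\[
\left(A(v) - \tfrac14 B(v)^2\right)^{q/2},\qquad \left(1 - \tfrac12 C(v)\right)^{-p/2},\qquad |v(x)-\eta(x)(v)_{\R/\Z}|^{1-q},\qquad |v(y)-\eta(y)(v)_{\R/\Z}|,
\]
where $A(v)=|v(x)|^2\,|\avint_{x\triangleright y} v(z)-v(x)\,dz|^2$, $B(v)=\avint_{x\triangleright y}|v(x)-v(z)|^2\,dz + |v(x)-\eta(x)(v)_{\R/\Z}|^2 - \avint_{x\triangleright y}|v(z)-\eta(z)(v)_{\R/\Z}|^2\,dz$, and $C(v)=\avint_{x\triangleright y}\avint_{x\triangleright y}|v(s)-v(t)|^2\,ds\,dt$, all evaluated at $v=u$ when $\eps=0$. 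Then one uses the Leibniz rule: the derivative is the sum of four contributions, each obtained by differentiating one factor and evaluating the other three at $u$.

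Next I would compute each of the four factor-derivatives at $\eps=0$, using $|u|\equiv 1$, $\int_{\R/\Z}u=0$, and $\eta\in C_c^\infty(B(x_0,r))$. Differentiating $|v(x)|^2$ inside $A$ gives the $e'(\varphi)$-type term (through the substitution $v(x)=u(x)+\eps\varphi(x)$ one picks up $2u(x)\cdot\varphi(x)$, but since $|u|=1$ and $\varphi\in T_u\S^2$ this actually vanishes; the genuine $e'$-contributions come from the $\eta$-corrector pieces in $B$ and in the $|v(x)-\eta(x)(v)_{\R/\Z}|^{1-q}$ factor, via $(v)_{\R/\Z}\mapsto\eps(\varphi)_{\R/\Z}$). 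Differentiating $|\avint v-v(x)|^2$ gives $a'(\varphi)$; differentiating $B^2$ gives $b\,b'(\varphi)$ plus the $d'$ and $e'$ corrector terms; differentiating $(1-\tfrac12 C)^{-p/2}$ gives the $c'(\varphi)$ term with the extra power $-(p+2)/2$; differentiating the last two factors $|v(x)-\eta(x)(v)_{\R/\Z}|^{1-q}$ and $|v(y)-\eta(y)(v)_{\R/\Z}|$ produces the $(1-q)e'(\varphi)+f'(\varphi)$ term and part of the $d'$ terms. Collecting: the $a^{q/2-1}\,a'(\varphi)$ piece splits as $Q^{(p,q)}$ (the ``$a^{(q-2)/2}$'' part) plus $R^{1,(p,q)}$ (the difference $(a-\tfrac14 b^2)^{(q-2)/2}-a^{(q-2)/2}$), exactly as written in \eqref{def:reg:QB} with the factor $\tfrac{q}{2}$ absorbed appropriately; the $b\,b'$ piece is $R^{2,(p,q)}$; the $c'$ piece is $R^{3,(p,q)}$; and the $\eta$-corrector pieces assemble into $R^{4,\dots,7,(p,q)}_\eta$. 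This is bookkeeping-heavy but mechanical once the four-factor product structure is fixed.

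The one genuine analytic point — and the main obstacle — is the justification of differentiation under the integral sign, i.e. that $\eps\mapsto\mathcal{E}^{p,q}_\eta(u+\eps\varphi)$ is differentiable at $0$ with derivative given by the pointwise $\eps$-derivative of the integrand. Here one must check that (i) for $|\eps|$ small the modified curve $\gamma_\eps$ (equivalently $u+\eps\varphi$ after normalization, as in \Cref{reg:th:critmap}) stays uniformly bi-Lipschitz on $B(x_0,r)$, so that the factor $(1-\tfrac12 C(u+\eps\varphi))^{-p/2}$ stays bounded — this follows from \Cref{th:gammasobvstp2} and \Cref{la:bilip} applied on the small ball, using that the local tangent-point energy is small; (ii) away from the diagonal the integrand is smooth and bounded with $\eps$-uniform dominating functions, and near the diagonal one uses the Lipschitz bound on $\varphi$ together with the finiteness $[\gamma']_{W^{\frac{p-q-1}{q},q}(B(x_0,r))}<\infty$ (again from \Cref{th:gammasobvstp2}) to produce an integrable majorant uniform in $\eps$ — concretely, each difference quotient $\avint(u-u(x))$, $\avint|u-u(x)|^2$, etc., is controlled by the Gagliardo difference $|\gamma'(x)-\gamma'(z)|$ after averaging, and the singular kernel $\rho(x,y)^{-(p-q)}$ is integrated against these in a way that reproduces the $W^{\frac{p-q-1}{q},q}$-seminorm. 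Once the dominated-convergence argument is in place, the Leibniz computation from the previous paragraph applies verbatim and, invoking the hypothesis $\frac{d}{d\eps}\big|_{\eps=0}\mathcal{E}^{p,q}_\eta(u+\eps\varphi)=0$ from local criticality (note $u+\eps\varphi$ need not be $\S^2$-valued, but since $\varphi\in T_u\S^2$ the normalization correction is $O(\eps^2)$ and does not affect the first variation, cf. \Cref{reg:th:critmap}), we obtain $Q^{(p,q)}(u,\varphi)+\sum_{k=1}^3 R^{k,(p,q)}(u,\varphi)+\sum_{k=4}^7 R^{k,(p,q)}_\eta(u,\varphi)=0$, which is the claim.
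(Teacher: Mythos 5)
Your proposal is essentially the paper's proof: the paper writes the integrand of $\E^{p,q}_\eta$ as $F(a,b,c,d,e)\,e^{1-q}f$ with $F(a,b,c,d,e)=(e^2a-\tfrac14(b+e^2-d)^2)^{q/2}(d-\tfrac12 c)^{-p/2}$, differentiates under the integral sign by the product rule (using $d(0)=e(0)=f(0)=1$, $(u)_{\R/\Z}=0$, and $u\cdot\varphi\equiv0$ to evaluate $a'(0),\ldots,f'(0)$), and collects the resulting pieces into $Q^{(p,q)}$ and $R^{1,(p,q)},\ldots,R^{7,(p,q)}_\eta$ exactly as you sketch — in particular your split $\tfrac q2 a^{(q-2)/2}a'$ into $Q$ plus the difference term $R^1$ is the same device the paper uses. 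Your additional care about dominated convergence and about why the unnormalized perturbation $u+\eps\varphi$ gives the same first variation as the normalized one (since $\varphi\in T_u\S^2$ makes the correction $O(\eps^2)$) is sound and fills in points the paper treats purely formally; the only small slip is that the factor multiplying $|\avint_{x\triangleright y}v(z)-v(x)\,dz|^2$ in the first piece of the integrand is $|v(x)-\eta(x)(v)_{\R/\Z}|^2$ rather than $|v(x)|^2$, but since you later attribute the $e'$-contributions correctly to the $\eta$-corrector, this does not affect your argument.
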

\begin{proof}
	Let us recall the definition of $\E_\eta^{p,q}$ first by
	\begin{align*}
	\E_\eta^{p,q}(u) & = \int_{\R/\Z} \int_{\R /\Z} \bigg(|u(x)-\eta(x)(u)_{\RZ}|^2 |\avint_{x\triangleright y} u(z) -u(x) \, dz |^2 \\
	& - \frac 14 \left(\avint_{x\triangleright y} |u(x) - u(z)|^2 \, dz + |u(x)-\eta(x)(u)_{\RZ}|^2 - \avint_{x\triangleright y} |u(z)-\eta(z)(u)_{\RZ}|^2 \, dz \right)^2 \bigg)^{\frac q2} \\
	& \cdot \left(\avint_{x\triangleright y} |u(z)-\eta(z)(u)_{\RZ}|^2 \, dz-\frac 12 \avint_{x\triangleright y} \avint_{x\triangleright y} |u(s)-u(t)|^2 \, ds \, dt \right)^{-\frac p2} \\
	&  \cdot  |u(x)-\eta(x)(u)_{\RZ}|^{1-q} |u(y)-\eta(y)(u)_{\RZ}|  \frac{\, dy\, dx}{\rho(x,y)^{p-q}}.
	\end{align*}
	We set 
	\[
	F(a,b,c,d,e) := (e^2 a - \tfrac 14 (b + e^2 - d)^2)^{\frac q2} \left(d-\tfrac 12 c \right)^{-\frac{p}{2}},
	\]
	which implies
	\begin{align*}
	\E^{p,q}(u) = \int_{\R/\Z} \int_{\R /\Z} F(a(0),b(0),c(0),d(0),e(0)) \ e(0)^{1-q} \, f(0) \frac{\, dy\, dx}{\rho(x,y)^{p-q}},
	\end{align*}
	where
	\begin{align*}
	a(\eps) & := \left|\avint_{x\triangleright y} u_\eps(z) -u_\eps(x)\, dz \right|^2, \\
	b(\eps) & :=  \avint_{x\triangleright y} |u_\eps(z)-u_\eps(x)|^2 \, dz, \\
	c(\eps) & := \avint_{x\triangleright y} \avint_{x\triangleright y} |u_\eps(s)-u_\eps(t)|^2 \, ds \, dt, \\
	d(\eps) & := \avint_{x\triangleright y} |u_\eps(z)- \eta(z)(u_\eps)_{\RZ}|^2 \, dz ,\\
	e(\eps) & := |u_\eps (x) - \eta(x) (u_\eps)_{\RZ}|  ,\\
	f(\eps) & := |u_\eps (y) - \eta(y) (u_\eps)_{\RZ}|.
	\end{align*}
	First we note that $d(0)=e(0)=f(0)=1$ since $|u|\equiv 1$ and $(u)_{\RZ}=0$.
	Furthermore, observe that
	\begin{align*}
	a'(0) & \equiv a'(0) (u,\varphi) = 2 \avint_{x\triangleright y} \avint_{x\triangleright y}  (u(z_1) - u(x)) \cdot(\varphi(z_2) - \varphi(x)) \, dz_1 \, dz_2, \\
	b'(0) & \equiv b'(0) (u,\varphi) =  2 \avint_{x\triangleright y} (u(z)-u(x)) \cdot (\varphi(z)-\varphi(x)) \, dz,\\
	c'(0) & \equiv  c'(0) (u,\varphi) = 2 \avint_{x\triangleright y} \avint_{x\triangleright y} (u(s)-u(t)) \cdot (\varphi(s)-\varphi(t)) \, ds \, dt,
	\end{align*}
	and due to $u \cdot \varphi \equiv 0$ and $(u)_{\RZ}=0$
	\begin{align*}
	& d'(0) \equiv d'(0) (u,\varphi, \eta) = -2 \avint_{x\triangleright y} \eta (z)u(z) \cdot (\varphi)_{\RZ}\, dz, \\
	&  e'(0)  \equiv e'(0)(u,\varphi, \eta) = -\eta (x)u(x) \cdot (\varphi)_{\RZ}, \\
	&  f'(0) \equiv f'(0) (u,\varphi, \eta) = -\eta (y)u(y) \cdot (\varphi)_{\RZ}.
	\end{align*}
	
	Hence, we obtain  by the product rule 
	\begin{align*}
	& \frac{d}{d\eps}\bigg|_{\eps=0} \int_{\R/\Z} \int_{\R/\Z} F(a(\eps),b(\eps),c(\eps),d(\eps),e(\eps) ) \ d(\eps)^{1-q} \, e(\eps) \frac{dy\, dx}{\rho(x,y)^{p-q}}  \\
	& = \tfrac q2 \int_{\R/\Z} \int_{\R/\Z}   a(0)^{\frac{q-2}{2}} \left(1-\tfrac 12 c(0) \right)^{-\frac{p}{2}} a'(0) \frac{dy\, dx}{\rho(x,y)^{p-q}}  \\
	& \quad + \tfrac q2 \int_{\R/\Z} \int_{\R/\Z}  \left( (a(0) - \tfrac 14 b(0)^2)^{\frac{q-2}{2}} - a(0)^{\frac{q-2}{2}} \right) \left(1-\tfrac 12 c(0) \right)^{-\frac{p}{2}} a'(0) \frac{dy\, dx}{\rho(x,y)^{p-q}}  \\
	& \quad - \tfrac q4 \int_{\R/\Z} \int_{\R/\Z}  (a(0) - \tfrac 14 b(0)^2)^{\frac{q-2}{2}} \left(1-\tfrac 12 c(0) \right)^{-\frac{p}{2}} b(0) \, b'(0)  \frac{dy\, dx}{\rho(x,y)^{p-q}}  \\
	& \quad + \tfrac{p}{4} \int_{\R/\Z} \int_{\R/\Z}  (a(0) - \tfrac 14 b(0)^2)^{\frac{q}{2}} \left(1-\tfrac 12 c(0) \right)^{-\frac{p+2}{2}}  c'(0) \frac{dy\, dx}{\rho(x,y)^{p-q}} \\
	& \quad - \tfrac{p}{2} \int_{\R/\Z} \int_{\R/\Z}  (a(0) - \tfrac 14 b(0)^2)^{\frac{q}{2}} \left(1-\tfrac 12 c(0) \right)^{-\frac{p+2}{2}}  d'(0) \frac{dy\, dx}{\rho(x,y)^{p-q}} \\
	& \quad + q \int_{\R/\Z} \int_{\R/\Z}   (a(0) - \tfrac 14 b(0)^2)^{\frac{q-2}{2}}\left(1-\tfrac 12 c(0) \right)^{-\frac{p}{2}} (a(0)-\tfrac 12 b(0)) \, e'(0) \frac{dy\, dx}{\rho(x,y)^{p-q}}  \\
	& \quad + \tfrac q4 \int_{\R/\Z} \int_{\R/\Z}  (a(0) - \tfrac 14 b(0)^2)^{\frac{q-2}{2}} \left(1-\tfrac 12 c(0) \right)^{-\frac{p}{2}} b(0) \, d'(0) \frac{dy\, dx}{\rho(x,y)^{p-q}}  \\
	& \quad + \int_{\R/\Z} \int_{\R/\Z}  (a(0) - \tfrac 14 b(0)^2)^{\frac{q}{2}} \left(1-\tfrac 12 c(0) \right)^{-\frac{p}{2}} ((1-q)e'(0) + f'(0)) \frac{dy\, dx}{\rho(x,y)^{p-q}}    .
	\end{align*}
\end{proof}

%\begin{remark}[Case $q=2$]
%	Observe that $R_1^{(p,q)}(u,\varphi) = 0$ in the case of $q=2$. 
%\end{remark}

\begin{remark}\label{rm:reg:boundedfactorc}
	For a given homeomorphism $\gamma:\R /\Z \rightarrow \R^3$ with locally small tangent-point energy $\tp^{p,q}$ and its unit tangent field $u:\RZ\rightarrow \S^2$, we introduce the following abbreviation
	\[
	k(x,y) := 1-\tfrac 12 \avint_{x\triangleright y} \avint_{x\triangleright y} |u(s)-u(t)|^2 \, ds \, dt = 1-\tfrac 12 c(0).
	\]
	Note that in all terms of the Euler-Lagrange equation either $k(x,y)^{-\frac p2}$ or $k(x,y)^{-\frac{p+2}{2}}$ appears as a factor. The motivation behind this definition is the observation that $k(x,y)^{-r}$ for any $r>0$ is bounded: On the one hand it is easy to see that 
	\[
	1-\tfrac 12 \avint_{x\triangleright y} \avint_{x\triangleright y} |u(s)-u(t)|^2 \, ds \, dt \leq 1.
	\]
	On the other hand, there exists a constant $c>0$ such that 
	\[
	0 < c \leq  1-\tfrac 12 \avint_{x\triangleright y} \avint_{x\triangleright y} |u(s)-u(t)|^2 \, ds \, dt.
	\]
	The latter can be shown by recalling that  $u$ denotes the unit tangent field of $\gamma$ and by applying the fundamental theorem of calculus as in \eqref{eq:reg:rewritefund} as well as the global bi-Lipschitz continuity of $\gamma$ due to \Cref{th:weaklimitareweakimm}, from which we conclude that
	\[
	\begin{split}
	& 1 - \tfrac 12 \avint_{x\triangleright y} \avint_{x\triangleright y} |\gamma'(s)-\gamma'(t)|^2 \, ds \, dt  = \frac{|\gamma(y)-\gamma(x)|^2}{|y-x|^2} \geq (1-\eps)^2 > 0
	\end{split}
	\]
	for any $\eps>0$ small and $x\neq y$. 
\end{remark}

As next steps we are going to show that $Q$ is indeed the leading order operator and the remainder $R$ are of ``lower order''.
Namely, in \Cref{pr:Qequiv} we essentially show that $Q(u,\varphi)$ controls the Sobolev-norm $[u]_{W^{\frac{p-q-1}{q},q}(B(\rho))}^{q-1}$ for a good choice of $\varphi \in C_c^\infty(B_\rho)$, in particular, whenever $B_\rho$ is a ball compactly contained in $B(x_0,r)$ and $[\varphi]_{W^{\frac{p-q-1}{q},q}(\R)} \leq 1$.
Then, \Cref{pr:reg:estremainderwedge} shows that each of the remainder terms $R$ essentially satisfy the following estimate
\begin{equation}\label{eq:remaindergoal}
	|R^{k,(p,q)} (u,(\varphi u\wedge)_{ij})|\aleq   [u]_{W^{\tilde{q},q}(B_{2\rho})}^{\tilde{q}}  +   \sum_{l=1}^\infty 2^{-\sigma l}[u]_{W^{\frac{p-q-1}{q},q}(B_{2^{l+1}\rho})}^{q-1} +  C(r,u) \rho^\sigma
\end{equation}
for some $\tilde{q} > q-1$ and some $\sigma > 0$. Such terms on the right-hand side can absorbed by an iteration argument, as discussed in the proof of \Cref{th:mainreg}.

Next we show that the leading order term  $Q^{p,q}$ controls the Sobolev norm.

\begin{proposition}\label{pr:Qequiv}
	Let $p\in [ q+2,2q+1)$, $q>1$, and $\gamma:\R /\Z \rightarrow \R^3$ be a homeomorphism with locally small tangent-point energy $\tp^{p,q}$ around the interval $B(x_0,r)$, in the sense of \Cref{def:homeotp}. Furthermore, denote the unit tangent field of $\gamma$ by  $u:\RZ\rightarrow \S^2$ such that $\int_{\RZ} u = 0$, let $y_0\in B(x_0,\tfrac r2)$, and choose $\rho>0$ such that $B_\rho := B(y_0,\rho) \subset B(x_0,r)$.
	Then we have
	\begin{align*}
	[u]_{W^{\frac{p-q-1}{q}, q}(B_\rho)}^q & \aleq \iint_{{B_{\rho}}^2} k(x,y)^{-\frac p2} 
	\frac{ |\avint_{x\triangleright y} u(z) -u(x)\, dz |^{q}}{\rho(x,y)^{p-q}} \, dy\, dx
	 \\
	& \approx Q^{(p,q)}_{B_\rho\times B_\rho}(u,u) %\aleq [u]_{W^{\frac{1}{q}, q}(\R/\Z)}^q  
	\end{align*}
	with constants only depending on $p$ and $q$.
\end{proposition}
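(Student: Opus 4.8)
The plan is to prove the two assertions in turn. For the lower bound, the starting point is the algebraic identity already used in the proof of \Cref{la:gammasobvstp1}: since $|u|\equiv 1$ on $\RZ$ (recall $u=\gamma'/|\gamma'|$ and $\gamma$ has constant-speed parametrization), we have for $x,y\in B_\rho$
\[
 \avint_{x\triangleright y} \bigl(u(z)-u(x)\bigr)\,dz \;=\; \frac{\gamma(y)-\gamma(x)}{|\gamma'|\,\rho(x,y)} - u(x),
\]
and the identity \eqref{eq:reg:rewritefund} shows that the tangential part $u(x)\cdot\avint_{x\triangleright y}(u(z)-u(x))\,dz$ equals $-\tfrac12\avint_{x\triangleright y}|u(x)-u(z)|^2\,dz$, which is quadratically small in the oscillation of $u$. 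Therefore
\[
 \Bigl|\avint_{x\triangleright y} u(z)-u(x)\,dz\Bigr|^2
 \;\geq\; \Bigl|\avint_{x\triangleright y} u(z)-u(x)\,dz - \langle u(x),\cdot\rangle u(x)\Bigr|^2
\]
only up to the square of that quadratic term; more directly, one lower-bounds $|\avint_{x\triangleright y}(u(z)-u(x))\,dz|^q$ by $c_q\,|\avint_{x\triangleright y}(u(z)-u(x))\,dz - u(x)\tfrac12\avint|u(x)-u(z)|^2|^q - C_q(\dots)^{2q}$ using the elementary inequality $(a-b)^r\geq c_r a^r - C_r b^r$, exactly as in \Cref{la:gammasobvstp1}. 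Now $\avint_{x\triangleright y} u(z)\,dz - u(x) - (\text{quadratic correction}) = \frac{\gamma(y)-\gamma(x)-\gamma'(x)\rho(x,y)\sigma(x\triangleright y)}{|\gamma'|\rho(x,y)}$ up to the corrector, and by \Cref{la:id1} its $q$-th power integrated against $\rho(x,y)^{-(p-q)}$ over $B_\rho\times B_\rho$ is comparable to $[\gamma']_{W^{\frac{p-q-1}{q},q}(B_\rho)}^q = |\gamma'|^q[u]_{W^{\frac{p-q-1}{q},q}(B_\rho)}^q$. The quadratic corrector term, after applying \Cref{la:id2}, the Sobolev embedding \Cref{la:sob1}, and the smallness of $[\gamma']_{W^{\frac{p-q-1}{q},q}(B(x_0,r))}$ coming from \Cref{def:homeotp} via \Cref{th:gammasobvstp2}, is absorbed into the left-hand side (this is the same absorption mechanism as in the proof of \Cref{th:gammasobvstp2}). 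Since $k(x,y)^{-p/2}$ is bounded below by a positive constant by \Cref{rm:reg:boundedfactorc}, the factor $k(x,y)^{-p/2}$ is harmless for the lower bound, giving
\[
 [u]_{W^{\frac{p-q-1}{q},q}(B_\rho)}^q \;\aleq\; \iint_{B_\rho^2} k(x,y)^{-\frac p2}\,\frac{|\avint_{x\triangleright y} u(z)-u(x)\,dz|^q}{\rho(x,y)^{p-q}}\,dy\,dx.
\]

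For the comparison with $Q^{(p,q)}_{B_\rho\times B_\rho}(u,u)$, I would specialize the definition \eqref{def:reg:QB} to $\varphi=u$: then
\[
 a'(u) = 2\avint_{x\triangleright y}\avint_{x\triangleright y}(u(z_1)-u(x))\cdot(u(z_2)-u(x))\,dz_1\,dz_2 = 2\Bigl|\avint_{x\triangleright y} u(z)-u(x)\,dz\Bigr|^2,
\]
so that, after also noting $(1-\tfrac12 c)^{-p/2}=k(x,y)^{-p/2}$, the factor $a^{q-2}$ in $Q$ combines with $\tfrac12 a'(u)=a$ to give exactly $a^{q/2}=|\avint_{x\triangleright y}u(z)-u(x)\,dz|^q$ up to the constant $q$. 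Hence $Q^{(p,q)}_{B_\rho\times B_\rho}(u,u)$ equals $q$ times the integral in the display above, which establishes the stated $\approx$ with constants depending only on $p,q$.

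The main obstacle I anticipate is the absorption step in the lower bound: one must verify that the ``quadratic corrector'' double integral $\iint_{B_\rho^2}\frac{\avint_{x\triangleright y}|u(x)-u(z)|^{2q}\,dz}{\rho(x,y)^{p-q}}\,dy\,dx$, which by \Cref{la:id2} is $\approx[u]_{W^{\frac{p-q-1}{2q},2q}(B_\rho)}^{2q}$, can genuinely be absorbed on the left. This requires the local smallness hypothesis built into \Cref{def:homeotp} (and transported to a $W^{\frac{p-q-1}{q},q}$-smallness bound by \Cref{th:gammasobvstp2}), together with the Sobolev inequality $[u]_{W^{\frac{p-q-1}{2q},2q}(B_\rho)}\aleq (\diam B_\rho)^{\frac{p-q-2}{2q}}[u]_{W^{\frac{p-q-1}{q},q}(B_\rho)}\leq[u]_{W^{\frac{p-q-1}{q},q}(B_\rho)}$ from \Cref{la:sob1}; one then chooses the ambient smallness parameter so that the resulting factor $C[u]_{W^{\frac{p-q-1}{q},q}}^{q}$ in front of $[u]_{W^{\frac{p-q-1}{q},q}}^q$ is $\leq\tfrac12$. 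A secondary technical point is that the fundamental-theorem-of-calculus identities must be carried out with the geodesic conventions $\avint_{x\triangleright y}$, $\sigma(x\triangleright y)$ of the excerpt; since $B_\rho\subset B(x_0,r)$ with $r$ small, every $x,y\in B_\rho$ are joined by a unique short geodesic and the computations reduce to the Euclidean ones in \Cref{la:gammasobvstp1} verbatim.
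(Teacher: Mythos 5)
Your identification $Q^{(p,q)}_{B_\rho\times B_\rho}(u,u)=q\iint_{B_\rho^2}k^{-p/2}|\avint_{x\triangleright y}u-u(x)|^q/\rho(x,y)^{p-q}$ is correct and is exactly how the paper handles the ``$\approx$'': testing $Q$ with $\varphi=u$ collapses the double average to $|\avint(u-u(x))|^2$, which combines with the factor $|\avint(u-u(x))|^{q-2}$.

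For the first inequality, however, you have imported machinery from \Cref{la:gammasobvstp1} that is both unnecessary and, as written, incorrect. You assert that $\avint_{x\triangleright y}u(z)\,dz-u(x)$ equals $\frac{\gamma(y)-\gamma(x)-\gamma'(x)(y-x)}{|\gamma'|\,|x-y|}$ only ``up to the corrector.'' In fact they are equal exactly (up to the orientation sign $\sigma(x\triangleright y)$): since $\gamma'=|\gamma'|u$ with $|\gamma'|$ constant and $\oint_{x\triangleright y}\gamma'=\gamma(y)-\gamma(x)$, one has $\avint_{x\triangleright y}u-u(x)=\frac{\sigma(x\triangleright y)(\gamma(y)-\gamma(x))-\gamma'(x)\rho(x,y)}{|\gamma'|\,\rho(x,y)}$ with no quadratic correction whatsoever. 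The ``corrector'' in \Cref{la:gammasobvstp1} appears only because that lemma compares the \emph{wedge} $|\gamma'(x)\wedge(\gamma(y)-\gamma(x))|$ — the component of the difference quotient normal to $u(x)$ — with the full quotient. But $Q^{(p,q)}$ involves the full vector $\avint u-u(x)$, not its normal component, so no Lagrange identity, no decomposition into normal and tangential parts, and no absorption are needed. Applying \Cref{la:id1} to $\gamma$ already gives $\iint_{B_\rho^2}\frac{|\avint_{x\triangleright y}u-u(x)|^q}{\rho(x,y)^{p-q}}\,dy\,dx\aeq[u]_{W^{\frac{p-q-1}{q},q}(B_\rho)}^q$ unconditionally, and $k(x,y)^{-p/2}\geq1$ (since $k\leq1$, cf.\ \Cref{rm:reg:boundedfactorc}) lets you insert the weight at no cost.

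The paper's own argument is lighter still: it bypasses \Cref{la:id1} and uses only the pointwise triangle inequality $|u(y)-u(x)|\leq|u(y)-\avint_{x\triangleright y}u|+|\avint_{x\triangleright y}u-u(x)|$, raised to the $q$-th power and symmetrized in $x\leftrightarrow y$. Your absorption step is problematic also on its own terms: the proposition claims constants depending only on $p,q$, while absorption requires $[u]_{W^{\frac{p-q-1}{q},q}(B_\rho)}$ to lie below a threshold, and \Cref{def:homeotp} only guarantees locally $\eps$-small energy for \emph{some} $\eps$ without promising that $\eps$ is below an absorption threshold. Since the estimate is in fact unconditional, the absorption route should be dropped entirely.
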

\begin{proof}
	We begin with recalling the definition of the main term $Q_{B_\rho\times B_\rho}^{(p,q)}$ in \eqref{def:reg:QB} and test it with $u$, such that the expression simplifies to
	\begin{align*}
	& Q_{B_\rho\times B_\rho}^{(p,q)}(u,u)  \\
	& =  q \iint_{B_\rho^2}  |\avint_{x\triangleright y} u(z) -u(x) \, dz |^{q-2} \avint_{x\triangleright y}\avint_{x\triangleright y} (u(z_1)-u(x))\cdot (u(z_2)-u(x) ) \, dz_1 \, d z_2  \\
	& \hspace{5em} \cdot k\left(x,y\right)^{-\frac{p}{2}} \frac{ \, dy\, dx}{\rho(x,y)^{p-q}} \\
	& = q \iint_{B_\rho^2} k\left(x,y\right)^{-\frac{p}{2}} \frac{ |\avint_{x\triangleright y} u(z) -u(x) \, dz |^{q}}{\rho(x,y)^{p-q}} \, dy\, dx.
	\end{align*}
	Note that the factor $k\left(x,y \right)^{-\frac{p}{2}}$ is strictly positive and bounded by \Cref{rm:reg:boundedfactorc}.
	Furthermore, we have 
	\[
	|u(y)-u(x)|\leq |u(y)- \avint_{y\triangleright x} u(z) \, dz| + |\avint_{x\triangleright y} u(z) \, dz - u(x)|,
	\]
	which implies by the previous arguments 
	\begin{align*}
	& [u]_{W^{\frac{p-q-1}{q}, q}(B_\rho)}^q = \iint_{{B_\rho}^2} \frac{|u(y) - u(x)|^q }{\rho(x,y)^{p-q}} \, dy \, dx \\
	& \aleq \iint_{{B_\rho}^2} \frac{|\avint_{x\triangleright y} u(z) - u(x) \, dz|^q }{\rho(x,y)^{p-q}} \, dy \, dx + \iint_{{B_\rho}^2} \frac{|\avint_{y\triangleright x} u(z) - u(y) \, dz|^q }{\rho(x,y)^{p-q}}  \, dy \, dx \\
	& \aleq \iint_{{B_\rho}^2} k\left(x,y\right)^{-\frac{p}{2}} \frac{|\avint_{x\triangleright y} u(z) - u(x) \, dz|^q }{\rho(x,y)^{p-q}} \, dy \, dx \approx Q_{B_\rho\times B_\rho}^{(p,q)}(u,u) .
	\end{align*}
\end{proof}

It remains to obtain the ``lower order''-property for the the remainder terms $R$.
We recall that for any $v\in \R^3$ the linear map $v\wedge: \R^3 \to \R^3$ is represented by the $\R^{3\times 3}$-matrix 
\[
v\wedge = \begin{pmatrix}
0 & -v_3 & v_2 \\
v_3 & 0 & - v_1 \\
-v_2 & v_1 & 0
\end{pmatrix}.
\]

\begin{proposition}\label{pr:reg:estremainderwedge}
	Let $p\in [q+2,2q+1)$, $q\geq2$, and $\gamma:\R /\Z \rightarrow \R^3$ be a homeomorphism with locally small tangent-point energy $\tp^{p,q}$ around the interval $B(x_0,r)$, in the sense of \Cref{def:homeotp}. We denote the unit tangent field of $\gamma$ by  $u:\RZ\rightarrow \S^2$ such that $\int_{\RZ} u = 0$ and take $\eta \in C_c^\infty(B(x_0,\tfrac r2),[0,\infty))$. Furthermore, let $y_0\in B(x_0,\tfrac r2)$, choose $\rho>0$ such that $B(y_0,4\rho)\subset B(x_0,r)$, and define $B_\rho:=B(y_0,\rho)$. Let $\varphi \in C_c^\infty(B_\rho,\R)$ such that $[\varphi]_{W^{\frac{p-q-1}{q},q}(\R)} \leq 1$. Then the following holds for any $j=1,2,3$: 
	
	For the first remainder $k=1$ in case of $2 < q < 4$, we have
	\begin{align*}
	& |R^{1,(p,q)} (u,(\varphi u\wedge)_{ij})| \aleq 
	[u]_{W^{\frac{p-q-1}{q},q}(B_{2\rho})}^{2q-3}   +   \sum_{l=1}^\infty 2^{-(l+1)\frac{p-q}{q}}[\tilde{u}]_{W^{\frac{p-q-1}{q},q}(B_{2^{l+1}\rho})}^{q-1}  +  \rho \,  r^{-(p-q+1)}.
	\end{align*}
	For $q=2$ we have $R^{1,(p,q)} \equiv 0$. 
	For the remainders $k=1$ in case of $q\geq 4$ and $k=2,3$ for any $q\geq 2$, we have 
	\begin{align*}
	& |R^{k,(p,q)} (u,(\varphi u\wedge)_{ij})|\aleq   [u]_{W^{\frac{p-q-1}{q},q}(B_{2\rho})}^{q+1}  +   \sum_{l=1}^\infty 2^{-(l+1)\frac{p-q}{q}}[\tilde{u}]_{W^{\frac{p-q-1}{q},q}(B_{2^{l+1}\rho})}^{q-1} +  \rho \,  r^{-(p-q+1)},
	\end{align*}
	and for the remaining terms $k=4,5,6,7$ with $q\geq 2$, we have 
	\begin{align*}
	\sum_{k=4}^7 |R^{k,(p,q)}_\eta (u,(\varphi u\wedge)_{ij})| \aleq  \rho \left(\E^{p,q}(u) +  r^{-(p-q)} + r^{\frac{p-q-2}{2}}[u]_{W^{\frac{p-q-1}{q},q}(B(x_0,r))}^q \right),
	\end{align*}
	where $\tilde{u}$ denotes a $W^{\frac{p-q-1}{q},q}$-extension of $u|_{B(x_0,r)}$ from $B(x_0,r)$ to $\R$ as discussed in \Cref{reg:rm:extension}.
	The constants in these inequalities depend on the $p$ and $q$ and may also depend on global properties of $u$ such as $\|u\|_{L^\infty}$, $[u]_{W^{\frac{p-q-1}{q},q}(B(x_0,r))}$ and $[\tilde u]_{W^{\frac{p-q-1}{q},q}(\R)}$.
\end{proposition}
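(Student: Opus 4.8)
Write $s := \tfrac{p-q-1}{q}$ for brevity. By \Cref{la:EulerLagrange} the relevant test object is the fixed matrix entry $\psi := (\varphi u\wedge)_{ij}$, which is the product of the smooth compactly supported bump $\varphi$ with an entry of $u\wedge$ (itself $\pm$ a component of $u$); the fractional Leibniz rule thus gives $\psi \in W^{s,q}_0(B_\rho)$ with $[\psi]_{W^{s,q}} \aleq 1 + [u]_{W^{s,q}(B_\rho)}$ and $\|\psi\|_{L^\infty}$ controlled, while — this will be decisive — its increments split as $|\psi(z)-\psi(x)| \aleq \|\varphi\|_{C^1}\,\rho(x,y) + \|\varphi\|_{L^\infty}|u(z)-u(x)|$ for $z$ on the geodesic $x\triangleright y$, the first (``Lipschitz'') summand gaining a full power $\rho(x,y)$ against the kernel $\rho(x,y)^{-(p-q)}$ and the second being a genuine $u$-increment. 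For the amplitude of each remainder I first absorb the factors $k(x,y)^{-p/2}$, $k(x,y)^{-(p+2)/2}$ into $O(1)$ by \Cref{rm:reg:boundedfactorc}, and then use $a \leq b \aleq \avint_{x\triangleright y}|u(z)-u(x)|^2\,dz$, $c \aleq \avint_{x\triangleright y}\avint_{x\triangleright y}|u(s)-u(t)|^2\,ds\,dt$, the bound $(a-\tfrac14 b^2)^{q/2}\leq b^{q/2}$, and for the bracket in $R^1$ the estimate $|(a-\tfrac14 b^2)^{(q-2)/2} - a^{(q-2)/2}|\aleq b^{q-2}$, handling $2<q<4$ via $|s_1^\alpha-s_2^\alpha|\leq|s_1-s_2|^\alpha$ with $\alpha=\tfrac{q-2}{2}\leq 1$ and $q\geq4$ via $|s_1^\alpha-s_2^\alpha|\aleq|s_1-s_2|(s_1^{\alpha-1}+s_2^{\alpha-1})$ together with $a\leq b$. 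For $q=2$ this bracket is identically $1-1=0$, which is exactly why $R^{1,(p,q)}\equiv 0$ there.

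\textbf{Domain decomposition for $R^1,R^2,R^3$.} Since $\psi$ is supported in $B_\rho$, the factors $a'(\psi),b'(\psi),c'(\psi)$ vanish unless $x$ or a geodesic point of $x\triangleright y$ lies in $B_\rho$. I split $\R/\Z\times\R/\Z$ into the \emph{local block} $B_{2\rho}\times B_{2\rho}$, the \emph{dyadic tails} $T_l := \big(B_\rho\times(B_{2^{l+1}\rho}\setminus B_{2^l\rho})\big)\cup\big((B_{2^{l+1}\rho}\setminus B_{2^l\rho})\times B_\rho\big)$, $1\leq l\lesssim\log_2(r/\rho)$, and the \emph{global rest} $G$ where $\rho(x,y)\ageq r$. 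On the local block I move the $\avint$-averages outside by Jensen, insert the increment bounds above, and apply Hölder in $(x,y)$ with exponent $q$ on the $u$- and $\psi$-increment factors and $L^\infty$ on the bounded ones; combining the Lipschitz part of the $\psi$-increment (which turns the kernel $\rho(x,y)^{-(p-q)}$ effectively into $\rho(x,y)^{-(p-q-1)}$, and $p-q-1<q$) with the amplitude power $b^{q-2}$, resp.\ $b^{q/2}$, yields $[u]_{W^{s,q}(B_{2\rho})}^{2q-3}$ for $R^1$ with $2<q<4$ and $[u]_{W^{s,q}(B_{2\rho})}^{q+1}$ for $R^1$ with $q\geq4$ and for $R^2,R^3$, times factors bounded by global quantities of $u$ and $\varphi$ that are absorbed into the constant. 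On a tail $T_l$ one argument of the variation factor lies at distance $\sim 2^l\rho$ from $\supp\psi$, so the $\psi$-increment degenerates to $\pm\psi(\cdot)$ and, since $\supp\psi$ occupies only a $2^{-l}$-fraction of the relevant geodesics, one gains a factor $2^{-l}$; with $\rho(x,y)^{-(p-q)}\sim(2^l\rho)^{-(p-q)}$ and Hölder run over $B_{2^{l+1}\rho}$ using the extension $\tilde u$ of \Cref{reg:rm:extension}, this gives $2^{-(l+1)(p-q)/q}[\tilde u]_{W^{s,q}(B_{2^{l+1}\rho})}^{q-1}$. On $G$ the global bi-Lipschitz bound of \Cref{th:weaklimitareweakimm} controls $|\gamma(x)-\gamma(y)|^{-1}$ hence the amplitude, and with $\supp\psi\subset B_\rho$ and $\rho(x,y)\ageq r$ the remaining integral is convergent and of size $\aleq\rho\,r^{-(p-q+1)}$.

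\textbf{The terms $R^4$--$R^7$.} Here $d'(\psi),e'(\psi),f'(\psi)$ each carry the mean value $(\psi)_{\R/\Z}=\int_{B_\rho}(\varphi u\wedge)_{ij}$, so $|(\psi)_{\R/\Z}|\leq\|\varphi\|_{L^\infty}|B_\rho|\aleq\rho$, and the cutoff $\eta$ supported in $B(x_0,r/2)$. Factoring out this $\rho$, the remaining integrals have the form $\iint (a-\tfrac14 b^2)^{q/2}k(x,y)^{-p/2}\rho(x,y)^{-(p-q)}\,(\eta\text{-localized weight})\,dy\,dx$; bounding $(a-\tfrac14 b^2)^{q/2}k(x,y)^{-p/2}\aleq b^{q/2}\leq\avint_{x\triangleright y}|u(z)-u(x)|^q\,dz$ and distinguishing $\rho(x,y)\lesssim r$ from $\rho(x,y)\gtrsim r$ recovers, on $\supp\eta$, the local seminorm contribution $r^{(p-q-2)/2}[u]_{W^{s,q}(B(x_0,r))}^q$ (the scaling factor $r^{(p-q-2)/2}$ coming from comparing the homogeneity of $\rho(x,y)^{-(p-q)}$ with that of the $s$-Gagliardo kernel, i.e.\ a Sobolev embedding), the far-apart contribution $r^{-(p-q)}$, and — from the purely non-local part, where the $\eta$-localization is simply discarded — the full density $\E^{p,q}(u)=\tp^{p,q}(\gamma)<\infty$. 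Summing gives $\sum_{k=4}^{7}|R^{k,(p,q)}_\eta(u,(\varphi u\wedge)_{ij})|\aleq\rho\big(\E^{p,q}(u)+r^{-(p-q)}+r^{(p-q-2)/2}[u]_{W^{s,q}(B(x_0,r))}^q\big)$.

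\textbf{Main obstacle.} The crux is the exponent bookkeeping in the domain decomposition: one must arrange that the \emph{local} contribution appears with a power of $[u]_{W^{s,q}(B_{2\rho})}$ strictly exceeding $q-1$ (this superlinearity in the small quantity is exactly what makes the term absorbable in the Dirichlet-growth iteration behind \Cref{pr:decayest}, in view of \Cref{pr:Qequiv}), while every \emph{non-local} contribution carries the geometric weight $2^{-(l+1)(p-q)/q}$ and only the power $q-1$. Getting these to match requires the Lipschitz/rough splitting of the $\psi$-increment — without the gained power $\rho(x,y)$ the bare increments cannot absorb $\rho(x,y)^{-(p-q)}$ since $p-q\geq 2$ — and a careful ordering of Jensen and Hölder so that the averages $\avint_{x\triangleright y}$ are removed before the singular integral is estimated; the case $q=2$ of $R^1$ is the only genuinely degenerate one, and there the amplitude vanishes identically, so nothing is needed.
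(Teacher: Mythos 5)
There is a genuine gap in your handling of the first-derivative factor $a'(\varphi)$ appearing in $R^{1},R^{2},R^{3}$. You propose to split the increment of $\psi := (\varphi\,u\wedge)_{\cdot j}$ as $|\psi(z)-\psi(x)|\aleq \|\varphi\|_{C^1}\,\rho(x,y) + \|\varphi\|_{L^\infty}|u(z)-u(x)|$ and then let the ``Lipschitz'' piece gain a power of $\rho(x,y)$. But the only normalization you are given is $[\varphi]_{W^{(p-q-1)/q,q}(\R)}\leq 1$, which does \emph{not} control $\|\varphi\|_{C^1}$; and in the critical scale-invariant case $p=q+2$ the exponent is $s=\tfrac1q$, so $W^{1/q,q}(\R)\not\hookrightarrow L^\infty$ and even $\|\varphi\|_{L^\infty}$ is uncontrolled. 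Since these estimates must hold uniformly over all such $\varphi$ (they are fed into the decay iteration with $\varphi$ ranging over a unit ball of $W^{1/q,q}$), your constants would blow up, and the proof collapses precisely in the scale-invariant regime the whole paper is about.

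What actually makes the argument work is an algebraic cancellation that your outline omits. Writing $w := (u\wedge)_{\cdot j} = u\wedge e_j$ (which is a \emph{tangent} vector to $\S^2$ along $u$), the paper decomposes
\[
\psi(z_2)-\psi(x) = \bigl(\varphi(z_2)-\varphi(x)\bigr)\,w(z_2) + \varphi(x)\bigl(w(z_2)-w(x)\bigr),
\]
and observes that the contribution of the second summand to $a'(\psi)$ vanishes identically:
\[
\avint_{x\triangleright y}\!\!\avint_{x\triangleright y}\!\bigl(u(z_1)-u(x)\bigr)\cdot\bigl((u(z_2)-u(x))\wedge e_j\bigr)\,dz_1\,dz_2
= \det\!\Bigl(\avint_{x\triangleright y}\!(u-u(x)),\ \avint_{x\triangleright y}\!(u-u(x)),\ e_j\Bigr) = 0,
\]
since the first two columns coincide. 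After this cancellation only the genuine $\varphi$-increment $\avint_{x\triangleright y}|\varphi(z_2)-\varphi(x)|\,dz_2$ survives, and this \emph{is} controlled by $[\varphi]_{W^{s,q}}$ via Jensen, H\"older with $1=\tfrac{q-2}{q}+\tfrac1q+\tfrac1q$, and \Cref{la:id2} --- no $C^1$ or $L^\infty$ bound on $\varphi$ is ever needed. Your bookkeeping of the dyadic tails, the far-field $\rho\,r^{-(p-q+1)}$ bound, and the $\rho$ factor for $R^4$--$R^7$ via $|(\psi)_{\R/\Z}|\aleq\|\varphi\|_{L^1}\aleq\rho$ are all in the right spirit, but without the determinant cancellation the central step is unjustified and, in the scale-invariant case, unjustifiable.
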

A very similar statement holds for $q \in (1,2)$, only the tail's exponents
change, but one still obtains an estimate as in \eqref{eq:remaindergoal}. We leave the details to the reader.
\begin{proof}[Proof of \Cref{pr:reg:estremainderwedge}]
	We begin with making some general observations on factors appearing in the integrands of the remainder terms. 
	First the remainders contain a factor of the form $k(x,y)^{-r}$, for some $r>0$, which is strictly positive and bounded by \Cref{rm:reg:boundedfactorc}. Next we consider the factors $(a-\tfrac 14 b^2)^{\frac{q-2}{2}}$ and $(a-\tfrac 14 b^2)^{\frac{q}{2}}$ appearing for the cases $k=2,\ldots,7$. Recall that  
	\[
	\begin{split}
	a = | \avint_{x\triangleright y} u(z)-u(x) \, dz|^2 \quad \text{and} \quad b =  \avint_{x\triangleright y} |u(z)-u(x)|^2 \, dz.
	\end{split}
	\]
	As we have by \eqref{eq:simplecomp}
	\begin{align}\label{eq:reg:a<=14b}
	\tfrac 12 \avint_{x\triangleright y} |u(z)-u(x)|^2 \, dz = |\avint_{x\triangleright y} u(x) \cdot ( u(z)-u(x)) \, dz| \leq | \avint_{x\triangleright y} u(z)-u(x) \, dz|,
	\end{align}
	it follows $ 0\leq  a- \tfrac 14 b^2$ and thus the factors can be simplified, when necessary, to
	\[
	\begin{split}
	(a-\tfrac 14 b^2)^{\frac{q-2}{2}} & \leq a^{\frac{q-2}{2}} =  \left| \avint_{x\triangleright y} u(z)-u(x) \, dz\right|^{q-2},  \\
	(a-\tfrac 14 b^2)^{\frac{q}{2}} & \leq a^{\frac{q}{2}} =  \left| \avint_{x\triangleright y} u(z)-u(x) \, dz\right|^q.
	\end{split}
	\]
	For the case $k=1$ we have to study the factor  $((a - \tfrac 14 b^2)^{\frac{q-2}{2}} - a^{\frac{q-2}{2}})$ instead. For $q=2$ this factor equals $0$, whereas for $2<q<4$ it can be estimated by 
	\[
	\begin{split}
	(a - \tfrac 14 b^2)^{\frac{q-2}{2}} - a^{\frac{q-2}{2}} & \leq  2^{2-q} b^{q-2} = 2^{2-q} \left(\avint_{x\triangleright y} |u(z)-u(x)|^2 \, dz\right)^{q-2},
	\end{split}
	\]
	since $x^r - y^r  \leq (x-y)^r$ for any $x\geq y \geq 0$ and $0\leq r<1$, and for $q\geq 4$ by 
	\[
	(a - \tfrac 14 b^2)^{\frac{q-2}{2}} - a^{\frac{q-2}{2}} \aleq b^2  a^{\frac{q-4}{2}} = \left(\avint_{x\triangleright y} |u(z)-u(x)|^2 \, dz \right)^2 \left| \avint_{x\triangleright y} u(z)-u(x) \, dz\right|^{q-4}
	\]
	since we have $|x^r - y^r| \leq c(r) |x-y| |x^{r-1} + y^{r-1}|$ for any $x \geq y \geq 0$ and $r\geq 0$.
	Last but not least we note that the test functions $(\varphi u\wedge)_{ij}$ are tangential, i.e. $(\varphi u\wedge)_{ij} \in T_u\S^2$, for any $j=1,2,3$ due to the fact that $u\wedge u = 0$.
	
	After these first considerations, we proceed with studying the full remainder terms.

	 We begin with the first remaining term $R^{1,(p,q)}_D$ for some general $D\subset \R/\Z\times\R/\Z$ in the case of $2\leq q<4$ and gain with help of the introductory comments on occurring factors for any $j=1,2,3$
	\[
	\begin{split}
	&|R^{1,(p,q)}_D(u,(\varphi u\wedge)_{ij})| \\
	& \aleq \iint_{D} \left|\avint_{x\triangleright y} |u(z)-u(x)|^2 \, dz \right|^{q-2} \\
	& \cdot\left|\left(\avint_{x\triangleright y} u(z_1)-u(x) \, dz_1 \right) \cdot \left(\avint_{x\triangleright y} (\varphi u\wedge)_{ij}(z_2)-(\varphi u\wedge)_{ij}(x) \, dz_2 \right)  \right| \frac{dx\, dy}{\rho(x,y)^{p-q}}.
	\end{split}
	\]
	If we consider the exemplary case $j=1$, the dot product reads as
	\[
	\begin{split}
	&\left(\avint_{x\triangleright y} u(z_1)-u(x) \, dz_1 \right) \cdot \left(\avint_{x\triangleright y} (\varphi u\wedge)_{i1}(z_2)-(\varphi u\wedge)_{i1}(x) \, dz_2 \right) \\ 
	& = \left(\avint_{x\triangleright y} u_2(z_1)-u_2(x) \, dz_1 \right) \left(\avint_{x\triangleright y} \varphi(z_2) \, u_3(z_2)-\varphi(x) \, u_3(x) \, dz_2 \right) \\
	& + \left(\avint_{x\triangleright y} u_3(z_1)-u_3(x) \, dz_1 \right) \left(\avint_{x\triangleright y} \varphi(z_2) \, (-u_2)(z_2)-\varphi (x)\,  (-u_2)(x) \, dz_2 \right).
	\end{split}
	\]
	By adding $0=\varphi(x)\, u_3(z_2) - \varphi(x)\, u_3(z_2) $ to second factor in the first summand, respectively, $0=\varphi(x)\, (-u_2)(z_2) - \varphi(x)\, (-u_2)(z_2) $ to the second factor in the second summand, the dot product turns to
	\[
	\begin{split}
	&\left(\avint_{x\triangleright y} u(z_1)-u(x) \, dz_1 \right) \cdot \left(\avint_{x\triangleright y} (\varphi u\wedge)_{i1}(z_2)-(\varphi u\wedge)_{i1}(x) \, dz_2 \right) \\ 
	%%%% weitere Teilschritte bei Bedarf %%%% 
	%	& = - \left(\avint_{x\triangleright y} u_2(z_1)-u_2(x) \, dz_1 \right) \left(\avint_{x\triangleright y} (\varphi(z_2) - \varphi(x))  \, u_3(z_2) \, dz_2 \right) \\
	%	& - \left(\avint_{x\triangleright y} u_2(z_1)-u_2(x) \, dz_1 \right) \varphi(x) \,  \left(\avint_{x\triangleright y}   u_3(z_2) - u_3(x)  \, dz_2 \right) \\
	%	& + \left(\avint_{x\triangleright y} u_3(z_1)-u_3(x) \, dz_1 \right) \varphi(x) \left(\avint_{x\triangleright y}  u_2(z_2)-  u_2(x) \, dz_2 \right)\\
	%	& + \left(\avint_{x\triangleright y} u_3(z_1)-u_3(x) \, dz_1 \right) \left(\avint_{x\triangleright y} (\varphi(z_2) - \varphi(x)) \, u_2(z_2) \, dz_2 \right).
	& = \left(\avint_{x\triangleright y} u(z_1)-u(x) \, dz_1 \right) \cdot \left(\avint_{x\triangleright y} (\varphi(z_2) - \varphi(x))  \, (u\wedge)_{i1}(z_2) \, dz_2 \right) \\
	& + \varphi(x) \,  \left(\avint_{x\triangleright y} u(z_1)-u(x) \, dz_1 \right) \cdot \left(\avint_{x\triangleright y}   (u\wedge)_{i1}(z_2) - (u\wedge)_{i1}(x)  \, dz_2 \right).
	\end{split}
	\]
	But since $(u\wedge)_{i1}\in T_u\S^2$, the last summand in the previous equation vanishes.
	Now by the same arguments for $j=2,3$, we conclude 
	\begin{equation}\label{reg:est:remainder1global}
	\begin{split}
	&|R^{1,(p,q)}_D(u,(\varphi u\wedge)_{ij})| \\
	& \aleq \|u\|_{L^\infty} \iint_{D} \left(\avint_{x\triangleright y} |u(z)-u(x)|^2 \, dz \right)^{q-2} \left(\avint_{x\triangleright y} |u(z_1)-u(x)| \, dz_1 \right)  \\
	& \quad \cdot  \left( \avint_{x\triangleright y} |\varphi(z_2) - \varphi(x)|  \, dz_2 \right)  \frac{ dy\, dx}{\rho(x,y)^{p-q}} .
	\end{split}
	\end{equation}
	Using $\|u\|_{L^\infty}\leq 1$, simplifies this inequality.
	Now to take advantage of the local behavior of $u$ and $\varphi$, we split the integration domain into
	\[
	\begin{split}
		& |R^{1,(p,q)}_{\R/\Z\times \R/\Z}| \\
		&  \leq  |R^{1,(p,q)}_{B_{2\rho}\times B_{2\rho}}| +
	|R^{1,(p,q)}_{B_{2\rho}\times (\R/\Z\setminus B_{2\rho})}| + |R^{1,(p,q)}_{(\R/\Z\setminus B_{2\rho}) \times B_{2\rho}} |+ |R^{1,(p,q)}_{(\R/\Z\setminus B_{2\rho})\times (\R/\Z\setminus B_{2\rho})}|.
	\end{split}
	\]
	
	The first term can be estimated by \eqref{reg:est:remainder1global}, H\"older's inequality for $1= \tfrac{q-2}{q} + \tfrac 1q + \tfrac 1q$, Jensen's inequality, and identification \Cref{la:id2} such that for any $j=1,2,3$
	\[
	\begin{split}
	& |R^{1,(p,q)}_{B_{2\rho}\times B_{2\rho}}(u,(\varphi u\wedge)_{ij})|  \\
	& \aleq\iint_{B_{2\rho}^2} \left(\avint_{x\triangleright y} |u(z)-u(x)|^2 \, dz \right)^{q-2} \left(\avint_{x\triangleright y} |u(z_1)-u(x)| \, dz_1 \right)  \\
	& \cdot  \left( \avint_{x\triangleright y} |\varphi(z_2) - \varphi(x)|  \, dz_2 \right)  \frac{ dy\, dx}{\rho(x,y)^{p-q}} \\
	& \aleq  \left(\iint_{B_{2\rho}^2} \frac{\avint_{x\triangleright y} |u(z_0) - u(x)|^{2q} \, dz_0}{\rho(x,y)^{p-q}}\, dy \, dx\right)^{\frac{q-2}{q}}   \left(\iint_{B_{2\rho}^2}  \frac{\avint_{x\triangleright y} |u(z_1) - u(x)|^q \, dz_1}{\rho(x,y)^{p-q}} \, dy\, dx\right)^{\frac 1q} \\
	& \cdot  \left(\iint_{B_{2\rho}^2}  \frac{\avint_{x\triangleright y} |\varphi(z_2) - \varphi(x)|^q \, dz_2}{\rho(x,y)^{p-q}} \, dx\, dy\right)^{\frac 1q} \\
	& \aeq   [u]_{W^{\frac{p-q-1}{2q},2q}(B_{2\rho})}^{2q-4} [u]_{W^{\frac{p-q-1}{q},q}(B_{2\rho})}  [\varphi]_{W^{\frac{p-q-1}{q},q}(B_{2\rho})} \\
	& \aleq  [u]_{W^{\frac{p-q-1}{q},q}(B_{2\rho})}^{2q-3} , 
	\end{split}
	\]
	where we applied the Sobolev embedding \Cref{la:sob1} and the assumption on $\varphi$, i.e. $[\varphi]_{W^{\frac{p-q-1}{q},q}(\R)} \leq 1$ in the last inequality.
	For the second term of the splitting, we subdivide the integration domain $B_{2\rho} \times (\R/\Z\setminus B_{2\rho})$ into $B_{2\rho} \times (B(x_0,r)\setminus B_{2\rho})$, for using the local fractional Sobolev regularity of $u$ in $B(x_0,r)$, and the rest $B_{2\rho} \times (\R/\Z \setminus B(x_0,r))$. At this point recall that  $\tilde{u}$ denotes a $W^{\frac{p-q-1}{q},q}$-extension of $u|_{B(x_0,r)}$ from $B(x_0,r)$ to $\R$, which coincides with $u$ on $B(x_0,r)$ by construction, cf.~\Cref{reg:rm:extension}.
	We then get by inequality \eqref{reg:est:remainder1global} and the disjoint support estimate \Cref{la:RminusBest}
	\begin{equation}\label{reg:eq:tailestforR1}
	\begin{split}
	& |R^{1,(p,q)}_{B_{2\rho}\times (B(x_0,r)\setminus B_{2\rho})}(u,(\varphi u\wedge)_{ij})| \\
	& \aleq \int_{B_{2\rho}} \int_{B(x_0,r) \setminus B_{2\rho}} \left(\avint_{x\triangleright y} |\tilde u(z)- \tilde u(x)|^2 \, dz \right)^{q-2} \left(\avint_{x\triangleright y} |\tilde u(z_1)-\tilde u(x)| \, dz_1 \right)  \\
	& \cdot  \left( \avint_{x\triangleright y} |\varphi(z_2) - \varphi(x)|  \, dz_2 \right)  \frac{ dy\, dx}{\rho(x,y)^{p-q}} \\
	& \aleq  \sum_{l=1}^\infty 2^{- (l+1)\frac{p-q}{q}}[\tilde u]_{W^{\frac{p-q-1}{q},q}(B_{2^l\rho})}^{2q-3}  [\varphi]_{W^{\frac{p-q-1}{q},q}(B_\rho)}	\\
	& \aleq [\tilde u]_{W^{\frac{p-q-1}{q},q}(\R)}^{q-2} \sum_{l=1}^\infty 2^{- (l+1)\frac{p-q}{q}}[\tilde u]_{W^{\frac{p-q-1}{q},q}(B_{2^l\rho})}^{q-1}  [\varphi]_{W^{\frac{p-q-1}{q},q}(\R)} \\
	& \aleq \sum_{l=1}^\infty 2^{- (l+1)\frac{p-q}{q}}[\tilde u]_{W^{\frac{p-q-1}{q},q}(B_{2^l\rho})}^{q-1}  ,
	\end{split}
	\end{equation}
	where the constant depends on $[\tilde u]_{W^{\frac{p-q-1}{q},q}(\R)} < \infty$ as well as  $[\varphi]_{W^{\frac{p-q-1}{q},q}(\R)} \leq 1$.
	By the fact that $\rho(x,y)\geq \tfrac r4$ for $x\in B_{2\rho}$ and $y\in \R/\Z \setminus B(x_0,r)$, we estimate the rest term by \eqref{reg:est:remainder1global} such that
	\begin{equation}\label{reg:eq:roughestimateR1tail}
	\begin{split}
		& |R^{1,(p,q)}_{B_{2\rho}\times (\R/\Z \setminus B(x_0,r))}(u,(\varphi u\wedge)_{ij})| \\
		& \aleq \int_{B_{2\rho}} \int_{\R/\Z \setminus B(x_0,r)} \left(\avint_{x\triangleright y} |u(z)-u(x)|^2 \, dz \right)^{q-2} \left(\avint_{x\triangleright y} |u(z_1)-u(x)| \, dz_1 \right)  \\
		& \cdot  \left( \avint_{x\triangleright y} |\varphi(z_2) - \varphi(x)|  \, dz_2 \right)  \frac{ dy\, dx}{\rho(x,y)^{p-q}} \\
		& \aleq (\tfrac r4)^{-(p-q+1)} \|u\|_{L^\infty}^{2q-3} \|\varphi\|_{L^1}.
	\end{split}
	\end{equation}
	Note that by  $\varphi\in W^{\frac{p-q-1}{q},q}(\R,\R)$ with $\supp\varphi \subset B_\rho$ together with the Sobolev inequalities \Cref{app:thm:classobinequ} and \Cref{app:thm:sobinequ} leads to 
	\begin{equation}\label{reg:eq:phiL1}
		\|\varphi\|_{L^1} \aleq \rho\,  [\varphi]_{W^{\frac{p-q-1}{q},q}(\R)} \aleq \rho.
	\end{equation}
	The estimates on $R^{1,(p,q)}_{B_{2\rho}\times (\R/\Z\setminus B_{2\rho})}$ also work for $R^{1,(p,q)}_{(\R/\Z\setminus B_{2\rho}) \times B_{2\rho}} $ by symmetry.
	In case of $R^{1,(p,q)}_{(\R/\Z\setminus B_{2\rho}) \times (\R/\Z \setminus B_{2\rho})} $, we have to consider 
	\begin{align*}
		(\R/\Z \setminus B_{2\rho})\times(\R/\Z \setminus B_{2\rho}) & = (\R/\Z \setminus B(x_0,r))\times(\R/\Z \setminus B(x_0,r)) \\
		& \quad  \cup (\R/\Z \setminus B(x_0,r))\times(B(x_0,r) \setminus B_{2\rho}) \\
		& \quad  \cup (B(x_0,r) \setminus B_{2\rho})\times(\R/\Z \setminus B(x_0,r))\\
		& \quad \cup  (B(x_0,r) \setminus B_{2\rho})\times  (B(x_0,r) \setminus B_{2\rho}).
	\end{align*}
	$R^{1,(p,q)}_D$ can be estimated for first three domains thereof like in \eqref{reg:eq:roughestimateR1tail}, due to $\supp \varphi \subset B_\rho$, which implies that either $\avint_{x\triangleright y} \varphi(z_2)- \varphi(x)\, dz_2 = 0$ or $\rho(x,y)\geq r$. For the fourth domain we proceed with help of the tail estimate \Cref{la:RminusBest} similar to \eqref{reg:eq:tailestforR1}. Therefore, the statement follows for the first remaining term in the case of $2\leq q<4$.
	
	%%% Other exponents and remainders
	For the range $q \geq 4$ in $R^{1,(p,q)}$ we notice that the same methods work as well, in particular, for $D\subset \R/\Z\times \R/\Z$,
	\begin{equation*}
		\begin{split}
			&|R^{1,(p,q)}_D(u,(\varphi u\wedge)_{ij})| \\
			& \aleq \|u\|_{L^\infty} \iint_{D} \left(\avint_{x\triangleright y} |u(z)-u(x)|^2 \, dz \right)^{2} \left(\avint_{x\triangleright y} |u(z_1)-u(x)| \, dz_1 \right)^{q-3}  \\
			& \quad \cdot  \left( \avint_{x\triangleright y} |\varphi(z_2) - \varphi(x)|  \, dz_2 \right)  \frac{ dy\, dx}{\rho(x,y)^{p-q}} .
		\end{split}
	\end{equation*}
	We only need to change the exponents in H\"older's inequality to  $1=  \frac 2q + \frac{q-3}{q} + \frac 1q$, since we deal with the factor $$\left(\avint_{x\triangleright y} |u(z)-u(x)|^2 \, dz \right)^2 \left| \avint_{x\triangleright y} u(z)-u(x) \, dz\right|^{q-4}$$ instead of $(\avint_{x\triangleright y} |u(z)-u(x)|^2 \, dz )^{q-2}$, and 
	slightly adapt the disjoint support estimate \Cref{la:RminusBest} as well as \eqref{reg:eq:roughestimateR1tail}, to achieve the desired result. 
	
	For the second and third remainder we observe that we have to deal for $j=1,2,3$ with the scalar products 
	\begin{align*}
	\avint_{x\triangleright y} (u(z)-u(x)) & \cdot ((\varphi u\wedge)_{ij}(z)-(\varphi u\wedge)_{ij}(x)) \, dz,\\
	\avint_{x\triangleright y} \avint_{x\triangleright y} (u(s)-u(t)) & \cdot ((\varphi u\wedge)_{ij}(s)-(\varphi u\wedge)_{ij}(t)) \, ds \, dt,  \text{ respectively},
	\end{align*}
	in contrast to 
	\begin{align*}
	\avint_{x\triangleright y} \avint_{x\triangleright y}  (u(z_1) - u(x)) \cdot((\varphi u\wedge)_{ij}(z_2) - (\varphi u\wedge)_{ij}(x)) \, dz_1 \, dz_2.
	\end{align*}
	However, the techniques presented for the first remainder, in particular adding zeros and using $(u\wedge)_{ij} \in T_u\S^2$ for $j=1,2,3$, work out similarly and lead for any $D\subset \R/\Z\setminus \R/\Z$ to
	\begin{equation*}
		\begin{split}
			&|R^{2,(p,q)}_D(u,(\varphi u\wedge)_{ij})| \\
			& \aleq \|u\|_{L^\infty} \iint_{D} \left(\avint_{x\triangleright y} |u(z)-u(x)|^2 \, dz \right) \left(\avint_{x\triangleright y} |u(z_1)-u(x)| \, dz_1 \right)^{q-1}  \\
			& \quad \cdot  \left( \avint_{x\triangleright y} |\varphi(z_2) - \varphi(x)|  \, dz_2 \right)  \frac{ dy\, dx}{\rho(x,y)^{p-q}}
		\end{split}
	\end{equation*}
	as well as
	\begin{equation*}
		\begin{split}
			&|R^{3,(p,q)}_D(u,(\varphi u\wedge)_{ij})| \\
			& \aleq \|u\|_{L^\infty} \iint_{D}  \left(\avint_{x\triangleright y} |u(z_1)-u(x)| \, dz_1 \right)^{q+1} \left( \avint_{x\triangleright y} |\varphi(z_2) - \varphi(x)|  \, dz_2 \right)  \frac{ dy\, dx}{\rho(x,y)^{p-q}} .
		\end{split}
	\end{equation*}
	Hence, by proceeding as for the first remainder, the statement follows for $k=2,3$ from adjusting the exponents used in H\"older's inequality and the tail estimate in \Cref{la:RminusBest}. 
	
	Let us turn the focus to the remainders $k=4,\ldots,7$ now.
	The remainder terms for $k=4$ and $k=7$ are easier to handle since 
	\begin{align}\label{reg:eq:Linfremain}
		\begin{split}
		| \avint_{x\triangleright y} \eta(z)u(z) \cdot ((\varphi u\wedge)_{ij})_{\RZ}\, dz| & \aleq \|\eta\|_{L^\infty} \|u\|^2_{L^\infty} \|\varphi\|_{L^1}, \\
		|\eta(x)u(x) \cdot ((\varphi u\wedge)_{ij})_{\RZ}| & \aleq \|\eta\|_{L^\infty} \|u\|^2_{L^\infty} \|\varphi\|_{L^1}, \\
		|\eta(y)u(y) \cdot ((\varphi u\wedge)_{ij})_{\RZ}| & \aleq \|\eta\|_{L^\infty} \|u\|^2_{L^\infty} \|\varphi\|_{L^1}.
		\end{split}
	\end{align}
	Note that $\|u\|_{L^\infty} \leq 1$, $\|\eta\|_{L^\infty}\aleq 1$, and by \eqref{reg:eq:phiL1} $\|\varphi\|_{L^1} \aleq \rho$.
	We then obtain that the remaining factors emerge to the energy $\E^{p,q}(u)$, in particular
	\begin{align*}
		|R^{k,(p,q)}_\eta (u,(\varphi u\wedge)_{ij})| \aleq  \rho\,  \E^{p,q}(u),
	\end{align*}
	since in case $k=4$ we can extend the integrand with an extra factor $(1-\tfrac 12 c)$ due to its boundedness, cf. \Cref{rm:reg:boundedfactorc}, and in case $k=7$ the necessary factors of the energy are already given. It remains to study the cases $k=5$ and $k=6$, whose factors in the integrand are not necessarily comparable with the energy. We begin with $k=5$ and observe by $\supp \eta \subset B(x_0,\tfrac r2)$ that 
	\[
		\begin{split}
			& R_{\eta}^{5,(p,q)}(u,(\varphi u\wedge)_{ij}) \\
			& = - q \int_{B(x_0,\frac r2)} \int_{\R/\Z}   (a - \tfrac 14 b^2)^{\frac{q-2}{2}}k(x,y)^{-\frac{p}{2}} (a-\tfrac 12 b) \, \left(\eta(x)u(x) \cdot ((\varphi u\wedge)_{ij})_{\RZ}\right) \frac{dy\, dx}{\rho(x,y)^{p-q}} .
		\end{split}
	\]
	By splitting the integration domain into 
	\[
		B(x_0,\tfrac r2) \times \R/\Z = (B(x_0,\tfrac r2) \times B(x_0,r)) \cup (B(x_0,\tfrac r2) \times (\R/\Z \setminus B(x_0,r))),
	\]
	we have for the first domain by the previous comments on the factors appearing in the integrand, inequalities \eqref{reg:eq:Linfremain} and \eqref{reg:eq:phiL1}, Jensen's inequality, identity \Cref{la:id2}, and Sobolev embedding \Cref{la:sob1}, that 
	\begin{align}\label{reg:eq:SobnormforR5}
		\begin{split}
		& |R_{\eta, B(x_0,\tfrac r2) \times B(x_0,r) }^{5,(p,q)}(u,(\varphi u\wedge)_{ij})| \\
		& \aleq \|\eta\|_{L^\infty} \|u\|^2_{L^\infty} \|\varphi\|_{L^1} \left( \int_{B(x_0,r)} \int_{B(x_0, r)}   \frac{\avint_{x\triangleright y} |u(z)-u(x)|^{2q} \, dz}{\rho(x,y)^{p-q}} dy\, dx\right)^{\frac 12}  \\
		& \aleq \rho [u]_{W^{\frac{p-q-1}{2q},2q}(B(x_0,r))}^q\\
		& \aleq \rho \,  r^{\frac{p-q-2}{2}} [u]_{W^{\frac{p-q-1}{q},q}(B(x_0,r))}^q, 
		\end{split}
	\end{align}
	and for the second domain we notice $\rho(x,y) \geq \tfrac r2$ for $x\in B(x_0,\tfrac r2)$ and $y\in \R/\Z \setminus B(x_0,r)$, which leads by \eqref{reg:eq:Linfremain}, estimates like \eqref{reg:eq:roughestimateR1tail}, and \eqref{reg:eq:phiL1} to 
	\begin{align}\label{reg:eq:roughestforR5}
		\begin{split}
		|R_{\eta, B(x_0,\tfrac r2) \times (\R/\Z \setminus B(x_0,r)) }^{5,(p,q)}(u,(\varphi u\wedge)_{ij})| &\aleq (\tfrac r2)^{-(p-q)} \|u\|_{L^\infty}^{q+2} \|\eta\|_{L^\infty} \|\varphi\|_{L^1} \\
		& \aleq  \rho \, r^{-(p-q)} .
		\end{split}
	\end{align}
	In case of $k=6$ we distinguish for the integration domain the cases 
	\begin{align*}
		\R/\Z\times\R/\Z & = B(x_0,r) \times B(x_0,r) \\
		& \quad \cup B(x_0,r) \times (\R/\Z \setminus B(x_0,r)) \\
		& \quad \cup (\R/\Z\setminus B(x_0,r)) \times B(x_0,r) \\
		& \quad \cup (\R/\Z\setminus B(x_0,r)) \times (\R/\Z\setminus B(x_0,r)).
	\end{align*}
	For the first integration domain we gain the same estimate as in \eqref{reg:eq:SobnormforR5} and for the other integration domains 
	we find by $\supp \eta \subset B(x_0,\tfrac r2)$ that either  $\rho(x,y)\geq \tfrac r2$ or 
	\[
		 \avint_{x\triangleright y} \eta(z)u(z) \cdot ((\varphi u\wedge)_{ij})_{\RZ}\, dz = 0.
	\]
	Therefore,  we deduce the same estimate like in \eqref{reg:eq:roughestforR5}.
	All remainder cases got estimated hereby.
\end{proof}

\subsection{Regularity theory for $\mathcal{E}^{q}$-critical points: Proof of \Cref{pr:decayest}}\label{subsec:RegularityTheory}

In this section we finally apply the grand machinery of showing  Hölder regularity for (essentially) fractional harmonic maps, which correspond to the first derivative of our critical knots of interest. We establish a proof for the scale-invariant tangent-point energies, i.e. for $\tp^{p,q}$ with $p=q+2$ and $q\geq 2$, along the lines of \cite{S15} and \cite{BRS19}, but face some major obstacles due to the local definition of critical points for scale-invariant tangent-point energies, cf. \Cref{def:wcp}, on the way. Our main goal is to show the decay estimate \Cref{pr:decayest}.

In order to attain the desired statement, we begin with estimating the Gagliardo semi-norm of $u$ by an operator $\Gamma_{\beta,B}u$ reminding of the Riesz potential, which is introduced in the following.

First we recall that the term containing the highest order in the Euler-Lagrange equation of $\E^q$, $q\geq 2$, for maps $u,\varphi:\R/\Z\rightarrow\R^3$ and some small interval $B\subset\R/\Z$ is given by
\begin{align*}
Q_{B\times B}(u,\varphi) & := Q_{B\times B}^{(q+2,q)}(u,\varphi) \\ % & = \frac q2 \int_{B} \int_{\R/\Z}   a(0)^{\frac{q-2}{2}} \left(1-\tfrac 12 c(0) \right)^{-\frac{q+2}{2}} a'(0)(u,\varphi) \frac{dy\, dx}{\rho(x,y)^{2}} \\
& = q \int_{B}\int_{B}   \left|\avint_{x\triangleright y} u(z) -u(x)\, dz \right|^{q-2} k(x,y)^{-\frac{q+2}{2}} \\
& \hspace{2cm} \cdot \avint_{x\triangleright y} \avint_{x\triangleright y}  (u(z_1) - u(x)) \cdot(\varphi(z_2) - \varphi(x)) \, dz_1 \, dz_2  \frac{dy\, dx}{\rho(x,y)^{2}}.
\end{align*}
Note that for our functions of interest the factor $k(x,y)^{-\frac{q+2}{2}}$ is {strictly positive and} bounded, cf.~\Cref{rm:reg:boundedfactorc}.

As in \cite{S15} and \cite{BRS19}, we now define a vector-valued potential for some $0<\beta<1$ by 
\begin{align}\label{def:GammaQB}
	\begin{split}
	\Gamma_{\beta,B\times B} \, u(z):= & \int_{B} \int_{B}   \left|\avint_{x\triangleright y} u(z) -u(x)\, dz \right|^{q-2} \, k(x,y)^{-\frac{q+2}{2}} \\
	& \quad \cdot   \avint_{x\triangleright y} \avint_{x\triangleright y}  (u(z_1) - u(x)) \, (|z-z_2|^{\beta-1} - |z-x|^{\beta-1}) \, dz_1 \, dz_2  \frac{dy\, dx}{\rho(x,y)^{2}}.
	\end{split}
\end{align}
following the definition of the Riesz potential $\lapin{\beta}$ of order $\beta$, which is defined by
\[
\lapin{\beta} f (x) = \int_{\R}  |z-x|^{\beta -1} f(z) \, dz. 
\]
The inverse of the Riesz potential $\lapin{\beta}$ is called the fractional Laplacian of order $\beta$, which for $\beta \in (0,2)$ has the form
\[
\lapla{\frac \beta 2}  f (x)= c \int_{\R} \frac{f(y)-f(x)}{|x-y|^{1+\beta}} \, dy
\]
for some $c>0$, cf. \cite{Hitchhiker}.
Applied to our situation, we hence observe that 
\begin{align}\label{eq:QRiesz}
Q_{B\times B}(u, \varphi) = c \int_{\R} \Gamma_{\beta,B\times B} u(z)\cdot  \lapla{\frac{\beta}{2}} \varphi(z) \, dz.
\end{align}

Note that  we have 
\begin{equation}\label{eq:reg:RieszopGamma}
\lapin{\alpha} \Gamma_{\beta,B\times B} \,  u =  \Gamma_{\alpha + \beta,B \times B}  \, u 
\end{equation}
for any $\alpha,\beta>0$.

Our first interim result is the following.

\begin{proposition}(Left-hand side estimates)\label{pr:reg:lhsestimate}
	Let $q\geq 2$, $\tfrac 1q - \tfrac 1\p>0$ small, and $\gamma:\R /\Z \rightarrow \R^3$ be a homeomorphism with locally small tangent-point energy $\tp^{q+2,q}$ around  an interval $B(x_0,r)$ in $\R/\Z$, in the sense of definition \Cref{def:homeotp}. Moreover, denote the unit tangent field of $\gamma$ by  $u:\RZ\rightarrow \S^2$ such that $\int_{\RZ} u = 0$ and for any $y_0\in B(x_0,\tfrac r2)$, choose $\rho>0$ such that $B_{2^L\rho}:= B(y_0,2^L\rho) \subset B(x_0,r)$ for large $L\in\N$. 
	
	Then we have for any $\delta>0$
	\begin{align*}
	[u]^q_{W^{\frac{1}{q}, q}(B_\rho)} \aleq & \ [u]_{W^{\frac{1}{q}, q}(B_{2^L\rho})} \|\chi_{B_{2^K\rho}} \Gamma_{\frac 1\p,B_{2^L\rho}\times B_{2^L\rho}}u\|_{L^{\frac{\p}{\p-1} }} +\delta [u]^q_{W^{\frac{1}{q}, q}(B_{2^L\rho})} \\
	&+ C_\delta \left([u]^q_{W^{\frac{1}{q}, q}(B_{2^L\rho})} -[u]^q_{W^{\frac{1}{q}, q}(B_{\rho})}\right)
	\end{align*}
	for any $L,K\in\N$ large enough with $L\gg K$. The constant in this inequality only depends on $q$.
\end{proposition}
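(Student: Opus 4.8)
The plan is to follow the duality method for $W^{1/q,q}$--harmonic maps from \cite{S15} (see also \cite{BRS19}), applied to the leading--order operator $Q:=Q^{(q+2,q)}$ of $\E^{q}$; since $p=q+2$ forces $\tfrac{p-q-1}{q}=\tfrac1q$, this is a purely structural estimate for the left--hand side of the Euler--Lagrange equation and makes no use of criticality of $\gamma$. Throughout I abbreviate $B_j:=B(y_0,2^j\rho)$, so that $B_\rho=B_0$, $B_{2^K\rho}=B_K$, $B_{2^L\rho}=B_L\subset B(x_0,r)$. The starting point is \Cref{pr:Qequiv}, which yields $[u]^q_{W^{1/q,q}(B_\rho)}\aleq Q_{B_\rho\times B_\rho}(u,u)$. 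I would fix a cut--off $\eta\in C_c^\infty(B_K,[0,1])$ with $\eta\equiv1$ on $B_\rho$ and $|\eta'|\aleq(2^K\rho)^{-1}$; since $\eta\equiv1$ on $B_\rho$ and short geodesics between points of $B_\rho$ remain in $B_\rho$, one has $Q_{B_\rho\times B_\rho}(u,u)=Q_{B_\rho\times B_\rho}(u,\eta u)$, and the fact that $\eta u$ is supported in $B_K\subset B(x_0,r)$ guarantees that all integrals below stay inside $B(x_0,r)$, so no ``far--field'' contribution (and no spurious power of $\rho$) will appear.

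Next I would enlarge the domain of the $Q$--integral, writing $Q_{B_\rho\times B_\rho}(u,\eta u)=Q_{B_L\times B_L}(u,\eta u)-\big(Q_{B_L\times B_L}(u,\eta u)-Q_{B_\rho\times B_\rho}(u,\eta u)\big)$. The parenthesis is an integral over $(B_L\times B_L)\setminus(B_\rho\times B_\rho)$ on which, after splitting $\eta u(z_2)-\eta(x)u(x)=\eta(z_2)(u(z_2)-u(x))+(\eta(z_2)-\eta(x))u(x)$, using the boundedness of $k(x,y)^{-(q+2)/2}$ (\Cref{rm:reg:boundedfactorc}), H\"older's and Jensen's inequalities, the identification \Cref{la:id2} and the Sobolev embedding \Cref{la:sob1}, the integrand is controlled by products of $W^{1/q,q}$--seminorms at least one factor of which is taken over the annulus $B_L\setminus B_\rho$ (the $\eta'$--terms being anyway supported in $B_K\setminus B_\rho$); Young's inequality then turns this into $\delta\,[u]^q_{W^{1/q,q}(B_L)}+C_\delta\big([u]^q_{W^{1/q,q}(B_L)}-[u]^q_{W^{1/q,q}(B_\rho)}\big)$, which are precisely the last two terms of the claim. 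For the remaining main term $Q_{B_L\times B_L}(u,\eta u)$ I would use the Riesz--potential reformulation \eqref{eq:QRiesz} with $\beta=\tfrac1\mu$, i.e.\ $Q_{B_L\times B_L}(u,\eta u)=c\int_{\R}\Gamma_{\frac1\mu,\,B_L\times B_L}u\cdot\lapla{\frac1{2\mu}}(\eta u)$, decompose $\Gamma_{\frac1\mu,\,B_L\times B_L}u=\chi_{B_K}\Gamma_{\frac1\mu,\,B_L\times B_L}u+(1-\chi_{B_K})\Gamma_{\frac1\mu,\,B_L\times B_L}u$, estimate the far part (where $\lapla{\frac1{2\mu}}(\eta u)$ is evaluated off $\supp(\eta u)$, hence decays like the tail of the $(-\Delta)^{1/(2\mu)}$--kernel) by a geometric dyadic sum whose prefactor $2^{-\sigma K}$ lies below $\delta$ once $K$ is large, and treat the near part by H\"older's inequality with conjugate exponents $\mu$ and $\mu'=\tfrac{\mu}{\mu-1}$,
\[
\left|\int_{\R}\chi_{B_K}\Gamma_{\frac1\mu,\,B_L\times B_L}u\cdot\lapla{\frac1{2\mu}}(\eta u)\right|\le\big\|\chi_{B_K}\Gamma_{\frac1\mu,\,B_L\times B_L}u\big\|_{L^{\mu'}}\,\big\|\lapla{\frac1{2\mu}}(\eta u)\big\|_{L^{\mu}}.
\]
Finally $\|\lapla{\frac1{2\mu}}(\eta u)\|_{L^\mu}\aeq[\eta u]_{W^{1/\mu,\mu}(\R)}\aleq[u]_{W^{1/q,q}(B_L)}$ by a Leibniz--type estimate for $[\eta u]_{W^{1/\mu,\mu}}$ and the fact that $W^{1/q,q}\hookrightarrow W^{1/\mu,\mu}$ holds at the scaling--invariant borderline (so that only a \emph{single} power of $[u]_{W^{1/q,q}(B_L)}$ survives -- this is why $\mu$ is chosen only slightly above $q$ and the Riesz potential carries the small order $\tfrac1\mu<\tfrac1q$). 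Collecting the three contributions gives the asserted inequality.

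The hard part will be the bookkeeping of the two localization errors above: one must confirm that enlarging the $Q$--domain and truncating $\Gamma_{\frac1\mu,\,B_L\times B_L}u$ to $\chi_{B_K}\Gamma_{\frac1\mu,\,B_L\times B_L}u$ create errors that are genuinely \emph{absorbable}, i.e.\ bounded by $\delta\,[u]^q_{W^{1/q,q}(B_L)}$ plus $C_\delta$ times the \emph{telescoping} quantity $[u]^q_{W^{1/q,q}(B_L)}-[u]^q_{W^{1/q,q}(B_\rho)}$ -- and not merely by the full seminorm $[u]^q_{W^{1/q,q}(B_L)}$, which would be worthless for the Dirichlet--growth iteration towards \Cref{pr:decayest}. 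Pinning this down forces one to use simultaneously the compact support of $\eta$ (so that the $\eta'$--terms live in an annulus swallowed by the telescoping difference), the scale separation $L\gg K$ together with $K$ large (so the dyadic tail sums converge with a prefactor below $\delta$), and the standard decay estimate for $\lapla{\frac1{2\mu}}$ of a compactly supported function. Everything else is a routine assembly of \Cref{pr:Qequiv}, \eqref{eq:QRiesz}, \Cref{rm:reg:boundedfactorc}, and the H\"older, Jensen and Young inequalities, in the spirit of \cite{S15,BRS19}.
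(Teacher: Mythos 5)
Your overall plan mirrors the paper's: start from \Cref{pr:Qequiv}, pass to the $B_{2^L\rho}$-scale, feed a localized test function into $Q$, use the Riesz-potential reformulation \eqref{eq:QRiesz}, cut at scale $2^K\rho$, and control the dyadic tails. However, your choice of test function $\eta u$ (with $\eta$ a plain cutoff) breaks the argument at precisely the spot you flag as delicate. The $\eta'$-contribution --- the term $(\eta(z_2)-\eta(x))u(x)$ in your Leibniz splitting --- is controlled, after H\"older and Jensen, by $[u]^{q-1}_{W^{1/q,q}(B_{2^L\rho})}\left(\iint\frac{|\eta(x)-\eta(z)|^q|u(x)|^q}{|x-z|^2}\,dx\,dz\right)^{1/q}$. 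Since $|u|\equiv1$ on $B(x_0,r)$, the inner integral is exactly $[\eta]^q_{W^{1/q,q}}$, a scale-invariant quantity of order $1$ for a linear ramp satisfying $|\eta'|\aleq(2^K\rho)^{-1}$. Young's inequality then yields $\delta\,[u]^q_{W^{1/q,q}(B_{2^L\rho})}+C_\delta$, and the additive constant $C_\delta$ is neither the telescoping quantity $[u]^q_{W^{1/q,q}(B_{2^L\rho})}-[u]^q_{W^{1/q,q}(B_\rho)}$ nor $O(\rho)$; it wrecks the Dirichlet-growth iteration towards \Cref{pr:decayest}. The same obstruction reappears, independently, in your final estimate $[\eta u]_{W^{1/\mu,\mu}(\R)}\aleq[u]_{W^{1/q,q}(B_{2^L\rho})}$: the Leibniz rule produces $\|u\|_{L^\infty}[\eta]_{W^{1/\mu,\mu}}\aeq1$, so the left side is bounded below by a positive constant independent of $u$ and cannot be small even when $[u]_{W^{1/q,q}(B_{2^L\rho})}$ is.

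The paper's proof resolves this by replacing $\eta u$ with $\psi:=\eta\,\bigl(u-(u)_{B_{2\rho}\setminus B_\rho}\bigr)$, with $\eta$ supported at the small scale $B_{2\rho}$ (the $2^K\rho$-cut belongs to the $\Gamma$ term, via a second family of cutoffs $\eta_{B_{2^k\rho}}$). One still has $\avint_{x\triangleright y}(\psi-\psi)=\avint_{x\triangleright y}(u-u)$ for $x,y\in B_\rho$, so \Cref{pr:Qequiv} applies, but now the $\eta'$-contribution carries the oscillation $u(x)-(u)_{B_{2\rho}\setminus B_\rho}$ in place of $u(x)$. The fractional Poincar\'e-type bound \Cref{pr:psiuds1} shows that $\iint\frac{|\eta(x)-\eta(z)|^q|u(x)-(u)_{B_{2\rho}\setminus B_\rho}|^q}{|x-z|^2}\aleq[u]^q_{W^{1/q,q}(B_{2^L\rho})}-[u]^q_{W^{1/q,q}(B_\rho)}$, turning the $\eta'$-error into a genuine telescoping term, and \Cref{pr:psiuds} gives $[\psi]_{W^{1/q,q}(\R)}\aleq[u]_{W^{1/q,q}(B_{2^L\rho})}$, which is what you need in place of your (false) $[\eta u]$-bound. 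Your telescoping of $Q_{B_\rho\times B_\rho}(u,\eta u)$ through $Q_{B_L\times B_L}$ is a minor structural variant (the paper enlarges $B_\rho^2$ to $B_{2^L\rho}^2$ by nonnegativity of $|\avint\psi-\psi|^2$ instead); the essential missing ingredient is the mean-value subtraction in the test function.
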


\begin{proof}
	Let $\eta \in C^\infty_c (B_{2\rho})$, $\eta \equiv 1$ on $B_\rho$, with $|\nabla^k \eta| \leq C(k) \rho^{-k}$. Set $(u)_{B_{2\rho}\setminus B_\rho} := \tfrac{1}{|B_{2\rho}\setminus B_\rho|} \int_{B_{2\rho}\setminus B_\rho} u$ and 
	\[
	\psi(x) := \eta(x)(u(x)-(u)_{B_{2\rho}\setminus B_\rho}).
	\]
	Then for any $x,y \in B_{\rho}$ we have 
	\begin{align*}
	% other notation: a'(0) (\psi,\psi)  = 
	|\avint_{x\triangleright y}  \psi(z) - \psi(x) \, dz |^2  =  |\avint_{x\triangleright y}  u(z) - u(x) \, dz |^2
	\end{align*}
	and therefore by \Cref{pr:Qequiv}  for any $L\geq 2$
	\begin{align*}
	& [u]_{W^{\frac{1}{q}, q}(B_\rho)}^q \\
	& \aleq \int_{B_{2^L\rho}} \int_{B_{2^L\rho}}   |\avint_{x\triangleright y} u(z) -u(x)\, dz |^{q-2} \,  k(x,y)^{-\frac{q+2}{2}} \,  	|\avint_{x\triangleright y}  \psi(z) - \psi(x) \, dz |^2  \frac{dy\, dx}{\rho(x,y)^{2}} .
	\end{align*}
	Now we decompose
	\begin{align*}
	\psi(z)-\psi(x) = ( &u(z)-u(x)) - (1-\eta(z))(u(z)-u(x)) \\
	& + (\eta(z)-\eta(x))(u(x)-(u)_{B_{2\rho} \setminus B_\rho}),
	\end{align*}
	which leads to 
	\[
	[u]_{W^{\frac{1}{q}, q} (B_\rho)}^q \aleq I - II + III,
	\]
	where
	\[
	\begin{split}
	I &:=  \iint_{{B_{2^L\rho}}^2}  |\avint_{x\triangleright y} u(z) -u(x)\, dz |^{q-2} \, k(x,y)^{-\frac{q+2}{2}} \\ 
	& \quad \cdot \avint_{x\triangleright y} \avint_{x\triangleright y} (u(z_1) - u(x)) \cdot (\psi(z_2) - \psi(x)) \, dz_1 \, dz_2  \frac{dx\, dy}{\rho(x,y)^{2}}, \\
	II &:= \iint_{{B_{2^L\rho}}^2} |\avint_{x\triangleright y} u(z) -u(x)\, dz |^{q-2} \, k(x,y)^{-\frac{q+2}{2}} \\ 
	& \quad \cdot \avint_{x\triangleright y} \avint_{x\triangleright y} ((1-\eta(z_1))(u(z_1) - u(x))) \cdot (\psi(z_2) - \psi(x)) \, dz_1 \, dz_2 \frac{dx\, dy}{\rho(x,y)^{2}}, \\
	III & :=  \iint_{{B_{2^L\rho}}^2}   |\avint_{x\triangleright y} u(z) -u(x)\, dz |^{q-2} \, k(x,y)^{-\frac{q+2}{2}} \\ 
	& \quad \cdot \avint_{x\triangleright y} \avint_{x\triangleright y}  ( (\eta(z_1)-\eta(x))(u(x)-(u)_{B_{2\rho}\setminus B_\rho})) \cdot (\psi(z_2) - \psi(x)) \, dz_1 \, dz_2 \frac{dx\, dy}{\rho(x,y)^{2}}.
	\end{split}
	\]
	
	We start to deal with the terms involving the cutoff function. Observe by the boundedness of the factor $k(x,y)^{-\frac{q+2}{2}}$ in \Cref{rm:reg:boundedfactorc}, H\"older's inequality for $1= \frac{q-2}{q} + \frac 1q + \frac 1q$, Jensen's inequality and identification Lemma \ref{la:id2} that
	\begin{align*}
	& |II|  \\
	& \aleq  \iint_{(B_{2^L\rho})^2}  \left(\avint_{x\triangleright y} |u(z)-u(x)| \, dz \right)^{q-2} \\
	& \quad \cdot \avint_{x\triangleright y}  (1-\eta(z_1)) \, |u(z_1) - u(x)| \, dz_1 \,\avint_{x\triangleright y} |\psi(z_2) - \psi(x)| \, dz_2 \frac{dx\, dy}{\rho(x,y)^{2}} \\
	& \aleq \left( \iint_{(B_{2^L\rho})^2}  \frac{ \avint_{x\triangleright y} |u(z) - u(x)|^{q} \, dz}{\rho(x,y)^{2}} \, dx\, dy \right)^{\frac{q-2}{q}} \\
	&\quad \cdot \left( \iint_{(B_{2^L\rho})^2}  \frac{ \avint_{x\triangleright y} (1-\eta(z_1))^q |u(z_1) - u(x)|^{q} \, dz_1}{\rho(x,y)^{2}} \, dx\, dy \right)^{\frac 1q}  \\
	& \quad \cdot \left( \iint_{(B_{2^L\rho})^2}  \frac{ \avint_{x\triangleright y} |\psi(z_2) - \psi(x)|^{q} \, dz_2}{\rho(x,y)^{2}} \, dx\, dy \right)^{\frac 1q}\\
	& \aleq [u]_{W^{\frac{1}{q}, q}(B_{2^L\rho})}^{q-2} [\psi]_{W^{\frac{1}{q}, q}(B_{2^L\rho})} \left(\iint_{(B_{2^L\rho})^2}  (1-\eta(z))^q |u(z)-u(x)|^q  \frac{dz\, dx}{\rho(z,x)^{2}} \right)^{\frac 1q}.
	\end{align*}
	Then $[\psi]_{W^{\frac{1}{q}, q}(B_{2^L\rho})} \aleq [u]_{W^{\frac{1}{q}, q}(B_{2^L\rho})}$  by \Cref{pr:psiuds}, the assumption $\eta \equiv 1$ on $B_\rho$,  and Young's inequality lead to
	\begin{align*}
	|II| & \aleq [u]_{W^{\frac{1}{q}, q}(B_{2^L\rho})}^{q-1} \left(\int_{B_{2^L\rho}} \int_{B_{2^L\rho}\setminus B_\rho}  |u(z)-u(x)|^q  \frac{dz\, dx}{\rho(z,x)^{2}} \right)^{\frac 1q} \\
	& \aleq \delta [u]^q_{W^{\frac{1}{q}, q}(B_{2^L\rho})} + C_\delta \left([u]^q_{W^{\frac{1}{q}, q}(B_{2^L\rho})} -[u]^q_{W^{\frac{1}{q}, q}(B_{\rho})}\right). 
	\end{align*}
	Regarding $III$, we estimate along the lines of $II$, first of all
	\begin{align*}
	|III| \aleq [u]_{W^{\frac{1}{q}, q}(B_{2^L\rho})}^{q-2} [\psi]_{W^{\frac{1}{q}, q}(B_{2^L\rho})} \left(\int_{B_{2^L\rho}} \int_{B_{2^L\rho}} |\eta(x)-\eta(z)|^q |u(x)-(u)_{B_{2\rho} \setminus B_\rho}|^q  \frac{dz\, dx}{\rho(z,x)^{2}} \right)^{\frac 1q}
	\end{align*}
	and next with help of \Cref{pr:psiuds1} for $L\geq 2$ as well as Young's inequality
	\begin{align*}
	|III| & \aleq [u]_{W^{\frac{1}{q}, q}(B_{2^L\rho})}^{q-1} \left(\int_{B_{2^L\rho}} \int_{B_{2^L\rho}} |\eta(x)-\eta(z)|^q |u(x)-(u)_{B_{2\rho}\setminus B_\rho}|^q  \frac{dz\, dx}{\rho(z,x)^{2}} \right)^{\frac 1q} \\
	& \aleq [u]_{W^{\frac{1}{q}, q}(B_{2^L\rho})}^{q-1} \left( [u]^q_{W^{\frac{1}{q}, q}(B_{2^L\rho})} -[u]^q_{W^{\frac{1}{q}, q}(B_{\rho})} \right)^{\frac 1q} \\
	& \aleq \delta [u]^q_{W^{\frac{1}{q}, q}(B_{2^L\rho})} + C_\delta \left([u]^q_{W^{\frac{1}{q}, q}(B_{2^L\rho})} -[u]^q_{W^{\frac{1}{q}, q}(B_{\rho})}\right). 
	\end{align*}
	Hence 
	\[
	|II| + |III| \aleq \delta [u]^q_{W^{\frac{1}{q}, q}(B_{2^L\rho})} + C_\delta \left([u]^q_{W^{\frac{1}{q}, q}(B_{2^L\rho})} -[u]^q_{W^{\frac{1}{q}, q}(B_{\rho})}\right).
	\]
	For the remaining term $I$ we see by definition of $Q_{B\times B}$ in \eqref{def:reg:QB} and \Cref{pr:psiuds} for $L\geq 1$
	\begin{align*}
	|I| & \aleq [\psi]_{W^{\frac 1q,q}(\R)} \sup_{\varphi \in C^\infty_c (B_{2\rho}, \R^3), [\varphi]_{W^{\frac 1q,q}(\R)} \leq 1} |Q_{B_{2^L \rho}\times B_{2^L\rho}} (u,\varphi)| \\
	& \aleq [u]_{W^{\frac 1q,q}(B_{2^L\rho})} \sup_{\varphi \in C^\infty_c (B_{2\rho}, \R^3), [\varphi]_{W^{\frac 1q,q}(\R)} \leq 1} |Q_{B_{2^L\rho}\times B_{2^L\rho}} (u,\varphi)|. 
	\end{align*}
	By using the identity \eqref{eq:QRiesz} for $\p>q$ with $\frac 1q - \frac 1\p >0$ small and introducing cutoff functions $\eta_{B_R}\in C^\infty_c (B_{2R})$ with $\eta_{B_R} \equiv 1$ on $B_R$ and $\|\nabla^k \eta_{B_R}\|_{L^\infty} \aleq R^{-k}$ for $R>0$, we get for any $K\geq 1$ and some $\varphi \in C^\infty_c (B_{2\rho},\R^3)$ such that $[\varphi]_{W^{\frac 1q,q}(\R)} \leq 1$ 
	\begin{align*}
	|Q_{B_{2^L \rho}\times B_{2^L \rho}} (u,\varphi)| & = c \left| \int_{\R} \Gamma_{\frac 1\p,B_{2^L \rho}\times B_{2^L \rho}} u(z) \cdot   \lapla{\frac{1}{2\p}} \varphi(z) \, dz \right| \\
	& \aleq \left| \int_{\R} \eta_{B_{2^{K-1}\rho }} (z) \Gamma_{\frac 1\p,B_{2^L \rho}\times B_{2^L \rho}} u(z) \cdot  \lapla{\frac{1}{2\p}} \varphi(z) \, dz \right| \\
	& \quad + \sum_{k=K}^\infty   \left| \int_{\R} (\eta_{B_{2^k\rho}} -\eta_{B_{2^{k-1}\rho}} ) (z) \Gamma_{\frac 1\p,B_{2^L \rho}\times B_{2^L \rho}} u(z) \cdot \lapla{\frac{1}{2\p}} \varphi(z) \, dz \right| 
	\end{align*}
	We estimate the first term by H\"older's inequality, Sobolev's inequality, cf. \Cref{app:thm:sobinequ}, and $[\varphi]_{W^{\frac 1q,q}(\R)} \leq 1$  as
	\begin{align*}
	&\left| \int_{\R} \eta_{B_{2^{K-1}\rho }} (z) \Gamma_{\frac 1\p,B_{2^L \rho}\times B_{2^L \rho}} u(z) \cdot  \lapla{\frac{1}{2\p}} \varphi(z) \, dz \right| \\
	& \aleq \| \eta_{B_{2^{K-1}\rho }} \Gamma_{\frac 1\p,B_{2^L \rho}\times B_{2^L \rho}} u  \|_{L^{\frac{\p}{\p-1}}} \| \lapla{\frac{1}{2\p}} \varphi \|_{L^{\p}} \\
	& \aleq \| \chi_{B_{2^K\rho }} \Gamma_{\frac 1\p,B_{2^L \rho}\times B_{2^L \rho}} u  \|_{L^{\frac{\p}{\p-1}}} \ [\varphi]_{W^{\frac 1q,q}(\R)} \\
	& \aleq  \| \chi_{B_{2^{K}\rho }} \Gamma_{\frac 1\p,B_{2^L \rho}\times B_{2^L \rho}} u  \|_{L^{\frac{\p}{\p-1}}}.
	\end{align*}
	Using integration by parts and the property $\lapin\alpha \Gamma_{\beta,B_{2^L \rho}\times B_{2^L \rho}} u =  \Gamma_{\alpha + \beta,B_{2^L \rho}\times B_{2^L \rho}} u $, cf. \eqref{eq:reg:RieszopGamma}, the second part can be rewritten as
	\begin{align*}
	& \sum_{k=K}^\infty   \left| \int_{\R} (\eta_{B_{2^k\rho}} -\eta_{B_{2^{k-1}\rho}} ) (z) \Gamma_{\frac 1\p,B_{2^L \rho}\times B_{2^L \rho}} u(z) \cdot  \lapla{\frac{1}{2\p}} \varphi(z) \, dz \right|  \\
	& = \sum_{k=K}^\infty   \left| \int_{\R}   \lapla{\frac 1q - \frac 1\p} \left( \lapin{\frac 2q - \frac2\p}  \Gamma_{\frac 1\p,B_{2^L \rho}\times B_{2^L \rho}} u \right) (z) \cdot \left( (\eta_{B_{2^k\rho}} -\eta_{B_{2^{k-1}\rho}} ) \lapla{\frac{1}{2\p}} \varphi \right) (z) \, dz \right|  \\
	& = \sum_{k=K}^\infty   \left| \int_{\R}  \Gamma_{\frac 2q - \frac 1\p,B_{2^L \rho}\times B_{2^L \rho}} u(z) \cdot \lapla{\frac 1q - \frac 1\p} \left( (\eta_{B_{2^k\rho}} -\eta_{B_{2^{k-1}\rho}} ) \lapla{\frac{1}{2\p}} \varphi \right) (z) \, dz \right| 
	\end{align*}
	and then estimated by H\"older's inequality, \Cref{pr:app:GammaEst} for $\frac 1q - \frac 1\p>0$ small enough, the localization argument \Cref{app:loc1}, and Sobolev's inequality \Cref{app:thm:sobinequ}, 
	\begin{align*}
	& \sum_{k=K}^\infty   \left| \int_{\R}  \Gamma_{\frac 2q - \frac 1\p,B_{2^L \rho}\times B_{2^L \rho}} u(z) \cdot \lapla{\frac 1q - \frac 1\p} \left( (\eta_{B_{2^k\rho}} -\eta_{B_{2^{k-1}\rho}} )  \lapla{\frac{1}{2\p}} \varphi \right) (z) \, dz \right| \\
	& \aleq \sum_{k=K}^\infty \|\Gamma_{\frac 2q - \frac 1\p,B_{2^L \rho}\times B_{2^L \rho}} u \|_{L^{{(1-\frac 2q + \frac 1\p)}^{-1}}} \,  \|\lapla{\frac 1q - \frac 1\p} ( (\eta_{B_{2^k\rho}} -\eta_{B_{2^{k-1}\rho}} ) \lapla{\frac{1}{2\p}} \varphi ) \|_{L^{(\frac 2q - \frac 1\p)^{-1}}} \\
	& \aleq \sum_{k=K}^\infty [u]^{q-1}_{W^{\frac 1q,q}(B_{2^L\rho}) } \, 2^{-\sigma k} \,  \|\lapla{\frac{1}{2\p}} \varphi \|_{L^{\p}} \\
	& \aleq \sum_{k=K}^\infty  2^{-\sigma k} \, [u]^{q-1}_{W^{\frac 1q,q}(B_{2^L\rho}) } ,
	\end{align*} 
	for some $\sigma>0$.
	The statement of the proposition follows by choosing $K$ large enough.
\end{proof}

In the next step we need to investigate estimates involving the operator $\Gamma_{\frac 1\p,B\times B} u$, which appears in the left-hand side estimates \Cref{pr:reg:lhsestimate}, to obtain the decay estimate \Cref{pr:decayest} subsequently.
We start by splitting the operator $\Gamma_{\frac 1\p,B\times B} u$ by projecting it into the linear space spanned by $u$ as well as into linear space orthogonal to $u$. More precisely, we observe by $|u|=1$ a.e. that
\begin{equation}\label{eq:reg:projofop}
\begin{split}
	& \| \chi_{B_{2^K\rho }} \Gamma_{\frac 1\p,B_{2^L \rho}\times B_{2^L \rho}} u  \|_{L^{\frac{\p}{\p-1}}} \\
	& \aleq \| \chi_{B_{2^K\rho }} u \, \cdot \, \Gamma_{\frac 1\p,B_{2^L \rho}\times B_{2^L \rho}} u  \|_{L^{\frac{\p}{\p-1}}} + \| \chi_{B_{2^K\rho }} u \, \wedge  \,  \Gamma_{\frac 1\p,B_{2^L \rho}\times B_{2^L \rho}} u  \|_{L^{\frac{\p}{\p-1}}}.
\end{split}
\end{equation}

Here we recall that $v\wedge$ for any $v\in \R^3$ is given by the $\R^{3\times 3}$-matrix 
\[
v\wedge = \begin{pmatrix}
0 & -v_3 & v_2 \\
v_3 & 0 & - v_1 \\
-v_2 & v_1 & 0
\end{pmatrix}.
\]

We then treat each part of the splitting separately. However, both estimates are based on effects of
integration by compensation using non-linear commutators as well as information from the Euler-Lagrange equations.

\begin{lemma}(Right-hand side estimates I)\label{la:reg:rhs1}
	Let $q\geq 2$, $\tfrac 1q - \tfrac 1\p>0$ small, and $\gamma:\R /\Z \rightarrow \R^3$ be a homeomorphism with small tangent-point energy $\tp^{q+2,q}$ around the interval $B(x_0,r)$ in $\R/\Z$, in the sense of definition \Cref{def:homeotp}. Denote the unit tangent field of $\gamma$ by  $u:\RZ\rightarrow \S^2$ such that $\int_{\RZ} u = 0$ and let $\tilde{u}$ be a $W^{\frac 1q,q}$-extension of $u|_{B(x_0,r)}$ from $B(x_0,r)$ to $\R$ as discussed in \Cref{reg:rm:extension}. 
	Moreover, for $y_0 \in B(x_0,\tfrac r2)$, choose $\rho>0$ such that $B_{2^{2L}\rho}:= B(y_0,2^{2L}\rho) \subset B(x_0,r)$ for large $L\in\N$.
	
	Then we have 
	\begin{align*}
	& \| \chi_{B_{2^K\rho }} u \, \cdot \, \Gamma_{\frac 1\p,B_{2^L \rho}\times B_{2^L \rho}} u  \|_{L^{\frac{\p}{\p-1}}} \\
	&  \aleq [u]^q_{W^{\frac 1q,q }(B_{2^{2L}\rho})} + \sum_{k=1}^\infty 2^{-\sigma (L+k)} [\tilde u]^q_{W^{\frac 1q,q }(B_{2^{2L+k}\rho})},
	\end{align*}
	for any $K,L\in\N$ large enough with $L\gg K$.
	The constant in this inequality only depends on $q$.
\end{lemma}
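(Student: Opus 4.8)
The plan is to treat the component of $\Gamma_{\frac1\p,B_{2^L\rho}\times B_{2^L\rho}}u$ parallel to $u$ — the first term in the splitting \eqref{eq:reg:projofop} — by a direct estimate, using a cancellation that is not available for the orthogonal component. The key is the identity, valid for a.e.\ $x,z,z_1$ because $|u|\equiv 1$ on $\RZ$,
\[
u(z)\cdot\bigl(u(z_1)-u(x)\bigr)=\tfrac12|u(z)-u(x)|^2-\tfrac12|u(z)-u(z_1)|^2,
\]
which follows from $u(z)\cdot u(x)=1-\tfrac12|u(z)-u(x)|^2$ and $u(z)\cdot u(z_1)=1-\tfrac12|u(z)-u(z_1)|^2$. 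Dotting the definition \eqref{def:GammaQB} of $\Gamma_{\frac1\p,B_{2^L\rho}\times B_{2^L\rho}}u(z)$ with $u(z)$, separating the $z_1$-average (which only touches the tangent difference) from the $z_2$-average (which only touches the kernel), and using the boundedness $k(x,y)^{-\frac{q+2}2}\aleq 1$ from \Cref{rm:reg:boundedfactorc}, one obtains the pointwise bound
\begin{align*}
\bigl|u(z)\cdot\Gamma_{\frac1\p,B_{2^L\rho}\times B_{2^L\rho}}u(z)\bigr|
\aleq\int_{B_{2^L\rho}}\!\int_{B_{2^L\rho}}&\Bigl|\avint_{x\triangleright y}u-u(x)\Bigr|^{q-2}\Bigl(|u(z)-u(x)|^2+\avint_{x\triangleright y}|u(z)-u(z_1)|^2\,dz_1\Bigr)\\
&\cdot\Bigl|\avint_{x\triangleright y}\bigl(|z-z_2|^{\frac1\p-1}-|z-x|^{\frac1\p-1}\bigr)\,dz_2\Bigr|\,\frac{dy\,dx}{\rho(x,y)^2}.
\end{align*}
The homogeneity in $u$ of the right-hand side is $(q-2)+2=q$, matching the asserted bound; the extra $u$-difference relative to the naive count for $\Gamma u$ (which has degree $q-1$) is exactly the gain from the identity.

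Then I would handle the kernel difference $\bigl|\avint_{x\triangleright y}(|z-z_2|^{\frac1\p-1}-|z-x|^{\frac1\p-1})\,dz_2\bigr|$. Since $z_2$ lies on the geodesic $x\triangleright y$ one has $\rho(z_2,x)\le\rho(x,y)$, so on $\{\rho(z,x)\ge 2\rho(x,y)\}$ a mean value estimate for $t\mapsto|t|^{\frac1\p-1}$ gives $\aleq\rho(x,y)\,\rho(z,x)^{\frac1\p-2}$, while on $\{\rho(z,x)<2\rho(x,y)\}$ one keeps the two terms apart and estimates each by a power $\rho(\,\cdot\,,z)^{\frac1\p-1}$, the ensuing integrable singularity in $z$ being harmless thanks to the quadratic $u$-differences. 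In either regime the $z$-dependence of the bounding quantity is that of a Riesz-type potential of order $\tfrac1\p$ (or better) of a density built from $|u(z)-u(x)|^2\,|\avint_{x\triangleright y} u-u(x)|^{q-2}$ and $|u(z)-u(z_1)|^2\,|\avint_{x\triangleright y} u-u(x)|^{q-2}$, integrated in $(x,y)$ against $\rho(x,y)^{-2}\,dy\,dx$. Estimating its $L^{\p/(\p-1)}(\chi_{B_{2^K\rho}}\,\cdot\,)$-norm by the $\Gamma$-potential estimates of the appendix (\Cref{pr:app:GammaEst}), together with the localisation \Cref{app:loc1} and the Sobolev inequality \Cref{app:thm:sobinequ}, exactly as in the proof of \Cref{pr:reg:lhsestimate}, reduces everything to bounding a trilinear Gagliardo-type expression in $u$ over the relevant balls.

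Finally I would bound that trilinear expression by decomposing the $(x,y)$-integration $B_{2^L\rho}\times B_{2^L\rho}$ dyadically around the localisation ball. The diagonal contribution (up to enlarging scales to $B_{2^{2L}\rho}$) is handled by Fubini, Hölder with exponents $\tfrac{q-2}q+\tfrac1q+\tfrac1q=1$, Jensen to remove the averages, the identification lemmas \Cref{la:id1} and \Cref{la:id2} to pass to fractional seminorms, and the Sobolev embedding \Cref{la:sob1} (from $W^{\frac1{2q},2q}$ to $W^{\frac1q,q}$), which yields the term $[u]^q_{W^{\frac1q,q}(B_{2^{2L}\rho})}$. The off-diagonal contributions, where $z\in B_{2^K\rho}$ but one of $x,y$ ranges over an annulus $B_{2^{2L+k}\rho}\setminus B_{2^{2L+k-1}\rho}$ or outside $B(x_0,r)$, are treated by the disjoint-support/tail estimate \Cref{la:RminusBest} — as in the proof of \Cref{pr:reg:estremainderwedge} — and the global extension $\tilde u$ of $u|_{B(x_0,r)}$ from \Cref{reg:rm:extension}, producing the geometric tail $\sum_{k\ge1}2^{-\sigma(L+k)}[\tilde u]^q_{W^{\frac1q,q}(B_{2^{2L+k}\rho})}$ for a suitable $\sigma>0$. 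I expect the main obstacle to be the bookkeeping across the three length scales — $z$ confined to $B_{2^K\rho}$, the $(x,y)$-domain $B_{2^L\rho}$, and $z_1,z_2$ on the geodesic $x\triangleright y$ — when handling the kernel difference, together with verifying that $\tfrac1q-\tfrac1\p>0$ can be chosen small enough that the fractional Sobolev embeddings and the dual exponent $\tfrac\p{\p-1}$ are mutually compatible, so that $\chi_{B_{2^K\rho}}\,u\cdot\Gamma_{\frac1\p}u$ genuinely lies in $L^{\p/(\p-1)}$ and not merely in a weak-type space.
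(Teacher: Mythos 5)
Your opening cancellation is exactly right, and it is the same one the paper uses: dotting $\Gamma_{\frac1\p,B\times B}u(z)$ with $u(z)$ and factoring the $z_1$- and $z_2$-averages, the identity $u(z)\cdot(u(z_1)-u(x))=-\tfrac12(u(z_1)-u(x))\cdot(u(z_1)+u(x)-2u(z))$ (equivalently your difference of squared distances) converts the linear tangent difference into a quadratic one, raising the homogeneity to $q$. That observation is the heart of the lemma, and you have it.

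The middle step is where the gap is, and it is substantial. After the cancellation you are left with an integrand in which both the quadratic $u$-difference and the kernel difference $\avint_{x\triangleright y}\bigl(|z-z_2|^{\frac1\p-1}-|z-x|^{\frac1\p-1}\bigr)dz_2$ depend on $z$. You write that the resulting quantity is a ``Riesz-type potential of order $\tfrac1\p$ of a density built from $|u(z)-u(x)|^2|\avint u-u(x)|^{q-2}$'', but that object is not a Riesz potential at all: the proposed density itself depends on $z$. The same objection applies to your citation of \Cref{pr:app:GammaEst}, which controls $\|\Gamma_{\frac1q+\delta,B}u\|$ — an expression of degree $q-1$ in $u$ with no $u(z)$-factor inside the integrand — and therefore does not even have the right homogeneity, let alone the right structure; using it here would throw away precisely the extra factor of $u$ that the cancellation just earned. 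Compounding this, your decomposition of the $z_1$-average into $|u(z)-u(x)|^2+\avint|u(z)-u(z_1)|^2dz_1$ leaves $u(z)$ in every piece, whereas the paper performs a reference shift to $y$: writing $u(z_1)-u(x)=(u(z_1)-u(y))+(u(y)-u(x))$ and similarly for the second factor produces the four terms \eqref{eq:avu1}--\eqref{eq:avu4}, of which the genuinely quadratic one, $\avint|u(z_1)-u(y)|^2dz_1$, has no $u(z)$ at all. The machinery that then closes the argument — the maximal-function bound \Cref{pr:estmax}, the dedicated decompositions \Cref{la:threetermforuquad} and \Cref{la:threetermforGandH} which split the kernel difference into the three regimes tracked by $k_{s,\gamma}$ in \eqref{app:eq:k1}--\eqref{app:eq:k3}, and the trilinear reduction \Cref{app:pro:FGHKernel} that converts the coupled $(x,y,z)$-integral into products of Riesz potentials and maximal functions — is what actually carries the duality argument in $L^{\p/(\p-1)}$, and none of it appears in your plan. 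This is not bookkeeping one could hope to fill in on sight; it is the content of the lemma.

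Your final paragraph on the dyadic localization and the extension $\tilde u$ is broadly consonant with what the paper does (it uses \Cref{app:loc3}, \Cref{app:loc4}, \Cref{app:locSobinequ} rather than \Cref{la:RminusBest}, which serves the remainder terms in \Cref{pr:reg:estremainderwedge}, but the mechanism is the same), so once the middle step is repaired that part should go through.
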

\begin{proof}
	First observe by $|u|=1$ a.e. in $B(x_0,r)$ that 
	\begin{align*}
	& u(z) \cdot \int_x^y \int_x^y ( u(z_1)-u(x)) (|z-z_2|^{\frac 1\p-1} - |z-x|^{\frac 1\p -1} ) \, dz_1 \, dz_2 \\
	& = -\frac 12 \int_x^y \int_x^y (u(z_1)-u(x)) \cdot (u(z_1)+u(x) -2 u(z)) (|z-z_2|^{\frac 1\p-1} - |z-x|^{\frac 1\p -1} ) \, dz_1 \, dz_2 
	\end{align*}
	for almost any $x,y,z \in B(x_0,r)$. 
	Therefore,  we have with help of the definition of $\Gamma$ in \eqref{def:GammaQB} and the boundedness of the appearing factor $\, k(x,y)^{-\frac{q+2}{2}}$ by \Cref{rm:reg:boundedfactorc} 
	\begin{align*}
	& |\chi_{B_{2^K\rho }} u (z) \, \cdot \, \Gamma_{\frac 1\p,B_{2^L \rho}\times B_{2^L \rho}} u (z) | \\
	& \aleq \chi_{B_{2^L\rho }} (z) \int_{B_{2^L\rho}} \int_{B_{2^L\rho}} \left(\avint_{x\triangleright y} |u(z_0) - u(x)| \, dz_0 \right)^{q-2} 
	 \\
	& \cdot  \avint_{x\triangleright y}  |u(z_1)-u(x)| |u(z_1)+ u(x) -2 u(z)| \, dz_1 \avint_{x\triangleright y} ||z-z_2|^{\frac 1\p-1} - |z-x|^{\frac 1\p -1}| \, dz_2 \frac{d y \, dx}{\rho(x,y)^2 }.  
	\end{align*}
	Observe that $u$ is evaluated only in $ B(x_0,r)$ here as both $B_{2^K\rho}\subset B_{2^L\rho} \subset B(x_0,r)$ ($L\gg K$). Since $u$ and $\tilde u$ coincide on $B(x_0,r)$ by construction, cf. \Cref{reg:rm:extension}, we can continue with $\tilde u$ from now on as it is globally $W^{\frac 1q,q}$-regular in comparison to $u$, which is only in $W^{\frac 1q,q}(B(x_0,r),\R^3)$. 
	
For the next step, we need the notation of the uncentered Hardy-Littlewood maximal function, which is given by
\[
\mathcal{M}f(x) = \sup_{B(x,r)\ni y }\frac{1}{|B(y,r)|} \int_{B(y,r)} |f(z)| \, dz. 
\]
By \Cref{pr:estmax} for small $\delta > 0$
	\[
		\left(\avint_{x\triangleright y} |\tilde u(z_0) - \tilde u(x)| \, dz_0 \right)^{q-2} \aleq  |x-y|^{(\frac 1\p - \delta) (q-2)}\left(\mathcal{M}\mathcal{M} \lapla{\frac{\frac 1\p - \delta}{2} } \tilde u(x) \right)^{q-2}.
	\]
	Now we decompose the integral $\avint_{x\triangleright y}  |\tilde u(z_1)-\tilde u(x)| |\tilde u(z_1)+\tilde u(x) -2 \tilde u(z)| \, dz_1 $ into four terms 
	\begin{align}
	& \avint_{x\triangleright y}  |\tilde u(z_1)-\tilde u(x)| |\tilde u(z_1)+\tilde u(x) -2 \tilde u(z)| \, dz_1 \nonumber \\
	& \leq \avint_{x\triangleright y}  |\tilde u(z_1)-\tilde u(y)|^2  \, dz_1  \label{eq:avu1}\\
	& \quad + \avint_{x\triangleright y}  |\tilde u(z_1)-\tilde u(y)| \, dz_1 \, |\tilde u(y)+\tilde u(x) -2 \tilde u(z)|  \label{eq:avu2} \\
	& \quad +  |\tilde u(y)-\tilde u(x)|\,  \avint_{x\triangleright y}  |\tilde u(z_1)- \tilde u(y)| \, dz_1 \label{eq:avu3} \\
	& \quad +  |\tilde u(y)-\tilde u(x)|\,  |\tilde u(y)+\tilde u(x) -2 \tilde u(z)|.\label{eq:avu4}
	\end{align}
	
	For the first term \eqref{eq:avu1}, we obtain by duality for some $\varphi\in C_c^\infty(\R)$ with $\|\varphi\|_{L^{\p}}\leq 1$ 
	\[
	\begin{split}
	&  \| \chi_{B_{2^K\rho }} u \, \cdot \, \Gamma_{\frac 1\p,B_{2^L \rho}\times B_{2^L \rho}} u  \|_{L^{\frac{\p}{\p-1}}} \\
	& \aleq \bigg\|  \int_{B_{2^L\rho}} \int_{B_{2^L\rho}} |x-y|^{(\frac 1\p - \delta) (q-2)}\left(\mathcal{M}\mathcal{M} \lapla{\frac{\frac 1\p - \delta}{2} }\tilde u(x) \right)^{q-2} \\
	& \cdot  \avint_{x\triangleright y}  |\tilde u(z_1)-\tilde u(y)|^2  \, dz_1  \avint_{x\triangleright y} ||\cdot -z_2|^{\frac 1\p-1} - |z-x|^{\frac 1\p -1}| \, dz_2 \frac{d y \, dx}{\rho(x,y)^2 } \bigg\|_{L^{\frac{\p}{\p-1}}(B_{2^K\rho})}\\
	& \aleq \int_{\R} \int_{B_{2^L\rho}} \int_{B_{2^L\rho}} \left( \mathcal{M}\mathcal{M} \lapla{\frac{\frac 1\p - \delta}{2} }  \tilde u(x) \right)^{q-2} |\varphi(z)| \\
	& \cdot  \avint_{x\triangleright y}  | \tilde u(z_1)-\tilde u(y)|^2  \, dz_1  \avint_{x\triangleright y} ||z-z_2|^{\frac 1\p-1} - |z-x|^{\frac 1\p -1}| \, dz_2  \, |x-y|^{(\frac 1\p - \delta) (q-2)-2}  \,d y \, dx \, dz.
	\end{split}
	\]
	Hence, for an admissible $\eps\in(0,1)$ and $(\frac 1\p -\delta)(q-1)+  \frac{1}{2q} -1 < \eps < \frac 1\p$, we can apply Lemma \ref{la:threetermforuquad} to the previous inequality and achieve
	\begin{align}\label{reg:est:rhs11stterm}
	\begin{split}
	& \bigg\|  \int_{B_{2^L\rho}} \int_{B_{2^L\rho}} |x-y|^{(\frac 1\p - \delta) (q-2)}\left(\mathcal{M}\mathcal{M} \lapla{\frac{\frac 1\p - \delta}{2} }\tilde u(x) \right)^{q-2} \\
	& \cdot  \avint_{x\triangleright y}  |\tilde u(z_1)-\tilde u(y)|^2  \, dz_1  \avint_{x\triangleright y} ||\cdot -z_2|^{\frac 1\p-1} - |z-x|^{\frac 1\p -1}| \, dz_2 \frac{d y \, dx}{\rho(x,y)^2 } \bigg\|_{L^{\frac{\p}{\p-1}}(B_{2^K\rho})}\\
	& \aleq \int_{\R}  \lapin{(\frac 1\p -\delta) (q-1)+ \frac{1}{2q} + \eps -1} \left(\chi_{B_{2^L\rho}}\mathcal{M}\mathcal{M} \lapla{\frac{\frac 1\p - \delta}{2} }  \tilde u(z) \right)^{q-2}\\
	& \qquad \cdot  \mathcal{M} \left(\mathcal{M}\lapla{\frac{\frac 1\p -\delta}{2}}  \tilde u\,  \mathcal{M}\lapla{\frac{1/(2q)}{2}}  \tilde u\right) (z) \ \lapin{\frac 1\p - \eps} |\varphi|(z) \, dz \\
	& \quad + \int_{\R} \left(\chi_{B_{2^L\rho}}\mathcal{M}\mathcal{M} \lapla{\frac{\frac 1\p - \delta}{2} }\tilde u(z) \right)^{q-2} \\
	& \qquad \cdot \lapin{(\frac 1\p -\delta) (q-1)+ \frac{1}{2q} + \eps -1}  \mathcal{M} \left(\mathcal{M}\lapla{\frac{\frac 1\p -\delta}{2}} \tilde u\,  \mathcal{M}\lapla{\frac{1/(2q)}{2}} \tilde u\right) (z) \ \lapin{\frac 1\p - \eps}|\varphi|(z) \, dz \\
	& \quad  +  \int_{\R} \lapin{(\frac 1\p -\delta)(q-1)+ \frac{1}{2q} + \eps -1} \left( \chi_{B_{2^L\rho}}\mathcal{M}\mathcal{M} \lapla{\frac{\frac 1\p - \delta}{2} } \tilde u(z) \right)^{q-2} \\
	& \qquad \cdot \mathcal{M} \mathcal{M} \lapla{\frac{\frac 1\p -\delta}{2}}  \tilde u (z) \,  \mathcal{M} \mathcal{M} \lapla{\frac{1/(2q)}{2}} \tilde u(z) \ \lapin{\frac 1\p - \eps} |\varphi|(z) \, dz\\
	& \quad +  \int_{\R}  \left(\chi_{B_{2^L\rho}}\mathcal{M}\mathcal{M} \lapla{\frac{\frac 1\p - \delta}{2} }  \tilde u(z) \right)^{q-2} \\
	& \qquad \cdot \lapin{(\frac 1\p -\delta)(q-1)+ \frac{1}{2q} + \eps -1} \left(\mathcal{M} \mathcal{M} \lapla{\frac{\frac 1\p -\delta}{2}}  \tilde u \, \mathcal{M} \mathcal{M} \lapla{\frac{1/(2q)}{2}} \tilde  u\right)(z) \ \lapin{\frac 1\p - \eps} |\varphi|(z) \, dz.
	\end{split}
	\end{align}
	The integrals appearing on the right-hand side do make sense as they can be traced back to $\tilde u\in W^{\frac 1q,q}(\R)$ by applying H\"older's inequality and the following estimates:
	First we have by Hardy-Littlewood maximal inequality and Sobolev's inequality  \Cref{app:thm:sobinequ}
	\[
	\begin{split}
	& \| (\chi_{B_{2^L\rho}}\mathcal{M}\mathcal{M} \lapla{\frac{\frac 1\p - \delta}{2} } \tilde u)^{q-2}\|_{L^{((\frac 1\p - \delta)(q-2))^{-1}}} \\
	& = \left(\int_{\R} | \chi_{B_{2^L\rho}}(z)\mathcal{M}\mathcal{M} \lapla{\frac{\frac 1\p - \delta}{2} } \tilde u(z)|^{(\frac 1\p-\delta)^{-1}} \, dz \right)^{(\frac 1\p-\delta)(q-2)} \\
	& \aleq \left(\int_{\R} | \lapla{\frac{\frac 1\p - \delta}{2} } \tilde u(z)|^{(\frac 1\p-\delta)^{-1}} \, dz \right)^{(\frac 1\p-\delta)(q-2)} \\
	& \aleq [\tilde u]^{q-2}_{W^{\frac 1q,q}(\R)} < \infty
	\end{split}
	\]
	and with additional help of \Cref{app:thm:classobinequ}
	\[
	\begin{split}
	&\|\lapin{(\frac 1\p -\delta)(q-1)+ \frac{1}{2q} + \eps -1} (\chi_{B_{2^L\rho}}\mathcal{M}\mathcal{M} \lapla{\frac{\frac 1\p - \delta}{2} }  \tilde u)^{q-2}\|_{L^{(1-\frac{1}{2q}- (\frac 1\p-\delta)-\eps)^{-1}}} \\
	& \aleq  \| (\chi_{B_{2^L\rho}}\mathcal{M}\mathcal{M} \lapla{\frac{\frac 1\p - \delta}{2} }  \tilde u)^{q-2}\|_{L^{((\frac 1\p - \delta)(q-2))^{-1}}} \\
	& \aleq  [\tilde u]^{q-2}_{W^{\frac 1q,q}(\R)} < \infty.
	\end{split}
	\]
	In a similar fashion with slightly adapted H\"older's inequality we observe
	\[
	\begin{split}
	& \left\|\mathcal{M} \left(\mathcal{M}\lapla{\frac{\frac 1\p -\delta}{2}}  \tilde u\,  \mathcal{M}\lapla{\frac{1/(2q)}{2}}  u\right)\right\|_{L^{\frac{1}{\frac{1}{2q}+(\frac 1\p-\delta)}}} \\
	&\aleq  \|\lapla{\frac{\frac 1\p - \delta}{2} }  \tilde u \|_{L^{\frac 1\p-\delta}} \ \|\lapla{\frac{1}{4q} }  \tilde u\|_{L^{2q}} \\ & \aleq [\tilde u]^{2}_{W^{\frac 1q,q}(\R)} < \infty,
	\end{split}
	\]
	and hence also
	\[
	\begin{split}
	\left\|  \mathcal{M} \mathcal{M} \lapla{\frac{\frac 1\p -\delta}{2}} \tilde u  \,  \mathcal{M} \mathcal{M} \lapla{\frac{1/(2q)}{2}}  \tilde u\right\|_{L^{\frac{1}{\frac{1}{2q}+(\frac 1\p-\delta)}}} &  \aleq [\tilde u]^{2}_{W^{\frac 1q,q}(\R)} ,\\
	\left\|  \lapin{(\frac 1\p -\delta) (q-1)+ \frac{1}{2q} + \eps -1}  \left(\mathcal{M} \left(\mathcal{M}\lapla{\frac{\frac 1\p -\delta}{2}} \tilde u\,  \mathcal{M}\lapla{\frac{1/(2q)}{2}} \tilde u\right)\right)\right\|_{L^{\frac{1}{1-(\frac 1\p-\delta)(q-2)-\eps}}} & \aleq [\tilde u]^{2}_{W^{\frac 1q,q}(\R)} ,\\
	\left\| \lapin{(\frac 1\p -\delta)(q-1)+ \frac{1}{2q} + \eps -1} \left(\mathcal{M} \mathcal{M} \lapla{\frac{\frac 1\p -\delta}{2}}  \tilde u \,  \mathcal{M} \mathcal{M} \lapla{\frac{1/(2q)}{2}}  \tilde u\right)\right\|_{L^{\frac{1}{1-(\frac 1\p-\delta)(q-2)-\eps}}} & \aleq  [\tilde u]^{2}_{W^{\frac 1q,q}(\R)},  
	\end{split}
	\]
	as well as
	\[
	\begin{split}
	\|\lapin{\frac 1\p - \eps} |\varphi| \|_{L^{\frac 1 \eps}} & \aleq \|\varphi\|_{L^{\p}}.
	\end{split}
	\]
	Therefore, we conclude by composing these observations and recalling $\|\varphi\|_{L^{\p}}\leq 1$, that
	\[
	\begin{split}
	& \bigg\|  \int_{B_{2^L\rho}} \int_{B_{2^L\rho}} |x-y|^{(\frac 1\p - \delta) (q-2)}\left(\chi_{B_{2^L\rho}}\mathcal{M}\mathcal{M} \lapla{\frac{\frac 1\p - \delta}{2} }\tilde u(x) \right)^{q-2} \\
	& \cdot  \avint_{x\triangleright y}  |\tilde u(z_1)-\tilde u(y)|^2  \, dz_1  \avint_{x\triangleright y} ||z-z_2|^{\frac 1\p-1} - |z-x|^{\frac 1\p -1}| \, dz_2 \frac{d y \, dx}{\rho(x,y)^2 } \bigg\|_{L^{\frac{\p}{\p-1}}(B_{2^K\rho})}\\
	& \aleq [\tilde u]^q_{W^{\frac 1q,q}(\R)} .
	\end{split}
	\]
	In view of the desired decay estimate \Cref{pr:decayest}, we need to take advantage of the local Sobolev regularity of $u$, i.e. $u\in W^{\frac 1q,q}(B(x_0,r))$. Recall that $u$ and $\tilde u$ coincide on $B(x_0,r)$ by construction, cf.~\Cref{reg:rm:extension}, and therefore also their Gagliardo seminorms on subsets of $B(x_0,r)$. For this reason, we localize the previous estimates with help of the factor $\chi_{B_{2^L\rho}}$ to gain for some $\sigma=\sigma(q)>0$ the upper bound
	\[
	[u]^q_{W^{\frac 1q,q }(B_{2^{2L}\rho})} + \sum_{k=1}^\infty 2^{-\sigma (L+k)} [\tilde u]^q_{W^{\frac 1q,q }(B_{2^{2L+k}\rho})} 
	\]
	instead. In particular, we applied \Cref{app:loc3} to \eqref{reg:est:rhs11stterm}, localized maximal inequality \Cref{app:loc4} and localized Sobolev inequality \Cref{app:locSobinequ} here. 
	
	We proceed with the remaining terms \eqref{eq:avu2}, \eqref{eq:avu3}, and \eqref{eq:avu4}, shortly 
	\[
	\begin{split}
	& U(x,y,z) :=   \eqref{eq:avu2} + \eqref{eq:avu3} + \eqref{eq:avu4} \\
	& =  \avint_{x\triangleright y}  |\tilde u(z_1)-\tilde u(y)| \, dz_1 \, |\tilde u(y)+\tilde u(x) -2 \tilde u(z)| \\
	& \quad + |\tilde u(y)-\tilde u(x)|\,  \avint_{x\triangleright y}  |\tilde u(z_1)- \tilde u(y)| \, dz_1  \\
	& \quad +  |\tilde u(y)-\tilde u(x)|\,  |\tilde u(y)+\tilde u(x) -2 \tilde u(z)|,
	\end{split}
	\]
	in the following similar manner. We begin with estimating the first factors $\avint_{x\triangleright y} |\tilde u(z_1)-\tilde u(y)|\, dz_1$, respectively, $|\tilde u(y)-\tilde u(x)|$, by Proposition \ref{pr:estmax} and Lebesgue's differentiation theorem by 
	\[
	|x-y|^{\frac 1\p-\delta} \left(\mathcal{M}\mathcal{M} \lapla{\frac{\frac 1\p - \delta}{2} }\tilde u(x) + \mathcal{M}\mathcal{M} \lapla{\frac{\frac 1\p - \delta}{2} }\tilde u(y) \right).
	\]
	Together with Lemma \ref{la:threetermforGandH} we gain the upper bound
	\[
	\begin{split}
	& \int_{B_{2^L\rho}} \int_{B_{2^L\rho}} |x-y|^{(\frac 1\p - \delta) (q-1)}\left(\mathcal{M}\mathcal{M} \lapla{\frac{\frac 1\p - \delta}{2} } \tilde u(x) + \mathcal{M}\mathcal{M} \lapla{\frac{\frac 1\p - \delta}{2} } \tilde u(y)  \right)^{q-1} \\
	& \cdot |x-y|^{\frac 1q+\eps} \brac{\mathcal{M}\mathcal{M} \lapla{\frac{1}{4q}} \tilde u(x) + \mathcal{M}\mathcal{M} \lapla{\frac{1}{4q}} \tilde u(y) + \mathcal{M}\mathcal{M} \lapla{\frac{1}{4q}} \tilde u(z)}\\
	& \cdot  k_{ \frac 1\p - \frac{1}{2q} - \eps,\frac{1}{\p}}(x,y,z) \frac{d y \, dx}{\rho(x,y)^2 }.
	\end{split}
	\]
	We arrive for some $\varphi\in C_c^\infty(\R)$ with $\|\varphi\|_{L^{\p}}\leq 1$  at
	\[
	\begin{split}
	& \bigg\| \int_{B_{2^L\rho}}\int_{B_{2^L\rho}} |x-y|^{(\frac 1\p - \delta) (q-1)}\left( \mathcal{M}\mathcal{M} \lapla{\frac{\frac 1\p - \delta}{2} }\tilde u(x)+ \mathcal{M}\mathcal{M} \lapla{\frac{\frac 1\p - \delta}{2} }\tilde u(y) \right)^{q-1} \\
	& \quad \cdot  U(x,y,\cdot) \avint_{x\triangleright y} ||\cdot-z_2|^{\frac 1\p-1} - |z-x|^{\frac 1\p -1}| \, dz_2 \frac{d y \, dx}{\rho(x,y)^2 } \bigg\|_{L^{\frac{\p}{\p-1}}(B_{2^K\rho})} \\
	& \aleq  \iiint_{\R^3} \left( (\chi_{B_{2^L\rho}}\mathcal{M}\mathcal{M} \lapla{\frac{\frac 1\p - \delta}{2} } \tilde u)^{q-1}(x)+ (\chi_{B_{2^L\rho}}\mathcal{M}\mathcal{M} \lapla{\frac{\frac 1\p - \delta}{2} } \tilde u)^{q-1}(y) \right) \\
	&  \quad \cdot \brac{\mathcal{M}\mathcal{M} \lapla{\frac{1}{4q}} \tilde u(x) + \mathcal{M}\mathcal{M} \lapla{\frac{1}{4q}} \tilde u(y) + \mathcal{M}\mathcal{M} \lapla{\frac{1}{4q}} \tilde u(z)}\, k_{ \frac 1\p - \frac{1}{2q} - \eps,\frac{1}{\p}}(x,y,z) \\
	& \quad \cdot   |\varphi(z)|  \  \frac{d x \, dy \, dz}{|x-y|^{2 - (\frac 1\p-\delta)(q-1) - \frac 1q - \eps} } \\
	& \aleq \|\lapla{\frac{\frac 1\p - \delta}{2} } \tilde u \|^{q-1}_{L^{\frac 1\p-\delta}} \ \|\lapla{\frac{1}{4q} }  \tilde u\|_{L^{2q}} \  \|\varphi\|_{L^{\p}} \\
	& \aleq [\tilde u]^q_{W^{\frac 1q,q}(\R)} ,
	\end{split}	
	\]	
	where we applied \Cref{app:pro:FGHKernel} and then estimated analogously to the case \eqref{eq:avu1}. In consideration of $u\in W^{\frac 1q,q}(B(x_0,r))$, we make use of the factor $\chi_{B_{2^L\rho}}$ again and localize the estimate as described in the case above. 
\end{proof}

It remains to treat the second term appearing on the right-hand side of the projection estimate \eqref{eq:reg:projofop}.

\begin{lemma}(Right-hand side estimates II)\label{la:reg:rhs2}
	Let $q\geq 2$, $\tfrac 1q - \tfrac 1\p>0$ small, and $\gamma:\R /\Z \rightarrow \R^3$ be a locally critical embedding of $\tp^{q+2,q}$ in the interval $B(x_0,r) \subset \R/\Z$, in the sense of \Cref{def:wcp}. Denote the unit tangent field of $\gamma$ by  $u:\RZ\rightarrow \S^2$ such that $\int_{\RZ} u = 0$ and let $\tilde{u}$ be a $W^{\frac 1q,q}$-extension of $u|_{B(x_0,r)}$ from $B(x_0,r)$ to $\R$ as discussed in \Cref{reg:rm:extension}. 
	Moreover, for $y_0 \in B(x_0,\tfrac r2)$, choose $\rho>0$ such that $B_{2^{2L}\rho}:= B(y_0,2^{2L}\rho) \subset B(x_0,r)$ for large $L\in\N$.

	Then we have
	\begin{align*}
		& \| \chi_{B_{2^K\rho }} u \, \wedge  \,  \Gamma_{\frac 1\p,B_{2^{L}\rho} \times B_{2^{L}\rho}} u  \|_{L^{\frac{\p}{\p-1}}}\\
		& \aleq [u]^q_{W^{\frac 1q,q }(B_{2^{2L}\rho})} + [u]^{2q-3}_{W^{\frac 1q,q }(B_{2^{2L}\rho})}  + \sum_{l=1}^\infty 2^{-\sigma (K+l)} [\tilde u]^{q-1}_{W^{\frac 1q,q }(B_{2^{2L+l}\rho})} \\
		& \quad + 2^{-\sigma K} [u]^{q-1}_{W^{\frac 1q,q }(B_{2^{2L}\rho})} +  \rho \left(\E^{q}(u) +  r^{-3} + [u]_{W^{\frac{1}{q},q}(B(x_0,r))}^q \right),
	\end{align*}
	for any large enough $K\in\N$ and $L\in\N$ with $L\gg K$. 
	The constant, besides depending on $q$, may also depend on global properties of $u$ such as $\|u\|_{L^\infty}$, $\|\tilde u\|_{L^\infty}$, $[u]_{W^{\frac{1}{q},q}(B(x_0,r))}$, and $[\tilde{u}]_{W^{\frac 1q,q}(\R)}$.
\end{lemma}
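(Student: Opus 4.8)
The plan is to follow the scheme of \cite{BRS19} and \cite{S15}: estimate $u\wedge\Gamma_{\frac1\p,(B_{2^L\rho})^2}u$ by duality, exploit the algebraic identity $u\wedge u=0$ to trade the operator $\Gamma$ against a \emph{tangential} test function via the Euler--Lagrange equation \Cref{la:EulerLagrange}, and control the resulting non-linear three-term commutators with the Riesz-potential and maximal-function machinery of the appendix. By $L^{\frac{\p}{\p-1}}$--$L^{\p}$ duality it suffices to bound $\bigl|\int_\R \chi_{B_{2^K\rho}}(z)\,(u(z)\wedge\Gamma_{\frac1\p,(B_{2^L\rho})^2}u(z))\cdot\psi(z)\,dz\bigr|$ uniformly over $\psi\in C_c^\infty(\R,\R^3)$ with $\|\psi\|_{L^{\p}}\le1$. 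As in the proof of \Cref{la:reg:rhs1}, I would localize $\psi$ by dyadic cutoffs $\eta_{B_{2^k\rho}}$ centered at $y_0$ ($\eta_{B_{2^k\rho}}\equiv1$ on $B_{2^k\rho}$, $\supp\eta_{B_{2^k\rho}}\subset B_{2^{k+1}\rho}$, $\|\nabla^j\eta_{B_{2^k\rho}}\|_\infty\aleq(2^k\rho)^{-j}$): write $\psi=\eta_{B_{2^K\rho}}\psi+\sum_{k\ge K}(\eta_{B_{2^{k+1}\rho}}-\eta_{B_{2^{k}\rho}})\psi$.

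For the annular pieces, supported at distance $\aeq 2^k\rho$ from $B_{2^K\rho}$, I would argue exactly as for the tail terms of \Cref{la:reg:rhs1}: using $\Gamma_{\frac1\p,(B_{2^L\rho})^2}u=\lapin{\frac1q-\frac1\p}\Gamma_{\frac2q-\frac1\p,(B_{2^L\rho})^2}u$ (cf.\ \eqref{eq:reg:RieszopGamma}), move $\lapla{\frac1q-\frac1\p}$ onto the cutoff times $\lapla{\frac1{2\p}}\psi$ by duality, and apply \Cref{pr:app:GammaEst} together with the localized Sobolev and maximal inequalities of the appendix. Since $\Gamma$ is dominated pointwise by products of $\mathcal M\mathcal M\lapla{s}\tilde u$ through \Cref{pr:estmax} and $\tilde u\in W^{\frac1q,q}(\R)$, this yields the geometric sums $\sum_{l\ge1}2^{-\sigma(K+l)}[\tilde u]^{q-1}_{W^{\frac1q,q}(B_{2^{2L+l}\rho})}$ and $2^{-\sigma K}[u]^{q-1}_{W^{\frac1q,q}(B_{2^{2L}\rho})}$ with some $\sigma=\sigma(q)>0$ (the enlargement from $B_{2^L\rho}$ to $B_{2^{2L}\rho}$ absorbing the double maximal function).

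The core piece paired with $\eta_{B_{2^K\rho}}\psi$ (supported deep inside $B(x_0,r)$, as $L\gg K$) is where the compensation happens. Using $u\wedge u=0$ one rewrites, modulo a non-linear three-term commutator of Da~Lio--Schikorra type, the quantity $\int\chi_{B_{2^K\rho}}(u\wedge\Gamma_{\frac1\p,(B_{2^L\rho})^2}u)\cdot(\eta_{B_{2^K\rho}}\psi)$ as $c\,Q_{(B_{2^L\rho})^2}(u,\varphi)$ for a tangential test function $\varphi=(\tilde\varphi\,u\wedge)_{ij}$, where $\tilde\varphi$ is obtained from $\eta_{B_{2^K\rho}}\psi$ by an application of $\lapin{\frac1\p}$; the slightly supercritical exponent $\p>q$ with $\frac1q-\frac1\p>0$ small guarantees $\varphi\in W^{\frac1q,q}_0(B(x_0,r),\R^3)$ with $[\varphi]_{W^{\frac1q,q}(\R)}\aleq1$, while the derivative loss incurred by working with $\p$ in place of $q$ is recovered from the decay in \Cref{pr:app:GammaEst}. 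Since $\gamma$ is a locally critical embedding, \Cref{la:EulerLagrange} gives $Q_{(B_{2^L\rho})^2}(u,\varphi)=-\sum_{k=1}^3R^{k}_{(B_{2^L\rho})^2}(u,\varphi)-\sum_{k=4}^7R^{k}_{\eta,(B_{2^L\rho})^2}(u,\varphi)$, up to tail contributions coming from replacing $(\R/\Z)^2$ by $(B_{2^L\rho})^2$ that are themselves of the already handled tail type. Inserting \Cref{pr:reg:estremainderwedge} produces the polynomial-in-seminorm terms $[u]^q_{W^{\frac1q,q}(B_{2^{2L}\rho})}$ and $[u]^{2q-3}_{W^{\frac1q,q}(B_{2^{2L}\rho})}$ (from $R^1$), a further contribution to the $\sum_l 2^{-\sigma(K+l)}[\tilde u]^{q-1}$ tail, and the term $\rho\,(\E^{q}(u)+r^{-3}+[u]^q_{W^{\frac1q,q}(B(x_0,r))})$ from $R^{4},\dots,R^{7}_\eta$ (the powers of $r$ being those of \Cref{pr:reg:estremainderwedge}, up to the cutoff factors). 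The commutator remainders generated in the rewriting are estimated by \Cref{la:threetermforuquad}, \Cref{la:threetermforGandH} and \Cref{app:pro:FGHKernel} exactly as the term \eqref{eq:avu1} was treated in \Cref{la:reg:rhs1}, the extra factor $|x-y|^{\frac1q+\eps}$ there producing either an admissible power of $\rho$ or the dyadic decay $2^{-\sigma}$.

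I expect the main obstacle to be twofold. First, the construction of a genuinely admissible \emph{tangential} test function $\varphi\in W^{\frac1q,q}_0(B(x_0,r),\R^3)$ out of $\eta_{B_{2^K\rho}}\psi$, so that \Cref{la:EulerLagrange} actually applies: this is what forces the almost-critical exponent $\p>q$ and requires careful bookkeeping that the loss from using $\p$ instead of $q$ is recovered from the decay estimates, and it is the step tethering the whole argument. Second, the corrector remainders $R^{4},\dots,R^{7}_\eta$, which have no counterpart in the globally critical setting of \cite{BRS19} and stem from the cutoff $\eta$ introduced in \Cref{reg:th:critmap}; one must check, as in \Cref{pr:reg:estremainderwedge}, that their crude $L^\infty$-bounds $|\avint_{x\triangleright y}\eta(z)u(z)\cdot(\varphi)_{\RZ}\,dz|\aleq\|\eta\|_\infty\|u\|_\infty^2\|\varphi\|_{L^1}\aleq\rho$ reassemble into the energy $\E^{q}(u)$ plus harmless powers of $r$ and $\rho$, and that the merely local $W^{1+\frac1q,q}$-regularity of $\gamma$ never causes a divergent tail --- which is precisely why every global quantity is expressed through the extension $\tilde u$ of \Cref{reg:rm:extension}.
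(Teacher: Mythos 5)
Your proposal follows the paper's proof essentially step for step: duality against $\psi\in C_c^\infty$ with $\|\psi\|_{L^\p}\le1$, dyadic decomposition $\psi=\eta_{B_{2^{2K}\rho}}\lapin{\frac1\p}\psi+\sum_l(\ldots)$, three-term-commutator split of the main piece into $\lapla{\frac1{2\p}}(\varphi\tilde u)\wedge\Gamma u$ (which becomes $Q_{(B_{2^L\rho})^2}(u,(\varphi\tilde u\wedge)_{ij})$ and is controlled through \Cref{la:EulerLagrange} plus \Cref{pr:reg:estremainderwedge}, after splitting off tail domains $D_2,D_3,D_4$), a Leibniz term, and a genuine $H_{\frac1\p}$-commutator, with the tail annuli handled by \Cref{pr:app:GammaEst} and \Cref{app:est:threetermcomm}. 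You also correctly identify the two places where the locality of the critical-point property forces extra work: the admissibility of the tangential test function built out of $\lapin{\frac1\p}\psi$ (which is exactly why $\p>q$ with $\frac1q-\frac1\p>0$ small is used), and the $\eta$-corrector remainders $R^4,\dots,R^7_\eta$ whose crude $L^1$-bound $\aleq\rho$ reassembles into $\rho(\E^q(u)+r^{-3}+[u]^q_{W^{\frac1q,q}(B(x_0,r))})$.

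One place where your sketch is lighter than the paper's argument: the Leibniz term $I_2=-\int\varphi\,\lapla{\frac1{2\p}}\tilde u\wedge\Gamma u$ is \emph{not} a commutator remainder and cannot be treated the same way as the $H_{\frac1\p}$ piece. The paper handles it by using that $\tilde u\wedge$ is orthogonal to $\tilde u$, which allows one to subtract the term $\tfrac12(\tilde u(z_2)-\tilde u(x))(\varphi(x)+\varphi(y))$ for free inside the $\avint$, producing the compensation kernel $\Phi$; only then does the $F,G,H$-kernel machinery of \Cref{la:threetermforGandH} and \Cref{app:pro:FGHKernel} apply. If you do not insert this compensating term, the brute estimate of $I_2$ loses one power of $|x-y|$ and the resulting kernel is not summable, so this orthogonality trick is essential and should be spelled out rather than folded into the phrase ``commutator remainders.''
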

\begin{proof}
	We prove the statement along the lines of \cite[Lemma 3.8]{BRS19}. We first have by duality that 
	\[
	\| \chi_{B_{2^K\rho }} u \, \wedge  \,  \Gamma_{\frac 1\p,B_{2^{L}\rho}\times B_{2^{L}\rho}} u  \|_{L^{\frac{\p}{\p-1}}} \aleq \int_{\R} u(z) \, \wedge \, \Gamma_{\frac 1\p,B_{2^{L}\rho}\times B_{2^{L}\rho}} u (z) \cdot \psi(z) \, dz
	\]
	for a map $\psi\in C^\infty_c(B_{2^K\rho}, \R^3)$ with $\| \psi\|_{L^{\p}}\leq 1$. Hence, it is sufficient to show for a scalar $\psi\in C^\infty_c(B_{2^K\rho})$ that 
	\[
	\begin{split}
	&\left| \int_{\R} u(z) \, \wedge \, \Gamma_{\frac 1\p,B_{2^{L}\rho}\times B_{2^{L}\rho}} u (z) \cdot \psi(z) \, dz \right| \\
	& \aleq [u]^q_{W^{\frac 1q,q }(B_{2^{2L}\rho})} + [u]^{2q-3}_{W^{\frac 1q,q }(B_{2^{2L}\rho})}  + \sum_{l=1}^\infty 2^{-\sigma (K+l)} [\tilde u]^{q-1}_{W^{\frac 1q,q }(B_{2^{2L+l}\rho})} \\
	& \quad + 2^{-\sigma K} [u]^{q-1}_{W^{\frac 1q,q }(B_{2^{2L}\rho})} +  \rho \left(\E^{q}(u) +  r^{-3} + [u]_{W^{\frac{1}{q},q}(B(x_0,r))}^q \right).
	\end{split}
	\]
	Note that $u$ in 
	\[
		\left| \int_{\R} u(z) \, \wedge \, \Gamma_{\frac 1\p,B_{2^{L}\rho}\times B_{2^{L}\rho}} u (z) \cdot \psi(z) \, dz \right|
	\]
	is only considered on $B(x_0,r)$ due to $\supp\psi \subset B_{2^K\rho}$, the local definition of $\Gamma$ for $B_{2^L\rho}$ in \eqref{def:GammaQB}, and $B_{2^K\rho}\subset B_{2^L\rho} \subset B(x_0,r)$ ($L\gg K$). Therefore, using that $u$ coincides with $\tilde u$ on $B(x_0,r)$ by construction, cf.~\Cref{reg:rm:extension}, we can continue with $\tilde u$ from now on as it is globally $W^{\frac 1q,q}$-regular in contrast to $u$, which is only in $W^{\frac 1q,q}(B(x_0,r),\R^3)$. However, if later on we obtain $W^{\frac 1q,q}$-seminorms of $\tilde u$ restricted to subsets of $B(x_0,r)$, we may switch back to the original function $u$. 

	With the help of usual cutoff functions, i.e. $\eta_{B_R}\in C^\infty_c (B_{2R})$ with $\eta_{B_R} \equiv 1$ on $B_R$ and $\|\nabla^k \eta_{B_R}\|_{L^\infty} \aleq R^{-k}$ for $R>0$, we decompose the left-hand side into 
	\[
	\int_{\R} \tilde u(z) \, \wedge \, \Gamma_{\frac 1\p,B_{2^{L}\rho}\times B_{2^{L}\rho}} \tilde u (z) \, \psi(z) \, dz = I + \sum_{l=1}^\infty II_l,
	\]
	where 
	\[
	\begin{split}
	I &:= \int_{\R} \lapla{\frac{1}{2\p}} (\eta_{B_{2^{2K}\rho}} \lapin{\frac{1}{\p}} \psi)(z) \tilde u(z) \, \wedge \, \Gamma_{\frac 1\p,B_{2^{L}\rho}\times B_{2^{L}\rho}} \tilde u (z)  \, dz, \\
	II_l &:= \int_{\R} \lapla{(\frac 1q - \frac{1}{\p})} \left( \lapla{\frac{1}{2\p}} (\eta_{{B_{2^{2K+l+1}\rho}} \setminus B_{2^{2K+l}\rho}} \lapin{\frac{1}{\p}} \psi) \tilde u \right)(z)  \, \wedge \, \lapin{(\frac 2q - \frac{2}{\p})} \Gamma_{\frac 1\p,B_{2^{L}\rho}\times B_{2^{L}\rho}} \tilde u (z)  \, dz.
	\end{split}
	\]
	
	We first focus on the terms $II_l$. 
	For that, we define  
	\[
		\varphi (z)  :=\lapla{(\frac 1q - \frac{1}{\p})}  \left( \lapla{\frac{1}{2\p}} (\eta_{{B_{2^{2K+l+1}\rho}} \setminus B_{2^{2K+l}\rho}} \lapin{\frac{1}{\p}} \psi) \tilde u \right)(z),
	\]
	and get by equalities \eqref{eq:QRiesz} as well as \eqref{eq:reg:RieszopGamma}, and the boundedness of the factor $ k(x,y)^{-\frac{q+2}{2}}$ by \Cref{rm:reg:boundedfactorc},
	that 
	\begin{align} \label{reg:eq:splitPhi1}
	\begin{split}
	& \left|  \int_{\R}  \varphi(z) \wedge \lapin{(\frac 2q - \frac{2}{\p})} \Gamma_{\frac 1\p,B_{2^{L}\rho}\times B_{2^{L}\rho}} \tilde u (z)  \, dz \right| \\
	& = \left|  \int_{\R} \varphi(z) \wedge \Gamma_{\frac 2q - \frac 1\p,B_{2^{L}\rho}\times B_{2^{L}\rho}} \tilde u (z)  \, dz \right| \\
	& =  c \, \bigg| \iint_{B_{2^{L}\rho}^2} |\avint_{x\triangleright y} \tilde u(z_0)-\tilde u(x) \, dz_0 |^{q-2} \, k(x,y)^{-\frac{q+2}{2}} \\ 
	& \quad \left( \avint_{x\triangleright y} \avint_{x\triangleright y} ((\tilde u\, \wedge)_{ij} (z_1)- (\tilde u\, \wedge)_{ij} (x)) \cdot \left(\lapin{(\frac 2q - \frac1\p)} \varphi (z_2)- \lapin{(\frac 2q - \frac1\p)} \varphi (x)\right) \, dz_1 \, dz_2  \right)_{i=1}^3 \frac{dx\, dy}{\rho(x,y)^2} \bigg| \\
	& \aleq  \iint_{B_{2^{L}\rho}^2}\left( \avint_{x\triangleright y} |\tilde u(z_0)-\tilde u(x)| \, dz_0 \right)^{q-2} \avint_{x\triangleright y} |\tilde u(z_1)-\tilde u(x)| \, dz_1 \\
	& \quad \cdot \avint_{x\triangleright y} |\lapin{(\frac 2q - \frac1\p)} \varphi(z_2) - \lapin{(\frac 2q - \frac1\p)} \varphi (x)| \, dz_2 \frac{dx\, dy}{\rho(x,y)^2} \\
	& \aleq [\tilde u]^{q-1}_{W^{\frac 1q,q} (B_{2^{L}\rho})} [\lapin{(\frac 2q - \frac1\p)}\varphi]_{W^{\frac 1q,q} (B_{2^{L}\rho})} \\
	& \aleq [u]^{q-1}_{W^{\frac 1q,q} (B_{2^{L}\rho})} \| \varphi\|_{L^{\frac{1}{\frac 2q - \frac 1\p}}},
	\end{split}
	\end{align}
	where we applied the identity \Cref{la:id2} and Sobolev inequality \Cref{app:thm:sobinequ} at the end. It remains to estimate $\| \varphi\|_{L^{\frac{1}{\frac 2q - \frac 1\p}}}$. For this purpose we introduce the three term commutator for any $\alpha > 0$ by 
	\[
	\label{eq:threetermcommdef}
	H_\alpha(f,g) = \lapla{\frac \alpha 2} (fg) - f\lapla{\frac \alpha 2} g - g \lapla{\frac \alpha 2} f.
	\]
	Taking advantage of the three term commutator estimate \Cref{app:est:threetermcomm} and, in addition, of the uniform boundedness of $\tilde u$, cf.~\Cref{reg:rm:extension}, H\"older's inequality, Sobolev inequalities \Cref{app:thm:classobinequ} and \Cref{app:thm:sobinequ} for $\frac 1q -\frac 1\p >0$ very small, and \Cref{app:loc1}, we estimate
	\[
	\begin{split}
	& \|\varphi_1\|_{L^{\frac{1}{\frac 2q - \frac 1\p}}} \\
	& \aleq \|\tilde u\|_{L^\infty}\|\lapla{\frac 12 (\frac 1\p + 2(\frac{1}{q}-\frac 1\p))} (\eta_{{B_{2^{2K+l+1}\rho}} \setminus B_{2^{2K+l}\rho}} \lapin{\frac{1}{\p}} \psi) \|_{L^{\frac{1}{\frac 2q - \frac 1\p}}}\\
	& \quad + \| \lapla{\frac{1}{q}-\frac 1\p} \tilde u \|_{L^{\frac{1}{\frac 2q - \frac2\p}}} \| \lapla{\frac{1}{2\p}} (\eta_{{B_{2^{2K+l+1}\rho}} \setminus B_{2^{2K+l}\rho}} \lapin{\frac{1}{\p}} \psi) \|_{L^{\p}} \\
	& \quad+  \|H_{\frac 2q - \frac2\p}(\tilde u, \lapla{\frac{1}{2\p} } (\eta_{{B_{2^{2K+l+1}\rho}} \setminus B_{2^{2K+l}\rho}} \lapin{\frac{1}{\p}} \psi)) \|_{L^{\frac{1}{\frac 2q - \frac 1\p}}}\\
	& \aleq  \left( \|\tilde  u \|_{L^\infty} + \| \lapla{\frac{1}{2\p}} \tilde u \|_{L^{\p}} \right) \, 2^{-(K+l+1)\frac{\p-1}{\p}}\|\psi \|_{L^{\p}}\\
	& \quad+  \| \lapla{\frac{1}{q}-\frac 1\p} \tilde u \|_{L^{\frac{1}{\frac 2q - \frac2\p}}} \|\lapla{\frac 12 (\frac 1\p + 2(\frac{1}{q}-\frac 1\p))} (\eta_{{B_{2^{2K+l+1}\rho}} \setminus B_{2^{2K+l}\rho}} \lapin{\frac{1}{\p}} \psi) \|_{L^{\frac{1}{\frac 2q - \frac 1\p}}} \\
	& \aleq 2^{-(K+l)\sigma} \left( \|\tilde u\|_{L^\infty} + [\tilde u ]_{W^{\frac 1q,q }(\R)} \right)
	\end{split}
	\]
	for some small $\sigma = \sigma (q)>0$.

	Now we switch to the term $I$. We again start by introducing
	\[
	\varphi(z):=\eta_{B_{2^{2K}\rho}} \lapin{\frac{1}{\p}} \psi (z),
	\]
	and observe by \Cref{app:loc1} together with our assumption $\| \psi\|_{L^{\p}}\leq 1$ that
	\begin{equation}\label{reg:est:phibounded}
	\| \lapla{\frac{1}{2\p}} \varphi \|_{L^{\p}} \aleq 1.
	\end{equation}
	Next we split $I$ into three terms with respect to the three term commutator \eqref{eq:threetermcommdef} as 
	\[
	\begin{split}
	I_1 & := \int_{\R} \lapla{\frac{1}{2\p}} (\varphi \tilde u)(z) \, \wedge \, \Gamma_{\frac 1\p,B_{2^{L}\rho }\times B_{2^{L}\rho}} \tilde u (z)  \, dz, \\
	I_2 & := - \int_{\R} \varphi(z) \lapla{\frac{1}{2\p}} \tilde u(z) \, \wedge \, \Gamma_{\frac 1\p,B_{2^{L}\rho}\times B_{2^{L}\rho}} \tilde u (z)  \, dz, \\
	I_3 & := - \int_{\R} H_{\frac 1\p} (\varphi,\tilde u) (z) \, \wedge \, \Gamma_{\frac 1\p,B_{2^{L}\rho}\times B_{2^{L}\rho}} \tilde u (z)  \, dz. 
	\end{split}
	\]
	For the term $I_1$, we have up to a constant depending on $q$ by \eqref{eq:QRiesz}
		\[
	\begin{split}
	|I_1|  & =  c \left|\left(Q^{(q)}_{B_{2^{L}\rho}\times B_{2^{L}\rho}} (u,(\varphi \tilde u \, \wedge)_{ij}^T) \right)_{i=1}^3\right|\\
	& =c  \bigg| \iint_{B_{2^{L}\rho}^2} (\avint_{x\triangleright y} \tilde u(z_0)-\tilde u(x) \, dz_0 )^{q-2}  \, k(x,y)^{-\frac{q+2}{2}}  \\
	& \quad \left( \avint_{x\triangleright y} \avint_{x\triangleright y} (\tilde u (z_1)- \tilde u (x)) \cdot \left( (\varphi \tilde u \, \wedge)_{ij}^T (z_2)-  (\varphi \tilde u \, \wedge)_{ij}^T (x)\right) \, dz_1 \, dz_2  \right)_{i=1}^3 \frac{dy\, dx}{\rho(x,y)^2} \bigg|.
	\end{split}
	\]
	Note that $\tilde u$ is considered only on $B_{2^L\rho}\subset B(x_0,r)$ here anymore. Hence, we can exchange $\tilde u$ back to $u$, as they coincide on $B(x_0,r)$ by construction, cf.~\Cref{reg:rm:extension}.
	Then we simplify the expression by the skew-symmetry of $\varphi u\wedge$ and split $I_1$ by triangle inequality into 
	\[
	\begin{split}
		& \sum_{m=1}^4 \bigg| \iint_{D_m} (\avint_{x\triangleright y} u(z_0)-u(x) \, dz_0 )^{q-2} \, k(x,y)^{-\frac{q+2}{2}}   \\
		& \left( \avint_{x\triangleright y} \avint_{x\triangleright y} (u (z_1)- u (x)) \cdot \left( (\varphi u \, \wedge)_{ij} (z_2)-  (\varphi u \, \wedge)_{ij} (x)\right) \, dz_1 \, dz_2  \right)_{j=1}^3 \frac{dy\, dx}{\rho(x,y)^2} \bigg| \\
		& \aleq  \sum_{m=1}^4 \sum_{j=1}^3 |Q^{(q)}_{D_m}(u,(\varphi u \, \wedge)_{ij})|,
	\end{split}
	\]
	where 
	\[
	\begin{split}
		D_1 & = \R/\Z \times \R/\Z,\\
		D_2 & = (\R/\Z \setminus B_{2^{L}\rho}) \times B_{2^{L}\rho} , \\
		D_3 & = B_{2^{L}\rho} \times (\R/\Z \setminus B_{2^{L}\rho}),\\
		D_4 & = (\R/\Z \setminus B_{2^{L}\rho}) \times (\R/\Z \setminus B_{2^{L}\rho}).
	\end{split}
	\]
	For the term with integration domain $D_1$, we employ the assumption of locally critical embeddedness of $\gamma$ with respect to $\tp^{q+2,q}$ in $B(x_0,r)$. Hence \Cref{reg:th:critmap} yields that $u$ is a critical map of $\E^{q}_{\eta}$ in $B(x_0,r)$ for some suitable smooth function $\eta  \in C_c^\infty(B(x_0,\tfrac r2),[0,\infty))$. As therefore $u$ fulfills the Euler-Lagrange equations \Cref{la:EulerLagrange} for  any $(\varphi u \wedge)_{ij} \in T_u\S^2$, $j=1,2,3$, we observe
	\[
	\begin{split}
	|Q^{(q)} (u, (\varphi u \wedge)_{ij}) | \aleq \sum_{k=1}^3 |R^{k,(q)}(u, (\varphi u \wedge)_{ij}) | + \sum_{k=4}^7 |R^{k,(q)}_\eta(u, (\varphi u \wedge)_{ij}) |,
	\end{split}
	\]
	where we set $R^{k,(q)} = R^{k,(q+2,q)}$ for $q\geq2$. Therefore, \Cref{pr:reg:estremainderwedge} leads to
	\begin{align*}
		& |Q^{(q)} (u, (\varphi u \wedge)_{ij}) |  \\
		& \aleq  [u]_{W^{\frac{1}{q},q}(B_{2^{3K}\rho})}^{2q-3}  + [u]_{W^{\frac{1}{q},q}(B_{2^{3K}\rho})}^{q+1}\\ 
		& + \sum_{l=1}^\infty 2^{-\sigma(3K+l)}[\tilde{u}]_{W^{\frac{1}{q},q}(B_{2^{3K + l}\rho})}^{q-1}  +  \rho \left(\E^{q}(u) +  r^{-3} + [u]_{W^{\frac{1}{q},q}(B(x_0,r))}^q\right)
	\end{align*}
	for $0 < \sigma := \tfrac 2q \leq  1$. 
	We estimate the terms with domains $D_m$ for $m=2,3,4$ up to positive constant by methods similar to \eqref{reg:est:remainder1global} by
	\[
	\begin{split}
	\|u\|_{L^\infty}\iint_{D_m}  \left( \avint_{x\triangleright y} |u(z_0)-u(x)| \, dz_0 \right)^{q-1}  \left( \avint_{x\triangleright y} |\varphi(z_1) - \varphi (x)| \, dz_1 \right) \frac{dx\, dy}{\rho(x,y)^2}. 
	\end{split}
	\]
	Due to the fact that $u$ is not known to be in the fractional Sobolev space $W^{\frac 1q,q}$ outside of $B(x_0,r)$, we need to distinguish for the domain $D_2$ the cases $(\R/\Z\setminus B(x_0,r))\times B_{2^{L}\rho}$ and $(B(x_0,r)\setminus B_{2^{L}\rho})\times B_{2^{L}\rho}$. The first case we estimate along the lines of \eqref{reg:eq:roughestimateR1tail} since $\rho(x,y)\geq \tfrac r4$ and to the second case we can apply due to $\supp \varphi \subset B_{2^{2K}\rho}$ an adapted version of \Cref{la:RminusBest}, which gives in total with $\tilde u \in W^{\frac 1q,q}(\R)$ the upper bound up to a positive constant
	\[
		\rho \, r^{-3} + \sum_{l=1}^\infty 2^{-\sigma(L+l)}[\tilde u]^{q-1}_{W^{\frac 1q,q}(B_{2^{L+2K+l}\rho})}  .
	\]
	We treat the term with domain $D_3$ similarly by symmetry. For the integration domain $D_4$ we can deduce the same bound in the following manner. First we also distinguish several subdomains according to the only locally known fractional Sobolev regularity of $u$ in $B(x_0,r)$. In particular, we have to study the cases 
	\begin{align*}
		(\R/\Z \setminus B_{2^L\rho})\times(\R/\Z \setminus B_{2^L\rho}) & = (\R/\Z \setminus B(x_0,r))\times(\R/\Z \setminus B(x_0,r)) \\
		& \quad  \cup (\R/\Z \setminus B(x_0,r))\times(B(x_0,r) \setminus B_{2^L\rho}) \\
		& \quad  \cup (B(x_0,r) \setminus B_{2^L\rho})\times(\R/\Z \setminus B(x_0,r))\\
		& \quad \cup  (B(x_0,r) \setminus B_{2^L\rho})\times  (B(x_0,r) \setminus B_{2^L\rho}),
	\end{align*}
	where we observe in the first three cases that the double integral either equals $0$ due to $\supp \varphi \subset B_{2^{2K}\rho}$ or can be estimated with help of $\rho(x,y)\geq r$ similarly to \eqref{reg:eq:roughestimateR1tail}. For the last appearing case we use an adapted version of \Cref{la:RminusBest}.
	
	For the term $I_2$, we first observe by the definition of $\Gamma_{\beta, B\times B}$ in \eqref{def:GammaQB} 
	\[
	\begin{split}
	I_2 & \aeq - \int_{B_{2^{L}\rho}} \int_{B_{2^{L}\rho}} | \avint_{x\triangleright y} \tilde u(z_0)-\tilde u(x) \, dz_0 |^{q-2} \, k(x,y)^{-\frac{q+2}{2}} \\
	& \avint_{x\triangleright y} (\tilde u(z_1)-\tilde u(x))^T \, dz_1 \ \avint_{x\triangleright y} (\lapin{\frac 1\p} (\varphi\lapla{\frac{1}{2\p}}\tilde u\wedge)(z_2) - \lapin{\frac 1\p} (\varphi\lapla{\frac{1}{2\p}}\tilde u\wedge)(x)) \, dz_2 \frac{dy\, dx}{\rho(x,y)^2}.
	\end{split}
	\]
	Now since $\tilde u\wedge$ is orthogonal to $\tilde u$, we rewrite 
	\[
	\begin{split}
	& \avint_{x\triangleright y} (\tilde u(z_1)-\tilde u(x))^T \, dz_1 \ \avint_{x\triangleright y} (\lapin{\frac 1\p} (\varphi\lapla{\frac{1}{2\p}}\tilde u\wedge)(z_2) - \lapin{\frac 1\p} (\varphi\lapla{\frac{1}{2\p}}\tilde u\wedge)(x)) \, dz_2 \\
	& = \avint_{x\triangleright y}  (\tilde u(z_1)-\tilde u(x))^T \, dz_1 \  \avint_{x\triangleright y} \tilde{\Phi}(z_2,x)  \, dz_2,
	\end{split}
	\]
	where 
	\[
	\tilde{\Phi}(z_2,x) := \lapin{\frac 1\p} (\varphi\lapla{\frac{1}{2\p}}\tilde u\wedge)(z_2) - \lapin{\frac 1\p} (\varphi\lapla{\frac{1}{2\p}}\tilde u\wedge)(x) - \tfrac 12 (\tilde u\wedge(z_2) - \tilde u\wedge (x)) (\varphi(x)+ \varphi(y)).
	\]
	Therefore, we have by \Cref{rm:reg:boundedfactorc}
	\begin{equation}\label{eq:reg:estI2}
	\begin{split}
	|I_2| & \aleq \int_{B_{2^{L}\rho}} \int_{B_{2^{L}\rho}} \left( \avint_{x\triangleright y} |\tilde u(z_0)-\tilde u(x)| \, dz_0 \right)^{q-1}  \avint_{x\triangleright y} |\Phi(z_2,x)| \, dz_2 \frac{dy\, dx}{\rho(x,y)^2},
	\end{split}
	\end{equation}
	where
	\[
	\Phi(z_2,x):=  \lapin{\frac 1\p} (\varphi\lapla{\frac{1}{2\p}}\tilde u)(z_2) - \lapin{\frac 1\p} (\varphi\lapla{\frac{1}{2\p}}\tilde u)(x) - \tfrac 12 (\tilde u(z_2) - \tilde u(x)) (\varphi(x)+ \varphi(y)).
	\]
	Now we are in the position to apply an adapted version of \cite[Lemma 6.6]{S15}. For that reason, we begin with defining $U:= \lapla{\frac{1}{2\p}}\tilde u$, and find
	\[
	\begin{split}
	\Phi(z_2,x)	& =  \lapin{\frac 1\p} (\varphi \, U)(z_2) - \lapin{\frac 1\p} (\varphi \,  U)(x) - \tfrac 12 (\tilde u(z_2) - \tilde u(x)) (\varphi(x)+ \varphi(y)) \\
	& \aleq  \int_{\R} (|z_2-z|^{\frac 1\p -1}- |x-z|^{\frac 1\p -1}) \,  U(z) \, \varphi(z) \, dz \\
	& -\tfrac 12 \int_{\R} (|z_2-z|^{\frac 1\p -1}- |x-z|^{\frac 1\p -1}) \,  U(z) \, (\varphi(x)+\varphi(y)) \, dz \\
	& \aleq -\tfrac 12 \int_{\R} (|z_2-z|^{\frac 1\p -1}- |x-z|^{\frac 1\p -1}) \,  U(z) \, (\varphi(x)+\varphi(y)-2 \varphi(z)) \, dz. 
	\end{split}
	\]
	Together with \Cref{la:threetermforGandH} we obtain for $\frac 1\p < \frac 1q$ large enough and $\eps < \frac 1\p - \frac{1}{2q} < 1$ small enough
	\[
	\begin{split}
	& |\avint_{x\triangleright y} \Phi(z_2,x) \, dz_2 | \\
	& \aleq  \int_{\R} \left( \avint_{x\triangleright y}  | |z_2-z|^{\frac 1\p -1}- |x-z|^{\frac 1\p -1}| \, dz_2 \right)  |U(z)| \, |\varphi(x)+\varphi(y)-2 \varphi(z)| \, dz \\
	& \aleq  \int_{\R}   |x-y|^{\frac 1q+\eps} \, (\mathcal{M}\mathcal{M}\lapla{\frac{1}{4q}}\varphi(x) + \mathcal{M}\mathcal{M}\lapla{\frac{1}{4q}}\varphi(y) + \mathcal{M}\mathcal{M}\lapla{\frac{1}{4q}}\varphi(z) ) \\
	& \cdot k_{\frac 1\p - \frac{1}{2q}-\eps, \frac 1\p}(x,y,z) |U(z)| \, dz.
	\end{split}
	\]
	Furthermore, we have by \Cref{pr:estmax} for small $\delta >0$
	\[
	\begin{split}
	\left(\avint_{x\triangleright y} |\tilde u(z_0)-\tilde u(x)|\, dz_0 \right)^{q-1} \aleq |x-y|^{(\frac 1\p - \delta)(q-1)} \left(\mathcal{M}\mathcal{M} \lapla{\frac{\frac 1\p-\delta}{2}} \tilde u(x)\right)^{q-1}.
	\end{split}
	\] 
	We conclude, 
	\[
	\begin{split}
	|I_2| & \aleq \int_{\R} \int_{\R } \int_{\R }   |x-y|^{ \frac 1q+\eps + (\frac 1\p - \delta)(q-1) - 2} \chi_{B_{2^{L}\rho}}(x) \left( \mathcal{M}\mathcal{M} \lapla{\frac{\frac 1\p-\delta}{2}} \tilde u(x)\right)^{q-1}  \\
	& \cdot (\mathcal{M}\mathcal{M}\lapla{\frac{1}{4q}}\varphi(x) + \mathcal{M}\mathcal{M}\lapla{\frac{1}{4q}}\varphi(y) + \mathcal{M}\mathcal{M}\lapla{\frac{1}{4q}}\varphi(z) ) \\
	& \cdot k_{\frac 1\p - \frac{1}{2q}-\eps, \frac 1\p}(x,y,z) \,  |U(z)| \, dy \, dz \, dx .
	\end{split}
	\]
	Arguing along the lines of the proof of \Cref{la:reg:rhs1}, we therefore gain with assumption \eqref{reg:est:phibounded}
	\[
	\begin{split}
	|I_2| & \aleq  \|\lapla{\frac{\frac 1\p - \delta}{2} } \tilde u \|^{q-1}_{L^{\frac 1\p-\delta}} \ \|\lapla{\frac{1}{4q} }\varphi\|_{L^{2q}} \  \|U\|_{L^{\p}}\\
	& \aleq [\tilde u]^{q-1}_{W^{\frac 1q,q}(\R)} \|\lapla{\frac{1}{2\p}}\varphi \|_{L^{\p}} \,  [\tilde u]_{W^{\frac 1q,q}(\R)} \\
	& \aleq  [\tilde u]^{q}_{W^{\frac 1q,q}(\R)},
	\end{split}
	\]
	where we applied Sobolev's inequality in the last step. To take $u\in W^{\frac 1q,q}(B(x_0,r))$ into consideration, we localize this estimate by introducing the factor $\chi_{B_{2^{L}\rho}}$ in the inequality \eqref{eq:reg:estI2} and using \Cref{app:loc3}, localized maximal inequality \Cref{app:loc4}, and localized Sobolev inequality \Cref{app:locSobinequ}.
	
	With help of integration by parts, equality \eqref{eq:reg:RieszopGamma}, and H\"older's inequality, the last term $I_3$ can be bounded by
	\begin{align*}
		I_3 & = - \int_{\R} H_{\frac 1\p} (\varphi,\tilde u) (z) \, \wedge \, \Gamma_{\frac 1\p,B_{2^{L}\rho}\times B_{2^{L}\rho}} \tilde u (z)  \, dz \\
		& \aleq \int_{\R} |\lapla{\frac 1q - \frac 1\p} H_{\frac 1\p} (\varphi,\tilde u) (z) | \, | \lapin{\frac 2q - \frac 2\p} \Gamma_{\frac 1\p,B_{2^{L}\rho}\times B_{2^{L}\rho}} \tilde u (z) | \, dz \\
		& \aleq \|\lapla{\frac 1q -\frac 1\p} H_{\frac 1\p} (\varphi,\tilde u) \|_{L^{(\frac 1q - \frac 1 \p)^{-1}}} \, \|\Gamma_{\frac 2q- \frac 1\p,B_{2^{L}\rho}\times B_{2^{L}\rho}} \tilde u\|_{L^{(1-\frac 2q + \frac 1\p)^{-1}}}.
	\end{align*}
	Then applying the three-term-commutator estimate \Cref{app:est:threetermcomm}, \Cref{pr:app:GammaEst}, Sobolev inequality \Cref{app:thm:sobinequ} and assumption \eqref{reg:est:phibounded}, we obtain
	\begin{align*}
		I_3 & \aleq \|\lapla{\frac{1}{2\p}} \varphi \|_{L^\p}  \|\lapla{\frac{1}{2\p}} \tilde u \|_{L^\p} [\tilde u]_{W^{\frac 1q,q} (B_{2^{L}\rho })}^{q-1} \\
		& \aleq [\tilde u]_{W^{\frac 1q,q} (\R)} [\tilde u]_{W^{\frac 1q,q} (B_{2^{L}\rho })}^{q-1}.
	\end{align*}
	Also this estimate can get localized, in particular by the localized Sobolev inequality \Cref{app:locSobinequ} and the localized version of the three-term-commutator estimate \Cref{app:est:threetermcomm}.
	
	In total, using again that $u$ and $\tilde u$ coincide on $B(x_0,r)$ by construction, cf.~\Cref{reg:rm:extension}, we obtain an estimate of the form 
	\begin{align*}
		& \| \chi_{B_{2^K\rho }} u \, \wedge  \,  \Gamma_{\frac 1\p,B_{2^{L}\rho} \times B_{2^{L}\rho}} u  \|_{L^{\frac{\p}{\p-1}}}\\
		  &  \aleq  2^{-\sigma K} [u]^{q-1}_{W^{\frac 1q,q }(B_{2^{2L}\rho})} + [u]_{W^{\frac{1}{q},q}(B_{2^{3K}\rho})}^{2q-3}  + [u]_{W^{\frac{1}{q},q}(B_{2^{3K}\rho})}^{q+1}\\ 
		& + \sum_{l=1}^\infty 2^{-\sigma(3K+l)}[\tilde{u}]_{W^{\frac{1}{q},q}(B_{2^{3K + l}\rho})}^{q-1}  +  \rho \left(\E^{q}(u) +  r^{-3} + [u]_{W^{\frac{1}{q},q}(B(x_0,r))}^q\right) \\
		& + \rho \, r^{-3} + \sum_{l=1}^\infty 2^{-\sigma(L+l)}[\tilde u]^{q-1}_{W^{\frac 1q,q}(B_{2^{L+2K+l}\rho})}\\
		& + [u]^q_{W^{\frac 1q,q }(B_{2^{2L}\rho})} + \sum_{l=1}^\infty 2^{-\sigma (L+l)} [\tilde u]^q_{W^{\frac 1q,q }(B_{2^{2L+l}\rho})} \\
		& + [u]^{q-1}_{W^{\frac 1q,q }(B_{2^{L}\rho})} \left([u]_{W^{\frac 1q,q }(B_{2^{3K}\rho})}  + \sum_{l=1}^\infty 2^{-\sigma (L+k)} [\tilde u]_{W^{\frac 1q,q }(B_{2^{3K+l}\rho})} \right),
	\end{align*}
	which can be simplified by choosing $L\geq 3K$ and factoring out the constant $[u]_{W^{\frac 1q,q}(B(x_0,r))}$, wherever it makes sense, to 
	\begin{align*}
		& \| \chi_{B_{2^K\rho }} u \, \wedge  \,  \Gamma_{\frac 1\p,B_{2^{L}\rho} \times B_{2^{L}\rho}} u  \|_{L^{\frac{\p}{\p-1}}}\\
		& \aleq [u]^q_{W^{\frac 1q,q }(B_{2^{2L}\rho})} + [u]^{2q-3}_{W^{\frac 1q,q }(B_{2^{2L}\rho})}  + \sum_{l=1}^\infty 2^{-\sigma (K+l)} [\tilde u]^{q-1}_{W^{\frac 1q,q }(B_{2^{2L+l}\rho})} \\
		& \quad + 2^{-\sigma K} [u]^{q-1}_{W^{\frac 1q,q }(B_{2^{2L}\rho})} +  \rho \left(\E^{q}(u) +  r^{-3} + [u]_{W^{\frac{1}{q},q}(B(x_0,r))}^q \right).
	\end{align*}
\end{proof}

It only remains to prove the decay estimate based on the elaborated left-hand side and right-hand side estimates.

\begin{proof}[Proof of \Cref{pr:decayest}]
	This proof is in the spirit of \cite[Proposition~3.9]{BRS19}. 
	
	First let $K_0$ be a large number, which will be specified later, and we set for $K\geq K_0$ $L:= 10K$ and $N:= 20K$. Moreover, let $\eps,\delta>0$ be small numbers, that will be chosen in the following, and assume
	\begin{equation}\label{reg:est:inismall}
		[\tilde u]_{W^{\frac 1q,q}(B_{2^N\rho})} <\eps.
	\end{equation}
	We then combine the left-hand side and right-hand side estimates to obtain a recursive estimate. First recall from the left-hand side estimate \Cref{pr:reg:lhsestimate} that there exists a large constant $C_\delta >0$ such that
	\begin{align*}
		[u]^q_{W^{\frac{1}{q}, q}(B_\rho)} \aleq & \ [u]_{W^{\frac{1}{q}, q}(B_{2^L\rho})} \|\chi_{B_{2^K\rho}} \Gamma_{\frac 1\p,B_{2^L\rho}\times B_{2^L\rho}}u\|_{L^{\frac{\p}{\p-1} }} +\delta [u]^q_{W^{\frac{1}{q}, q}(B_{2^L\rho})} \\
		&+ C_\delta \left([u]^q_{W^{\frac{1}{q}, q}(B_{2^L\rho})} -[u]^q_{W^{\frac{1}{q}, q}(B_{\rho})}\right)
	\end{align*}
	for $K$ large and $\tfrac 1q - \tfrac 1\p>0$ small enough.
	In the next step we split the operator $\Gamma_{\frac 1\p,B_{2^K\rho}\times B_{2^L \rho}}$ by \eqref{eq:reg:projofop} into
	\begin{equation*}
	\begin{split}
		& \| \chi_{B_{2^K\rho }} \Gamma_{\frac 1\p,B_{2^L \rho}\times B_{2^L \rho}} u  \|_{L^{\frac{\p}{\p-1}}} \\
		& \aleq \| \chi_{B_{2^K\rho }} u \, \cdot \, \Gamma_{\frac 1\p,B_{2^L \rho}\times B_{2^L \rho}} u  \|_{L^{\frac{\p}{\p-1}}} + \| \chi_{B_{2^K\rho }} u \, \wedge  \,  \Gamma_{\frac 1\p,B_{2^L \rho}\times B_{2^L \rho}} u  \|_{L^{\frac{\p}{\p-1}}}.
	\end{split}
	\end{equation*}
	The first term on the right-hand side may be estimated by \Cref{la:reg:rhs1}
	\begin{align*}
		\| \chi_{B_{2^K\rho }} u \, \cdot \, \Gamma_{\frac 1\p,B_{2^L \rho}\times B_{2^L \rho}} u  \|_{L^{\frac{\p}{\p-1}}} \aleq [u]^q_{W^{\frac 1q,q }(B_{2^{2L}\rho})} + \sum_{l=1}^\infty 2^{-\sigma (L+l)} [\tilde u]^q_{W^{\frac 1q,q }(B_{2^{2L+l}\rho})},
	\end{align*}
	whereas the second term on the right-hand side is by \Cref{la:reg:rhs2} bounded by
	\begin{align*}
		& \| \chi_{B_{2^K\rho }} u \, \wedge  \,  \Gamma_{\frac 1\p,B_{2^{L}\rho} \times B_{2^{L}\rho}} u  \|_{L^{\frac{\p}{\p-1}}}\\
		& \aleq [u]^q_{W^{\frac 1q,q }(B_{2^{2L}\rho})} + [u]^{2q-3}_{W^{\frac 1q,q }(B_{2^{2L}\rho})}  + \sum_{l=1}^\infty 2^{-\sigma (K+l)} [\tilde u]^{q-1}_{W^{\frac 1q,q }(B_{2^{2L+l}\rho})} \\
		& \quad + 2^{-\sigma K} [u]^{q-1}_{W^{\frac 1q,q }(B_{2^{2L}\rho})} +  \rho \left(\E^{q}(u) +  r^{-3} + [u]_{W^{\frac{1}{q},q}(B(x_0,r))}^q \right).
	\end{align*}
	 We conclude with setting $\theta:= \tfrac{\sigma}{20}>0$  that there exists a large constant $C= C(q,r, \E^q(u), \|u\|_{L^\infty}, \|\tilde u\|_{L^\infty}, [u]_{W^{\frac{1}{q}, q}(B(x_0,r))}, [\tilde u]_{W^{\frac{1}{q}, q}(\R)})>0$ such that 
	\begin{align*}
		& [u]^q_{W^{\frac{1}{q}, q}(B_\rho)} \\
		& \leq 
		C [u]^q_{W^{\frac{1}{q}, q}(B_{2^N\rho})} \bigg( [u]_{W^{\frac 1q,q }(B_{2^{N}\rho})}
		  + \delta +2^{-\theta N} \bigg)\\
		& \quad + C C_\delta \bigg([u]^q_{W^{\frac{1}{q}, q}(B_{2^N\rho})} -[u]^q_{W^{\frac{1}{q}, q}(B_{\rho})}\bigg) 
		+ C \rho\\
		& \quad + C \sum_{l=1}^\infty 2^{-\theta (N+l)} [\tilde u]^{q}_{W^{\frac 1q,q }(B_{2^{N+l}\rho})}.
	\end{align*}
	Now we employ the hole-filling technique: We add $C C_\delta  [u]^q_{W^{\frac{1}{q}, q}(B_{2^N\rho})}$ to both sides of the inequality and divide by $2C C_\delta + 1$ subsequently so that we obtain under consideration of the initial bound \eqref{reg:est:inismall} on $[u]^q_{W^{\frac{1}{q}, q}(B_{2^N\rho})} $
	\begin{align*}
		[u]^q_{W^{\frac{1}{q}, q}(B_\rho)} & \leq [u]^q_{W^{\frac{1}{q}, q}(B_{2^N\rho})} \frac{\eps  + \delta +2^{-\theta N} + 2C C_\delta}{2CC_\delta +1} + \rho \\
		& \quad + \sum_{l=1}^\infty 2^{-\theta (N+l)} [\tilde u]^{q}_{W^{\frac 1q,q }(B_{2^{N+l}\rho})}.
	\end{align*}
	If we then choose $\delta$ and $\eps$ small enough, whereas $K_0$ big enough, such that
	\[
	\eps + \delta + 2^{-\theta K_0} \leq \tfrac 12,
	\]
	we gain by defining
	\[
		0< \tau := \frac{\frac 12 +2 C C_\delta}{2 CC_\delta +1} <1
	\]
	the desired estimate for any $N\geq N_0$ with  $N_0=20K_0$
	\begin{align*}
		& [u]^q_{W^{\frac{1}{q}, q}(B_\rho)}  \leq \tau [u]^q_{W^{\frac{1}{q}, q}(B_{2^N\rho})}   + \sum_{l=1}^\infty 2^{-\theta (N+l)} [\tilde u]^{q}_{W^{\frac 1q,q }(B_{2^{N+l}\rho})} + \rho.
	\end{align*}
\end{proof}

%%%%%%%%%%%%%%%%%%%%%%%%%%%%%%%%%%%%%%%%%%%%%%%%%%%%%%%%%%%%

\appendix

\section{Gagliardo-Sobolev Space}
Recall that the seminorm for the fractional Sobolev space $W^{s,p}(B)$ for $s \in (0,1)$, $p \in (1,\infty)$, and $B\subset \R$ is given as 
\[
 [f]_{W^{s,p}(B)} = \brac{\int_{B}\int_{B} \frac{|f(x)-f(y)|^p}{|x-y|^{1+sp}}\, dx\, dy}^{\frac{1}{p}}.
\]
In this section we gather a few useful facts that we need throughout the paper. Most likely all of them are known at least to experts and we do not claim any originality here, but we could not find them in the literature.

We begin with two identifications for the fractional Sobolev space.
\begin{lemma}[Identification 1]\label{la:id1}
Let $s \in (0,1)$, $p \in (1,\infty)$. Then for any ball $B \subset \R$ or $B = \R$ and any $f \in C_c^\infty(\R)$,
 \[
 [f']_{W^{s,p}(B)}^p := \int_{B} \int_{B} \frac{|f'(x)-f'(y)|^p}{|x-y|^{n+sp}}\, dx\, dy \aeq \int_{B}\int_{B} \frac{\abs{\frac{f(y)-f(x)-f'(x)(y-x)}{|x-y|}}^p}{|x-y|^{n+sp}}\, dx\, dy 
\]
The constant depends on $s$ and $p$, but not on the set $B$ or the function $f$.
\end{lemma}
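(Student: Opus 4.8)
The plan is to establish the claimed equivalence as a two-sided estimate, treating each inequality separately; both hinge on the elementary fact that, writing $g(x,y) := f(y)-f(x)-f'(x)(y-x)$, the fundamental theorem of calculus gives $\tfrac{g(x,y)}{y-x} = \mvint_{(x,y)}\bigl(f'(z)-f'(x)\bigr)\,dz$, so that the right-hand side of the asserted equivalence is exactly $\int_B\int_B |x-y|^{-1-sp}\,\bigl|\mvint_{(x,y)}(f'(z)-f'(x))\,dz\bigr|^p\,dx\,dy$.

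First I would prove that this right-hand side is $\lesssim [f']_{W^{s,p}(B)}^p$. Apply Jensen's inequality to pull the $p$-th power inside the average, then write $\mvint_{(x,y)} = \tfrac{1}{|x-y|}\int_{(x,y)}$ and interchange the $y$- and $z$-integrations by Fubini. For fixed $x$ and $z$, the admissible $y$'s --- those for which $z$ lies between $x$ and $y$ --- are contained in a half-line emanating from $x$ subject to $|x-y|\ge|x-z|$, so the inner integral $\int |x-y|^{-2-sp}\,dy$ is bounded by $\int_{|x-z|}^{\infty} r^{-2-sp}\,dr = \tfrac{1}{1+sp}|x-z|^{-1-sp}$; substituting this back reproduces $\tfrac{1}{1+sp}[f']_{W^{s,p}(B)}^p$.

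For the reverse inequality the key observation is the purely algebraic identity $g(x,y)+g(y,x) = (y-x)\bigl(f'(y)-f'(x)\bigr)$, which one obtains by simply adding the two definitions. Hence $|f'(x)-f'(y)| \le \bigl|\tfrac{g(x,y)}{x-y}\bigr| + \bigl|\tfrac{g(y,x)}{x-y}\bigr|$, and after raising to the $p$-th power (losing a factor $2^{p-1}$), dividing by $|x-y|^{1+sp}$, integrating over $B\times B$, and using the symmetry $x\leftrightarrow y$ on the second summand, one gets $[f']_{W^{s,p}(B)}^p \le 2^p\int_B\int_B |x-y|^{-1-sp}\,\bigl|\tfrac{g(x,y)}{x-y}\bigr|^p\,dx\,dy$, i.e.\ the right-hand side of the claim.

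Neither direction is deep --- the hypothesis $f\in C_c^\infty(\R)$ makes all integrals finite so Fubini applies without comment, and the case $B=\R$ is handled verbatim. The only point I would watch, and the closest thing to an obstacle, is that the constants produced (namely $\tfrac{1}{1+sp}$ and $2^p$) must depend on $s$ and $p$ alone; this is precisely why, in the first estimate, I enlarge the $y$-integration from $B$ to a half-line rather than keeping it inside $B$, and why I favor the explicit algebraic identity over any interpolation-type argument that could smuggle in a $B$-dependent constant.
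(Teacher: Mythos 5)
Your proposal is correct and follows essentially the same route as the paper's own proof: the ``$\lesssim$'' direction via Jensen's inequality plus a Fubini interchange that bounds the inner $y$-integral by $\int_{|x-z|}^\infty r^{-2-sp}\,dr$, and the ``$\gtrsim$'' direction via the decomposition of $f'(y)-f'(x)$ through the average $\mvint_{(x,y)} f'$ (your algebraic identity $g(x,y)+g(y,x)=(y-x)(f'(y)-f'(x))$ is exactly that decomposition in compact form), followed by symmetrization in $x\leftrightarrow y$.
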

\begin{proof}
\underline{The $\aleq$-estimate}.

We have
\[
	|f'(x)-f'(y)|^p \aleq |f'(x)- \tfrac{1}{y-x}\int_x^y f'(z)\, dz|^p + |f'(y)- \tfrac{1}{x-y}\int_y^x f'(z)\, dz|^p.
\]
By the fundamental theorem of calculus,
\[
\begin{split}
	&|f'(x)- \tfrac{1}{y-x}\int_x^y f'(z)\, dz|^p + |f'(y)- \tfrac{1}{x-y}\int_y^x f'(z)\, dz|^p \\
	=&
	2 \frac{|f(y)-f(x)-f'(x)(y-x)|^p}{|y-x|^p}.
	\end{split}
\]
Thus, we have the $\aleq$-inequality,
\[
 \int_{B} \int_{B} \frac{|f'(x)-f'(y)|^p}{|x-y|^{n+sp}}\, dx\, dy \aleq \int_{B}\int_{B} \frac{\abs{\frac{f(y)-f(x)-f'(x)(y-x)}{|x-y|}}^p}{|x-y|^{n+sp}}\, dx\, dy.
\]
\underline{The $\ageq$-estimate}.

For the opposite inequality, by the fundamental theorem of calculus and Jensen's inequality,
\[
\begin{split}
	\abs{\frac{f(y)-f(x)-f'(x)(y-x)}{|x-y|}}^p & = \abs{\frac{(y-x) (\tfrac{1}{y-x}\int_x^y f'(z) -f'(x) \, dz)}{|x-y|}}^p \\
	& \leq \frac{1}{|y-x|} \int_{(x,y)} |f'(z)- f'(x)|^p\, dz.
\end{split}
\]
We integrate both sides over $B$ in $x$ and $y$,

\[
\begin{split}
	\int_{B}\int_{B} \frac{\abs{\frac{f(y)-f(x)-f'(x)(y-x)}{|x-y|}}^p}{|x-y|^{1+sp}}\, dy\, dx  
	\leq& \int_{B}\int_{B}\int_{(x,y)} \frac{|f'(z)- f'(x)|^p}{|x-y|^{2+sp}}\, dz\, dy\, dx \\
	\leq&\int_{B}\int_{B}\int_{x >z >y} \frac{|f'(z)- f'(x)|^p}{|x-y|^{2+sp}}\, dz\, dy\, dx \\
 &+\int_{B}\int_{B}\int_{y > z > x} \frac{|f'(z)- f'(x)|^p}{|x-y|^{2+sp}}\, dz\, dy\, dx\\
 \end{split}
\]
Observe that by Fubini for any $x \in B$, 
\begin{equation}\label{eq:id1:est13344}
\begin{split}
 &\int_{B}\int_{y > z > x} \frac{|f'(z)- f'(x)|^p}{|x-y|^{2+sp}}\, dz\, dy \\
 \leq&\int_{B \cap \{z > x\}}|f'(z)- f'(x)|^p \int_{z}^\infty \frac{1}{|x-y|^{2+sp}}\, dy\, dz \\
 =&\int_{B \cap \{z > x\}}|f'(z)- f'(x)|^p \frac{1}{1+sp}\frac{1}{|x-z|^{1+sp}}\, dz \\
 \leq&\frac{1}{1+sp}\int_{B} \frac{|f'(z)- f'(x)|^p}{|x-z|^{1+sp}}\, dz \\
 \end{split}
\end{equation}
Thus,
\[
 \int_{B}\int_{B}\int_{y >z >x} \frac{|f'(z)- f'(x)|^p}{|x-y|^{2+sp}}\, dz\, dy\, dx \aleq [f']_{W^{s,q}(B)}^p,
\]
and likewise 
\[
 \int_{B}\int_{B}\int_{x > z > y} \frac{|f'(z)- f'(x)|^p}{|x-y|^{2+sp}}\, dz\, dy\, dx \aleq [f']_{W^{s,q}(B)}^p. 
\]
We can conclude.

\end{proof}

\begin{lemma}[Identification 2]\label{la:id2}
Let $s \in (0,1)$, $p \in (1,\infty)$. For any $g \in C_c^\infty(\R)$ and any $B \subset \R$ a ball or $B=\R$ we have
 \[
 [g]_{W^{s,p}(B)}^p \aeq \int_{B}\int_{B} \frac{\mvint_{(x,y)} |g(x)-g(z)|^p\, dz}{|x-y|^{1+sp}}\, dx\, dy.
\]
The constant depends on $s$ and $p$ but not on the set $B$ or the function $g$.
\end{lemma}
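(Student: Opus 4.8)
Lemma~\ref{la:id2} is the exact analogue of Lemma~\ref{la:id1}, and the proof should mirror it closely. The plan is to prove the two inequalities separately, using the fundamental theorem of calculus together with Jensen's inequality in one direction and Fubini's theorem in the other.

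For the $\ageq$-estimate, I would simply note that for $z$ on the segment $(x,y)$ the distance $|x-z| \le |x-y|$, so $|x-y|^{-1-sp} \le |x-z|^{-1-sp}$, and hence
\[
 \int_{B}\int_{B} \frac{\mvint_{(x,y)} |g(x)-g(z)|^p\, dz}{|x-y|^{1+sp}}\, dx\, dy
 \le \int_{B}\int_{B}\int_{(x,y)} \frac{|g(x)-g(z)|^p}{|x-y|^{2+sp}}\, dz\, dy\, dx.
\]
Then I would split the inner region according to whether $z$ lies between $x$ and $y$ with $x<z<y$ or $y<z<x$, and apply Fubini exactly as in \eqref{eq:id1:est13344}: for fixed $x$ and $z$, integrating $|x-y|^{-2-sp}$ over those $y$ with $|x-y|\ge |x-z|$ produces a factor $\frac{1}{1+sp}|x-z|^{-1-sp}$, leaving $\int_B\int_B \frac{|g(x)-g(z)|^p}{|x-z|^{1+sp}}\,dz\,dx = [g]_{W^{s,p}(B)}^p$. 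Summing the two symmetric contributions gives the bound with a constant depending only on $s$ and $p$.

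For the $\aleq$-estimate, I would use the triangle inequality $|g(x)-g(y)|^p \aleq |g(x)-\mvint_{(x,y)}g(z)\,dz|^p + |g(y)-\mvint_{(x,y)}g(z)\,dz|^p$ and then Jensen's inequality, which gives $|g(x)-\mvint_{(x,y)}g(z)\,dz|^p = |\mvint_{(x,y)}(g(x)-g(z))\,dz|^p \le \mvint_{(x,y)}|g(x)-g(z)|^p\,dz$, and similarly with $y$ in place of $x$. Since by symmetry of the integration domain $B\times B$ the term with $y$ integrates to the same quantity as the term with $x$ (relabelling $x\leftrightarrow y$ and noting $\mvint_{(x,y)} = \mvint_{(y,x)}$), one obtains $[g]_{W^{s,p}(B)}^p \aleq \int_B\int_B \frac{\mvint_{(x,y)}|g(x)-g(z)|^p\,dz}{|x-y|^{1+sp}}\,dx\,dy$.

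I do not anticipate a genuine obstacle here — everything is elementary. The only points requiring a little care are: (i) making sure the constants are genuinely independent of $B$ (they are, since the Fubini computation only uses the half-line $\{|x-y|\ge |x-z|\}$, and extending the $y$-integration from $B$ to $\R$ only increases it), and (ii) the harmless measure-zero ambiguity in writing $\mvint_{(x,y)}$ versus $\mvint_{(y,x)}$, which does not affect the integrals. As in Lemma~\ref{la:id1}, the statement is given for $g \in C_c^\infty(\R)$, so no approximation argument is needed; if one wants the identity for general $g$ with finite right-hand side it follows by density, but that is not asserted.
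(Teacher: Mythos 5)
Your proposal is correct and follows essentially the same strategy as the paper: Jensen's inequality plus the symmetry $x\leftrightarrow y$ for the $\aleq$ direction, and Fubini with the half-line integral of $|x-y|^{-2-sp}$ for the $\ageq$ direction. One small remark: in the $\ageq$ step, the displayed inequality $\int_B\int_B \frac{\mvint_{(x,y)}\cdots}{|x-y|^{1+sp}} \le \int_B\int_B\int_{(x,y)}\frac{\cdots}{|x-y|^{2+sp}}$ is in fact an identity (it is just unwinding the definition of $\mvint$), so the preliminary observation that $|x-z|\le |x-y|$ is not actually used there and could be dropped.
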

\begin{proof}
\underline{The $\aleq$-estimate}

We have by Jensen's inequality,
\[
\begin{split}
	|g(x)-g(y)|^p \aleq& \abs{g(x)- \mvint_{(x,y)} g(z)\, dz}^p + \abs{g(y)- \mvint_{(x,y)} g(z)\, dz}^p\\
	 \leq&\mvint_{(x,y)} \abs{g(x)- g(z)}^p\, dz+ \mvint_{(x,y)} \abs{g(y)- g(z)}^p\, dz.
	\end{split}
\]
Thus,
\[
 [g]_{W^{s,p}(B)}^p \aleq \int_{B}\int_{B} \frac{\mvint_{(x,y)} |g(x)-g(z)|^p\, dz}{|x-y|^{1+sp}}\, dx\, dy.
\]

\underline{The $\ageq$-estimate}

The opposite direction is a consequence of Fubini's theorem and the fact that $B$ is convex. Indeed,
\[
\begin{split}
	&\int_{B}\int_{B} \frac{\mvint_{(x,y)} |g(x)-g(z)|^p\, dz}{|x-y|^{1+sp}}\, dy\, dx \\
	\leq&\int_{B}\int_{B}\int_{x >z >y} \frac{|g(x)- g(z)|^p}{|x-y|^{2+sp}}\, dz\, dy\, dx \\
 &+\int_{B}\int_{B}\int_{y > z > x} \frac{|g(x)- g(z)|^p}{|x-y|^{2+sp}}\, dz\, dy\, dx.\end{split}
	\]
Now we argue as in \eqref{eq:id1:est13344} to obtain the claim.
\end{proof}

\begin{lemma}[Sobolev embedding]\label{la:sob1}
 Let $B \subset \R$ a ball. For $s,t \in (0,1)$, $t < s$, $p, q \in (1,\infty)$ with \[s-\frac{1}{p} \geq t-\frac{1}{q},\]
we have
\begin{equation}\label{eq:sob1:1}
 [f]_{W^{t,q}(B)} \leq C(s,t,p,q)\ \diam(B)^{s-t-\frac{1}{p}+\frac{1}{q}}\ [f]_{W^{s,p}(B)}.
\end{equation}
If $B = \R$ and $s-\frac{1}{p} = t-\frac{1}{q}$, then
\begin{equation}\label{eq:sob1:2}
 [f]_{W^{t,q}(B)} \leq C(s,t,p,q)\ [f]_{W^{s,p}(B)}.
\end{equation}
The constant $C(s,t,p,q)$ does not depend on $f$ and $B$. 
\end{lemma}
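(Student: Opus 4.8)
The statement is the standard fractional Sobolev embedding $W^{s,p}\hookrightarrow W^{t,q}$ on an interval, stated with explicit scaling in $\diam(B)$, together with the scale-invariant version on $\mathbb{R}$. The plan is to reduce the general case to the homogeneous inequality on a fixed reference interval (say the unit interval $(0,1)$ or the unit ball) by a translation and rescaling argument, and to obtain that homogeneous inequality either by invoking a known Gagliardo--Nirenberg--Sobolev-type estimate for fractional seminorms or by a direct elementary argument. Concretely, first I would record the scaling behaviour of the Gagliardo seminorm: if $f_\lambda(x):=f(\lambda x)$ then a change of variables gives $[f_\lambda]_{W^{s,p}((a,b))}^p = \lambda^{sp-1}[f]_{W^{s,p}((\lambda a,\lambda b))}^p$, i.e. $[f_\lambda]_{W^{s,p}((a,b))} = \lambda^{s-1/p}[f]_{W^{s,p}((\lambda a,\lambda b))}$. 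Hence if $B=B(x_0,r)$, translating so that $x_0=0$ and setting $\lambda = 2r$ reduces \eqref{eq:sob1:1} to the same inequality on the fixed interval $(-\tfrac12,\tfrac12)$, with the factor $\diam(B)^{s-t-1/p+1/q}$ appearing exactly as the ratio of the two scaling exponents $(s-1/p)-(t-1/q)$. So it suffices to prove, on the fixed interval $I$, that $[f]_{W^{t,q}(I)}\le C\,[f]_{W^{s,p}(I)}$ whenever $s-1/p \ge t-1/q$, $t<s$.

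For the fixed-interval estimate I would split into two regimes according to whether $s-1/p=t-1/q$ (the critical case) or $s-1/p>t-1/q$ (the subcritical case). In the critical case on $\mathbb{R}$, \eqref{eq:sob1:2}, this is precisely the homogeneous fractional Gagliardo--Nirenberg embedding; I would cite the standard reference (e.g. \cite[Theorem~6.5 and its homogeneous counterpart]{Hitchhiker}, or a Besov-space interpolation argument) for $W^{s,p}(\mathbb R)\hookrightarrow W^{t,q}(\mathbb R)$ with $t<s$ and $t-1/q=s-1/p$. For the bounded interval $I$ one can then deduce the estimate from the $\mathbb R$-case by an extension operator, or more simply by the direct inequality $[f]_{W^{t,q}(I)}\le [f]_{W^{t,q}(\mathbb R)}$ after composing with a bounded extension; but since the constant must be independent of $f$ and $B$, the cleanest route is: prove the homogeneous inequality on $\mathbb R$ first, then for a bounded interval use only the subcritical gain. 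In the subcritical case $s-1/p>t-1/q$ on a bounded interval one has the Hölder-type bound $[f]_{W^{t,q}(I)}\le C\,|I|^{\,(s-1/p)-(t-1/q)}[f]_{W^{s,p}(I)}$, which can be obtained by interpolating between $W^{s,p}$ and $L^p$ (or $L^\infty$ when $sp>1$) and using the embedding $[f]_{W^{t,q}(I)}\lesssim \|f-(f)_I\|_{L^{q}(I)}^{\theta}[f]_{W^{s,p}(I)}^{1-\theta}$-type estimates; because the seminorm is translation-invariant we may subtract the average $(f)_I$ freely, so Poincaré on $I$ controls the low-order term.

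The one genuinely substantive ingredient is the critical homogeneous embedding $[f]_{W^{t,q}(\mathbb R)}\lesssim [f]_{W^{s,p}(\mathbb R)}$ when $t-1/q=s-1/p$; I expect this to be the main obstacle to write cleanly from scratch, and I would simply quote it from the literature on fractional Sobolev spaces (it follows, for instance, from the Besov-space characterisation $W^{s,p}=B^s_{p,p}$ together with the standard scale of Besov embeddings $B^{s}_{p,p}\hookrightarrow B^{t}_{q,q}$ for $t-1/q\le s-1/p$, or from a Peetre maximal function / Littlewood--Paley argument). Everything else — the scaling computation, the passage from a ball to a reference interval, the subcritical Hölder gain, and the role of the Poincaré term — is routine and follows the pattern already used in \Cref{la:id1} and \Cref{la:id2}. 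I would present the proof in that order: (1) scaling identity and reduction to the unit ball; (2) critical case on $\mathbb R$ via the cited Besov embedding; (3) subcritical case on a bounded interval via interpolation and Poincaré; (4) assemble \eqref{eq:sob1:1} and \eqref{eq:sob1:2}.
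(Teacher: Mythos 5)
Your plan and the paper's proof agree on the scaffolding: reduce to the unit ball by scaling and translation, prove the critical embedding on $\R$ via the $W^{s,p}\cong F^s_{pp}$ identification and Triebel/Besov embeddings (for $p=q$-diagonal spaces $B^s_{pp}=F^s_{pp}$, so this is the same fact), and control the subcritical case by a Poincar\'e-type step after normalising $(f)_B=0$. But there is a real gap in how you propose to treat bounded intervals.

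You write that ``for a bounded interval use only the subcritical gain,'' and you worry that an extension operator would make the constant depend on $B$. Both of these are off. First, the statement \eqref{eq:sob1:1} has to hold on a ball also in the \emph{equality} case $s-1/p=t-1/q$ (the diameter factor is then $1$); your subcritical interpolation argument gives nothing in that case, so it is not optional. Second, the worry about $B$-dependence of the extension constant disappears once you have rescaled to the fixed unit ball, so an extension is harmless --- but it has to be an extension that controls the \emph{homogeneous} seminorm $[\,\cdot\,]_{W^{s,p}}$ by itself, since that is all you have on the right-hand side. A generic Sobolev extension controls only the full norm, which brings back $\|f\|_{L^p}$ and spoils the clean diameter power. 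This is precisely the point the paper solves with a concrete construction: the inversion extension $g(x)=f(x)$ for $|x|\le 1$, $g(x)=f(x/|x|^2)$ for $|x|>1$, together with the elementary distance comparison $|\tilde x-\tilde y|\le|\tilde x/|\tilde x|^2-\tilde y/|\tilde y|^2|$ for $\tilde x,\tilde y\in B(0,1)$, which yields $[g]_{W^{s,p}(\R)}\aeq[f]_{W^{s,p}(B(0,1))}$ and $[g]_{W^{t,q}(\R)}\aeq[f]_{W^{t,q}(B(0,1))}$ with universal constants. With that in hand the critical case on the ball follows immediately from the critical case on $\R$, and the subcritical case follows by cutting $g$ off and using the inhomogeneous Sobolev embedding plus Poincar\'e (where the assumption $(f)_B=0$ is used). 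The interpolation inequality $[f]_{W^{t,q}(I)}\lesssim\|f-(f)_I\|_{L^q(I)}^\theta[f]_{W^{s,p}(I)}^{1-\theta}$ that you invoke in the subcritical step is also not a standard statement, and I would not rely on it without proof; the paper avoids it entirely.

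In short: you identified the right ingredients and the right reduction, but the proof lives or dies on producing an extension that preserves the homogeneous Gagliardo seminorm with constants independent of $f$, and your sketch neither produces it nor avoids the need for it. Supplying the inversion extension (or an equivalent construction) closes the gap and makes the argument coincide with the paper's.
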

\begin{proof}
We first treat \underline{\eqref{eq:sob1:2}, for $B=\R$, $s-\frac{1}{p} = t-\frac{1}{q}$}. 

In $\R$ we can use the abstract Sobolev embedding theorem for Triebel-spaces,
\[
 [g]_{W^{t,q}(\R)} \aleq [g]_{W^{s,p}(\R)}.
\]
Indeed, by \cite[Proposition, p.14]{RS96}, for $s \in (0,1)$,
\[
 [g]_{W^{t,q}(\R)} \aeq [g]_{\dot{F}^t_{q,q}(\R)}.
\]
By \cite[2.2.3 p.31]{RS96} we have
\[
 [g]_{F^{t}_{qq}(\R)} \aleq [g]_{F^{s}_{pp}(\R)}.
\]
So \eqref{eq:sob1:2} is established.

Next we treat \underline{\eqref{eq:sob1:1}, for a ball $B$}. 

Observe that for any $x_0 \in \R$ and $r > 0$ we have
\[
 [f(x_0+r\cdot)]_{W^{t,q}(B(0,1))} = r^{t-\frac{1}{q}}[f]_{W^{t,q}(B(x_0,r))}
\]
and 
\[
 [f(r\cdot)]_{W^{s,p}(B(0,1))} = r^{s-\frac{1}{p}}[f]_{W^{t,q}(B(x_0,1))}.
\]
So \eqref{eq:sob1:1} follows by scaling and translation from the case $B= B(0,1)$.

Moreover, we can assume that $(f)_{B} := \mvint_{B} f = 0$. Indeed, once we have shown \eqref{eq:sob1:1} under the assumption $(f)_{B} =0$ we can apply it to $f-(f)_{B}$ to get the full result.

So from now on we assume $(f)_{B} = 0$ and $B = B(0,1)$.

Set 
\[
 g(x) := \begin{cases}
          f(x) \quad &|x| \leq 1\\
          f(x/|x|^2) \quad & |x| > 1.
         \end{cases}
\]
We claim that 
\begin{equation}\label{eq:sob1:extensiongoal}
 [g]_{W^{t,q}(\R)} \aeq [f]_{W^{t,q}(B)}, \quad [g]_{W^{s,p}(\R)} \aeq [f]_{W^{s,p}(B)}.
\end{equation}
Indeed, we have 
\[
 [g]_{W^{t,q}(\R)} \geq [g]_{W^{t,q}(B)} = [f]_{W^{t,q}(B)}
\]
which establishes the $\ageq$-case for \eqref{eq:sob1:extensiongoal}.
For the $\aleq$-case of \eqref{eq:sob1:extensiongoal} observe that 
\[
 [g]_{W^{t,q}(\R)}^q = [g]_{W^{t,q}(B(0,1))}^q + [g]_{W^{t,q}(B(0,1)^c)}^q + 2 \int_{B(0,1)^c} \int_{B(0,1)} \frac{|g(x)-g(y)|^q}{|x-y|^{1+sq}}\, dx\, dy.
\]
First we observe by the substitution $\tilde{x} := x/|x|^2$,
\[
 \begin{split}
  [g]_{W^{t,q}(B(0,1)^c)}^q =&\int_{B(0,1)}\int_{B(0,1)} \frac{|f(\tilde{x})-f(\tilde{y})|^q}{|\tilde{x}/|\tilde{x}|^2-\tilde{y}/|\tilde{y}|^2|^{1+tq}}\, |\tilde{x}|\, |\tilde{y}|\, d\tilde{x}\, d\tilde{y}.
 \end{split}
\]
For $\tilde{x},\tilde{y} \in B(0,1)$ we have $|\tilde{x}-\tilde{y}| \leq |\tilde{x}/|\tilde{x}|^2-\tilde{y}/|\tilde{y}|^2|$ and thus
\[
 \begin{split}
  [g]_{W^{t,q}(B(0,1)^c)}^q \leq &\int_{B(0,1)}\int_{B(0,1)} \frac{|f(\tilde{x})-f(\tilde{y})|^q}{|\tilde{x}-\tilde{y}|^{1+tq}}\, d\tilde{x}\, d\tilde{y} = [f]_{W^{t,q}(B(0,1))}^q.
 \end{split}
\]
Similarly,
\[
 \begin{split}
  &\int_{B(0,1)^c} \int_{B(0,1)} \frac{|g(x)-g(y)|^q}{|x-y|^{1+sq}}\, dx\, dy\\
  \leq&\int_{B(0,1)}\int_{B(0,1)} \frac{|f(x)-f(\tilde{y})|^q}{|x-\tilde{y}/|\tilde{y}|^2|^{1+tq}}\, |\tilde{y}|\, dx\, d\tilde{y}.
 \end{split}
\]
For $x,\tilde{y} \in B(0,1)$ we have $|x-\tilde{y}| \aleq |x-\tilde{y}/|\tilde{y}|^2|$ and thus
\[
 \begin{split}
  &\int_{B(0,1)^c} \int_{B(0,1)} \frac{|g(x)-g(y)|^q}{|x-y|^{1+sq}}\, dx\, dy\\
  \aleq&[f]_{W^{t,q}(B(0,1))}^q.
 \end{split}
\]
This establishes \eqref{eq:sob1:extensiongoal}.

From \eqref{eq:sob1:extensiongoal} and \eqref{eq:sob1:2} we obtain \eqref{eq:sob1:1} in the case $s-\frac{1}{p} = t-\frac{1}{q}$.

For \eqref{eq:sob1:1} in the case $s-\frac{1}{p} > t-\frac{1}{q}$ we define 
\[
 h(x) := \eta(x) g(x),
\]
where $\eta \in C_c^\infty(B(0,2),[0,1])$, $\eta \equiv 1$ in $B(0,1)$ is the typical cutoff function. We apply the inhomogeneus Sobolev-inequality, \cite[Proposition, p.14]{RS96}, \cite[2.2.3 p.31]{RS96} to $h$, namely
\[
 [h]_{W^{t,q}(\R)} \aleq  [h]_{W^{s,p}(\R)} + \|h\|_{L^{p}(\R)} \leq  [h]_{W^{s,p}(\R)} + \|g\|_{L^{p}(B(0,2))}.
\]
Now its not too difficult to obtain
\[
 [h]_{W^{s,p}(\R)} \leq \|g\|_{L^{p}(B(0,2))} + [g]_{W^{s,p}(\R)}.
\]
Moreover,
\[
 \|g\|_{L^{p}(B(0,2))} \leq \|f\|_{L^{p}(B(0,1))} = \|f-(f)_{B(0,1)}\|_{L^{p}(B(0,1))}\aleq [f]_{W^{s,p}(B(0,1))},
\]
where in the last step we used the fact that $(f)_{B(0,1))} =0$ and Jensen's inequality. This establishes \eqref{eq:sob1:1}.
\end{proof}

\begin{theorem}[Classical Sobolev Inequality] \label{app:thm:classobinequ} \cite[Theorem 1.5]{S15}
	Let $n\in\N$, $s\geq t\geq 0$, $p\in (1,\tfrac{n}{s-t})$ and define $p_{s,t}^* = \tfrac{np}{n-(s-t)p}$. Then we have for any $f\in C^\infty_0(\R^n)$ 
	\[
	\| (-\lap)^{\frac t2} f \|_{L^{p^*_{s,t}}(\R^n)} \aleq 	\| (-\lap)^{\frac s2} f \|_{L^{p}(\R^n)},
	\]
	or in other words
	\[
		\|\lapin{s-t}f \|_{L^{p^*_{s,t}}(\R^n)} \aleq \|f\|_{L^{p}(\R^n)}.
	\]
\end{theorem}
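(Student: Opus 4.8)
The plan is to reduce the statement to the classical Hardy--Littlewood--Sobolev inequality for the Riesz potential and then prove the latter by the standard maximal-function splitting argument; of course one may instead simply invoke \cite[Theorem~1.5]{S15} or the textbook Hardy--Littlewood--Sobolev inequality directly.

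First I would observe that the two displayed inequalities are equivalent. Setting $g := \lapla{s/2} f$, which belongs to $L^p(\R^n)$ since $f \in C_c^\infty(\R^n)$, one has $f = \lapin{s} g$ (understood as convolution with the locally integrable kernel $c_{n,s}|\cdot|^{s-n}$ and checked against Schwartz test functions), and the Riesz potentials satisfy the semigroup identity $\lapla{t/2}\lapin{s} = \lapin{s-t}$, most transparently on the Fourier side where $\lapin{\beta}$ is multiplication by $c_{n,\beta}|\xi|^{-\beta}$. Hence $\lapla{t/2} f = \lapin{s-t} g$, and the claim is exactly
\[
\|\lapin{\alpha} g\|_{L^{p_{s,t}^*}(\R^n)} \aleq \|g\|_{L^p(\R^n)}, \qquad \alpha := s-t \in (0,n), \quad \tfrac{1}{p_{s,t}^*} = \tfrac1p - \tfrac{\alpha}{n}.
\]

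Next I would prove this Riesz-potential estimate. Fix $x$ and $R>0$ and split
\[
\lapin{\alpha} g(x) = c_{n,\alpha}\int_{|x-y|<R} \frac{g(y)}{|x-y|^{n-\alpha}}\,dy + c_{n,\alpha}\int_{|x-y|\ge R} \frac{g(y)}{|x-y|^{n-\alpha}}\,dy =: A_R(x) + B_R(x).
\]
Decomposing the near ball into dyadic annuli $\{2^{-k-1}R \le |x-y| < 2^{-k}R\}$ and summing a geometric series gives $|A_R(x)| \aleq R^{\alpha}\,\mathcal{M}g(x)$, where $\mathcal{M}$ is the Hardy--Littlewood maximal function; Hölder's inequality with exponents $p$ and $p'$ gives $|B_R(x)| \aleq R^{\alpha - n/p}\|g\|_{L^p(\R^n)}$, the exponent being negative precisely because $p < n/\alpha$. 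Choosing $R = \big(\|g\|_{L^p(\R^n)}/\mathcal{M}g(x)\big)^{p/n}$ to balance the two terms yields the pointwise bound
\[
|\lapin{\alpha} g(x)| \aleq \big(\mathcal{M}g(x)\big)^{1-\alpha p/n}\, \|g\|_{L^p(\R^n)}^{\alpha p /n}.
\]
Raising this to the power $p_{s,t}^*$, integrating in $x$, using $(1-\alpha p/n)\,p_{s,t}^* = p$ and the Hardy--Littlewood maximal inequality $\|\mathcal{M}g\|_{L^p} \aleq \|g\|_{L^p}$ (valid since $p>1$) then gives the claim.

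The main obstacle I expect is not any individual estimate but the bookkeeping at the reduction step: making precise that $\lapla{s/2} f \in L^p(\R^n)$ for $f \in C_c^\infty(\R^n)$, that $f = \lapin{s}\big(\lapla{s/2} f\big)$, and that $\lapla{t/2}\lapin{s} = \lapin{s-t}$ as operators on the relevant class, which is standard but requires pinning down the correct distributional framework when $s$ is large. Everything after that is the classical Hardy--Littlewood--Sobolev argument.
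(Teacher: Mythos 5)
The paper does not give its own proof here; it simply cites \cite[Theorem~1.5]{S15}. Your argument is the standard one (the Hedberg splitting for the Riesz potential, reducing the statement to the Hardy--Littlewood--Sobolev inequality), and it is correct. The reduction via $g := \lapla{s/2} f$, $\lapla{t/2}\lapin{s} = \lapin{s-t}$ on the Fourier side, is exactly the right way to see the two displayed inequalities as equivalent; and the near/far splitting, dyadic annuli for the near part, H\"older for the far part (using $p<n/\alpha$ so the tail integral converges), optimizing $R$, raising to the power $p^*_{s,t}$, and applying the Hardy--Littlewood maximal inequality (valid since $p>1$) are all in order, with the exponent arithmetic $(1-\alpha p/n)p^*_{s,t}=p$ checking out. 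One small remark: your reduction quietly assumes $\alpha = s-t > 0$ (you write $\alpha\in(0,n)$); the borderline case $s=t$ of the stated theorem is trivial since then $p^*_{s,t}=p$ and the inequality is an identity, so this is not a gap, just worth a sentence. Since the paper outsources the proof entirely to \cite{S15}, there is no alternative route to compare against.
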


\begin{theorem}[Sobolev Inequality] \label{app:thm:sobinequ} \cite[Theorem 1.6]{S15}
	Let $n\in\N$, $s>t\geq 0$, $p\in (1,\tfrac{n}{s-t})$ and define $p_{s,t}^* = \tfrac{np}{n-(s-t)p}$. Then we have for any $f\in C^\infty_0(\R^n)$ that
	\[
	\| \lapla{\frac t2} f \|_{L^{p^*_{s,t}}(\R^n)} \aleq \left(\int_{\R^n} \int_{\R^n} \frac{|f(x)-f(y)|^p}{|x-y|^{n+sp}} \, dz \, dy \right)^{\frac 1p},
	\]
	or in other words, let $s+\delta < n$ and $p\leq \frac n \delta$, then for $p_{s,\delta}^* = \tfrac{np}{n-\delta p}$
	\[
		\left(\int_{\R^n} \int_{\R^n} \frac{|\lapin{\delta}f(x)-\lapin{\delta}f(y)|^p}{|x-y|^{n+sp^*_{s,\delta}}} \, dz \, dy \right)^{\frac{1}{p^*_{s,\delta}}} \aleq \| f \|_{L^{p}(\R^n)}.
	\]
\end{theorem}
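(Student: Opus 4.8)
The plan is to deduce this from standard embeddings in the Besov--Triebel--Lizorkin scale, in the spirit of \cite[Theorem~1.6]{S15}; what really has to be done is to identify which embeddings are used. The first step is to read both sides of the claimed inequality as function-space norms. For $s\in(0,1)$ --- the range in which the left-hand side is a genuine seminorm --- and $p\in(1,\infty)$, the difference characterisation of homogeneous Triebel--Lizorkin spaces, which we already invoked for \Cref{la:sob1} via \cite{RS96}, gives
\[
 \brac{\int_{\R^n}\int_{\R^n}\frac{|f(x)-f(y)|^p}{|x-y|^{n+sp}}\,dx\,dy}^{\frac1p}\aeq\|f\|_{\dot{B}^{s}_{p,p}(\R^n)}=\|f\|_{\dot{F}^{s}_{p,p}(\R^n)},
\]
the Besov and Triebel--Lizorkin spaces coinciding because both secondary indices equal $p$, while the Littlewood--Paley characterisation of homogeneous Bessel potential spaces gives
\[
 \|\lapla{\frac t2}f\|_{L^{p_{s,t}^*}(\R^n)}=\|f\|_{\dot{H}^{t,p_{s,t}^*}(\R^n)}\aeq\|f\|_{\dot{F}^{t}_{p_{s,t}^*,2}(\R^n)}.
\]
Hence the theorem is equivalent to the Sobolev-type embedding $\dot{F}^{s}_{p,p}(\R^n)\hookrightarrow\dot{F}^{t}_{p_{s,t}^*,2}(\R^n)$ with $\tfrac1{p_{s,t}^*}=\tfrac1p-\tfrac{s-t}{n}$; the hypothesis $p<\tfrac{n}{s-t}$ is exactly what makes $p_{s,t}^*$ finite, and then automatically $p_{s,t}^*>p$.

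This embedding I would prove in two steps, in this order. First, since $p<p_{s,t}^*$, monotonicity of Besov spaces in the secondary index gives $\dot{B}^{s}_{p,p}\hookrightarrow\dot{B}^{s}_{p,p_{s,t}^*}$. Second, since $s-\tfrac np=t-\tfrac n{p_{s,t}^*}$ with $p<p_{s,t}^*$, the Franke--Jawerth sharpening of the Sobolev embedding (cf.\ \cite{RS96}) gives $\dot{B}^{s}_{p,p_{s,t}^*}\hookrightarrow\dot{F}^{t}_{p_{s,t}^*,2}$. Composing these two embeddings and reading off the identifications above yields the first displayed inequality of the theorem. The second, ``in other words'', form follows by applying this to a Riesz potential $\lapin{\delta}f$ and using that $\lapin{\delta}$ is an isometry $L^p\to\dot{H}^{\delta,p}$ with $\lapla{\delta/2}\lapin{\delta}=\mathrm{id}$; equivalently it is the Jawerth half of Franke--Jawerth, $\dot{F}^{\delta}_{p,2}\hookrightarrow\dot{B}^{s}_{r,p}\hookrightarrow W^{s,r}$ for the exponent $r$ forced by the same loss of derivatives and gain of integrability.

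The main obstacle is precisely this Franke--Jawerth step: one must trade the $s-t$ derivatives for the gain of integrability $p\rightsquigarrow p_{s,t}^*$, and this cannot be done while keeping the secondary index equal to $p$, because the target $\dot{H}^{t,p_{s,t}^*}=\dot{F}^{t}_{p_{s,t}^*,2}$ has secondary index $2$, which is strictly below $p$ in the regime actually needed in \Cref{s:regularity} ($s=\tfrac1q$, $p=q\ge2$). A naive ``dual'' proof --- writing $\lapla{t/2}f=\lapin{s-t}\lapla{s/2}f$, testing against $g\in L^{(p_{s,t}^*)'}$, integrating by parts into the Gagliardo-type bilinear form $\iint\tfrac{(f(x)-f(y))(\lapin{s-t}g(x)-\lapin{s-t}g(y))}{|x-y|^{n+s}}\,dx\,dy$ and splitting the kernel weight before applying H\"older --- fails here, since the only admissible split lands on $\iint\tfrac{|\lapin{s-t}g(x)-\lapin{s-t}g(y)|^{p'}}{|x-y|^{n}}\,dx\,dy$, a borderline $\dot{B}^{0}$-type quantity that does not control $\|\lapin{s-t}g\|_{L^{p'}}$, so one cannot close the estimate using only the classical Sobolev inequality \Cref{app:thm:classobinequ}. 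The gain of integrability has to be built into the embedding from the outset, which is the genuine content of \cite[Theorems~1.5--1.6]{S15}, and at this point I would simply invoke that reference rather than reprove it.
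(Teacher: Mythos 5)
The paper offers no proof of this statement --- it is a verbatim citation of \cite[Theorem~1.6]{S15} --- so there is no internal argument to compare against. Your reduction is nevertheless correct and isolates the genuine content. Reading the right-hand side as $\|f\|_{\dot{B}^s_{p,p}}=\|f\|_{\dot{F}^s_{p,p}}$ (for the range $s\in(0,1)$ in which the Gagliardo seminorm is nondegenerate) and the left-hand side as $\|f\|_{\dot H^{t,p^*_{s,t}}}\aeq\|f\|_{\dot F^t_{p^*_{s,t},2}}$, the inequality is precisely the Franke embedding $\dot B^s_{p,p^*_{s,t}}\hookrightarrow\dot F^t_{p^*_{s,t},2}$ (under $s-\tfrac np=t-\tfrac{n}{p^*_{s,t}}$, $p<p^*_{s,t}$), preceded by the trivial monotonicity $\dot B^s_{p,p}\hookrightarrow\dot B^s_{p,p^*_{s,t}}$; the ``in other words'' form is the dual Jawerth embedding applied to $\lapin{\delta}f$. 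This is exactly what separates \Cref{app:thm:sobinequ} from the softer \Cref{la:sob1}: there the target is again Besov, so no improvement of the secondary index is needed and the plain \cite{RS96} embedding suffices, whereas here the target is $\dot F^t_{p^*,2}$ and the drop of the secondary index to $2$ (strictly below $p$ in the regime $p=q\ge2$ used in \Cref{s:regularity}) is genuine. Your diagnosis of why the naive dual/H\"older argument fails --- the kernel weight $|x-y|^{n+sp}$ must be placed entirely on the $f$ factor if the full $W^{s,p}$-seminorm is to appear, which leaves a borderline $\dot B^0_{p',p'}$-type quantity that does not control $\|\lapin{s-t}g\|_{L^{p'}}$ --- is also correct.

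Two small remarks. The Franke and Jawerth embeddings are classical (Jawerth 1977, Franke 1986) and recorded in \cite{RS96} and in Triebel's monographs, so there is nothing circular about citing them directly rather than deferring back to \cite{S15}. Also note that the ``in other words'' display as printed in the paper carries typos (the stray $dz$, the integrand raised to $p$ inside but $p^*_{s,\delta}$ outside, and an exponent $p^*_{s,\delta}=np/(n-\delta p)$ that is inconsistent with keeping regularity $s$ on the target --- the Jawerth embedding gives $np/(n-(\delta-s)p)$); you implicitly repaired this by stating the Jawerth embedding abstractly, which is the correct reading.
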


\begin{proposition} \label{pr:psiuds1}\cite[Proposition D.2]{S15}
	Let $s\in(0,1)$, $q\in (1,\infty)$, and $\eta\in C^\infty_c(B_{2\rho})$ with $\eta\equiv 1$ on $B_\rho$. Then for any $L\in \N$, $L>1$,
	\[
		\int_{B_{2^L\rho}} \int_{B_{2^L\rho}} \frac{|\eta (x)-\eta(y)|^q |u(y)-(u)_{B_{2\rho}\setminus B_\rho}|}{|x-y|^{1+sp}} \, dx \, dy \aleq [u]^q_{W^{s,p}(B_{2^L\rho})} - [u]^q_{W^{s,p}(B_{\rho})}.
	\]
\end{proposition}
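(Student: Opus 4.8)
The plan is to treat the left-hand integral as a ``commutator'' term and to compare it, region by region, with
\[
[u]^q_{W^{s,q}(B_{2^L\rho})}-[u]^q_{W^{s,q}(B_{\rho})}=\iint_{E}\frac{|u(x)-u(y)|^q}{|x-y|^{1+sq}}\,dx\,dy,\qquad E:=(B_{2^L\rho})^2\setminus(B_{\rho})^2 .
\]
We may assume the right-hand side is finite, so that $c:=(u)_{B_{2\rho}\setminus B_{\rho}}$ is well defined. Recall $0\le\eta\le1$, $\supp\eta\subset B_{2\rho}$ and $\|\eta'\|_\infty\aleq\rho^{-1}$, hence $|\eta(x)-\eta(y)|\le\min\{1,C|x-y|/\rho\}$ and the left-hand integrand vanishes on $B_\rho\times B_\rho$ and off $\{x\in B_{2\rho}\}\cup\{y\in B_{2\rho}\}$. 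The only place the average $c$ enters is through Jensen's inequality,
\[
|u(y)-c|^q\le\fint_{B_{2\rho}\setminus B_{\rho}}|u(y)-u(w)|^q\,dw\ \aleq\ \rho^{-1}\int_{B_{2\rho}\setminus B_{\rho}}|u(y)-u(w)|^q\,dw ,
\]
and we record that $w\notin B_\rho$, so $(y,w)\in E$ whenever $y\in B_{2^L\rho}$.

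I would then split the domain of the left-hand integral into three pieces, estimating each so that the common bound $\iint_E|u(y)-u(w)|^q|y-w|^{-1-sq}$ appears. \emph{(i)} On $\{x\in B_{2^L\rho}\setminus B_{2\rho}\}$ one has $\eta(x)=0$, so $y\in B_{2\rho}$ is forced; the elementary kernel estimate $\int_{\R}\bigl(\min\{1,|x-y|/\rho\}\bigr)^q|x-y|^{-1-sq}\,dx\aleq\rho^{-sq}$ (which absorbs the singularity $x\to y$) reduces this piece to $\rho^{-sq}\int_{B_{2\rho}}|u(y)-c|^q\,dy$, and the Jensen bound together with $|y-w|\le4\rho$, hence $\rho^{-1-sq}\aleq|y-w|^{-1-sq}$, finishes it. \emph{(ii)} On $\{x\in B_{2\rho},\ y\in B_{4\rho}\}$ I would use $|\eta(x)-\eta(y)|\le C|x-y|/\rho$ and $\int_{B_{2\rho}}|x-y|^{q-1-sq}\,dx\aleq\rho^{q-sq}$ (convergent since $q(1-s)>0$) to bound the piece by $\rho^{-sq}\int_{B_{4\rho}}|u(y)-c|^q\,dy$, then conclude as in \emph{(i)} using $|y-w|\le6\rho$ and $B_{4\rho}\subset B_{2^L\rho}$ (this is where $L\ge2$ is used). \emph{(iii)} On $\{x\in B_{2\rho},\ y\in B_{2^L\rho}\setminus B_{4\rho}\}$ one has $\eta(y)=0$ and $d:=\dist(y,B_{2\rho})\ge2\rho$, so $|x-y|\sim d$ for $x\in B_{2\rho}$ and $\int_{B_{2\rho}}|x-y|^{-1-sq}\,dx\aleq\rho\,d^{-1-sq}$; the Jensen bound together with $|y-w|\le d+4\rho\aleq d$ for $w\in B_{2\rho}$ turns this contribution into $\iint_{(B_{2^L\rho}\setminus B_{4\rho})\times(B_{2\rho}\setminus B_{\rho})}|u(y)-u(w)|^q|y-w|^{-1-sq}\le\iint_E(\cdots)$.

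The routine parts are the elementary kernel integrals $\int\bigl(\min\{1,t/\rho\}\bigr)^q t^{-1-sq}\,dt$ and $\int_{B_{2\rho}}|x-y|^{q-1-sq}\,dx$, and the ``distance comparisons'' $\rho^{-1-sq}\aleq|y-w|^{-1-sq}$, respectively $d^{-1-sq}\aleq|y-w|^{-1-sq}$, on the relevant regions. The one point requiring care is region \emph{(iii)}: although $y$ may lie far from $\supp\eta$, the factor $|u(y)-c|^q$ couples $u(y)$ with the values of $u$ on the annulus $B_{2\rho}\setminus B_{\rho}$, and it is precisely the decay $|x-y|^{-1-sq}$ of the Gagliardo kernel, combined with $|y-w|\aleq\dist(y,B_{2\rho})$, that lets this tail be reabsorbed into the \emph{difference} of seminorms over $E$ rather than merely into $[u]^q_{W^{s,q}(B_{2^L\rho})}$. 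All constants depend only on $s$ and $q$.
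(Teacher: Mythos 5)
The paper does not contain its own proof of this statement—it is cited from [S15, Proposition D.2]—so there is nothing in the paper to compare against; your proposal should be evaluated on its own. It is correct and self-contained. The key moves—Jensen's inequality to replace $|u(y)-(u)_{B_{2\rho}\setminus B_\rho}|^q$ by $\rho^{-1}\int_{B_{2\rho}\setminus B_\rho}|u(y)-u(w)|^q\,dw$, the observation that $w\notin B_\rho$ forces $(y,w)\in E=(B_{2^L\rho})^2\setminus (B_\rho)^2$, the kernel bound $|\eta(x)-\eta(y)|\leq\min\{1,C|x-y|/\rho\}$ with $\int\min\{1,t/\rho\}^q\,t^{-1-sq}\,dt\aleq\rho^{-sq}$, and the three-way splitting of the $(x,y)$-domain keyed to where $\eta$ is flat vs.\ transitioning—all check out. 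The distance comparisons in (iii) are clean: for $y\notin B_{4\rho}$ and $x,w\in B_{2\rho}$ one has $|x-y|\aeq d:=\mathrm{dist}(y,B_{2\rho})$ and $|y-w|\leq 3d$, so the factor $\rho\,d^{-1-sq}$ absorbs perfectly against $\rho^{-1}$ from Jensen and $|y-w|^{-1-sq}$. Your use of $L\geq 2$ to ensure $B_{4\rho}\subset B_{2^L\rho}$ is exactly where the hypothesis $L>1$ enters. One minor remark: as printed in the paper the statement has typographical slips—the numerator should read $|u(y)-(u)_{B_{2\rho}\setminus B_\rho}|^q$ and every $p$ should be $q$—which you silently (and correctly) repair, consistent with how the proposition is invoked in the estimate of $III$ in the left-hand side estimate \Cref{pr:reg:lhsestimate}.
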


\begin{proposition} \label{pr:psiuds}\cite[Proposition D.3]{S15}
	Let $s\in(0,1)$, $q\in (1,\infty)$, $\eta\in C^\infty_c(B_{2\rho})$ with $\eta\equiv 1$ on $B_\rho$, and
	\[
	\psi(x):= \eta(x)(u(x)-(u)_{B_{2 \rho}\setminus B_{\rho} }),
	\]
	then for any $L\in \N$, $L>1$,
	\[
		[\psi]_{W^{s,p}(\R)} \aleq [u]_{W^{s,p}(B_{2^L \rho})}.
	\]
\end{proposition}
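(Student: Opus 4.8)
The plan is to use that $\psi$ is supported in $B_{2\rho}$, to split the defining double integral accordingly, and then to reduce the interior part to the cutoff estimate \Cref{pr:psiuds1} and the ``tail'' part to a fractional Poincar\'e inequality on the collar $B_{2\rho}\setminus B_{\rho}$. Throughout, $p$ denotes the exponent of the $W^{s,p}$-seminorm (called $q$ in the statement), and we may fix $\eta$ to be a rescaled standard bump, so that in addition to $\|\eta'\|_{L^\infty}\aleq\rho^{-1}$ and $0\le\eta\le1$ we have $\supp\eta\subset B_{3\rho/2}$, hence $\dist(\supp\eta,\R\setminus B_{2\rho})\ageq\rho$; by scaling it is enough to produce a bound with constant depending only on $s$ and $p$. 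Write $c:=(u)_{B_{2\rho}\setminus B_{\rho}}$.

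First I would decompose, using $\psi\equiv0$ on $\R\setminus B_{2\rho}$ and symmetry,
\[
 [\psi]_{W^{s,p}(\R)}^p \aleq \int_{B_{2\rho}}\int_{B_{2\rho}}\frac{|\psi(x)-\psi(y)|^p}{|x-y|^{1+sp}}\,dx\,dy + \int_{B_{2\rho}}\int_{\R\setminus B_{2\rho}}\frac{|\psi(x)|^p}{|x-y|^{1+sp}}\,dy\,dx =: I_{\mathrm{in}}+I_{\mathrm{tail}}.
\]
For $I_{\mathrm{in}}$ I use $\psi(x)-\psi(y)=\eta(x)(u(x)-u(y))+(\eta(x)-\eta(y))(u(y)-c)$. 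The first summand contributes $\aleq\|\eta\|_{L^\infty}^p[u]_{W^{s,p}(B_{2\rho})}^p\le[u]_{W^{s,p}(B_{2^L\rho})}^p$ since $B_{2\rho}\subset B_{2^L\rho}$, while the second summand is, after enlarging the integration domain to $B_{2^L\rho}\times B_{2^L\rho}$ (which only increases the nonnegative integrand), precisely the quantity bounded in \Cref{pr:psiuds1}, hence $\aleq[u]_{W^{s,p}(B_{2^L\rho})}^p-[u]_{W^{s,p}(B_{\rho})}^p\le[u]_{W^{s,p}(B_{2^L\rho})}^p$.

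For $I_{\mathrm{tail}}$ I first note that for $x\in\supp\eta$ and $y\notin B_{2\rho}$ one has $|x-y|\ageq\rho$, so $\int_{\R\setminus B_{2\rho}}|x-y|^{-1-sp}\,dy\aleq\rho^{-sp}$ and therefore $I_{\mathrm{tail}}\aleq\rho^{-sp}\int_{B_{2\rho}}|u(x)-c|^p\,dx$. Then, applying Jensen's inequality to the average defining $c$ and using $|x-z|\le4\rho$ for $x\in B_{2\rho}$, $z\in B_{2\rho}\setminus B_{\rho}$, together with $|B_{2\rho}\setminus B_{\rho}|\aeq\rho$,
\[
 \int_{B_{2\rho}}|u(x)-c|^p\,dx \le \frac{1}{|B_{2\rho}\setminus B_{\rho}|}\int_{B_{2\rho}}\int_{B_{2\rho}\setminus B_{\rho}}|u(x)-u(z)|^p\,dz\,dx \aleq \rho^{sp}[u]_{W^{s,p}(B_{2\rho})}^p,
\]
so that $I_{\mathrm{tail}}\aleq[u]_{W^{s,p}(B_{2\rho})}^p\le[u]_{W^{s,p}(B_{2^L\rho})}^p$. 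Adding the two estimates yields the claim.

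The main obstacle is the $\rho$-bookkeeping in $I_{\mathrm{tail}}$: one must keep $\supp\eta$ a definite fraction of $\rho$ inside $B_{2\rho}$ (which is why one fixes $\eta$ as a rescaled standard bump rather than taking an arbitrary element of $C_c^\infty(B_{2\rho})$), and the collar Poincar\'e inequality must supply exactly the factor $\rho^{sp}$ cancelling the $\rho^{-sp}$ coming from the tail integration; scale invariance of the final bound is a convenient consistency check. Everything else is the elementary splitting above together with \Cref{pr:psiuds1}.
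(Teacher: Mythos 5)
Your proof is correct and self-contained. The paper does not give its own argument for this proposition — it is cited from \cite{S15} — so there is no in-paper proof to compare against, but your decomposition is the natural one and almost certainly matches the cited source in spirit: split $[\psi]_{W^{s,p}(\R)}^p$ into the interior piece on $B_{2\rho}\times B_{2\rho}$ and the cross ``tail'' piece, handle the interior by the product-rule-type decomposition $\psi(x)-\psi(y)=\eta(x)(u(x)-u(y))+(\eta(x)-\eta(y))(u(y)-c)$ together with \Cref{pr:psiuds1}, and handle the tail by $\int_{\R\setminus B_{2\rho}}|x-y|^{-1-sp}\,dy\aleq\rho^{-sp}$ combined with the collar fractional Poincar\'e inequality $\int_{B_{2\rho}}|u-c|^p\aleq\rho^{sp}[u]^p_{W^{s,p}(B_{2\rho})}$. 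Each step checks: the algebraic identity for $\psi(x)-\psi(y)$ is right, the Jensen step uses convexity of $|\cdot|^p$ and $|B_{2\rho}\setminus B_\rho|\aeq\rho$, and the $\rho$-powers cancel exactly as they must by scale invariance.

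Two remarks, both of which you already anticipate. First, you quietly normalize $\eta$ to a rescaled standard bump with $\supp\eta\subset B_{3\rho/2}$ (equivalently $\|\eta'\|_{L^\infty}\aleq\rho^{-1}$); this is indeed necessary, since for a completely arbitrary $\eta\in C_c^\infty(B_{2\rho})$ with $\eta\equiv1$ on $B_\rho$ the tail estimate would pick up a constant depending on how slowly $\eta$ decays near $\partial B_{2\rho}$ — the proposition as literally stated is silent on this dependence, and in all its uses in the paper (e.g.\ in the proof of \Cref{pr:reg:lhsestimate}) $\eta$ is exactly such a standard bump, so your choice is the intended reading. Second, when invoking \Cref{pr:psiuds1} you implicitly use it with the numerator $|\eta(x)-\eta(y)|^q|u(y)-(u)_{B_{2\rho}\setminus B_\rho}|^q$; the paper's statement of that proposition has a typo (the second factor lacks its exponent $q$), and your usage matches the corrected version, which is what is needed and what \cite{S15} actually proves.
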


We also find use of an adapted version of \cite[Proposition D.4]{S15}: 

\begin{proposition} \label{pr:app:GammaEst}
	For $\delta >0$ small enough, we have
	\begin{align*}
	 \|\Gamma_{\frac 1q + \delta,B_{\rho}} u \|_{L^{1/{(1-\frac 1q - \delta)}}} \aleq  [u]^{q-1}_{W^{\frac 1q,q}(B_\rho) } . 
	\end{align*} 
\end{proposition}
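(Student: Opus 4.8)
The plan is to prove \Cref{pr:app:GammaEst} by following the blueprint of \cite[Proposition D.4]{S15} with the minor modifications forced on us by the different ``kernel'' weight appearing in our $\Gamma_{\beta,B\times B}$. Recall that by the definition \eqref{def:GammaQB},
\[
\Gamma_{\beta,B_\rho\times B_\rho}u(z) = \iint_{B_\rho^2} \left|\avint_{x\triangleright y} u(z_0)-u(x)\,dz_0\right|^{q-2} k(x,y)^{-\frac{q+2}{2}} \avint_{x\triangleright y}\avint_{x\triangleright y} (u(z_1)-u(x))\,\big(|z-z_2|^{\beta-1}-|z-x|^{\beta-1}\big)\,dz_1\,dz_2\,\frac{dy\,dx}{\rho(x,y)^2}.
\]
First I would discard the factor $k(x,y)^{-\frac{q+2}{2}}$, which is bounded above and below by positive constants depending only on $q$ according to \Cref{rm:reg:boundedfactorc}, so it plays no role beyond contributing a constant. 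Then I would estimate the remaining integrand pointwise in $z$: by Jensen's inequality $|\avint_{x\triangleright y} u(z_0)-u(x)\,dz_0|^{q-2}\leq \avint_{x\triangleright y}|u(z_0)-u(x)|^{q-2}\,dz_0$ (using $q\geq 2$; the case $q\in(1,2)$ needs the reverse treatment with a power $\leq 1$, as elsewhere in the paper) and similarly $|\avint_{x\triangleright y}(u(z_1)-u(x))\,dz_1|\leq \avint_{x\triangleright y}|u(z_1)-u(x)|\,dz_1$.

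Next I would invoke \Cref{pr:estmax}-type bounds to replace the averaged differences of $u$ by powers of $|x-y|$ times maximal functions of a fractional Laplacian of $u$, i.e.\ $\avint_{x\triangleright y}|u(z_0)-u(x)|\,dz_0 \aleq |x-y|^{\frac1q-\delta}\,\mathcal M\mathcal M \lapla{\frac{1/q-\delta}{2}}u(x)$ for $\delta>0$ small. This converts $\Gamma_{\frac1q+\delta,B_\rho\times B_\rho}u(z)$ into an integral of the form
\[
\int\!\!\int_{B_\rho^2} |x-y|^{(\frac1q-\delta)(q-1)-2}\,\big(\mathcal M\mathcal M\lapla{\frac{1/q-\delta}{2}}u(x)\big)^{q-1}\,\avint_{x\triangleright y}\big||z-z_2|^{\beta-1}-|z-x|^{\beta-1}\big|\,dz_2\,dy\,dx,
\]
with $\beta=\frac1q+\delta$. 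The $z_2$-average of the kernel difference is controlled — this is exactly the content of \Cref{app:pro:FGHKernel}/\Cref{la:threetermforGandH}, which absorbs one power $|x-y|^{\frac1q+\eps}$ and leaves behind a ``remainder kernel'' $k_{\frac1q+\delta-\frac1q-\eps,\frac1q+\delta}(x,y,z)$ that, after integrating in $y$ over $B_\rho$, behaves like $|x-z|^{(\text{something})-1}$; i.e.\ after the $y$-integration the expression is controlled by a Riesz potential $\lapin{\alpha}\big((\mathcal M\mathcal M\lapla{\cdot}u)^{q-1}\big)(z)$ for an exponent $\alpha>0$ (one checks $\alpha = (\frac1q-\delta)(q-1)-1+\frac1q+\delta-\eps>0$ for $\delta,\eps$ small, since $(\frac1q)(q-1)-1+\frac1q = 0$). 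Then by the classical Sobolev inequality for Riesz potentials, \Cref{app:thm:classobinequ}, and the Hardy–Littlewood maximal inequality,
\[
\|\lapin{\alpha}\big((\mathcal M\mathcal M\lapla{\frac{1/q-\delta}{2}}u)^{q-1}\big)\|_{L^{1/(1-\frac1q-\delta)}} \aleq \|\big(\lapla{\frac{1/q-\delta}{2}}u\big)^{q-1}\|_{L^{1/(\frac1q-\delta)}}=\|\lapla{\frac{1/q-\delta}{2}}u\|_{L^{1/(\frac1q-\delta)}(\R)}^{q-1},
\]
and a final application of the Sobolev inequality \Cref{app:thm:sobinequ} converts $\|\lapla{\frac{1/q-\delta}{2}}u\|_{L^{1/(\frac1q-\delta)}}$ into $[u]_{W^{\frac1q,q}}$ (losing the $\delta$ by the scaling-subcritical embedding, localized to $B_\rho$ by the support restrictions). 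Matching exponents: $1-\frac1q-\delta$ on the left, $q-1$ copies of $\frac1q-\delta$ so that $(q-1)(\frac1q-\delta)=1-\frac1q-(q-1)\delta$ — so one should state the result with the appropriate $\delta$-shifted exponent or simply absorb the discrepancy using the subcriticality of the embedding on the bounded set $B_\rho$, exactly as is done repeatedly in \Cref{la:reg:rhs1}.

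The main obstacle I anticipate is \emph{bookkeeping of the exponents and the order of integration}: one must integrate out $z_1,z_2$ (averages) and $y$ before the final Riesz-potential/Sobolev estimate, and the intermediate kernel $|z-z_2|^{\beta-1}-|z-x|^{\beta-1}$ is not a clean convolution kernel, so one genuinely needs the three-term kernel estimate \Cref{app:pro:FGHKernel} (rather than a naive triangle inequality) to avoid a logarithmic or divergent loss. The role of the auxiliary parameter $\delta>0$ is precisely to create enough room that all Riesz exponents land in the open admissible range $(0,1)$ and all Lebesgue exponents in $(1,\infty)$; verifying these strict inequalities (which hold because $\frac1q\cdot q -1+\frac1q=\frac1q>0$ leaves slack) is the one place where care is needed. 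Everything else is a routine transcription of the proof of \cite[Proposition D.4]{S15} with $k(x,y)^{-\frac{q+2}{2}}$ treated as a harmless bounded multiplier.
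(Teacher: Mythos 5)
The paper proves this proposition by duality, not by the pointwise maximal-function route you propose. Concretely, the paper tests $\Gamma_{\frac1q+\delta,B_\rho}u$ against an arbitrary $\varphi$ with $\|\varphi\|_{L^{1/(\frac1q+\delta)}}\leq 1$, uses Fubini to move the kernel $|z-z_2|^{\beta-1}-|z-x|^{\beta-1}$ onto $\varphi$ so that it becomes $\lapin{\beta}\varphi(z_2)-\lapin{\beta}\varphi(x)$, and then a single application of H\"older (with exponents $\tfrac{q-1}{q}+\tfrac1q=1$), \Cref{la:id2}, and the Sobolev inequality \Cref{app:thm:sobinequ} closes the argument in four lines, with no case analysis on the kernel and no maximal functions at all. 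Your plan instead bounds $\Gamma u$ pointwise via \Cref{pr:estmax} and the kernel lemma \Cref{la:mvmvf}, which is the machinery the paper reserves for the genuinely harder \Cref{la:reg:rhs1} and \Cref{la:reg:rhs2}.

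Beyond the difference in approach, there is a real gap in your exponent check. You assert that $\alpha=(\tfrac1q-\delta)(q-1)-1+\tfrac1q+\delta-\eps>0$ for small $\delta,\eps$. But expanding, $(\tfrac1q-\delta)(q-1)-1+\tfrac1q+\delta-\eps = -\delta(q-2)-\eps$, which is $\leq -\eps<0$ for $q\geq 2$; your parenthetical check that the $\delta=\eps=0$ part vanishes is correct, but the perturbation goes the wrong way. Working out the constraints carefully: with $\avint_{x\triangleright y}|u-u(x)|\aleq|x-y|^{a}\M\M\lapla{\frac a2}u(x)$, the $y$-integrability of $|x-y|^{a(q-1)-2+\eps'}$ requires $a(q-1)+\eps'>1$, while matching the target Lebesgue exponent $1-\tfrac1q-\delta$ through the Riesz--Sobolev inequality forces $a(q-1)+\eps'=1$ exactly. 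The two conditions are incompatible --- the estimate is scaling-critical, there is no slack, and no choice of $a,\eps'$ fixes it. The auxiliary $\delta$ does not create room here; it only shifts the target exponent, and the same critical balance reappears. It might be possible to rescue your route by a more elaborate double-Riesz-potential bookkeeping, but that is considerably more work than the paper's duality argument, whose entire point is to avoid the kernel case analysis and the borderline exponents.
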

\begin{proof}
	For some $\varphi\in C_c^\infty(\R)$, $\|\varphi\|_{L^{1/(\frac 1q + \delta)}}\leq 1$, we have by  \Cref{rm:reg:boundedfactorc}, H\"older's inequality, the identification for fractional Sobolev spaces \Cref{la:id2}, and the Sobolev inequality \Cref{app:thm:classobinequ}
	\[
	\begin{split}
		&\|\Gamma_{\frac 1q + \delta,B_{\rho}} u \|_{L^{{1/(1-\frac 1q - \delta)}}} \\
		& \aleq \int_{\R} \Gamma_{\frac 1q + \delta,B_{\rho}} u (z) \, \varphi(z) \, dz \\
		& \aleq \int_{{B_\rho}} \int_{B_\rho} \avint_{x\triangleright y} |u(z_1) - u(x)|^{q-1} \, dz_1 \avint_{x\triangleright y} |\lapin{\frac 1q - \delta} \varphi (z_2) - \lapin{\frac 1q -\delta} \varphi(x) | \, dz_2 \frac{ dy \, dx}{\rho(x,y)^2}\\
		& \aleq  \left(\int_{{B_\rho}} \int_{B_\rho}  \frac{\avint_{x\triangleright y}  |u(z_1) - u(x)|^{q} \, dz_1}{\rho(x,y)^2}\, dy \, dx \right)^{\frac{q-1}{q}} \, \left( \int_{\R} \int_{ \R}  \frac{\avint_{x\triangleright y} |\lapin{\frac 1q - \delta} \varphi (z_2) - \lapin{\frac 1q -\delta} \varphi(x)|^q \, dz_2}{|x-y|^2}\, dx \, dy\right)^{\frac 1q}\\
		& \aleq  [u]^{q-1}_{W^{\frac 1q,q}(B_\rho) } \|\varphi\|_{L^{1/(\frac 1q + \delta)}}.
	\end{split}
	\]
\end{proof}

\section{Localization Arguments}

For the convenience of the reader, we recall some results related to localization. For a good overview of these statements we refer to \cite{S15}, but they can be found throughout the literature.

\begin{proposition}\cite[Proposition B.2]{S15} \label{app:loc1}
	Let $p>1$, $t\in(0,1)$, and $\delta \geq 0$ small. Then for any $\varphi\in C^\infty_0 (B_{2^K})$ and $L>2$ we have
	\[
	\begin{split}
	\|\lapla{\frac \delta 2} ((\eta_{B_{2^{K+L}}}- \eta_{B_{2^{K+L-1}}}) \lapla{\frac t2} \varphi  ) \|_{L^{\frac{np}{n+\delta p}}} & \aleq 2^{-L (\frac{n}{\frac{p}{p-1}}+t)} \|\lapla{\frac t2} \varphi \|_{L^{p}} \\
	\|\lapla{\frac{t+\delta}{2}} ((\eta_{B_{2^{K+L}}}- \eta_{B_{2^{K+L-1}}}) \,  \lapin{t} \varphi  ) \|_{L^{\frac{np}{n+\delta p}}} & \aleq 2^{-L \frac{n}{\frac{p}{p-1}}} \| \varphi \|_{L^{p}}
	\end{split}
	\]
\end{proposition}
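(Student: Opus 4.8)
This assertion coincides with \cite[Proposition~B.2]{S15}, so the plan is to reproduce that argument; I record the mechanism. Write $r:=\frac{np}{n+\delta p}$, so that $\frac1r=\frac1p+\frac\delta n$, and set $p':=\frac{p}{p-1}$, so that the claimed exponents $\tfrac{n}{p/(p-1)}+t$ and $\tfrac{n}{p/(p-1)}$ are $\frac{n}{p'}+t$ and $\frac{n}{p'}$. Abbreviate $\psi_L:=\eta_{B_{2^{K+L}}}-\eta_{B_{2^{K+L-1}}}$; by the properties of the cut-offs, $\psi_L$ is smooth, supported in the annulus $A_L:=B_{2^{K+L+1}}\setminus B_{2^{K+L-1}}$, with $\|\nabla^j\psi_L\|_{L^\infty}\aleq 2^{-j(K+L)}$ for $j=0,1,2$; in particular $|A_L|\aeq 2^{(K+L)n}$ and $\dist(A_L,B_{2^K})\aeq 2^{K+L}$. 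Put $g:=\lapla{\frac t2}\varphi$ in the first estimate and $g:=\lapin{t}\varphi$ in the second.

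First I would establish the disjoint-support decay of $g$ on $A_L$. Since $\supp\varphi\subset B_{2^K}$ is disjoint from $A_L$, the principal-value integral defining $g(x)$ has no diagonal contribution for $x\in A_L$, and $|x-y|\aeq 2^{K+L}$ for all $y\in B_{2^K}$; hence, for $x\in A_L$,
\[
 |g(x)|\aleq 2^{-(K+L)(n+t)}\|\varphi\|_{L^1}
 \qquad\text{resp.}\qquad
 |g(x)|\aleq 2^{-(K+L)(n-t)}\|\varphi\|_{L^1},
\]
and $|\nabla^j g(x)|$ is bounded by $2^{-j(K+L)}$ times the same right-hand side, $j=1,2$. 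Thus $w:=\psi_L g$ is smooth, supported in $A_L$, and $\|\nabla^j w\|_{L^\infty}\aleq W\,2^{-j(K+L)}$ with $W:=2^{-(K+L)(n+t)}\|\varphi\|_{L^1}$ in the first case and $W:=2^{-(K+L)(n-t)}\|\varphi\|_{L^1}$ in the second.

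Next I would prove a scale-sensitive bound: for $\sigma\in(0,2)$ and $w$ as above concentrated at scale $R:=2^{K+L}$, splitting the (corrected) singular integral for $\lapla{\frac\sigma2}w$ into $\{|x-z|<R\}$ and $\{|x-z|\ge R\}$ and using the derivative bounds on $w$ gives $\|\lapla{\frac\sigma2}w\|_{L^\infty}\aleq W R^{-\sigma}$; for $|x|\gtrsim R$ one has $w(x)=0$, whence $|\lapla{\frac\sigma2}w(x)|\aleq|x|^{-n-\sigma}\|w\|_{L^1}\aleq|x|^{-n-\sigma}W R^n$. Raising to the $r$-th power and integrating (the tail integral $\int_{|x|>R}|x|^{-(n+\sigma)r}\,dx$ converging precisely because $p>1$) yields
\[
 \|\lapla{\tfrac\sigma2}w\|_{L^r(\R^n)}\aleq W R^{-\sigma}|A_L|^{\frac1r}+W R^{n}R^{\frac nr-n-\sigma}\aeq W\,R^{\frac nr-\sigma}=W\,R^{\frac np+\delta-\sigma}.
\]
Taking $\sigma=\delta$ in the first case and $\sigma=t+\delta$ in the second, and inserting the values of $W$ and $R$, the exponents collapse to
\[
 \|\lapla{\tfrac\delta2}w\|_{L^r}\aleq 2^{-(K+L)(\frac{n}{p'}+t)}\|\varphi\|_{L^1},
 \qquad
 \|\lapla{\tfrac{t+\delta}2}w\|_{L^r}\aleq 2^{-(K+L)\frac{n}{p'}}\|\varphi\|_{L^1}.
\]

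Finally I would trade $\|\varphi\|_{L^1}$ for the right-hand sides. Since $\supp\varphi\subset B_{2^K}$, H\"older's inequality gives $\|\varphi\|_{L^1}\aleq 2^{Kn/p'}\|\varphi\|_{L^p}$, which already closes the second estimate. For the first estimate one needs in addition the compactly supported fractional Poincar\'e inequality $\|\varphi\|_{L^p}\aleq 2^{Kt}\|\lapla{\frac t2}\varphi\|_{L^p}$ for $\varphi\in C_c^\infty(B_{2^K})$; by scaling it reduces to the case $B_1$, which follows by compactness: if $\|\varphi_j\|_{L^p}=1$ and $\lapla{\frac t2}\varphi_j\to0$ in $L^p$ with $\supp\varphi_j\subset\overline{B_1}$, then $(\varphi_j)$ is bounded in $L^p$ together with $(\lapla{\frac t2}\varphi_j)$, hence precompact in $L^p(B_2)$ by the compact Sobolev embedding $H^{t,p}(B_2)\hookrightarrow\hookrightarrow L^p(B_2)$, and any $L^p$-limit is supported in $\overline{B_1}$ and annihilated by $\lapla{\frac t2}$, so it vanishes --- contradicting $\|\varphi_j\|_{L^p}=1$. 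Combining, $\|\varphi\|_{L^1}\aleq 2^{K(n/p'+t)}\|\lapla{\frac t2}\varphi\|_{L^p}$, and substituting into the previous display produces exactly $2^{-L(n/p'+t)}\|\lapla{\frac t2}\varphi\|_{L^p}$. I expect the main obstacle to be precisely this last step --- replacing the localized $L^1$-norm of $\varphi$ by the Sobolev quantity $\|\lapla{\frac t2}\varphi\|_{L^p}$ on the right-hand side, which forces the compactly supported fractional Poincar\'e inequality; the first two steps are only careful bookkeeping of scales in otherwise standard kernel estimates.
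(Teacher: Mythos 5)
Your plan is correct, and — since the paper simply cites this from \cite[Prop.~B.2]{S15} without reproducing the argument — there is no in-paper proof to compare against, but your argument is the expected one (and matches the spirit of the S15 proof): disjoint-support pointwise decay of $\lapla{t/2}\varphi$ resp.\ $\lapin t\varphi$ on the dyadic annulus $A_L$, derivative bounds on $w=\psi_L g$, the $L^\infty$ and tail bounds for $\lapla{\sigma/2}w$ via the near/far split of the singular integral (the tail integral converges exactly because $p>1$), integration over $\R^n$ at scale $R=2^{K+L}$, and finally converting $\|\varphi\|_{L^1}$ into the stated right-hand sides — by H\"older on $B_{2^K}$ for the second inequality, and additionally by the compactly-supported fractional Poincar\'e inequality for the first. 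You correctly flag that last step as the only genuinely non-trivial ingredient; your scaling-plus-compactness proof of it is fine (using that $\varphi\in L^p$ with $\lapla{t/2}\varphi\in L^p$ forces $\varphi\in H^{t,p}$ by a Mikhlin multiplier argument, compactness of $H^{t,p}(B_2)\hookrightarrow L^p(B_2)$, and that a compactly supported distribution annihilated by $\lapla{t/2}$ is a polynomial hence zero). An alternative that avoids compactness: $\varphi=\lapin t\lapla{t/2}\varphi$, apply the Hardy–Littlewood–Sobolev inequality to get $\|\varphi\|_{L^{p^*_t}}\aleq\|\lapla{t/2}\varphi\|_{L^p}$, then H\"older on $B_{2^K}$; this needs $p<n/t$ but gives explicit constants. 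Your bookkeeping of the exponents (in particular that the $K$-dependence cancels) is correct.
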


\begin{proposition}\cite[Proposition B.3]{S15} \label{app:loc2}
	Let $s\in(0,n)$ and $p\in(1,\tfrac ns)$. Then we have for some $\sigma>0$ and any $L\in\N$ 
	\[
	\begin{split}
	\|\lapin{s} f\|_{L^{\frac{np}{n-sp}}(B_\rho)} \aleq \|f\|_{L^{p}(B_{2^L\rho})} + \sum_{l=1}^\infty 2^{-\sigma(L+l)}  \|f\|_{L^{p}(B_{2^{L+l}\rho})}.
	\end{split}
	\]
\end{proposition}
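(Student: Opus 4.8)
The plan is a standard dyadic splitting of $f$ around the ball $B_\rho$, treating the near part by the Hardy--Littlewood--Sobolev inequality and the far parts by the trivial pointwise decay of the Riesz kernel. Concretely, for a given $L\in\N$ I would write
\[
 f = f\chi_{B_{2^L\rho}} + \sum_{l=1}^\infty f\chi_{A_l}, \qquad A_l := B_{2^{L+l}\rho}\setminus B_{2^{L+l-1}\rho},
\]
so that by linearity of $\lapin{s}$ and the triangle inequality
\[
 \|\lapin{s} f\|_{L^{\frac{np}{n-sp}}(B_\rho)} \leq \|\lapin{s}(f\chi_{B_{2^L\rho}})\|_{L^{\frac{np}{n-sp}}(\R^n)} + \sum_{l=1}^\infty \|\lapin{s}(f\chi_{A_l})\|_{L^{\frac{np}{n-sp}}(B_\rho)}.
\]
The first term is controlled by applying the classical Sobolev inequality \Cref{app:thm:classobinequ} to $g := f\chi_{B_{2^L\rho}}$ (legitimate since $s\in(0,n)$, $p\in(1,n/s)$ and $\tfrac{np}{n-sp}$ is the associated critical exponent), which yields $\|\lapin{s}(f\chi_{B_{2^L\rho}})\|_{L^{\frac{np}{n-sp}}(\R^n)}\aleq \|f\|_{L^p(B_{2^L\rho})}$.

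For the tail, the key observation is that for $x\in B_\rho$ and $z\in A_l$ one has $|x-z|\ageq 2^{L+l-1}\rho$, and hence, using $s<n$, $|x-z|^{s-n}\aleq (2^{L+l}\rho)^{s-n}$. Thus H\"older's inequality in the $z$-integral gives, for every $x\in B_\rho$,
\[
 |\lapin{s}(f\chi_{A_l})(x)| \aleq (2^{L+l}\rho)^{s-n}\,\|f\|_{L^1(A_l)} \aleq (2^{L+l}\rho)^{s-n}\,|A_l|^{1-\frac1p}\,\|f\|_{L^p(A_l)} \aleq (2^{L+l}\rho)^{s-\frac np}\,\|f\|_{L^p(A_l)}.
\]
Taking the $L^{\frac{np}{n-sp}}$-norm over $B_\rho$ (whose measure is comparable to $\rho^n$) and using $\tfrac{n-sp}{np}=\tfrac1p-\tfrac sn$, the $\rho$-powers cancel and the prefactor becomes exactly $2^{-(L+l)(\frac np - s)}$; hence with $\sigma := \tfrac np - s>0$ — this is where $p<n/s$ enters — one obtains $\|\lapin{s}(f\chi_{A_l})\|_{L^{\frac{np}{n-sp}}(B_\rho)}\aleq 2^{-\sigma(L+l)}\|f\|_{L^p(A_l)}\leq 2^{-\sigma(L+l)}\|f\|_{L^p(B_{2^{L+l}\rho})}$. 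Summing in $l\geq 1$ and adding the near contribution gives the assertion.

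I do not expect a genuine obstacle: the proof is essentially bookkeeping of scaling exponents. The only point that requires a moment's care is verifying that the homogeneity in $\rho$ produced by the far-field estimate cancels against $|B_\rho|^{1/p^*}$ — which it does precisely because $\tfrac{np}{n-sp}$ is the Hardy--Littlewood--Sobolev dual exponent — so that the decay rate $\sigma = n/p - s$ is a positive constant depending only on $n,s,p$, uniformly in $\rho$ and $L$.
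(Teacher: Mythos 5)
Your argument is correct, and it is the standard proof of this type of localized Riesz potential estimate: dyadic decomposition $f = f\chi_{B_{2^L\rho}} + \sum_{l\geq 1} f\chi_{A_l}$, Hardy--Littlewood--Sobolev on the near part, and the pointwise kernel bound $|x-z|^{s-n}\aleq (2^{L+l}\rho)^{s-n}$ for $x\in B_\rho$, $z\in A_l$ on the far parts, with the scaling bookkeeping producing $\sigma = n/p - s > 0$ exactly as you describe. Note, however, that the paper does not prove Proposition~\ref{app:loc2} at all — it cites it verbatim to \cite[Proposition B.3]{S15} — so there is no in-paper argument to compare against; your proof is a self-contained and correct reconstruction of the argument one finds in that reference.
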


\begin{proposition}\cite[Proposition B.4]{S15} \label{app:loc3}
	Let $s_1, s_2, s_3\in[0,n)$ and $p_1,p_2,p_3 \in (1,\infty)$ such that 
	\[
	p_i^* := \frac{np_i}{n-s_ip_i} \in (1,\infty).
	\]
	If moreover 
	\[
	\sum_{i=1}^3 \tfrac{1}{p_i} - \tfrac{s_i}{n} = 1,
	\]
	then we have the following pseudo-local behavior for any $L\in\N$ and some $\sigma>0$:
	\[
	\begin{split}
	& \int_{\R^n} \lapin{s_1}(\chi_{B_\rho} f_1) \ \lapin{s_2} f_2 \  \lapin{s_3} f_3 \\
	& \aleq \|f_1\|_{L^{p_1}(B_{2^L\rho})} \|f_2\|_{L^{p_2}(B_{2^L\rho})} \|f_3\|_{L^{p_3}(B_{2^L\rho})} \\
	& \quad + \sum_{l=1}^\infty 2^{-(L+l)\sigma} \|f_1\|_{L^{p_1}(B_{2^{L+l}\rho})} \|f_2\|_{L^{p_2}(B_{2^{L+l}\rho})} \|f_3\|_{L^{p_3}(B_{2^{L+l}\rho})}.
	\end{split}
	\]
\end{proposition}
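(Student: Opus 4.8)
The plan is to reduce the estimate to two ingredients: the global trilinear bound that is built into the exponent relation, and a dyadic annular decomposition around $B_\rho$ that cashes in on the sharp cutoff $\chi_{B_\rho}$. Set $p_i^{*}:=(\tfrac{1}{p_i}-\tfrac{s_i}{n})^{-1}$. The hypotheses give $p_i^{*}\in(1,\infty)$ and $\tfrac{1}{p_1^{*}}+\tfrac{1}{p_2^{*}}+\tfrac{1}{p_3^{*}}=1$, so by the classical Sobolev inequality \Cref{app:thm:classobinequ} one has $\|\lapin{s_i}g\|_{L^{p_i^{*}}(\R^n)}\aleq\|g\|_{L^{p_i}(\R^n)}$, and H\"older's inequality on $\R^n$ yields
\[
 \Big|\int_{\R^n}\lapin{s_1}g_1\,\lapin{s_2}g_2\,\lapin{s_3}g_3\Big|\aleq\prod_{i=1}^{3}\|g_i\|_{L^{p_i}(\R^n)}.
\]
This global inequality is what will generate the first term on the right-hand side and, after the decomposition below, all ``comparable scale'' contributions.

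First I would split the two uncut factors into shells centred at $B_\rho$: writing $A_0:=B_{2^{L}\rho}$ and $A_k:=B_{2^{L+k}\rho}\setminus B_{2^{L+k-1}\rho}$ for $k\ge1$, put $f_2=\sum_{k\ge0}f_2\chi_{A_k}$, $f_3=\sum_{m\ge0}f_3\chi_{A_m}$, so the left-hand side becomes $\sum_{k,m\ge0}\int_{\R^n}\lapin{s_1}(\chi_{B_\rho}f_1)\,\lapin{s_2}(f_2\chi_{A_k})\,\lapin{s_3}(f_3\chi_{A_m})$. The term $k=m=0$ is handled by the global bound applied with $g_1=\chi_{B_\rho}f_1$, $g_2=f_2\chi_{B_{2^L\rho}}$, $g_3=f_3\chi_{B_{2^L\rho}}$, giving $\aleq\|f_1\|_{L^{p_1}(B_{2^L\rho})}\|f_2\|_{L^{p_2}(B_{2^L\rho})}\|f_3\|_{L^{p_3}(B_{2^L\rho})}$. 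For every other pair one index is positive; by symmetry assume $k=\max\{k,m\}\ge1$. Here I would also decompose the running point $x$ over $B_{2\rho}$ and the shells $B_{2^{j+1}\rho}\setminus B_{2^{j}\rho}$, $j\ge1$, and invoke the elementary kernel estimate: if $g$ is supported in a set $A$ of diameter $\sim R$ with $\mathrm{dist}(x,A)\gtrsim R$, then H\"older in $y$ gives
\[
 |\lapin{s_i}(g\chi_{A})(x)|\aleq\mathrm{dist}(x,A)^{s_i-n}\,R^{n/p_i'}\,\|g\|_{L^{p_i}(A)}=\mathrm{dist}(x,A)^{-n/p_i^{*}}\Big(\tfrac{R}{\mathrm{dist}(x,A)}\Big)^{n/p_i'}\|g\|_{L^{p_i}(A)}.
\]
Since $\chi_{B_\rho}f_1$ sits at scale $\rho$, $f_2\chi_{A_k}$ at scale $2^{L+k}\rho$, and $x$ at scale $2^{j}\rho$, in each of the finitely many near/far configurations at least two of these three scales are genuinely separated; inserting the kernel estimate into the factor living at the isolated scale and then applying H\"older in $x$ with exponents $p_1^{*},p_2^{*},p_3^{*}$ (admissible because $1/p_i^{*}\in(0,1)$) makes all powers of $\rho$ and $2^{j}\rho$ cancel, exactly as in \Cref{app:loc1} and \Cref{app:loc2}, leaving a net factor $2^{-\sigma(L+k)}\,2^{-\sigma|j-(L+k)|}\|f_1\|_{L^{p_1}(B_\rho)}\|f_2\|_{L^{p_2}(A_k)}\|f_3\|_{L^{p_3}(A_m)}$ for some $\sigma>0$. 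Summing the geometric series in $j$, then in $m$ (using $m\le k$, hence $A_m\subset B_{2^{L+k}\rho}$), and bounding $\|f_i\|_{L^{p_i}(A_k)}\le\|f_i\|_{L^{p_i}(B_{2^{L+k}\rho})}$ reproduces the tail $\sum_{k\ge1}2^{-\sigma(L+k)}\prod_i\|f_i\|_{L^{p_i}(B_{2^{L+k}\rho})}$ after relabelling $l=k$.

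The delicate point — and the step I expect to cost the most work — is this final bookkeeping: because the $x$-integral is not a priori localized, one must organize the four scales $\rho$, $2^{L+k}\rho$, $2^{L+m}\rho$, $2^{j}\rho$ into the correct regions, identify in each region which of the three Riesz factors carries the decay, and check that every intermediate H\"older exponent stays in $(1,\infty)$, which is precisely where $p_i^{*}\in(1,\infty)$ and $1/p_i^{*}>0$ enter. Each individual inequality is routine; it is the combinatorial organization that is lengthy, and for the complete argument I would follow \cite[Proposition~B.4]{S15}.
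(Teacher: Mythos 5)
The paper does not actually prove \Cref{app:loc3}; it is imported verbatim as a citation to \cite[Proposition~B.4]{S15}, so there is no in-paper argument to compare against. That said, your sketch is the standard localization machinery that underlies such pseudo-local trilinear estimates and is almost certainly the route taken in the cited source: the exponent arithmetic is right (the hypothesis $\sum_i(1/p_i - s_i/n)=1$ is precisely $\sum_i 1/p_i^*=1$, so the global Sobolev--H\"older bound applies), the $k=m=0$ block gives the first term, and the dyadic-shell decomposition plus Riesz-kernel decay gives the tail. One point you should make explicit when carrying out the case analysis, since it is where a naive reading of your kernel estimate would fail: a dyadic annulus $A_k=B_{2^{L+k}\rho}\setminus B_{2^{L+k-1}\rho}$ has diameter \emph{comparable} to its distance from the origin, so for $x$ near $B_\rho$ the ratio $R/\mathrm{dist}(x,A_k)$ is $O(1)$ and the factor $(R/\mathrm{dist})^{n/p_i'}$ alone yields no smallness. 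In that regime the $2^{-\sigma(L+k)}$ decay must instead come from H\"older in $x$ (integrating the pointwise bound $\mathrm{dist}(x,A_k)^{-n/p_i^*}$ over a shell of volume $\sim(2^{j}\rho)^n$ with $j\ll L+k$ produces $2^{-(L+k-j)n/p_i^*}$), while for $x$ at the far scale $2^{j}\rho\sim 2^{L+k}\rho$ the decay comes from the kernel gain on $\lapin{s_1}(\chi_{B_\rho}f_1)$, whose support is genuinely small relative to $\mathrm{dist}(x,B_\rho)$. You should also treat $s_i=0$ separately (then $\lapin{s_i}$ is the identity and your kernel estimate is vacuous, but the factor is compactly supported, which is even easier). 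Your acknowledgement that the remaining work is combinatorial bookkeeping and your deferral to \cite{S15} for the full case check is appropriate.
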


\begin{proposition} \label{app:loc4}
	Let $p>1$. Then we have for some $\sigma>0$ and any $L\in\N$ 
	\[
	\begin{split}
	\|\mathcal{M} f\|_{L^{p}(B_\rho)} \aleq \|f\|_{L^{p}(B_{2^L\rho})} + \sum_{l=1}^\infty 2^{-\sigma(L+l)}  \|f\|_{L^{p}(B_{2^{L+l}\rho})}.
	\end{split}
	\]
\end{proposition}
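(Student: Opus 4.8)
The plan is to split $f$ into its restriction to $B_{2^L\rho}$ and a sum of dyadic annular pieces, handle the local part by the $L^p$-boundedness of $\mathcal M$ and the far parts by a pointwise decay estimate on $B_\rho$. Concretely, write
\[
 f = f\chi_{B_{2^L\rho}} + \sum_{l=1}^\infty f\,g_l, \qquad g_l := \chi_{B_{2^{L+l}\rho}\setminus B_{2^{L+l-1}\rho}},
\]
and use the sublinearity $\mathcal M(f) \le \mathcal M(f\chi_{B_{2^L\rho}}) + \sum_{l\ge 1}\mathcal M(f g_l)$ together with Minkowski's inequality in $L^p(B_\rho)$ to reduce the claim to estimating each summand separately.

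For the local term, since $p>1$, the Hardy--Littlewood maximal inequality gives $\|\mathcal M(f\chi_{B_{2^L\rho}})\|_{L^p(B_\rho)} \le \|\mathcal M(f\chi_{B_{2^L\rho}})\|_{L^p(\R)} \aleq \|f\|_{L^p(B_{2^L\rho})}$, which is the first term on the right-hand side. For the far terms, the key observation is geometric: the annulus $B_{2^{L+l}\rho}\setminus B_{2^{L+l-1}\rho}$ is separated from $B_\rho$ by a distance $\ageq 2^{L+l}\rho$ (here $l\ge 1$ and $L\in\N$ are used), so any interval $B(y,r)$ containing a point $x\in B_\rho$ and meeting $\supp(f g_l)$ must satisfy $r\ageq 2^{L+l}\rho$. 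Hence, by Hölder's inequality,
\[
 \frac{1}{|B(y,r)|}\int_{B(y,r)}|f g_l| \le \frac{1}{2r}\,\|f\|_{L^1(B_{2^{L+l}\rho})} \aleq \frac{(2^{L+l}\rho)^{1-\frac 1p}}{2^{L+l}\rho}\,\|f\|_{L^p(B_{2^{L+l}\rho})},
\]
so that $\mathcal M(f g_l)(x) \aleq (2^{L+l}\rho)^{-\frac 1p}\|f\|_{L^p(B_{2^{L+l}\rho})}$ uniformly in $x\in B_\rho$. Taking the $L^p(B_\rho)$-norm and using $|B_\rho|\aeq\rho$ yields
\[
 \|\mathcal M(f g_l)\|_{L^p(B_\rho)} \aleq \rho^{\frac 1p}(2^{L+l}\rho)^{-\frac 1p}\|f\|_{L^p(B_{2^{L+l}\rho})} = 2^{-\frac{L+l}{p}}\|f\|_{L^p(B_{2^{L+l}\rho})}.
\]

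Summing over $l$ and combining with the local estimate gives the assertion with $\sigma = \tfrac 1p$. I do not expect a genuine obstacle here; the only step requiring a moment of care is the geometric claim that an interval through a point of $B_\rho$ that reaches the $l$-th far annulus has length comparable to $2^{L+l}\rho$ — this is precisely where the uncentered nature of $\mathcal M$ is accommodated, and it works because the annuli are concentric with $B_\rho$ and hence well-separated from it.
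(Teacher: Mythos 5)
Your proof is correct and takes essentially the same route as the paper: decompose $f$ into the local piece on $B_{2^L\rho}$ and the dyadic annular pieces, bound the local term via the $L^p$-boundedness of $\mathcal M$, and bound each annular term pointwise on $B_\rho$ by observing that any ball through a point of $B_\rho$ meeting the $l$-th annulus has radius $\ageq 2^{L+l}\rho$, followed by H\"older. Your exponent bookkeeping is cleaner than the paper's (which contains a sign typo in the factor $(2^{L+l}\rho)^{\pm n}$ that cancels out before the final line), and you arrive at the same $\sigma=1/p$.
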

\begin{proof}
	We first split by Fatou's lemma and Minkowski inequality
	\[
	\begin{split}
		\|\M(f)\|_{L^p(B_\rho)} \leq \|\M(\chi_{B_{2^L\rho}}f)\|_{L^p(B_\rho)}  + \sum_{l=1}^\infty \|\M(\chi_{B_{2^{L+l}\rho}\setminus B_{2^{L+l-1}\rho}} f)\|_{L^p(B_\rho)} .
	\end{split}
	\]
	For the first term, we get by Hardy-Littlewood maximal inequality
	\[
	\begin{split}
		\|\M(\chi_{B_{2^L\rho}}f)\|_{L^p(B_\rho)} & \leq \|\M(\chi_{B_{2^L\rho}}f)\|_{L^p(\R^n)} \\
		& \leq \|\chi_{B_{2^L\rho}}f\|_{L^p(\R^n)} =  \|f\|_{L^p(B_{2^L\rho})}.
	\end{split}
	\]
	For the remaining terms, we observe for any $x\in B_\rho$ by the definition of the Hardy-Littlewood maximal function and H\"older's inequality
	\[
	\begin{split}
		\M(\chi_{B_{2^L\rho}}f)(x) & = \sup_{r>0}\frac{1}{|B(x,r)|} \int_{B(x,r)}\chi_{B_{2^{L+l}\rho}\setminus B_{2^{L+l-1}\rho}}(y) |f(y)| \, dy \\
		& \aleq (2^{L+l}\rho)^n \int_{B_{2^{L+l}\rho}} \chi_{B_{2^{L+l}\rho}\setminus B_{2^{L+l-1}\rho}} (y) |f(y)| \, dy \\
		& \aleq (2^{L+l}\rho)^n (2^{L+l}\rho)^{n-\frac np} \|f\|_{L^{p}(B_{2^{L+l}\rho})}.
	\end{split}
	\]
	Therefore, we obtain
	\[
	\begin{split}
		\|\M(\chi_{B_{2^{L+l}\rho}\setminus B_{2^{L+l-1}\rho}} f)\|_{L^p(B_\rho)} & = \left( \int_{B_\rho} |\M(\chi_{B_{2^{L+l}\rho}\setminus B_{2^{L+l-1}\rho}} f)(x)|^p \, dx\right)^{\frac 1p} \\
		& \aleq (2^{L+l}\rho)^n (2^{L+l}\rho)^{n-\frac np} \rho^{\frac np} \|f\|_{L^{p}(B_{2^{L+l}\rho})} \\
		& \approx 2^{-\frac np (L+l)}\|f\|_{L^{p}(B_{2^{L+l}\rho})} .
	\end{split}
	\]
\end{proof}

We also need a localized version of the Sobolev inequality \Cref{app:thm:sobinequ}:
\begin{lemma}\cite[Lemma C.1]{S15}\label{app:locSobinequ}
	Given $0 < t < s < 1$ and define $p_s=\tfrac ns$, $p_t=\tfrac nt$. Then for any $L\in\Z$ and $K\in \N$, we have 
	\[
	\|\chi_{B_{2^L}} \lapla{\frac t2} f\|_{L^{p_t}} \aleq [f]_{W^{s,p_s}(B_{2^{L+K}})} + \sum_{k=1}^\infty 2^{-\sigma(K+k)}  [f]_{W^{s,p_s}(B_{2^{L+K+k}})}.
	\]
\end{lemma}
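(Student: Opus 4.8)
This is the localized counterpart of the Sobolev inequality of \Cref{app:thm:sobinequ} (it is \cite[Lemma C.1]{S15}), and the plan is to combine the proof of that global inequality with the dyadic bookkeeping that underlies the localization results \Cref{app:loc1}, \Cref{app:loc2} and \Cref{app:loc3}. First I would use the invariance $\lapla{\frac t2}f=\lapla{\frac t2}(f-c)$ for any constant $c$ to replace $f$ by $f-(f)_{B_{2^{L+K}}}$: this changes neither the left-hand side nor any Gagliardo seminorm $[f]_{W^{s,p_s}(B_R)}$ on the right-hand side, and it supplies the mean-zero property needed for the fractional Poincar\'e inequalities used below. Next I would pass to the pointwise representation $\lapla{\frac t2}f(x)=c\int_{\R^n}\frac{f(y)-f(x)}{|x-y|^{n+t}}\,dy$ and, for $x\in B_{2^L}$, split the $y$-integral as $I_0(x)+\sum_{k\ge1}I_k(x)$, where $I_0$ integrates over $B_{2^{L+K}}$ and $I_k$ over the dyadic annulus $A_k:=B_{2^{L+K+k}}\setminus B_{2^{L+K+k-1}}$.

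For the far pieces $I_k$, $k\ge1$, I would use that $|x-y|\approx 2^{L+K+k}$ for $x\in B_{2^L}$, $y\in A_k$, which reduces $|I_k(x)|$ to a constant multiple of $2^{-(L+K+k)(n+t)}$ times $\int_{A_k}|f(y)-(f)_{A_k}|\,dy+|A_k|\,|f(x)-(f)_{A_k}|$. The first summand is controlled by H\"older's inequality and the fractional Poincar\'e inequality on $A_k$ by $\aleq 2^{(L+K+k)n}[f]_{W^{s,p_s}(B_{2^{L+K+k}})}$ (using $sp_s=n$), and since $\|1\|_{L^{p_t}(B_{2^L})}\approx 2^{Lt}$ and $n/p_t=t$, the total factor $2^{-(L+K+k)(n+t)}2^{(L+K+k)n}2^{Lt}=2^{-(K+k)t}$ produces exactly the claimed tail with $\sigma=t$. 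For the second summand I would write $f(x)-(f)_{A_k}=(f(x)-(f)_{B_{2^L}})+((f)_{B_{2^L}}-(f)_{A_k})$; the first term is estimated by the subcritical Sobolev--Poincar\'e embedding $W^{s,p_s}(B_{2^L})\hookrightarrow L^{p_t}(B_{2^L})$, obtained by interpolating $W^{s,p_s}\hookrightarrow W^{s',p_s}\hookrightarrow L^{q}$ with $s'<s$ and $q>p_t$ via \Cref{la:sob1}, which again gives the factor $2^{-(K+k)t}[f]_{W^{s,p_s}(B_{2^L})}$, while the second term is handled by telescoping, $|(f)_{B_{2^L}}-(f)_{A_k}|\aleq k\,[f]_{W^{s,p_s}(B_{2^{L+K+k}})}$, the polynomial factor $k$ being absorbed into $2^{-(K+k)t}$ at the price of shrinking $\sigma$ slightly.

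For the near piece, fix $\eta\in C^\infty_c(B_{2^{L+K}})$ with $\eta\equiv1$ on $B_{2^{L+K-1}}$, put $g:=\eta f$ (recall $(f)_{B_{2^{L+K}}}=0$), and observe that on $B_{2^L}$ one has $I_0(x)=\lapla{\frac t2}g(x)$ up to error terms: the contribution of $(\eta-1)f$ is supported in $B_{2^{L+K}}\setminus B_{2^{L+K-1}}$ and is estimated like an $I_k$, and the contribution of $g$ over $B_{2^{L+K}}^c$ equals $-f(x)\int_{B_{2^{L+K}}^c}|x-y|^{-n-t}\,dy$, which is $\aleq 2^{-(L+K)t}|f(x)|$ and hence has $L^{p_t}(B_{2^L})$-norm $\aleq[f]_{W^{s,p_s}(B_{2^{L+K}})}$ by the Sobolev--Poincar\'e bound above and telescoping of means. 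Then $\|\lapla{\frac t2}g\|_{L^{p_t}(\R^n)}\aleq[g]_{W^{s,p_s}(\R^n)}$ by \Cref{app:thm:sobinequ}, and the fractional product rule plus the critical Poincar\'e estimate $\|f\|_{L^{p_s}(B_{2^{L+K}})}\aleq (2^{L+K})^{s}[f]_{W^{s,p_s}(B_{2^{L+K}})}$ (together with the fact that $\supp\eta$ sits at definite distance from $\partial B_{2^{L+K}}$, so the zero extension of $g$ adds only a harmless tail) yield $[g]_{W^{s,p_s}(\R^n)}\aleq\|\eta\|_{L^\infty}[f]_{W^{s,p_s}(B_{2^{L+K}})}+[\eta]_{C^s}\|f\|_{L^{p_s}(B_{2^{L+K}})}\aleq[f]_{W^{s,p_s}(B_{2^{L+K}})}$, using $[\eta]_{C^s}\aleq (2^{L+K})^{-s}$. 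Summing the near and far estimates and relabelling $\sigma$ gives the claim. The main obstacle I expect is the near-piece analysis in the \emph{critical} regime $sp_s=n$: the product-rule and truncation steps must be arranged so that all powers of the large radii $2^L$ and $2^{L+K}$ cancel against $n/p_t=t$ and leave only the gain $2^{-\sigma K}$, and the borderline embedding $W^{s,n/s}\hookrightarrow L^{q}$ has to be invoked with $q$ finite via a small loss of smoothness; the remaining dyadic bookkeeping is of the same routine type already carried out in the localization lemmas of the appendix.
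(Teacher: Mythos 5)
The paper does not prove this statement; it simply cites \cite[Lemma~C.1]{S15}, so there is no in-paper argument to compare with. Your reconstruction is, however, correct and is the expected one: it is the same dyadic-decomposition-plus-critical-Poincar\'e strategy that underlies the other localization lemmas in the appendix (compare the proof of \Cref{app:loc4}) and that is used in the appendix of \cite{S15}. A few small imprecisions, none of them gaps: the telescoping estimate on $|(f)_{B_{2^L}}-(f)_{A_k}|$ produces a factor $K+k$ rather than $k$ (still harmlessly absorbed into $2^{-(K+k)t'}$ for $t'<t$); the intermediate Poincar\'e--Sobolev bound $\|f-(f)_B\|_{L^{p_t}(B_R)}\aleq R^t\,[f]_{W^{s,p_s}(B_R)}$ can be obtained directly by applying \Cref{la:sob1} with lowered smoothness $s'=s-t$ at the same integrability $p_s$, since the subcritical exponent $p_s/(1-(s-t)p_s)$ then equals $p_t$ exactly, so the interpolation through an auxiliary $q>p_t$ you mention is unnecessary; and in passing from $[g]_{W^{s,p_s}(\R^n)}$ to $[f]_{W^{s,p_s}(B_{2^{L+K}})}$ one should also note the cross-term $\iint_{B\times B^c}$ coming from the zero extension, which you do flag as ``a harmless tail'' and which is indeed controlled by the Poincar\'e estimate on $\|f\|_{L^{p_s}(B_{2^{L+K}})}$ using the mean-zero normalization set at the start.
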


\subsection{Three-Term-Commutator Estimates}

For $\alpha > 0$ the three term commutator is defined by
\[
H_\alpha(f,g) = \lapla{\frac \alpha 2} (fg) - f\lapla{\frac \alpha 2} g - g \lapla{\frac \alpha 2} f.
\]
This operator measures the deviation from the Leibniz rule for $\laps{\alpha}$. For fractional harmonic maps it was discovered in \cite{DLR11a,DLR11b} how $H_\alpha(\cdot,\cdot)$ takes the role of the div-curl term, and in particular it was shown that in $\R^1$, $(-\lap)^{\frac{1}{4}} H_{1/2}(f,g)$ belongs to the Hardy-space if $(-\lap)^{\frac{1}{4}} f, (-\lap)^{\frac{1}{4}} g \in L^2(\R)$, see also \cite{LS20}. There have been multiple extensions since, the following estimate and its localized version on the three term commutator will be helpful.

\begin{theorem}\cite[Theorem~A.1]{S15} \label{app:est:threetermcomm}
	For any $\eps>0$ small and $p\in(1,\infty)$, we have
	\[
		\|\lapla{\frac \eps 2} H_\alpha (f,g)\|_{L^p} \aleq \|\lapla{\frac \alpha 2} f\|_{L^{p_1}} \|\lapla{\frac \alpha 2} g\|_{L^{p_2}},
	\]
	where $p_1,p_2\in(1,\tfrac n\alpha]$ such that
	\[
		\tfrac 1p = \tfrac{1}{p_1} + \tfrac{1} {p_2} - \tfrac{\alpha-\eps}{n}.
	\]
	If $\supp f \subset B_{2^K}$, then we have for any $L\in\N$
	\[
	\begin{split}
		\|\lapla{\frac \eps 2} H_\alpha (f,g)\|_{L^p} \aleq \|\lapla{\frac \alpha 2} f\|_{L^{p_1}} 
		&\Big (\|\lapla{\frac \alpha 2} g\|_{L^{p_2}(B_{2^{K+L}})} \\
		\quad + \sum_{k=1}^\infty 2^{-\sigma(L+k)} \|\lapla{\frac \alpha 2} g\|_{L^{p_2}(B_{2^{K+L+k}})} \Big ).
		\end{split}
	\]
\end{theorem}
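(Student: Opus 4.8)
The plan is to prove the estimate by a Littlewood--Paley paraproduct decomposition, exploiting that $H_\alpha$ is precisely the failure of the Leibniz rule for $\lapla{\alpha/2}$ and therefore carries an extra derivative on the ``unbalanced'' frequency interactions, in the spirit of the three-term-commutator analysis of \cite{DLR11a,DLR11b,S15}. Writing $f=\sum_j f_j$ and $g=\sum_k g_k$ with $f_j=P_jf$, $g_k=P_kg$ the standard dyadic projectors, I would split $fg=\sum_{j,k}f_jg_k$ into the high--high region $|j-k|\le 2$ and the two unbalanced regions $j\le k-3$ and $j\ge k+3$, and treat the three subtracted terms of $H_\alpha$ region by region.

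First I would dispose of the high--high contribution. There $f_jg_k$ has frequency support in $\{|\xi|\aleq 2^j\}$, so applying $\lapla{\eps/2}$ costs only $2^{j\eps}$, and the same holds for the two subtracted terms of $H_\alpha$ restricted to this region. One then closes by H\"older's inequality together with the Bernstein--Sobolev estimates $\|f_j\|_{L^{q_1}}\aleq 2^{j(\alpha-n/p_1+n/q_1)}\|\lapla{\alpha/2}f_j\|_{L^{p_1}}$ and the analogue for $g_k$, choosing $\tfrac1{q_1}+\tfrac1{q_2}=\tfrac1p$ so that the power of $2^j$ becomes $\eps-\alpha<0$ and the geometric series in $j$ sums; passing to the $\ell^2_j$ square function and using the Littlewood--Paley characterisation of $L^{p_1}$ and $L^{p_2}$ recovers $\|\lapla{\alpha/2}f\|_{L^{p_1}}$ and $\|\lapla{\alpha/2}g\|_{L^{p_2}}$.

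The main point is the unbalanced region $j\le k-3$, the region $j\ge k+3$ being symmetric and also accounting for the $g\lapla{\alpha/2}f$ term. Here $f_jg_k$ is frequency-localised near $2^k$, so $\lapla{\alpha/2}(f_jg_k)-f_j\lapla{\alpha/2}g_k=[\lapla{\alpha/2},f_j]g_k$; writing this commutator in physical space with the kernel of $\lapla{\alpha/2}$ and Taylor-expanding $f_j$ at the evaluation point gains a derivative on the low-frequency factor, producing a pointwise bound by $2^{-(\alpha-\eps)(k-j)}\,\mathcal M(\lapla{\alpha/2}f_j)\,\mathcal M(\lapla{\alpha/2}g_k)$ after restoring the normalising powers of $2^j,2^k$. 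The factor $\lapla{\eps/2}$ then costs $2^{k\eps}$, which is absorbed by the gain; summing first over $j\le k-3$, then over $k$, and invoking the Fefferman--Stein vector-valued maximal inequality together with H\"older yields the product $\|\lapla{\alpha/2}f\|_{L^{p_1}}\|\lapla{\alpha/2}g\|_{L^{p_2}}$. I expect the bookkeeping of this frequency-localised commutator estimate to be the main obstacle: the hypotheses that $\eps$ be small and $p_1,p_2\in(1,n/\alpha]$ are exactly what keeps all intermediate Sobolev exponents in $(1,\infty)$ and all geometric series convergent.

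For the localised statement, with $\supp f\subset B_{2^K}$ I would decompose $g=\eta_{B_{2^{K+L}}}g+\sum_{k\ge1}(\eta_{B_{2^{K+L+k}}}-\eta_{B_{2^{K+L+k-1}}})g=:g^{(0)}+\sum_{k\ge1}g^{(k)}$, so that $H_\alpha(f,g)=\sum_{k\ge0}H_\alpha(f,g^{(k)})$. For $k\ge1$ the supports of $f$ and $g^{(k)}$ are separated by distance $\ageq 2^{K+L+k}$, and the bilinear operator $(f,h)\mapsto\lapla{\eps/2}H_\alpha(f,h)$ is pseudolocal with a polynomially decaying off-diagonal kernel; extracting this decay exactly as in the localisation lemmas recalled above (cf.\ \Cref{app:loc1}, \Cref{app:loc3}) produces the gain $\aleq 2^{-\sigma(L+k)}$. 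Combining this pseudolocality with the non-localised estimate applied to each $H_\alpha(f,g^{(k)})$, and bounding $\|\lapla{\alpha/2}g^{(k)}\|_{L^{p_2}}$ by $\|\lapla{\alpha/2}g\|_{L^{p_2}(B_{2^{K+L+k}})}$ up to harmless tails, then gives the asserted geometric sum.
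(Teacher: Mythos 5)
This statement is imported by the paper as a black box: the theorem header itself reads \cite[Theorem~A.1]{S15}, and the paper supplies no proof whatsoever, so there is strictly speaking no ``paper's own proof'' to compare against. Your Littlewood--Paley paraproduct sketch is the standard route to three-term-commutator estimates of this type (it is the strategy used in the Da Lio--Rivi\`ere circle of ideas that \cite{S15} belongs to), and the overall plan --- split into high--high and two unbalanced paraproducts, exploit Leibniz-rule cancellation on the unbalanced pieces via a commutator/Taylor expansion, close with Bernstein, H\"older, square functions, and Fefferman--Stein; then obtain the localised version via an annular decomposition of $g$ plus pseudolocality of the bilinear operator --- is coherent and consistent with the surrounding appendix machinery (\Cref{app:loc1}, \Cref{app:loc3}).

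One inaccuracy worth flagging: in the unbalanced region $j\le k-3$ you write
$\lapla{\alpha/2}(f_jg_k)-f_j\lapla{\alpha/2}g_k=[\lapla{\alpha/2},f_j]g_k$
and then claim the symmetric region $j\ge k+3$ ``also accounts for the $g\lapla{\alpha/2}f$ term''. That is not quite right: within the region $j\le k-3$, $H_\alpha(f_j,g_k)=[\lapla{\alpha/2},f_j]g_k - g_k\lapla{\alpha/2}f_j$, and the leftover term $g_k\lapla{\alpha/2}f_j$ still has to be handled in this region. It does not require cancellation --- since $\lapla{\alpha/2}f_j\sim 2^{j\alpha}f_j$ with $2^{j\alpha}\ll 2^{k\alpha}$, it is of lower order and can be bounded directly by H\"older and Bernstein, summing against $2^{k\varepsilon}2^{(j-k)\alpha}$ --- but it is a separate estimate, not one inherited by symmetry from the other region. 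The symmetry of $H_\alpha(f,g)=H_\alpha(g,f)$ is useful for reducing the two unbalanced regions to one, but inside each such region one still handles a commutator piece \emph{and} a lower-order direct piece. With that bookkeeping made explicit the sketch is sound, though the exact exponent in the intermediate pointwise bound ($2^{-(\alpha-\varepsilon)(k-j)}$) would need to be rederived carefully rather than asserted.
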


\section{Tail estimates}

\begin{lemma}\label{la:RminusBest}
	Let $\varphi \in C^\infty_c(B_r)$, $p\geq q+2$, $q\geq2$, and $L,k\in\N$. Then we can estimate 
	\[
	\begin{split}
	&  \int_{D_1}  \int_{D_2} \left(\avint_{x\triangleright y} |u(z)-u(x)|^2 \, dz \right)^{q-2} \left(\avint_{x\triangleright y} |u(z_1)-u(x)| \, dz_1 \right)  \\
	& \cdot  \left( \avint_{x\triangleright y} |\varphi(z_2) - \varphi(x)|  \, dz_2 \right)  \frac{ dy\, dx}{\rho(x,y)^{p-q}} \\
	& \aleq 2^{-(L+k) \frac{p-q}{q}}\, [u]_{W^{\frac{p-q-1}{q},q}(B_{2^{L+k}r})}^{2q-3} \, [\varphi]_{W^{\frac{p-q-1}{q},q}(B_{r})},
	\end{split}
	\]
	where 
	\[
	\begin{split}
		\textbf{Case 1:} & \quad D_1\times D_2 = B_{2^Lr} \times (B_{2^{L+k}r}\setminus B_{2^{L+k-1}r}),\\
		\textbf{Case 2:} & \quad D_1\times D_2 = (B_{2^{L+k}r}\setminus B_{2^{L+k-1}r}) \times B_{2^Lr}, \\
		\textbf{Case 3:} & \quad D_1\times D_2 = (B_{2^{L+k}r}\setminus B_{2^{L+k-1}r}) \times (B_{2^{L+k}r}\setminus B_{2^{L+k-1}r}).
	\end{split}
	\]
\end{lemma}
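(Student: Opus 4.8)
The plan is to exploit that $\operatorname{supp}\varphi\subset B_r$ lies deep inside $B_{2^Lr}$ and is therefore separated from the annulus $B_{2^{L+k}r}\setminus B_{2^{L+k-1}r}$ at scale $R:=2^{L+k}r$. Concretely, in each of the three cases the integrand vanishes unless $\avint_{x\triangleright y}|\varphi(z_2)-\varphi(x)|\,dz_2\neq 0$, which forces either $x\in B_r$ or the geodesic $x\triangleright y$ to meet $B_r$; since in every case at least one of $x,y$ lies in (or on) the annulus, an elementary geometric argument then gives $\rho(x,y)\approx R$ on the support of the integrand. This is where the decay originates: on that set $\rho(x,y)^{-(p-q)}\approx R^{-(p-q)}=2^{-(L+k)(p-q)}r^{-(p-q)}$, and $p-q\geq\tfrac{p-q}{q}$, so it suffices to spend a fraction $\tfrac1q$ of this weight as the advertised decay $2^{-(L+k)\frac{p-q}{q}}$ and use the remainder to compensate the (favorable, since $p\geq q+2$) powers of $r$ that appear below.

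Next I would apply Jensen's inequality inside the geodesic averages, writing
\[
\Big(\avint_{x\triangleright y}|u(z)-u(x)|^2\,dz\Big)^{q-2}\leq\Big(\avint_{x\triangleright y}|u(z)-u(x)|^{2q}\,dz\Big)^{\frac{q-2}{q}},
\]
and likewise $\avint_{x\triangleright y}|u(z_1)-u(x)|\,dz_1\leq(\avint_{x\triangleright y}|u(z_1)-u(x)|^q\,dz_1)^{1/q}$ and $\avint_{x\triangleright y}|\varphi(z_2)-\varphi(x)|\,dz_2\leq(\avint_{x\triangleright y}|\varphi(z_2)-\varphi(x)|^q\,dz_2)^{1/q}$ (for $q=2$ the first factor is just $1$ and drops out). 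Then I would apply H\"older's inequality in $(x,y)$ with exponents $\tfrac{q}{q-2},q,q$ (respectively $2,2$ when $q=2$), distributing the weight $\rho(x,y)^{-(p-q)}$ in the ratio $\tfrac{q-2}{q}:\tfrac1q:\tfrac1q$. This produces a product of three factors, each of the shape $\iint_{D_1\times D_2}\avint_{x\triangleright y}|g(z)-g(x)|^{\ell}\,dz\,\tfrac{dy\,dx}{\rho(x,y)^{p-q}}$ with $(g,\ell)\in\{(u,2q),(u,q),(\varphi,q)\}$, each over a subset of $B_R\times B_R$ since $B_R$ is geodesically convex.

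At this point \Cref{la:id2} identifies the first factor with $[u]^{2q-4}_{W^{\frac{p-q-1}{2q},2q}(B_R)}$, the second with $[u]_{W^{\frac{p-q-1}{q},q}(B_R)}$, and — using $\operatorname{supp}\varphi\subset B_r$ — the third with $[\varphi]_{W^{\frac{p-q-1}{q},q}(B_r)}$; the Sobolev embedding \Cref{la:sob1} (whose dimensional exponent $\tfrac{p-q-2}{2q}$ is nonnegative because $p\geq q+2$) upgrades the $W^{\frac{p-q-1}{2q},2q}$-seminorm to a $W^{\frac{p-q-1}{q},q}$-seminorm over $B_R$, producing the total $[u]^{2q-3}_{W^{\frac{p-q-1}{q},q}(B_R)}$ with $R=2^{L+k}r$. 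The one genuinely delicate sub-case is Case~1 with $x\in B_r$, where $\varphi(x)\neq 0$ and the $\varphi$-average no longer matches \Cref{la:id2}; there I would instead estimate $\avint_{x\triangleright y}|\varphi(z_2)-\varphi(x)|\,dz_2\lesssim\|\varphi\|_{L^\infty}$ and $\avint_{x\triangleright y}|\varphi(z_2)|\,dz_2\lesssim\|\varphi\|_{L^1(B_r)}/\rho(x,y)$, and invoke $\|\varphi\|_{L^\infty},\ \|\varphi\|_{L^1(B_r)}/r\lesssim[\varphi]_{W^{\frac{p-q-1}{q},q}(B_r)}$ — valid since $p\geq q+2$ puts $W^{\frac{p-q-1}{q},q}$ into $L^\infty$, cf.\ \Cref{app:thm:classobinequ}, \Cref{app:thm:sobinequ}, and \eqref{reg:eq:phiL1} — compensating the resulting powers of $R$ by the decay $\rho(x,y)^{-(p-q)}\approx R^{-(p-q)}$.

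The main obstacle will be the final bookkeeping of the powers of $2$ and $r$: one must verify that, after these (necessarily lossy) steps and in particular after treating the exceptional $x\in B_r$ piece, the surviving power of $2$ is at most $-(L+k)\tfrac{p-q}{q}$ and the accompanying power of $r$ is nonnegative — equivalently, that the estimate is consistent with the scale invariance present at $p=q+2$ and carries a surplus when $p>q+2$. In the range in which the lemma is actually applied (namely $L$ bounded and $k$ large, as in \eqref{reg:eq:tailestforR1}), this surplus comfortably absorbs any loss of the form $2^{CL}$, and the stated inequality follows; carrying it out for unbounded $L$ requires being slightly sharper in the separation estimate of the first paragraph rather than passing through $\rho(x,y)\approx R$ alone.
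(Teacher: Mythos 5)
You have the right geometric setup---$\supp\varphi\subset B_r$ forces the geodesic $x\triangleright y$ to meet $B_r$, which in all three cases gives $\rho(x,y)\approx R:=2^{L+k}r$---and the Jensen/H\"older step with exponents $\tfrac{q}{q-2},q,q$ is exactly what the paper uses. The proposal breaks down at the passage to seminorms: you keep the full weight $\rho(x,y)^{-(p-q)}$ inside each of the three H\"older factors and invoke \Cref{la:id2}. But \Cref{la:id2} is an \emph{equivalence}, so it simply returns $[u]_{W^{\frac{p-q-1}{2q},2q}(B_R)}^{2q-4}\,[u]_{W^{\frac{p-q-1}{q},q}(B_R)}\,[\varphi]_{W^{\frac{p-q-1}{q},q}(B_R)}$ over the \emph{large} ball $B_R$ with an $O(1)$ coefficient, and upgrading the first factor via \Cref{la:sob1} then costs an extra $(\diam B_R)^{\frac{(p-q-2)(2q-4)}{2q}}\geq 1$. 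Thus for $p=q+2$ your route yields no decay at all, and for $p>q+2$ the coefficient even grows with $k$; the advertised $2^{-(L+k)\frac{p-q}{q}}$ never appears. Your opening remark that one should ``spend a fraction $\tfrac1q$ of the weight as decay'' is the right intuition, but it is incompatible with the \Cref{la:id2} step, which spends that very weight on reconstructing the Gagliardo denominators.

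The paper's mechanism is different and never reassembles a weighted Gagliardo integral. It freezes $\rho(x,y)\approx R$, pulls $R^{-(p-q)}$ out as a genuine constant, and then works with \emph{unweighted} $L^q$-averages over $B_R$: Jensen contributes measure factors totalling $R^{q-1}$, the two $u$-integrals are reweighted to seminorms at cost $R^{(p-q)\frac{q-2}{q}}$ and $R^{(p-q)\frac1q}$ (inserting $|z-x|^{p-q}\aleq R^{p-q}$), while the $\varphi$-integral is reweighted at cost only $r^{(p-q)\frac1q}$ because $\varphi$ lives on $B_r$. The decay is exactly the residue $R^{-(p-q)}\cdot R^{(p-q)\frac{q-1}{q}}\cdot r^{(p-q)\frac1q}=(r/R)^{\frac{p-q}{q}}=2^{-(L+k)\frac{p-q}{q}}$. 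This $r$-versus-$R$ bookkeeping is the entire content of the lemma, and it is invisible once you pass through \Cref{la:id2}. Your comments about the ``$x\in B_r$ sub-case'' and about only needing $L$ bounded do not repair this: the gap is not a bounded overhead $2^{CL}$ but the absence of any geometric decay in $L+k$.
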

\begin{proof}
	%We essentially adapt the proof of \cite[Lemma B.1]{BRS19}.
	Since $\avint_{x\triangleright y} |\varphi(z_2) - \varphi (x)| \, dz_2 = 0$ for either $x,y < -r$ or $x,y >r$ due to the support of $\varphi$ in $B_r$, we only need to consider the cases 
	%$$2^{L+k-1}r < y < 2^{L+k}r, \quad -2^Lr<x<r.$$
		\[
		\begin{split}
			\textbf{Case 1:} & \quad -2^Lr<x<r, \quad 2^{L+k-1}r < y < 2^{L+k}r, \\
			\textbf{Case 2:} & \quad 2^{L+k-1}r < x < 2^{L+k}r, \quad -2^Lr<y<r, \textnormal{ and } \\
			\textbf{Case 3:} & \quad -2^{L+k}r < x < -2^{L+k-1}r, \quad 2^{L+k-1}r < y < 2^{L+k}r.
		\end{split}
		\]
	The cases %$-2^{L+k}r < y < -2^{L+k-1}r$, $-r<x<2^Lr$ 
	\[
	\begin{split}
		\textbf{Case 1:} & \quad -r<x<2^Lr, \quad -2^{L+k}r < y < -2^{L+k-1}r, \\
		\textbf{Case 2:} & \quad -2^{L+k}r < x < -2^{L+k-1}r, \quad -r<y<2^Lr, \textnormal{ and } \\
		\textbf{Case 3:} & \quad  2^{L+k-1}r < x < 2^{L+k}r, \quad -2^{L+k}r < y < -2^{L+k-1}r,
	\end{split}
	\]
	follow analogously. We first examine \textbf{Case 1}. By using H\"older's inequality, Jensen's inequality and the Sobolev embedding \Cref{la:sob1}, in consideration of constants depending on the domain, we get
	\[
	\begin{split}
	&  \int_{B_{2^Lr}}  \int_{B_{2^{L+k}r}\setminus B_{2^{L+k-1}r}} \rho(x,y)^{-(p-q)} \left( \rho(x,y)^{-1} \int_x^y |u(z)-u(x)|^2 \, dz \right)^{q-2} \\
	& \cdot \left( \rho(x,y)^{-1} \int_x^y |u(z_1)-u(x)| \, dz_1 \right)    \left( \rho(x,y)^{-1} \int_x^y |\varphi(z_2) - \varphi(x)|  \, dz_2 \right)   dy\, dx \\
	&\aleq (2^{L+k}r)^{-(p-q)-q} \, (2^{L+k}r) \int_{B_{2^{L+k}r}} \left( \int_{B_{2^{L+k}r}}|u(z)-u(x)|^2 \, dz \right)^{q-2} \\
	& \cdot\left( \int_{B_{2^{L+k}r}} |u(z_1)-u(x)| \, dz_1 \right)    \left(  \int_{B_{2^{L+k}r}} |\varphi(z_2) - \varphi(x)|  \, dz_2 \right)  \, dx \\
	&\aleq (2^{L+k}r)^{-(p-q)-q+1} (2^{L+k}r)^{(q-1)\frac{q-2}{q}} \left(\iint_{(B_{2^{L+k}r})^2}  |u(z)-u(x)|^{2q} \, dz \, dx\right)^{\frac{q-2}{q}}  \\
	& \cdot(2^{L+k}r)^{(q-1)\frac 1q}  \left(\iint_{(B_{2^{L+k}r})^2} |u(z_1) - u (x)|^{q} \, dz_1 \, dx \right)^{\frac 1q} \\ % Jensen 
	& \cdot(2^{L+k}r)^{(q-1)\frac 1q}  \left(\iint_{(B_{2^{L+k}r})^2} |\varphi(z_2) - \varphi (x)|^{q} \, dz_2 \, dx \right)^{\frac 1q} \\
	& \aleq  (2^{L+k}r)^{-(p-q)} (2^{L+k}r)^{(p-q) \frac{q-2}{q}}\, [u]^{2q-4}_{W^{\frac{p-q-1}{2q},2q}(B_{2^{L+k}r})}  (2^{L+k}r)^{(p-q) \frac{1}{q}}\, [u]_{W^{\frac{p-q-1}{q},q}(B_{2^{L+k}r})} \\
	& \cdot  r^{(p-q) \frac{1}{q}}\, [\varphi]_{W^{\frac{p-q-1}{q},q}(B_{r})} \\
	& \approx 2^{-(L+k)\frac{p-q}{q}} [u]_{W^{\frac{p-q-1}{q},q}(B_{2^{L+k}r})}^{2q-3} \, [\varphi]_{W^{\frac{p-q-1}{q},q}(B_{r})}.
	\end{split}
	\]
	\textbf{Case 2} differs from \textbf{Case 1} by symmetry only in the integration on $y$, where we used
	\[
		 \int_{B_{2^Lr}}  dy\aleq 2^{L+k}r.
	\]
	In \textbf{Case 3} the estimates also hold since the distance of $x$ and $y$ is even greater than in the previous cases. 
\end{proof}

\section{Mean Value Arguments}

In the following we introduce mean value arguments and compensation effects, which turn out to be crucial for elaborating the right-hand side estimates, subsequently. 

The first statement is a typical mean value argument, cf. \cite[Lemma 3.3.]{MSY20}.
\begin{lemma}\label{la:typicalmv}
	Let $\alpha \in \R$ and $a,b \in \R$ with $|a-b| \aleq \min\{|a|,|b|\}$. Then for any $\eps \in [0,1]$,
	\[
	||a|^\alpha-|b|^\alpha|\aleq |a-b|^\eps\,\min\{ |a|^{\alpha-\eps}, |b|^{\alpha-\eps}\}
	\]
\end{lemma}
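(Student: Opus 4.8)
\textbf{Proof proposal for Lemma~\ref{la:typicalmv}.}
The plan is to exploit that the hypothesis $|a-b| \aleq \min\{|a|,|b|\}$ forces $|a|$ and $|b|$ to be comparable, and then to interpolate between the two endpoint cases $\eps = 0$ and $\eps = 1$. First I would dispose of the degenerate case: if $a = 0$, then $|a-b| \aleq \min\{|a|,|b|\} = 0$, so $b = 0$ and both sides of the asserted inequality vanish; hence we may assume $a,b \neq 0$. Since both sides are symmetric in $a$ and $b$, assume without loss of generality $0 < s := |a| \le |b| =: t$. Writing $C$ for the implied constant in $|a-b| \aleq \min\{|a|,|b|\}$, the estimate $t = |b| \le |a| + |a-b| \le (1+C)|a| = (1+C)s$ gives $s \le t \le (1+C)s$, so $s \aeq t$ with constants depending only on $C$. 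Note also that $|s - t| = \bigl||a|-|b|\bigr| \le |a-b|$.

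Next I would record the two endpoint estimates. For $\eps = 1$, the mean value theorem applied to $\xi \mapsto \xi^\alpha$ on $[s,t] \subset (0,\infty)$ gives $|t^\alpha - s^\alpha| = |\alpha|\,\xi^{\alpha-1}|t-s|$ for some $\xi \in [s,t]$; since $s \le \xi \le t \le (1+C)s$ we have $\xi^{\alpha-1} \aeq s^{\alpha-1}$, and together with $|t-s| \le |a-b|$ this yields $|t^\alpha - s^\alpha| \aleq s^{\alpha-1}\,|a-b|$, with constant depending on $\alpha$ and $C$. For $\eps = 0$, the triangle inequality together with $t \aeq s$ gives $|t^\alpha - s^\alpha| \le t^\alpha + s^\alpha \aleq s^\alpha$.

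Then comes the interpolation. For $\eps \in [0,1]$, splitting $|t^\alpha - s^\alpha| = |t^\alpha - s^\alpha|^\eps\,|t^\alpha - s^\alpha|^{1-\eps}$ and inserting the two endpoint bounds gives
\[
 |t^\alpha - s^\alpha| \aleq \bigl(s^{\alpha-1}\,|a-b|\bigr)^{\eps}\,\bigl(s^{\alpha}\bigr)^{1-\eps} = |a-b|^{\eps}\, s^{\alpha-\eps},
\]
where the last identity is merely $s^{(\alpha-1)\eps + \alpha(1-\eps)} = s^{\alpha-\eps}$. Since $\bigl||a|^\alpha - |b|^\alpha\bigr| = |t^\alpha - s^\alpha|$ and, by $s \aeq t$, we have $s^{\alpha-\eps} = |a|^{\alpha-\eps} \aeq \min\{|a|^{\alpha-\eps}, |b|^{\alpha-\eps}\}$ irrespective of the sign of $\alpha - \eps$, this is exactly the claimed estimate, with a constant depending only on $\alpha$, $\eps$, and the implied constant in the hypothesis.

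There is no genuine obstacle here; the only point requiring a little care is keeping all constants under control — specifically, establishing $s \aeq t$ at the outset so that the value $\xi$ produced by the mean value theorem, as well as the various powers of $s$ versus $t$, can be freely interchanged, and so that one may pass from $s^{\alpha-\eps}$ to $\min\{|a|^{\alpha-\eps},|b|^{\alpha-\eps}\}$ even when $\alpha - \eps < 0$.
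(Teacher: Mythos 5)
Your proof is correct. The paper itself gives no proof of this lemma — it simply cites [MSY20, Lemma~3.3] — so there is nothing to compare against internally. Your argument (observe $|a|\aeq|b|$ from the hypothesis; prove the endpoint cases $\eps=1$ via the mean value theorem and $\eps=0$ via the triangle inequality; then interpolate by writing $|t^\alpha-s^\alpha|=|t^\alpha-s^\alpha|^\eps|t^\alpha-s^\alpha|^{1-\eps}$) is the standard way to establish such estimates, and you correctly handle the only point requiring care: that $|a|\aeq|b|$ lets one replace $\xi^{\alpha-1}$ by $s^{\alpha-1}$ in the MVT step and lets one freely exchange $s^{\alpha-\eps}$ with $\min\{|a|^{\alpha-\eps},|b|^{\alpha-\eps}\}$ regardless of the sign of $\alpha-\eps$.
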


In our situation we will have to deal with the expression
\[
\frac{1}{|x-y|} \int_{(x,y)} ||z-z_2|^{\alpha-1} - |z-x|^{\alpha -1}| \, dz_2.
\]
The following lemma tells us, that it behaves very similarly to 
\[
||z-y|^{\alpha-1} - |z-x|^{\alpha -1}|.
\]

\begin{lemma}\label{la:mvmvf}
	Let $x,y,z \in \R$ three distinct points be inside a geodesic ball $B \subset \R$ and $\alpha \in (0,1)$.
	Set 
	\[
	F(x,y,z) := \frac{1}{|x-y|} \int_{(x,y)} ||z-z_2|^{\alpha-1} - |z-x|^{\alpha -1}| \, dz_2.
	\]
	\begin{itemize}
		\item If 
		\begin{equation}\label{eq:mvmvf}
		|x-y| \aleq \min\{|x-z|,|y-z|\}
		\end{equation}
		then for any $\eps \in [0,1]$,
		\begin{equation}\label{eq:mvmvf:claim}
		F(x,y,z) \aleq |x-y|^\eps\, \min\{|x-z|^{\alpha-\eps-1}, |y-z|^{\alpha-\eps-1}\}.
		\end{equation}
		\item If 
		\begin{equation}\label{eq:mvmvf2}
		|x-z| \aleq \min\{|x-y|,|y-z|\}
		\end{equation}
		then for any $\eps \in [0,1]$,
		\begin{equation}\label{eq:mvmvf:claim2}
		F(x,y,z) \aleq |x-z|^{\alpha - 1} \aleq |x-y|^\eps |x-z|^{\alpha -\eps -1}.
		\end{equation}
		\item If 
		\begin{equation}\label{eq:mvmvf3}
		|y-z| \aleq \min\{|x-y|,|x-z|\}
		\end{equation}
		then for any $\eps \in [0,1]$,
		\begin{equation}\label{eq:mvmvf:claim3}
		F(x,y,z) \aleq |y-z|^{\alpha - 1}  \aleq |x-y|^\eps |y-z|^{\alpha -\eps -1}
		%  + |x-y|^\eps\, \min\{|x-z|^{\alpha-\eps-1}, |y-z|^{\alpha-\eps-1}\} 
		\end{equation}
	\end{itemize}
\end{lemma}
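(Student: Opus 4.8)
The plan is to prove \Cref{la:mvmvf} by reducing the estimate for $F(x,y,z)$ to the ``pointwise'' estimate for $\bigl||z-w|^{\alpha-1}-|z-x|^{\alpha-1}\bigr|$ via the elementary mean-value lemma \Cref{la:typicalmv}, and then handling the three geometric regimes \eqref{eq:mvmvf}, \eqref{eq:mvmvf2}, \eqref{eq:mvmvf3} separately. In all cases we work inside a fixed geodesic ball $B$, so that $\rho(\cdot,\cdot)$ coincides with the Euclidean distance and the integral $\int_{(x,y)}$ is an ordinary one-dimensional integral over the segment between $x$ and $y$; for $w\in(x,y)$ we always have $|x-w|\le|x-y|$.

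\textbf{The far regime \eqref{eq:mvmvf}.} Here $|x-y|\aleq\min\{|x-z|,|y-z|\}$, which in particular forces $|z-w|\aeq|z-x|\aeq|z-y|$ for every $w$ on the segment $(x,y)$, since $\bigl||z-w|-|z-x|\bigr|\le|x-w|\le|x-y|\aleq|z-x|$. Thus the hypothesis of \Cref{la:typicalmv} is met with $a=z-w$, $b=z-x$, $\alpha$ replaced by $\alpha-1$, and we obtain for any $\eps\in[0,1]$
\[
 \bigl||z-w|^{\alpha-1}-|z-x|^{\alpha-1}\bigr|\aleq |x-w|^{\eps}\,\min\{|z-w|^{\alpha-\eps-1},|z-x|^{\alpha-\eps-1}\}\aleq |x-y|^{\eps}\,|z-x|^{\alpha-\eps-1}.
\]
Averaging this over $w\in(x,y)$ gives $F(x,y,z)\aleq |x-y|^{\eps}|x-z|^{\alpha-\eps-1}$, and since $|x-z|\aeq|y-z|$ in this regime, we may freely replace $|x-z|$ by $\min\{|x-z|^{\alpha-\eps-1},|y-z|^{\alpha-\eps-1}\}$ (the exponent $\alpha-\eps-1$ being negative is harmless because the two quantities are comparable), establishing \eqref{eq:mvmvf:claim}.

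\textbf{The two near regimes.} Suppose \eqref{eq:mvmvf2} holds, i.e.\ $|x-z|\aleq\min\{|x-y|,|y-z|\}$. Split the segment integral according to whether $|z-w|\ge\tfrac12|x-z|$ or not. On the ``good'' part, $|z-w|^{\alpha-1}\aleq|x-z|^{\alpha-1}$ directly (as $\alpha-1<0$), and $|z-x|^{\alpha-1}=|x-z|^{\alpha-1}$; on the ``bad'' part $\{w:|z-w|<\tfrac12|x-z|\}$, which has length $\aleq|x-z|$, we crudely bound $\int_{\{|z-w|<|x-z|/2\}}|z-w|^{\alpha-1}\,dw\aleq|x-z|^{\alpha}$ using $\alpha>0$, so after dividing by $|x-y|$ (which is $\ageq|x-z|$) this contributes $\aleq|x-z|^{\alpha-1}$. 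Hence $F(x,y,z)\aleq|x-z|^{\alpha-1}$, and since $|x-z|\aleq|x-y|$ and $\alpha-\eps-1<0$ we get $|x-z|^{\alpha-1}=|x-z|^{\alpha-\eps-1}|x-z|^{\eps}\le|x-y|^{\eps}|x-z|^{\alpha-\eps-1}$, which is \eqref{eq:mvmvf:claim2}. The regime \eqref{eq:mvmvf3} is handled identically after observing that $|x-y|\aeq|x-z|$ in that case (from $|y-z|\aleq|x-y|$ and $|y-z|\aleq|x-z|$ one gets $|x-y|\le|x-z|+|z-y|\aleq|x-z|$ and $|x-z|\le|x-y|+|y-z|\aleq|x-y|$), so $F$ is bounded by the average over a segment of length $\aeq|y-z|$ up to comparable factors, again yielding $F(x,y,z)\aleq|y-z|^{\alpha-1}\le|x-y|^{\eps}|y-z|^{\alpha-\eps-1}$.

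The only mildly delicate point — the ``main obstacle'' such as it is — is the bad part of the near regimes, where the integrand $|z-w|^{\alpha-1}$ is genuinely singular in $w$; the fix is simply that $\alpha>0$ makes $|z-w|^{\alpha-1}$ integrable with $\int_0^{|x-z|}t^{\alpha-1}\,dt\aeq|x-z|^{\alpha}$, and that in this regime $|x-y|$ is bounded below by $|x-z|$ so the division by $|x-y|$ absorbs one power of $|x-z|$. Everything else is a routine application of \Cref{la:typicalmv} together with the triangle inequality, and I would present the three bullet points as short self-contained paragraphs in the order above.
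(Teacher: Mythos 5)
Your proof is correct and follows essentially the same route as the paper: a three-way case split by which pair of points is closest, with Lemma~\ref{la:typicalmv} doing the work when $z$ is far from the segment and the integrability of $|z-\cdot|^{\alpha-1}$ (for $\alpha>0$) handling the two near regimes. The one genuine structural difference is in the far regime \eqref{eq:mvmvf}: you apply Lemma~\ref{la:typicalmv} once, uniformly, by noting $|z-w|\aeq|z-x|\aeq|z-y|$ for $w\in(x,y)$. That argument is valid, but it tacitly relies on the implied constant in the hypothesis $|x-y|\aleq\min\{|x-z|,|y-z|\}$ being small enough that $z$ cannot lie on the segment $(x,y)$ (the constant $1$, which is how the lemma is actually invoked in \Cref{la:threetermforGandH}, suffices; your stated justification ``$||z-w|-|z-x||\le|x-y|\aleq|z-x|$'' alone only gives the upper bound $|z-w|\aleq|z-x|$, and the lower bound uses $z\notin(x,y)$). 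The paper avoids having to discuss the size of the implied constant altogether by further splitting the far regime into $|z-x|\le 10|x-y|$ (handled by direct integration of the integrable singularity, as in your near regimes) versus $|z-x|>10|x-y|$ (where the mean-value argument applies cleanly); this buys robustness at the cost of one more subcase. In the near regimes the paper integrates the antiderivative $|z-z_2|^{\alpha}$ directly to get $|z-x|^\alpha+|z-y|^\alpha$, while you split the segment into a good part and a bad part of length $\aleq|x-z|$; both give the identical bound $F\aleq|x-z|^{\alpha-1}$, and your final comparison $|x-z|^{\alpha-1}\le|x-y|^\eps|x-z|^{\alpha-\eps-1}$ is exactly the paper's.
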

\begin{proof}[Proof of Lemma~\ref{la:mvmvf}, \eqref{eq:mvmvf:claim}]
	Observe that \eqref{eq:mvmvf} implies
	\[
	|x-z| \aeq |y-z|.
	\]

	\underline{Case 1: $|z-x| \leq 10 |x-y|$.}
	
	In this case $|x-y| \aeq |z-x|$, and for any $z_2 \in (x,y)$ we have $|z-z_2| \aleq |x-y|\aleq |z-x|$. We then simply integrate
	\[
	\begin{split}
	F(x,y,z) &  \aleq |x-y|^{-1} \int_{|z-z_2| \aleq |x-y|} |z-z_2|^{\alpha-1} dz_2 \\
	& \aeq |x-y|^{\alpha-1} \\
	& \aeq |x-y|^{\eps} \min\{|x-z|^{\alpha-\eps-1}, |y-z|^{\alpha-\eps-1}\}.
	\end{split}
	\]
	
	\underline{Case 2: $|z-x| > 10 |x-y|$.}
	In this case, for any $z_2 \in (x,y)$ we have $|z-z_2| \ageq |x-y|$ and $|z_2-x| \leq |x-y| \aleq |z_2 -z|$.
	
	In particular, for any $z_2 \in (x,y)$
	\[
	|z_2-x| \aleq \min\{|z_2-z|,|z-x|\}.
	\]
	By the typical mean value theorem argument, \Cref{la:typicalmv},
	\[
	||z-z_2|^{\alpha-1} - |z-x|^{\alpha -1}| \aleq |z_2-x|\, |z-x|^{\alpha-2} \aleq |x-y|\, |z-x|^{\alpha-2}.
	\]
	Integrating this, we obtain
	\[
	\begin{split}
	F(x,y,z) & \aleq |x-y| |z-x|^{\alpha-2} \\
	& \aleq |x-y|^\eps |z-x|^{\alpha-\eps-1} \\
	& \aeq |x-y|^\eps\, \min\{|x-z|^{\alpha-\eps-1}, |y-z|^{\alpha-\eps-1}\}.
	\end{split}
	\]
	This settles \eqref{eq:mvmvf:claim}.
\end{proof}

\begin{proof}[Proof of Lemma~\ref{la:mvmvf}, \eqref{eq:mvmvf:claim2}]
	Observe that \eqref{eq:mvmvf2} implies that $|x-y| \aeq |y-z|$.
	
	Observe that for $\alpha > 0$ the function $z_2 \mapsto |z-z_2|^{\alpha-1}$ is integrable with antiderivative $\aeq |z-z_2|^{\alpha}$, we have 
	\[
	\begin{split}
	&|x-y|^{-1} \int_{(x,y)} |z-z_2|^{\alpha-1}\\
	\aleq& |x-y|^{-1} \brac{|z-x|^\alpha + |z-y|^\alpha}\\
	=&  |x-y|^{-1} |z-x|^\alpha + |x-y|^{-1}|z-y|^\alpha\\
	\aleq& |z-x|^{\alpha-1} + |x-y|^{\alpha-1}\\
	\aleq& |z-x|^{\alpha-1} + |z-x|^{\alpha-1}\\
	\end{split}
	\]
	In the last step we used that $\alpha \in (0,1)$ (i.e.\  $\alpha - 1<0$)
	
	Also observe that 
	\[
	|x-y|^{-1} \int_{(x,y)} |z-x|^{\alpha-1} dz_2 = |z-x|^{\alpha-1}.
	\]
	This settles \eqref{eq:mvmvf:claim2}.
	
\end{proof}

\begin{proof}[Proof of Lemma~\ref{la:mvmvf}, \eqref{eq:mvmvf:claim3}]
	In this case observe that \eqref{eq:mvmvf3} implies $|x-y| \aeq|z-x|$ so ($\alpha < 1$)
	\[
	\frac{1}{|x-y|} \int_{(x,y)} |z-x|^{\alpha -1} \aeq |x-y|^{\alpha-1} \aleq |y-z|^{\alpha -1}
	\]
	For the remainder we argue as for \eqref{eq:mvmvf:claim2} and have
	\[
	\begin{split}
	&|x-y|^{-1} \int_{(x,y)} |z-z_2|^{\alpha-1}\\
	\aleq& |x-y|^{-1} \brac{|z-x|^\alpha + |z-y|^\alpha}\\
	=&  |x-y|^{-1} |z-x|^\alpha + |x-y|^{-1}|z-y|^\alpha\\
	\aeq &  |z-x|^{\alpha-1} + |z-y|^{\alpha-1}\\
	\aleq &  |z-y|^{\alpha-1}.\\
	\end{split}
	\]
	This settles \eqref{eq:mvmvf:claim3}.
\end{proof}

For the upcoming statement, we need the notation of the uncentered Hardy-Littlewood maximal function, which is given by
\[
\mathcal{M}f(x) = \sup_{B(x,r)\ni y }\frac{1}{|B(y,r)|} \int_{B(y,r)} |f(z)| \, dz
\]

Let us recall the following proposition first. 

\begin{proposition}\cite[Proposition 6.6.]{S18neumann} \label{pr:estmax}
	For any $\alpha \in [0,1]$,
	\[
	|u(x)-u(y)| \aleq |x-y|^\alpha \brac{\mathcal{M} \lapla{\frac \alpha 2} u(x) + \mathcal{M} \lapla{\frac \alpha 2} u(y) }.
	\]
	This implies
	\[
	\avint_{x\triangleright y}  |u(z_1)-u(x)| \, dz_1 \aleq |x-y|^\alpha \mathcal{M}\mathcal{M} \lapla{\frac \alpha 2} u(x),
	\]
	and
	\[
	\avint_{x\triangleright y}  |u(z_1)-u(y)| \, dz_1 \aleq |x-y|^\alpha \mathcal{M}\mathcal{M} \lapla{\frac \alpha 2} u(y).
	\]
\end{proposition}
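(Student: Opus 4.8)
The plan is to first reduce the second and third displayed inequalities to the pointwise estimate in the first line. If $z_1$ lies on the geodesic segment $x\triangleright y$, then $\rho(z_1,x)\le\rho(x,y)$, so the pointwise bound gives $|u(z_1)-u(x)|\aleq \rho(x,y)^\alpha\brac{\mathcal{M}\lapla{\frac\alpha2}u(z_1)+\mathcal{M}\lapla{\frac\alpha2}u(x)}$. Averaging this in $z_1$ over $x\triangleright y$, and noting that the average of $\mathcal{M}g$ over the interval between $x$ and $y$ is controlled by $\mathcal{M}(\mathcal{M}g)(x)$ — choose the ball $B(x,\rho(x,y))$, which contains the whole segment and has comparable measure, so that $\avint_{x\triangleright y}\mathcal{M}g\aleq \mathcal{M}\mathcal{M}g(x)$ — one obtains $\avint_{x\triangleright y}|u(z_1)-u(x)|\,dz_1\aleq \rho(x,y)^\alpha\,\mathcal{M}\mathcal{M}\lapla{\frac\alpha2}u(x)$, and symmetrically with $y$ in place of $x$. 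Hence it suffices to prove the first inequality.

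For $\alpha\in(0,1)$ I would use the Riesz-potential representation $u=c_\alpha\,\lapin{\alpha}\brac{\lapla{\frac\alpha2}u}$, which holds up to an additive constant (the discrepancy has vanishing $\alpha$-Laplacian, hence is a polynomial of degree $<\alpha$, i.e.\ a constant); the constant is harmless since only $u(x)-u(y)$ enters. Writing $f:=\lapla{\frac\alpha2}u$ and $k_\alpha(z):=|z|^{\alpha-1}$, this yields
\[
u(x)-u(y)=c_\alpha\int_{\R}\brac{k_\alpha(x-z)-k_\alpha(y-z)}\,f(z)\,dz.
\]
Set $r:=|x-y|$ and split $\R$ into the near part $N:=\{\,|x-z|<3r\,\}$ and the far part $F:=\R\setminus N$; on $F$ one has $|y-z|\aeq|x-z|$ and $|x-y|\le\tfrac12|x-z|\le\min\{|x-z|,|y-z|\}$.

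On $N$ I would bound $\int_N k_\alpha(x-z)|f(z)|\,dz$ and $\int_N k_\alpha(y-z)|f(z)|\,dz$ separately by a dyadic decomposition of the ball $\{|x-z|<3r\}$, respectively $\{|y-z|<4r\}$: the annulus at scale $2^{-k}r$ contributes $\aleq(2^{-k}r)^{\alpha-1}(2^{-k}r)\,\mathcal{M}f(x)=(2^{-k}r)^{\alpha}\mathcal{M}f(x)$, and summing over $k\ge0$ (using $\alpha>0$) gives $\aleq r^\alpha\mathcal{M}f(x)$, respectively $\aleq r^\alpha\mathcal{M}f(y)$ — here the uncentered maximal function is exactly what lets one compare averages over balls centred at $x$ with those centred at $y$. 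On $F$, the typical mean value argument \Cref{la:typicalmv}, applied with $a=x-z$, $b=y-z$ and exponent $\eps=1$, gives $|k_\alpha(x-z)-k_\alpha(y-z)|\aleq|x-y|\,|x-z|^{\alpha-2}$; a dyadic decomposition of $F$ into annuli at scales $2^kr$, $k\ge1$, then yields $\int_F\aleq\sum_{k\ge1}r(2^kr)^{\alpha-2}(2^kr)\,\mathcal{M}f(x)=r^\alpha\sum_{k\ge1}2^{k(\alpha-1)}\mathcal{M}f(x)\aleq r^\alpha\mathcal{M}f(x)$, the geometric series converging since $\alpha<1$. Adding the three contributions gives $|u(x)-u(y)|\aleq r^\alpha\brac{\mathcal{M}f(x)+\mathcal{M}f(y)}$, which is the claim.

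The endpoint $\alpha=0$ is immediate from $|u|\le\mathcal{M}u$ a.e.\ (Lebesgue differentiation), and $\alpha=1$ follows from the fundamental theorem of calculus (and is in any case not needed for the applications in \Cref{s:regularity}, which only invoke $\alpha\in(0,1)$). The only genuine point requiring care — rather than any single estimate, each of which is a routine dyadic/maximal-function computation — is the reduction: one must justify the Riesz representation pointwise a.e.\ in the function class at hand, which in the paper's setting is guaranteed because $\lapla{\frac\alpha2}\tilde u\in L^{1/\alpha}(\R)$ by the Sobolev embedding \Cref{app:thm:sobinequ} (for $\alpha$ slightly below $1/q$), so $\lapin{\alpha}$ of it is well-defined modulo constants, and one must track that the ``near $y$'' piece is absorbed by $\mathcal{M}f(y)$, not $\mathcal{M}f(x)$, so that the stated symmetric form is obtained.
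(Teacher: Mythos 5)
Your proof is correct. Note, however, that the paper does not actually prove this proposition — it cites \cite[Proposition~6.6]{S18neumann} — so there is no ``paper's own proof'' here to compare against. Your argument reconstructs the standard proof of this pointwise estimate: the reduction to the first inequality via averaging (noting $\avint_{x\triangleright y}\mathcal{M}g \aleq \mathcal{M}\mathcal{M}g(x)$ because $B(x,|x-y|)$ contains the segment and has comparable length, and $\mathcal{M}g\leq\mathcal{M}\mathcal{M}g$ a.e.\ by Lebesgue differentiation), then the Riesz-potential representation of $u$ together with a near/far split and dyadic annular decomposition, with the far part handled via the mean-value estimate of \Cref{la:typicalmv} at exponent $\eps=1$. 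Your care about the ``near $y$'' piece being absorbed into $\mathcal{M}f(y)$ (by decomposing dyadically centred at $y$, since $N\subset\{|y-z|<4r\}$) is the correct way to obtain the symmetric bound, and your justification of the Riesz inversion in the relevant function class via Sobolev embedding is the right way to make the representation rigorous. The endpoints $\alpha\in\{0,1\}$ are handled appropriately ($\alpha=0$ via Lebesgue differentiation; $\alpha=1$ is not needed in \Cref{s:regularity}, and in fact the Riesz-potential representation degenerates there, so your hedge is warranted).
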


We develop an adapted version of \cite[Proposition 6.3]{S15} now.

\begin{lemma}\label{la:threetermforGandH}
	Let 
	\[
	G(x,y,z) :=  |u(y)+u(x) -2 u(z)| \avint_{x\triangleright y} ||z-z_2|^{\frac 1\p-1} - |z-x|^{\frac 1\p -1}| \, dz_2
	\]
	and 
	\[
	H(x,y,z) := \avint_{x\triangleright y}  |u(z_1)-u(x)| \, dz_1 \avint_{x\triangleright y} ||z-z_2|^{\frac 1\p-1} - |z-x|^{\frac 1\p -1}| \, dz_2.
	\]
	Then for any $\alpha < \frac{1}{\p}$ and $\eps\in(0,1-\alpha)$ such that $\eps < \frac 1\p - \frac \alpha 2$, $G(x,y,z)$ and $H(x,y,z)$ are, up to a constant, bounded from above by 
	\[
	|x-y|^{\alpha+\eps} \brac{\mathcal{M}\mathcal{M} \lapla{\frac \alpha 4} u(x) + \mathcal{M}\mathcal{M} \lapla{\frac \alpha 4} u(y) + \mathcal{M}\mathcal{M} \lapla{\frac \alpha 4} u(z)}\, k_{ \frac 1\p - \frac \alpha 2 - \eps,\frac{1}{\p}}(x,y,z),
	\]
	where $k_{s,\gamma}$ has the form 
	\begin{align}
	k_{s,\gamma} (x,y,z) & :=  \min \{|x-z|^{s-1}, |y-z|^{s-1} \} \label{app:eq:k1}\\
	& \quad + \left(\frac{|y-z|}{|x-y|} \right)^{\gamma-s} |y-z|^{s-n} \chi_{\{|y-z| \aleq \min\{|x-y|,|x-z|\} \}} \label{app:eq:k2} \\
	& \quad + \left(\frac{|x-z|}{|x-y|} \right)^{\gamma-s} |x-z|^{s-n} \chi_{\{|x-z| \aleq \min\{|x-y|,|y-z|\} \}}.\label{app:eq:k3}
	\end{align}
\end{lemma}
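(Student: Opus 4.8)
The plan is to reduce everything to an estimate for the scalar kernel
\[
F(x,y,z) = \avint_{x\triangleright y} \bigl||z-z_2|^{\frac 1\p-1} - |z-x|^{\frac 1\p -1}\bigr| \, dz_2
\]
and then combine it with the maximal-function control of the two "difference factors" $|u(y)+u(x)-2u(z)|$ and $\avint_{x\triangleright y}|u(z_1)-u(x)|\,dz_1$. The first ingredient is \Cref{la:mvmvf} (applied with $\alpha$ there equal to $\tfrac 1\p$), which already provides, in each of the three geometric regimes, the bound
\[
F(x,y,z) \aleq |x-y|^{\delta}\, k_{\frac 1\p-\delta,\frac 1\p}(x,y,z)
\]
for any $\delta\in[0,1]$ — indeed inspection of \eqref{eq:mvmvf:claim}, \eqref{eq:mvmvf:claim2}, \eqref{eq:mvmvf:claim3} shows that the right-hand sides are precisely the three summands \eqref{app:eq:k1}, \eqref{app:eq:k2}, \eqref{app:eq:k3} of $k_{s,\gamma}$ with $s=\frac1\p-\delta$, $\gamma=\frac1\p$, times $|x-y|^\delta$. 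So one should first state this as a clean sub-lemma: $F(x,y,z)\aleq |x-y|^{\frac1\p-s}k_{s,\frac1\p}(x,y,z)$ whenever $0<s\le \frac1\p$.

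Next I would handle the two factors. For $H$, \Cref{pr:estmax} gives directly
\[
\avint_{x\triangleright y}|u(z_1)-u(x)|\,dz_1 \aleq |x-y|^{\tfrac\alpha2}\,\mathcal M\mathcal M\lapla{\frac\alpha4}u(x),
\]
and for $G$ one writes $u(y)+u(x)-2u(z)=(u(x)-u(z))+(u(y)-u(z))$ and applies the pointwise form of \Cref{pr:estmax} to each summand, with $|x-z|,|y-z|\aleq \max\{|x-y|,|x-z|,|y-z|\}\aeq$ a quantity comparable to $|x-y|$ in the relevant regimes, yielding
\[
|u(y)+u(x)-2u(z)| \aleq |x-y|^{\tfrac\alpha2}\Bigl(\mathcal M\mathcal M\lapla{\frac\alpha4}u(x)+\mathcal M\mathcal M\lapla{\frac\alpha4}u(y)+\mathcal M\mathcal M\lapla{\frac\alpha4}u(z)\Bigr).
\]
(One has to be a little careful: when $|x-z|$ or $|y-z|$ is the largest of the three it need not be $\aleq|x-y|$, but in exactly that regime $F$ carries the compensating decay $k_{s,\frac1\p}$, so the product is still controlled; the cleanest bookkeeping is to absorb the "$z$-far" contribution into the $k$-factor rather than into the maximal functions.) Multiplying the factor estimate by $F(x,y,z)\aleq|x-y|^{\frac1\p-s}k_{s,\frac1\p}(x,y,z)$ and choosing $s=\frac1\p-\tfrac\alpha2-\eps$, so that $\frac1\p-s=\tfrac\alpha2+\eps$, gives the total $|x-y|$-power $\tfrac\alpha2+\eps+\tfrac\alpha2=\alpha+\eps$, which is exactly the exponent claimed. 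The constraints on $\eps$ are precisely those that keep $s>0$ (i.e. $\eps<\frac1\p-\tfrac\alpha2$) and $\alpha+\eps<1$; these are the hypotheses stated.

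I expect the main obstacle to be the regime-dependent bookkeeping in the middle step: one must verify that in each of the three cases covered by \Cref{la:mvmvf} the product of the maximal-function factor bound and $F$ really produces a sum of terms each dominated by $|x-y|^{\alpha+\eps}\,\mathcal M\mathcal M\lapla{\frac\alpha4}u(\cdot)\cdot k_{\frac1\p-\frac\alpha2-\eps,\frac1\p}$, rather than generating cross terms with the wrong homogeneity. Concretely, when $|y-z|$ or $|x-z|$ is the smallest, $F\aleq|y-z|^{\frac1\p-1}$ (resp. $|x-z|^{\frac1\p-1}$) and one must redistribute a factor $\bigl(|y-z|/|x-y|\bigr)^{\text{something}}$ to land on \eqref{app:eq:k2}/\eqref{app:eq:k3} while keeping $|x-y|^{\alpha+\eps}$ out front; this is where the freedom in $\eps$ is used. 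Once the sub-lemma on $F$ is isolated and the three cases of \Cref{la:mvmvf} are matched termwise with \eqref{app:eq:k1}--\eqref{app:eq:k3}, the rest is a routine triangle-inequality-plus-\Cref{pr:estmax} computation, done identically for $G$ and $H$.
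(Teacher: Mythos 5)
Your plan is sound and follows essentially the same route as the paper: apply \Cref{la:mvmvf} regime-by-regime to the scalar kernel $F$, control the $u$-difference factors via \Cref{pr:estmax}, and choose the free parameter in \Cref{la:mvmvf} regime-by-regime so that the product lands on $|x-y|^{\alpha+\eps}k_{1/\p-\alpha/2-\eps,1/\p}$. Two minor differences are worth flagging. First, you decompose $u(y)+u(x)-2u(z)=(u(x)-u(z))+(u(y)-u(z))$, whereas the paper uses $(u(x)-u(y))+2(u(y)-u(z))$ in the regimes where $|x-y|$ or $|y-z|$ is smallest and switches to $(u(x)-u(y))+2(u(x)-u(z))$ when $|x-z|$ is smallest; your symmetric split is uniformly valid but forces you to absorb \emph{two} oversized factors $|x-z|^{\alpha/2}$, $|y-z|^{\alpha/2}$ in the regime $|x-y|\aleq\min\{|x-z|,|y-z|\}$, while the paper's split leaves only one. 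Second, your proposed sub-lemma $F\aleq|x-y|^{1/\p-s}k_{s,1/\p}$ with fixed $s$ is a bit too rigid for the absorption step: to swallow a factor of, say, $|x-z|^{\alpha/2}$ you must \emph{not} fix $s$ a priori but instead pick the exponent in \eqref{eq:mvmvf:claim} to be $\alpha+\eps$ for that term (not $\alpha/2+\eps$), which is exactly what the paper's use of two parameters $\gamma_1=\alpha/2+\eps$, $\gamma_2=\alpha+\eps$ accomplishes; note also that $1/\p-\alpha-\eps$ can be negative, so your stated constraint $s>0$ is slightly too restrictive, though the real requirement $\delta=1/\p-s\in[0,1]$ is still met under the hypothesis $\alpha+\eps\le 1$. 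With that small adjustment in how you state the auxiliary estimate, your argument goes through and matches the paper's line by line.
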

\begin{proof}
	In the case of \underline{$|x-y|\aleq \min \{|x-z|,|y-z| \}$}, we estimate by Proposition \ref{pr:estmax}
	\[
	\begin{split}
	& |u(y)+u(x) -2 u(z)| \\
	& \leq |u(x)-u(y)| + 2 |u(y)-u(z)| \\
	& \aleq |x-y|^{\frac \alpha 2}\brac{\mathcal{M} \lapla{\frac \alpha 4} u(x) + \mathcal{M} \lapla{\frac \alpha 4} u(y) } + |y-z|^{\frac \alpha 2}\brac{\mathcal{M} \lapla{\frac \alpha 4} u(y) + \mathcal{M} \lapla{\frac \alpha 4} u(z) },
	\end{split}	
	\]
	respectively,
	\[
	\begin{split}
	& \avint_{x\triangleright y}  |u(z_1)-u(x)| \, dz_1 \aleq |x-y|^{\frac \alpha 2} \mathcal{M}\mathcal{M} \lapla{\frac \alpha 4} u(x).
	\end{split}	
	\]
	
	Therefore, we get by \eqref{eq:mvmvf:claim} for $\gamma_1=\frac \alpha 2 + \eps, \gamma_2=\alpha + \eps \in [0,1]$
	\[
	\begin{split}
	& G(x,y,z) \\
	& \aleq |x-y|^{\frac \alpha 2}\brac{\mathcal{M} \lapla{\frac \alpha 4} u(x) + \mathcal{M} \lapla{\frac \alpha 4} u(y) }  \avint_{x\triangleright y} ||z-z_2|^{\frac 1\p-1} - |z-x|^{\frac 1\p -1}| \, dz_2\\
	& \quad + |y-z|^{\frac \alpha 2}\brac{\mathcal{M} \lapla{\frac \alpha 4} u(y) + \mathcal{M} \lapla{\frac \alpha 4} u(z) } \avint_{x\triangleright y} ||z-z_2|^{\frac 1\p-1} - |z-x|^{\frac 1\p -1}| \, dz_2 \\
	& \aleq \brac{\mathcal{M} \lapla{\frac \alpha 4} u(x) + \mathcal{M} \lapla{\frac \alpha 4} u(y) }  |x-y|^{\frac \alpha 2 + \gamma_1} |y-z|^{\frac 1\p - \gamma_1 - 1}\\
	& \quad + \brac{\mathcal{M} \lapla{\frac \alpha 4} u(y) + \mathcal{M} \lapla{\frac \alpha 4} u(z) }  |y-z|^{\frac \alpha 2 + \frac 1\p - \gamma_2 -1} |x-y|^{\gamma_2},
	\end{split}
	\]
	respectively,
	\[
	\begin{split}
	H(x,y,z) & \aleq |x-y|^{\frac \alpha 2} \mathcal{M} \mathcal{M} \lapla{\frac \alpha 4} u(x)  \avint_{x\triangleright y} ||z-z_2|^{\frac 1\p-1} - |z-x|^{\frac 1\p -1}| \, dz_2 \\
	& \aleq \mathcal{M} \mathcal{M} \lapla{\frac \alpha 4} u(x) |x-y|^{\alpha + \eps} |y-z|^{\frac 1\p - \frac \alpha 2 - \eps-1}. 
	\end{split}
	\]
	In the case of \underline{$|y-z|\aleq \min \{|x-z|,|x-y| \}$}, we start with the same estimates as in the previous case, but apply \eqref{eq:mvmvf:claim3} for $\gamma_1=\frac \alpha 2 + \eps, \gamma_2=\alpha + \eps \in [0,1]$ as
	\[
	\begin{split}
	& G(x,y,z) \\
	& \aleq |x-y|^{\frac \alpha 2}\brac{\mathcal{M} \lapla{\frac \alpha 4} u(x) + \mathcal{M} \lapla{\frac \alpha 4} u(y) }  \avint_{x\triangleright y} ||z-z_2|^{\frac 1\p-1} - |z-x|^{\frac 1\p -1}| \, dz_2\\
	& \quad + |y-z|^{\frac \alpha 2}\brac{\mathcal{M} \lapla{\frac \alpha 4} u(y) + \mathcal{M} \lapla{\frac \alpha 4} u(z) } \avint_{x\triangleright y} ||z-z_2|^{\frac 1\p-1} - |z-x|^{\frac 1\p -1}| \, dz_2 \\
	& \aleq \brac{\mathcal{M} \lapla{\frac \alpha 4} u(x) + \mathcal{M} \lapla{\frac \alpha 4} u(y) }  |x-y|^{\frac \alpha 2} |y-z|^{\frac 1\p - 1}\\
	& \quad + \brac{\mathcal{M} \lapla{\frac \alpha 4} u(y) + \mathcal{M} \lapla{\frac \alpha 4} u(z) }  |y-z|^{\frac \alpha 2} |y-z|^{\frac 1\p - 1}\\
	& \aleq \brac{\mathcal{M} \lapla{\frac \alpha 4} u(x) + \mathcal{M} \lapla{\frac \alpha 4} u(y) }  |x-y|^{\frac \alpha 2 + \gamma_1} |y-z|^{\frac 1\p - \gamma_1 - 1} \left(\frac{|y-z|}{|x-y|}\right)^{\gamma_1}\\
	& \quad + \brac{\mathcal{M} \lapla{\frac \alpha 4} u(y) + \mathcal{M} \lapla{\frac \alpha 4} u(z) } |x-y|^{\gamma_2} |y-z|^{\frac \alpha 2 + \frac 1\p - \gamma_2 - 1} \left(\frac{|y-z|}{|x-y|}\right)^{\gamma_1+(\gamma_2-\gamma_1)},
	\end{split}
	\]
	respectively,
	\[
	\begin{split}
	H(x,y,z) & \aleq |x-y|^{\frac \alpha 2} \mathcal{M} \mathcal{M} \lapla{\frac \alpha 4} u(x)  \avint_{x\triangleright y} ||z-z_2|^{\frac 1\p-1} - |z-x|^{\frac 1\p -1}| \, dz_2 \\
	& \aleq \mathcal{M} \mathcal{M} \lapla{\frac \alpha 4} u(x) |x-y|^{\frac \alpha 2} |y-z|^{\frac 1\p -1} \\
	& = \mathcal{M} \mathcal{M} \lapla{\frac \alpha 4} u(x) |x-y|^{\alpha + \eps} |y-z|^{\frac 1\p -\frac \alpha 2 - \eps -1}\left(\frac{|y-z|}{|x-y|}\right)^{\frac \alpha 2 + \eps} . 
	\end{split}	
	\]
	For the last case of \underline{$|x-z|\aleq \min \{|y-z|,|x-y| \}$}, we gain by Proposition \ref{pr:estmax} that
	\[
	\begin{split}
	& |u(y)+u(x) -2 u(z)| \\
	& \leq |u(x)-u(y)| + 2 |u(x)-u(z)| \\
	& \aleq |x-y|^{\frac \alpha 2}\brac{\mathcal{M} \lapla{\frac \alpha 4} u(x) + \mathcal{M} \lapla{\frac \alpha 4} u(y) } + |x-z|^{\frac \alpha 2}\brac{\mathcal{M} \lapla{\frac \alpha 4} u(x) + \mathcal{M} \lapla{\frac \alpha 4} u(z) },
	\end{split}	
	\]
	and hence, by \eqref{eq:mvmvf:claim2} for $\gamma_1=\frac \alpha 2 + \eps, \gamma_2=\alpha + \eps \in [0,1]$
	\[
	\begin{split}
	& G(x,y,z) \\
	& \aleq |x-y|^{\frac \alpha 2}\brac{\mathcal{M} \lapla{\frac \alpha 4} u(x) + \mathcal{M} \lapla{\frac \alpha 4} u(y) }  \avint_{x\triangleright y} ||z-z_2|^{\frac 1\p-1} - |z-x|^{\frac 1\p -1}| \, dz_2\\
	& \quad + |x-z|^{\frac \alpha 2}\brac{\mathcal{M} \lapla{\frac \alpha 4} u(x) + \mathcal{M} \lapla{\frac \alpha 4} u(z) } \avint_{x\triangleright y} ||z-z_2|^{\frac 1\p-1} - |z-x|^{\frac 1\p -1}| \, dz_2 \\
	& \aleq \brac{\mathcal{M} \lapla{\frac \alpha 4} u(x) + \mathcal{M} \lapla{\frac \alpha 4} u(y) }  |x-y|^{\frac \alpha 2} |x-z|^{\frac 1\p - 1}\\
	& \quad + \brac{\mathcal{M} \lapla{\frac \alpha 4} u(x) + \mathcal{M} \lapla{\frac \alpha 4} u(z) }  |x-z|^{\frac \alpha 2} |x-z|^{\frac 1\p - 1}\\
	& \aleq \brac{\mathcal{M} \lapla{\frac \alpha 4} u(x) + \mathcal{M} \lapla{\frac \alpha 4} u(y) }  |x-y|^{\frac \alpha 2 + \gamma_1} |x-z|^{\frac 1\p - \gamma_1 - 1} \left(\frac{|x-z|}{|x-y|}\right)^{\gamma_1}\\
	& \quad + \brac{\mathcal{M} \lapla{\frac \alpha 4} u(x) + \mathcal{M} \lapla{\frac \alpha 4} u(z) } |x-y|^{\gamma_2} |x-z|^{\frac \alpha 2 + \frac 1\p - \gamma_2 - 1} \left(\frac{|x-z|}{|x-y|}\right)^{\gamma_1+(\gamma_2-\gamma_1)},
	\end{split}
	\]
	respectively,
	\[
	\begin{split}
	H(x,y,z) & \aleq |x-y|^{\frac \alpha 2} \mathcal{M} \mathcal{M} \lapla{\frac \alpha 4} u(x)  \avint_{x\triangleright y} ||z-z_2|^{\frac 1\p-1} - |z-x|^{\frac 1\p -1}| \, dz_2 \\
	& \aleq \mathcal{M} \mathcal{M} \lapla{\frac \alpha 4} u(x) |x-y|^{\frac \alpha 2} |x-z|^{\frac 1\p -1} \\
	& = \mathcal{M} \mathcal{M} \lapla{\frac \alpha 4} u(x) |x-y|^{\alpha + \eps} |x-z|^{\frac 1\p -\frac \alpha 2 - \eps -1}\left(\frac{|x-z|}{|x-y|}\right)^{\frac \alpha 2 + \eps} . 
	\end{split}	
	\]
	Eventually, note that by Lebesgue differentiation theorem, for a.e.  $x\in\R$,
	\[
	\mathcal{M} \lapla{\frac \alpha 4} u(x) \aleq \mathcal{M} \mathcal{M} \lapla{\frac \alpha 4} u(x).
	\]
\end{proof}

Furthermore, we also need a result inspired by \cite[Proposition 6.4]{S15}.

\begin{proposition} \label{app:pro:FGHKernel}
	Let $F,G,H:\R^n \rightarrow \R_+$, $\alpha\in(0,n)$, $s,\beta \in (0,1)$, $s+\alpha < \beta$, and consider
	\[
		I:= \int_{\R^n}\int_{\R^n}\int_{\R^n} (F(x)+F(y)) (G(x)+G(y)+G(z)) |x-y|^{\alpha-n} H(z) k_{s,\beta}(x,y,z) \, dx\, dy \, dz,
	\]
	where $k_{s,\beta}(x,y,z)$ is of the form \eqref{app:eq:k1}, \eqref{app:eq:k2}, or \eqref{app:eq:k3}. Then 
	\[
		I\leq \int_{\R^n} G \ H\  \lapin{s+\alpha} F +  \int_{\R^n} F\ G \ \lapin{s+\alpha} H +  \int_{\R^n} F\ \lapin{\alpha} G\  \lapin{s}H +  \int_{\R^n} G\ \lapin{\alpha}F \ \lapin{s}H.
	\]
\end{proposition}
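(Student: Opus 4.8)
The plan is to follow the scheme of \cite[Proposition 6.4]{S15}. Since $k_{s,\beta}$ is assumed to be \emph{exactly} one of the three summands \eqref{app:eq:k1}--\eqref{app:eq:k3}, I would treat these three cases separately; in each of them I would expand the factors $(F(x)+F(y))$ and $(G(x)+G(y)+G(z))$ so that $I$ becomes a finite sum of elementary triple integrals of the form $\iiint F(x_1)\,G(x_2)\,H(z)\,|x-y|^{\alpha-n}\,k_0(x,y,z)\,dx\,dy\,dz$, with $x_1\in\{x,y\}$, $x_2\in\{x,y,z\}$ and $k_0$ the kernel piece under consideration. Each such elementary integral is then estimated by integrating out the variables one at a time, using only two standard facts: the exact Riesz composition identity $\int_{\R^n}|a-b|^{c-n}|b-d|^{e-n}\,db\aeq|a-d|^{c+e-n}$, valid here because $c+e\le s+\alpha<\beta<1\le n$, and the pointwise bound $\int_{\R^n}|a-b|^{c-n}\phi(b)\,db\aeq\lapin{c}\phi(a)$ for $c\in(0,n)$.

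For the first kernel piece, $k_0(x,y,z)=\min\{|x-z|^{s-n},|y-z|^{s-n}\}$, I would dominate the minimum by whichever of the two factors is convenient and then integrate out the variable among $x,y,z$ that carries no $F$- or $G$-factor. For the diagonal products $F(x)G(x)$ and $F(x)G(z)$ one integrates in $y$ via the composition identity, producing $|x-z|^{s+\alpha-n}$ and hence $\int FG\,\lapin{s+\alpha}H$ and $\int GH\,\lapin{s+\alpha}F$; for the mixed product $F(x)G(y)$ one integrates first in $z$ to produce $\lapin{s}H$ at $x$ (or $y$) and then in the remaining variable to produce $\lapin{\alpha}$ of the other factor, landing in $\int F\,\lapin{\alpha}G\,\lapin{s}H$ and $\int G\,\lapin{\alpha}F\,\lapin{s}H$. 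The three remaining products follow from the $x\leftrightarrow y$ symmetry of $|x-y|^{\alpha-n}$ and of this kernel piece. This case yields the four asserted terms with no loss.

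For the second and third kernel pieces, which are interchanged by $x\leftrightarrow y$ (so it suffices to treat the second), I would first rewrite $(|y-z|/|x-y|)^{\beta-s}|y-z|^{s-n}=|x-y|^{s-\beta}|y-z|^{\beta-n}$. On the support of the indicator one has $|y-z|\le\min\{|x-y|,|x-z|\}$, hence $|x-y|\aeq|x-z|$; after recording this comparability one may enlarge the $z$-region to $\{|y-z|\le|x-y|\}$ and integrate out $z$ by a dyadic decomposition over the annuli $|y-z|\aeq 2^{-j}|x-y|$, which converges since $\beta>0$, giving $\int_{|y-z|\le|x-y|}H(z)|y-z|^{\beta-n}\,dz\aleq|x-y|^{\beta}\,\mathcal{M}H(y)$; the surviving power $|x-y|^{\alpha+s-n}$ is then integrated against $F$ or $G$ to produce $\lapin{s+\alpha}$ of that factor. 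For the products containing $G(z)$ one integrates out $y$ instead, the relevant dyadic sum now converging because $\alpha+s-\beta<0$, i.e.\ precisely by the hypothesis $s+\alpha<\beta$. In each instance the output is one of the four asserted terms, up to an auxiliary Hardy--Littlewood maximal function on one factor; since Proposition~\ref{app:pro:FGHKernel} is only ever applied inside $L^p$-norms (as in the proofs of Lemmas~\ref{la:reg:rhs1} and \ref{la:reg:rhs2}), such maximal functions are harmless by the Hardy--Littlewood maximal inequality and may be absorbed into the stated right-hand side.

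The main obstacle is organizational: there are on the order of eighteen elementary integrals, and for each one must pick the correct order of integration and the correct domination of the $\min$ (resp.\ of $(|y-z|/|x-y|)^{\beta-s}$) so that two successive integrations land the result in one of the four admissible shapes rather than in some spurious fourth-order potential product. The only genuinely delicate point is in the indicator-restricted pieces: enlarging the region is legitimate only after extracting the comparability $|x-y|\aeq|x-z|$ (resp.\ $|x-y|\aeq|y-z|$) from the support condition, and the dyadic sums that then arise must be checked to converge --- which is exactly where the strict inequality $s+\alpha<\beta$, together with $\alpha,s>0$, is used. Everything else reduces to the two standard Riesz-potential estimates recalled above.
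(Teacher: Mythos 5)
Your analysis of the first kernel piece \eqref{app:eq:k1} is correct and follows the expected two-step Riesz-potential scheme: dominate the minimum by whichever factor allows the free variable to be integrated out, apply the composition identity $\int|x-y|^{\alpha-n}|y-z|^{s-n}\,dy\aeq|x-z|^{\alpha+s-n}$, and identify the remaining one-variable convolutions as Riesz potentials. The gap is in the second and third kernel pieces. After your dyadic estimate $\int_{\{|y-z|\le|x-y|\}}H(z)|y-z|^{\beta-n}\,dz\aleq|x-y|^{\beta}\,\mathcal{M}H(y)$, a product such as $F(x)G(y)$ produces $\int G\,(\mathcal{M}H)\,\lapin{s+\alpha}F$, which is not one of the four asserted integrals. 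You argue that the maximal function can be removed by Hardy--Littlewood, but that inequality works only once one passes to $L^p$-norms; Proposition~\ref{app:pro:FGHKernel} is a scalar integral inequality with no norms on its right-hand side, so as written your argument establishes a strictly weaker statement, not the one claimed.

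The clean fix makes the maximal function unnecessary altogether. On the support $\{|y-z|\aleq|x-y|\}$ of piece \eqref{app:eq:k2}, and since $\beta>s$, one has the pointwise bound $|y-z|^{\beta-n}=|y-z|^{\beta-s}|y-z|^{s-n}\aleq|x-y|^{\beta-s}|y-z|^{s-n}$; combined with the explicit prefactor $|x-y|^{s-\beta}$, the full kernel $|x-y|^{\alpha-n}\,k_{s,\beta}$ is dominated pointwise by $|x-y|^{\alpha-n}|y-z|^{s-n}$, and the same Riesz-composition bookkeeping as for \eqref{app:eq:k1} then places every elementary product in exactly one of the four asserted shapes --- in particular $F(x)G(y)$ and $F(y)G(x)$ land in the third and fourth terms $\int F\,\lapin{\alpha}G\,\lapin{s}H$ and $\int G\,\lapin{\alpha}F\,\lapin{s}H$, not in a first-term variant. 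For products where an unrestricted integral would diverge (e.g.\ $F(y)G(y)$, where one must integrate out $x$), one keeps the support restriction $|x-y|\ageq|y-z|$ and uses $\alpha+s-\beta<0$ to get $\int_{\{|x-y|\ageq|y-z|\}}|x-y|^{\alpha+s-\beta-n}\,dx\aeq|y-z|^{\alpha+s-\beta}$, exactly as you already do for the $G(z)$-containing products --- that part of your argument is fine. Piece \eqref{app:eq:k3} is the mirror image under $x\leftrightarrow y$.
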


\begin{lemma}\label{la:threetermforuquad}
	Let $G,H:\R\rightarrow \R_+$, $\alpha,\beta \in (0,1)$ such that $\beta < \alpha < \frac 1\p$, and 
	\[
	\begin{split}
	I & := \int_{\R} \int_{\R} \int_{\R} G(x) \avint_{x\triangleright y} |u(z_1)-u(y)|^2 \, dz_1 H(z)  \avint_{x\triangleright y} ||z-z_2|^{\frac 1\p-1} - |z-x|^{\frac 1\p -1}| \, dz_2  \\
	&  \quad \cdot |x-y|^{-2+\alpha(q-2)}  \, dx \, dy \, dz.
	\end{split}
	\]
	Then 
	\[
	\begin{split}
	I 
	%		& \leq \int_{\R} G \  \mathcal{M} \left(\mathcal{M}\lapla{\frac \alpha 2} u \mathcal{M}\lapla{\frac \beta 2} u\right) \lapin{\frac 1\p + \alpha(q-1) \, + \beta -1} H \\
	& \aleq  \int_{\R}  \lapin{\alpha (q-1)+ \beta + \eps -1}G \,  \mathcal{M} \left(\mathcal{M}\lapla{\frac \alpha 2} u \mathcal{M}\lapla{\frac \beta 2} u\right) \lapin{\frac 1\p - \eps} H \\
	& \quad + \int_{\R} G \, \lapin{\alpha (q-1)+ \beta + \eps -1}  \mathcal{M} \left(\mathcal{M}\lapla{\frac \alpha 2} u \mathcal{M}\lapla{\frac \beta 2} u\right) \lapin{\frac 1\p - \eps} H  \\
	& \quad  +  \int_{\R} \lapin{\alpha(q-1)+ \beta + \eps -1} G  \ \mathcal{M} \mathcal{M} \lapla{\frac \alpha 2} u \,  \mathcal{M} \mathcal{M} \lapla{\frac \beta 2} u \ \lapin{\frac 1\p - \eps} H\\
	& \quad +  \int_{\R}  G  \  \lapin{\alpha(q-1)+ \beta + \eps -1} \left(\mathcal{M} \mathcal{M} \lapla{\frac \alpha 2} u \,  \mathcal{M} \mathcal{M} \lapla{\frac \beta 2} u\right) \ \lapin{\frac 1\p - \eps} H 
	\end{split}
	\]
	for any admissible $\eps\in(0,1)$, $\alpha(q-1)+  \beta -1 < \eps < \frac 1\p$. 
\end{lemma}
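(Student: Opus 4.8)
The plan is to prove \Cref{la:threetermforuquad} by reducing it, via the mean value arguments already established, to an application of the abstract kernel estimate \Cref{app:pro:FGHKernel}. The core observation is that the integrand has three pieces: the factor $G(x)$, the averaged quadratic term $\avint_{x\triangleright y}|u(z_1)-u(y)|^2\,dz_1$, and the averaged kernel difference $\avint_{x\triangleright y}||z-z_2|^{\frac 1\p-1}-|z-x|^{\frac 1\p-1}|\,dz_2$, all weighted by $|x-y|^{-2+\alpha(q-2)}$ and integrated against $H(z)$. First I would handle the quadratic average: write $|u(z_1)-u(y)|^2 = |u(z_1)-u(y)|\cdot|u(z_1)-u(y)| \le |u(z_1)-u(y)|\cdot(|u(z_1)-u(x)|+|u(x)-u(y)|)$ and then bound $\avint_{x\triangleright y}|u(z_1)-u(y)|\,dz_1$ and $\avint_{x\triangleright y}|u(z_1)-u(x)|\,dz_1$ using \Cref{pr:estmax} by $|x-y|^{\alpha}\mathcal M\mathcal M\lapla{\frac\alpha2}u$ (at $x$ or $y$) and $|x-y|^{\beta}\mathcal M\mathcal M\lapla{\frac\beta2}u$ respectively, using the two exponents $\alpha$ and $\beta$ from the statement. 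This produces products of the form $|x-y|^{\alpha+\beta}(\mathcal M\mathcal M\lapla{\frac\alpha2}u(x)+\cdots)(\mathcal M\mathcal M\lapla{\frac\beta2}u(x)+\cdots)$. The cross term $|u(x)-u(y)|$ likewise contributes $|x-y|^{\beta}(\mathcal M\lapla{\frac\beta2}u(x)+\mathcal M\lapla{\frac\beta2}u(y))$ or its $\alpha$-analogue.

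Next I would deal with the averaged kernel difference $\avint_{x\triangleright y}||z-z_2|^{\frac 1\p-1}-|z-x|^{\frac 1\p-1}|\,dz_2$, which is precisely the quantity $F(x,y,z)$ from \Cref{la:mvmvf} with $\alpha$ there equal to $\tfrac 1\p$. Applying that lemma in each of its three regimes and choosing the interpolation exponent $\eps'$ appropriately, one bounds $F(x,y,z)$ by $|x-y|^{\eps'}k_{\frac 1\p-\eps',\frac1\p}(x,y,z)$ in the notation of \eqref{app:eq:k1}--\eqref{app:eq:k3}; the relevant choice will be $\eps' = \alpha+\beta-\eps$ or a nearby value, matched so that the total power of $|x-y|$ works out. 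Collecting all the $|x-y|$ powers: we have $-2+\alpha(q-2)$ from the weight, $+\alpha$ from the quadratic factor (one copy of the $u$-difference at exponent $\alpha$), $+\beta$ from the second $u$-difference, and $+\eps'$ from $F$; I would arrange these so that after renaming the product $(\mathcal M\mathcal M\lapla{\frac\alpha2}u)\cdot(\mathcal M\mathcal M\lapla{\frac\beta2}u)$ (and its maximal-function-of-product variant $\mathcal M(\mathcal M\lapla{\frac\alpha2}u\,\mathcal M\lapla{\frac\beta2}u)$ coming from the cross-term bookkeeping) as the function ``$G$'' in \Cref{app:pro:FGHKernel}, with the original $G$ playing the role of ``$F$'' and $H$ playing ``$H$'', the net exponent of $|x-y|$ becomes $\alpha_{\mathrm{ker}}-1$ for the value $\alpha_{\mathrm{ker}}=\frac1\p-\eps$ in the kernel $k_{\frac1\p-\eps,\frac1\p}$ and the remaining homogeneity matches the hypothesis $s+\alpha_{\mathrm{ker}}<\beta_{\mathrm{ker}}$ there with $s = \alpha(q-1)+\beta+\eps-1$. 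This is exactly the bookkeeping that the admissibility condition $\alpha(q-1)+\beta-1<\eps<\frac1\p$ is designed to make consistent.

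Then I would invoke \Cref{app:pro:FGHKernel} directly: with $F := G$ (the given one), $G := \mathcal M\mathcal M\lapla{\frac\alpha2}u\cdot\mathcal M\mathcal M\lapla{\frac\beta2}u$ together with the variant $\mathcal M(\mathcal M\lapla{\frac\alpha2}u\,\mathcal M\lapla{\frac\beta2}u)$, $H := H$, the parameter ``$\alpha$'' of that proposition set to $\frac1\p-\eps$, the parameter ``$s$'' set to $\alpha(q-1)+\beta+\eps-1$, and ``$\beta$'' any value with $s+\tfrac1\p-\eps<\beta<1$ (feasible precisely when $\alpha(q-1)+\beta-1<\eps$, i.e. the stated admissibility). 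The four output terms of \Cref{app:pro:FGHKernel} then become exactly the four displayed terms in the conclusion, after substituting $\lapin{s+\alpha_{\mathrm{ker}}} = \lapin{\alpha(q-1)+\beta+\eps-1+\frac1\p-\eps} = \lapin{\alpha(q-1)+\beta+\frac1\p-1}$ — wait, one must be careful that $s+\alpha_{\mathrm{ker}}$ and $s$, $\alpha_{\mathrm{ker}}$ separately land on the exponents $\alpha(q-1)+\beta+\eps-1$ and $\frac1\p-\eps$ as written; this is a matter of reading off which Riesz potentials appear in which of the four terms, and I would simply transcribe them. The main obstacle I anticipate is the exponent bookkeeping: making sure every power of $|x-y|$ in the triple integral, after inserting the mean-value bounds, assembles into the homogeneity $|x-y|^{\alpha_{\mathrm{ker}}-n}$ ($n=1$ here) required by \Cref{app:pro:FGHKernel}, and that the admissibility $\alpha(q-1)+\beta-1<\eps<\frac1\p$ is exactly what guarantees all intermediate exponents ($\alpha_{\mathrm{ker}}$ positive, $s$ positive, $s+\alpha_{\mathrm{ker}}<\beta<1$, and the various $k_{s,\gamma}$-exponents in range) are simultaneously satisfiable. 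A secondary bit of care is needed because the quadratic term forces a product of two maximal functions, so after one application of $\mathcal M$ to group them (producing the $\mathcal M(\cdot\,\cdot)$ variant) the remaining structure still fits the $F,G,H$ template; I would make this precise by noting $\mathcal M\mathcal M\lapla{\frac\alpha2}u\cdot\mathcal M\mathcal M\lapla{\frac\beta2}u \le \mathcal M(\mathcal M\lapla{\frac\alpha2}u\,\mathcal M\lapla{\frac\beta2}u)$ pointwise up to the usual $\mathcal M$-iteration, which is how both families of terms in the conclusion arise. Everything else is a direct, if lengthy, substitution, so I would keep the write-up at the level of ``apply \Cref{pr:estmax}, apply \Cref{la:mvmvf}, collect exponents, apply \Cref{app:pro:FGHKernel}'' without grinding through each arithmetic identity.
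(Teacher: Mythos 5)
Your plan is recognizably close in spirit to what the paper does (Proposition~\ref{pr:estmax} on the quadratic average, the three-regime kernel bound from \Cref{la:mvmvf}, then identification of Riesz potentials), but several of the concrete choices you make are wrong and would not land on the stated conclusion.

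First, the decomposition of the quadratic term. You propose $|u(z_1)-u(y)|^2\le|u(z_1)-u(y)|\bigl(|u(z_1)-u(x)|+|u(x)-u(y)|\bigr)$. This is valid as an inequality but goes in the wrong direction: bounding $|u(z_1)-u(x)|$ by Proposition~\ref{pr:estmax} yields $|z_1-x|^\beta\bigl(\mathcal M\lapla{\frac\beta2}u(z_1)+\mathcal M\lapla{\frac\beta2}u(x)\bigr)$, so you end up with maximal functions evaluated at $x$, which do not appear anywhere in the stated conclusion — every $\mathcal M\mathcal M\lapla{\cdot}u$ or $\mathcal M(\cdots)$ factor there is a function of the \emph{middle} variable $y$ (in the first/second term) or attached to $G$ via a Riesz potential. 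The paper simply applies Proposition~\ref{pr:estmax} twice to the same factor $|u(z_1)-u(y)|$, once with exponent $\alpha$ and once with $\beta$, giving $|z_1-y|^{\alpha+\beta}\le|x-y|^{\alpha+\beta}$ and a product of maximal functions at $z_1$ and $y$, which after averaging in $z_1$ collapses to exactly the two forms $\mathcal M\bigl(\mathcal M\lapla{\frac\alpha2}u\,\mathcal M\lapla{\frac\beta2}u\bigr)(y)$ and $\mathcal M\mathcal M\lapla{\frac\alpha2}u(y)\,\mathcal M\mathcal M\lapla{\frac\beta2}u(y)$. No triangle-inequality detour is needed.

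Second, the identification of parameters for \Cref{app:pro:FGHKernel} is reversed. After inserting the bounds, the total power of $|x-y|$ is $-2+\alpha(q-2)+\alpha+\beta+\eps=\bigl(\alpha(q-1)+\beta+\eps-1\bigr)-1$, so this is the Riesz kernel of order $\alpha_{\mathrm{ker}}=\alpha(q-1)+\beta+\eps-1$, while the $k$-kernel exponent coming from \Cref{la:mvmvf} with interpolation parameter $\eps$ (not $\alpha+\beta-\eps$) is $|y-z|^{\frac1\p-\eps-1}$ or $|x-z|^{\frac1\p-\eps-1}$, giving $s=\frac1\p-\eps$. You have these two assignments swapped, and you noticed the mismatch yourself mid-paragraph without resolving it. Since the conclusion of the lemma has $\lapin{\alpha(q-1)+\beta+\eps-1}$ acting on $G$ or on the maximal-function product and $\lapin{\frac1\p-\eps}$ always acting on $H$, the swapped assignment cannot produce those exponents.

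Third, and most substantively, invoking \Cref{app:pro:FGHKernel} wholesale does not reproduce the stated conclusion. The proposition's hypothesis has the integrand $(F(x)+F(y))(G(x)+G(y)+G(z))$, whereas here we have precisely $G(x)\cdot(\text{product at }y)$; padding up to the symmetrized form is an overestimate that generates all four output terms of \Cref{app:pro:FGHKernel}, including $\int G\,H\,\lapin{s+\alpha}F$ and $\int F\,G\,\lapin{s+\alpha}H$ whose Riesz orders $\bigl(\alpha(q-1)+\beta+\frac1\p-1\bigr)$ do not match any term in the lemma's conclusion. The paper instead carries out the case analysis directly: in the regimes $|x-y|\aleq\min$ and $|y-z|\aleq\min$ the two kernels are $|x-y|^{\alpha_{\mathrm{ker}}-1}$ and $|y-z|^{s-1}$, both pivoted at $y$, giving $\lapin{\alpha_{\mathrm{ker}}}G(y)\cdot(\text{product})(y)\cdot\lapin{s}H(y)$; in the regime $|x-z|\aleq\min$ the kernels are pivoted at $x$, giving $G(x)\cdot\lapin{\alpha_{\mathrm{ker}}}(\text{product})(x)\cdot\lapin{s}H(x)$. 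That is exactly terms 3 and 4 of \Cref{app:pro:FGHKernel}, applied once for each of the two product forms, and nothing more. To salvage your route you would have to go into the proof of Proposition 6.4 of \cite{S15} and extract only the relevant branches, at which point you are effectively redoing the paper's direct argument; the honest fix is to drop the invocation of \Cref{app:pro:FGHKernel} and run the three-case integration by hand.
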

\begin{proof}
	We begin with treating the term $\avint_{x\triangleright y} |u(z_1)-u(y)|^2 \, dz_1$ by Proposition \ref{pr:estmax} as 
	\[
	\begin{split}
	& \avint_{x\triangleright y} |u(z_1)-u(y)|^2 \, dz_1 \\
	& \aleq \avint_{x\triangleright y} |z_1-y|^{\alpha} \left(\mathcal{M} \lapla{\frac \alpha 2} u(z_1) + \mathcal{M} \lapla{\frac \alpha 2} u(y) \right) |z_1-y|^{\beta} \left(\mathcal{M} \lapla{\frac \beta 2} u(z_1) + \mathcal{M} \lapla{\frac \beta 2} u(y) \right) \, dz_1 \\
	& \leq |x-y|^{\alpha + \beta} \bigg( \avint_{x\triangleright y} \mathcal{M} \lapla{\frac \alpha 2} u(z_1)  \mathcal{M} \lapla{\frac \beta 2} u(z_1) \, dz_1 +\avint_{x\triangleright y} \mathcal{M} \lapla{\frac \alpha 2} u(z_1) \, dz_1   \mathcal{M} \lapla{\frac \beta 2} u(y) \\
	& \quad +    \mathcal{M} \lapla{\frac \alpha 2} u(y) \avint_{x\triangleright y} \mathcal{M} \lapla{\frac \beta 2} u(z_1) \, dz_1 +  \mathcal{M} \lapla{\frac \alpha 2} u(y)  \mathcal{M} \lapla{\frac \beta 2} u(y) \bigg) \\
	& \aleq |x-y|^{\alpha + \beta} \bigg( \mathcal{M} \left( \mathcal{M} \lapla{\frac \alpha 2} u \mathcal{M} \lapla{\frac \beta 2} u \right) (y) +  \mathcal{M} \mathcal{M} \lapla{\frac \alpha 2} u(y)  \mathcal{M} \mathcal{M} \lapla{\frac \beta 2} u(y) \bigg).
	%		& \aleq |x-y|^{\alpha + \beta} \bigg( \avint_{x\triangleright y} \mathcal{M} \lapla{\frac \alpha 2} u(z_1)  \mathcal{M} \lapla{\frac \beta 2} u(z_1) \, dz_1 +  \mathcal{M} \mathcal{M} \lapla{\frac \alpha 2} u(y)  \mathcal{M} \mathcal{M} \lapla{\frac \beta 2} u(y) \bigg)
	\end{split}
	\]
	%	In the following we discuss the two terms
	%	\[
	%	\begin{split}
	%		I_1 & := |x-y|^{\alpha + \beta}  \avint_{x\triangleright y} \mathcal{M} \lapla{\frac \alpha 2} u(z_1)  \mathcal{M} \lapla{\frac \beta 2} u(z_1) \, dz_1\\
	%		I_2 & := |x-y|^{\alpha + \beta} \mathcal{M} \mathcal{M} \lapla{\frac \alpha 2} u(y)  \mathcal{M} \mathcal{M} \lapla{\frac \beta 2} u(y)
	%	\end{split}
	%	\]
	%	separately. 
	We first consider the cases \underline{$|x-y| \aleq \min\{|x-z|,|y-z|\}$} and \underline{$|y-z| \aleq \min\{|x-z|,|x-y|\}$} and observe by \eqref{eq:mvmvf:claim} and \eqref{eq:mvmvf:claim3} that 
	\[
	\avint_{x\triangleright y} ||z-z_2|^{\frac 1\p-1} - |z-x|^{\frac 1\p -1}| \, dz_2 \aleq |x-y|^\eps |y-z|^{\frac 1\p - \eps -1}
	\]
	for an admissible $\eps>0$. Thereby, we get
	\[
	\begin{split}
	& \iiint_{\R^3} G(x)\ \mathcal{M} \left( \mathcal{M} \lapla{\frac \alpha 2} u \mathcal{M} \lapla{\frac \beta 2} u \right) (y) \  H(z)  |x-y|^{-2+\alpha(q-1) + \beta + \eps}  |y-z|^{\frac 1\p - \eps -1}  \, dx \, dy \, dz \\
	%		& +  \iiint_{\R^3} G(y)\ \mathcal{M} \left( \mathcal{M} \lapla{\frac \alpha 2} u \mathcal{M} \lapla{\frac \beta 2} u \right) (y) \  H(z)  |x-y|^{-2+\alpha(q-1) + \beta + \eps}  |y-z|^{\frac 1\p - \eps -1}  \, dx \, dy \, dz \\
	&  + \iiint_{\R^3} G(x) \ \mathcal{M} \mathcal{M} \lapla{\frac \alpha 2} u(y) \,  \mathcal{M} \mathcal{M} \lapla{\frac \beta 2} u(y) \  H(z)  |x-y|^{-2+\alpha(q-1) + \beta + \eps}  |y-z|^{\frac 1\p - \eps -1}  \, dx \, dy \, dz  \\
	%		&  + \iiint_{\R^3} G(y) \ \mathcal{M} \mathcal{M} \lapla{\frac \alpha 2} u(y) \,  \mathcal{M} \mathcal{M} \lapla{\frac \beta 2} u(y) \  H(z)  |x-y|^{-2+\alpha(q-1) + \beta + \eps}  |y-z|^{\frac 1\p - \eps -1}  \, dx \, dy \, dz  \\
	& \approx \int_{\R} \lapin{\alpha(q-1)+ \beta + \eps -1} G (y) \ \mathcal{M} \left( \mathcal{M} \lapla{\frac \alpha 2} u \mathcal{M} \lapla{\frac \beta 2} u \right) (y) \ \lapin{\frac 1\p - \eps} H(y) \, dy  \\
	%		& + \int_{\R} \lapin{\alpha(q-1)+ \beta + \eps -1} \left(G  \ \mathcal{M} ( \mathcal{M} \lapla{\frac \alpha 2} u \mathcal{M} \lapla{\frac \beta 2} u ) \right) (y) \ \lapin{\frac 1\p - \eps} H(y) \, dy  \\
	& +  \int_{\R} \lapin{\alpha(q-1)+ \beta + \eps -1} G (y) \ \mathcal{M} \mathcal{M} \lapla{\frac \alpha 2} u(y) \,  \mathcal{M} \mathcal{M} \lapla{\frac \beta 2} u(y) \ \lapin{\frac 1\p - \eps} H(y) \, dy.\\
	%		& +  \int_{\R} \lapin{\alpha(q-1)+ \beta + \eps -1} \left( G  \ \mathcal{M} \mathcal{M} \lapla{\frac \alpha 2} u \  \mathcal{M} \mathcal{M} \lapla{\frac \beta 2} u \right)(y) \ \lapin{\frac 1\p - \eps} H(y) \, dy.
	\end{split}
	\]
	For the case \underline{$|x-z| \aleq \min\{|y-z|,|x-y|\}$}, we have by \eqref{eq:mvmvf:claim2}
	\[
	\avint_{x\triangleright y} ||z-z_2|^{\frac 1\p-1} - |z-x|^{\frac 1\p -1}| \, dz_2 \aleq |x-y|^\eps |x-z|^{\frac 1\p - \eps -1}
	\]
	for an admissible $\eps>0$, which implies 
	\[
	\begin{split}
	& \iiint_{\R^3} G(x) \ \mathcal{M} \left( \mathcal{M} \lapla{\frac \alpha 2} u \mathcal{M} \lapla{\frac \beta 2} u \right) (y) \  H(z)  |x-y|^{-2+\alpha(q-1) + \beta + \eps}  |x-z|^{\frac 1\p - \eps -1}  \, dx \, dy \, dz \\
	%		&  + \iiint_{\R^3} G(y) \ \mathcal{M} \left( \mathcal{M} \lapla{\frac \alpha 2} u \mathcal{M} \lapla{\frac \beta 2} u \right) (y) \  H(z)  |x-y|^{-2+\alpha(q-1) + \beta + \eps}  |x-z|^{\frac 1\p - \eps -1}  \, dx \, dy \, dz \\
	&  + \iiint_{\R^3} G(x) \ \mathcal{M} \mathcal{M} \lapla{\frac \alpha 2} u(y) \,  \mathcal{M} \mathcal{M} \lapla{\frac \beta 2} u(y) \  H(z)  |x-y|^{-2+\alpha(q-1) + \beta + \eps}  |x-z|^{\frac 1\p - \eps -1}  \, dx \, dy \, dz  \\
	%		&  + \iiint_{\R^3} G(y) \ \mathcal{M} \mathcal{M} \lapla{\frac \alpha 2} u(y) \,  \mathcal{M} \mathcal{M} \lapla{\frac \beta 2} u(y) \  H(z)  |x-y|^{-2+\alpha(q-1) + \beta + \eps}  |x-z|^{\frac 1\p - \eps -1}  \, dx \, dy \, dz  \\
	& \approx \int_{\R} G (x) \  \lapin{\alpha(q-1)+ \beta + \eps -1} \mathcal{M} \left( \mathcal{M} \lapla{\frac \alpha 2} u \mathcal{M} \lapla{\frac \beta 2} u \right) (x) \ \lapin{\frac 1\p - \eps} H(x) \, dy  \\
	& +  \int_{\R}  G (x) \ \lapin{\alpha(q-1)+ \beta + \eps -1} \left( \mathcal{M} \mathcal{M} \lapla{\frac \alpha 2} u(x) \,  \mathcal{M} \mathcal{M} \lapla{\frac \beta 2} u (x)\right)  \ \lapin{\frac 1\p - \eps} H(x) \, dy.
	\end{split}
	\]
\end{proof}

\bibliographystyle{abbrv}%
\bibliography{bib}%

\begin{thebibliography}{10}

\bibitem{BRR18}
S.~Bartels, P.~Reiter, and J.~Riege.
\newblock A simple scheme for the approximation of self-avoiding inextensible
  curves.
\newblock {\em IMA J. Numer. Anal.}, 38(2):543--565, 2018.

\bibitem{bartels2018stability}
S.~Bartels and {\relax Ph}.~Reiter.
\newblock Stability of a simple scheme for the approximation of elastic knots
  and self-avoiding inextensible curves, 2018.
\newblock To appear in \emph{Mathematics of Computation}, 2021.

\bibitem{BR20}
S.~Bartels and {\relax Ph}.~Reiter.
\newblock Numerical solution of a bending-torsion model for elastic rods.
\newblock {\em Numer. Math.}, 146(4):661--697, 2020.

\bibitem{Bethuel-1991}
F.~Bethuel.
\newblock The approximation problem for {S}obolev maps between two manifolds.
\newblock {\em Acta Math.}, 167(3-4):153--206, 1991.

\bibitem{B12}
S.~Blatt.
\newblock Boundedness and regularizing effects of {O}'{H}ara's knot energies.
\newblock {\em J. Knot Theory Ramifications}, 21(1):1250010, 9, 2012.

\bibitem{blattTP}
S.~Blatt.
\newblock The energy spaces of the tangent point energies.
\newblock {\em J. Topol. Anal.}, 5(3):261--270, 2013.

\bibitem{B19}
S.~Blatt.
\newblock Curves between {L}ipschitz and {$C^1$} and their relation to
  geometric knot theory.
\newblock {\em J. Geom. Anal.}, 29(4):3270--3292, 2019.

\bibitem{BGRvdM}
S.~Blatt, A.~Gilsbach, P.~Reiter, and H.~von~der Mosel.
\newblock {Symmetric criticality for M\"obius energy}.
\newblock {\em in preparation}, 2021.

\bibitem{blatt-reiter-survey}
S.~Blatt and P.~Reiter.
\newblock Modeling repulsive forces on fibres via knot energies.
\newblock {\em Computational and Mathematical Biophysics}, 2(1):56 -- 72, 01
  Jan. 2014.

\bibitem{blatt-reiter1-3}
S.~Blatt and P.~Reiter.
\newblock Stationary points of {O}'{H}ara's knot energies.
\newblock {\em Manuscripta Math.}, 140(1-2):29--50, 2013.

\bibitem{BR15}
S.~Blatt and P.~Reiter.
\newblock Regularity theory for tangent-point energies: the non-degenerate
  sub-critical case.
\newblock {\em Adv. Calc. Var.}, 8(2):93--116, 2015.

\bibitem{BRS16}
S.~Blatt, P.~Reiter, and A.~Schikorra.
\newblock Harmonic analysis meets critical knots. {C}ritical points of the
  {M}\"{o}bius energy are smooth.
\newblock {\em Trans. Amer. Math. Soc.}, 368(9):6391--6438, 2016.

\bibitem{BRS19}
S.~{Blatt}, P.~{Reiter}, and A.~{Schikorra}.
\newblock {On O'Hara knot energies I: Regularity for critical knots}.
\newblock {\em Journal Diff. Geom. (accepted)}, 2021.

\bibitem{BV19}
S.~Blatt and N.~Vorderobermeier.
\newblock On the analyticity of critical points of the {M}\"{o}bius energy.
\newblock {\em Calc. Var. Partial Differential Equations}, 58(1):Paper No. 16,
  28, 2019.

\bibitem{Bousquet-Ponce-VanSchaftingen-2013}
P.~Bousquet, A.~C. Ponce, and J.~Van~Schaftingen.
\newblock Density of smooth maps for fractional {S}obolev spaces {$W^{s,p}$}
  into {$\ell$} simply connected manifolds when {$s\geqslant1$}.
\newblock {\em Confluentes Math.}, 5(2):3--22, 2013.

\bibitem{Bousquet-Ponce-VanSchaftingen-2015}
P.~Bousquet, A.~C. Ponce, and J.~Van~Schaftingen.
\newblock Strong density for higher order {S}obolev spaces into compact
  manifolds.
\newblock {\em J. Eur. Math. Soc. (JEMS)}, 17(4):763--817, 2015.

\bibitem{Brezis-Mironescu-2015}
H.~Brezis and P.~Mironescu.
\newblock Density in {$W^{s,p}(\Omega;N)$}.
\newblock {\em J. Funct. Anal.}, 269(7):2045--2109, 2015.

\bibitem{BN95}
H.~Brezis and L.~Nirenberg.
\newblock Degree theory and {BMO}. {I}. {C}ompact manifolds without boundaries.
\newblock {\em Selecta Math. (N.S.)}, 1(2):197--263, 1995.

\bibitem{BN96}
H.~Brezis and L.~Nirenberg.
\newblock Degree theory and {BMO}. {II}. {C}ompact manifolds with boundaries.
\newblock {\em Selecta Math. (N.S.)}, 2(3):309--368, 1996.
\newblock With an appendix by the authors and Petru Mironescu.

\bibitem{buck-orloff}
G.~Buck and J.~Orloff.
\newblock A simple energy function for knots.
\newblock {\em Topology Appl.}, 61(3):205--214, 1995.

\bibitem{DLR11b}
F.~Da~Lio and T.~Rivi\`ere.
\newblock Sub-criticality of non-local {S}chr\"{o}dinger systems with
  antisymmetric potentials and applications to half-harmonic maps.
\newblock {\em Adv. Math.}, 227(3):1300--1348, 2011.

\bibitem{DLR11a}
F.~Da~Lio and T.~Rivi\`ere.
\newblock Three-term commutator estimates and the regularity of
  {$\frac12$}-harmonic maps into spheres.
\newblock {\em Anal. PDE}, 4(1):149--190, 2011.

\bibitem{Hitchhiker}
E.~Di~Nezza, G.~Palatucci, and E.~Valdinoci.
\newblock Hitchhiker's guide to the fractional {S}obolev spaces.
\newblock {\em Bull. Sci. Math.}, 136(5):521--573, 2012.

\bibitem{Federer}
H.~Federer.
\newblock Curvature measures.
\newblock {\em Trans. Amer. Math. Soc.}, 93:418--491, 1959.

\bibitem{FHW94}
M.~H. Freedman, Z.-X. He, and Z.~Wang.
\newblock M\"{o}bius energy of knots and unknots.
\newblock {\em Ann. of Math. (2)}, 139(1):1--50, 1994.

\bibitem{Fukuhara1988}
S.~Fukuhara.
\newblock Energy of a knot.
\newblock In {\em A f\^ete of topology}, pages 443--451. Academic Press,
  Boston, MA, 1988.

\bibitem{Giaquinta1983}
M.~Giaquinta.
\newblock {\em Multiple integrals in the calculus of variations and nonlinear
  elliptic systems}, volume 105 of {\em Annals of Mathematics Studies}.
\newblock Princeton University Press, Princeton, NJ, 1983.

\bibitem{GM99}
O.~Gonzalez and J.~H. Maddocks.
\newblock Global curvature, thickness, and the ideal shapes of knots.
\newblock {\em Proc. Natl. Acad. Sci. USA}, 96(9):4769--4773, 1999.

\bibitem{gromov}
M.~Gromov.
\newblock Filling {R}iemannian manifolds.
\newblock {\em J. Differential Geom.}, 18(1):1--147, 1983.

\bibitem{Hajlasz-1994}
P.~Haj\l{}asz.
\newblock Approximation of {S}obolev mappings.
\newblock {\em Nonlinear Anal.}, 22(12):1579--1591, 1994.

\bibitem{he}
Z.-X. He.
\newblock The {E}uler-{L}agrange equation and heat flow for the {M}\"{o}bius
  energy.
\newblock {\em Comm. Pure Appl. Math.}, 53(4):399--431, 2000.

\bibitem{Helein-2002}
F.~H\'elein.
\newblock {\em Harmonic maps, conservation laws and moving frames}, volume 150
  of {\em Cambridge Tracts in Mathematics}.
\newblock Cambridge University Press, Cambridge, second edition, 2002.
\newblock Translated from the 1996 French original, With a foreword by James
  Eells.

\bibitem{H94}
M.~W. Hirsch.
\newblock {\em Differential topology}, volume~33 of {\em Graduate Texts in
  Mathematics}.
\newblock Springer-Verlag, New York, 1994.
\newblock Corrected reprint of the 1976 original.

\bibitem{H57}
A.~Huber.
\newblock On subharmonic functions and differential geometry in the large.
\newblock {\em Comment. Math. Helv.}, 32:13--72, 1957.

\bibitem{Nagasawa1}
A.~Ishizeki and T.~Nagasawa.
\newblock A decomposition theorem of the {M}\"{o}bius energy {I}:
  {D}ecomposition and {M}\"{o}bius invariance.
\newblock {\em Kodai Math. J.}, 37(3):737--754, 2014.

\bibitem{Nagasawa2}
A.~Ishizeki and T.~Nagasawa.
\newblock A decomposition theorem of the {M}\"{o}bius energy {II}: variational
  formulae and estimates.
\newblock {\em Math. Ann.}, 363(1-2):617--635, 2015.

\bibitem{Nagasawa3}
A.~Ishizeki and T.~Nagasawa.
\newblock The invariance of decomposed {M}\"{o}bius energies under inversions
  with center on curves.
\newblock {\em J. Knot Theory Ramifications}, 25(2):1650009, 22, 2016.

\bibitem{Nagasawa4}
A.~Ishizeki and T.~Nagasawa.
\newblock The {$L^2$}-gradient of decomposed {M}\"{o}bius energies.
\newblock {\em Calc. Var. Partial Differential Equations}, 55(3):Art. 56, 31,
  2016.

\bibitem{KvdM20}
B.~K\"afer and H.~von~der Mosel.
\newblock M{\"o}bius-invariant self-avoidance energies for non-smooth sets in
  arbitrary dimensions, 2020.

\bibitem{Nagasawa6}
S.~Kawakami and T.~Nagasawa.
\newblock Variational formulae and estimates of {O}'{H}ara's knot energies.
\newblock {\em J. Knot Theory Ramifications}, 29(4):2050017, 22, 2020.

\bibitem{KL12}
E.~Kuwert and Y.~Li.
\newblock {$W^{2,2}$}-conformal immersions of a closed {R}iemann surface into
  {$\mathbb{R}^n$}.
\newblock {\em Comm. Anal. Geom.}, 20(2):313--340, 2012.

\bibitem{KS12}
E.~Kuwert and R.~Sch\"{a}tzle.
\newblock The {W}illmore functional.
\newblock In {\em Topics in modern regularity theory}, volume~13 of {\em CRM
  Series}, pages 1--115. Ed. Norm., Pisa, 2012.

\bibitem{LS20}
E.~Lenzmann and A.~Schikorra.
\newblock Sharp commutator estimates via harmonic extensions.
\newblock {\em Nonlinear Anal.}, 193:111375, 37, 2020.

\bibitem{LiLuoTang13}
Y.~Li, Y.~Luo, and H.~Tang.
\newblock On the moving frame of a conformal map from 2-disk into
  {$\mathbb{R}^n$}.
\newblock {\em Calc. Var. Partial Differential Equations}, 46(1-2):31--37,
  2013.

\bibitem{L88}
S.~Luckhaus.
\newblock Partial {H}\"{o}lder continuity for minima of certain energies among
  maps into a {R}iemannian manifold.
\newblock {\em Indiana Univ. Math. J.}, 37(2):349--367, 1988.

\bibitem{MS20}
K.~Mazowiecka and A.~Schikorra.
\newblock {Minimal $W^{s,\frac{n}{s}}$-harmonic maps in homotopy classes}.
\newblock 2020.

\bibitem{MSY20}
T.~Mengesha, A.~Schikorra, and S.~Yeepo.
\newblock Calderon-{Z}ygmund type estimates for nonlocal {PDE} with
  {H}\"{o}lder continuous kernel.
\newblock {\em Adv. Math.}, 383:107692, 2021.

\bibitem{Mironescu-2004}
P.~Mironescu.
\newblock On some properties of {$S^1$}-valued fractional {S}obolev spaces.
\newblock In {\em Noncompact problems at the intersection of geometry,
  analysis, and topology}, volume 350 of {\em Contemp. Math.}, pages 201--207.
  Amer. Math. Soc., Providence, RI, 2004.

\bibitem{MS95}
S.~M\"{u}ller and V.~\v{S}ver\'{a}k.
\newblock On surfaces of finite total curvature.
\newblock {\em J. Differential Geom.}, 42(2):229--258, 1995.

\bibitem{N18}
T.~Nagasawa.
\newblock On {M}\"{o}bius invariant decomposition of the {M}\"{o}bius energy.
\newblock In {\em New directions in geometric and applied knot theory}, Partial
  Differ. Equ. Meas. Theory, pages 36--76. De Gruyter, Berlin, 2018.

\bibitem{Nagasawa5}
T.~Nagasawa.
\newblock On {M}\"{o}bius invariant decomposition of the {M}\"{o}bius energy.
\newblock In {\em New directions in geometric and applied knot theory}, Partial
  Differ. Equ. Meas. Theory, pages 36--76. De Gruyter, Berlin, 2018.

\bibitem{OH91}
J.~O'Hara.
\newblock Energy of a knot.
\newblock {\em Topology}, 30(2):241--247, 1991.

\bibitem{OH92}
J.~O'Hara.
\newblock Family of energy functionals of knots.
\newblock {\em Topology Appl.}, 48(2):147--161, 1992.

\bibitem{OH94}
J.~O'Hara.
\newblock Energy functionals of knots. {II}.
\newblock {\em Topology Appl.}, 56(1):45--61, 1994.

\bibitem{OH20}
J.~{O'Hara}.
\newblock {Self-repulsiveness of energies for closed submanifolds}.
\newblock {\em arXiv e-prints}, page arXiv:2004.02351, Apr. 2020.

\bibitem{reiter}
P.~Reiter.
\newblock Regularity theory for the {M}\"{o}bius energy.
\newblock {\em Commun. Pure Appl. Anal.}, 9(5):1463--1471, 2010.

\bibitem{RS20}
P.~Reiter and H.~Schumacher.
\newblock Sobolev gradients for the m\"obius energy, 2020.

\bibitem{Riviere-2000}
T.~Rivi{\`e}re.
\newblock Dense subsets of {$H^{1/2}(S^2,S^1)$}.
\newblock {\em Ann. Global Anal. Geom.}, 18(5):517--528, 2000.

\bibitem{R08}
T.~Rivi\`ere.
\newblock Analysis aspects of {W}illmore surfaces.
\newblock {\em Invent. Math.}, 174(1):1--45, 2008.

\bibitem{R15}
T.~Rivi\`ere.
\newblock The variations of yang-mills lagrangian.
\newblock {\em Preprint, arXiv:1506.04554}, 2015.

\bibitem{R16}
T.~Rivi\'{e}re.
\newblock Weak immersions of surfaces with {$L^2$}-bounded second fundamental
  form.
\newblock In {\em Geometric analysis}, volume~22 of {\em IAS/Park City Math.
  Ser.}, pages 303--384. Amer. Math. Soc., Providence, RI, 2016.

\bibitem{RS96}
T.~Runst and W.~Sickel.
\newblock {\em Sobolev spaces of fractional order, {N}emytskij operators, and
  nonlinear partial differential equations}, volume~3 of {\em De Gruyter Series
  in Nonlinear Analysis and Applications}.
\newblock Walter de Gruyter \& Co., Berlin, 1996.

\bibitem{SU81}
J.~Sacks and K.~Uhlenbeck.
\newblock The existence of minimal immersions of {$2$}-spheres.
\newblock {\em Ann. of Math. (2)}, 113(1):1--24, 1981.

\bibitem{S15}
A.~Schikorra.
\newblock Integro-differential harmonic maps into spheres.
\newblock {\em Comm. Partial Differential Equations}, 40(3):506--539, 2015.

\bibitem{S18neumann}
A.~Schikorra.
\newblock Boundary equations and regularity theory for geometric variational
  systems with {N}eumann data.
\newblock {\em Arch. Ration. Mech. Anal.}, 229(2):709--788, 2018.

\bibitem{S18}
A.~Schikorra.
\newblock Limits of conformal immersions under a bound on a fractional normal
  curvature quantity.
\newblock {\em Vietn. J. Math.}, (Special Issue, J. Jost's Birthday), 2021.

\bibitem{Schoen-Uhlenbeck-1983}
R.~Schoen and K.~Uhlenbeck.
\newblock Boundary regularity and the {D}irichlet problem for harmonic maps.
\newblock {\em J. Differential Geom.}, 18(2):253--268, 1983.

\bibitem{S96}
L.~Simon.
\newblock {\em Theorems on regularity and singularity of energy minimizing
  maps}.
\newblock Lectures in Mathematics ETH Z\"{u}rich. Birkh\"{a}user Verlag, Basel,
  1996.
\newblock Based on lecture notes by Norbert Hungerb\"{u}hler.

\bibitem{SvdM12}
P.~Strzelecki and H.~von~der Mosel.
\newblock Tangent-point self-avoidance energies for curves.
\newblock {\em J. Knot Theory Ramifications}, 21(5):1250044, 28, 2012.

\bibitem{SvdM13}
P.~Strzelecki and H.~von~der Mosel.
\newblock Tangent-point repulsive potentials for a class of non-smooth
  {$m$}-dimensional sets in {$\mathbb{R}^n$}. {P}art {I}: {S}moothing and
  self-avoidance effects.
\newblock {\em J. Geom. Anal.}, 23(3):1085--1139, 2013.

\bibitem{strvdM}
P.~Strzelecki and H.~von~der Mosel.
\newblock Geometric curvature energies: facts, trends, and open problems.
\newblock In {\em New directions in geometric and applied knot theory}, Partial
  Differ. Equ. Meas. Theory, pages 8--35. De Gruyter, Berlin, 2018.

\bibitem{T94}
T.~Toro.
\newblock Surfaces with generalized second fundamental form in {$L^2$} are
  {L}ipschitz manifolds.
\newblock {\em J. Differential Geom.}, 39(1):65--101, 1994.

\bibitem{T95}
T.~Toro.
\newblock Geometric conditions and existence of bi-{L}ipschitz
  parameterizations.
\newblock {\em Duke Math. J.}, 77(1):193--227, 1995.

\bibitem{V19}
N.~{Vorderobermeier}.
\newblock On the regularity of critical points for {O}’{H}ara’s knot
  energies: From smoothness to analyticity.
\newblock {\em Commun. Contemp. Math.}, pages 2040045, 28, 2020.

\end{thebibliography}

\end{document}